\documentclass[11pt,british,twoside]{report}

\usepackage{amsmath, amsfonts, amsthm, amssymb, multicol}
\usepackage{graphicx}
\usepackage{float}
\usepackage{verbatim}
\usepackage{epigraph}

\setcounter{tocdepth}{1}
\setcounter{secnumdepth}{3}
\addtocontents{toc}{\setcounter{tocdepth}{1}}
\numberwithin{equation}{section}

\usepackage{stmaryrd}
\SetSymbolFont{stmry}{bold}{U}{stmry}{m}{n} %

\usepackage{IEEEtrantools}
\usepackage{bm}

\usepackage{etoolbox}
\usepackage{hyperref}
\usepackage[T1]{fontenc}
\usepackage[utf8]{inputenc}
\usepackage{babel}
\usepackage{csquotes}
\usepackage{mathtools}
\usepackage[protrusion=true,expansion=true]{microtype}
\usepackage{mathrsfs}
\usepackage{enumerate}
\usepackage{latexsym, euscript, epic, eepic, color}
\usepackage{blindtext}
\usepackage{subfigure}
\usepackage{enumitem}
\usepackage{amscd}

\usepackage{comment}
\usepackage{bbm}
\usepackage{dsfont}
\usepackage{booktabs}
\usepackage{wrapfig}
\usepackage{caption}

\usepackage{longtable}
\usepackage{listings}
\usepackage[scaled]{beramono}
\usepackage{pgffor}
\usepackage[all]{xy}

\usepackage{bookmark}

\usepackage{tikz, pgfplots}
\pgfplotsset{compat=1.15}
\usetikzlibrary{intersections,positioning,cd,calc,bending}
\usetikzlibrary{decorations.markings,shapes}
\usetikzlibrary{arrows,arrows.meta}
\usetikzlibrary{patterns}
\tikzcdset{arrow style=tikz,diagrams={>=stealth}}
\tikzset{%
    lvtx/.style={circle,inner sep=1pt,draw=black},
    vtx/.style={circle,inner sep=1.2pt,draw=black,fill=black},
    elbl/.style={fill=white,circle,inner sep=1pt},
    edge/.style={thick,->,>=stealth,shorten <=1pt,shorten >=1pt}
}

\PassOptionsToPackage{hyphens}{url}

\sloppy
\allowdisplaybreaks

\usepackage[a4paper,left=30mm,right=30mm,top=30mm,bottom=30mm]{geometry}

\raggedbottom

\newtheorem{thm}{Theorem}[section]
\newtheorem*{theorem*}{Theorem}
\newtheorem{lemma}[thm]{Lemma}
\newtheorem{lma}[thm]{Lemma}
\newtheorem{theorem}[thm]{Theorem}
\newtheorem{definition}[thm]{Definition}
\newtheorem{defn}[thm]{Definition}
\newtheorem{prop}[thm]{Proposition}
\newtheorem{example}[thm]{Example}
\newtheorem{corollary}[thm]{Corollary}
\newtheorem{cor}[thm]{Corollary}

\newtheorem{rem}[thm]{Remark}
\newtheorem{question}[thm]{Question}

\newtheorem{proofpart}{Part}
\AtBeginEnvironment{proof}{\setcounter{proofpart}{0}} %

\definecolor{trp}{rgb}{1,1,1}

\newcommand\setItemnumber[1]{\setcounter{enumi}{\numexpr#1-1\relax}}
\definecolor{red}{rgb}{1,0,.2}

\newcommand{\N}{\mathbb{N}}
\newcommand{\Rd}{\mathbb{R}^d}
\newcommand{\ubd}{\overline{\dim}_{\mathrm{B}}}
\newcommand{\bd}{\dim_{\mathrm{B}}}
\newcommand{\lbd}{\underline{\dim}_{\mathrm{B}}}
\newcommand{\uid}{\overline{\dim}_{\theta}}
\newcommand{\lid}{\underline{\dim}_{\theta}}
\newcommand{\upd}{\overline{\dim}^{\Phi}}
\newcommand{\lpd}{\underline{\dim}^{\Phi}}
\newcommand{\asd}{\dim_{\mathrm{A}}}
\newcommand{\asp}{\dim_{\mathrm{A}}^\theta}

\newcommand{\uasp}{\overline{\dim}_\mathrm{A}^\theta}
\newcommand{\qad}{\dim_\mathrm{qA}}
\newcommand{\fix}{\mathrm{fix}}
\newcommand{\R}{\mathbb{R}}

\newcommand{\btau}{\boldsymbol\tau}

\newcommand{\ii}{\mathbf{i}}
\newcommand{\jj}{\mathbf{j}}

\newcommand{\iih}{\boldsymbol{\hat\imath}}
\newcommand{\jjh}{\boldsymbol{\hat\jmath}}
\newcommand{\kkh}{\mathbf{\hat k}}
\newcommand{\ih}{\hat\imath}
\newcommand{\jh}{\hat\jmath}
\newcommand{\kh}{\hat k}
\newcommand{\iiv}{\overline{\imath}}
\newcommand{\jjv}{\overline{\jmath}}

\newcommand{\supp}{\mathrm{supp}}

\newcommand{\diam}{\operatorname{diam}}

\newcommand{\diniu}[1]{D^{#1}\!}
\newcommand{\dinil}[1]{D_{#1}}

\renewcommand{\epsilon}{\varepsilon}

\renewcommand{\geq}{\geqslant}
\renewcommand{\leq}{\leqslant}

\DeclareMathOperator{\dimH}{\operatorname{dim}_{\mathrm{H}}}

\DeclareMathOperator{\dimlB}{\underline{\operatorname{dim}}_{\mathrm{B}}}
\DeclareMathOperator{\dimuB}{\overline{\operatorname{dim}}_{\mathrm{B}}}
\DeclareMathOperator{\dimA}{\operatorname{dim}_{\mathrm{A}}}
\DeclareMathOperator{\dimL}{\operatorname{dim}_{\mathrm{L}}}

\DeclareMathOperator{\Q}{{\mathbb{Q}}}
\DeclareMathOperator{\Z}{{\mathbb{Z}}}

\usepackage[backend=biber,mincrossrefs=9,doi=false,
url=false,isbn=false,
date=year,style=alphabetic,
giveninits=true,maxnames=99,maxalphanames=99,
sortcites=true,sorting=nyt,
hyperref=true,backref=true,clearlang=true]{biblatex}
\renewbibmacro{in:}{%
  \ifentrytype{article}{}{\printtext{\bibstring{in}\intitlepunct}}}
\DeclareFieldFormat[article,periodical]{volume}{\mkbibbold{#1}}%
\DeclareFieldFormat[article,periodical,unpublished]{citetitle}{#1}
\DeclareFieldFormat[article,periodical,unpublished]{title}{#1}
\DeclareFieldFormat{pages}{#1}
\DeclareFieldFormat[inproceedings]{title}{#1\isdot}
\DeclareFieldFormat[misc]{title}{#1\isdot}
\DeclareFieldFormat[misc,article]{note}{#1\addcomma}

\DeclareLabelalphaTemplate{%
  \labelelement{
    \field[final]{shorthand}
    \field{label}
    \field[strwidth=3,strside=left,ifnames=1]{labelname}
    \field[strwidth=1,strside=left]{labelname}
  }
}
\DeclareFieldFormat{extraalpha}{#1}%

\makeatletter
\def\verbatim@font{\normalfont\rmfamily}
\makeatother
\urlstyle{same}
\makeatletter
\DeclareFieldFormat{eprint:arxiv}{%
  arXiv\addcolon\space
  \ifhyperref
    {\href{http://arxiv.org/\abx@arxivpath/#1}{%
       \nolinkurl{#1}%
       \iffieldundef{eprintclass}
     {}
     {\addspace\mkbibbrackets{\thefield{eprintclass}}}}}
    {\nolinkurl{#1}
     \iffieldundef{eprintclass}
       {}
       {\addspace\mkbibbrackets{\thefield{eprintclass}}}}}
\makeatother

\AtEveryBibitem{%
  \clearlist{language}%
}
\AtEveryBibitem{
  \clearfield{number}}
\addbibresource{Thesis-Amlan-Banaji-bibliography.bib}

\begin{document}

\begin{titlepage}
    \centering
    \vfill
    {\bfseries\Huge
        Interpolating between Hausdorff and box dimension\\
        \vskip2cm
        \huge Amlan Banaji\\
    }    
    \vfill
    \includegraphics[width=5cm]{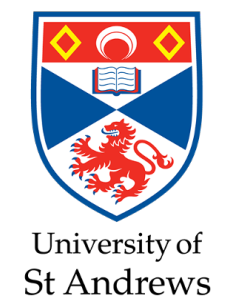} %
    \vfill
    {\large
        A Thesis Submitted for the Degree of PhD \\
at the\\
 University of St Andrews\\
 \vskip1cm
     Submitted 2023
}
    \vfill
\end{titlepage}

\pagenumbering{roman}

\makeatletter
\renewenvironment{abstract}{%
    \if@twocolumn
      \section*{\abstractname}%
    \else %
      \begin{center}%
        {\bfseries \LARGE\abstractname\vspace{\z@}}%
      \end{center}%
      \quotation
    \fi}
    {\if@twocolumn\else\endquotation\fi}
\makeatother
\begin{abstract}
\addcontentsline{toc}{chapter}{Abstract}
\noindent Hausdorff and box dimension are two familiar notions of fractal dimension. 
Box dimension can be larger than Hausdorff dimension, because in the definition of box dimension, all sets in the cover have the same diameter, but for Hausdorff dimension there is no such restriction. 
This thesis focuses on a family of dimensions parameterised by ${\theta \in (0,1)}$, called the \emph{intermediate dimensions}, which are defined by requiring that $\diam(U) \leq (\diam(V))^{\theta}$ for all sets $U,V$ in the cover. \smallbreak
\noindent We begin by generalising the intermediate dimensions to allow for greater refinement in how the relative sizes of the covering sets are restricted. 
These new dimensions can recover the interpolation between Hausdorff and box dimension for compact sets whose intermediate dimensions do not tend to the Hausdorff dimension as ${\theta \to 0}$. 
We also use a Moran set construction to prove a necessary and sufficient condition, in terms of Dini derivatives, for a given function to be realised as the intermediate dimensions of a set. \smallbreak
\noindent We proceed to prove that the intermediate dimensions of limit sets of infinite conformal iterated function systems are given by the maximum of the Hausdorff dimension of the limit set and the intermediate dimensions of the set of fixed points of the contractions. 
This applies to sets defined using continued fraction expansions, and has applications to dimensions of projections, fractional Brownian images, and general H\"older images. \smallbreak
\noindent Finally, we determine a formula for the intermediate dimensions of all self-affine Bedford--McMullen carpets. The functions display features not witnessed in previous examples, such as having countably many phase transitions. 
We deduce that two carpets have equal intermediate dimensions if and only if the multifractal spectra of the corresponding uniform Bernoulli measures coincide. This shows that if two carpets are bi-Lipschitz equivalent then the multifractal spectra are equal. 
\end{abstract}

\chapter*{Acknowledgements}
\addcontentsline{toc}{chapter}{Acknowledgements}
\section*{General acknowledgements}

First and foremost, it is a pleasure to thank my PhD supervisor, Jonathan Fraser. He has always been there to provide excellent advice when I have needed it, and has been patiently willing to help whenever I have encountered any issues. 
I am also very grateful to my second supervisor, Kenneth Falconer, for proofreading draft versions of this thesis and my papers really thoroughly, and for many invaluable discussions. The depth and breadth of my supervisors' knowledge of fractal geometry is outstanding, and I am appreciative of the way they guided me through the first years of my PhD, which were affected by the Covid pandemic. At the same time, it has been great to have the freedom to explore my own mathematical interests. 
I am extremely grateful to Lars Olsen and P\'eter Varj\'u for examining this thesis and making very many helpful comments and suggestions. 
\smallbreak
I am grateful to everyone in the School of Mathematics and Statistics at the University of St Andrews who helped make my time here so rewarding. 
The history of the analysis group is inextricably linked to that of my office, which we call the `bunker.' 
I have had the pleasure of sharing the space with Raquel Couto, Douglas Howroyd, Aleksi Py\"or\"al\"a, Liam Stuart, Shuqin Zhang and Boyuan Zhao. 
Particular thanks go to Stuart Burrell and Lawrence Lee for being so helpful while I was settling in. 
My time at St Andrews would not have been the same without other members of the analysis group including Natalia Jurga, Istv\'an Kolossv\'ary, Lars Olsen and Mike Todd. I will miss the weekly trips to the Whey Pat pub. 
Special thanks go to my friend Alex Rutar - 
I have learned a great deal from our long discussions, about not just mathematics but so many different aspects of life! 
I would like to thank all the friends I have made from across the School and wider St Andrews community (too many to list individually). I have enjoyed the annual trips to the Burn and many lunches in the common room! 
\smallbreak
During my PhD, it has been helpful in many different ways to have got to know mathematicians from across the UK and around the world. 
I have received lots of welcome comments about my work, many of which have resulted in improvements to content that is included in this thesis. In particular, I thank Simon Baker, Haipeng Chen (Clarence), De-Jun Feng, Kathryn Hare, Antti K\"aenm\"aki, Henna Koivusalo, Thomas Jordan, Hui Rao, Pablo Shmerkin, Ville Suomala, Justin Tan, Sascha Troscheit, Mariusz Urba\'nski, Yimin Xiao, Alexia Yavicoli, Han Yu, Yuchen Zhu, and several anonymous referees. \smallbreak
Finally, I would like to thank my family. I thank my father, Murad Banaji, for proofreading a draft of this thesis and for many enjoyable discussions about mathematics and academia. I also thank him, and my mother, Kirsten Shirke, for all their support and encouragement throughout these years.

\section*{Publications and collaboration}

Whilst at St Andrews, I have worked on many projects including \cite{Banaji2023gen,Banaji2022moran,
Banaji2021infinite,Banaji2021bedford,
Banaji2022assouad,
Banaji2022popcorn,Banaji2022geo,
BanajiPreprintphiassouad,BanajiPreprintgl}, all of which are freely available on arXiv (see the links below and in the bibliography). 
Several of these papers have been collaborations, and I have learned a great deal from each of these. 
I hope that I can continue to work with my co-authors many times more in the future. 
Chapters~\ref{s:generalised}, \ref{s:attainable}, \ref{s:infinite} and \ref{s:bm} are broadly based on the following four papers respectively. 

\begin{itemize}

\item A.~Banaji. Generalised intermediate dimensions. \\
\emph{Monatsh. Math.}, \textbf{202} (2023), 465--506. \href{https://arxiv.org/abs/2011.08613}{https://arxiv.org/abs/2011.08613}

\item A.~Banaji and A.~Rutar. Attainable forms of intermediate dimensions. \\
\textit{Ann. Fenn. Math.} \textbf{47} (2022), 939--960. \href{https://arxiv.org/abs/2111.14678}{https://arxiv.org/abs/2111.14678}

\item A.~Banaji and J.~M.~Fraser. Intermediate dimensions of infinitely generated attractors. \\
\textit{Trans. Amer. Math. Soc.} \textbf{376} (2023), 2449--2479. \href{https://arxiv.org/abs/2104.15133}{https://arxiv.org/abs/2104.15133}  

\item A.~Banaji and I.~Kolossv\'ary. Intermediate dimensions of Bedford--McMullen carpets with applications to Lipschitz equivalence. \\
\textit{Adv. Math.} \textbf{449} (2024), 109735. \href{https://arxiv.org/abs/2111.05625}{https://arxiv.org/abs/2111.05625}

\end{itemize}

In addition, there is a small amount of material from the following two papers. 

\begin{itemize}

\item A.~Banaji and H.~Chen. Dimensions of popcorn-like pyramid sets. \\
\emph{J. Fractal Geom.} \textbf{10} (2023), 151--168. \href{https://arxiv.org/abs/2212.06961}{https://arxiv.org/abs/2212.06961}

\item A.~Banaji and J.~M.~Fraser. Assouad type dimensions of infinitely generated self-conformal sets. \\
\emph{Nonlinearity} \textbf{37} (2024), 045004. \href{https://arxiv.org/abs/2207.11611}{https://arxiv.org/abs/2207.11611}

\end{itemize}
I thank Alex Rutar, Jonathan Fraser, Istv\'an Kolossv\'ary, Haipeng Chen and Sascha Troscheit for our collaborations, and for agreeing for me to use material from these papers in this thesis. 
I will explain in more detail in Section~\ref{s:structure} where in the thesis each of these papers is used. Each of the papers constitutes substantial original research and has been published in a high quality peer-reviewed mathematics journal.

\section*{Funding}
This work was supported by a Leverhulme Trust Research Project Grant (RPG-2019-034).

\section*{Copyright and further comments}

The copyright for this thesis `Interpolating between Hausdorff and box dimension' rests with the author, Amlan Banaji. Any information derived from it should be acknowledged. 
This version of the thesis is licenced under the Creative Commons Attribution~4.0 International (CC~BY~4.0) license: \href{https://creativecommons.org/licenses/by/4.0/}{https://creativecommons.org/licenses/by/4.0/}

This arXiv version of the thesis is very similar to the version which was uploaded to the St Andrews Research Repository on 28/11/2023 (after viva and revisions) at 

\href{https://research-repository.st-andrews.ac.uk/handle/10023/28591}{https://research-repository.st-andrews.ac.uk/handle/10023/28591} 

The DOI \href{https://doi.org/10.17630/sta/642}{https://doi.org/10.17630/sta/642} can be used to cite or link to the research repository version. 
Since then I have updated the bibliography and made some typo corrections. 
Comments, questions, and mistake/typo corrections are very welcome; after leaving St Andrews I began a postdoc with Simon Baker at Loughborough University, and my up-to-date email address can be found on my personal website \href{https://amlan-banaji.github.io/}{https://amlan-banaji.github.io/} 

Based on work in this thesis, I have written a two-page `microthesis' which was published in the May 2024 edition of the Newsletter of the London Mathematical Society. It can be found at \href{https://amlan-banaji.github.io/files/MicrothesisLMS.pdf}{https://amlan-banaji.github.io/files/MicrothesisLMS.pdf}   

Some key words and phrases associated with this thesis are: fractal geometry, Hausdorff dimension, box dimension, intermediate dimensions, dimension interpolation, Moran set, iterated function system, self-conformal, self-affine, Bedford--McMullen carpet.

\chapter*{Declarations}
\addcontentsline{toc}{chapter}{Declarations}
\noindent {\large \textbf{Candidate's declaration}}\medbreak
\noindent I, Amlan Faiyaz Banaji, do hereby certify that this thesis, submitted for the degree of PhD, which is approximately 39,000 words in length, has been written by me, and that it is the record of work carried out by me, or principally by myself in collaboration with others as acknowledged, and that it has not been submitted in any previous application for any degree. I confirm that any appendices included in my thesis contain only material permitted by the `Assessment of Postgraduate Research Students' policy.\medbreak
\noindent I was admitted as a research student at the University of St Andrews in September 2019. \medbreak
\noindent I received funding from an organisation or institution and have acknowledged the funder(s) in the full text of my thesis. \medbreak
\noindent {\bf Date:} 23/02/2023 \hspace{0.5cm} {\bf Signature of Candidate:}  
  \bigbreak
\noindent {\large \textbf{Supervisor's declaration}} \medbreak
\noindent I hereby certify that the candidate has fulfilled the conditions of the Resolution and Regulations appropriate for the degree of PhD in the University of St Andrews and that the candidate is qualified to submit this thesis in application for that degree. I confirm that any appendices included in the thesis contain only material permitted by the `Assessment of Postgraduate Research Students' policy. \medbreak
\noindent {\bf Date:} 23/02/2023 \hspace{0.5cm} {\bf Signature of Supervisor 1:} 
 \smallbreak
\noindent {\bf Date:} 23/02/2023 \hspace{0.5cm} {\bf Signature of Supervisor 2:} 
\clearpage

\noindent {\large \textbf{Permission for publication}} \medbreak
\noindent In submitting this thesis to the University of St Andrews we understand that we are giving permission for it to be made available for use in accordance with the regulations of the University Library for the time being in force, subject to any copyright vested in the work not being affected thereby. We also understand, unless exempt by an award of an embargo as requested below, that the title and the abstract will be published, and that a copy of the work may be made and supplied to any bona fide library or research worker, that this thesis will be electronically accessible for personal or research use and that the library has the right to migrate this thesis into new electronic forms as required to ensure continued access to the thesis. \medbreak
\noindent I, Amlan Faiyaz Banaji, confirm that my thesis does not contain any third-party material that requires copyright clearance.\medbreak
\noindent The following is an agreed request by candidate and supervisor regarding the publication of this thesis:\smallbreak
\noindent \textbf{Printed copy}
No embargo on print copy.\\
\textbf{Electronic copy}
No embargo on electronic copy. \medbreak
\noindent {\bf Date:} 23/02/2023 \hspace{0.5cm} {\bf Signature of Candidate:}  
 \smallbreak
\noindent {\bf Date:} 23/02/2023 \hspace{0.5cm} {\bf Signature of Supervisor 1:} 
 \smallbreak
\noindent {\bf Date:} 23/02/2023 \hspace{0.5cm} {\bf Signature of Supervisor 2:} 

\clearpage 
\noindent
{\large \textbf{Underpinning research data or digital outputs}} \medbreak
\noindent \noindent {\large \textbf{Candidate's declaration}} \smallbreak
\noindent I, Amlan Faiyaz Banaji, hereby certify that no requirements to deposit original research data or digital outputs apply to this thesis and that, where appropriate, secondary data used have been referenced in the full text of my thesis. \medbreak
\noindent {\bf Date:} 23/02/2023 \hspace{0.5cm} {\bf Signature of Candidate:}  

\newgeometry{top=1.4cm,bottom=2.2cm,left=3cm,right=3cm}
\tableofcontents
\restoregeometry

\chapter{Introduction}\label{s:intro}

\pagenumbering{arabic}

\section{Fractal geometry}\label{s:fractalsintro}

\epigraph{Beautiful, damn hard, increasingly useful. That's fractals.}{Benoît Mandelbrot}

Fractals are geometric objects which display intricate structure at arbitrarily small scales. 
There is no precise definition of a fractal, but many exhibit some form of self-similarity, in the sense that the fractal is made up of several copies of itself which are scaled down and possibly distorted. For instance, the Sierpi\'nski carpet in Figure~\ref{f:intro} is comprised of eight scaled copies of itself. 
Fractal features are ubiquitous in nature; in one striking example, the buds of a Romanesco broccoli resemble scaled copies of the entire flower bud. 
\begin{figure}[ht]
\centering
\subfigure{\includegraphics[width=.3\textwidth]{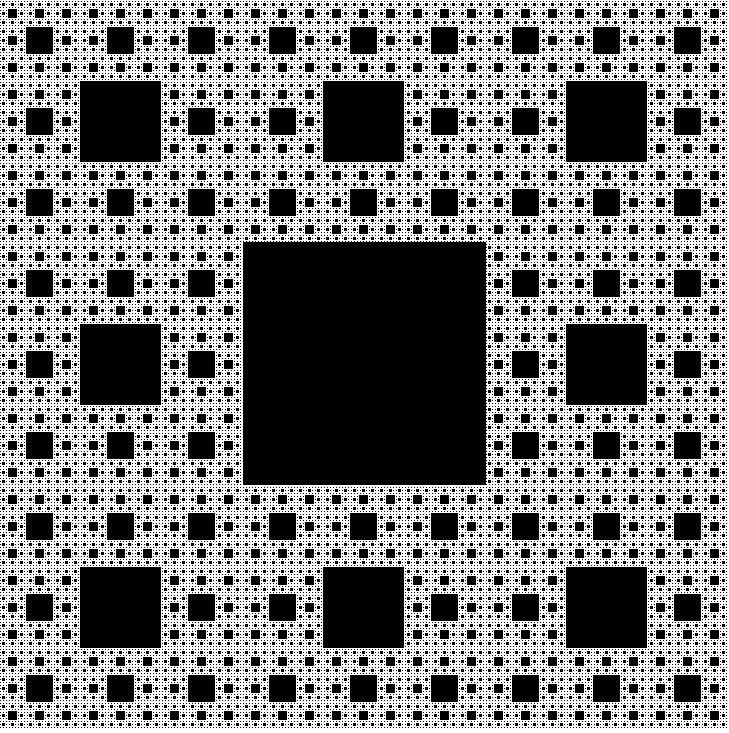}}
\qquad \qquad 
\subfigure{\includegraphics[width=.4\textwidth]{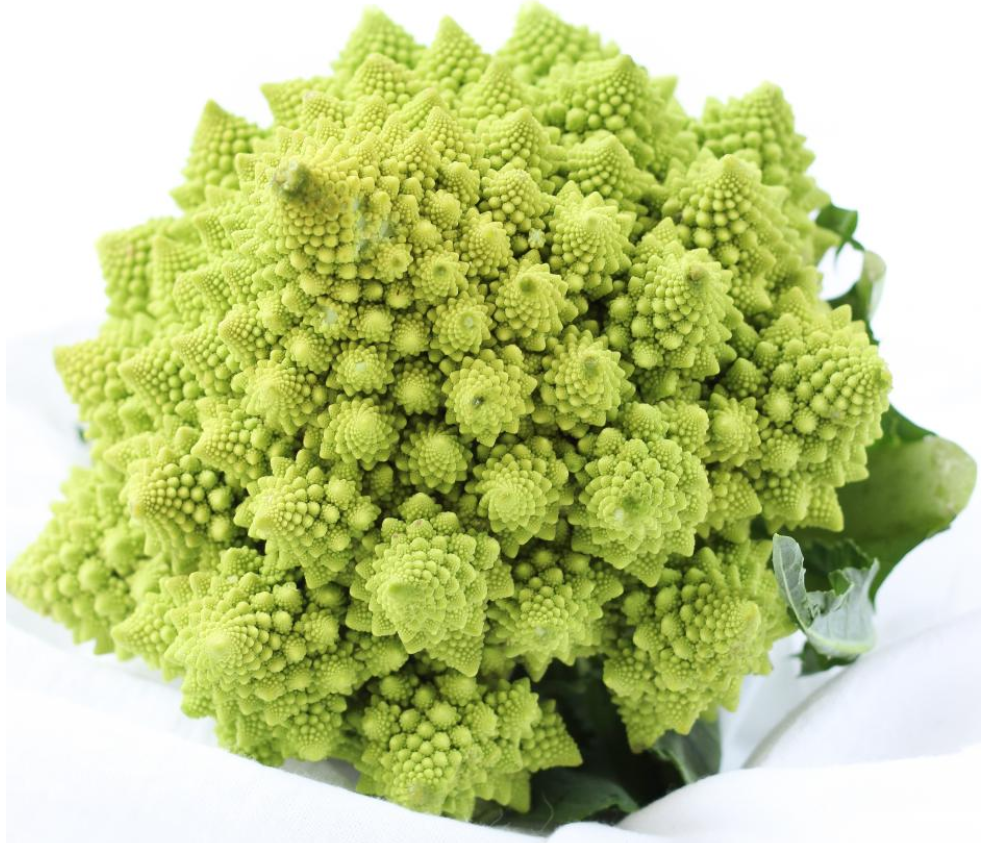}}
\caption{Left: the Sierpi\'nski carpet is a self-similar fractal. Right: Romanesco broccoli exhibits fractal-like features. Both pictures are \href{https://creativecommons.org/publicdomain/zero/1.0/}{CC0}, from references~\cite{sierpinski2023url},~\cite{romanesco2023url} respectively.}\label{f:intro}
\end{figure}
Fractal patterns have been used for centuries by many different cultures in creative work such as art and architecture. 
For example, clusters of houses in Benin city and its surrounding villages (in present-day Nigeria) are laid out in fractal patterns~\cite{Eglash1999african}. 
However, it was not until the 1970s that Mandelbrot~\cite{Mandelbrot1977first,
Mandelbrot1982nature} coined the term `fractal' (from the Latin word `fractus,' meaning `fractured' or `broken'), and widely popularised the concept. 
Today, fractal geometry is a flourishing branch of mathematics which puts fractals into a rigorous framework~\cite{Barnsley2012everywhere,
Barnsley2006superfractals}, and has particular relevance to chaotic dynamical systems~\cite{Peitgen2004chaos}.

An especially important notion in fractal geometry is that of dimension. 
The question of how best to define dimension for fractal sets is a challenging problem because, unlike for smooth manifolds, there is no obvious way to associate `tangent' vector spaces to points on fractals. 
Therefore, it is natural to define notions of dimension that make sense even for fractal sets using covers the set (or a part of it). 
Most notions of dimension satisfy standard properties such as $\dim M = d$ for a smooth $d$-manifold, and $\dim E \leq \dim F$ whenever $E \subseteq F$. %
For fractal sets, notions of dimension very often take non-integer values. For example, if $S$ is the Sierpi\'nski carpet from Figure~\ref{f:intro} and $\dim$ is Hausdorff or box dimension (described below), then $\dim S = \log 8 / \log 3 \approx 1.89$. This makes intuitive sense, since $S$ appears to fill up more space than a one-dimensional curve, but less than a two-dimensional filled square. 
For general background on fractal geometry and dimension theory, we refer the reader to Falconer's seminal books~\cite{Falconer2014main,Falconer1997techniques}.

The Hausdorff dimension is perhaps the most widely used notion of fractal dimension in mathematics; we give the precise definition in Section~\ref{s:notation}. 
One first defines $s$-dimensional Hausdorff measure $\mathcal{H}^s$ on $\Rd$ for each $s \geq 0$. If a set $F$ happens to satisfy $0 < \mathcal{H}^s(F) <\infty$ for some $s$ (for example if $F$ is a disc and $s = 2$), then for all $t \neq s$, the value of $\mathcal{H}^t(F)$ can be shown to be 0 or $\infty$. This suggests that $s$ is in some sense the correct value at which to measure $F$, and the Hausdorff dimension of $F$ is $s$. Hausdorff dimension can also be defined for sets whose Hausdorff measure is never positive and finite.

Box dimension is another familiar notion of fractal dimension. It has some properties that may be considered mathematically undesirable; for example countable compact sets can have positive box dimension (but always have 0 Hausdorff dimension). However, in many situations, box dimension is easier to calculate or estimate numerically than Hausdorff dimension. Box dimension of real-world fractals such as coastlines can be estimated over a range of scales. 
If the box dimension of a set $F$ exists and equals $s$ then this says that the number of balls of size $\delta$ needed to cover $F$ scales approximately like $\delta^{-s}$ as $\delta \to 0^+$; if box dimension does not exist then one can define upper and lower box dimension by considering the scales at which the set looks largest or smallest respectively. 
There are many longstanding open problems about Hausdorff and box dimension, such as the Kakeya conjecture~\cite{Katz2002kakeya}. 
The Hausdorff dimension of any set cannot exceed its lower (or upper) box dimension. 
For many nice sets, box dimension exists and coincides with Hausdorff dimension, but this is not always the case. 
If the dimensions do indeed coincide, this indicates that the set has a large amount of spatial regularity. For example, the dimension version of Falconer's distance set conjecture is known for this class of sets~\cite{ShmerkinPreprintdistancehigherdim}, but is wide open in general.

The main topic of this thesis are the intermediate dimensions, which are an example of dimension interpolation. 
This area has gathered significant interest since around~2018; for a survey of this topic we refer the reader to~\cite{Fraser2021interpolating}. The idea is to consider two different notions of dimension and find a geometrically meaningful family of dimensions which lie between them. This family should share some characteristics of both dimensions, but provide more information about sets than either does in isolation. The hope is that, as well as being interesting in its own right and leading to a rich theory, dimension interpolation can help illuminate why for some sets the two endpoint dimensions give different values. 
A different example of dimension interpolation is the Assouad spectrum, which lies between the upper box and Assouad dimensions, giving information about the `thickest' part of the set. 
We will encounter many applications of dimension interpolation in this thesis, for example to distinguish when sets are not bi-Lipschitz equivalent or to bound the H\"older distortion between two sets. 
As discussed below, the intermediate dimensions can give information about dimensions of images of sets under projections or stochastic processes, and the Assouad spectrum has been used in functional analysis and conformal geometry.

\section{Structure of thesis}\label{s:structure}

The introduction (Chapter~\ref{s:intro}) describes background material, mostly from~\cite{Falconer2020firstintermediate,Falconer2014main}; we provide references where appropriate. 

Chapter~\ref{s:generalised} introduces a family of dimensions, which we call the $\Phi$-intermediate dimensions, and is based on our paper~\cite{Banaji2023gen} which has been published in \emph{Monatshefte f\"ur Mathematik}. 
These dimensions also lie between Hausdorff and box dimension, and put the intermediate dimensions into a more general framework by restricting the sizes of allowable covers in ways that allow for greater refinement than in the definition of the intermediate dimensions. 
We show that for any compact subset of an appropriate space, these dimensions can be used to `recover the interpolation' between the Hausdorff and box dimensions of sets for which the intermediate dimensions are discontinuous at $\theta=0$, thus providing more refined geometric information about such sets. 
We also study many analytic and geometric properties of the $\Phi$-intermediate dimensions, and investigate their relationships with several other notions of dimension. Moreover, we prove Hölder distortion estimates which imply bi-Lipschitz stability for the $\Phi$-intermediate dimensions. We prove a mass distribution principle and Frostman type lemma, and use these to study dimensions of product sets, and to show that the lower versions of the dimensions, unlike the upper versions, are not finitely stable. 
Furthermore, we extend the theory from Euclidean space to a wider class of metric spaces, namely those that are uniformly perfect and doubling with more than one point.

Chapter~\ref{s:attainable} describes the general behaviour of the intermediate dimensions, and is based on the paper~\cite{Banaji2022moran} (joint with A.~Rutar, published in \emph{Annales Fennici Mathematici}). 
In Section~\ref{s:genbounds} (which also includes a little material from~\cite{Banaji2023gen}), we provide general bounds for the intermediate dimensions, some of which can be proved using the continuity bounds for the $\Phi$-intermediate dimensions in Chapter~\ref{s:generalised}. 
These results improve existing bounds in the literature, and in Section~\ref{s:lattice}, based on joint work with J.~M.~Fraser from~\cite{Banaji2021infinite}, we use them to calculate the intermediate dimensions of the inversion of the lattice $\{ 1^p,2^p,3^p,\dotsc \}^d$ in the unit $d$-sphere in $\Rd$. 
In Section~\ref{s:popcorn}, based on joint work with H. Chen from the paper~\cite{Banaji2022popcorn} published in the \emph{Journal of Fractal Geometry}, we use the bounds to calculate the intermediate dimensions of the graph of the well-known `popcorn function' and its pyramid-like higher-dimensional analogues. 
One bound that we prove is that any function that can be realised as the intermediate dimensions of a subset of Euclidean space must satisfy a straightforward constraint in terms of its Dini derivatives. In fact a converse result also holds: \emph{any} function that satisfies this constraint (together with some mild continuity and monotonicity assumptions) can be realised as the intermediate dimension function of some set, showing that a wide variety of behaviour is possible. We prove this using a Moran set construction that is homogeneous at each fixed scale, but has inhomogeneity between different scales. 
We also show that the lower and upper intermediate dimensions can be prescribed simultaneously, using a set which behaves like a union of homogeneous Moran sets at each fixed scale. 

Chapter~\ref{s:infinite} relates to limit sets of iterated function systems consisting of a countably \emph{infinite} number of contractions, and is based on a joint paper~\cite{Banaji2021infinite} with J.~M.~Fraser which has been published in \emph{Transactions of the American Mathematical Society}. 
Our main results are in the case when all the contractions are conformal. Under a natural separation condition we prove that the intermediate dimensions of the limit set are given by the maximum of the Hausdorff dimension of the limit set and the intermediate dimensions of the set of fixed points of the contractions. This builds on work of Mauldin and Urbański concerning the Hausdorff and upper box dimension. 
Our results apply to well-studied examples such as sets of numbers which have real or complex continued fraction expansions with restricted entries. 
We prove general upper bounds for the Hausdorff, box and intermediate dimensions of infinitely generated attractors in terms of a topological pressure function, without assuming conformality or separation conditions. We also make a few remarks from the preprint~\cite{Banaji2022assouad} (also joint with J.~M.~Fraser), in particular that our results can be applied to infinite parabolic IFSs. 
Moreover, we show that the limit set of a `generic' infinite IFS has box and intermediate dimensions equal to the ambient spatial dimension, where `generic' can refer to either full measure or comeagre. 

Chapter~\ref{s:bm} relates to self-affine Bedford--McMullen carpets, and is based on the joint paper~\cite{Banaji2021bedford} with I.~Kolossv\'ary which has been published in \emph{Advances in Mathematics}. 
In Theorem~\ref{thm:main}, which we consider to be one of the best results in this thesis, we calculate a precise formula for the intermediate dimensions of any Bedford--McMullen carpet for the whole range of $\theta \in (0,1)$, in terms of a certain rate function from large deviations theory. 
The intermediate dimensions exist and are strictly increasing in~$\theta$, and the function $\theta\mapsto \dim_{\theta}\Lambda$ exhibits interesting features not witnessed on any previous example, such as having countably many phase transitions, between which it is analytic and strictly concave. 
We make an unexpected connection to multifractal analysis by showing that two carpets have equal intermediate dimensions if and only if the Hausdorff multifractal spectra of the uniform Bernoulli measures supported on the two carpets are equal. Since intermediate dimensions are bi-Lipschitz invariant, this shows that the equality of these multifractal spectra is a necessary condition for two such carpets to be bi-Lipschitz equivalent.

\section{Notation and preliminaries}\label{s:notation}
 Throughout this thesis, we denote the natural logarithm by $\log$, and the cardinality of a set $S$ by $\# S$. 
We write $a \lesssim b$ to mean $a \leq c b$ for some constant $c$ which may depend on parameters in the subscript of $\lesssim$ but is independent of other parameters unless stated otherwise. For each $x > 0$, we write $ \lfloor x \rfloor \coloneqq \max \{ \, n \in \mathbb{N}^+ : n \leq x \, \}$. 
 All the sets $F$ we consider will be assumed to be non-empty and totally bounded subsets of an underlying metric space. We usually take the underlying space to be $\Rd$ with the Euclidean metric, but the theory in Chapter~\ref{s:generalised} works in more general spaces. 
We denote the (Euclidean) diameter of a subset of $\Rd$ by $|\cdot|$, and $d$-dimensional Lebesgue measure on $\Rd$ by $\mathcal{L}_d$. The symbol $\N$ will denote $\{1,2,3,\dotsc\}$, and $||\cdot||$ will denote either the Euclidean norm on $\Rd$ or the supremum norm of a continuous function, depending on context. 

We write 
\begin{equation}\label{e:defineball} 
B(x,\delta) \coloneqq \{ \, y \in \Rd : ||y-x|| < \delta \, \} 
\end{equation}
for the open ball of radius $\delta>0$ centred at $x \in \Rd$, and $B^F(x,r) \coloneqq B(x,r) \cap F$. 
We denote by $N_\delta(F)$ the smallest integer such that there exist $x_1,\dotsc,x_{N_\delta(F)} \in F$ such that 
 \begin{equation}\label{e:ndeltadefn}
 F \subseteq \bigcup_{i = 1}^{N_\delta(F)} B(x_i,\delta/2).
 \end{equation}
  For subsets of $\Rd$, by calculations similar to \cite[Equivalent definitions~2.1]{Falconer2014main}, there are several other definitions for $N_\delta(F)$ which would work equally well when calculating dimensions. %
For $U \subseteq \Rd$ and $\delta >0$ let 
\[ \mathcal{S}_\delta (U) \coloneqq \{ \, x \in \Rd : \mbox{ there exists } y \in U \mbox{ such that } ||x-y|| \leq \delta \, \}\] be the closed $\delta$-neighbourhood of $U$. 
For $d \in \N$ and $r \geq 1$, we denote by $A_{d,r} \in \N$ the smallest integer such that for all $U \subset \Rd$ there exist $U_1,\dotsc,U_{A_{d,r}} \subseteq \Rd$, each of diameter $|U|/r$, which \emph{cover} $U$, meaning that 
 \begin{equation}\label{doublingconst}
 U \subseteq \bigcup_{k=1}^{A_{d,r}} U_k.
 \end{equation}
 Given $x\in\Rd$, we denote the $j$\textsuperscript{th} coordinate of $x$ by $x^{(j)}$. 
We write $\overline{F}$ to denote the topological closure of $F$.

Hausdorff measure and dimension were first introduced in the early 20th century in~\cite{Caratheodory1914hausdorff,Hausdorff1919main}. 
Given $s \geq 0$ and a finite or countable set $\mathcal{U} = \{U_1,U_2,\dotsc \}$ of non-empty subsets of $\Rd$, we call the quantity $\sum_i |U_i|^s$ the \emph{$s$-cost} of $\mathcal{U}$. 
As explained in \cite[Chapter~3]{Falconer2014main}, the Hausdorff content of a subset $F$ of $\Rd$ can be defined, for $s \geq 0$ and $\delta > 0$, by 
  \[\mathcal{H}_\delta^s(F) = \inf{\left\{ \sum_{i=1}^\infty |U_i|^s \middle| F \subseteq \bigcup_{i=1}^{\infty} U_i, \mbox{diam}(U_i) \leq \delta \right\} }. \]
  As $\delta$ decreases, the class of covers is reduced so the infimum increases, and therefore converges to a limit 
  \[\mathcal{H}_\delta^s(F) \to \mathcal{H}^s(F) \in [0,\infty] \mbox{ as } \delta \to 0^+,\]
  called the \emph{$s$-dimensional Hausdorff measure} of $F$. 
   It can be shown that this is an outer measure on $\Rd$, and so its restriction to the $\mathcal{H}^s$-measurable sets is a measure, as is its further restriction to the Borel sets. 
 It is straightforward to see that for each $F$ there is a unique $s \geq 0$, called the \emph{Hausdorff dimension} of $F$, such that if $0 \leq t < s$ then $\mathcal{H}^t(F) = \infty$ and if $t > s$ then $\mathcal{H}^t(F) = 0$, as illustrated in Figure~\ref{f:HausdorffMeas}. 
 The $s$-dimensional Hausdorff measure of $F$ may be any value in $[0,\infty]$; if it is positive and finite then $F$ is called an $s$\emph{-set}. 
 Sets with Hausdorff dimension less than 1 are necessarily totally disconnected, see \cite[Proposition~3.5]{Falconer2014main}. 
 Hausdorff measures and dimension have been studied in detail in~\cite{Mattila1995book,Federer1996hausdorff,
 Rogers1998hausdorff,Falconer1985otherbook}. 
  \begin{figure}[ht]
\center{\includegraphics[width=.5\textwidth]
        {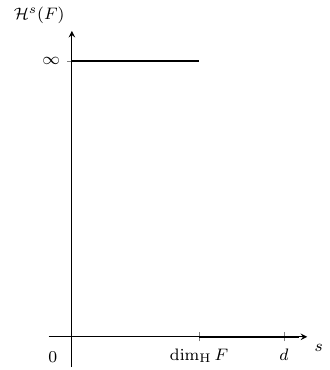}}
        \caption{\label{f:HausdorffMeas}
        Graph of the $s$-dimensional Hausdorff measure of a subset of $\Rd$ against $s$.}
\end{figure}

The upper and lower box dimensions, also called box-counting, Minkowski--Bouligand or Minkowski dimensions, originated in~\cite{Bouligand1928box,Pontrjagin1932box} and are respectively defined by 
\begin{equation}\label{e:boxdimdef}
 \ubd F \coloneqq \limsup_{\delta \to 0^+} \frac{\log N_\delta(F)}{-\log \delta}; \qquad \lbd F \coloneqq \liminf_{\delta \to 0^+} \frac{\log N_\delta(F)}{-\log \delta}.
 \end{equation}
 If the two coincide, it is called simply the \emph{box dimension}, denoted $\dim_{\mathrm B} F$. 
 Box dimension can also be described in terms of the Lebesgue measure of the $\delta$-neighbourhood of the set: 
 \[ \ubd F = d - \liminf_{\delta \to 0^+} \frac{\log \mathcal{L}_d(\mathcal{S}_\delta (F))}{\log \delta}    ; \qquad \lbd = d - \limsup_{\delta \to 0^+} \frac{\log \mathcal{L}_d(\mathcal{S}_\delta (F))}{\log \delta}    \]
 for $F \subset \Rd$, see \cite[Proposition~2.4]{Falconer2014main}. 
The packing dimension, introduced in~\cite{Tricot1982packing} and studied in~\cite{Mattila1995book}, can be defined using packing measure, or equivalently as the modified upper box dimension: 
\begin{equation}\label{e:definepackingdim}
 \dim_\mathrm{P} F \coloneqq \inf\left\{\, \sup_{i \in \N} \ubd F_i : F \subseteq \bigcup_{i=1}^\infty F_i, \mbox{ each } F_i \mbox{ non-empty and bounded}\, \right\}. 
 \end{equation}

 Since the important work of Assouad~\cite{Assouad1977thesis,Assouad1979dimension,
 Assouad1983dimension} and Larman~\cite{Larman1967lower}, other notions of dimension which describe the local scaling behaviour of sets have been studied. The \emph{Assouad} and \emph{lower} dimensions, studied in detail in~\cite{Fraser2020book}, give information about the `thickest' and `thinnest' part of a set respectively. 
The Assouad dimension of a subset $F$ of a metric space with more than one point is defined by  
\begin{align}\label{e:assouaddef}
\begin{split}
      \dim_\mathrm{A} F = \inf\{ \, \alpha : \mbox{there exists } &C>0\mbox{ such that } N_r(B(x,R)\cap F) \leq C(R/r)^{\alpha}  \\*
      &\mbox{ for all } x \in F \mbox{ and } 0<r<R \, \}. 
      \end{split}
    \end{align}
    Dually, the \emph{lower dimension} of $F$ is defined by
\begin{align*}
      \dim_\mathrm{L} F = \sup\{ \,
      \lambda : \mbox{there exists } &C>0\mbox{ such that } N_r(B(x,R)\cap F) \geq C(R/r)^\lambda \\*
      &\mbox{ for all } x \in F \mbox{ and } 0<r<R\leq |F| \, \}. 
    \end{align*}
      For $\theta \in (0,1)$, the \emph{Assouad spectrum} of $F$ at $\theta$ is defined by fixing the scales $R = r^\theta$ in the definition of Assouad dimension: 
   \begin{multline*}
      \asp F = \inf\left\{
      s : \mbox{ there exists }C>0\mbox{ such that for all } x \in F \mbox{ and } 
    \right. \\*
     \qquad \left. 0< R \leq 1, \mbox{ we have } N_{R^{1/\theta}}(B(x,R)\cap F) \leq C R^{s(1 - 1/\theta)}\right\}. 
    \end{multline*}%
    Clearly $\ubd F \leq \asp F \leq \asd F$ for all $\theta \in (0,1)$. 
    The \emph{lower spectrum} is defined by 
    \begin{multline*}
      \dim_{\mathrm L}^{\theta} F = \sup\left\{
      s : \mbox{ there exists }C>0\mbox{ such that for all } x \in F \mbox{ and } 
    \right. \\*
     \qquad \left. 0< R \leq 1, \mbox{ we have } N_{R^{1/\theta}}(B(x,R)\cap F) \geq C R^{s(1 - 1/\theta)}\right\}. 
    \end{multline*}
    The Assouad spectrum is not always monotonic in $\theta$ (see~\cite{Fraser2018firstassspec}). The \emph{upper Assouad spectrum} at $\theta$, however, \emph{is} monotonic, and is defined by 
      \begin{multline*}
      \uasp F = \inf\left\{
      s : \mbox{ there exists }C>0\mbox{ such that for all } x \in F \mbox{ and } 
    \right. \\*
     \qquad \left. 0<r\leq R^{1/\theta} \leq R \leq 1, \mbox{ we have } N_r(B(x,R)\cap F) \leq C(R/r)^s\right\}. 
    \end{multline*}
    
    The Assouad spectrum was introduced in~\cite{Fraser2018firstassspec} and has been calculated for various families of fractals in~\cite{Banaji2022assouad,
    Fraser2018secondassouad,
    FraserPreprintsullivan,Burrell2022spirals} and other works. 
    Rutar~\cite{Rutar2022assouad} has given a complete description of the attainable forms of Assouad spectra of sets, showing that a wide variety of behaviour is possible in general. 
    The Assouad spectrum can be used to give information about dimensions of orthogonal projections of sets~\cite{Falconer2021projections}, and has applications related to spherical maximal functions~\cite{Anderson2021assouadspec,
    Roos2023assouadspectrum} and conformal geometry~\cite{GaritsisPreprintconformal}. 
The \emph{quasi-Assouad dimension}, introduced in~\cite{Lu2016qa}, can be defined by  
\begin{equation}
 \qad F \coloneqq \lim_{\theta \to 1^-} \asp F,
 \end{equation}
or equivalently $\qad F \coloneqq \lim_{\theta \to 1^-} \uasp F$ (see~\cite[Corollary~3.3.7]{Fraser2020book}). 
 We always have 
 \[ \dim_{\mathrm H} F \leq \ubd F \leq \asp F \leq \uasp F \leq \qad F \leq \asd F,\] 
  and all inequalities can be strict. 
  We sometimes write $\dim_{\mathrm A}^1$ or $\overline{\dim}_{\mathrm A}^1$ to mean the quasi-Assouad dimension, and since $\uasp F \to \ubd F$ as $\theta \to 0^+$, we sometimes write $\dim_{\mathrm A}^{0} F$ or $\overline{\dim}_{\mathrm A}^0 F$ to mean the upper box dimension of $F$. 
  The Assouad spectrum and upper Assouad spectrum are continuous in $\theta \in (0,1)$, see~\cite{Fraser2018firstassspec,Fraser2019twospectra}. In~\cite{Fraser2019twospectra}, Fraser et al. show that we always have $\overline{\dim}_\mathrm{A}^\theta F = \sup_{\theta' \in (0,\theta]} \dim_\mathrm{A}^{\theta'} F$.

    There are also various different notions of fractal dimension of a measure. The \emph{Assouad dimension} of a Borel probability measure $\mu$ is 
    \begin{align*}
 \dim_\mathrm{A} \mu \coloneqq \inf \left\{ \, s \geq 0 : \right. &\left.\mbox{there exists } A > 0 \mbox{ such that if } 0 < r < R \leq |\mbox{supp}(\mu)| \right. \\*
&\left. \mbox{  and } x \in \mbox{supp}(\mu) \mbox{ then } \frac{\mu(B(x,R))}{\mu(B(x,r))} \leq A \left(\frac{R}{r}\right)^s \right\}. 
\end{align*}
    To obtain bounds involving the lower dimension in Chapters~\ref{s:generalised} and~\ref{s:attainable}, we will use the dual notion of \emph{lower dimension} of a measure: 
\begin{align}\label{e:lowerdimmeas}
\begin{split}
 \dim_\mathrm{L} \mu \coloneqq \sup \left\{ \, \lambda \geq 0 : \right. &\left.\mbox{there exists } A > 0 \mbox{ such that if } 0 < r < R \leq |\mbox{supp}(\mu)| \right. \\*
&\left. \mbox{  and } x \in \mbox{supp}(\mu) \mbox{ then } \frac{\mu(B(x,R))}{\mu(B(x,r))} \geq A \left(\frac{R}{r}\right)^\lambda \right\}. 
\end{split}
\end{align}
A measure $\mu$ is said to be \emph{doubling} if there exists $M \geq 1$, called the \emph{doubling constant}, such that $\mu(B(x,2r)) \leq M \mu(B(x,r))$ for all $x \in \mbox{supp}(\mu)$ and $r > 0$. 
For further details we refer the reader to~\cite[Section~4.1]{Fraser2020book}.

  Given a closed set $D \subset \Rd$, a \emph{contraction} on $D$ is a map $S \colon D \to D$ for which there exists $r<1$ with $||S(x) - S(y)|| \leq r||x-y||$ for all $x,y \in D$. 
 An \emph{iterated function system (IFS)} on $D$ is a finite set $\Phi \coloneqq \{F_1,\dotsc,F_m\}$ of contractions on $D$, with $m \geq 2$. 
 Hutchinson~\cite{Hutchinson1981attractor} showed that given such an IFS, there is a unique non-empty compact set $K \subseteq D$ called the \emph{attractor} or \emph{limit set} of the IFS, such that 
 \[K = \bigcup_{i=1}^m F_i(K).\] 
 This can be proved either directly or using Banach's contraction mapping theorem. 
  The attractor is very often fractal in nature; examples include the Sierpi\'nski carpet in Figure~\ref{f:intro} (page~\pageref{f:intro}) and the fractal in~\ref{f:carpetIFS} (page~\pageref{f:carpetIFS}). 
  Define $F(E) \coloneqq \cup_{i=1}^m F_i(E)$ for non-empty, compact sets $E$, and let $F^0(E) = E$ and $F^k(E) = F(F^{k-1}(E))$ for each $k \in \N$. 
  Then if $E$ is any non-empty compact subset of $D$ such that $F_i(E) \subseteq E$ for all $i$, then \[K = \bigcap_{k=0}^\infty F^k(E). \]
  Furthermore, $K$ is the closure of the set of fixed points of finite compositions $F_{i_1} \circ \dotsb \circ F_{i_p}$ of the $F_i$. 
  
  Given positive numbers $p_1,\dotsc,p_m$ summing to 1, one can define a measure by repeated subdivision according to the probabilities $p_i$. The resulting probability measure $\mu$ will have $\supp(\mu) = K$ and satisfy 
  \[ 
  \mu(A) = \sum_{i=1}^m p_i \mu(F_i^{-1}(A))
  \]
  for all Borel sets $A$. 
If all of the contractions are similarity maps, which means that there exists $r_i$ depending only on the map for which 
$||F_i(x)-F_i(y)|| = r_i||x-y||$ for all $x,y \in D$, then the attractor is said to be a \emph{self-similar set} and $\mu$ a \emph{self-similar measure}. 
In~\cite{Varju2021selfsimsurvey}, Varj\'u has surveyed results relating to the dimension theory of self-similar sets and measures in $\mathbb{R}$.

To obtain dimension results, one often assumes the \emph{open set condition (OSC)}, which means that there is a non-empty, bounded, open set $V \subset \Rd$ such that 
\[ V \supseteq \bigcup_{i=1}^m F_i(V)\]
 with the union disjoint. Intuitively, this says that the components $F_i(K)$ do not overlap too much. 
If this is satisfied, then the Hausdorff, box, Assouad and lower dimensions of self-similar sets are all equal to the \emph{similarity dimension} $\dim_{\mathrm{sim}} \Phi$, which is the unique non-negative number satisfying \[\sum_{i=1}^m r_i^{\dim_{\mathrm{sim}} \Phi}=1, \] 
see~\cite{Hutchinson1981attractor,Falconer2014main,Fraser2020book}. 
This is known as the \emph{Hutchinson--Moran formula.} 
  Moreover, the attractor has positive and finite Hausdorff measure in its Hausdorff dimension, and this measure is \emph{Ahlfors regular}: there exists $C \geq 1$ such that $C^{-1} R^{\dim_{\mathrm{sim}} \Phi} \leq \mu(B_R) \leq C R^{\dim_{\mathrm{sim}} \Phi}$ for all closed balls $B_R$ of radius $0 < R < \mbox{diam}(\mbox{supp}(\mu))$. 
  Even if the OSC is not satisfied, the Hausdorff and box dimension of all self-similar sets still coincide. The famous \emph{exact overlaps conjecture} asks whether the only way the Hausdorff dimension of a self-similar set in the real line can differ from the general upper bound $\min\{\dim_{\mathrm{sim}} \Phi ,1\}$ is if there are exact overlaps, in other words if different finite compositions of the defining contractions can result in the same function. 
  Hochman~\cite{Hochman2014selfsim} has made important progress in this direction, showing that any potential counter-example to the conjecture would have to be given by an IFS with very rapidly accumulating cylinders (but without exact overlaps). The first examples of IFSs with this `super-exponential concentration' property were given in~\cite{Baker2021superexp,
  Barany2021superexp}, and there has been further work on the dimension theory of such IFSs~\cite{Rapaport2022exactalgebraic,
  Rapaport2024homogthreemaps}. 
  There are also longstanding open problems about dimensions and absolute continuity of overlapping self-similar measures such as Bernoulli convolutions~\cite{Breuillard2019bernoulli,
  Varju2019transcendental,Varju2019abscts,
  Varju2016bernoullisurvey,Erdos1939bernoulli,
  Erdos1940bernoulli}.

 We often require the metric spaces we work with to satisfy certain properties, especially in Chapter~\ref{s:generalised}. 
\begin{defn}\label{d:unifperf}%
For $c \in (0,1)$ we say a metric space $X$ is \emph{$c$-uniformly perfect} if for all $x \in X$ and $R \in \mathbb{R}$ such that $0 < R < |X|$ we have %
\[ B(x,R) \setminus B(x,cR) \neq \varnothing. \]
The space $X$ is \emph{uniformly perfect} if there exists $c \in (0,1)$ such that $X$ is $c$-uniformly perfect. 
\end{defn}
Intuitively, a metric space is uniformly perfect if it does not have islands which are very separated from the rest of the space. 
\begin{defn}
A metric space is said to be \emph{doubling} if there exists a constant $M \in \mathbb{N}$ (called the \emph{doubling constant}) such that for every $x \in X$ and $r>0$, there exist $x_1,\dotsc,x_M \in X$ such that $B(x,2r) \subseteq \bigcup_{i=1}^M B(x_i,r)$. 
\end{defn} 
    In~\cite[Section~13.1.1]{Fraser2020book} it is shown that a metric space $X$ with more than one point is uniformly perfect if and only if $0 < \dim_\mathrm{L} X$. Such a space cannot have any isolated points, so must be infinite. 
    It is also shown that a space $X$ is doubling if and only if $\dim_\mathrm{A} X < \infty$. %
    In this case we will see in Proposition~\ref{basicbounds} that all dimensions of all subsets $F$ will be finite, as we will need to assume for many of the results in this chapter. 
    A metric space is said to be \emph{Ahlfors regular} if there exists $s>0$, $C \geq 1$ and a Borel regular measure~$\mu$ supported on~$X$ such that $C^{-1} R^s \leq \mu(B_R) \leq C R^s$ for all closed balls $B_R$ of radius $0 < R < \mbox{diam}(X)$. 
    By \cite[Corollary~14.15]{Heinonen2001metric}, every Ahlfors regular space with more than one point is uniformly perfect and doubling. 
    An example of such a space which is not bi-Lipschitz equivalent to any subset of $\Rd$ is the Heisenberg group with its usual Carnot-Carath{\'e}odory metric, see~\cite{LeDonne2015,Pansu1989,Semmes1996}. 
    In Chapters~\ref{s:generalised} and~\ref{s:attainable}, we will use the fact that if $F$ is a complete, uniformly perfect, totally bounded, doubling metric space with more than one point, then 
    \begin{align}\label{e:lowerdimexistmeas}
    \begin{split}
     \dimA F &= \inf \{ \, \dimA \mu : \mu \in \mathcal{P}_F \, \}, \\*
     \dimL F &= \sup \{ \, \dimL \mu : \mu \in \mathcal{P}_F \, \},
     \end{split}
     \end{align}
     where $\mathcal{P}_F$ is the set of doubling Borel regular finite outer measures $\mu$ with $\supp \mu = F$. 
    For more on these results, we refer the reader to \cite{Bylund2000lower,Konyagin1988assouad,
    Luukkainen1998assouad,Kaenmaki2013assouad}, \cite[Theorem~3.2]{Kaenmaki2017}, and \cite[Section~4.1]{Fraser2020book}. 

\section{Intermediate dimensions}\label{s:intdimsintro}

\subsection{Definitions and general theory}

Since the Hausdorff dimension of a set $F$ is the infimum of values $s$ for which $\mathcal{H}^s(F) = 0$, an equivalent definition of Hausdorff dimension is 
\begin{equation}\label{hausdorffdef}
\begin{aligned} \dim_\mathrm{H} F = \inf \{ \, s \geq 0 : &\mbox{ for all } \epsilon >0 \mbox{ there exists a finite or countable cover } \\*
& \phantom{-}\{U_1,U_2,\dotsc\} \mbox{ of } F \mbox{ such that } \sum_i |U_i|^s \leq \epsilon \,\}. 
\end{aligned}
\end{equation}%
It is clear from~\eqref{e:boxdimdef} that lower box dimension can equivalently be defined as 
\begin{equation}
\begin{aligned}
\underline{\mbox{dim}}_{\mathrm B} F = \inf \{ \, s \geq 0 : &\mbox{ for all } \epsilon >0 \mbox{ there exists a finite cover } \{U_1,U_2,\dotsc \} \\*
&\mbox{ of } F \mbox{ such that } |U_i| = |U_j| \mbox{ for all } i,j, \mbox{ and } \sum_i |U_i|^s \leq \epsilon \, \}.
\end{aligned}
\end{equation}

These definitions look rather similar, and we see that in the definition of Hausdorff dimension there is no restriction on the size of the covering sets, whereas for the box dimension all the sets in the cover have the same size, as illustrated in~\cite[Figures~1.2 and~1.3]{Fraser2020book}. 
Note also that it is immediate from the definitions that $\dim_{\mathrm H} F \leq \underline{\dim}_{\mathrm B} F \leq \overline{\dim}_{\mathrm B} F$ always holds. 
This motivates the definition of the intermediate dimensions, which lie between the Hausdorff and box dimensions and require the sizes of the covering sets to be restricted in a way that depends on a parameter $\theta$. These dimensions form the basis around which this thesis is built. 
    Given $\theta\in[0,1]$, we say that a family of sets $\{U_i\}_{i}$ is a $(\delta,\theta)$-cover of $F$ if 
    \begin{equation}
     F\subseteq\bigcup_{i} U_i \quad \mbox{and} \quad \forall i: \, \delta^{1/\theta}\leq |U_i|\leq\delta 
     \end{equation}
     where for convenience we take $\delta^{1/0}=0$. 
\begin{defn}[Falconer--Fraser--Kempton~\cite{Falconer2020firstintermediate}]\label{d:intdimdef}
For $\theta \in [0,1]$, the \emph{upper $\theta$-intermediate dimension} of a non-empty, bounded subset $F \subset \Rd$ is given by 
\begin{align*} \uid F = \inf \{ \, s \geq 0 : &\mbox{ for all } \epsilon >0 \mbox{ there exists } \delta_0 \in (0,1] \mbox{ such that for all } \delta \in (0,\delta_0) \\*
&\mbox{ there exists a } (\delta,\theta)\mbox{-cover } \{U_i\}_{i}\mbox{ of } F \mbox{ such that }\sum_i |U_i|^s \leq \epsilon \, \}.
\end{align*}
Similarly the \emph{lower $\theta$-intermediate dimension} of $F$ is 
\begin{align*} \lid F = \inf \{ \, s \geq 0 : &\mbox{ for all } \epsilon >0 \mbox{ and } \delta_0 \in (0,1] \mbox{ there exists } \delta \in (0,\delta_0) \\* &\mbox{ and a }(\delta,\theta)\mbox{-cover } \{U_i\}_{i}\mbox{ of } F \mbox{ such that }\sum_i |U_i|^s \leq \epsilon \,\}.
\end{align*}
If these coincide, then we refer to the \emph{intermediate dimension} of $F$, denoted $\dim_{\theta} F$. Note that $\overline{\dim}_{1} = \ubd$ and $\underline{\dim}_{1} = \lbd$ and $\overline{\dim}_{0} = \underline{\dim}_{0} \coloneqq \dim_\mathrm{H}$. 
\end{defn}
For all non-empty, bounded $F \subset \Rd$, these satisfy the inequalities 
\begin{align}\label{e:dimrelations}
\begin{split}
 0 \leq \dim_\mathrm{H} F \leq &\lid F \leq \uid F \leq \ubd F \leq \asd F \leq d, \\*
  &\lid F \leq \lbd F \leq \ubd F. 
 \end{split}
 \end{align}
 The intermediate dimensions have been studied in \cite{Burrell2021thesis,
Tan2020,Daw2023,Falconer2021intdimsurvey,
Fraser2021interpolating,
Burrell2021projections,Burrell2022brownian,
Kolossvary2022bm,Tan2020,Burrell2022spirals,
Falconer2020firstintermediate,Falconer2021seq,
Banaji2023gen,Banaji2022moran,
Banaji2021infinite,Banaji2021bedford,
Banaji2022popcorn,Daw2022phd,Douzi2022hewitt,
Feng2024intermediate}. 
A specific variant was used in \cite{Kukavica2012} (before the paper~\cite{Falconer2020firstintermediate}) to study the singular sets of certain partial differential equations. 
The intermediate dimensions satisfy standard properties that most other dimensions satisfy; for example if $E \subseteq F$ then $\dim_{\theta} E \leq \dim_{\theta} F$. 
In several ways, the intermediate dimensions behave more like box than Hausdorff dimension. 
For example, it is straightforward to see that box and intermediate dimensions are unchanged under taking closure of the set, but Hausdorff dimension is not. 
A dimension $\dim$ is said to be \emph{countably stable} if for all countable sequences of sets $F_1,F_2,\dotsc$, it holds that 
\[ \dim\left( \bigcup_{n=1}^{\infty} F_n \right) = \sup\{ \, \dim F_n : n \in \N \, \}. \]
It is shown in \cite[Proposition~3.1]{Falconer2020firstintermediate} that for $p>0$ and $0 \leq \theta \leq 1$, 
\[ \dim_{\theta} (\{0\} \cup \{ \, n^{-p} : n \in \N \, \}) = \frac{\theta}{p+\theta}, \]
so although Hausdorff dimension is countably stable, box and intermediate dimensions are not.

Some examples of the possible forms of intermediate dimension functions are given in~\cite[Section~3.2]{Falconer2020firstintermediate}, and a full characterisation is obtained in Chapter~\ref{s:attainable}. 
The maps $\theta \mapsto \uid F$ and $\theta \mapsto \lid F$ are trivially increasing in $\theta \in [0,1]$. They were shown in~\cite[Section~2.1]{Falconer2020firstintermediate} to be continuous in $\theta \in (0,1]$. 
For many classes of sets for which the intermediate dimensions have been calculated, they are also continuous at $\theta = 0$, so fully interpolate between the Hausdorff and box dimensions. Such classes include elliptical polynomial spirals~\cite{Burrell2022spirals}, concentric spheres and attenuated topologist's sine curves~\cite{Tan2020}, polynomial sequences and lattice sets (see \cite[Proposition~3.1]{Falconer2020firstintermediate} and Section~\ref{s:lattice}), popcorn-like pyramid sets (see Section~\ref{s:popcorn}), and Bedford--McMullen carpets (see 
\cite[Section~4]{Falconer2020firstintermediate} and Section~\ref{s:bm}). 
In Section~\ref{s:holderintro}, we will see that continuity of the intermediate dimensions at $\theta = 0$ has powerful consequences. 

On the other hand, there are a plethora of compact subsets of $\R$, such as $\{0\} \cup \left\{\,\frac{1}{\log k} : k \in \mathbb{N},k \geq 3 \,\right\}$ (see \cite[Section~3.2]{Falconer2020firstintermediate}), for which the intermediate dimensions are constant at the value of the box dimension and discontinuous at $\theta=0$, thus providing very little information about the set. 
Note that every compact subset of $\mathbb{R}$ can be obtained by starting with a closed interval and removing a sequence of disjoint open intervals from it. 
Now fix any non-increasing, summable sequence of positive numbers $(a_k)_{k=1}^{\infty}$ such that $-\log a_k / \log k \to 1$ as $k \to \infty$ (for example $a_k \coloneqq k^{-1}(\log (2k))^{-2}$). %
By \cite[Theorem~1]{Besicovitch1954cutout}, for all $s \in [0,1]$, one can start with a closed interval of length $\sum_{k=1}^\infty a_k$ and recursively cut out open intervals of length $a_k$ in such a way that the resulting compact set $F$ satisfies $\dim_{\mathrm H} F = s$. 
But by \cite[Section~3.2]{Falconer1997techniques}, $\dim_{\mathrm B} F = 1$ (independent of precisely which intervals are removed). %
It was shown in \cite[Proposition~2.4]{Falconer2020firstintermediate} (see also Chapter~\ref{s:attainable}) that this implies that $\dim_{\theta} F = 1$ for all $\theta \in (0,1]$. Therefore if $\dim_{\mathrm H} F < 1$ then $\theta \mapsto \dim_{\theta} F$ is discontinuous at $\theta=0$.

Following~\cite{Burrell2021projections}, for a bounded and non-empty set $F\subset \Rd$, $\theta\in(0,1)$ and $s\in[0,d]$, we introduce
\begin{equation}\label{eq:41}
S_{\delta, \theta}^{s}(F)\coloneqq \inf \Big\{ \, \sum_{i}\left|U_{i}\right|^{s}:\left\{U_{i}\right\}_{i} \text { is a cover of } F \text { such that } \delta^{1/\theta} \leq\left|U_{i}\right| \leq \delta \text { for all } i \, \Big\}.
\end{equation}
The motivation for introducing $S_{\delta, \theta}^{s}(F)$ is that from~\cite[Lemma~2.1]{Burrell2021projections} and the definitions of $\underline{\dim}_{\theta}F$ and $\overline{\dim}_{\theta}F$ it follows that
\begin{equation*}
\underline{\dim}_{\theta} F=\text { the unique } s \in[0, d] \text { such that } \liminf_{\delta\searrow 0} \frac{\log S_{\delta, \theta}^{s}(F)}{-\log \delta}=0
\end{equation*}
and
\begin{equation}\label{eq:45}
\overline{\dim}_{\theta} F=\text { the unique } s \in[0, d] \text { such that } \limsup_{\delta\searrow 0} \frac{\log S_{\delta, \theta}^{s}(F)}{-\log \delta}=0.
\end{equation}
For each $\theta \in (0,1)$, $\liminf_{\delta\searrow 0} \frac{\log S_{\delta, \theta}^{s}(F)}{-\log \delta}$ and $\limsup_{\delta\searrow 0} \frac{\log S_{\delta, \theta}^{s}(F)}{-\log \delta}$ are strictly decreasing and continuous functions of $s$. 

Some methods for estimating Hausdorff and box dimension are given in~\cite[Chapter~4]{Falconer2014main}. In particular, to obtain an upper bound, one often uses covers which arise naturally in the construction of the fractal, and to obtain lower bounds one often puts an appropriate measure on the set and applies a mass distribution principle. 
Similar strategies often work for the intermediate dimensions as well. 
In particular, we will use the following version of the mass distribution principle for the intermediate dimensions of Falconer, Fraser and Kempton,~\cite[Proposition~2.2]{Falconer2020firstintermediate}. 

\begin{prop}\label{prop:mdp}
Let $F$ be a non-empty, bounded subset of $\Rd$, and let $\theta \in [0,1]$, $s \geq 0$, $\delta_0 \in (0,1)$. Suppose that for all $\delta \in (0,\delta_0)$ there exists a Borel measure $\mu_\delta$ with support $\supp(\mu_\delta) \subseteq F$ such that $\mu_\delta(U) \leq |U|^s$ for all Borel sets $U \subset \Rd$ with $\delta^{1/\theta} \leq |U| \leq \delta$. Then 
\[\liminf_{\delta\searrow 0} \frac{\log S_{\delta, \theta}^{s}(F)}{-\log \delta} \geq \liminf_{\delta\searrow 0} \frac{\log \mu_\delta(\supp(\mu_\delta))}{-\log \delta}.\]
The same holds if we replace $\liminf$ with $\limsup$.  
\end{prop}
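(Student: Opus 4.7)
The plan is to run the classical mass distribution argument, adapted so that the measure is allowed to vary with $\delta$. The key observation is that the hypothesis provides an upper bound on $\mu_\delta(U)$ precisely for those Borel sets $U$ that are eligible to appear in a $(\delta,\theta)$-cover of $F$, so the argument is essentially one-step once covers and measures are correctly paired up.

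First I would fix $\delta \in (0,\delta_0)$ and let $\{U_i\}_i$ be an arbitrary $(\delta,\theta)$-cover of $F$. Replacing each $U_i$ by its closure $\overline{U_i}$ preserves both the diameter and the cover property, so without loss of generality each $U_i$ can be assumed to be closed, and in particular Borel. Since $\supp(\mu_\delta) \subseteq F \subseteq \bigcup_i U_i$ and $\mu_\delta$ places no mass outside its support, countable subadditivity gives
\[
\mu_\delta(\supp(\mu_\delta)) \;=\; \mu_\delta(F) \;\leq\; \sum_i \mu_\delta(U_i) \;\leq\; \sum_i |U_i|^s,
\]
where the final inequality applies the hypothesis, which is valid because each $U_i$ satisfies $\delta^{1/\theta} \leq |U_i| \leq \delta$ by definition of a $(\delta,\theta)$-cover.

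Taking the infimum over all such covers yields $\mu_\delta(\supp(\mu_\delta)) \leq S_{\delta,\theta}^s(F)$ for every $\delta \in (0,\delta_0)$. Since $\delta<1$ implies $-\log\delta > 0$, taking logarithms, dividing through by $-\log\delta$, and then passing to the $\liminf$ (respectively $\limsup$) as $\delta \to 0^+$ delivers both versions of the claimed inequality.

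There is no real obstacle here; the only minor subtlety is that the covers in the definition of $S_{\delta,\theta}^s(F)$ are not a priori required to consist of Borel sets. This is handled by the closure trick above, which leaves the $s$-cost unchanged while making the measure inequality applicable. The argument is exactly parallel to the classical Hausdorff mass distribution principle, with the restriction on covering diameters matched to the restricted domain of the mass bound.
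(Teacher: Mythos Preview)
Your proof is correct and follows essentially the same approach as the paper: for each $\delta$, bound $\mu_\delta(\supp(\mu_\delta))$ by $\sum_i |U_i|^s$ for an arbitrary $(\delta,\theta)$-cover, then take the infimum over covers and pass to the limit. The closure trick you include to ensure the $U_i$ are Borel is the only minor addition, and it is handled the same way elsewhere in the paper (Lemma~\ref{massdistprinc}).
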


\begin{proof}
If $\{U_i\}$ is a cover of $F$ with $\delta^{1/\theta} \leq |U_i| \leq \delta$ for all $i$, then $\mathrm{supp}(\mu_\delta) \subseteq F \subseteq \cup_i U_i$. Therefore 
\[ \mu_\delta (\mathrm{supp}(\mu_\delta)) \leq \sum_i \mu_\delta(U_i) \leq \sum_i |U_i|^s.\] 
Since the cover was arbitrary, also $\mu_\delta (\mathrm{supp}(\mu_\delta)) \leq S_{\delta,\theta}^s (F)$. 
\end{proof}
The intermediate dimensions also satisfy an appropriate analogue of Frostman's lemma (see \cite[Section~2.3]{Falconer2020firstintermediate} and Section~\ref{s:frostman} of this thesis). 
Moreover, bounds for dimensions of products have been obtained in \cite[Section~2.5]{Falconer2020firstintermediate} and Section~\ref{s:products}. 

There are several natural questions about the intermediate dimensions which no-one has yet investigated, and which we will not pursue in this thesis. We give three possible lines of enquiry here; others relevant to the different chapters, and some specific open questions, are given later in the thesis. 
\begin{itemize}
\item In this thesis we write $\overline{\dim}_{0} = \underline{\dim}_{0} = \dim_\mathrm{H}$ to keep notation consistent with the literature on intermediate dimensions (where `continuity at $\theta=0$' is frequently discussed) and with Definition~\ref{d:intdimdef} under the convention $\delta^{1/0} = 0$. 
However, it could be argued that it is mathematically more natural to define these quantities as $\overline{\dim}_{0} F = \lim_{\theta \to 0^+} \uid F$ and $\underline{\dim}_{0} F = \lim_{\theta \to 0^+} \lid F$. 
One could study these limits in their own right, for example by asking what geometric information they provide about $F$, characterising when they coincide with Hausdorff dimension, and calculating them for some families of sets where they do not. 
\item 
Douzi and Selmi~\cite{Douzi2022hewitt} have introduced a family of dimensions which they call the \emph{modified intermediate dimensions} by making the intermediate dimensions countably stable, similarly to~\eqref{e:definepackingdim}. They are larger than the Hausdorff dimension but smaller than the packing/Hewitt--Stromberg/modified box dimensions. 
It could be of interest to calculate the modified intermediate dimensions of families of sets such as those considered in Chapters~\ref{s:infinite} or~\ref{s:bm}. However, we suspect that for many dynamically defined fractals they will coincide with the intermediate dimensions, and for others, such as level sets of local dimensions of self-affine measures on Bedford--McMullen carpets, this is likely to be a hard problem because even packing dimension is not fully understood. 
\item
Many notions of fractal dimensions of sets (such as Hausdorff, packing, box, Assouad) have analogous notions for measures, and it would be natural to try to define an appropriate notion of intermediate dimensions of a measure. 
\end{itemize}

\subsection{Dimensions of images of sets}\label{s:holderintro}

Different notions of fractal dimension, including the intermediate dimensions, can be used to give information about the possible H{\"o}lder exponents of maps between different sets. For a more in-depth discussion of the H{\"o}lder mapping problem in the context of dimension theory we refer the reader to~\cite[Section~17.10]{Fraser2020book}. 
Let $(X,d_X)$ and $(Y,d_Y)$ be metric spaces. 
 We say that a map $f \colon X \to Y$ is \emph{H{\"o}lder}, \emph{$\alpha$-H{\"o}lder} or \emph{$C,\alpha$-H{\"o}lder} if 
\[ d_Y(f(x_1),f(x_2)) \leq C d_X(x_1,x_2)^\alpha \qquad \mbox{for all }x_1,x_2 \in X\] 
for constants $\alpha \in (0,1]$ and $C \in [0,\infty)$, and we call $\alpha$ the \emph{H{\"o}lder exponent}. 
It is a straightforward exercise to show that if $f \colon F \to \Rd$ is $\alpha$-H\"older and $\dim$ is any one of Hausdorff, upper box, lower box, or (for fixed $\theta \in [0,1]$) upper $\theta$-intermediate or lower $\theta$-intermediate dimensions, then 
\begin{align}\label{generalholderint} 
\dim f(F) \leq \alpha^{-1}\dim F.
\end{align}
 For further H{\"o}lder distortion estimates for the intermediate dimensions we refer the reader to~\cite[Theorem~3.1]{Burrell2022brownian}. 
 
 In Section~\ref{holdersection} we prove more such estimates for generalised intermediate dimensions which, interestingly, are different to~\eqref{generalholderint}. 
 In previous examples such as elliptical polynomial spirals~\cite{Burrell2022spirals}, the box dimension gives the best information about H{\"o}lder exponents. 
In this thesis, however, we will see that for several classes of sets, the intermediate dimensions for $\theta \in (0,1)$ can give better information than either the Hausdorff or box dimensions. 
In particular, this is the case for some popcorn-like pyramid graphs (see Corollary~\ref{c:holder} on page~\pageref{c:holder}), continued fraction sets (see Example~\ref{holderint} on page~\pageref{holderint}), and Bedford--McMullen carpets (see Proposition~\ref{p:biLip} on page~\pageref{p:biLip}). 
 Fraser~\cite{Fraser2019spiral} showed that for the spiral winding problem, another spectrum of dimensions (the Assouad spectrum) gives better information about H{\"o}lder exponents than has been obtained from either of the two dimensions (the upper box and Assouad dimensions) that it interpolates between.

 \emph{Lipschitz} maps are simply $1$-H\"older maps and we see from~\eqref{generalholderint} that the intermediate dimensions of sets cannot increase under Lipschitz maps. Two sets $F$ and $G$ are said to be \emph{bi-Lipschitz equivalent} if there exists a Lipschitz bijection $f \colon F \to G$ with a Lipschitz inverse. 
It is clear from the definitions that all of the notions of dimension considered in this thesis take the same values for two bi-Lipschitz equivalent sets. 
 Dimensions can therefore be used to give necessary conditions for two sets to be bi-Lipschitz equivalent. As we will see in Chapter~\ref{s:bm}, for the intermediate dimensions this is particularly relevant in the setting of Bedford--McMullen carpets.

 Potential-theoretic methods have been used to study the intermediate dimensions (and variants) of images of sets under various deterministic and random functions in~\cite{Burrell2021projections,Burrell2022brownian,
 Daw2023,Douzi2022hewitt,Feng2024intermediate}. 
Burrell, Falconer and Fraser~\cite{Burrell2021projections} have proved a Marstrand-type projection theorem for the intermediate dimensions, namely that the intermediate dimensions of orthogonal projections of a set are almost surely independent of the choice of subspace. 
 Continuity of the intermediate dimensions has powerful consequences, as illustrated by the following result. 
\begin{theorem}[Burrell--Falconer--Fraser]\label{t:burrellproj}
Let $1 \leq k < d$ be integers and let $F \subset \Rd$ be bounded with $\dim_{\mathrm H} F < k$ and $\uid F$ continuous at $\theta=0$. 
Then there exists $c<k$ such that $\ubd \pi(F) \leq c$ for every orthogonal projection $\pi \colon \Rd \to \mathbb{R}^k$, and $\ubd \pi(F) = c$ for almost every such orthogonal projection $\pi$ (with respect to the natural measure on the Grassmannian). 
The same holds with $\overline{\dim}$ replaced by $\underline{\dim}$ throughout. 
\end{theorem}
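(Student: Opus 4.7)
The plan is to derive a deterministic upper bound relating $\ubd \pi(F)$ to $\uid F$, then use the continuity hypothesis to make this bound strictly less than $k$.

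The key inequality I would prove first is
\[ \ubd \pi(F) \;\leq\; k - \theta(k - \uid F) \]
for every $\theta \in (0,1]$ and every orthogonal projection $\pi \colon \Rd \to \mathbb{R}^k$. To establish this, fix $s$ slightly larger than $\uid F$; then for all sufficiently small $\delta>0$ there is a $(\delta,\theta)$-cover $\{U_i\}_i$ of $F$ with $\sum_i |U_i|^s \leq 1$. Since $\pi$ is $1$-Lipschitz, each $\pi(U_i) \subset \mathbb{R}^k$ has diameter at most $|U_i|$, and may be covered by $\lesssim (|U_i|/\delta^{1/\theta})^k$ sets of diameter $\delta^{1/\theta}$. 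Summing gives
\[ N_{\delta^{1/\theta}}(\pi(F)) \;\lesssim\; \delta^{-k/\theta}\sum_i |U_i|^k \;\leq\; \delta^{-k/\theta}\delta^{k-s}\sum_i |U_i|^s \;\lesssim\; \delta^{k-s-k/\theta}, \]
where we used $|U_i| \leq \delta$ and $s \leq k$ (which is the regime of interest; for $s \geq k$ the bound is trivial). Taking logarithms and dividing by $-\log(\delta^{1/\theta}) = -\theta^{-1}\log\delta$ yields $\ubd \pi(F) \leq k-\theta(k-s)$, and letting $s \downarrow \uid F$ gives the claimed inequality.

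Second, I would invoke the continuity hypothesis: $\lim_{\theta\to 0^+}\uid F = \dim_{\mathrm H} F < k$, so there is some $\theta_0 \in (0,1)$ with $\overline{\dim}_{\theta_0} F < k$. Setting $c := k - \theta_0(k-\overline{\dim}_{\theta_0}F) < k$ gives the uniform deterministic bound $\ubd\pi(F) \leq c$ for every projection $\pi$, proving the first half of the theorem.

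For the almost-sure equality, I would appeal to the Marstrand-type projection theorem for the intermediate dimensions stated just before this theorem: for each fixed $\theta$, the quantity $\uid \pi(F)$ is almost surely independent of $\pi$ (with respect to the natural Grassmannian measure). Combining with the elementary monotonicity $\overline{\dim}_\theta \pi(F) \leq \ubd \pi(F)$ and passing $\theta \to 1$ (using the Assouad-spectrum-style fact that $\ubd$ is the limiting endpoint), or, more directly, by running the same energy-integral machinery on $\ubd$ itself, one obtains that $\ubd\pi(F)$ is almost surely equal to a single constant $c' \leq c < k$. Identifying $c$ with this almost-sure value (by the definition of $c$ as an achievable supremum) completes the proof. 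The same argument with $\lbd$ in place of $\ubd$ gives the lower-box analogue; the only change is replacing the relevant $(\delta,\theta)$-cover condition with the one for $\lid F$ and using $\liminf$ instead of $\limsup$ throughout.

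The main obstacle I anticipate is the almost-sure constancy/attainment of the box dimension of projections: the deterministic upper bound above is straightforward once one unwinds the cover definitions, but turning this into an \emph{equality} for almost every $\pi$ requires either Fubini-type energy arguments as in the Marstrand framework, or a careful limiting procedure from the intermediate dimensions to the box dimension. Making this limiting procedure rigorous is the delicate step, since $\theta \mapsto \uid \pi(F)$ need not be continuous at $\theta = 1$ uniformly in $\pi$.
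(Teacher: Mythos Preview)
Your covering argument for the deterministic bound $\ubd \pi(F) \leq k - \theta(k-\uid F)$ is correct, and choosing $\theta_0$ with $\overline{\dim}_{\theta_0}F < k$ indeed yields a constant $c_1 < k$ bounding every projection. This is a clean elementary substitute for the capacity-theoretic route in \cite{Burrell2021projections}. Your almost-sure constancy argument can also be made rigorous: pick a countable sequence $\theta_n \uparrow 1$, intersect the full-measure sets on which $\overline{\dim}_{\theta_n}\pi(F)$ is constant, and use continuity of the intermediate dimensions at $\theta=1$ to pass to the limit.

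The real gap is in combining these into a \emph{single} $c$. The theorem requires that the almost-sure value $c_0$ is itself an upper bound for \emph{every} projection, not merely that some possibly larger $c_1$ is. Your argument yields only $c_0 \leq c_1 < k$; the line ``identifying $c$ with this almost-sure value (by the definition of $c$ as an achievable supremum)'' does not close this, since your $c_1$ was defined explicitly as $k - \theta_0(k-\overline{\dim}_{\theta_0}F)$ and there is no reason it should equal $c_0$. Ruling out exceptional projections with $\ubd\pi(F) > c_0$ is exactly the content of the box-dimension profile result \cite[Theorem~1.8]{Falconer2020projections}, which furnishes a single profile quantity that is simultaneously the almost-sure value and the universal upper bound. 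The paper's proof simply cites this together with \cite[Corollary~6.4]{Burrell2021projections}; your elementary covering bound can stand in for the latter citation but not the former.
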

\begin{proof}
This follows by combining~\cite[Corollary~6.4]{Burrell2021projections} with~\cite[Theorem~1.8]{Falconer2020projections}. 
\end{proof} 
In Example~\ref{proj}, we observe that this can be applied to a particular class of dynamically generated sets whose intermediate dimensions we prove are continuous. 
The modified intermediate dimensions also satisfy a Marstrand-type projection result~\cite{Douzi2022hewitt}.

Fractional Brownian motion is an important stochastic process, introduced by Mandelbrot and Van Ness~\cite{Mandelbrot1968brownian} and studied by Kahane~\cite{Kahane1985fractbrown}; we refer the reader to those texts for the precise definition. 
In the cases of interest to us, it is a random function $B_\alpha \colon \Rd \to \Rd$, where $0 < \alpha < 1$ and $d \in \N$ are fixed. 
One can write $B_\alpha = (B_{\alpha,1},\dotsc,B_{\alpha,d})$ and show that each $B_{\alpha,i} \colon \Rd \to \R$ is almost surely locally $(\alpha - \epsilon)$-H\"older continuous for all $\epsilon>0$ but almost nowhere differentiable. Moreover, $B_{\alpha,i}(0) = 0$, and the increments $B_{\alpha,i}(x) - B_{\alpha,i}(y)$ are normally distributed with mean $0$ and variance $|x-y|^{2\alpha}$. For all $x,y \in \Rd$ and distinct $i,j \in \{1,\dotsc,n\}$, the processes $B_{\alpha,i}(x)$ and $B_{\alpha,j}(y)$ are independent. 
The case $\alpha = 1/2$ is usual Brownian motion, and in this case the increments $B_{\alpha,i}(x) - B_{\alpha,i}(y)$ are independent, but if $\alpha \neq 1/2$ then the increments are dependent. 

Falconer~\cite{Falconer2021seq} has explicitly computed the intermediate dimensions of fractional Brownian images of certain sequence sets. 
Burrell~\cite[Corollary~3.7]{Burrell2022brownian} has shown that for sets $F$ with $\uid F$ continuous at $\theta = 0$, almost surely 
\begin{equation}\label{e:burrellbrownian}
 \ubd F < d \qquad \mbox{ if } \alpha > \frac{1}{d}\dim_{\mathrm H} F,
 \end{equation}
but that almost surely $\ubd F = d$ if $\alpha \leq \frac{1}{d}\dim_{\mathrm H} F$. The analogous result holds for the lower versions of the dimensions. 
Note the interplay between Hausdorff and box dimension in~\eqref{e:burrellbrownian}. 
This result can in particular be applied for several classes of sets whose intermediate dimensions we prove are continuous in this thesis, such as lattices (see Section~\ref{s:lattice}), popcorn-like pyramid sets (see Section~\ref{s:popcorn}), and sets of numbers with real or complex continued fraction expansions with restricted entries (see Section~\ref{ctdfracsect}). 
Intermediate dimensions of more general Rosenblatt processes have been studied in~\cite{Daw2023}. 

After the paper on which Chapter~\ref{s:generalised} is based appeared on arXiv, Feng~\cite{Feng2024intermediate} showed that the potential-theoretic methods in \cite{Burrell2021projections,Burrell2022brownian} can be adapted to study the $\Phi$-intermediate dimensions. 
He has obtained information about $\Phi$-intermediate dimensions of images of sets under projections and fractional Brownian motion if the function $\Phi$ satisfies the property that for all $\epsilon > 0$, $\delta^{\epsilon} \log \Phi(\delta) \to 0$ as $\delta \to 0$. 
He has also shown that for every subset $E$ of the symbolic space, the intermediate and $\Phi$-intermediate dimensions of the projections of $E$ under typical self-affine coding maps are constant and given by formulas in terms of capacities.

\chapter{Generalised intermediate dimensions}\label{s:generalised}

\section{Introduction}

\subsection{Discussion of main results}\label{s:phidiscussion}

This chapter introduces a more general family of dimensions and is based on~\cite{Banaji2023gen}. 
Recall from Section~\ref{s:intdimsintro} that the intermediate dimensions are always continuous in $\theta \in (0,1]$ but are often discontinuous at $\theta = 0$. 
In this chapter, we introduce the $\Phi$-intermediate dimensions, by restricting the sizes of the covering sets to lie in a wider class of intervals of the form $[\Phi(\delta),\delta]$ for more general functions $\Phi$. These dimensions give even more refined geometric information than the intermediate dimensions about sets for which the intermediate dimensions are discontinuous at $\theta = 0$. %
While many results for the $\Phi$-intermediate dimensions are similar to results for the intermediate dimensions, others, such as the H{\"o}lder distortion estimates in Theorem~\ref{holder}, are rather different. 

A class of dimensions which generalise the Assouad spectrum were defined in \cite[Section~9]{Fraser2018firstassspec}, greatly developed by Garc{\'{i}}a, Hare and Mendivil in~\cite{Garcia2020}, and further studied in~\cite{Garcia2019-2,Troscheit2019,
Hare2022randomass,HarePreprintrandomass2,
Hare2020measures,BanajiPreprintphiassouad}. %
They are defined by fixing the relative scales in more general ways than for the Assouad spectrum, thus giving more refined geometric information about sets whose quasi-Assouad dimension is less that the Assouad dimension. These Assouad-like dimensions were part of our original motivation for considering the $\Phi$-intermediate dimensions, and there are parallels between the two settings in a philosophical sense. 

This chapter is structured as follows. 
In Section~\ref{prelimsection}, we define the notions of dimension that we will work with and make some standing assumptions to reduce repetition. 
In Section~\ref{ctysection}, we give relationships between the different notions of dimension (Propositions~\ref{basicbounds} and~\ref{compareintermediate}). In Theorem~\ref{maincty} and Proposition~\ref{zerocontinuityprop} we prove quantitative %
continuity-like properties for the $\Phi$-intermediate dimensions, which intuitively say that if two functions $\Phi$ and $\Phi_1$ are `close' to each other then the dimensions of subsets do not differ too much. Interestingly, the precise bounds depend on the Assouad and lower dimensions of the set, which give information about its extremal scaling properties. From this result we deduce a condition for the $\Phi$- and $\Phi_1$-intermediate dimensions to coincide for all subsets with finite Assouad dimension (Proposition~\ref{hardcomparisoncor}~(ii)). 
 
In Section~\ref{holdersection} we prove H{\"o}lder distortion estimates for the $\Phi$-intermediate dimensions (Theorem~\ref{holder}) which are different from the usual bound~\eqref{generalholderint} on page~\pageref{generalholderint}. The estimates imply bi-Lipschitz stability (Corollary~\ref{philipschitz}), which is an important property that most notions of dimension satisfy. This means that the $\Phi$-intermediate dimensions provide yet another invariant for the classification of subsets up to bi-Lipschitz image. 

In Section~\ref{masssection} we prove a mass distribution principle (Lemma~\ref{massdistprinc}) and a converse, a Frostman type lemma (Lemma~\ref{frostman}) for the $\Phi$-intermediate dimensions. The latter is an example of where the extension from Euclidean space to the more general metric spaces in which we work is non-trivial; we use an analogue of the dyadic cubes in general doubling metric spaces given in~\cite{Hytonen2010}. The mass distribution principle and Frostman type lemma combine to give Theorem~\ref{massfrostman}, a useful alternative definition of the $\Phi$-intermediate dimensions in terms of measures. We use this characterisation to prove Theorem~\ref{producttheorem} on the dimensions of product sets, giving new bounds in terms of the dimensions of the marginals, one of which we improve further in the case of self-products. In particular, ($\underline{\dim}^\Phi,\overline{\dim}_\mathrm{B}$) and ($\underline{\dim}_\theta,\overline{\dim}_\mathrm{B}$) satisfy the inequalities~\eqref{dimpairineq} that many `dimension pairs' satisfy, although our upper bound for $\overline{\dim}^\Phi (E \times F)$ is different to what might be expected. 
We also use the mass distribution principle to prove in Proposition~\ref{finitestability} that the lower versions of the intermediate and $\Phi$-intermediate dimensions are not finitely stable (in contrast to the upper versions).

Proposition~\ref{finitestability} also gives an example of a set to which Theorem~\ref{recoverinterpolation}, which is perhaps the most significant result of this chapter, can be applied. Theorem~\ref{recoverinterpolation} implies that for all compact subsets of an appropriate space there is a family of functions $\Phi$ which `recover the interpolation' between the Hausdorff and box dimensions, even if the intermediate dimensions are discontinuous at $\theta = 0$. %
In fact, there exists a single family of $\Phi$ which interpolate for both the upper and lower versions of the dimensions, and whose dimensions vary monotonically for all sets, but in Proposition~\ref{interpolatenotcts} we show that it might not be possible to ensure that the dimensions vary continuously for all other sets. 

\subsection{Definitions of dimensions}\label{prelimsection}%
   
For the purposes of this thesis, we make the following definition. 

 \begin{defn}\label{admissible}
 A function $\Phi \colon (0,\Delta) \to \mathbb{R}$ is \emph{admissible} if $\Phi$ is monotonic, $0<\Phi(\delta) \leq \delta$ for all $\delta \in (0,\Delta)$, and $\Phi(\delta)/\delta \to 0$ as $\delta \to 0^+$. 
 \end{defn}
 In some situations, in particular in Chapter~\ref{s:infinite}, it will be convenient to work with admissible functions that satisfy the additional mild condition that $\Phi(\delta)/\delta \to 0$ monotonically as $\delta \to 0^+$. This is satisfied by many reasonable functions such as $\delta^{1/\theta}$ and $e^{-\delta^{-0.5}}$), and we call such functions \emph{monotonically admissible}. 
  To minimise repetition, we make the following standing assumptions for the rest of this chapter: 
  \begin{itemize}
    \item The letter $\Phi$ will represent an arbitrary admissible function (except in Proposition~\ref{p:nonadmissible} where we explore the conditions on $\Phi$). 
  \item The underlying metric space is denoted by $X$ (or sometimes $Y$), and will be assumed to have more than one point and be uniformly perfect. %
  The letter $d$ will denote the the metric of the underlying space, and $c$ will usually denote the constant from Definition~\ref{d:unifperf}. %
  \item Subsets of $X$ are denoted by $F$ (or sometimes $E$ or $G$), and are assumed to be non-empty and totally bounded. 
  \end{itemize}
Using these conventions, and based on Definition~\ref{d:intdimdef}, we now make the main definition of this chapter. 
\begin{defn}\label{maindefinition}
We define the \emph{upper $\Phi$-intermediate dimension} of a subset $F$ by 
\begin{align*} \overline{\dim}^\Phi F = \inf \{ \, s \geq 0 : &\mbox{ for all } \epsilon >0 \mbox{ there exists } \delta_0 \in (0,1] \mbox{ such that for all } \delta \in (0,\delta_0) \\*
&\phantom{--}\mbox{ there exists} \mbox{ a cover } \{U_1,U_2,\dotsc\} \mbox{ of } F \mbox{ such that } \\*
&\phantom{--}\Phi(\delta)\leq |U_i| \leq \delta \mbox{ for all } i, \mbox{ and } \sum_i |U_i|^s \leq \epsilon \, \}.
\end{align*}
Similarly, we define the \emph{lower $\Phi$-intermediate dimension} of $F$ by 
\begin{align*} \underline{\dim}^\Phi F = \inf \{ \, s \geq 0 : &\mbox{ for all } \epsilon >0 \mbox{ and } \delta_0 \in (0,1] \mbox{ there exists } \delta \in (0,\delta_0) \mbox{ and a cover } \\*
&\phantom{--}\{U_1,U_2,\dotsc\} \mbox{ of } F \mbox{ such that } \\*
&\phantom{--}\Phi(\delta)\leq |U_i| \leq \delta \mbox{ for all } i, \mbox{ and } \sum_i |U_i|^s \leq \epsilon \,\}.
\end{align*}
If these two quantities coincide, we call the common value the $\Phi$\emph{-intermediate dimension} of $F$ and denote it by $\dim^\Phi F$. 
\end{defn}

In the above definition, the cover $\{U_i\}$ of $F$ is \emph{a priori} countable, but since it satisfies $0<\Phi(\delta)\leq |U_i|$ for all $i$, and $\sum_i |U_i|^s \leq \epsilon$, it must be finite. 
If $F$ were not totally bounded then the $\Phi$-intermediate dimensions of $F$ would be infinite according to Definition~\ref{maindefinition}. 
If $\theta \in (0,1)$ and $\Phi(\delta) = \delta^{1/\theta}$ for all $\delta \in [0,1]$, then it is straightforward to check that $\Phi$ is admissible, and $\overline{\dim}^\Phi F = \overline{\dim}_\theta F$ and $\underline{\dim}^\Phi F = \underline{\dim}_\theta F$ are the definitions of the upper and lower intermediate dimensions of $F$ at $\theta$, respectively.

\section{General bounds}\label{ctysection}

In this section we examine general bounds and continuity-like properties for the $\Phi$-intermediate dimensions. 
Recall that the letter $d$ is reserved for the metric in this chapter. 
Note that since we work in a more general space than $\R^n$, balls of radius $\delta$ (as defined in~\eqref{e:defineball}) can have diameter less than $2\delta$, which makes several of the proofs in this chapter more technical. 
In the special case $X = \R^n$, if $\dim_{\mathrm L} F$ is replaced by $0$, and $\dim_{\mathrm A} F$ is replaced by $n$, then the bounds in this section will hold for all subsets. 
The dimensions satisfy the inequalities in Proposition~\ref{basicbounds} below, as with the intermediate dimensions. 
We assume that the ambient metric space $X$ is $c$-uniformly perfect with more than one point, and that $\Phi(\delta)/\delta \to 0$ as $\delta \to 0^+$, to ensure that Proposition~\ref{basicbounds} will hold and to avoid cases like the two-point metric space, which would have infinite intermediate and $\Phi$-intermediate dimensions according to Definition~\ref{maindefinition}. 

\begin{prop}\label{basicbounds}
For a subset $F$, 
\begin{align*}
 0 \leq \dim_\mathrm{H} F \leq \underline{\dim}^\Phi F &\leq \overline{\dim}^\Phi F \leq \overline{\dim}_\mathrm{B} F \leq \dim_\mathrm{A} F \leq \dim_\mathrm{A} X, \mbox{ and} \\*
\underline{\dim}^\Phi F &\leq \underline{\dim}_\mathrm{B} F \leq \overline{\dim}_\mathrm{B} F. 
\end{align*}
\end{prop}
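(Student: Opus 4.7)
The proposition is a chain of standard comparisons, and the plan is to verify each inequality in turn, noting that several are immediate from Definition~\ref{maindefinition}. The trivial steps are $0 \leq \dim_\mathrm{H} F$, the sandwich $\underline{\dim}^\Phi F \leq \overline{\dim}^\Phi F$, and $\dim_\mathrm{A} F \leq \dim_\mathrm{A} X$ (monotonicity of Assouad dimension, which follows directly from~\eqref{e:assouaddef} since the quantifier $x \in F$ becomes stronger when $F$ shrinks). The inequality $\overline{\dim}_\mathrm{B} F \leq \dim_\mathrm{A} F$ is standard and already used in the introduction. So the content lies in three inequalities.

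For $\dim_\mathrm{H} F \leq \underline{\dim}^\Phi F$, the idea is that every admissible cover in the $\Phi$-intermediate sense is also admissible in the Hausdorff sense. Fix $s > \underline{\dim}^\Phi F$ and $\epsilon > 0$; by definition, for every $\delta_0 > 0$ there exist $\delta \in (0, \delta_0)$ and a cover $\{U_i\}$ of $F$ with $|U_i| \leq \delta < \delta_0$ and $\sum_i |U_i|^s \leq \epsilon$. Hence $\mathcal{H}^s_{\delta_0}(F) \leq \epsilon$ for every $\delta_0 > 0$, so letting $\delta_0 \to 0^+$ gives $\mathcal{H}^s(F) \leq \epsilon$, and then letting $\epsilon \to 0^+$ yields $\mathcal{H}^s(F) = 0$. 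Therefore $\dim_\mathrm{H} F \leq s$, and taking the infimum over such $s$ completes this step.

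The inequalities $\overline{\dim}^\Phi F \leq \overline{\dim}_\mathrm{B} F$ and $\underline{\dim}^\Phi F \leq \underline{\dim}_\mathrm{B} F$ are handled in parallel. Given $\delta > 0$ small, take a minimal cover of $F$ by $N_\delta(F)$ balls $B(x_i, \delta/2)$ with $x_i \in F$, and set $U_i \coloneqq B(x_i, \delta/2) \cap F$ so that $|U_i| \leq \delta$. Here the standing assumption that $X$ is $c$-uniformly perfect is used: for $\delta$ sufficiently small we have $\delta/2 < |X|$, so there exists $y \in F$ with $c\delta/2 \leq d(x_i, y) < \delta/2$, which gives $|U_i| \geq c\delta/2$ (shrinking $F$ by a point if necessary, but since $F$ is infinite enough this is fine — more carefully, one can appeal to the fact that $\dim_\mathrm{L} X > 0$ implies an analogous lower bound on diameters of balls intersected with $F$, or one can replace $U_i$ by a slight inflation to guarantee $|U_i| \geq c\delta/2$). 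Since $\Phi(\delta)/\delta \to 0^+$, for all sufficiently small $\delta$ we have $\Phi(\delta) \leq c\delta/2 \leq |U_i| \leq \delta$, so $\{U_i\}$ is a legitimate cover in Definition~\ref{maindefinition}. Its $s$-cost is bounded by $N_\delta(F)\, \delta^s$, which, for $s$ strictly larger than the relevant (upper or lower) box dimension, tends to $0$ along the appropriate sequence of $\delta$. Taking an infimum then yields the two inequalities.

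The main technical subtlety is the use of uniform perfectness to ensure the lower diameter constraint $|U_i| \geq \Phi(\delta)$, which would be trivial in $\Rd$ but requires this standing hypothesis in general metric spaces. All other steps are either definitional or are part of the standard hierarchy of dimensions recorded in Section~\ref{s:notation}.
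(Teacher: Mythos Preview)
Your approach matches the paper's, but there is a genuine slip in the box-dimension step. Uniform perfectness is assumed of $X$, not of $F$, so the annulus $B(x_i,\delta/2)\setminus B(x_i,c\delta/2)$ is only guaranteed to contain a point of $X$; your claim that it contains some $y\in F$ is false in general (think of $F$ with isolated points). Consequently your sets $U_i=B(x_i,\delta/2)\cap F$ can have diameter below $\Phi(\delta)$, and the cover is not admissible in Definition~\ref{maindefinition}.

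The fix your parenthetical hints at is exactly what the paper does: covering sets need not lie inside $F$. Start from any cover $\{U_i\}$ of $F$ by sets of diameter at most $\delta$, and for each $U_i$ with $|U_i|<\delta/2$ pick $x_i\in U_i$ and adjoin a point $y_i\in B(x_i,\delta/2)\setminus B(x_i,c\delta/2)\subset X$, setting $V_i\coloneqq U_i\cup\{y_i\}$. Then $\Phi(\delta)\le c\delta/2\le |V_i|\le\delta$ by the triangle inequality, and the $s$-cost estimate $\sum_i|V_i|^s\le \delta^{-t}\delta^s\to 0$ goes through as you wrote. With this correction the proof is complete and agrees with the paper's.
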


\begin{proof}
We first prove $\overline{\dim}^\Phi F \leq \overline{\dim}_\mathrm{B} F$. Recalling that $|X|$ is the diameter of $X$, since $\Phi(\delta)/\delta \to 0$, there exists $\Delta \in (0,\min\{|X|,1\})$ such that $\Phi(\delta)/\delta < c/2$ for all $\delta \in (0,\Delta)$. 
Let $s > \overline{\dim}_\mathrm{B} F$ and $\epsilon > 0$. Let $t \in (\overline{\dim}_\mathrm{B} F, s)$, so we can reduce $\Delta$ further to assume that $\Delta < \epsilon^{\frac{1}{s-t}}$ %
and that for all $\delta \in (0,\Delta)$ there exists a cover of $F$ by $\delta^{-t}$ or fewer sets $\{U_i\}$, each having diameter at most $\delta$. We may assume without loss of generality that each $U_i$ intersects $F$. If $|U_i| \geq \delta/2$ then leave $U_i$ in the cover unchanged. If $|U_i| < \delta/2$, then fix $x_i \in U_i$ and $y_i \in B(x_i,\delta/2)\setminus B(x_i,c\delta/2)$; add the point $y_i$ to $U_i$, and call the resulting cover $\{V_i\}$. For each $i$, 
\[\Phi(\delta) \leq c\delta/2 \leq |V_i| \leq  \delta \]
by the triangle inequality. Moreover, 
\[\sum_i |V_i|^s \leq \delta^{-t}\delta^s < \delta_0^{s-t} < \epsilon. \]
Thus $\overline{\dim}^\Phi F \leq s$ by Definition~\ref{maindefinition}, so $\overline{\dim}^\Phi F \leq \overline{\dim}_\mathrm{B} F$, as required. 

The proof that $\underline{\dim}^\Phi F \leq \underline{\dim}_\mathrm{B} F$ is similar. %
Indeed, let $s' > \underline{\dim}_\mathrm{B} F$ and $\epsilon' > 0$. Let $t' \in (\underline{\dim}_\mathrm{B} F, s')$, so for all $\Delta' \in (0,\min\{(\epsilon')^{\frac{1}{s'-t'}},|X|,1\})$ there exists $\delta' \in (0,\Delta')$ and a cover of $F$ by $(\delta')^{-t'}$ or fewer sets, each having diameter at most $\delta'$. As above, we can use this cover to form a cover $\{V'_j\}$ which satisfies $\Phi(\delta') \leq |V'_j| \leq \delta'$ for all $j$ and $\sum_j |V'_j|^s < \epsilon'$. Therefore $\underline{\dim}^\Phi F \leq s'$, so $\underline{\dim}^\Phi F \leq \underline{\dim}_\mathrm{B} F$. 

 The inequalities $\dim_\mathrm{H} F \leq \underline{\dim}^\Phi F$, $\underline{\dim}^\Phi F \leq \overline{\dim}^\Phi F$ and $\underline{\dim}_\mathrm{B} F \leq \overline{\dim}_\mathrm{B} F$ follow directly from the definitions. The inequality $\overline{\dim}_\mathrm{B} F \leq \dim_\mathrm{A} F$ holds by fixing $R = |F|$ in the definition~\ref{e:assouaddef}. The inequality $\dim_\mathrm{A} F \leq \dim_\mathrm{A} X$ follows from~\ref{e:assouaddef} since $F \subseteq X$. 
\end{proof}

It follows from Proposition~\ref{basicbounds} that if $F \subset \R^n$ is non-empty and bounded then $\underline{\dim}^\Phi F \leq \overline{\dim}^\Phi F \leq n$, and if in addition $F$ is open with respect to the Euclidean metric then $\underline{\dim}^\Phi F = \overline{\dim}^\Phi F = n$, as one would expect. 
There is no general relationship between the lower box dimension and the upper intermediate dimensions. 
Indeed, it is a straightforward exercise to construct a non-empty bounded $F \subset \R$ such that $\lbd F = 0$ and $\ubd F = 1$, in which case $\overline{\dim}_{\theta} F = 1$ for all $\theta \in (0,1]$ by \cite[Proposition~2.4]{Falconer2020firstintermediate} (see also Corollary~\ref{c:int-dim-bound}). 
But if $G = \{ \, 1/n : n \in \N \, \}$, then $\overline{\dim}_{\theta} G = \frac{\theta}{1+\theta} < 1/2 = \lbd G$ for all $\theta \in (0,1)$ by \cite[Proposition~3.1]{Falconer2020firstintermediate}. 

The dimensions satisfy the following basic properties. 
\begin{prop}\label{unprovedprop}
  \hfill \begin{enumerate}[label=(\roman*)]
\item Both $\overline{\dim}^\Phi$ and $\underline{\dim}^\Phi$ are \emph{increasing for sets}: if $E \subseteq F$ then $\overline{\dim}^\Phi E \leq \overline{\dim}^\Phi F$ and $\underline{\dim}^\Phi E \leq \underline{\dim}^\Phi F$. 
\item Both $\overline{\dim}^\Phi$ and $\underline{\dim}^\Phi$ are \emph{stable under closure}: $\overline{\dim}^\Phi F = \overline{\dim}^\Phi \overline{F}$ and $\underline{\dim}^\Phi F = \underline{\dim}^\Phi \overline{F}$. 
\end{enumerate}
\end{prop}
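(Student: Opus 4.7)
Both parts are routine consequences of Definition~\ref{maindefinition}. The guiding principle is that the conditions imposed on a cover (the diameter constraint $\Phi(\delta)\leq |U_i|\leq \delta$ and the cost bound $\sum_i|U_i|^s\leq\epsilon$) are intrinsic to the cover itself and do not reference the set being covered. Consequently, any admissible cover of a larger set is automatically an admissible cover of any subset, and any admissible cover can be modified to cover the closure without changing its diameters or cost. I do not expect a serious obstacle.

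For part~(i), the plan is to compare directly the classes of competitor covers appearing in the definition of $\overline{\dim}^\Phi$. Fix $s>\overline{\dim}^\Phi F$ and $\epsilon>0$. By definition, for every sufficiently small $\delta$ there exists a cover $\{U_i\}$ of $F$ with $\Phi(\delta)\leq|U_i|\leq\delta$ and $\sum_i|U_i|^s\leq\epsilon$. Since $E\subseteq F\subseteq\bigcup_i U_i$, the same collection $\{U_i\}$ covers $E$, and the diameter and cost conditions are unaffected. Taking the infimum over such $s$ gives $\overline{\dim}^\Phi E\leq\overline{\dim}^\Phi F$. The argument for $\underline{\dim}^\Phi$ is identical: each admissible pair $(\delta,\{U_i\})$ for $F$ is automatically admissible for $E$, so the infimum defining $\underline{\dim}^\Phi E$ is over a class at least as large.

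For part~(ii), the inequality $\overline{\dim}^\Phi F\leq\overline{\dim}^\Phi\overline{F}$ (and likewise for the lower version) is immediate from (i) since $F\subseteq\overline{F}$. For the reverse inequalities, the key observation is the elementary metric-space identity $|\overline{U}|=|U|$ for every bounded $U$. Given an admissible cover $\{U_i\}$ of $F$ with $\Phi(\delta)\leq|U_i|\leq\delta$ and $\sum_i|U_i|^s\leq\epsilon$, replace each $U_i$ by its closure $\overline{U_i}$. The union $\bigcup_i\overline{U_i}$ is closed (the cover is finite, as noted immediately after Definition~\ref{maindefinition}) and contains $F$, hence contains $\overline{F}$. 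The diameter bounds $\Phi(\delta)\leq|\overline{U_i}|\leq\delta$ and the cost bound $\sum_i|\overline{U_i}|^s\leq\epsilon$ are preserved verbatim. Hence every competitor $s$ in the definition for $F$ is also a competitor for $\overline{F}$, giving $\overline{\dim}^\Phi\overline{F}\leq\overline{\dim}^\Phi F$ and, by the same argument applied scale by scale, $\underline{\dim}^\Phi\overline{F}\leq\underline{\dim}^\Phi F$.

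If anything deserves to be flagged as the one non-trivial input, it is the diameter-preservation identity $|\overline{U}|=|U|$, which ensures that the passage from $U_i$ to $\overline{U_i}$ does not push the upper or lower endpoint of the admissible diameter window $[\Phi(\delta),\delta]$; this is the only place where one could conceivably lose the size constraint, and it is automatic.
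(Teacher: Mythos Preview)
Your proof is correct and follows exactly the approach the paper has in mind; the paper itself dismisses this as ``straightforward from the definition'' without writing any details, and you have supplied them accurately. Your explicit appeal to the finiteness of the cover (noted right after Definition~\ref{maindefinition}) to ensure $\bigcup_i \overline{U_i}$ is closed is the one point worth making explicit, and you handled it correctly.
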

\begin{proof}
This is straightforward from the definition. 
\end{proof}

\begin{example}\label{dirichlet}
The set $F \coloneqq \mathbb{Q} \cap [0,1] \subset \mathbb{R}$ is countable, so $\dim_\mathrm{H} F = 0$, but $\underline{\dim}^\Phi F = \overline{\dim}^\Phi F = 1$ for all admissible $\Phi$, directly from Definition~\ref{maindefinition}. This demonstrates that:
\begin{itemize}
\item The dimensions $\underline{\dim}^\Phi$ and $\overline{\dim}^\Phi$ are different from $\dim_\mathrm{H}$. 
\item There are subsets of $\mathbb{R}$, such as $F$, for which there does not exist a family of admissible functions for which the $\Phi$-intermediate dimensions interpolate between the Hausdorff and box dimensions of the set. This means that the assumption of compactness in Theorem~\ref{recoverinterpolation} on page~\pageref{recoverinterpolation} cannot be removed in general. 
\item The dimensions $\underline{\dim}^\Phi$ and $\overline{\dim}^\Phi$ are not countably stable. 
\end{itemize}
\end{example}

In Proposition~\ref{whenequalsbox} we give a sufficient condition for the $\Phi$-intermediate dimension always to equal the box dimension. 
As an example, the function $\Phi(\delta) \coloneqq \frac{\delta}{-\log \delta}$ satisfies the assumptions of Proposition~\ref{whenequalsbox}. Recall that in this chapter $N_\delta(F)$ is defined as in~\eqref{e:ndeltadefn}. 
\begin{prop}\label{whenequalsbox}
Let $\Phi$ be an admissible function such that $\frac{\log \delta}{\log \Phi(\delta)} \to 1$ as $\delta \to 0^+$. Then for any subset $F$, $\overline{\dim}^\Phi F = \overline{\dim}_\mathrm{B} F$ and $\underline{\dim}^\Phi F = \underline{\dim}_\mathrm{B} F$. 
\end{prop}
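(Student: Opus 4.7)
The proof divides into the two inequalities. One direction, namely $\overline{\dim}^\Phi F \leq \overline{\dim}_\mathrm{B} F$ and $\underline{\dim}^\Phi F \leq \underline{\dim}_\mathrm{B} F$, is already given by Proposition~\ref{basicbounds}. So the plan is to establish the reverse inequalities, exploiting that the hypothesis $\log\delta/\log\Phi(\delta)\to 1$ forces $\Phi(\delta)$ to be only logarithmically smaller than $\delta$.

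First, I would make a counting observation. Suppose $s > \overline{\dim}^{\Phi} F$ and fix $\epsilon>0$. By Definition~\ref{maindefinition}, there is $\delta_0 \in (0,1]$ such that for every $\delta\in(0,\delta_0)$ we have a cover $\{U_1,\dotsc,U_n\}$ of $F$ with $\Phi(\delta)\leq|U_i|\leq\delta$ and $\sum_i |U_i|^s\leq\epsilon$. Each such cover is in particular a cover by sets of diameter at most $\delta$, and by standard comparability between covers by sets of diameter at most $\delta$ and covers by balls of radius $\delta/2$ (using that balls of radius $\delta$ can be covered by a uniformly bounded number of balls of radius $\delta/2$ when one works inside a doubling space, or directly from the comparability of covering notions in the sense of \cite[Equivalent definitions~2.1]{Falconer2014main}), there is a constant $C>0$ independent of $\delta$ such that $n\geq C\,N_\delta(F)$. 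Since $|U_i|\geq\Phi(\delta)$ and $s\geq 0$, we have $|U_i|^s\geq\Phi(\delta)^s$, hence
\[
 \epsilon \;\geq\; \sum_i |U_i|^s \;\geq\; n\,\Phi(\delta)^s \;\geq\; C\,N_\delta(F)\,\Phi(\delta)^s.
\]

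Second, I would take logarithms and pass to the limit. Rearranging gives $\log N_\delta(F) \leq \log(\epsilon/C) + s\log(1/\Phi(\delta))$, so
\[
 \frac{\log N_\delta(F)}{-\log\delta} \;\leq\; \frac{\log(\epsilon/C)}{-\log\delta} \;+\; s\cdot\frac{\log(1/\Phi(\delta))}{\log(1/\delta)}.
\]
The hypothesis $\log\delta/\log\Phi(\delta)\to 1$ is equivalent (for $\delta$ small enough that $\Phi(\delta)<1$) to $\log(1/\Phi(\delta))/\log(1/\delta)\to 1$. Taking $\limsup_{\delta\to 0^+}$, the first term vanishes and the second tends to $s$, giving $\overline{\dim}_\mathrm{B} F\leq s$. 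Since $s>\overline{\dim}^\Phi F$ was arbitrary, $\overline{\dim}_\mathrm{B} F\leq\overline{\dim}^\Phi F$, as required.

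For the lower dimension, I would run essentially the same argument but along a suitable subsequence. Namely, if $s>\underline{\dim}^\Phi F$, then for any $\delta_0>0$ and $\epsilon>0$ there exists $\delta\in(0,\delta_0)$ admitting a $(\Phi(\delta),\delta)$-cover of total $s$-cost at most $\epsilon$; this yields a sequence $\delta_k\to 0^+$ for which the inequality above holds. Taking $\liminf$ along any sequence $\delta\to 0^+$ can only decrease, but the estimate above bounds the relevant subsequence, giving $\underline{\dim}_\mathrm{B} F \leq s$ and hence $\underline{\dim}_\mathrm{B} F\leq\underline{\dim}^\Phi F$. The main (and essentially only) technical point is the counting step relating the number of sets in a $(\Phi(\delta),\delta)$-cover to $N_\delta(F)$; once this is in place, the logarithmic asymptotic of $\Phi$ does the rest.
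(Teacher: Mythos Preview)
Your proof is correct and follows essentially the same approach as the paper: both use that a $(\Phi(\delta),\delta)$-cover has at least $\sim N_\delta(F)$ sets, combine this with the lower bound $|U_i|^s \geq \Phi(\delta)^s$ to get $N_\delta(F)\,\Phi(\delta)^s \lesssim \sum_i |U_i|^s$, and then exploit $\log\Phi(\delta)/\log\delta\to 1$. The paper packages this as a proof by contradiction with an auxiliary exponent $t\in(s,\overline{\dim}_\mathrm{B} F)$ and shows $N_\delta(F)\,\delta^t\to 0$, whereas you divide through by $-\log\delta$ and pass to the limit directly; your formulation is slightly cleaner in that it avoids the extra parameter.
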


\begin{proof}
We prove that $\overline{\dim}^\Phi F = \overline{\dim}_\mathrm{B} F$; the proof of $\underline{\dim}^\Phi F = \underline{\dim}_\mathrm{B} F$ is similar. %
Assume (for the purpose of obtaining a contradiction) that $\overline{\dim}^\Phi F < \overline{\dim}_\mathrm{B} F$, and let $s,t \in \mathbb{R}$ be such that $\overline{\dim}^\Phi F < s < t < \overline{\dim}_\mathrm{B} F$. Then for all sufficiently small $\delta$ there exists a cover $\{U_i\}$ of $F$ such that $\Phi(\delta) \leq |U_i| \leq \delta$ for all $i$, and $\sum_i |U_i|^s \leq 1$. Therefore 
\begin{equation*}
N_\delta(F) \delta^t \leq \sum_i \delta^t \frac{|U_i|^s|U_i|^{t-s}}{|U_i|^t} \leq \sum_i \delta^t \frac{|U_i|^s \delta^{t-s}}{(\Phi(\delta))^t}  \leq \left(\frac{\delta^{1+(t-s)/t}}{\Phi(\delta)}\right)^t, 
 \end{equation*}
 which converges to 0 as $\delta \to 0^+$. %
 This contradicts $t < \overline{\dim}_\mathrm{B} F$ and completes the proof. 
\end{proof}

We now consider continuity-like results for the $\Phi$-intermediate dimensions. 
The main such result is Theorem~\ref{maincty}, which roughly implies that if two admissible functions $\Phi$ and $\Phi_1$ are in a quantitative sense `close' to each other, then the $\Phi$ and $\Phi_1$-intermediate dimensions of sets whose Assouad dimension is not too large do not differ greatly. In a similar spirit, quantitative continuity results have been proven for the intermediate dimensions in $\R^n$, for example \cite[Proposition~2.1]{Falconer2020firstintermediate} and \cite[(14.2.2)]{Falconer2021intdimsurvey}. 
In Chapter~\ref{s:attainable}, we will see what Theorem~\ref{maincty} says about the $\theta$-intermediate dimensions of sets and deduce a complete characterisation of attainable forms of intermediate dimensions (we will also give a self-contained proof for completeness). 
\begin{thm}\label{maincty}
Let $\Phi$ and $\Phi_1$ be admissible functions. Let $F$ be a subset satisfying $0 < \dim_\mathrm{A} F < \infty$,  and assume that $\overline{F}$ is complete. %
Suppose that $0 < \overline{\dim}^\Phi F < \dim_\mathrm{A} F$, and let $\eta \in [0,\dim_\mathrm{A} F - \overline{\dim}^\Phi F)$. %
Define 
\begin{equation}\label{e:definelambdaalpha}
\gamma \coloneqq \frac{\overline{\dim}^\Phi F - \dim_\mathrm{L} F}{\overline{\dim}^\Phi F + \eta - \dim_\mathrm{L} F}; \qquad \alpha \coloneqq \frac{\dim_\mathrm{A} F-\overline{\dim}^\Phi F}{\dim_\mathrm{A} F-\overline{\dim}^\Phi F - \eta}. 
\end{equation}
If 
\begin{equation}\label{continuityassumption}
 \Phi_1\left(\delta\right) \leq (\Phi(\delta^{1/\alpha}))^{\gamma}
 \end{equation}
 for all sufficiently small $\delta > 0$, then $\overline{\dim}^{\Phi_1} F \leq \overline{\dim}^\Phi F + \eta$. 
The same holds with $\overline{\dim}$ replaced by $\underline{\dim}$ throughout. 
\end{thm}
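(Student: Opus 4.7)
When $\eta = 0$ we have $\gamma = \alpha = 1$, the hypothesis reduces to $\Phi_1(\delta) \leq \Phi(\delta)$, and every $\Phi$-cover at scale $\delta$ is already a $\Phi_1$-cover at scale $\delta$, yielding the conclusion. Henceforth assume $\eta > 0$. Write $D = \overline{\dim}^\Phi F$, and fix small $\kappa > 0$, $A$ slightly above $\dim_A F$, and $L$ slightly below $\dim_L F$ (taking $L = 0$ if $\dim_L F = 0$); set $s = D + \kappa$ and $s' = s + \eta$. For any prescribed $\epsilon > 0$ and all sufficiently small $\delta$, the definition of $\overline{\dim}^\Phi$ yields a cover $\{U_i\}$ of $F$ with $\Phi(\delta^{1/\alpha}) \leq |U_i| \leq \delta^{1/\alpha}$ and $\sum_i |U_i|^s \leq \epsilon$. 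The plan is to convert this into a $\Phi_1$-cover $\{V_j\}$ of $F$ at scale $\delta$ with $\sum_j |V_j|^{s'} \leq C\epsilon$, giving $\overline{\dim}^{\Phi_1} F \leq s' = D + \eta + \kappa$; letting $\kappa \to 0^+$ concludes, and the lower-dimension version follows by the same argument using ``for arbitrarily small $\delta$" in place of ``for sufficiently small $\delta$".

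Partition the $U_i$ by diameter into \emph{large} ($|U_i| > \delta$; possible because $\delta^{1/\alpha} \geq \delta$), \emph{medium} ($\Phi_1(\delta) \leq |U_i| \leq \delta$), and \emph{small} ($|U_i| < \Phi_1(\delta)$). For large sets, subdivide using the Assouad bound $N_r(B(x,R) \cap F) \leq C_A(R/r)^A$: cover each $U_i \cap F$ by at most $C_A(|U_i|/\delta)^A$ sets of diameter in $[c\delta/2, \delta]$, inflated via uniform perfectness to meet the lower threshold $\Phi_1(\delta)$. Using $|U_i|^A \leq (\delta^{1/\alpha})^{A - s}|U_i|^s$, the total $s'$-cost is at most $C_A \epsilon \, \delta^{s' - A + (A-s)/\alpha}$, and the exponent $s' - A + (A-s)/\alpha$ is non-negative by the choice of $\alpha = (A-D)/(A-D-\eta)$, vanishing exactly at $s = D$. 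Medium sets are kept and contribute $\leq \delta^\eta \epsilon$. For small sets I run a $5r$-covering argument on $\{B(x_i, \Phi_1(\delta)) : i \text{ small}\}$ with $x_i \in U_i \cap F$, extracting a pairwise disjoint subcollection of size $M$ whose $5$-enlargements cover $\bigcup_{\text{small}} U_i$ with diameters in $[\Phi_1(\delta), 10\Phi_1(\delta)] \subseteq [\Phi_1(\delta), \delta]$.

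To bound $M$, when $L > 0$ I invoke \eqref{e:lowerdimexistmeas} to select $\nu \in \mathcal{P}_{\overline{F}}$ with $\dim_L \nu > L$, which yields the Frostman-type bounds $\nu(U) \leq C|U|^L$ and $\nu(B(x,r)) \geq C^{-1} r^L$. A mass-distribution argument on each disjoint ball $B(x_{i_k}, \Phi_1(\delta))$ shows that the $U_j$ intersecting that ball have total $L$-cost bounded below by a constant times $\Phi_1(\delta)^L$; combining with $|U_j|^L \leq \Phi(\delta^{1/\alpha})^{L-s}|U_j|^s$ and the bounded-overlap property of small $U_j$ across disjoint balls (a consequence of the doubling of $F$) gives $M \leq C\epsilon \,\Phi(\delta^{1/\alpha})^{L-s} \Phi_1(\delta)^{-L}$. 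When $L = 0$ the trivial count bound $M \leq \epsilon/\Phi(\delta^{1/\alpha})^s$ suffices. Substituting the hypothesis $\Phi_1(\delta) \leq \Phi(\delta^{1/\alpha})^\gamma$ and using the defining identity $\gamma(D + \eta - L) = D - L$, the small-sets $s'$-cost reduces to a constant times $\epsilon \,\Phi(\delta^{1/\alpha})^{\gamma(s'-L) - (s-L)}$, whose exponent is zero at $s = D$.

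The main obstacle is that at the critical pair $(\tau, s) = (\delta^{1/\alpha}, D)$ both the large-sets and small-sets exponents saturate with zero safety margin, whereas the definition of $\overline{\dim}^\Phi F$ only supplies covers for $s > D$ strictly. To close this gap, I run the construction at a slightly perturbed scale $\tau = \delta^{1/\alpha - \kappa'}$ for an appropriate $\kappa' = \kappa'(\kappa) > 0$: this transfers a small amount of the large-sets margin into the small-sets exponent so that both remain strictly positive at $s = D + \kappa$, at the cost of only slightly worse constants. Summing the three contributions yields $\sum_j |V_j|^{s'} \leq C\epsilon$ for all sufficiently small $\delta$; letting $\epsilon \to 0^+$ and then $\kappa \to 0^+$ completes the proof of $\overline{\dim}^{\Phi_1} F \leq D + \eta$.
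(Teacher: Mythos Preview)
Your overall architecture matches the paper's: partition the $\Phi$-cover at scale $\delta^{1/\alpha}$ into large/medium/small, break up large sets via the Assouad dimension, and control small sets via a maximal separated set together with a doubling measure realising the lower dimension. The gap is in your treatment of the saturation problem.

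You correctly observe that the two key exponents vanish at $s=D$, but your computation for the small sets is slightly off: with source exponent $s=D+\kappa$ and target $s'=s+\eta$, the small-sets exponent is
\[
\gamma(s'-L)-(s-L)=\gamma\kappa-\kappa=(\gamma-1)\kappa<0
\]
at $L=\dim_L F$, so the bound blows up, not just saturates. Your proposed remedy of running the construction at $\tau=\delta^{1/\alpha-\kappa'}$ does not repair this. The only link between $\Phi(\tau)$ and $\Phi_1(\delta)$ is the hypothesis $\Phi_1(\delta)\le\Phi(\delta^{1/\alpha})^\gamma$, and monotonicity only gives $\Phi(\tau)\ge\Phi(\delta^{1/\alpha})$ with no quantitative control. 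Substituting this back yields exactly the same factor $\Phi(\delta^{1/\alpha})^{\gamma(s'-L)-(s-L)}$ with the same negative exponent, while the perturbation strictly \emph{reduces} the large-sets margin by $\kappa'(A-s)$. No transfer actually occurs for general admissible $\Phi$.

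The paper's fix is different: it decouples the two exponents. One takes the source cover with $s'$-cost at most $\epsilon$ for some $s'\in(D,s)$, and bounds the $(s+\eta)$-cost of the new cover. The conditions become
\[
\gamma(s+\eta-\lambda)-(s'-\lambda)>0\qquad\text{and}\qquad a-s'-\alpha(a-s-\eta)>0,
\]
and both are satisfiable once $s'-D$, $a-\dim_A F$ and $\dim_L F-\lambda$ are small relative to $s-D$. In your notation this amounts to fixing the target at $D+\eta+\kappa$ but taking the source exponent at $D+\kappa'$ with $\kappa'<\gamma\kappa$; then both exponents are strictly positive.

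A minor point: $\dim_L\nu>L$ yields the ratio bound $\nu(B(x,R))/\nu(B(x,r))\gtrsim(R/r)^L$ and hence $\nu(U)\lesssim|U|^L$, but it does \emph{not} give $\nu(B(x,r))\gtrsim r^L$ (that is an upper-dimension statement). The paper avoids this by using the ratio bound directly on each separated ball and cancelling the common factor $\mu(B(z_m,\Phi_1(\delta)))$; your bound on $M$ is still correct once argued this way.
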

By a similar argument, if we only assume that $\Phi_1\left(\delta\right) \leq (\Phi(\delta^{1/\alpha}))^{\gamma}$ (with $\gamma$ and $\alpha$ as in~\eqref{e:definelambdaalpha}) holds only for a sequence of $\delta \to 0^+$, then we can only conclude $\underline{\dim}^{\Phi_1} F \leq \overline{\dim}^\Phi F + \eta$. 

\begin{proof}
Without loss of generality assume $\eta > 0$, so $\gamma < 1 < \alpha$. The idea of the proof is to convert a cover for the interval $[\Phi(\delta),\delta]$ into a cover for $[\Phi_1(\delta^\alpha),\delta^\alpha]$. %
We do this by using the Assouad dimension to replace sets which are too large with sets of size $\delta^\alpha$ (corresponding to indices $I_1$). We use the lower dimension to replace sets which are too small with sets of size $(\Phi(\delta))^{\gamma}$ (corresponding to indices $I_3$). We have chosen the parameters $\gamma$ and $\alpha$ so that the `cost' of each of these actions in terms of how much the dimension can increase is the same, namely $\eta$. 

Without loss of generality we assume that $F$ is closed. %
Now for $s \in (\overline{\dim}^\Phi F,\dim_\mathrm{A} F - \eta)$ let $s' \in (\overline{\dim}^\Phi F,s)$, $a > \dim_\mathrm{A} F$ and $\lambda < \dim_\mathrm{L} F$ satisfy 
\begin{equation}\label{ctyparameters}
\gamma (s + \eta - \lambda) - (s'-\lambda) > 0  \qquad \mbox{and} \qquad  a - s' - \alpha (a-s-\eta) > 0.
\end{equation}
Let $c \in (0,1/2)$ be such that $X$ is $c$-uniformly perfect. 
Fix $C\in (0,\infty)$ such that $N_r(B(x,R)\cap F) \leq C(R/r)^a$ for all $x \in F$ and $0<r<R$. 
Since $F$ is assumed to be complete, by~\eqref{e:lowerdimexistmeas} there exists a doubling Borel probability measure $\mu$ with $\mbox{supp}(\mu) = F$ and $\dim_{\mathrm L} \mu \in (\lambda,\dim_{\mathrm L} F]$. %
In particular, there exists $A \in (0,1)$ such that if $0 < r < R \leq |F|$ and $x \in X$ then 
\[ \frac{\mu(B(x,R))}{\mu(B(x,r))} \geq A \left( \frac{R}{r}\right)^{\lambda}.\]
Fix $M>1$ such that $\mu$ is $M$-doubling. 

Let $\epsilon > 0$. Choose $\Delta >0$ such that for all $\delta \in (0,\Delta)$ there exists a cover $\{U_i\}_{i \in I}$ of $F$ such that $\Phi(\delta) \leq |U_i| \leq \delta$ for all $i$, and 
\[ \sum_i |U_i|^{s'} \leq  (c^{-(s+\eta)} M^2 A^{-1} 10^{s+\eta} + 3^{s+\eta} + 2^a C)^{-1} \epsilon. \]
We may reduce $\Delta$ to assume that~\eqref{continuityassumption} and $\delta/\Phi_1(\delta) \geq 5/c$ hold for all $\delta \in (0,\Delta)$, and $\Delta < 1$, $\Delta<|X|$. 
Write $I$ as a disjoint union $I = I_1 \cup I_2 \cup I_3$ where 
\begin{align*} 
I_1 &\coloneqq \{ \, i \in I : \Phi(\delta) \leq |U_i| < \Phi_1(\delta^\alpha) \, \} \\*
I_2 &\coloneqq \{ \, i \in I : \Phi_1(\delta^\alpha) \leq |U_i| \leq \delta^\alpha/2 \, \} \\*
I_3 &\coloneqq \{ \, i \in I : \delta^\alpha/2 < |U_i| \leq \delta \, \},
\end{align*}
noting that some of these sets may be empty. %
Let $z_1,\dotsc,z_K$ be a maximal $4\Phi_1(\delta^\alpha)$-separated subset of 
\[ F \setminus \left( \bigcup_{i \in I_2 \cup I_3} \mathcal{S}_{\Phi_1(\delta^\alpha)}(U_i)\right),\]
recalling that $\mathcal{S}_r(U)$ is the $r$-neighbourhood of $U$. 

For each $k \in I_3$ pick 
\[ x_{k,1},\dotsc,x_{k,\lfloor C(2|U_k|/\delta^\alpha)^a \rfloor} \in F \] 
such that 
\[ \mathcal{S}_{\Phi_1(\delta^\alpha)}(U_k) \cap F \subseteq \bigcup_{l=1}^{\lfloor 2^a C|U_k|^a\delta^{-a\alpha} \rfloor} B(x_{k,l},\delta^\alpha/2).\]
Define 
\begin{align*}
\mathcal{U}_1 &\coloneqq \{ \, B(z_m,5\Phi_1(\delta^\alpha)/c) : 1 \leq m \leq K \, \}, \\*
\mathcal{U}_2 &\coloneqq \{ \, \mathcal{S}_{\Phi_1(\delta^\alpha)} (U_j) : j \in I_2 \, \}, \\*
\mathcal{U}_3 &\coloneqq \bigcup_{k \in I_3} \{ \,  B(x_{k,l},\delta^\alpha/2) : 1 \leq l \leq \lfloor 2^a C|U_l|^a\delta^{-a\alpha} \rfloor \, \}. 
\end{align*}
Then $\mathcal{U}_1 \cup \mathcal{U}_2 \cup \mathcal{U}_3$ is a cover of $F$, and for sufficiently small $\delta$ the diameter of each covering set lies in the interval $[\Phi_1(\delta^\alpha),\delta^\alpha]$. 

We bound the $(s+\eta)$-powers of the diameters of each part of the cover separately. First consider $\mathcal{U}_1$. 
For $m \in \{1,\dotsc,K\}$ let $J_m \coloneqq \{\, i \in I_1 : U_i \cap B(z_m,\Phi_1(\delta^\alpha)) \neq \varnothing \, \}$. 
If $i \in J_m$, let $u_{i,m} \in U_i \cap B(z_m,\Phi_1(\delta^\alpha))$. Then 
\begin{align*}
 \mu(U_i) &\leq \mu(B(u_{i,m},2|U_i|)) \\
 &\leq A^{-1} \mu(B(u_{i,m},2\Phi_1(\delta^\alpha))) \left(\frac{\Phi_1(\delta^\alpha)}{|U_i|}\right)^{-\lambda} \\
&\leq M^2 A^{-1} \mu(B(z_m,\Phi_1(\delta^\alpha))) \left(\frac{\Phi_1(\delta^\alpha)}{|U_i|}\right)^{-\lambda}. 
\end{align*} 
Therefore 
\[ \mu(B(z_m,\Phi_1(\delta^\alpha))) \leq \sum_{i \in J_m} \mu(U_i) \leq M^2 A^{-1} \mu(B(z_m,\Phi_1(\delta^\alpha))) \cdot (\Phi_1(\delta^\alpha))^{-\lambda} \cdot \sum_{i \in J_m} |U_i|^{\lambda}. \]
Since $\mbox{supp}(\mu) = F$, we can cancel through by the positive number $\mu(B(z_m,\Phi_1(\delta^\alpha)))$. 
Note also that if $i \in I_1$ then there is at most one $m$ for which $U_i \cap B(z_m,\Phi_1(\delta^\alpha)) \neq \varnothing$. Therefore 
\begin{align*}
\sum_{U \in \mathcal{U}_1} |U|^{s+\eta} &\leq K (10c^{-1} \Phi_1(\delta^\alpha))^{s+\eta} \\
&\leq c^{-(s+\eta)} M^2 A^{-1} 10^{s + \eta} (\Phi_1(\delta^\alpha))^{s + \eta - \lambda} \sum_{i \in I} |U_i|^{\lambda}
\\&\leq c^{-(s+\eta)} M^2 A^{-1} 10^{s + \eta} (\Phi_1(\delta^\alpha))^{s + \eta - \lambda} (\Phi(\delta))^{-(s' - \lambda)} \sum_{i \in I} |U_i|^{s'} \\
&\leq c^{-(s+\eta)} M^2 A^{-1} 10^{s + \eta} (\Phi(\delta))^{\gamma(s + \eta - \lambda) - (s' - \lambda)} \sum_{i \in I} |U_i|^{s'} \\
&< c^{-(s+\eta)} M^2 A^{-1} 10^{s + \eta} \sum_{i \in I} |U_i|^{s'}, 
\end{align*}
where we used~\eqref{ctyparameters} in the last step. 

For $\mathcal{U}_2$, 
\[ \sum_{U \in \mathcal{U}_2} |U|^{s + \eta} \leq \sum_{j \in I_2} (3 |U_j|)^{s + \eta} \leq 3^{s+\eta} \sum_{j \in I} |U_j|^{s'}. \] 
Finally, consider $\mathcal{U}_3$. Since $|U_k| \leq \delta$ for $k \in I_3$, 
\begin{align*}
\sum_{k \in I_3} \sum_{l = 1}^{\lfloor 2^a C|U_k|^a\delta^{-a\alpha} \rfloor} |B(x_{k,l},\delta^\alpha/2)|^{s + \eta} &\leq \sum_{k \in I_3} 2^a C|U_k|^a \delta^{-a\alpha} \delta^{\alpha(s + \eta)} \\
&\leq 2^a C \delta^{-a\alpha + \alpha(s + \eta) + a-s'} \sum_{k \in I_3} |U_k|^{s'} \\
&\leq 2^a C \sum_{k \in I} |U_k|^{s'}. 
\end{align*}
Bringing the above bounds together, for all $\delta \in (0, \Delta)$, 
\[ \sum_{U \in \mathcal{U}_1 \cup \mathcal{U}_2 \cup \mathcal{U}_3} |U|^{s + \eta} \leq (c^{-(s+\eta)} M^2 A^{-1} 10^{s+\eta} + 3^{s+\eta} + 2^a C) \sum_{i \in I} |U_i|^{s'} \leq \epsilon. \]
It follows that $\overline{\dim}^{\Phi_1} F \leq s + \eta$, as required. 
The proof for when $\overline{\dim}$ is replaced by $\underline{\dim}$ is similar. 
\end{proof} 

We have a similar result for the case when the $\Phi$-intermediate dimension of $F$ is $0$. 
\begin{prop}\label{zerocontinuityprop}
Let $\Phi,\Phi_1$ be admissible functions, assume $0 < \dim_\mathrm{A} F < \infty$, let $\eta \in (0,\dim_\mathrm{A} F)$, and let $b>0$. 
If for all sufficiently small $\delta$, 
\begin{equation}\label{zerocontinuityassumption} 
\Phi_1(\delta) \leq \left(\Phi(\delta^{1/\alpha})\right)^b 
\qquad \mbox{where} \qquad \alpha = \alpha(\eta) \coloneqq \frac{\dim_\mathrm{A} F}{\dim_\mathrm{A} F - \eta}
\end{equation}
holds, then if $\underline{\dim}^\Phi F = 0$ then $\underline{\dim}^{\Phi_1} F \leq \eta$, and if $\overline{\dim}^\Phi F = 0$ then $\overline{\dim}^{\Phi_1} F \leq \eta$. 
If we assume only that~\eqref{zerocontinuityassumption} holds for a subsequence of $\delta \to 0^+$, then if $\overline{\dim}^\Phi F = 0$ then $\underline{\dim}^{\Phi_1} F \leq \eta$. 
\end{prop}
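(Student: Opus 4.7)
The plan is to adapt the proof of Theorem~\ref{maincty} almost verbatim, but with the $I_1$ argument collapsed to a trivial counting bound. The vanishing of $\overline{\dim}^\Phi F$ (or $\underline{\dim}^\Phi F$) is exactly what lets us take the reference exponent $s'$ arbitrarily close to $0$, which buys us enough slack to dispense with the doubling measure and lower-dimension parameter $\lambda$ that drove the $I_1$ bound in Theorem~\ref{maincty}.

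First I would fix any $s \in (0, \dim_{\mathrm A} F - \eta)$ and $s' \in (0, s)$, and choose $a > \dim_{\mathrm A} F$ close enough to $\dim_{\mathrm A} F$ that
\[ a - s' - \alpha(a - s - \eta) > 0 \qquad \text{and} \qquad b(s + \eta) - s' > 0. \]
The first condition is the same one used in the $I_3$ calculation of Theorem~\ref{maincty} and is satisfied for $a$ close to $\dim_{\mathrm A} F$ because $s + \eta < \dim_{\mathrm A} F$; the second is arranged by making $s'$ small compared to $b\eta$. Given $\epsilon > 0$, I would use the hypothesis on the $\Phi$-intermediate dimension to produce, either for every sufficiently small $\delta$ or (for the final statement) for a sequence $\delta \to 0^+$ on which \eqref{zerocontinuityassumption} also holds, a cover $\{U_i\}_{i \in I}$ of $F$ with $\Phi(\delta) \leq |U_i| \leq \delta$ and $\sum_i |U_i|^{s'} \leq \epsilon$.

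Next I would split $I = I_1 \sqcup I_2 \sqcup I_3$ exactly as in Theorem~\ref{maincty}, according to whether $|U_i|$ is below $\Phi_1(\delta^\alpha)$, inside $[\Phi_1(\delta^\alpha), \delta^\alpha/2]$, or above $\delta^\alpha/2$. For $i \in I_3$ I would cover $\mathcal{S}_{\Phi_1(\delta^\alpha)}(U_i) \cap F$ by at most $2^a C(|U_i|/\delta^\alpha)^a$ balls of radius $\delta^\alpha/2$ via the Assouad-dimension bound, exactly as in the proof of Theorem~\ref{maincty}; the calculation there shows the $(s+\eta)$-cost of this part is controlled by $\sum_i |U_i|^{s'} \leq \epsilon$. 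For $i \in I_2$ I would keep $U_i$: since $|U_i| \leq 1$ and $s + \eta > s'$, $\sum_{i \in I_2} |U_i|^{s+\eta} \leq \epsilon$. For $i \in I_1$ the simplification appears: using uniform perfectness of $X$ (as in the proof of Proposition~\ref{basicbounds}), I can enlarge each $U_i$ to a set $V_i$ of diameter in $[\Phi_1(\delta^\alpha), \delta^\alpha]$; since $|U_i| \geq \Phi(\delta)$ gives $1 \leq (|U_i|/\Phi(\delta))^{s'}$, one has $|I_1| \leq (\Phi(\delta))^{-s'}\sum_i |U_i|^{s'} \leq (\Phi(\delta))^{-s'}\epsilon$, so by \eqref{zerocontinuityassumption}
\[ \sum_{i \in I_1} |V_i|^{s+\eta} \leq |I_1| \cdot (\Phi_1(\delta^\alpha))^{s+\eta} \leq (\Phi(\delta))^{b(s+\eta) - s'} \epsilon, \]
which is bounded (indeed tends to $0$) for small $\delta$ because $b(s+\eta) - s' > 0$.

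Combining the three contributions gives a $(\Phi_1, \delta^\alpha)$-cover of $F$ with $(s+\eta)$-cost bounded by a constant multiple of $\epsilon$, hence $\overline{\dim}^{\Phi_1} F \leq s + \eta$ in the first case and $\underline{\dim}^{\Phi_1} F \leq s + \eta$ in the quantifier-swapped case (and likewise the subsequence variant yields a $\liminf$ bound). Letting $s \to 0^+$ gives the conclusion. The main technical point to watch is purely bookkeeping: as in Theorem~\ref{maincty}, the enlargements in $I_1$ and the Assouad covers in $I_3$ must be arranged, using the uniform-perfectness constant $c$, so that every piece of the final cover has diameter genuinely in $[\Phi_1(\delta^\alpha), \delta^\alpha]$ rather than slightly outside it; the resulting $c$-dependent multiplicative constants are harmless.
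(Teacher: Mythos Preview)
Your proposal is correct and follows exactly the approach the paper sketches: the paper's proof reads in full ``This is a straightforward modification of the proof of Theorem~\ref{maincty}. A cover for $[\Phi(\delta),\delta]$ is converted into a cover for $[\Phi_1(\delta^\alpha),\delta^\alpha]$ by breaking up the largest sets using the Assouad dimension of $F$, and fattening the smallest sets. The details are left to the reader.'' You have supplied precisely those details, with the key simplification being that for $I_1$ you replace the doubling-measure argument by the crude cardinality bound $|I_1| \leq (\Phi(\delta))^{-s'}\epsilon$, which works because the hypothesis $\overline{\dim}^\Phi F = 0$ lets you take $s'$ below $b(s+\eta)$.
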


\begin{proof} 
This is a straightforward modification of the proof of Theorem~\ref{maincty}. A cover for $[\Phi(\delta),\delta]$ is converted into a cover for $[\Phi_1(\delta^\alpha),\delta^\alpha]$ by breaking up the largest sets using the Assouad dimension of $F$, and fattening the smallest sets. 
The details are left to the reader. 
\end{proof} 
In particular, if $\Phi_1(\delta) \leq (\Phi(\delta))^b$ holds for some $b>0$ and all sufficiently small $\delta$, then $\overline{\dim}^{\Phi} F = 0$ implies $\overline{\dim}^{\Phi_1} F = 0$. 
The following corollary of Theorem~\ref{maincty} and Proposition~\ref{zerocontinuityprop} says that if the underlying metric space is doubling, then if $\Phi$ and $\Phi_1$ are `close' in a way that depends only on $X$, then the difference between the $\Phi$- and $\Phi_1$-intermediate dimensions of subsets will be small, independently of the particular subset. %
\begin{cor}\label{indepctycor}
Let $X$ be a doubling metric space and suppose $F \subseteq X$ is bounded. If 
\begin{equation}\label{indepctyassumption}
\Phi_1\left(\delta^{\frac{\dim_\mathrm{A} X}{\dim_\mathrm{A} X - \eta}}\right) \leq (\Phi(\delta))^{\frac{\dim_\mathrm{A} X}{\dim_\mathrm{A} X + \eta}}
\end{equation}
holds for all sufficiently small $\delta$, then if $\overline{\dim}^\Phi F < \dim_\mathrm{A} F$ and $\eta \in [0,\dim_\mathrm{A} F - \overline{\dim}^\Phi F)$ then $\overline{\dim}^{\Phi_1} F \leq \overline{\dim}^\Phi F + \eta$, and the same holds with $\overline{\dim}$ replaced by $\underline{\dim}$ throughout. 
If we only assume that~\eqref{indepctyassumption} holds for a subsequence of $\delta \to 0^+$, and if $\overline{\dim}^\Phi F < \dim_\mathrm{A} F$ and $\eta \in [0,\dim_\mathrm{A} F - \overline{\dim}^\Phi F)$, then $\underline{\dim}^{\Phi_1} F \leq \overline{\dim}^\Phi F + \eta$. 
\end{cor}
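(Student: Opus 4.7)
The plan is to derive Corollary~\ref{indepctycor} directly from Theorem~\ref{maincty} (when $\overline{\dim}^\Phi F > 0$) and Proposition~\ref{zerocontinuityprop} (when $\overline{\dim}^\Phi F = 0$), by showing that the set-independent hypothesis~\eqref{indepctyassumption} implies the set-dependent hypothesis~\eqref{continuityassumption} of Theorem~\ref{maincty} for every bounded $F \subseteq X$. The essential inputs are the trivial facts that $\dim_\mathrm{A} F \leq \dim_\mathrm{A} X =: D_X < \infty$ (since $X$ is doubling and $F \subseteq X$) and $\dim_\mathrm{L} F \geq 0$, which convert into uniform bounds on the exponents $\alpha$ and $\gamma$ from~\eqref{e:definelambdaalpha}.

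Write $s = \overline{\dim}^\Phi F$ and $D_F = \dim_\mathrm{A} F$, and first assume $s > 0$. Since $u \mapsto u/(u - \eta)$ is decreasing on $(\eta, \infty)$ and $D_F - s \leq D_X$, the parameter $\alpha$ from~\eqref{e:definelambdaalpha} satisfies $\alpha \geq D_X/(D_X - \eta)$, equivalently $1/\alpha \leq (D_X - \eta)/D_X$. Dually, because $v \mapsto v/(v + \eta)$ is increasing and $\dim_\mathrm{L} F \geq 0$ while $s \leq D_X$, one obtains $\gamma \leq s/(s + \eta) \leq D_X/(D_X + \eta)$. Substituting $\xi = \delta^{D_X/(D_X - \eta)}$ in~\eqref{indepctyassumption} yields
\[
\Phi_1(\xi) \leq \bigl(\Phi(\xi^{(D_X - \eta)/D_X})\bigr)^{D_X/(D_X + \eta)}.
\]
For $\xi < 1$, the inequality $1/\alpha \leq (D_X - \eta)/D_X$ gives $\xi^{1/\alpha} \geq \xi^{(D_X - \eta)/D_X}$, and since admissibility forces $\Phi$ to be increasing, $\Phi(\xi^{1/\alpha}) \geq \Phi(\xi^{(D_X - \eta)/D_X})$. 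Moreover, because $\Phi(\cdot) \in (0,1]$, raising to the \emph{smaller} positive exponent $\gamma \leq D_X/(D_X + \eta)$ only enlarges the quantity, so
\[
\bigl(\Phi(\xi^{1/\alpha})\bigr)^\gamma \geq \bigl(\Phi(\xi^{(D_X - \eta)/D_X})\bigr)^{D_X/(D_X + \eta)} \geq \Phi_1(\xi).
\]
This is precisely~\eqref{continuityassumption} for $F$, so Theorem~\ref{maincty} delivers $\overline{\dim}^{\Phi_1} F \leq s + \eta$. The case $s = 0$ is handled by the identical chain of inequalities, which verifies the hypothesis of Proposition~\ref{zerocontinuityprop} with $b \coloneqq D_X/(D_X + \eta) > 0$ and that proposition's $\alpha = D_F/(D_F - \eta)$ in place of $\alpha$ above. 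The $\underline{\dim}$ version and the subsequential statement follow by invoking the corresponding parts of Theorem~\ref{maincty} and Proposition~\ref{zerocontinuityprop}.

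The only genuine obstacle is the monotonicity bookkeeping. The bounds on $\alpha$ and $\gamma$ coming from $\dim_\mathrm{A} F \leq D_X$ and $\dim_\mathrm{L} F \geq 0$ go in opposite directions, and because $\Phi(\cdot) \in (0,1]$ raising to a smaller positive exponent yields a larger quantity; once these two reversals are tracked correctly they reinforce each other and the reduction to the set-dependent hypothesis is automatic, requiring no further estimates.
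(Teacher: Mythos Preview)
Your proof is correct and follows essentially the same approach as the paper: the paper's proof states only the key inequality $\gamma \leq \frac{\dim_\mathrm{A} X}{\dim_\mathrm{A} X + \eta} \leq 1 \leq \frac{\dim_\mathrm{A} X}{\dim_\mathrm{A} X - \eta} \leq \alpha$ (deduced from Proposition~\ref{basicbounds}) and then simply invokes Theorem~\ref{maincty} and Proposition~\ref{zerocontinuityprop}, leaving the monotonicity bookkeeping that you spell out implicit.
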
 

\begin{proof} 
Using notation from~\eqref{e:definelambdaalpha}, by Proposition~\ref{basicbounds}, 
\[ \gamma \leq \frac{\dim_\mathrm{A} X}{\dim_\mathrm{A} X + \eta}   \leq 1 \leq \frac{\dim_\mathrm{A} X}{\dim_\mathrm{A} X - \eta} \leq \alpha ,\]
so the result follows from Theorem~\ref{maincty} in the cases $\overline{\dim}^\Phi F > 0$ and $\underline{\dim}^\Phi F > 0$, and from Proposition~\ref{zerocontinuityprop} in the cases $\overline{\dim}^\Phi F = 0$ and $\underline{\dim}^\Phi F = 0$. 
\end{proof}

We now define an equivalence relation $\equiv$ on the set of admissible functions by setting $\Phi_1 \equiv \Phi_2$ if for every subset $F$ with $\dim_{\mathrm{A}} F < \infty$ of every underlying space $X$ we have $\overline{\dim}^{\Phi_1} F = \overline{\dim}^{\Phi_2} F$ and $\underline{\dim}^{\Phi_1} F = \underline{\dim}^{\Phi_2} F$. 
We define a non-strict partial order $\preceq$ on the equivalence classes of $\equiv$ by setting $[\Phi_1] \preceq [\Phi_2]$ if $\overline{\dim}^{\Phi_1} F \leq \overline{\dim}^{\Phi_2} F$ and $\underline{\dim}^{\Phi_1} F \leq \underline{\dim}^{\Phi_2} F$ for all such $F$. We abuse notation by writing $\Phi_1 \preceq \Phi_2$ to mean $[\Phi_1] \preceq [\Phi_2]$. 

Consider the topology on the set of equivalence classes generated by the basis of open sets 
\[ \{ \, N_{\Phi,\alpha} : \Phi \mbox{ an admissible function, } \alpha \in (1,\infty) \, \}, \]
where
\begin{align*}
 N_{\Phi,\alpha} \coloneqq \{ \, C : &\mbox{ there exists } \Phi_1 \in C \mbox{ such that } (\Phi(\delta^a))^a \leq \Phi_1(\delta) \leq (\Phi(\delta^{1/a}))^{1/a} \\*
 &\mbox{ for all } a \in (1,\alpha) \mbox{ and all sufficiently small } \delta \, \}.
 \end{align*}
Then for any given subset $F$ of finite Assouad dimension, the maps $[\Phi] \mapsto \overline{\dim}^{\Phi} F$ and $[\Phi] \mapsto \underline{\dim}^{\Phi} F$ are well-defined and, by Theorem~\ref{maincty} and Proposition~\ref{zerocontinuityprop}, continuous with respect to this topology. This provides motivation for calling these `continuity-like' results. 
It is natural to ask about abstract properties of the topological space, such as connectivity and separability (see \cite[Corollary~B]{BanajiPreprintphiassouad} and the remarks after it relating to the generalised Assouad dimensions), but we will not pursue this. 

Corollary~\ref{hardcomparisoncor} gives a condition for the dimensions to coincide for all sets. 

\begin{cor}\label{hardcomparisoncor}
Let $\Phi,\Phi_1$ be admissible functions. 
\begin{enumerate}[label=(\roman*)]
\item If for all $\alpha \in (1,\infty)$ there exists $\Delta > 0$ such that for all $\delta \in (0,\Delta)$ we have %
\begin{equation}\label{hardcomparisoneqn}
 \Phi_1(\delta) \leq (\Phi(\delta^{1/\alpha}))^{1/\alpha}
 \end{equation} 
 (noting that this will be the case if, for example, there exists $C \in (0,\infty)$ such that ${\limsup_{\delta \to 0^+} \frac{\Phi_1(C \delta)}{\Phi(\delta)} < \infty}$), then $\Phi_1 \preceq \Phi$. 
 If we only assume that for all $\alpha \in (1,\infty)$ and $\delta_0 > 0$ there exists $\delta \in (0,\delta_0)$ such that~\eqref{hardcomparisoneqn} holds, then we can only conclude that $\underline{\dim}^{\Phi_1} F \leq \overline{\dim}^\Phi F$ for all subsets $F$ with finite Assouad dimension. 
 
 \item If for all $\alpha \in (1,\infty)$ there exists $\Delta > 0$ such that for all $\delta \in (0,\Delta)$,  
 \begin{equation}\label{hardequality}
  (\Phi(\delta^\alpha))^\alpha \leq \Phi_1(\delta) \leq (\Phi(\delta^{1/\alpha}))^{1/\alpha}
  \end{equation}
 holds, then $\Phi \equiv \Phi_1$. 
 \end{enumerate}
 \end{cor}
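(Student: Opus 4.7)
The plan is to deduce both parts from Theorem~\ref{maincty} and Proposition~\ref{zerocontinuityprop} by a suitable interpolation argument in the exponent. The key observation is that an admissible function $\Phi$ must in fact be monotonically increasing with $\Phi(\delta) \to 0$ as $\delta \to 0^+$ (since $\Phi(\delta)/\delta \to 0$ and $\Phi$ is monotonic), and takes values in $(0,1)$ for small inputs. Consequently, for $x \in (0,1)$, the map $p \mapsto x^p$ is \emph{decreasing} in $p > 0$, which is exactly the fact that will let us weaken the exponents on the right-hand side of~\eqref{hardcomparisoneqn} to match those in Theorem~\ref{maincty}.

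For part~(i), fix a subset $F$ with $\dim_\mathrm{A} F < \infty$. If $\overline{\dim}^\Phi F = \dim_\mathrm{A} F$, Proposition~\ref{basicbounds} yields $\overline{\dim}^{\Phi_1} F \leq \dim_\mathrm{A} F$ automatically; if $\overline{\dim}^\Phi F = 0$, Proposition~\ref{zerocontinuityprop} handles the bound. Otherwise, given arbitrarily small $\eta > 0$ with $\eta < \dim_\mathrm{A} F - \overline{\dim}^\Phi F$, let $\alpha^\ast = \alpha(\eta)$ and $\gamma^\ast = \gamma(\eta)$ be as in~\eqref{e:definelambdaalpha}, so $\alpha^\ast > 1$ and $\gamma^\ast < 1$. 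Pick any $\beta \in (1, \min(\alpha^\ast, 1/\gamma^\ast))$ and apply the hypothesis of the corollary with this $\beta$: for $\delta$ sufficiently small one has $\Phi_1(\delta) \leq (\Phi(\delta^{1/\beta}))^{1/\beta}$. Since $\Phi$ is increasing and $1/\beta \geq 1/\alpha^\ast$, one has $\Phi(\delta^{1/\beta}) \leq \Phi(\delta^{1/\alpha^\ast}) < 1$; then since $1/\beta \geq \gamma^\ast$, the decreasing nature of $p \mapsto x^p$ for $x \in (0,1)$ yields $(\Phi(\delta^{1/\beta}))^{1/\beta} \leq (\Phi(\delta^{1/\alpha^\ast}))^{\gamma^\ast}$. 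Theorem~\ref{maincty} then gives $\overline{\dim}^{\Phi_1} F \leq \overline{\dim}^\Phi F + \eta$, and letting $\eta \to 0$ completes the argument; the proof for $\underline{\dim}$ and for the subsequence statement is identical, using the corresponding variants mentioned after Theorem~\ref{maincty}. The parenthetical sufficient condition is checked by noting that if $\Phi_1(C\delta) \leq K\Phi(\delta)$ for small $\delta$, then $\Phi_1(\delta) \leq K\Phi(\delta/C)$, and for any fixed $\alpha > 1$ one has $\delta/C \ll \delta^{1/\alpha}$ as $\delta \to 0^+$, so monotonicity of $\Phi$ and the exponent considerations above make the right-hand side of~\eqref{hardcomparisoneqn} dominate.

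For part~(ii), the right-hand inequality in~\eqref{hardequality} is exactly~\eqref{hardcomparisoneqn}, so part~(i) gives $\Phi_1 \preceq \Phi$. For the other inequality, substitute $\eta = \delta^\alpha$ (so $\delta = \eta^{1/\alpha}$) in $(\Phi(\delta^\alpha))^\alpha \leq \Phi_1(\delta)$ and take $\alpha$-th roots to obtain the equivalent statement $\Phi(\eta) \leq (\Phi_1(\eta^{1/\alpha}))^{1/\alpha}$ for all small $\eta$. This is precisely~\eqref{hardcomparisoneqn} with the roles of $\Phi$ and $\Phi_1$ interchanged, so part~(i) (applied with the roles swapped) yields $\Phi \preceq \Phi_1$. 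Combining the two gives $\Phi \equiv \Phi_1$.

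The main obstacle is simply bookkeeping the direction of inequality under exponentiation; because we are comparing quantities less than one, the natural pairings of exponents (e.g.\ $1/\beta$ versus $\gamma^\ast$, and $1/\beta$ versus $1/\alpha^\ast$) pull in opposite senses, and the interval $\beta \in (1, \min(\alpha^\ast, 1/\gamma^\ast))$ is precisely what reconciles them. No new measure-theoretic content is needed beyond what already appears in Theorem~\ref{maincty}.
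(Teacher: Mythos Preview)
Your proof is correct and follows essentially the same route as the paper's: handle the boundary cases $\overline{\dim}^\Phi F \in \{0,\dim_\mathrm{A} F\}$ via Propositions~\ref{zerocontinuityprop} and~\ref{basicbounds}, then in the main case choose an exponent in $(1,\min(\alpha^\ast,1/\gamma^\ast))$, apply the hypothesis, and use monotonicity of $\Phi$ together with the fact that $p\mapsto x^p$ is decreasing for $x\in(0,1)$ to reach the condition~\eqref{continuityassumption} of Theorem~\ref{maincty}. The paper picks the specific value $\alpha = \min\bigl\{\alpha^\ast,\frac{\overline{\dim}^\Phi F+\eta}{\overline{\dim}^\Phi F}\bigr\}$ rather than an arbitrary $\beta$ in the interval, but this is cosmetic; your substitution $\eta=\delta^\alpha$ for part~(ii) is exactly how the paper obtains ``(ii) follows from (i)''.
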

 
 \begin{proof} 
In the cases $\overline{\dim}^\Phi F = 0$ and $\overline{\dim}^\Phi F = \dim_\mathrm{A} F$, (i) follows from Propositions~\ref{zerocontinuityprop} and~\ref{basicbounds}. If $0 < \overline{\dim}^\Phi F < \dim_\mathrm{A} F$ then for all $\eta \in [0,\dim_\mathrm{A} F - \overline{\dim}^\Phi F)$, by the case of~\eqref{hardcomparisoneqn} with 
 \[ \alpha \coloneqq \min\left\{\frac{\dim_\mathrm{A} F-\overline{\dim}^\Phi F}{\dim_\mathrm{A} F-\overline{\dim}^\Phi F - \eta},\frac{\overline{\dim}^\Phi F + \eta}{\overline{\dim}^\Phi F}\right\}, \]%
 it follows that $\overline{\dim}^{\Phi_1} F \leq \overline{\dim}^\Phi F + \eta$ by Theorem~\ref{maincty}. Since $\eta$ was arbitrary, $\overline{\dim}^{\Phi_1} F \leq \overline{\dim}^\Phi F$. 
 Similarly, in all cases $\underline{\dim}^{\Phi_1} F \leq \underline{\dim}^\Phi F$, so $\Phi_1 \preceq \Phi$. 
 The case when we only assume~\eqref{hardequality} along a subsequence is proved similarly, and~(ii) follows from~(i). 
 \end{proof}

We now use Corollary~\ref{hardcomparisoncor} to explore the conditions that can be imposed on the function $\Phi$, and show that nothing is really lost by only considering functions which are strictly increasing, invertible and continuous. %

\begin{prop}
For every admissible function $\Phi$ there exists an admissible function $\Phi_1 \colon (0,1) \to (0,1)$ that is a strictly increasing, $C^\infty$ diffeomorphism, such that $\Phi \equiv \Phi_1$. 

\end{prop}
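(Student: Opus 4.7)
Proof proposal. By Corollary~\ref{hardcomparisoncor}(ii), it suffices to construct a $C^\infty$ diffeomorphism $\Phi_1 \colon (0,1) \to (0,1)$, strictly increasing and admissible, such that for every $\alpha > 1$,
\[ (\Phi(\delta^\alpha))^\alpha \leq \Phi_1(\delta) \leq (\Phi(\delta^{1/\alpha}))^{1/\alpha} \]
holds for all sufficiently small $\delta$. I would pass to logarithmic coordinates by setting $g(t) := -\log \Phi(e^{-t})$ and $g_1(t) := -\log \Phi_1(e^{-t})$. Admissibility of $\Phi$ then translates to: $g$ is monotone increasing, $g(t) \geq t$, and $g(t) - t \to \infty$ as $t \to \infty$. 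The inequality above becomes
\[ g(t/\alpha)/\alpha \leq g_1(t) \leq \alpha g(\alpha t) \qquad \text{for all sufficiently large } t. \]

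The plan is to produce $g_1$ by mollifying $g$ and adding a bounded, strictly increasing smooth perturbation. Fix a non-negative $C^\infty$ mollifier $\rho$ supported in $[-1,1]$ with $\int \rho = 1$. Since $g$ is locally integrable, $g_0 := g \ast \rho$ is $C^\infty$; monotonicity of $g$ makes $g_0$ non-decreasing and gives the sandwich $g(t-1) \leq g_0(t) \leq g(t+1)$. To enforce strict monotonicity I would set $\tilde g_1(t) := g_0(t) + \arctan(t) + C$, whose derivative is at least $(1+t^2)^{-1} > 0$ and which satisfies $|\tilde g_1 - g_0| \leq \pi/2 + C$. The constant $C$ is chosen so that $\tilde g_1(t) \geq t$ on the domain of definition. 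Define $\tilde \Phi_1(\delta) := \exp(-\tilde g_1(-\log \delta))$ for $\delta$ in a right-punctured neighbourhood of $0$; this is $C^\infty$ and strictly increasing in $\delta$. Finally, extend $\tilde \Phi_1$ to a $C^\infty$ strictly increasing function on all of $(0,1)$ with $\Phi_1(\delta) \leq \delta$ and $\Phi_1(\delta) \to 1$ as $\delta \to 1^-$, gluing at the transition point by Borel's theorem so that all derivatives match.

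To verify $\Phi_1 \equiv \Phi$, fix $\alpha > 1$ and choose $\beta \in (1,\alpha)$. For all sufficiently large $t$ we have $t - 1 \geq t/\beta$ and $t + 1 \leq \beta t$, so monotonicity of $g$ gives
\[ g(t/\alpha)/\alpha \leq g(t/\beta) \leq g(t-1) \leq g_0(t) \leq \tilde g_1(t) \leq g(t+1) + \pi/2 + C \leq g(\beta t) + \pi/2 + C. \]
Since $g(\beta t) \to \infty$ and $\alpha > \beta$, for large $t$ one has $g(\beta t) + \pi/2 + C \leq \alpha g(\alpha t)$, so both required inequalities hold eventually, and Corollary~\ref{hardcomparisoncor}(ii) yields $\Phi \equiv \Phi_1$.

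The main obstacle is the tail construction near $\delta = 1$: one must glue the mollified-and-perturbed $\tilde \Phi_1$ to a smooth strictly increasing bijection onto a left neighbourhood of $1$ while preserving $\Phi_1 \leq \delta$, enforcing $\Phi_1(\delta) \to 1$, and matching all derivatives at the junction. This is a standard but somewhat finicky bump-function construction, and is essentially the only step where one must simultaneously juggle admissibility with the diffeomorphism requirement; the equivalence relation $\equiv$ depends only on the small-$\delta$ behaviour and is therefore unaffected by the tail choice.
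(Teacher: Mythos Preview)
Your argument is correct and takes a genuinely different route from the paper. The paper constructs $\Phi_1$ by first building a piecewise-linear $\Phi_2$ on dyadic intervals (ensuring strict monotonicity by halving when $\Phi$ has a flat stretch), then smoothing the countably many corners; this yields an explicit sandwich $\Phi(\delta/2)/4 \leq \Phi_1(\delta) \leq 2\Phi(4\delta)$, from which Corollary~\ref{hardcomparisoncor}(ii) gives $\Phi \equiv \Phi_1$. Your approach instead passes to log-coordinates and mollifies, which has the advantage of producing a $C^\infty$ function in one step and making the target inequality additive rather than multiplicative-in-exponent; the sandwich $g(t-1) \leq g_0(t) \leq g(t+1)$ plays exactly the role of the paper's dyadic bound. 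The only place your write-up is lighter than the paper's is the tail extension near $\delta = 1$: the paper handles this trivially by starting with a linear segment on $[2^{-N},1]$ joining $\Phi(2^{-N})$ to $1$ before smoothing, whereas you defer to a Borel/bump-function gluing. This is genuinely routine, but note that you must simultaneously keep $\Phi_1' > 0$ and $\Phi_1(\delta) \leq \delta$ through the junction, so a partition-of-unity blend between $\tilde\Phi_1$ and (say) an affine map sending $[\delta_0,1)$ onto $[\tilde\Phi_1(\delta_0),1)$ is cleaner than invoking Borel's theorem.
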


\begin{proof}
Fix $N \in \mathbb{N}$ such that $\Phi$ is positive and increasing on $(0,2^{-N}]$ with $\Phi(2^{-N}) < 1$. We construct a strictly increasing function $\Phi_2 \colon (0,1] \to (0,1]$ by defining $\Phi_2$ to be linear on $[2^{-N},1]$ with $\Phi_2(2^{-N}) = \Phi(2^{-N})$ and $\Phi_2(1)=1$ and defining $\Phi_2$ inductively on $(0,2^{-N})$ as follows. Suppose we have defined $\Phi_2$ on $[2^{-n},1]$ for some $n \geq N$. If $\Phi(2^{-n-1}) < \Phi_2(2^{-n})$ then define $\Phi_2(2^{-n-1}) = \Phi(2^{-n-1})$ and $\Phi_2$ linear on $[2^{-n-1},2^{-n}]$. If, on the other hand, $\Phi(2^{-n-1}) = \Phi_2(2^{-n})$, then let $m > n$ be the smallest integer such that $\Phi(2^{-m}) < \Phi(2^{-n})$, define $\Phi_2(2^{-m}) \coloneqq \max\{\Phi_2(2^{-n})/2,\Phi(2^{-m})\}$, and define $\Phi_2$ to be linear on $[2^{-m},2^{-n}]$. Then by construction $\Phi_2$ is strictly increasing on $(0,1]$ with $\Phi_2(\delta/4) \leq \Phi(\delta)$ and $2\Phi_2(2\delta) \geq \Phi(\delta)$ for all $\delta \in (0,2^{-N-1})$. 
Each of the countably many points of non-differentiability of $\Phi_2$ can be locally made smooth to give an admissible function $\Phi_1 \colon (0,1) \to (0,1)$ that is $C^\infty$ on $(0,1)$, still strictly increasing, and such that $\Phi_2(\delta)/2 \leq \Phi_1(\delta) \leq 2\Phi_2(\delta)$ for all $\delta \in (0,2^{-N})$. Then 
\[ \Phi_1(\delta)/\delta \leq 2\Phi_2(\delta)/\delta \leq 2\Phi(4\delta)/\delta = 8\Phi(4\delta)/(4\delta) \xrightarrow[\delta \to 0^+]{} 0, \]
so $\Phi_1$ is admissible. Moreover, 
\[ \Phi(\delta/2)/4 \leq \Phi_2(\delta)/2 \leq \Phi_1(\delta) \leq 2\Phi_2(\delta) \leq 2\Phi(4\delta) \]
for all $\delta \in (0,2^{-N-3})$, so $\Phi_1 \equiv \Phi$ by Corollary~\ref{hardcomparisoncor}~(ii). By the smooth inverse function theorem, $\Phi_1$ has a $C^\infty$ inverse, as required. 
\end{proof}

In the proof of Theorem~\ref{inttypeub} on page~\pageref{inttypeub} we will use the following technical lemma. 
 \begin{lma}\label{phiinvertible}
 If $\Phi \colon (0,\Delta] \to \mathbb{R}$ is monotonically admissible then there exists an \emph{invertible} monotonically admissible function $\Phi_1 \colon (0,\Delta] \to (0,\Phi(\Delta)]$ such that $\phi \equiv \phi_1$. 
 \end{lma}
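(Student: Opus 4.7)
The monotonic admissibility assumption means $\Phi$ is non-decreasing and that $g(\delta) := \Phi(\delta)/\delta$ is itself non-decreasing on $(0,\Delta]$ with $g(\delta)\to 0$ as $\delta\to 0^+$. My plan is to first replace $\Phi$ by a continuous smoothing $\tilde\Phi$ obtained via a logarithmic averaging, and then multiply by a simple strictly increasing factor to force strict monotonicity without losing any other property. The three features that must be maintained simultaneously are continuity, strict monotonicity of $\Phi_1$, and monotone non-decrease of $\Phi_1(\delta)/\delta$; after that I must verify the sandwich estimate needed to invoke Corollary~\ref{hardcomparisoncor}.

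For the smoothing step I would set
\[
\tilde\Phi(\delta) := \frac{1}{\log 2}\int_{\delta/2}^{\delta}\frac{\Phi(t)}{t}\,dt = \frac{1}{\log 2}\int_{1/2}^{1}\frac{\Phi(s\delta)}{s}\,ds,
\]
where the second form arises by substituting $t=s\delta$. Continuity of $\tilde\Phi$ is immediate from the integral representation, and because $\Phi$ is non-decreasing one has $\Phi(\delta/2)\leq \tilde\Phi(\delta)\leq \Phi(\delta)$. Since $\delta\mapsto \Phi(s\delta)$ and $\delta\mapsto g(s\delta)$ are non-decreasing for each fixed $s\in[1/2,1]$, the substituted form shows that both $\tilde\Phi$ and $\tilde\Phi(\delta)/\delta = (\log 2)^{-1}\int_{1/2}^{1}g(s\delta)\,ds$ are non-decreasing in $\delta$. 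Then I would set
\[
\Phi_1(\delta) := \tilde\Phi(\delta)\cdot \frac{1 + \delta/\Delta}{2}.
\]
The factor $(1+\delta/\Delta)/2$ is continuous, strictly increasing, positive and at most $1$, so $\Phi_1$ is continuous and strictly increasing on $(0,\Delta]$ with $0<\Phi_1(\delta)\leq \tilde\Phi(\delta)\leq \Phi(\delta)\leq \delta$ and $\Phi_1(\Delta)\leq \Phi(\Delta)$; hence it is a homeomorphism onto its image, which lies inside $(0,\Phi(\Delta)]$. The ratio $\Phi_1(\delta)/\delta = (\tilde\Phi(\delta)/\delta)\cdot(1+\delta/\Delta)/2$ is the product of two non-negative non-decreasing functions, hence non-decreasing and converging to $0$, so $\Phi_1$ is monotonically admissible.

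To establish $\Phi\equiv\Phi_1$ I would apply Corollary~\ref{hardcomparisoncor}(i) to the sandwich $\Phi(\delta/2)/2 \leq \Phi_1(\delta)\leq \Phi(\delta)$. The right-hand estimate gives $\limsup_{\delta\to 0^+}\Phi_1(\delta)/\Phi(\delta)\leq 1$, yielding $\Phi_1\preceq\Phi$ via the corollary's example condition with $C=1$; the left-hand estimate, rearranged as $\Phi(\delta/2)\leq 2\Phi_1(\delta)$, gives $\limsup_{\delta\to 0^+}\Phi(\delta/2)/\Phi_1(\delta)\leq 2$, yielding $\Phi\preceq\Phi_1$ from the corollary applied with the roles of the two functions reversed (and $C=1/2$). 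The principal subtlety is juggling all three monotonicity requirements in tandem; the benefit of writing the smoothing in the substituted form $(\log 2)^{-1}\int_{1/2}^{1}g(s\delta)\,ds$ is precisely that it locates the $\delta$-dependence inside $g$, so that the monotone-admissibility hypothesis on $\Phi$ passes transparently to $\tilde\Phi$ and hence to $\Phi_1$.
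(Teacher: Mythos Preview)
Your proof is correct and takes a genuinely different route from the paper's. The paper simply sets $\Phi_1(\Delta/2^n)\coloneqq\Phi(\Delta/2^n)$ and interpolates linearly on each dyadic interval $[\Delta/2^{n+1},\Delta/2^n]$, then observes the sandwich $\Phi(\delta/2)\leq\Phi_1(\delta)\leq\Phi(2\delta)$ and applies Corollary~\ref{hardcomparisoncor}. Strict monotonicity there comes from the fact that $\Phi(\Delta/2^{n+1})\leq\tfrac12\Phi(\Delta/2^n)$ (a consequence of $\Phi(\delta)/\delta$ being non-decreasing), and monotone admissibility of the interpolant follows by a short computation on each linear piece. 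Your integral-averaging construction is more elaborate but has the advantage that the monotone-admissibility of $\tilde\Phi$ is read off directly from the substituted form $\tilde\Phi(\delta)/\delta=(\log 2)^{-1}\int_{1/2}^1 g(s\delta)\,ds$, with no casework; the multiplicative factor $(1+\delta/\Delta)/2$ then forces strict monotonicity without disturbing either of the other two monotonicities. Both proofs land on the same sandwich-and-Corollary~\ref{hardcomparisoncor} endgame.

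One small discrepancy: the paper's construction hits $\Phi_1(\Delta)=\Phi(\Delta)$ on the nose, so $\Phi_1$ is a bijection onto exactly $(0,\Phi(\Delta)]$. Your $\Phi_1$ satisfies only $\Phi_1(\Delta)=\tilde\Phi(\Delta)\leq\Phi(\Delta)$, with possible strict inequality, so your map is a bijection onto $(0,\Phi_1(\Delta)]\subseteq(0,\Phi(\Delta)]$. This does not matter for the lemma's downstream uses (where one only needs $\Phi_1^{-1}$ to be defined for small arguments), but if the stated codomain is read as requiring surjectivity you would need a minor tweak, for instance replacing the factor $(1+\delta/\Delta)/2$ by one that equals $\Phi(\Delta)/\tilde\Phi(\Delta)$ at $\delta=\Delta$.
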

 
 \begin{proof}
 Define $\Phi_1$ by $\Phi_1(\Delta/2^n) \coloneqq \Phi(\Delta/2^n)$ and $\Phi_1$ linear on $[\Delta/2^{n+1},\Delta/2^n]$ for $n=0,1,2,\dotsc$. Then clearly $\Phi_1$ is invertible and satisfies $\Phi_1(\delta) \leq \delta$ for all $\delta \in (0,\Delta]$ and $\Phi_1(\delta)/\delta \searrow 0$ monotonically as $\delta \to 0^+$. Moreover 
 $ \Phi(\delta/2) \leq \Phi_1(\delta) \leq \Phi(2\delta) $
 for all $\delta \in (0,\Delta/2]$, so by Corollary~\ref{hardcomparisoncor}, $\phi \equiv \phi_1$.
 \end{proof}
 
The following proposition shows that the assumption that $\Phi$ is monotonic and strictly positive does not really lose anything. 
\begin{prop}\label{p:nonadmissible}
Let $\Phi \colon (0,\Delta] \to [0,\infty)$ be \emph{any} function (not necessarily monotonic) such that $\Phi(\delta)/\delta \to 0$ as $\delta \to 0^+$, and define the $\Phi$-intermediate dimensions as in Definition~\ref{maindefinition}. Let $F$ be a subset. 
\begin{enumerate}[label=(\roman*)]
\item 
If $\Phi_1$ is defined by $\Phi_1(\delta) \coloneqq \sup\{\,\Phi(\delta') : \delta' \in [0,\delta] \, \}$ then $\overline{\dim}^\Phi F = \overline{\dim}^{\Phi_1} F$. 
\item \begin{enumerate}[label=(\arabic*)]
\item If there is a sequence of $\delta \to 0^+$ for which $\Phi(\delta) = 0$ then $\underline{\dim}^\Phi F = \dim_\mathrm{H} F$. 

\item Suppose $\Phi(\delta)>0$ for all $\delta \in (0,\Delta)$ but for all $\delta_2 \in (0,\Delta)$ there exists $\delta_3 \in (0,\delta_2)$ such that $\inf\{\,\Phi(\delta) : \delta \in [\delta_3,\delta_2] \, \} = 0$. Then if $F$ is compact then $\underline{\dim}^\Phi F = \dim_\mathrm{H} F$. In particular, if $F$ is a non-empty, bounded subset of $X=\R^n$ then $\underline{\dim}^\Phi F = \dim_\mathrm{H} \overline{F}$. %

\item If $\Phi_2 \colon (0,\Delta) \to \mathbb{R}$ defined by $\Phi_2(\delta) \coloneqq \inf\{\,\Phi(\delta') : \delta' \in [\delta,\Delta] \, \}$ is positive for all $\delta \in (0,\Delta)$, then $\underline{\dim}^\Phi F = \underline{\dim}^{\Phi_2} F$. 
\end{enumerate}
\end{enumerate}

\end{prop}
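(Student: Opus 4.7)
The four parts are all established by comparing $\Phi$-covers to covers defined via a related envelope function, together with enlargement arguments based on uniform perfectness and, for part (ii), compactness or closure-stability.

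For \emph{part (i)}, the inequality $\overline{\dim}^\Phi F \leq \overline{\dim}^{\Phi_1} F$ is immediate because $\Phi_1 \geq \Phi$ makes $\Phi_1$-covers a subfamily of $\Phi$-covers at the same scale. For the reverse, given small $\delta$, I would use the supremum definition of $\Phi_1$ to pick $\delta^* \in (0,\delta]$ with $\Phi(\delta^*) \geq \Phi_1(\delta)/2$, take a $\Phi$-cover at scale $\delta^*$, and enlarge each covering set of diameter less than $\Phi_1(\delta)$ up to diameter at least $\Phi_1(\delta)$ by adding a suitable point using $c$-uniform perfectness (cf.\ Proposition~\ref{basicbounds}). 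The resulting set $V_i$ has diameter in $[\Phi_1(\delta), 2\Phi_1(\delta)/c] \subseteq [\Phi_1(\delta), \delta]$ for $\delta$ small (since $\Phi_1(\delta)/\delta \to 0$), and $|V_i|^s \lesssim |U_i|^s$ with a constant depending only on $c$ and $s$, so the bound on $\sum|U_i|^s$ transfers.

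For \emph{part (ii)(1)}, the direction $\underline{\dim}^\Phi F \geq \dim_\mathrm{H} F$ follows from the definitions because $\Phi$-covers are a subfamily of arbitrary diameter-bounded covers; for the reverse, at each scale $\delta_n$ with $\Phi(\delta_n) = 0$ the lower constraint is vacuous, so any Hausdorff cover of $F$ by sets of diameter at most $\delta_n$ qualifies as a $\Phi$-cover at $\delta_n$. For \emph{part (ii)(2)}, given $s > \dim_\mathrm{H} F$ and $\epsilon, \delta_0 > 0$, select $\delta_3 < \delta_2 < \delta_0$ with $\inf_{[\delta_3,\delta_2]}\Phi = 0$ from the hypothesis. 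By compactness of $F$ together with $\mathcal{H}^s(F)=0$, extract a \emph{finite} cover $\{U_1,\dotsc,U_N\}$ with $\sum|U_i|^s < \epsilon$ and $\max|U_i| \leq \delta_3$, which has a positive minimum diameter $m$; then pick $\delta \in [\delta_3,\delta_2]$ with $\Phi(\delta) \leq m$, making the same cover a valid $\Phi$-cover at scale $\delta < \delta_0$. The $\R^n$ version follows from closure-stability (Proposition~\ref{unprovedprop}(ii)) applied to the compact set $\overline{F}$.

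For \emph{part (ii)(3)}, the inequality $\underline{\dim}^{\Phi_2} F \leq \underline{\dim}^\Phi F$ is immediate from $\Phi_2 \leq \Phi$. The reverse direction will be the main obstacle. The key step is the following lemma, which I would prove by contradiction: for every $\gamma, \delta_0 > 0$, for all sufficiently small $\delta^*$ there exists $\tilde\delta \in [\delta^*, \delta_0)$ with $\Phi(\tilde\delta) \leq (1+\gamma)\Phi_2(\delta^*)$. If the lemma fails along $\delta^*_k \to 0^+$, then splitting $\Phi_2(\delta^*_k) = \min\bigl(\inf_{[\delta^*_k,\delta_0)}\Phi,\, \Phi_2(\delta_0)\bigr)$ forces $\Phi_2(\delta^*_k) = \Phi_2(\delta_0)$ for all $k$, contradicting the fact that $\Phi_2(\delta^*) \to 0$ as $\delta^* \to 0^+$ (which follows from $\Phi(\delta)/\delta \to 0$) together with the positivity assumption $\Phi_2(\delta_0) > 0$.

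Given the lemma, fix $s > \underline{\dim}^{\Phi_2} F$ and $\epsilon, \delta_0 > 0$, and choose $\gamma$ small enough that $(2(1+\gamma)/c)^s \leq 2$. Apply the $\Phi_2$ lower-dimension definition at a sufficiently small threshold to obtain $\delta^* \in (0, \delta_0)$ and a $\Phi_2$-cover $\{U_i\}$ at $\delta^*$ with $\sum|U_i|^s < \epsilon/2$, and use the lemma to find $\tilde\delta \in [\delta^*, \delta_0)$ with $\Phi(\tilde\delta) \leq (1+\gamma)\Phi_2(\delta^*)$. Since $|U_i| \geq \Phi_2(\delta^*) \geq \Phi(\tilde\delta)/(1+\gamma)$, enlarging each $U_i$ with $|U_i| < \Phi(\tilde\delta)$ to a set $V_i$ of diameter in $[\Phi(\tilde\delta), 2\Phi(\tilde\delta)/c]$ via uniform perfectness yields a $\Phi$-cover at scale $\tilde\delta$ with $\sum|V_i|^s \leq (2(1+\gamma)/c)^s\sum|U_i|^s < \epsilon$, completing the proof.
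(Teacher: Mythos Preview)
Your approach is essentially the same as the paper's: envelope comparisons for (i) and (ii)(3), direct definition-chasing for (ii)(1), and a compactness/finite-subcover argument for (ii)(2). The lemma you isolate in (ii)(3) is exactly what the paper does inline (it chooses $\delta_0 < \Phi_2(\min\{\Delta,\delta_1\})/2$ and then picks $\delta_2 \in [\delta,\Delta]$ with $\Phi(\delta_2) < 2\Phi_2(\delta)$, arguing $\delta_2 < \delta_1$ for the same reason your contradiction gives).

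One genuine slip: in (ii)(3) you ask for $\gamma$ small enough that $(2(1+\gamma)/c)^s \leq 2$, but since $c \in (0,1)$ this fails whenever $s \geq 1$ (and often for smaller $s$ too). The fix is trivial --- just fix $\gamma = 1$ and take the $\Phi_2$-cover with $s$-cost below $\epsilon(c/4)^s$ rather than $\epsilon/2$ --- but as written the sentence is incorrect. You also need to check that the enlarged sets $V_i$ satisfy $|V_i| \leq \tilde\delta$; this requires $\Phi(\tilde\delta)/\tilde\delta \leq c/(c+1)$, which the paper secures by reducing $\Delta$ at the outset so that $\Phi(\delta) \leq (1+2/c)^{-1}\delta$ throughout.
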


\begin{proof}
We may assume that $\Delta < \min\{1,|X|\}$ and that $\Phi(\delta) \leq (1+2/c)^{-1} \delta$ for all $\delta \in (0,\Delta)$. In the proofs of the different parts of the proposition, the same symbols may take different values. 

(i) For all $\delta \in (0,\Delta)$, 
\[ \Phi_1(\delta)/\delta = \sup\{ \, \Phi(\delta')/\delta : \delta' \in (0,\delta] \, \} \leq \sup\{ \, \Phi(\delta')/\delta' : \delta' \in (0,\delta] \, \} \xrightarrow[\delta \to 0^+]{} 0, \]
and $\Phi_1(\delta)$ is monotonic, so $\Phi_1$ is admissible. 
Also, $\Phi(\delta)\leq \Phi_1(\delta)$, so $\overline{\dim}^\Phi F \leq \overline{\dim}^{\Phi_1} F$. 

It remains to prove the reverse inequality. %
Let $s>\overline{\dim}^\Phi F$ and $\epsilon > 0$. 
Then there exists $\delta_0>0$ such that for all $\delta \in (0,\min\{\delta_0,\Delta\})$ there exists a cover $\{U_i\}$ of $F$ such that $\Phi(\delta) \leq |U_i| \leq \delta$ for all $i$, and 
\begin{equation}\label{firstphicompareeps}
\sum_i |U_i|^s \leq 2^{-s}(1+1/c)^{-s} \epsilon.
\end{equation}
 Then if $\delta' \in (0,\delta_0)$ then there exists $\delta \in (0,\delta']$ such that $\Phi(\delta)\geq \Phi_1(\delta')/2$. Let $\{U_i\}$ be the cover corresponding to $\delta$ as above. For each $i$, if $|U_i| \geq \Phi_1(\delta')$ then leave $|U_i|$ in the cover unchanged, noting that $\Phi_1(\delta') \leq |U_i| \leq \delta \leq \delta'$. If $\Phi_1(\delta') > |U_i|$, on the other hand, then fix $p_i \in U_i$, and $q_i \in X$ such that $\Phi_1(\delta') \leq d(p_i,q_i) \leq \Phi_1(\delta')/c$. Replace $U_i$ in the cover by $U_i \cup \{q_i\}$, and denote the new cover of $F$ by $\{V_i\}_i$. Then 
\[ \Phi_1(\delta') \leq d(p_i,q_i) \leq |U_i \cup \{q_i\}| < (1+1/c)\Phi_1(\delta') \leq \delta'. \] 
Also,
 \[ |U_i \cup \{q_i\}| \leq 2(1+1/c)\Phi(\delta) \leq 2(1+1/c)|U_i|.\]
Therefore 
\[ \sum_i |V_i|^s \leq \sum_i (2(1+1/c)|U_i|)^s = 2^s (1+1/c)^s \sum_i |U_i|^s \leq \epsilon \]
by~\eqref{firstphicompareeps}, so $\overline{\dim}^{\Phi_1} F \leq s$, hence $\underline{\dim}^{\Phi_1} F \leq \overline{\dim}^\Phi F$ as required. 

(ii) 
(1) Follows directly from~\eqref{hausdorffdef} and Definition~\ref{maindefinition}. 

(ii) (2) 
Assume that $F$ is compact. %
Let $s>\dim_\mathrm{H} F$, $\epsilon>0$ and $\delta_2 \in (0,1]$, so there exists $\delta_3 \in (0,\delta_2)$ such that $\inf\{\,\Phi(\delta) : \delta \in [\delta_3,\delta_2] \, \} = 0$. There exists a countable cover $\{U_i\}$ of $F$ such that $\sum_i |U_i|^{s} \leq \min\{\delta_3^{s},\epsilon\}$. In particular, $|U_i|\leq \delta_3$. Since $F$ is compact, there is a finite subcover $\{V_i\}$, so $\min_i\{|V_i|\}>0$, and each  $|V_i|\leq \delta_3$. Since $\inf\{\,\Phi(\delta) : \delta \in [\delta_3,\delta_2] \, \} = 0$, there exists $\delta_4 \in [\delta_3,\delta_2]$ such that $\Phi(\delta_4) \in (0,\min_i\{|V_i|\})$. Then $0 \leq \Phi(\delta_4) \leq \min_i\{|V_i|\}) \leq |V_i| \leq \delta_3 \leq \delta_4$ for each $i$, and $\sum_i |V_i|^{s} \leq \sum_i |U_i|^{s} \leq \epsilon$. As $\epsilon$ and $\delta_2$ were arbitrary, $\underline{\dim}^\Phi F \leq s$, so $\underline{\dim}^\Phi F = \dim_\mathrm{H} F$. 

(ii) (3) 
Clearly $\Phi_2$ is admissible and $\underline{\dim}^{\Phi_2} F \leq \underline{\dim}^{\Phi} F$, so it remains to prove the reverse inequality. Let $s > \underline{\dim}^{\Phi_2} F$ and $\epsilon > 0$. Let $\delta_1 > 0$ and let $\delta_0 \in (0,\Phi_2(\min\{\Delta,\delta_1\})/2)$. %
 Then there exists $\delta \in (0,\delta_0)$ and a cover $\{U_i\}$ of $F$ such that $\Phi_2(\delta) \leq |U_i| \leq \delta$ for all $i$, and 
 \begin{equation}\label{lastphicompareeps}
 \sum_i |U_i|^s \leq 2^{-s}(1+1/c)^{-s} \epsilon.
 \end{equation}
  By the definition of $\Phi_2$, there exists $\delta_2 \in [\delta,\Delta]$ such that $\Phi(\delta_2) < 2\Phi_2(\delta)$. But since $\Phi_2(\delta) \leq \Phi(\delta) \leq \delta_0 < \Phi_2(\min\{\Delta,\delta_1\})/2$, %
 it must be the case that $\delta_2 <\min\{\Delta,\delta_1\}$. %
 If $|U_i| \geq \Phi(\delta_2)$ then leave $U_i$ in the cover unchanged. If $|U_i| < \Phi(\delta_2)$ then fix $p_i \in U_i$ and $q_i \in X$ such that $\Phi(\delta_2) \leq d(p_i,q_i) \leq \Phi(\delta_2)/c$; replace $U_i$ in the cover with $U_i \cup \{q_i\}$ and call the new cover $\{V_i\}$. Now, $\Phi(\delta_2) \leq |U_i \cup \{q_i\}| < \delta_2$. Also, $|U_i \cup \{q_i\}| \leq 2(1+1/c)\Phi_2(\delta) \leq 2(1+1/c)|U_i|$. 
 Therefore 
 \[ \sum_i |V_i|^s \leq \sum_i (2(1+1/c)|U_i|)^s = 2^s (1+1/c)^s \sum_i|U_i|^s \leq \epsilon, \]
 by~\eqref{lastphicompareeps}. It follows that $\underline{\dim}^{\Phi} F \leq s$, as required. 
\end{proof}

Another consequence of Corollary~\ref{hardcomparisoncor} is the following relationships between the $\Phi$-intermediate and intermediate dimensions. %

\begin{prop}\label{compareintermediate}
Let $\Phi$ be an admissible function, and let
\begin{equation}\label{compareintermediatedefinitions} \theta_1 \coloneqq \liminf_{\delta \to 0^+} \frac{\log \delta}{\log \Phi(\delta)}; \qquad \theta_2 \coloneqq \limsup_{\delta \to 0^+} \frac{\log\delta}{\log\Phi(\delta)},
\end{equation}
 noting that $0\leq \theta_1 \leq \theta_2 \leq 1$. If $\dim_{\mathrm A} F < \infty$ then the following bounds hold: 
 \begin{itemize}
\item If $0=\theta_2=\lim_{\delta \to 0^+} \frac{\log \delta}{\log \Phi(\delta)}$ then $\underline{\dim}^\Phi F \leq \underline{\dim}_\theta F$ and $\overline{\dim}^\Phi F \leq \overline{\dim}_\theta F$ for all $\theta \in (0,1]$. 

\item If $0=\theta_1<\theta_2$ then $\underline{\dim}_{\theta_2} F \leq \overline{\dim}^\Phi F \leq \overline{\dim}_{\theta_2} F$ (so if $\dim_{\theta_2} F$ exists then $\overline{\dim}^\Phi F = \dim_{\theta_2} F$), and $\underline{\dim}^\Phi F \leq \min\{\overline{\dim}_\theta F,\underline{\dim}_{\theta_2} F\}$ for all $\theta \in (0,1]$. 

\item If $0< \theta_1\leq\theta_2$ then 
\begin{align*}
\underline{\dim}_{\theta_1} F \leq &\underline{\dim}^\Phi F \leq \min\{\overline{\dim}_{\theta_1} F,\underline{\dim}_{\theta_2} F\}, \\*
\max\{\overline{\dim}_{\theta_1} F,\underline{\dim}_{\theta_2} F\} \leq &\overline{\dim}^\Phi F \leq \overline{\dim}_{\theta_2} F. 
\end{align*}

\item If $0<\theta_1=\theta_2$ then $\underline{\dim}^\Phi F = \underline{\dim}_{\theta_1} F$ and $\overline{\dim}^\Phi F =\overline{\dim}_{\theta_1}$. 
\end{itemize}
\end{prop}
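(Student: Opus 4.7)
The strategy is to compare $\Phi$ with the one-parameter family $\Phi_\theta(\delta) \coloneqq \delta^{1/\theta}$, for which Definition~\ref{maindefinition} reduces to Definition~\ref{d:intdimdef}, and then take limits in $\theta$ using continuity of $\theta \mapsto \overline{\dim}_\theta F$ and $\theta \mapsto \underline{\dim}_\theta F$ on $(0,1]$ (recalled in Section~\ref{s:intdimsintro}). The workhorse is Corollary~\ref{hardcomparisoncor}.

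The first step is to translate the hypothesis~\eqref{hardcomparisoneqn} of Corollary~\ref{hardcomparisoncor}(i) into a condition on $\log\delta / \log\Phi(\delta)$. Since $\log\delta < 0$ for small $\delta$, writing $\delta'' = \delta^{1/\alpha}$ one checks that
\[
\Phi_\theta(\delta) \leq \bigl(\Phi(\delta^{1/\alpha})\bigr)^{1/\alpha}
\quad\Longleftrightarrow\quad
\frac{\log\delta''}{\log\Phi(\delta'')} \geq \frac{\theta}{\alpha^2},
\]
and symmetrically
\[
\Phi(\delta) \leq \bigl(\Phi_\theta(\delta^{1/\alpha})\bigr)^{1/\alpha}
\quad\Longleftrightarrow\quad
\frac{\log\delta}{\log\Phi(\delta)} \leq \theta\alpha^2.
\]

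Next I would apply these translations in the four obvious ways. For any $\theta < \theta_1$ and every $\alpha > 1$ one has $\theta/\alpha^2 < \theta_1 \leq \liminf_{\delta''\to 0^+} \log\delta''/\log\Phi(\delta'')$, so the first translation holds for all sufficiently small $\delta$; hence Corollary~\ref{hardcomparisoncor}(i) yields $\Phi_\theta \preceq \Phi$, i.e.\ $\overline{\dim}_\theta F \leq \overline{\dim}^\Phi F$ and $\underline{\dim}_\theta F \leq \underline{\dim}^\Phi F$. Dually, for any $\theta > \theta_2$ and every $\alpha > 1$ one has $\theta_2 < \theta < \theta\alpha^2$, so the second translation holds for all sufficiently small $\delta$ and Corollary~\ref{hardcomparisoncor}(i) yields $\Phi \preceq \Phi_\theta$. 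The cross-bounds come from the weaker subsequential half of Corollary~\ref{hardcomparisoncor}(i): for $\theta < \theta_2$ and every $\alpha > 1$, the $\limsup$ definition of $\theta_2$ supplies a sequence $\delta_n'' \to 0$ along which $\log\delta_n''/\log\Phi(\delta_n'') > \theta/\alpha^2$, giving $\underline{\dim}_\theta F \leq \overline{\dim}^\Phi F$; a symmetric argument using the $\liminf$ definition of $\theta_1$ gives $\underline{\dim}^\Phi F \leq \overline{\dim}_\theta F$ whenever $\theta > \theta_1$.

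Finally I would pass to the limits $\theta \nearrow \theta_1$ and $\theta \searrow \theta_2$ using continuity of the intermediate dimensions on $(0,1]$, and read off the four cases: case~(i) is just the $\theta > \theta_2$ half applied for every $\theta \in (0,1]$; case~(ii) combines the $\theta > \theta_2$ upper bound with the $\theta < \theta_2$ lower cross-bound and the $\theta > \theta_1 = 0$ upper cross-bound; case~(iii) assembles all four inequalities; and case~(iv) collapses case~(iii) to equality because $\theta_1 = \theta_2$ forces the $\min$ and $\max$ terms to coincide.

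The only real obstacle is bookkeeping: ensuring that each application of Corollary~\ref{hardcomparisoncor} uses the correct direction (all small $\delta$ versus subsequential), that the admissibility of $\Phi_\theta$ is valid for each $\theta \in (0,1)$ involved, and that the limits $\theta \to \theta_1$ or $\theta \to \theta_2$ are inside the range where intermediate dimensions are continuous. No new geometric input is needed beyond the tools already developed in this section.
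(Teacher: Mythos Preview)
Your proposal is correct and follows essentially the same approach as the paper: compare $\Phi$ with $\Phi_\theta(\delta)=\delta^{1/\theta}$ via Corollary~\ref{hardcomparisoncor}~(i) (both the full and subsequential forms), then pass to the limits $\theta\to\theta_1$ and $\theta\to\theta_2$ using continuity of the intermediate dimensions on $(0,1]$. The paper only writes out one representative inequality and cites the parenthetical sufficient condition $\limsup_{\delta\to 0^+}\Phi(\delta)/\delta^{1/(\theta_2+\eta)}<\infty$ in Corollary~\ref{hardcomparisoncor}~(i), whereas you work directly with the condition~\eqref{hardcomparisoneqn} and lay out all four directions systematically; these are cosmetic differences. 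One small point: since the chapter works in a general uniformly perfect metric space, the continuity you invoke should really be the version derived from Theorem~\ref{maincty} (as the paper notes) rather than the $\mathbb{R}^d$ result recalled in Section~\ref{s:intdimsintro}, and the edge case $\theta_2=1$ is handled by Proposition~\ref{basicbounds}; both fit under the ``bookkeeping'' you already flagged.
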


\begin{proof}
As an example, we prove $\overline{\dim}^\Phi F \leq \overline{\dim}_{\theta_2} F$ under the assumption that $\theta_2 > 0$; the other bounds are proved similarly. If $\theta_2 = 1$ then this follows from Proposition~\ref{basicbounds}, so assume $\theta_2 \in (0,1)$. Then letting $\eta \in (0,1-\theta_2)$, by the definition of $\theta_2$, 
\[ \limsup_{\delta \to 0^+}\frac{\Phi(\delta)}{\delta^{1/(\theta_2 + \eta)}} = 0 < \infty. \]
Corollary~\ref{hardcomparisoncor}~(i) now gives $\overline{\dim}^\Phi F \leq \overline{\dim}_{\theta_2+\eta} F$. It is straightforward to verify from Theorem~\ref{maincty} that the intermediate dimensions are continuous at $\theta_2 > 0$ (see also Theorem~\ref{intermediatects}), so the result follows upon letting $\eta \to 0^+$. 
\end{proof}

For sets whose upper intermediate dimensions are continuous at $\theta=0$, usually we will not study the $\Phi$-intermediate dimensions, because much information about the general $\Phi$-intermediate dimensions of such sets can be obtained directly from results about their intermediate dimensions and the inequalities from Corollary~\ref{hardcomparisoncor}.

\section{H\"older and Lipschitz maps}\label{holdersection}

\subsection{H\"older distortion}

We now investigate how these dimensions behave under H{\"o}lder and Lipschitz maps and prove bounds which are different from the usual $\dim f(F) \leq \alpha^{-1}\dim F$. 

\begin{thm}\label{holder}
Let $\Phi$ and $\Phi_1$ be admissible functions and let $(X,d_X)$ and $(Y,d_Y)$ be uniformly perfect. 
Let $f \colon F \to Y$ be a H{\"o}lder map with exponent $\alpha \in (0,1]$ for some $F \subseteq X$, assume $\dim_\mathrm{A} f(F) < \infty$, and let $\gamma \in [1,1/\alpha]$. 
Assume that 
\begin{equation}\label{phiholdercondition}
 \Phi_1(\delta) \leq (\Phi(\delta^{1/(\alpha \gamma)}))^\alpha
 \end{equation}%
 for all sufficiently small $\delta$, and suppose $\overline{\dim}^\Phi F < \alpha \dim_\mathrm{A} f(F)$. 

  Then 
\[ \overline{\dim}^{\Phi_1} f(F) \leq \frac{\overline{\dim}^\Phi F + \alpha (\gamma - 1)\dim_\mathrm{A} f(F)}{\alpha \gamma}.\]
The same holds with $\overline{\dim}$ replaced by $\underline{\dim}$ throughout. 
\end{thm}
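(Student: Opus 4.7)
The plan is to convert a near-optimal $(\delta,\Phi)$-cover of $F$ into a $(\delta_1,\Phi_1)$-cover of $f(F)$ at the scale $\delta_1 := \delta^{\alpha\gamma}$, performing three-part surgery in the spirit of the proof of Theorem~\ref{maincty}: keep image sets whose diameters already lie in $[\Phi_1(\delta_1),\delta_1]$, fatten those that are too small using uniform perfectness of $Y$, and break up those that are too large using the Assouad dimension of $f(F)$. The choice of the target exponent in the statement is made precisely to balance the cost of these two modifications.

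First I would fix $s_1$ slightly larger than $(\overline{\dim}^\Phi F + \alpha(\gamma-1)\dim_\mathrm{A} f(F))/(\alpha\gamma)$, pick $a > \dim_\mathrm{A} f(F)$ and $s' \in (\overline{\dim}^\Phi F, \min\{\alpha s_1,\,\alpha\gamma s_1 - \alpha(\gamma-1)a\})$; a direct computation shows that the hypothesis $\overline{\dim}^\Phi F < \alpha\dim_\mathrm{A} f(F)$ is exactly what makes this interval non-empty for suitably compatible slacks (since at the target value one has $\alpha\gamma s_1^\ast - \alpha(\gamma-1)\dim_\mathrm{A} f(F) = \overline{\dim}^\Phi F$ and $s_1^\ast \leq \dim_\mathrm{A} f(F)$). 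Let $C_H$ denote the H\"older constant of $f$, let $c_Y$ be a uniform-perfectness constant for $Y$, and let $C_A$ come from the Assouad dimension at level $a$. For $\epsilon>0$ and all sufficiently small $\delta$, choose a cover $\{U_i\}_{i\in I}$ of $F$ with $\Phi(\delta)\leq |U_i|\leq \delta$ and $\sum_i |U_i|^{s'}\leq \epsilon$. Set $V_i := f(U_i\cap F)\subseteq f(F)$, so $|V_i|\leq C_H|U_i|^\alpha$, and partition $I$ into $I_1 := \{i: |V_i|<\Phi_1(\delta_1)\}$, $I_2 := \{i: \Phi_1(\delta_1)\leq |V_i|\leq \delta_1\}$, $I_3 := \{i: |V_i|>\delta_1\}$. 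For $i\in I_1$, pick $p_i\in V_i$ and $q_i\in Y$ with $d_Y(p_i,q_i)\in[\Phi_1(\delta_1),\Phi_1(\delta_1)/c_Y]$, and set $W_i := V_i\cup\{q_i\}$. For $i\in I_2$, keep $V_i$. For $i\in I_3$, pick $y_i\in V_i$ and use the definition of Assouad dimension to cover $B(y_i,|V_i|)\cap f(F)$ by at most $C_A(2|V_i|/\delta_1)^a$ sets of diameter $\delta_1$.

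The remaining work is to verify, using the hypothesis $\Phi_1(\delta_1)\leq \Phi(\delta)^\alpha$ and the bounds $\Phi(\delta)\leq |U_i|\leq\delta$, that each of the three resulting sums is $O(\epsilon)$. Directly:
\begin{align*}
\sum_{i\in I_2}|V_i|^{s_1} &\leq C_H^{s_1}\delta^{\alpha s_1-s'}\sum_{i\in I_2}|U_i|^{s'}, \\
\sum_{i\in I_1}|W_i|^{s_1} &\leq (1+1/c_Y)^{s_1}\,|I_1|\,\Phi_1(\delta_1)^{s_1} \leq (1+1/c_Y)^{s_1}\epsilon\,\Phi(\delta)^{\alpha s_1-s'},
\end{align*}
where the second line uses $|I_1|\Phi(\delta)^{s'}\leq\epsilon$ and $\Phi_1(\delta_1)\leq \Phi(\delta)^\alpha$, and the $I_3$-cost is bounded by a constant multiple of $\delta_1^{s_1-a}\delta^{\alpha a-s'}\epsilon = \delta^{\alpha\gamma s_1-\alpha(\gamma-1)a-s'}\epsilon$. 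The choice of $s'$ and $a$ renders all three $\delta$- and $\Phi(\delta)$-exponents non-negative, so the combined cover is a $(\delta_1,\Phi_1)$-cover of $f(F)$ with total $s_1$-cost $\lesssim \epsilon$. This yields $\overline{\dim}^{\Phi_1} f(F)\leq s_1$; letting $s_1$ approach its target gives the claim. The $\underline{\dim}$-version follows by running the identical argument along a sequence $\delta_n\to 0^+$ on which a near-optimal $\underline{\dim}^\Phi F$-cover exists.

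The hard part will be the exponent bookkeeping in the three estimates: the choice of target $s_1$ is dictated by requiring the $I_3$-exponent $\alpha\gamma s_1 - \alpha(\gamma-1)a - s'$ to be non-negative while still allowing $s' > \overline{\dim}^\Phi F$, and simultaneously requiring $\alpha s_1\geq s'$ for the $I_1$- and $I_2$-estimates. Verifying that the single hypothesis $\overline{\dim}^\Phi F < \alpha\dim_\mathrm{A} f(F)$ suffices to meet both constraints — and that the strict inequality here is exactly what creates the necessary slack — is the crux of the argument.
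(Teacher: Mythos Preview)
Your proof is correct and follows the same overall strategy as the paper: push a near-optimal $[\Phi(\delta),\delta]$-cover forward by $f$, fatten image sets that fall below $\Phi_1(\delta^{\alpha\gamma})$ using uniform perfectness of $Y$, and break up those above $\delta^{\alpha\gamma}$ using the Assouad dimension of $f(F)$. The paper organises the bookkeeping slightly differently: it uses only a two-part split (small versus large, no ``keep as-is'' middle class), and in the break-up step it introduces a scale parameter $\beta_i\in[1,\gamma]$ for each $U_i$ together with an interpolating exponent $g(\eta)=\frac{\eta s + \alpha a(\gamma-\eta)}{\alpha\gamma}$, chosen so that the per-set cost of the broken-up family collapses exactly to a constant multiple of $|U_i|^s$. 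The required inequality $t>g(\beta_i)$ for all $\beta_i$ then reduces to the two endpoint constraints $t>g(1)$ and $t>g(\gamma)=s/\alpha$, which are precisely your conditions $s'<\alpha\gamma s_1-\alpha(\gamma-1)a$ and $s'<\alpha s_1$. Your three-way split and the cruder replacement $|U_i|\leq\delta$ in the $I_3$-estimate sidestep the $g$-function altogether, giving a somewhat more transparent argument that lands at the same exponent for the same reasons. (One small technicality you should make explicit in a full write-up: the pieces produced in the $I_3$ step may have diameter below $\Phi_1(\delta_1)$ in a general metric space and need the same one-point fattening you already use for $I_1$.)
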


\begin{proof}
The idea of the proof is to consider a cover of $F$ with diameters in $[\Phi(\delta),\delta]$, consider the cover of $f(F)$ formed by the images under $f$ of this cover, and `fatten' the smallest sets in the new cover to size $\Phi_1(\delta^{\alpha \gamma})$ and break up the largest sets in the new cover to size $\delta^{\alpha \gamma}$. 
Assume that $f$ is $C,\alpha$-H{\"o}lder with $C \geq 1$. Let $\epsilon > 0$. Let  
\[ t > \frac{\overline{\dim}^\Phi F + \alpha (\gamma - 1)\dim_\mathrm{A} f(F)}{\alpha \gamma}. \]
Then there exist $s > \overline{\dim}^\Phi F$ and $a> \dim_\mathrm{A} f(F)$ such that $s < \alpha a$ and 
\[ t > \frac{s + \alpha (\gamma - 1)a}{\alpha \gamma}.\]  
 Define  
\[g(\eta) \coloneqq \frac{\eta s + \alpha a (\gamma - \eta )}{\alpha \gamma}. \] 

 Since $a> \dim_\mathrm{A} f(F)$, there exists $M \in \mathbb{N}$ such that for all $y \in f(F)$ and $0<r<R$, we have $N_r(B(y,R)\cap f(F)) \leq M(R/r)^a$.  
Let $c \in (0,1)$ be such that $X$ and $Y$ are $c$-uniformly perfect. %
For all small enough $\delta$ we have $\Phi(\delta)/\delta < c/2$ and $\Phi_1(\delta)/\delta < c/2$, and there exists a cover $\{U_i\}$ of $F$ such that $\Phi(\delta) \leq |U_i| \leq \delta$ for all $i$, and 
\begin{equation}\label{holdersumbound}
\sum_i |U_i|^s \leq ((C + c^{-1})^{s/\alpha} + M(2C)^{a + \gamma g(1)})^{-1} \epsilon/2.
\end{equation}
Without loss of generality assume $U_i \cap F \neq \varnothing$ for all $i$. Now, $\{f(U_i)\}$ covers $f(F)$, and $|f(U_i)| \leq C|U_i|^\alpha$ for all $i$. 
There are two cases. 

\textbf{Case 1}: Suppose $i$ is such that $|f(U_i)| \leq \delta^{\alpha \gamma}/2$. Fix any $y_i \in f(U_i)$. There exists $y_i' \in Y$ such that $\Phi_1(\delta^{\alpha \gamma}) \leq d_Y(y_i,y_i') \leq \Phi_1(\delta^{\alpha \gamma})/c$, hence $d_Y(y_i,y_i') \leq (\Phi(\delta))^\alpha/c$. Let $V_i \coloneqq f(U_i) \cup \{y_i'\}$. 
By the triangle inequality, %
\begin{equation}\label{holderdiambound1} 
\Phi_1(\delta^{\alpha \gamma}) \leq d_Y(y_i,y_i') \leq |V_i| \leq |f(U_i)| + \Phi_1(\delta^{\alpha \gamma})/c \leq \delta^{\alpha \gamma}.
\end{equation}
Moreover, by the assumption~\eqref{phiholdercondition} about $\Phi_1$, 
\begin{equation}\label{holderubound1} |V_i| \leq |f(U_i)| + \Phi_1(\delta^{\alpha \gamma})/c \leq C|U_i|^\alpha + (\Phi(\delta))^\alpha/c \leq (C+ c^{-1}) |U_i|^\alpha. 
\end{equation}

\textbf{Case 2}: Now suppose that $i$ is such that $\delta^{\alpha \gamma}/2 < |f(U_i)| \leq C\delta^\alpha$. Then $(2C)^{-1/\alpha}\delta^\gamma < |U_i| \leq \delta$ so there exists $\beta_i \in [1,\gamma]$ such that $(2C)^{-1/\alpha} \delta^{\beta_i} < |U_i| \leq \delta^{\beta_i}$. 
Then $\delta^{\alpha \gamma}/2 < |f(U_i)| \leq C\delta^{\alpha \beta_i} \leq C\delta^\alpha$. %
There exists a collection of $M(2C)^a \delta^{\alpha (\beta_i - \gamma)a} \leq M(2C)^a |U_i|^{\alpha a (1-\gamma/\beta_i)}$  %
or fewer balls, each of diameter at most $\delta^{\alpha \gamma}/2$, which cover $f(U_i) \cap f(F)$. For each ball we can add a point in $Y$ whose distance from the centre of the ball is between $\Phi_1(\delta^{\alpha \gamma})$ and $\Phi_1(\delta^{\alpha \gamma})/c$. Each of the new sets, which we call $\{W_{i,j}\}_j$, will satisfy 
\begin{equation}\label{holderdiambound2}
 \Phi_1(\delta^{\alpha \gamma}) \leq |W_{i,j}| \leq  \delta^{\alpha \gamma}.
 \end{equation}
Moreover, 
\begin{equation}\label{holderubound2}
|W_{i,j}| \leq \delta^{\alpha \gamma} =  (2C)^{\gamma/\beta_i} ((2C)^{-1/\alpha}\delta^{\beta_i})^{\alpha \gamma/\beta_i} \leq (2C)^{\gamma/\beta_i} |U_i|^{\alpha \gamma/\beta_i}. 
\end{equation}

Note that $g(\eta)$ is linear and decreasing in $\eta$, so $t>g(1) \geq g(\eta) \geq g(\gamma) = s/\alpha$ for all $\eta \in [1,\gamma]$, and in particular $t > g(\beta_i)$ for all $i$. Therefore using~\eqref{holderubound1} and~\eqref{holderubound2},  
\begin{align*}
\sum_k |V_k|^t &+ \sum_{i,j} |W_{i,j}|^t < \sum_k |V_k|^{s/\alpha} + \sum_{i,j} |W_{i,j}|^{g(\beta_i)} \\
&\leq \sum_k ((C + c^{-1})|U_k|^\alpha)^{s/\alpha} + \sum_i M(2C)^a |U_i|^{\alpha a (1-\gamma/\beta_i)} ((2C)^{\gamma/\beta_i} |U_i|^{\alpha \gamma/\beta_i})^{g(\beta_i)} \\
&\leq (C + c^{-1})^{s/\alpha} \sum_k |U_k|^s + M(2C)^{a + \gamma g(\beta_i)/\beta_i} \sum_i |U_i|^s \\
&\leq \epsilon,
\end{align*}
where the last equality follows from~\eqref{holdersumbound}. 
Also, $\{V_k\}_k \cup \{W_{i,j}\}_{i,j}$ covers $f(F)$, and (noting~\eqref{holderdiambound1} and~\eqref{holderdiambound2}) $\overline{\dim}^{\Phi_1} f(F) \leq t$, as required. 
\end{proof}

We make several comments about Theorem~\ref{holder}. 
\begin{itemize}
\item An important special case is when $\gamma = 1/\alpha$ and $\Phi_1 = \Phi$. Then we can conclude 
\[ \overline{\dim}^\Phi f(F) \leq \overline{\dim}^\Phi F + (1-\alpha)\dim_\mathrm{A} f(F). \]
\item Another special case is for the $\Phi_1$ which satisfy~\eqref{phiholdercondition} with $\gamma = 1$, when we can conclude $\overline{\dim}^{\Phi_1} f(F) \leq \alpha^{-1}\overline{\dim}^\Phi F$. %
\item If $\overline{\dim}^\Phi F \geq \alpha \dim_\mathrm{A} f(F)$ (contrary to the assumption of Theorem~\ref{holder}) then the simple bound $\overline{\dim}^\Phi f(F) \leq \alpha^{-1} \overline{\dim}^\Phi F$ follows immediately. %

\item If $\overline{\dim}^\Phi F < \alpha \dim_\mathrm{A} f(F)$ but we only assume  that~\eqref{phiholdercondition} holds along a subsequence of $\delta \to 0^+$, then we can conclude only that 
\[ \underline{\dim}^{\Phi_1} f(F) \leq \frac{\overline{\dim}^\Phi F + \alpha (\gamma - 1)\dim_\mathrm{A} f(F)}{\alpha \gamma}.\]
\end{itemize}

Setting $\Phi(\delta) = \delta^{1/\theta}$ gives a H{\"o}lder distortion estimate for the intermediate dimensions in Corollary~\ref{holderintermediate}. 
For subsets of Euclidean space, Corollary~\ref{holderintermediate} was noted in~\cite[Section~14.2.1 5.]{Falconer2021intdimsurvey}, and it also follows from the stronger result~\cite[Theorem~3.1]{Burrell2022brownian} which is proven using capacity theoretic methods and dimension profiles, but we include it nonetheless because our proof works for more general metric spaces. 

\begin{cor}\label{holderintermediate}
If $f \colon F \to Y$ is an $\alpha$-H{\"o}lder map with exponent $\alpha \in (0,1]$ and $\dim_\mathrm{A} f(F) < \infty$, then $\overline{\dim}_\theta f(F) \leq \alpha^{-1}\overline{\dim}_\theta F$ and $\underline{\dim}_\theta f(F) \leq \alpha^{-1}\underline{\dim}_\theta F$ for all $\theta \in [0,1]$. 

\end{cor}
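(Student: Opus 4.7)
The plan is to deduce this corollary directly from Theorem~\ref{holder} by choosing $\Phi = \Phi_1$ to be the function $\delta \mapsto \delta^{1/\theta}$ and the parameter $\gamma = 1$. By the remark following Definition~\ref{maindefinition}, this choice recovers the $\theta$-intermediate dimensions from the $\Phi$-intermediate dimensions for $\theta \in (0,1)$, so it suffices to transcribe Theorem~\ref{holder} in this special case.

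First I would dispose of the endpoints. For $\theta = 0$ the statement reduces to the standard Hölder bound $\dim_{\mathrm H} f(F) \leq \alpha^{-1} \dim_{\mathrm H} F$, and for $\theta = 1$ it reduces to the corresponding bounds for upper and lower box dimension; both follow from the trivial estimate~\eqref{generalholderint} on page~\pageref{generalholderint}. So assume $\theta \in (0,1)$ and set $\Phi(\delta) \coloneqq \Phi_1(\delta) \coloneqq \delta^{1/\theta}$, which is admissible, with $\overline{\dim}^\Phi F = \overline{\dim}_\theta F$, $\underline{\dim}^\Phi F = \underline{\dim}_\theta F$ and analogously for $f(F)$.

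Next, with $\gamma = 1$ the hypothesis~\eqref{phiholdercondition} of Theorem~\ref{holder} becomes
\[ \Phi_1(\delta) = \delta^{1/\theta} \leq \bigl(\Phi(\delta^{1/\alpha})\bigr)^{\alpha} = \bigl(\delta^{1/(\alpha\theta)}\bigr)^{\alpha} = \delta^{1/\theta}, \]
which holds with equality for all $\delta$. Provided that $\overline{\dim}_\theta F < \alpha \dim_{\mathrm A} f(F)$, the conclusion of Theorem~\ref{holder} (with the $\alpha(\gamma-1)\dim_{\mathrm A} f(F)$ term vanishing) gives exactly $\overline{\dim}_\theta f(F) \leq \alpha^{-1} \overline{\dim}_\theta F$, and the same argument applied to the lower dimensions yields $\underline{\dim}_\theta f(F) \leq \alpha^{-1} \underline{\dim}_\theta F$.

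Finally, in the complementary case $\overline{\dim}_\theta F \geq \alpha \dim_{\mathrm A} f(F)$, Proposition~\ref{basicbounds} applied to $f(F)$ gives $\overline{\dim}_\theta f(F) \leq \dim_{\mathrm A} f(F) \leq \alpha^{-1} \overline{\dim}_\theta F$, and likewise for the lower versions using $\underline{\dim}_\theta f(F) \leq \dim_{\mathrm A} f(F)$. So there is genuinely no obstacle here beyond a small case split — this is essentially a direct specialisation of Theorem~\ref{holder}. The only point to be careful about is that Theorem~\ref{holder} is stated under the assumption $\overline{\dim}^\Phi F < \alpha \dim_{\mathrm A} f(F)$, so the dichotomy above must be addressed explicitly; the finiteness hypothesis $\dim_{\mathrm A} f(F) < \infty$ ensures both cases are meaningful.
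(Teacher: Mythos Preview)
Your proposal is correct and matches the paper's proof essentially line for line: the paper also disposes of the endpoints $\theta\in\{0,1\}$ via the standard H\"older estimates, sets $\Phi=\Phi_1=\delta^{1/\theta}$, verifies~\eqref{phiholdercondition} with $\gamma=1$ as an equality, and handles the dichotomy $\overline{\dim}_\theta F \lessgtr \alpha\dim_{\mathrm A} f(F)$ exactly as you do (invoking Theorem~\ref{holder} in one case and the trivial bound via $\dim_{\mathrm A} f(F)$ in the other).
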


\begin{proof} 
These estimates hold for the Hausdorff and lower and upper box dimensions (similar to~\cite[Exercise~2.2 and Proposition~3.3]{Falconer2014main}), so assume that $\theta \in (0,1)$ and let $\Phi(\delta) = \Phi_1(\delta) = \delta^{1/\theta}$. 
If $\overline{\dim}_\theta F \geq \alpha \dim_\mathrm{A} f(F)$ then $\overline{\dim}_\theta f(F) \leq \dim_\mathrm{A} f(F) \leq \alpha^{-1} \overline{\dim}_\theta F$. If $\overline{\dim}_\theta F < \alpha \dim_\mathrm{A} f(F)$ then since 
\[ \Phi_1(\delta) = \Phi(\delta) = \delta^{1/\theta} = ((\delta^{1/\alpha})^{1/\theta})^\alpha = \Phi(\delta^{1/\alpha})^\alpha, \]
the case $\gamma = 1$ of Theorem~\ref{holder} gives that $\overline{\dim}_\theta f(F) \leq \alpha^{-1} \overline{\dim}_\theta F$. 
Similarly, the bound for the lower intermediate dimensions follows from the version of Theorem~\ref{holder} for the lower $\Phi$-intermediate dimensions. %
\end{proof} 

\subsection{Lipschitz stability}

Recall that a map is \emph{Lipschitz} if it is 1-H\"older, and \emph{bi-Lipschitz} if it is a Lipschitz bijection with a Lipschitz inverse. 
Corollary~\ref{philipschitz} shows that the $\Phi$-intermediate dimensions cannot increase under Lipschitz maps. %
It follows that $\overline{\dim}^\Phi$ and $\underline{\dim}^\Phi$ satisfy the important property of being stable under bi-Lipschitz maps. 
Bi-Lipschitz stability has already been proven for the Hausdorff and box dimensions in~\cite[Propositions~2.5 and 3.3]{Falconer2014main} and, for subsets of $\R^n$, for the intermediate dimensions in~\cite[Lemma~3.1]{Fraser2021interpolating}.

\begin{cor}\label{philipschitz}
Let $X$ and $Y$ be underlying spaces, let $F \subseteq X$, let $f \colon F \to Y$ be Lipschitz, and assume that $\dim_\mathrm{A} f(F) < \infty$. Then 
\begin{enumerate}[label=(\roman*)]
\item\label{i:firstphilip} We have $\overline{\dim}^\Phi f(F) \leq \overline{\dim}^\Phi F$ and $\underline{\dim}^\Phi f(F) \leq \underline{\dim}^\Phi F$.

\item\label{i:bilipschitzlabel} If moreover $f$ is bi-Lipschitz then 
$\overline{\dim}^\Phi f(F) = \overline{\dim}^\Phi F$
and
$\underline{\dim}^\Phi f(F) = \underline{\dim}^\Phi F$. 
\end{enumerate}
\end{cor}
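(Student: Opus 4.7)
The strategy is to obtain part~\ref{i:firstphilip} by specialising Theorem~\ref{holder} to the Lipschitz setting, and then to obtain part~\ref{i:bilipschitzlabel} as a formal consequence.

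For part~\ref{i:firstphilip}, I will choose the parameters $\alpha = 1$ (Lipschitz means $1$-H\"older), $\gamma = 1$ (the only admissible value in $[1,1/\alpha] = \{1\}$), and $\Phi_1 = \Phi$. With these choices the hypothesis~\eqref{phiholdercondition} of Theorem~\ref{holder} reads $\Phi(\delta) \leq (\Phi(\delta))^1$, which holds trivially, and the conclusion simplifies to $\overline{\dim}^\Phi f(F) \leq \overline{\dim}^\Phi F$ (the $(\gamma-1)\dim_\mathrm{A} f(F)$ term vanishes). However, Theorem~\ref{holder} requires the additional hypothesis $\overline{\dim}^\Phi F < \dim_\mathrm{A} f(F)$, so I would split into two cases. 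In the case $\overline{\dim}^\Phi F < \dim_\mathrm{A} f(F)$, Theorem~\ref{holder} applies directly and gives the desired bound. In the complementary case $\overline{\dim}^\Phi F \geq \dim_\mathrm{A} f(F)$, Proposition~\ref{basicbounds} gives $\overline{\dim}^\Phi f(F) \leq \overline{\dim}_\mathrm{B} f(F) \leq \dim_\mathrm{A} f(F) \leq \overline{\dim}^\Phi F$, which is even stronger. The same two-case argument applied to the lower-dimensional version of Theorem~\ref{holder} yields $\underline{\dim}^\Phi f(F) \leq \underline{\dim}^\Phi F$.

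For part~\ref{i:bilipschitzlabel}, I note that if $f \colon F \to f(F)$ is bi-Lipschitz then its inverse $f^{-1} \colon f(F) \to F$ is also Lipschitz, and $\dim_\mathrm{A} F = \dim_\mathrm{A} f(F) < \infty$ (bi-Lipschitz stability of the Assouad dimension follows from~\eqref{e:assouaddef}). Consequently, part~\ref{i:firstphilip} is applicable to $f^{-1}$ as well, giving the reverse inequalities $\overline{\dim}^\Phi F = \overline{\dim}^\Phi (f^{-1}(f(F))) \leq \overline{\dim}^\Phi f(F)$ and similarly for $\underline{\dim}^\Phi$, so both inequalities from part~\ref{i:firstphilip} become equalities.

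There is no real obstacle here: the work has been front-loaded into Theorem~\ref{holder}, and the only subtlety is keeping track of the hypothesis $\overline{\dim}^\Phi F < \dim_\mathrm{A} f(F)$ and handling the degenerate case separately via Proposition~\ref{basicbounds}. The monotonicity of $\overline{\dim}^\Phi$ and $\underline{\dim}^\Phi$ under inclusion (Proposition~\ref{unprovedprop}(i)) is implicit in identifying $f^{-1}(f(F))$ with $F$ when $f$ is a bijection.
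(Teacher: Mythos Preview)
Your proposal is correct and follows essentially the same approach as the paper: the two-case split via Proposition~\ref{basicbounds} and the $\alpha=\gamma=1$, $\Phi_1=\Phi$ specialisation of Theorem~\ref{holder}, with~\ref{i:bilipschitzlabel} deduced by applying~\ref{i:firstphilip} to the inverse.
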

In~\ref{i:bilipschitzlabel}, the assumption $\dim_\mathrm{A} f(F) < \infty$ is equivalent to $\dim_\mathrm{A} F < \infty$ since the Assouad dimension is stable under bi-Lipschitz maps. 

\begin{proof} 
If $\overline{\dim}^\Phi F \geq \dim_\mathrm{A} f(F)$ then $\overline{\dim}^\Phi f(F) \leq \dim_\mathrm{A} f(F) \leq \overline{\dim}^\Phi F$ by Proposition~\ref{basicbounds}; if $\overline{\dim}^\Phi F < \dim_\mathrm{A} f(F)$ then the case $\alpha = \gamma = 1$, $\Phi_1 = \Phi$, of Theorem~\ref{holder} gives $\overline{\dim}^\Phi f(F) \leq \overline{\dim}^\Phi F$. The proof that $\underline{\dim}^\Phi f(F) \leq \underline{\dim}^\Phi F$ is similar, and~\ref{i:bilipschitzlabel} follows from~\ref{i:firstphilip}. 
\end{proof}

\section{A mass distribution principle}\label{masssection}

In this section we prove a mass distribution principle for the $\Phi$-intermediate dimensions and a converse result (a Frostman type lemma), which together give an alternative characterisation of the intermediate dimensions. We then prove some applications regarding product sets and finite stability. 

\subsection{A mass distribution principle}

The mass distribution principle is a useful tool to bound dimensions from below by putting a measure on the set. The original version was for the Hausdorff dimension (see~\cite[page~67]{Falconer2014main}), and a version was proved for the intermediate dimensions in~\cite[Proposition~2.2]{Falconer2020firstintermediate}. The following natural generalisation for the $\Phi$-intermediate dimensions holds. 

\begin{lemma}\label{massdistprinc}
 Let $F$ be a subset and let $s,a,c,\delta_0>0$ be positive constants. 
 \begin{enumerate}[label=(\roman*)]
\item If there exists a positive decreasing sequence $\delta_n \to 0$ such that for each $n \in \mathbb{N}$ there exists a Borel measure $\mu_n$ with support $\mathrm{supp}(\mu_n) \subseteq F$ with $\mu_n (\mathrm{supp}(\mu_n)) \geq a$, and such that for every Borel subset $U \subseteq X$ with $\Phi(\delta_n) \leq |U| \leq \delta_n$ we have $\mu_n(U)\leq c |U|^s$, then $\overline{\dim}^\Phi F \geq s$. %

\item If, moreover, for all $\delta \in (0,\delta_0)$ there exists a Borel measure $\mu_\delta$ with support $\mathrm{supp}(\mu_\delta) \subseteq F$ with $\mu_\delta (\mathrm{supp}(\mu_\delta)) \geq a$, and such that for every Borel subset $U \subseteq X$ with $\Phi(\delta) \leq |U| \leq \delta$ we have $\mu_\delta(U)\leq c |U|^s$, then $\underline{\dim}^\Phi F \geq s$. 
\end{enumerate}
\end{lemma}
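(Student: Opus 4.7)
The plan is to follow the same strategy as in Proposition~\ref{prop:mdp}: apply each hypothesised measure to an admissible cover and thereby obtain a uniform lower bound on the $s$-cost, then translate this into a lower bound on the appropriate dimension via Definition~\ref{maindefinition}.

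First I would establish the core inequality. Fix any scale $\delta$ at which a measure $\mu_\delta$ as in the hypothesis is available, and let $\{U_i\}$ be any cover of $F$ with $\Phi(\delta) \le |U_i| \le \delta$ for all $i$. Replacing each $U_i$ with its closure (which has the same diameter and is Borel, hence a valid test set in the hypothesis) and using that $\mathrm{supp}(\mu_\delta) \subseteq F \subseteq \bigcup_i \overline{U_i}$, we obtain
\[ a \;\le\; \mu_\delta(\mathrm{supp}(\mu_\delta)) \;\le\; \sum_i \mu_\delta(\overline{U_i}) \;\le\; c\sum_i |U_i|^s, \]
so $\sum_i |U_i|^s \ge a/c$. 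Since $\delta \le 1$ forces $|U_i| \le 1$, for any $t \in [0,s)$ we also have $\sum_i |U_i|^t \ge \sum_i |U_i|^s \ge a/c$.

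For part~(i), set $\epsilon \coloneqq a/(2c)$ and suppose for contradiction that some $t \in [0,s)$ lies in the set whose infimum defines $\overline{\dim}^\Phi F$. Then there exists $\delta_0' \in (0,1]$ such that for every $\delta \in (0,\delta_0')$ some cover of $F$ with diameters in $[\Phi(\delta),\delta]$ has $t$-cost at most $\epsilon$. Choosing $n$ large enough that $\delta_n < \delta_0'$ and applying the previous paragraph at $\delta=\delta_n$ yields a contradiction, so no such $t$ exists and $\overline{\dim}^\Phi F \ge s$.

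For part~(ii) the quantifier structure is different but the argument is analogous: if some $t \in [0,s)$ belonged to the set defining $\underline{\dim}^\Phi F$, then with the same $\epsilon$ and $\delta_0' \coloneqq \min\{\delta_0,1\}$ there would exist a scale $\delta \in (0,\delta_0')$ admitting a cover of $F$ with diameters in $[\Phi(\delta),\delta]$ and $t$-cost at most $\epsilon$, again contradicting the core inequality since $\mu_\delta$ is available at this scale. The only real subtlety throughout is the Borel measurability of the covering sets, which is resolved by passing to closures; otherwise the proof is a direct adaptation of the classical mass distribution principle.
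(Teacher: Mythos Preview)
Your proof is correct and follows essentially the same approach as the paper: pass to closures to ensure Borel measurability, use subadditivity of the measure to bound $\sum_i |U_i|^s \ge a/c$ for every admissible cover at the relevant scales, and conclude that $\overline{\dim}^\Phi F \ge s$ (respectively $\underline{\dim}^\Phi F \ge s$). The only difference is cosmetic: the paper stops at the inequality $\sum_i |U_i|^s \ge a/c > 0$ and directly infers the dimension bound, whereas you spell out the final step via a contradiction argument for arbitrary $t<s$.
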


\begin{proof}
We prove (i); the proof of~(ii) is similar. If $n \in \mathbb{N}$ and $\{U_i\}$ is a cover of $F$ such that $\Phi(\delta_n) \leq |U_i| \leq \delta_n$ for all $i$, then the closures $\overline{U_i}$ are Borel, satisfy $\Phi(\delta_n) \leq |\overline{U_i}| = |U_i| \leq \delta_n$, and cover $\mathrm{supp}(\mu_n)$, so
\begin{equation}\label{massdistprinceqn} 
a \leq \mu_n (\mathrm{supp}(\mu_n)) = \mu_n\left(\bigcup_i \overline{U_i}\right) \leq \sum_i \mu_n(\overline{U_i}) \leq c\sum_i |\overline{U_i}|^s = c\sum_i |U_i|^s. 
\end{equation}
 Therefore $\sum_i |U_i|^s \geq a/c > 0$, so $\overline{\dim}^\Phi F \geq s$. 
 \end{proof}
 
 \subsection{A Frostman type lemma}\label{s:frostman}
 
Another powerful tool in fractal geometry and geometric measure theory is Frostman's lemma, dual to the mass distribution principle. 
Lemma~\ref{frostman} is an analogue of Frostman's lemma for the $\Phi$-intermediate dimensions, generalising~\cite[Proposition~2.3]{Falconer2020firstintermediate} for the intermediate dimensions both to more general functions~$\Phi$ and to more general metric spaces. 

The main difference with the proof of~\cite[Proposition~2.3]{Falconer2020firstintermediate} is that in $\mathbb{R}^n$ there are the dyadic cubes to work with, but here we use the fact that $\dim_\mathrm{A} F < \infty$, and use an analogue of the dyadic cubes constructed in~\cite{Hytonen2010} for general doubling metric spaces. 
We now state a special case of \cite[Theorem~2.2]{Hytonen2010}, using notation from that theorem (note in particular that $\delta$ denotes a certain constant). 
We take the quasi-metric $\rho$ simply to be the metric $d$ restricted to $F$ (so the usual triangle inequality holds and $A_0 = 1$). 
Fix $\delta \coloneqq 1/20$ (in fact any $\delta \in (0,1/12)$ will do). 
Since $\dim_\mathrm{A} F < \infty$, for each $k \in \mathbb{N}$ we have $N_{\delta^k/3}(F) < \infty$. Therefore there exists a finite $\delta^k$-separated subset $\{z_{\alpha}^k\}_\alpha$ of $F$, of maximum possible cardinality. 
Then applying \cite[Theorem~2.2]{Hytonen2010} with $c_0 = C_0 = 1$, $c_1 = 1/3$, $C_1 = 2$, for each $k \in \mathbb{N}$ there exist subsets $Q^k \coloneqq \{Q^k_\alpha\}_\alpha$ of $F$ such that: 
\begin{enumerate}
\item for all $k \in \mathbb{N}$, $F = \bigcup_\alpha Q_\alpha^k$ with the union disjoint;
\item\label{frostmanballinball} 
$B^F (z_\alpha^k,(20)^{-k}/4) \subseteq B^F (z_\alpha^k,c_1 (20)^{-k}) \subseteq Q_\alpha^k 
\subseteq B^F (z_\alpha^k,C_1 (20)^{-k})
= B^F (z_\alpha^k,2 (20)^{-k})$, recalling that $B^F$ denotes the open ball in $F$;
\item if $k,l \in \mathbb{N}$ with $k \leq l$ then for all $\alpha, \beta$, either $Q_\alpha^k \cap Q_\beta^l = \varnothing$ or $Q_\beta^l \subseteq Q_\alpha^k$, and in the latter case, also $B^F (z_\beta^l,2 (20)^{-l}) \subseteq B^F (z_\alpha^k,2 (20)^{-k})$. We call $Q_\alpha^k$ a \emph{parent} of $Q_\beta^l$. 
\end{enumerate}
We say that $Q_\alpha^k$ is a \emph{dyadic cube} with \emph{centre} $z_\alpha^k$. 

\begin{lemma}\label{frostman}
Assume that $\dim_\mathrm{A} F < \infty$. 
\begin{enumerate}[label=(\roman*)]
\item If $\overline{\dim}^\Phi F > 0$ then for all $s \in (0,\overline{\dim}^\Phi F)$ there exists a constant $c \in (0,\infty)$ such that for all $\delta_0 >0$ there exist $\delta' \in (0,\delta_0)$ and a Borel probability measure $\mu_{\delta'}$ with finite support $\mathrm{supp}(\mu_{\delta'}) \subseteq F$ 
such that if $x \in X$ and $\Phi(\delta') \leq r \leq \delta'$ then 
\[\mu_{\delta'} (B(x,r)) \leq c r^s.\]

\item If $\underline{\dim}^\Phi F > 0$ then for all $s \in (0,\underline{\dim}^\Phi F)$ there exists $c \in (0,\infty)$ such that for all sufficiently small $\delta'$ there exists a Borel probability measure $\mu_{\delta'}$ with finite support $\mathrm{supp}(\mu_{\delta'}) \subseteq F$ such that if $x \in X$ and $\Phi(\delta') \leq r \leq \delta'$ then $\mu_{\delta'} (B(x,r)) \leq c r^s$. 
\end{enumerate}
\end{lemma}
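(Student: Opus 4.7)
The overall strategy is a max-flow / linear programming duality argument on the tree of Hytönen dyadic cubes, generalising the approach of \cite{Falconer2020firstintermediate} for the intermediate dimensions to arbitrary admissible $\Phi$ and to the metric-space setting. I treat part~(i) in detail; part~(ii) is essentially identical, with the only change being that the ``good'' scale $\delta'$ can be chosen from a full neighbourhood of $0$ rather than along a sequence, which is exactly what the definition of $\underline{\dim}^\Phi$ supplies via its negation.

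First I reduce to a quantitative lower bound on the cost of dyadic-cube covers. Since $s < \overline{\dim}^\Phi F$, negating Definition~\ref{maindefinition} yields $\epsilon_0 > 0$ such that for every $\delta_0 > 0$ there exists $\delta' \in (0, \delta_0)$ with the property that every cover $\{U_i\}$ of $F$ satisfying $\Phi(\delta') \leq |U_i| \leq \delta'$ has $\sum_i |U_i|^s \geq \epsilon_0$. Fix such a $\delta'$ and choose integers $m \leq M$ with $20^{-m}$ comparable to $\delta'$ and $20^{-M}$ comparable to $\Phi(\delta')$. Since $\dim_\mathrm{A} F < \infty$, the space $F$ is doubling, and any cover by sets of diameter in a dyadic scale band can be replaced by a cover by Hytönen dyadic cubes at the corresponding levels at the cost of a bounded (by a constant $K$ depending only on $\dim_\mathrm{A} F$) multiplicative increase in the $s$-cost. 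Hence every antichain cover of $F$ by dyadic cubes at levels in $[m, M]$ has total $s$-cost at least $\epsilon_0/K$.

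Next I construct the measure by a water-filling procedure on the finite rooted forest $\mathcal{T}$ of dyadic cubes at levels $m, m+1, \ldots, M$ that meet $F$. Define tentative masses bottom-up by
\begin{equation*}
\nu(Q) \coloneqq |Q|^s \quad \text{if } Q \text{ is at level } M, \qquad \nu(Q) \coloneqq \min\Bigl(|Q|^s, \sum_{Q' \text{ child of } Q} \nu(Q')\Bigr) \text{ otherwise},
\end{equation*}
then renormalise top-down whenever the cap $|Q|^s$ is binding, so that the resulting measure $\nu$ is supported on the level-$M$ cubes and satisfies $\nu(R) \leq |R|^s$ for every $R \in \mathcal{T}$. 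A standard max-flow/min-cut (equivalently, LP-duality) argument on this finite tree shows that the total mass $\nu(F)$ equals the minimum of $\sum_j |R_j|^s$ over antichain covers $\{R_j\} \subset \mathcal{T}$ of $F$, hence $\nu(F) \geq \epsilon_0/K$ by the previous paragraph.

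Finally, to convert the cube condition into the required ball condition, let $x \in X$ and $\Phi(\delta') \leq r \leq \delta'$. If $r$ is large enough to select a level $\ell \in [m, M]$ with $20^{-\ell}$ comparable to $r$, then by doubling there are at most $D = D(\dim_\mathrm{A} F)$ dyadic cubes at level $\ell$ intersecting $B(x, r)$, each of diameter at most a constant multiple of $r$, giving $\nu(B(x, r)) \leq D' r^s$; if instead $r$ is close to $\Phi(\delta')$ so that no such $\ell$ exists, one covers $B(x, r) \cap F$ by boundedly many level-$M$ cubes and uses $\Phi(\delta') \leq r$ to bound their total mass by a constant multiple of $r^s$. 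Normalising $\mu_{\delta'} \coloneqq \nu/\nu(F)$ produces the desired probability measure with constant $c$ depending on $s, \Phi, F, \dim_\mathrm{A} F$ but not on $\delta'$. The main obstacle is the LP-duality step identifying $\nu(F)$ with the minimum antichain cover cost; while this is classical on trees, some care is needed to verify that the water-filling-then-renormalise construction indeed achieves the maximum, and to manage the mismatch between the diameter intervals $[\Phi(\delta'), \delta']$ and the dyadic scale bands through the doubling constants.
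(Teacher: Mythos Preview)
Your proposal is correct and follows essentially the same approach as the paper: Hyt\"onen dyadic cubes, the bottom-up water-filling/capping construction of the measure, the observation that the saturated cubes form a cover yielding the lower bound on the total mass, and doubling to pass from cube bounds to ball bounds. The only difference is cosmetic---you phrase the mass-lower-bound step as max-flow/min-cut on the finite tree, whereas the paper writes out the identical argument concretely by exhibiting, for every point, a cube $Q_y$ at which the cap is binding ($\mu_{m-l}(Q_y)=20^{-(m-k)s}$) and observing that these $Q_y$ form an antichain cover of $F$.
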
 

\begin{proof}
We prove~(ii); the proof of~(i) is similar. 
The idea of the proof is to put point masses on an analogue of dyadic cubes of size approximately $\Phi(\delta')$ so that the measure of sets with diameter approximately $\Phi(\delta')$ is controlled by the $\Phi$-intermediate dimension of $F$, and then iteratively reduce the masses so that the mass of larger cubes is not too large either. 
The proof is based on the proof of~\cite[Proposition~2.3]{Falconer2020firstintermediate} for the intermediate dimensions, which is in turn based on~\cite[pages~112--114]{Mattila1995book}. In~\cite[Proposition~2.3]{Falconer2020firstintermediate}, the assumption that the set $F$ is closed is not necessary as it is not used in the proof. 

We use notation from \cite[Theorem~2.2]{Hytonen2010}, as above. 
Let $c_2 \in (0,1)$ be such that $X$ is $c_2$-uniformly perfect. 
Suppose $\underline{\dim}^\Phi F > 0$ and let $s \in (0,\underline{\dim}^\Phi F)$. Then there exists $\epsilon > 0$ such that for all sufficiently small $\delta'$ and all covers $\{U_i\}$ of $F$ satisfying $\Phi(\delta') \leq |U_i| \leq \delta'$ for all $i$, %
\begin{equation}\label{frostmansumlarge}
\sum_i |U_i|^s \geq \epsilon.
\end{equation} 
Let $\delta'$ be small enough such that this is the case, and moreover that $\Phi(\delta')/\delta' < c_2/320$. Define $m = m(\delta')$ to be the largest natural number satisfying $\Phi(\delta') \leq \frac{1}{2}(20)^{-m}$. Define the Borel measure $\mu_m$ by 
\[ \mu_m \coloneqq \sum_{\alpha} 20^{-ms} M_{z_\alpha^k} \]
where $M_{z_\alpha^k}$ is a unit point mass at $z_\alpha^k$. 

Let $l$ be the largest integer such that $8(20^{-(m-l)}) \leq \delta'$, noting that $l \geq 1$. In particular, $|Q_{m-l}| \leq \delta'/2$ for all $Q_{m-l} \in Q^{m-l}$. 
In order to reduce the mass of cubes which carry too much measure, having defined $\mu_{m-k}$ for some $k \in \{0,1,\dotsc,l-1\}$, inductively define the Borel measure $\mu_{m-k-1}$, supported on the same finite set as $\mu_m$, by 
\[ \mu_{m-k-1} |_{Q_{m-k-1}} \coloneqq \min\left\{1, \frac{20^{-(m-k-1) s}}{ \mu_{m-k}(Q_{m-k-1})}\right\}\mu_{m-k}|_{Q_{m-k-1}} \]
for all $Q_{m-k-1} \in Q^{m-k-1}$. By construction, if $k \in \{0,1,\dotsc, l\}$ and $Q_{m-k} \in Q^{m-k}$ then 
\begin{equation}\label{frostmanupperbound} \mu_{m-l}(Q_{m-k}) \leq 20^{-(m-k)s} \leq 4^s c_2^{-s} |Q_{m-k}|^s 
\end{equation}
by condition~\ref{frostmanballinball}. %
Moreover, each $Q_m \in Q^m$ satisfies $\mu_m(Q_m) = 20^{-ms}$. If $k \in \{0,1,\dotsc, l-1\}$ and $Q_{m-k} \in Q^{m-k}$ satisfies $\mu_{m-k}(Q_{m-k}) = 20^{-(m-k)s}$ and $Q_{m-k-1} \in Q^{m-k-1}$ is the parent of $Q_{m-k}$, then by the construction of $\mu_{m-k-1}$, either $\mu_{m-k-1}(Q_{m-k}) = 20^{-(m-k)s}$ or $\mu_{m-k-1}(Q_{m-k-1}) = 20^{-(m-k-1)s}$. Therefore for all $y \in F$ there is at least one $k \in \{0,1,\dotsc, l\}$ and $Q_y \in Q^{m-k}$ with $y \in Q_y$ such that 
\begin{equation}\label{frostmanmulowerbound}
\mu_{m-l}(Q_y) = 20^{-(m-k)s} \geq 4^{-s}|Q_y|^s, 
\end{equation}
where the inequality is by condition~\ref{frostmanballinball}. 

 For each $y \in F$, choosing $Q_y$ such that~\eqref{frostmanmulowerbound} is satisfied and moreover $Q_y \in Q^{m-k}$ for the largest possible $k \in \{0,1,\dotsc, l\}$ yields a finite collection of cubes $\{Q_i\}$ which cover $F$. 
 For each $i$, let $z_i$ be the centre of $Q_i$, and by the uniformly perfect condition there exists $p_i \in X$ such that $\Phi(\delta') \leq d(p_i,z_i) \leq \Phi(\delta')/c_2 \leq \delta'/2$. Letting $U_i \coloneqq Q_i \cup \{p_i\}$, by condition~\ref{frostmanballinball} we have $\Phi(\delta') \leq |U_i| \leq \delta'$. Then $\{U_i\}$ covers $F$, and each $|U_i| \leq |Q_i| + \Phi(\delta')/c_2 \leq (1+1/c_2)|Q_i|$. %
 Therefore by~\eqref{frostmansumlarge} and~\eqref{frostmanmulowerbound},  
\begin{equation}\label{frostmangreaterepsilon}
\mu_{m-l}(F) = \sum_i \mu_{m-l}(Q_i) \geq \sum_i 4^{-s}|Q_i|^s \geq 4^{-s}(1+1/c_2)^{-s} \sum_i |U_i|^s \geq 4^{-s}(1+1/c_2)^{-s} \epsilon. 
\end{equation}
Define $\mu_{\delta'} \coloneqq (\mu_{m-l}(F))^{-1} \mu_{m-l}$, which is clearly a Borel probability measure with finite support $\mathrm{supp}(\mu_{\delta'}) \subseteq F$. 

Now, since $\dim_\mathrm{A} F < \infty$ there exists $C \in \mathbb{N}$ such that for all $p \in F$ and $d>0$, $N_d(B^F(p,13d)) \leq C$. 
Let $x \in X$ and $r \in [\Phi(\delta'), \delta']$. Let $j = j(r)$ be the largest integer in $\{0,1,\dotsc,l\}$ such that $20^{-(m-j+1)} < r$; such an integer exists by the definition of $m$. 
If $B^X (x,r) \cap F = \varnothing$ then $\mu_{\delta'}(B^X (x,r)) = 0$, so suppose that there exists some $x_1 \in B^X (x,r) \cap F$, so $B^X (x,r) \subseteq B^F(x_1,2r)$. 
Suppose $B^X (x,r) \cap Q_{m-j} \neq \varnothing$ for some $Q_{m-j} \in Q^{m-j}$, with centre $z_{m-j}$, say. 
Then there exists $z \in B^X(x,r) \cap Q_{m-j}$, and by condition~\ref{frostmanballinball} and the definition of $j$,  
\[ d(x_1,z_{m-j}) \leq d(x_1,z) + d(z,z_{m-j}) \leq 2r + 2(20)^{-(m-j)} \leq 6(20)^{-(m-j)}. \]
Therefore $z_{m-j} \in B^F (x_1,6(20)^{-(m-j)})$, and the centres of the cubes in $Q_{m-j}$ which intersect $B^X (x,r)$ form a $20^{-(m-j)}$-separated subset of $B^F (x_1,6(20)^{-(m-j)})$. But 
\[ N_{6(20)^{-(m-j)}/13}(B^F (x_1,6(20)^{-(m-j)})) \leq C.\] 
Therefore there are most $C$ such centres, so at most $C$ elements of $Q^{m-j}$ which intersect $B^X(x,r)$. Therefore by~\eqref{frostmanupperbound} and~\eqref{frostmangreaterepsilon} and the definition of $j$,  
\[ \mu_{\delta'} (B^X(x,r)) = (\mu_{m-l}(F))^{-1} \mu_{m-l}(B^X(x,r)) \leq C (\mu_{m-l}(F))^{-1} 20^{-(m-j)s} \leq c r^s, \]
where $c \coloneqq C 4^s (1+1/c_2)^s \epsilon^{-1} (20)^s$, as required. 
\end{proof}

Putting Lemmas~\ref{massdistprinc} and~\ref{frostman} together, we obtain a useful characterisation of the $\Phi$-intermediate dimensions. 

\begin{thm}\label{massfrostman}
If $\Phi$ is an admissible function and $\dim_\mathrm{A} F < \infty$ then 
\begin{align*} (i) \quad \overline{\dim}^\Phi F = \sup\{ s \geq 0 : &\mbox{ there exists } C \in (0,\infty) \mbox{ such that for all } \delta_1 > 0 \\
&\mbox{ there exists } \delta \in (0,\delta_1) \mbox{ and a Borel probability measure } \mu_\delta  \\*
&\mbox{ with support } \mathrm{supp}(\mu_\delta) \subseteq F \mbox{ such that if } U \mbox{ is a Borel subset }\\
&\mbox{ of } X \mbox{ which satisfies } \Phi(\delta) \leq |U| \leq \delta \mbox{ then } \mu_\delta(U) \leq C|U|^s \} 
\end{align*}%

\begin{align*} (ii) \quad \underline{\dim}^\Phi F = \sup\{ s \geq 0 : &\mbox{ there exist } C,\delta_1 \in (0,\infty) \mbox{ such that for all } \delta \in (0,\delta_1) \mbox{ there }\\ 
&\mbox{ exists a Borel probability measure } \mu_\delta \mbox{ with support } \\*
&\mathrm{supp}(\mu_\delta) \subseteq F \mbox{ such that if } U \mbox{ is a Borel subset satisfying }\\
&\Phi(\delta) \leq |U| \leq \delta \mbox{ then } \mu_\delta(U) \leq C|U|^s \} 
\end{align*}
\end{thm}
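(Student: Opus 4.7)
The plan is to obtain Theorem~\ref{massfrostman} essentially as a packaging of the two preceding lemmas: the mass distribution principle (Lemma~\ref{massdistprinc}) yields one inequality and the Frostman type lemma (Lemma~\ref{frostman}) yields the other. The characterisation is the statement that these two lemmas are sharp against each other, and both parts (i) and (ii) are proved by establishing two inclusions.

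For the inequality that the supremum is at most the dimension, I would apply the mass distribution principle directly. Suppose $s$ lies in the supremum set in (i): then for arbitrarily small $\delta$ there is a Borel probability measure $\mu_\delta$ with $\mathrm{supp}(\mu_\delta)\subseteq F$ satisfying $\mu_\delta(U)\leq C|U|^s$ for every Borel $U$ with $\Phi(\delta)\leq|U|\leq\delta$. Since $\mu_\delta(\mathrm{supp}(\mu_\delta))=1$, Lemma~\ref{massdistprinc}(i) applied with $a=1$ and $c=C$ along a sequence $\delta_n\to 0$ chosen so that each $\delta_n$ admits such a measure gives $\overline{\dim}^\Phi F\geq s$; taking the supremum over $s$ proves $\leq$. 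The analogous argument with Lemma~\ref{massdistprinc}(ii) (replacing the sequence by "all sufficiently small $\delta$") handles part~(ii).

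For the inequality that the supremum is at least the dimension, I would invoke Frostman. If the dimension is $0$ then there is nothing to prove because $s=0$ always lies in the supremum set (any probability measure satisfies $\mu_\delta(U)\leq 1=C|U|^0$ with $C=1$). Otherwise, for any $s$ strictly below $\overline{\dim}^\Phi F$ (respectively $\underline{\dim}^\Phi F$), Lemma~\ref{frostman} supplies a constant $c$ and probability measures $\mu_{\delta'}$ at a sequence (respectively at all sufficiently small) $\delta'$ such that $\mu_{\delta'}(B(x,r))\leq cr^s$ for all $x\in X$ and all $r\in[\Phi(\delta'),\delta']$. The only step requiring care is the conversion of the ball estimate to an estimate on arbitrary Borel sets $U$ of comparable diameter. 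Given such $U$, either $U\cap\mathrm{supp}(\mu_{\delta'})=\varnothing$, in which case $\mu_{\delta'}(U)=0$, or else we may pick $x\in U\cap\mathrm{supp}(\mu_{\delta'})$ and observe that $U\subseteq B(x,|U|+\epsilon)$ for every $\epsilon>0$, so that $\mu_{\delta'}(U)\leq c(|U|+\epsilon)^s$ and letting $\epsilon\to 0^+$ gives $\mu_{\delta'}(U)\leq c|U|^s$ whenever $|U|\in[\Phi(\delta'),\delta')$.

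The main (minor) obstacle is the boundary case $|U|=\delta'$, for which the required radius $|U|+\epsilon$ falls outside the range on which Frostman delivers its ball estimate. I would absorb this by applying Lemma~\ref{frostman} at the slightly enlarged scale $2\delta'$ instead of $\delta'$, which still provides ball bounds on the enlarged interval $[\Phi(2\delta'),2\delta']$ and therefore controls $\mu_{2\delta'}(B(x,2|U|))\leq c(2|U|)^s$, absorbing the factor $2^s$ into the constant $C$; alternatively, since the characterisation is stable under replacing $\Phi$ by an $\equiv$-equivalent function in the sense of Corollary~\ref{hardcomparisoncor}, one may simply work with $\Phi$ replaced by $\Phi(2\,\cdot\,)/2$ throughout. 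Either way, no new ideas are needed beyond the two lemmas already proved.
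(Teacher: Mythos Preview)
Your overall strategy is exactly the paper's: the mass distribution principle (Lemma~\ref{massdistprinc}) gives one inequality, Frostman (Lemma~\ref{frostman}) gives the other, and the only nontrivial step is converting the Frostman ball estimate $\mu_{\delta'}(B(x,r))\leq cr^s$ into a bound on arbitrary Borel sets $U$ with $|U|\in[\Phi(\delta'),\delta']$. Your $\epsilon\to 0$ argument handles $|U|<\delta'$ but, as you identify, not $|U|=\delta'$; the difficulty is that neither of your proposed fixes works as stated. Applying Frostman at scale $2\delta'$ and bounding $\mu_{2\delta'}(B(x,2|U|))$ requires $2|U|\geq\Phi(2\delta')$, i.e.\ $2\Phi(\delta')\geq\Phi(2\delta')$, which fails for instance when $\Phi(\delta)=\delta^{1/\theta}$ with $\theta<1$ (then $\Phi(2\delta')=2^{1/\theta}\Phi(\delta')>2\Phi(\delta')$), so you lose control at the \emph{bottom} of the interval instead of the top. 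The $\equiv$-equivalence route is essentially circular: stability of the right-hand supremum under $\equiv$ is exactly what the theorem would give you, not something you know in advance.

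The paper sidesteps the endpoint issue entirely by using the hypothesis $\dim_{\mathrm A}F<\infty$ a second time (not just inside the Frostman construction). Since $F$ is doubling with some constant $M$, for $x\in U\cap F$ one has $U\cap\mathrm{supp}(\mu_{\delta'})\subseteq B(x,2|U|)$, and $B^F(x,2|U|)$ can be covered by $M$ balls $B^F(x_i,|U|)$; each of these satisfies $\mu_{\delta'}(B^X(x_i,|U|))\leq c|U|^s$ because $|U|$ itself lies in the Frostman range $[\Phi(\delta'),\delta']$, giving $\mu_{\delta'}(U)\leq Mc|U|^s$ with no limiting argument and no boundary case.
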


\begin{proof}
We prove~(ii) using Lemma~\ref{massdistprinc}~(ii) and Lemma~\ref{frostman}~(ii); (i) follows from Lemma~\ref{massdistprinc}~(i) and Lemma~\ref{frostman}~(i) in a similar way. 
We denote by $\mathit{sup}$ the supremum on the right-hand side of the equation~(ii). 
Fix $y \in F$. 
If $s=0$, then letting $C\coloneqq 1$ and letting $\mu_\delta$ be a unit point mass at $y$ for all sufficiently small $\delta$, we see that $\mathit{sup}$ is well-defined and non-negative. Suppose that $\underline{\dim}^\Phi F > 0$ and let $s \in (0,\underline{\dim}^\Phi F)$. Then by the Frostman type Lemma~\ref{frostman}~(ii), there exist constants $c,\delta_1 \in (0,\infty)$ such that for all $\delta \in (0,\delta_1)$ there exists a Borel probability measure $\mu_{\delta}$ with finite support $\mathrm{supp}(\mu_{\delta}) \subseteq F$ such that if $x \in X$ and $\Phi(\delta) \leq r \leq \delta$ then $\mu_{\delta} (B(x,r)) \leq c r^s$. 
If $U$ is a Borel subset of $X$ satisfying $\Phi(\delta) \leq |U| \leq \delta$, then $U \cap F = \varnothing$ implies $\mu_\delta(U) = 0$. Suppose there exists some $x \in U \cap F$. Let $M$ be the doubling constant of $F$. Then $U \cap \mathrm{supp}(\mu_\delta) \subseteq B(x,2|U|)$, so there exist $x_1,\dotsc,x_M \in B^F(x,2|U|)$ such that $U \cap \mathrm{supp}(\mu_\delta) \subseteq B^F(x,2|U|) \subseteq \bigcup_{i=1}^M B^F(x_i,|U|)$. Therefore 
\[\mu_\delta(U) \leq \sum_{i=1}^M \mu_\delta (B^F(x_i,|U|)) = \sum_{i=1}^M \mu_\delta(B^X(x_i,|U|)) \leq C|U|^s,\]
where $C \coloneqq Mc$. %
Thus $s \leq \mathit{sup}$. 

For the reverse inequality, if $\mathit{sup}> 0$ and $t \in (0, \mathit{sup})$ then by the mass distribution principle Lemma~\ref{massdistprinc}~(ii), $t \leq \underline{\dim}^\Phi F$. 
Therefore if $\max\{\mathit{sup}, \underline{\dim}^\Phi F\} > 0$ then in fact $\mathit{sup} = \underline{\dim}^\Phi F$. But both $\mathit{sup}$ and $\underline{\dim}^\Phi F$ are non-negative, so they must always be equal. 
\end{proof} 

\subsection{Product formulae}\label{s:products}

It is a well-studied problem to bound the dimensions of product sets in terms of the dimensions of the marginals. Very often, dimensions come in pairs (dim, Dim) which satisfy $\dim F \leq \mbox{Dim} F$ and 
\begin{equation}\label{dimpairineq} \dim E + \dim F \leq \dim (E \times F) \leq \dim E + \mbox{Dim} F \leq \mbox{Dim} (E \times F) \leq \mbox{Dim} E + \mbox{Dim} F \end{equation}
for all `reasonable' sets $E$ and $F$ and `reasonable' metrics on the product space. Examples are (Hausdorff, packing)~\cite{Howroyd1996}, (lower box, upper box)~\cite{Robinson2013}, (lower, Assouad) and (modified lower, Assouad)~\cite[Corollary~10.1.2]{Fraser2020book} and, for each fixed $\theta \in (0,1)$, (lower spectrum at $\theta$, Assouad spectrum at $\theta$) and (modified lower spectrum at $\theta$, Assouad spectrum at $\theta$)~\cite[Proposition~4.4]{Fraser2018firstassspec}. In Theorem~\ref{producttheorem} we show that for any given $\Phi$ or $\theta$, (lower $\Phi$-intermediate, upper box) and (lower $\theta$-intermediate, upper box) are also such pairs. However, our upper bound for $\overline{\dim}^\Phi (E \times F)$ is $\overline{\dim}^\Phi E + \overline{\dim}_\mathrm{B} F$, rather than the expected $\overline{\dim}^\Phi E + \overline{\dim}^\Phi F$. Theorem~\ref{producttheorem} generalises~\cite[Proposition~2.5]{Falconer2020firstintermediate} on the intermediate dimensions of product sets to more general functions $\Phi$ and more general metric spaces, and also gives an improved lower bound for $\overline{\dim}^\Phi (E \times F)$ and an improved upper bound for $\underline{\dim}^\Phi (E \times F)$. 
We improve the lower bound for $\overline{\dim}^\Phi (E \times F)$ further for self-products in~(iii). 

\begin{thm}\label{producttheorem}
Consider uniformly perfect metric spaces $(X,d_X)$ and $(Y,d_Y)$. Let $d_{X \times Y}$ be a metric on $X \times Y$ such that there exist constants $c_1,c_2 \in (0,\infty)$ such that 
\begin{equation}\label{metriclikesup} 
c_1 \max(d_X,d_Y) \leq d_{X \times Y} \leq c_2 \max(d_X,d_Y). 
\end{equation}
Then if $E \subseteq X$ and $F \subseteq Y$ have finite Assouad dimension, then 
 \begin{enumerate}[label=(\roman*)]
\item $\overline{\dim}^\Phi E + \underline{\dim}^\Phi F 
\leq \overline{\dim}^\Phi (E \times F) 
\leq \overline{\dim}^\Phi E + \overline{\dim}_\mathrm{B} F$;  

\item $\underline{\dim}^\Phi E + \underline{\dim}^\Phi F 
\leq \underline{\dim}^\Phi (E \times F)
\leq \underline{\dim}^\Phi E + \overline{\dim}_\mathrm{B} F$. 

In the case of self-products,~(i) can be improved to %

\item 
 $2 \overline{\dim}^\Phi F \leq \overline{\dim}^\Phi (F \times F) \leq \overline{\dim}^\Phi F + \overline{\dim}_\mathrm{B} F$. 

\end{enumerate}

\end{thm}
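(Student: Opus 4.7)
The plan is to follow the standard dimension-pair strategy: product covers for the upper bounds, and product measures combined with the mass distribution principle for the lower bounds. Since $E \times F$ has finite Assouad dimension, Corollary~\ref{philipschitz} shows that the $\Phi$-intermediate dimensions are invariant under replacing $d_{X \times Y}$ with the comparable max metric $\max(d_X, d_Y)$, for which $|U \times V|_{X \times Y} = \max(|U|, |V|)$; I assume this metric is being used throughout. For the upper bounds, fix $s > \overline{\dim}^\Phi E$ and $t > \overline{\dim}_{\mathrm{B}} F$, and $\epsilon > 0$. For small $\delta$, take a cover $\{U_i\}$ of $E$ with $|U_i| \in [\Phi(\delta), \delta]$ and $\sum_i |U_i|^s \leq \epsilon$, and cover $F$ by at most $|U_i|^{-t}$ closed balls $\{W_{i,j}\}_j$ of diameter at most $|U_i|$. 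Each product has max-metric diameter $|U_i| \in [\Phi(\delta), \delta]$, and
\[ \sum_{i,j} |U_i \times W_{i,j}|^{s+t} \leq \sum_i |U_i|^{-t} |U_i|^{s+t} = \sum_i |U_i|^s \leq \epsilon, \]
giving $\overline{\dim}^\Phi(E \times F) \leq s + t$. The $\underline{\dim}^\Phi$ version in (ii) is identical (requiring a suitable cover only for some $\delta \in (0,\delta_0)$), and the upper bound in (iii) is the special case $E = F$ of (i).

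For the lower bound of (i), fix $s < \overline{\dim}^\Phi E$ and $t < \underline{\dim}^\Phi F$, and apply Theorem~\ref{massfrostman} to obtain a sequence $\delta_n \searrow 0$ and Borel probability measures $\mu_{\delta_n}$ on $E$, $\nu_{\delta_n}$ on $F$, with Frostman-type bounds $\mu_{\delta_n}(A) \leq C_1 |A|^s$ and $\nu_{\delta_n}(B) \leq C_2 |B|^t$ for Borel $A,B$ of diameter in $[\Phi(\delta_n), \delta_n]$; crucially, Theorem~\ref{massfrostman}(ii) supplies the measures on $F$ for \emph{all} small $\delta$, hence in particular along any sequence chosen for the $\overline{\dim}^\Phi E$ side. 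For any Borel $W \subseteq X \times Y$ with $|W| \in [\Phi(\delta_n), \delta_n]$, bound $(\mu_{\delta_n} \times \nu_{\delta_n})(W) \leq \mu_{\delta_n}(\overline{\pi_X(W)}) \cdot \nu_{\delta_n}(\overline{\pi_Y(W)})$. For each factor, if the projection's diameter is at least $\Phi(\delta_n)$ the Frostman bound applies directly; otherwise uniform perfectness lets me fatten the closure of the projection to a set of diameter in $[\Phi(\delta_n), (1+1/c)\Phi(\delta_n)] \subseteq [\Phi(\delta_n), \delta_n]$, giving a bound by a constant times $\Phi(\delta_n)^s \leq |W|^s$, using $|W| \geq \Phi(\delta_n)$. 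Multiplying, $(\mu_{\delta_n} \times \nu_{\delta_n})(W) \lesssim |W|^{s+t}$, so Lemma~\ref{massdistprinc}(i) along $\delta_n$ yields $\overline{\dim}^\Phi(E \times F) \geq s + t$; letting $s \nearrow \overline{\dim}^\Phi E$ and $t \nearrow \underline{\dim}^\Phi F$ completes~(i). Part~(ii) is the direct analogue using Theorem~\ref{massfrostman}(ii) and Lemma~\ref{massdistprinc}(ii), while part~(iii) uses the \emph{same} measure $\mu_{\delta_n}$ on both factors so the identical case analysis gives $\overline{\dim}^\Phi(F \times F) \geq 2s$ for any $s < \overline{\dim}^\Phi F$.

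The main subtlety is the case analysis in the lower bound: projections of a covering set $W$ may have diameter below $\Phi(\delta)$, where the Frostman-type measures give no direct control. The key observation is that in the max metric at least one projection realises the full diameter $|W| \geq \Phi(\delta)$, and uniform perfectness allows me to enlarge any small projection into the controlled range $[\Phi(\delta),\delta]$, so that the resulting $\Phi(\delta)^s$ bound is automatically dominated by $|W|^s$.
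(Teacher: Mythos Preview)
Your proof is correct and follows essentially the same strategy as the paper: product covers for the upper bounds, and product Frostman-type measures together with the mass distribution principle for the lower bounds. Your reduction to the max metric via Corollary~\ref{philipschitz} and your direct use of the Borel-set version Theorem~\ref{massfrostman} (bounding $(\mu\times\nu)(W)$ via the projections $\pi_X(W),\pi_Y(W)$) streamline the argument somewhat compared to the paper, which works throughout with the general metric $d_{X\times Y}$ and instead covers each covering set by a bounded number of product balls before invoking the ball version Lemma~\ref{frostman}; but the underlying ideas are the same.
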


Note that~\eqref{metriclikesup} is the same condition as~\cite[(2.4)]{Robinson2013}, and familiar metrics which satisfy this are $d_{X \times Y} \coloneqq \max\{d_X,d_Y\}$ and $d_{X \times Y} \coloneqq (d_X^p + d_Y^p)^{1/p}$ for $p \in [1,\infty)$. 

\begin{proof}
The idea of the proof of the upper bounds is to consider a cover of one of the sets $E$ with diameters in $[\Phi(\delta),\delta]$, and, for each set $U_i$ in that cover, to form a cover of that other set $F$ with all the diameters approximately equal to $|U_i|$, with the number of sets in this cover controlled by $\overline{\dim}_{\mathrm B} F$. We can then cover the product set with approximate squares with sizes between $\Phi(\delta)$ and $\delta$ to obtain the result. 
The idea of the proof of the lower bounds is to use the Frostman type lemma to put measures on each of the marginal sets such that the measure of sets with diameter in $[\Phi(\delta),\delta]$ is controlled by the $\Phi$-intermediate dimensions of the sets, and then apply the mass distribution principle with the product measure on the product set. 

Since $X$ and $Y$ each have more than one point, so does $X \times Y$. 
A straightforward calculation shows that since $(X,d_X)$ is uniformly perfect, so is $(X \times Y,d_{X \times Y})$. 
Another routine calculation shows that since $E$ and $F$ have finite Assouad dimension, so does $E \times F$. 

(i) We first prove the upper bound of~(i), following the proof of the upper bound in~\cite[Proposition~2.5]{Falconer2020firstintermediate}. Let $\epsilon > 0$. Let $c_p \in (0,1)$ be such that $X \times Y$ is $c_p$-uniformly perfect, and without loss of generality assume $0 < c_p < c_1 < 1 < c_2 < \infty$. 
Since $\dim_\mathrm{A} (E \times F) < \infty$ there exists $A \in \mathbb{N}$ such that $N_r(B^{E \times F} (p,4c_2 r)) \leq A$ for all $p \in E \times F$ and $r > 0$. 
 Let $\Delta > 0$ be such that $\Phi(\delta)/\delta < c_p/2$ for all $\delta \in (0,\Delta)$. 
Fix $s > \overline{\dim}^\Phi E$ and $d > \overline{\dim}_\mathrm{B} F$. Let $\delta_1\in (0,\Delta)$ be such that for all $r \in (0,\delta_1)$ there is a cover of $F$ by $r^{-d}$ or fewer sets, each having diameter at most $r$. Let $\delta_0 \in (0,\delta_1)$ be such that for all $\delta \in (0,\delta_0)$ there exists a cover $\{U_i\}$ of $E$ such that $\Phi(\delta) \leq |U_i| \leq \delta$ for all $i$, and 
\begin{equation}\label{productsetepsbound}
\sum_i |U_i|^s \leq A^{-1} (c_2 + c_p^{-1})^{-(s+d)}\epsilon. 
\end{equation}

For such a cover, for each $i$ let $\{U_{i,j}\}_j$ be a cover of $F$ by $|U_i|^{-d}$ or fewer sets, each having diameter $|U_{i,j}| \leq |U_i|$. 
Then for all $i$ and $j$,  
\begin{equation}\label{productbound}
|U_i \times U_{i,j}| \leq c_2 \max\{|U_i|,|U_{i,j}|\} = c_2|U_i| \leq c_2\delta, 
\end{equation}
so $U_i \times U_{i,j}$ can be covered by $A$ sets $\{U_{i,j,k}\}_k$, each having diameter at most $\min\{\delta/2,{|U_i \times U_{i,j}|}\}$. We may assume that each of these sets is non-empty, and fix $p_{i,j,k} \in U_{i,j,k}$. Fix $q_{i,j,k} \in X \times Y$ such that $\Phi(\delta) \leq d_{X \times Y}(p_{i,j,k},q_{i,j,k}) \leq \Phi(\delta)/c_p$. Let $V_{i,j,k} \coloneqq U_{i,j,k} \cup \{q_{i,j,k}\}$, so by the triangle inequality 
\begin{equation}\label{productlimits}
 \Phi(\delta) \leq d_{X \times Y}(p_{i,j,k},q_{i,j,k}) \leq |V_{i,j,k}| \leq \delta/2 + \Phi(\delta)/c_p \leq \delta,
 \end{equation}
 since $\delta < \delta_0 < \Delta$. 
Also, by~\eqref{productbound}, 
\begin{equation}\label{productvbound} |V_{i,j,k}| \leq  c_2|U_i| + \Phi(\delta)/c_p \leq (c_2 + c_p^{-1})|U_i|. 
\end{equation}
Therefore by~\eqref{productvbound} and~\eqref{productsetepsbound},  
\[ \sum_{i,j,k} |V_{i,j,k}|^{s+d} \leq \sum_i A|U_i|^{-d} ((c_2 + c_p^{-1})|U_i|)^{s+d} \leq A (c_2 + c_p^{-1})^{s+d} \sum_i |U_i|^s \leq \epsilon.\]
Also \[E \times F \subseteq \bigcup_{i,j,k} U_{i,j,k} \subseteq \bigcup_{i,j,k} V_{i,j,k}. \]
This gives $\overline{\dim}^\Phi (E \times F) \leq s + d$, so $\overline{\dim}^\Phi (E \times F) \leq \overline{\dim}^\Phi E + \overline{\dim}_\mathrm{B} F$. 

The proof of the lower bound is somewhat similar to the proof of the lower bound in~\cite[Proposition~2.5]{Falconer2020firstintermediate}. First assume $\underline{\dim}^\Phi F = 0$. Fix any $f \in F$. By~\eqref{metriclikesup}, the natural embedding $E \xhookrightarrow{} X \times Y$, $x \mapsto (x,f)$, is bi-Lipschitz onto its image, so by Corollary~\ref{philipschitz} 2. and Proposition~\ref{unprovedprop}~(i),
\[ \overline{\dim}^\Phi E + \underline{\dim}^\Phi F = \overline{\dim}^\Phi E = \overline{\dim}^\Phi (E \times \{f\}) \leq \overline{\dim}^\Phi (E \times F). \]
Now assume that $\overline{\dim}^\Phi E > 0$ and $\underline{\dim}^\Phi F > 0$. Fix $t_1 \in (0,\overline{\dim}^\Phi E)$ and $t_2 \in (0,\underline{\dim}^\Phi F)$. By Lemma~\ref{frostman}~(i) there exists $c_E \in (0,\infty)$ such that for all $\delta_2>0$ there exists $\delta_3 \in (0,\delta_2)$ and a Borel probability measure $\mu_{\delta_3}$ with $\mathrm{supp}(\mu_{\delta_3}) \subseteq E$ such that if $x \in X$ and $\Phi(\delta_3) \leq r_1 \leq \delta_3$ then $\mu_{\delta_3}(B^X(x,r_1)) \leq c_E r_1^{t_1}$. 
By Lemma~\ref{frostman}~(ii) there exist $c_F,\delta_4 \in (0,\infty)$ such that for all $\delta_5 \in (0,\delta_4)$ there exists a Borel probability measure $\nu_{\delta_5}$ with $\mathrm{supp}(\nu_{\delta_5}) \subseteq F$ such that if $y \in Y$ and $\Phi(\delta_5) \leq r_2 \leq \delta_5$ then $\nu_{\delta_5}(B^Y(y,r_2)) \leq c_F r_2^{t_2}$. 
If $\delta_7>0$, then there exists $\delta_6 \in (0,\min\{\delta_7,\delta_4\})$ and Borel probability measures $\mu_{\delta_6}$ and $\nu_{\delta_6}$ as above. Let $\mu_{\delta_6} \times \nu_{\delta_6}$ be the product measure, which satisfies $\mathrm{supp}(\mu_{\delta_6} \times \nu_{\delta_6}) \subseteq E \times F$. 

If $U \subseteq X \times Y$ is Borel and satisfies $\Phi(\delta_6) \leq |U| \leq \delta_6$ then if we fix any $(x,y) \in U$ then 
\begin{equation}\label{productlowerboundsubset} U \subseteq B^{X \times Y}((x,y), 2|U|) \subseteq B^X(x,2|U|/c_1) \times B^Y(y,2|U|/c_1). 
\end{equation}
Fix $x_1,\dotsc,x_C \in E$ and $y_1,\dotsc,y_C \in F$ such that 
\[ \overline{E} \cap B^X(x,2|U|/c_1) \subseteq \bigcup_{i=1}^C B^X(x_i,|U|); \qquad \overline{F} \cap B^Y(y,2|U|/c_1) \subseteq \bigcup_{i=1}^C B^Y(y_i,|U|). \]
Now, 
\begin{align*} (\overline{E} \times \overline{F}) \cap (B^X(x,2|U|/c_1) &\times B^Y(y,2|U|/c_1)) \\
&= (\overline{E} \cap B^X(x,2|U|/c_1)) \times (\overline{F} \cap B^Y(y,2|U|/c_1)) \\
&\subseteq \left(\bigcup_{i=1}^C B^X(x_i,|U|)\right) \times \left(\bigcup_{j=1}^C B^Y(y_j,|U|)\right) \\
&= \bigcup_{i=1}^C \bigcup_{j=1}^C (B^X(x_i,|U|) \times B^Y(y_j,|U|)).
\end{align*}
Then by~\eqref{productlowerboundsubset} and the definition of the product measure,  
\begin{align*}
 (\mu_{\delta_6} \times \nu_{\delta_6}) (U) &\leq (\mu_{\delta_6} \times \nu_{\delta_6}) (B^X(x,2|U|/c_1) \times B^Y(y,2|U|/c_1)) \\
 &\leq (\mu_{\delta_6} \times \nu_{\delta_6}) \left(\bigcup_{i=1}^C \bigcup_{j=1}^C (B^X(x_i,|U|) \times B^Y(y_j,|U|))\right) \\
 &\leq \sum_{i=1}^C \sum_{j=1}^C (\mu_{\delta_6} \times \nu_{\delta_6}) (B^X(x_i,|U|) \times B^Y(y_j,|U|)) \\
 &= C^2 c_E c_F |U|^{t_1+t_2}. 
 \end{align*}
Therefore by the mass distribution principle Lemma~\ref{massdistprinc}~(i), $\overline{\dim}^\Phi (E \times F) \geq t_1 + t_2$, as required. The bound $\overline{\dim}^\Phi (E \times F) \geq \underline{\dim}^\Phi E + \overline{\dim}^\Phi F$ follows similarly. 

(ii) The proof of~(ii) is a straightforward modification of the proof of~(i). 

(iii) The upper bound is just the upper bound of~(i) with $E=F$; the improved bound is the lower bound. Assume $\overline{\dim}^\Phi F > 0$ and let $t \in (0,\overline{\dim}^\Phi F)$. By Lemma~\ref{frostman}~(i) there exists $c_F \in (0,\infty)$ such that for all $\delta_0>0$ there exists $\delta \in (0,\delta_0)$ and a Borel probability measure $\mu_\delta$ with $\mathrm{supp}(\mu_{\delta}) \subseteq F$ such that if $x \in X$ and $\Phi(\delta) \leq r \leq \delta$ then $\mu_{\delta}(B^X(x,r)) \leq c_F r^t$. Then $\mathrm{supp}(\mu_\delta \times \mu_\delta) \subseteq F \times F$, and as in the proof of the lower bound of~(i), if $\Phi(\delta) \leq |U| \leq \delta$ then $(\mu_\delta \times \mu_\delta)(U) \leq C^2 c_F^2 |U|^{2t}$. Therefore by Lemma~\ref{massdistprinc}, $\overline{\dim}^\Phi (F\times F) \geq 2t$, as required. 
\end{proof} 

In the particular case $\Phi(\delta) \coloneqq \frac{\delta}{-\log \delta}$, Proposition~\ref{whenequalsbox} gives $\overline{\dim}^\Phi G = \overline{\dim}_\mathrm{B} G$ and $\underline{\dim}^\Phi G = \underline{\dim}_\mathrm{B} G$ for a subset $G$ of an underlying space $X$. Therefore from~(i) and (ii) we recover the inequalities for the upper and lower box dimensions of product sets in~\cite[Theorem~2.4]{Robinson2013} (which is proven directly, without putting measures on the sets). 
Note also that bounds on the dimensions of products of more than two sets can be obtained by applying Theorem~\ref{producttheorem} iteratively, for example 
\[\overline{\dim}^\Phi (E \times F \times G) \geq \overline{\dim}^\Phi (E \times F) + \underline{\dim}^\Phi G \geq \overline{\dim}^\Phi E +  \underline{\dim}^\Phi F + \underline{\dim}^\Phi G.\]

\subsection{Finite stability}

Our next application of the mass distribution principle is Proposition~\ref{finitestability}, which illustrates an important difference between the upper and lower versions of the dimensions. It was stated in~\cite[Section~14.2.1 2.]{Falconer2021intdimsurvey} that in Euclidean space, the upper intermediate dimensions are finitely stable but the lower intermediate dimensions are not. 

\begin{prop}\label{finitestability}
Let $\Phi$ be an admissible function. 
\begin{enumerate}[label=(\roman*)]
\item The dimension $\overline{\dim}^\Phi$ is finitely stable: we always have 
\[ \overline{\dim}^\Phi(E \cup F) = \max\{\overline{\dim}^\Phi E, \overline{\dim}^\Phi F\}.\] 

\item The dimension $\underline{\dim}^\Phi$ is \emph{not} finitely stable: there exist compact sets $E,F \subset \mathbb{R}$ such that 
\[\underline{\dim}^\Phi(E \cup F) > \max\{\underline{\dim}^\Phi E, \underline{\dim}^\Phi F\}.\]
\end{enumerate}
\end{prop}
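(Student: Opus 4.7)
The inequality $\max\{\overline{\dim}^\Phi E,\overline{\dim}^\Phi F\}\le \overline{\dim}^\Phi(E\cup F)$ is immediate from monotonicity (Proposition~\ref{unprovedprop}~(i)). For the reverse, fix $s>\max\{\overline{\dim}^\Phi E,\overline{\dim}^\Phi F\}$ and $\epsilon>0$. By Definition~\ref{maindefinition} applied to $E$ and $F$ separately, there exists $\delta_0>0$ such that for every $\delta\in(0,\delta_0)$ we can find $(\Phi(\delta),\delta)$-covers $\{U_i\}$ of $E$ and $\{V_j\}$ of $F$ with $\sum_i|U_i|^s<\epsilon/2$ and $\sum_j|V_j|^s<\epsilon/2$. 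The concatenated family covers $E\cup F$, has diameters in $[\Phi(\delta),\delta]$, and total $s$-cost below $\epsilon$. Letting $s\searrow\max\{\overline{\dim}^\Phi E,\overline{\dim}^\Phi F\}$ yields the upper bound.

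\textbf{Plan for (ii): the construction.} The idea is to produce two compact sets whose "good covering scales" are disjoint subsequences, while their union is geometrically rich at every scale. Fix an admissible $\Phi$ and a target value $s\in(0,1)$ strictly smaller than some $s'\in(0,1)$ that we will realise as a lower bound for $\underline{\dim}^\Phi(E\cup F)$. Choose disjoint closed subintervals $I_k\subset(0,1)$ with $I_k\to\{0\}$, and inside each $I_k$ place a uniformly spaced finite set $F_k$ of cardinality $n_k$ whose natural diameter scale is $\delta_k$. Calibrate $n_k$ and the spacing so that any $(\Phi(\delta_k),\delta_k)$-cover of $F_k$ has $s'$-cost bounded below by a positive constant, witnessed by the uniform probability measure on $F_k$ via the mass distribution principle (Lemma~\ref{massdistprinc}). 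Then set
\[
E=\{0\}\cup\bigcup_{k\text{ odd}}F_k,\qquad F=\{0\}\cup\bigcup_{k\text{ even}}F_k,
\]
so that $E\cup F=\{0\}\cup\bigcup_k F_k$.

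\textbf{Plan for (ii): the dimension estimates.} I would choose the gaps between scales so widely that $\delta_{k+1}\le\Phi(\delta_k)$, ensuring that at scale $\delta_k$ the clusters $F_j$ with $j\ne k$ either fit inside a single admissible covering set (for $j<k$, since their diameter is at most $\delta_k$ anyway) or are far finer than the permitted lower bound $\Phi(\delta_k)$ and can be grouped into finitely many admissible sets arranged along a convergent geometric series of sizes. Along the subsequence $\delta=\delta_k$ with $k$ even, this produces $(\Phi(\delta_k),\delta_k)$-covers of $E$ of arbitrarily small $s$-cost, so $\underline{\dim}^\Phi E\le s$; symmetrically $\underline{\dim}^\Phi F\le s$. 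On the other hand, for every admissible scale $\delta$ there is some $k$ with $\delta_k\in[\Phi(\delta),\delta]$, and the normalised uniform measure on $F_k$ furnishes a witness in Lemma~\ref{massdistprinc}~(ii) forcing $\underline{\dim}^\Phi(E\cup F)\ge s'>s$.

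\textbf{Main obstacle.} The delicate point is the simultaneous calibration of $(\delta_k)$, $(n_k)$, and the in-cluster spacings against a general admissible $\Phi$ so that three quantitative demands hold at once: (a) covers of $F_k$ at its intrinsic scale $\delta_k$ genuinely need cost $\gtrsim 1$ in exponent $s'$; (b) at all other scales $\delta_j$ with $j\ne k$, the cluster $F_k$ contributes negligibly to any admissible cover of $E$ (or of $F$); and (c) the contributions from the clusters far from the chosen scale telescope into a convergent series. The condition $\delta_{k+1}\le\Phi(\delta_k)$ (together with a rapid enough decrease of $n_k^{-1}\delta_k$) is what separates the scales cleanly; once this is arranged, each of (a)--(c) is a direct counting argument, and the strict inequality $s<s'$ can be made quantitative because the measure on $F_k$ satisfies $\mu(U)\lesssim |U|^{s'}$ only on the narrow band $[\Phi(\delta_k),\delta_k]$, not on all scales.
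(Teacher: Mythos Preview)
Your argument for (i) is correct and matches the paper's one-line remark that it is a straightforward exercise from the definition.

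For (ii), there is a genuine gap. The claim ``any $(\Phi(\delta_k),\delta_k)$-cover of $F_k$ has $s'$-cost bounded below by a positive constant'' is false as stated: your $F_k$ has diameter $\delta_k$, so a \emph{single} set of diameter $\delta_k$ covers it with $s'$-cost $\delta_k^{s'}\to 0$. More structurally, the uniform probability measure $\mu$ on $n_k$ equally spaced points in an interval of length $\delta_k$ satisfies $\mu(U)\approx |U|/\delta_k$ for $|U|$ between the spacing and $\delta_k$; this is one-dimensional scaling, so the bound $\mu(U)\le c|U|^{s'}$ with $s'<1$ forces $c\ge \delta_k^{-s'}\to\infty$ and cannot hold with $c$ independent of $k$. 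Hence Lemma~\ref{massdistprinc}~(ii) does not apply to give $\underline{\dim}^\Phi(E\cup F)\ge s'$. There is also an indexing slip: with $I_k\to\{0\}$ one has $\delta_j>\delta_k$ for $j<k$, so it is the clusters with $j>k$ (not $j<k$) that fit inside a single admissible set, and the large-$j<k$ clusters are the ones needing the geometric-series treatment. Finally, the mass distribution principle for $\underline{\dim}^\Phi$ requires a witnessing measure for \emph{every} small $\delta$, not just along the subsequence $\delta=\delta_k$; the spacing condition $\delta_{k+1}\le\Phi(\delta_k)$ separates the scales rather than ensuring every $[\Phi(\delta),\delta]$ contains some $\delta_k$.

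The paper circumvents all of this by building $E$ and $F$ as generalised Cantor sets that alternate between removing middle $1/3$ and middle $3/5$ at different levels, with the alternation patterns for $E$ and $F$ offset so that the ``sparse'' levels never coincide within a $[\Phi(\delta),\delta]$-window. Because Cantor sets are genuinely fractal at \emph{every} scale, the natural Bernoulli measure on whichever of $E,F$ is currently in its ``dense'' phase satisfies $\mu(U)\le 2|U|^{2\log 2/\log 15}$ uniformly across the whole window $[\Phi(\delta),\delta]$, for every $\delta$. This gives $\underline{\dim}^\Phi(E\cup F)\ge \tfrac{2\log 2}{\log 15}$, while covering at the sparse levels yields $\underline{\dim}_{\mathrm B} E,\,\underline{\dim}_{\mathrm B} F\le \tfrac{10\log 2}{9\log 5}<\tfrac{2\log 2}{\log 15}$. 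Your cluster idea could perhaps be rescued by making each $F_k$ itself a scaled finite-level Cantor set rather than an arithmetic progression, but at that point one is essentially reconstructing the paper's approach.
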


\begin{proof}

It is a straightforward exercise to prove~(i) directly from Definition~\ref{maindefinition}, so we prove only~(ii). 
To do so, we take inspiration from~\cite[Exercises~2.8, 2.9]{Falconer2014main}. %
The idea is to construct generalised Cantor sets $E$ and $F$, each of which looks `large' on most scales but `small' on some sequence of scales. We do this in such a way that the sequences of scales where the two sets look small do not even approximately coincide, so for each small $\delta$, either $E$ looks large at \emph{every} scale between $\delta$ and $\Phi(\delta)$, or $F$ looks large at \emph{every} scale between $\delta$ and $\Phi(\delta)$. 

Assume without loss of generality that $\Phi \colon (0,1] \to \mathbb{R}$. 
We inductively define the numbers $k_n \in \{0,1,2,\dotsc \}$ and $e_{10^{k_n}},f_{10^{k_n}}>0$, for $n = 0,1,2,\dotsc$, as follows. Let $k_0\coloneqq 0$, $e_{10^{k_0}}=f_{10^{k_0}}=1$. Having defined $k_n,e_{10^{k_n}},f_{10^{k_n}}$ for some $n = 0,1,2,\dotsc$, there are two cases depending on the parity of $n$. 
If $n$ is even, let $k_{n+1}$ be the smallest integer such that $k_{n+1} > k_n$ and 
\begin{equation}\label{finitestableeqne} (1/3)^{10^{k_{n+1}}-10^{k_n}}f_{10^{k_n}} < \Phi\left((1/5)^{10^{k_n + 1} - 10^{k_n}}(1/3)^{10^{k_n+2}-10^{k_n+1}}e_{10^{k_n}}\right),
\end{equation} 
and let 
\begin{align*}
e_{10^{k_{n+1}}} &\coloneqq (1/5)^{10^{k_n + 1} - 10^{k_n}}(1/3)^{10^{k_{n+1}}-10^{k_n+1}}e_{10^{k_n}}, \\*
f_{10^{k_{n+1}}} &\coloneqq (1/3)^{10^{k_{n+1}}-10^{k_n}}f_{10^{k_n}}.
\end{align*} 
If, on the other hand, $n$ is odd, then let $k_{n+1} > k_n$ be the smallest integer such that 
\begin{equation}\label{finitestableeqnf} (1/3)^{10^{k_{n+1}}-10^{k_n}}e_{10^{k_n}} < \Phi\left((1/5)^{10^{k_n + 1} - 10^{k_n}}(1/3)^{10^{k_n+2}-10^{k_n+1}}f_{10^{k_n}}\right),
\end{equation}
 and let 
 \begin{align*}
 f_{10^{k_{n+1}}} &\coloneqq (1/5)^{10^{k_n + 1} - 10^{k_n}}(1/3)^{10^{k_{n+1}}-10^{k_n+1}}f_{10^{k_n}}, \\*
 e_{10^{k_{n+1}}} &\coloneqq (1/3)^{10^{k_{n+1}}-10^{k_n}}e_{10^{k_n}}.
 \end{align*} 

Now let $E_1 \coloneqq [0,1]$ and for $j \in \mathbb{N}$, if $10^{k_n} < j \leq 10^{k_n+1}$ for some even $n \in \{0,2,4,\dotsc\}$ then obtain $E_j$ by removing the middle 3/5 of each interval in $E_{j-1}$, otherwise obtain $E_j$ by removing the middle 1/3 of each interval in $E_{j-1}$. 
Let $F_1 \coloneqq [2,3]$ and for $j \in \mathbb{N}$, if $10^{k_n} < j \leq 10^{k_n+1}$ for some odd $n \in \{1,3,5,\dotsc\}$ then obtain $F_j$ from removing the middle 3/5 of each interval in $F_{j-1}$, otherwise obtain $F_j$ by removing the middle 1/3 of each interval in $F_{j-1}$. 
Define $E \coloneqq \bigcap_{j=1}^\infty E_j$ and $F \coloneqq \bigcap_{j=1}^\infty F_j$, noting that both are non-empty and compact subsets of $\mathbb{R}$. 
For all $j \in \mathbb{N}$, let $e_j$ and $f_j$ be the lengths of each of the $2^j$ intervals in $E_j$ and $F_j$ respectively, noting that for each $n \in \mathbb{N}$, the two different definitions that we have given for $e_{10^{k_n}}$ and $f_{10^{k_n}}$ agree by induction. The sequences $e_j$ and $f_j$ lie in $(0,1]$ by induction and converge monotonically to 0. 

We now find an upper bound for $\underline{\dim}_\mathrm{B} E$. Let $n \in \mathbb{N}$ be even. Then $E_{10^{k_n+1}}$ is made up of $2^{10^{k_n+1}}$ intervals, each of length $e_{10^{k_n+1}} = (1/5)^{10^{k_n+1}-10^{k_n}}e_{10^{k_n}} \leq (1/5)^{10^{k_n+1}-10^{k_n}}$. Covering $E$ with these intervals, we see that for all $n \in \mathbb{N}$,  
\begin{equation*} 
\frac{\log N_{e_{10^{k_n+1}}} F (E)}{-\log (e_{10^{k_n+1}})} \leq \frac{\log 2^{10^{k_n+1}}}{\log 5^{10^{k_n+1}-10^{k_n}}} = \frac{10\log 2}{9\log 5}.
 \end{equation*}
Therefore $\underline{\dim}_\mathrm{B} E \leq \frac{10\log 2}{9\log 5}$, and similarly using $F_{10^{k_n+1}}$ for $n$ odd to cover $F$ gives $\underline{\dim}_\mathrm{B} F \leq \frac{10\log 2}{9\log 5}$. Therefore 
\begin{equation}\label{stabilityboxbound}
\frac{10\log 2}{9\log 5} \geq \max\{\underline{\dim}_\mathrm{B} E, \underline{\dim}_\mathrm{B} F\}. 
\end{equation}

To bound $\underline{\dim}^\Phi (E \cup F)$ from below, we use the mass distribution principle. 
Define the sequence $(r_n)_{n \geq 0}$ by 
\[r_n \coloneqq \begin{cases} e_{10^{k_n+2}} = (1/5)^{10^{k_n + 1} - 10^{k_n}}(1/3)^{10^{k_n+2}-10^{k_n+1}}e_{10^{k_n}} & \mbox{if } n \mbox{ even,}\\
f_{10^{k_n+2}} = (1/5)^{10^{k_n + 1} - 10^{k_n}}(1/3)^{10^{k_n+2}-10^{k_n+1}}f_{10^{k_n}} & \mbox{if } n \mbox{ odd}.
\end{cases}\]
This sequence is strictly decreasing, because if $n \geq 0$ is even then by~\eqref{finitestableeqne}, 
\[ r_{n+1} = f_{10^{k_{n+1}+2}} < f_{10^{k_{n+1}}} < \Phi(e_{10^{k_n+2}}) \leq e_{10^{k_n+2}} = r_n,\]
and similarly if $n$ is odd then $r_{n+1} < r_n$ by~\eqref{finitestableeqnf}. 
Let $\delta \in (0,r_0)$. Define $n_\delta \in \mathbb{N}$ by $r_{n_\delta}\leq \delta < r_{n_\delta - 1}$. 

There are two cases depending on the parity of $n_\delta$. If $n_\delta$ is even, then let $\mu_\delta$ be any Borel probability measure on $F$ which gives mass $2^{-10^{k_{n_\delta + 1}}}$ to each of the $2^{10^{k_{n_\delta + 1}}}$ intervals in $F_{10^{k_{n_\delta + 1}}}$. Let $U$ be a Borel subset of $\mathbb{R}$ with $\Phi(\delta) \leq |U| \leq \delta$. Define $j \in \mathbb{N}$ (depending on $|U|$) by $f_j \leq |U| < f_{j-1}$. By~\eqref{finitestableeqne},  
\[f_{10^{k_{n_\delta+1}}} \leq \Phi(e_{10^{k_{n_\delta}+2}}) = \Phi(r_{n_\delta}) \leq \Phi(\delta) \leq |U| < f_{j-1}, \]
so $j-1 < 10^{k_{n_\delta+1}}$. Also, $f_j \leq |U| \leq \delta < r_{n_\delta - 1} = f_{10^{k_{n_\delta - 1}+2}}$, so in fact $10^{k_{n_\delta-1}+2} < j \leq 10^{k_{n_\delta+1}}$. 
Therefore by the construction of $F$,  
\begin{equation*}%
f_j \geq \left(\frac{1}{5}\right)^{10^{k_{n_\delta-1}+1}}\left(\frac{1}{3}\right)^{j-10^{k_{n_\delta-1}+1}} > \left(\frac{1}{5}\right)^{j/2}\left(\frac{1}{3}\right)^{j/2}.
\end{equation*}
  Since $U$ has diameter less than $f_{j-1}$, it can intersect at most two of the $2^{j-1}$ intervals in $F_{j-1}$. Therefore $U$ can intersect at most $2(2^{10^{k_{n_\delta + 1}} - j})$ of the $2^{10^{k_{n_\delta + 1}}}$ intervals in $F_{10^{k_{n_\delta + 1}}}$. 
  Therefore 
  \begin{equation*}
   \mu_\delta (U) \leq 2(2^{10^{k_{n_\delta + 1}} - j}) (2^{-10^{k_{n_\delta + 1}}}) = 2\left(\left(\frac{1}{3}\right)^{j/2}\left(\frac{1}{5}\right)^{j/2}\right)^{\frac{2\log 2}{\log 15}} \leq 2f_j^{\frac{2\log 2}{\log 15}} \leq 2|U|^{\frac{2\log 2}{\log 15}}. 
  \end{equation*}
  
  If, on the other hand, $n_\delta$ is odd, then let $\mu_\delta$ be a Borel probability measure on $E$ which gives mass $2^{-10^{k_{n_\delta + 1}}}$ to each of the $2^{10^{k_{n_\delta + 1}}}$ intervals in $E_{10^{k_{n_\delta + 1}}}$. As above, if $\Phi(\delta) \leq |U| \leq \delta$ then $\mu_\delta (U) \leq 2|U|^{\frac{2\log 2}{\log 15}}$. Therefore by the mass distribution principle Lemma~\ref{massdistprinc}~(ii) and Proposition~\ref{basicbounds} and~\eqref{stabilityboxbound},  
  \[\underline{\dim}^\Phi(E \cup F) \geq \frac{2\log 2}{\log 15} > \frac{10\log 2}{9\log 5} \geq \max\{\underline{\dim}_\mathrm{B} E, \underline{\dim}_\mathrm{B} F\} \geq \max\{\underline{\dim}^\Phi E, \underline{\dim}^\Phi F\}. \qedhere \]
\end{proof}
It follows from Propositions~\ref{finitestability} and~\ref{basicbounds} and the fact that the Hausdorff dimension is countably stable that for all admissible functions $\Phi_1$ and $\Phi_2$, the three notions of dimension $\dim_\mathrm{H}$, $\underline{\dim}^{\Phi_1}$ and $\overline{\dim}^{\Phi_2}$ are pairwise-distinct, even just working in $\mathbb{R}$. 

Letting $E,F$ be as in Proposition~\ref{finitestability}, applying the mass distribution principle as in the proof of that result at the scales $\delta \coloneqq f_{10^{k_n+2}}$ shows that 
\[ \dim_\mathrm{H} F \leq \underline{\dim}^\Phi F \leq \frac{10\log 2}{9\log 5} < \frac{2\log 2}{\log 15} \leq \overline{\dim}^\Phi F \leq \overline{\dim}_\mathrm{B} F,\]
 and similarly for $E$. 
 Suppose $F$ is the set corresponding to a $\Phi$ satisfying $\frac{\log \delta}{\log \Phi(\delta)} \to 0$ as $\delta \to 0^+$ (for example $\Phi(\delta) = e^{-\delta^{-0.5}}$). Then by Proposition~\ref{compareintermediate}, 
 \[ \dim_\mathrm{H} F < \frac{2\log 2}{\log 15} \leq \overline{\dim}^\Phi F \leq \overline{\dim}_\theta F\]
  for all $\theta \in (0,1]$, so $\overline{\dim}_\theta F$ is discontinuous at $\theta = 0$. Let $\Phi_1$ be an admissible function such that $\Phi_1(f_{10^{k_n+1}}) \leq \Phi_1(f_{10^{k_{n+2}+1}})$ for all sufficiently large $n$. Then for all sufficiently small $\delta$, there exists an odd integer $n(\delta)$ such that $\Phi(\delta) \leq f_{10^{k_{n(\delta)}+1}} \leq \delta$, and the natural cover of $F_{10^{k_{n(\delta)}+1}}$ with $2^{10^{k_{n(\delta)}+1}}$ intervals gives $\overline{\dim}^{\Phi_1} F \leq \frac{10\log 2}{9\log 5} < \frac{2\log 2}{\log 15}$. 
  This gives an indication of how one might construct the admissible functions from Theorem~\ref{recoverinterpolation} below which recover the interpolation for this particular set.

\section{Recovering the interpolation}\label{recoversection} 

It is clear from~\cite[Proposition~2.4]{Falconer2020firstintermediate} and the proof of Proposition~\ref{finitestability} that there are many compact sets with intermediate dimensions discontinuous at $\theta = 0$. For these sets the intermediate dimensions do not fully interpolate between the Hausdorff and box dimensions. 
The main result of this section, Theorem~\ref{recoverinterpolation}, shows that for every compact set there is indeed a family of functions $\Phi$ for which the $\Phi$-intermediate dimensions interpolate all the way between the Hausdorff and box dimensions of the set. %
Moreover, there exists a family of $\Phi$ which interpolates for both the upper and lower versions of the dimensions, and forms a chain in the partial order introduced in Section~\ref{ctysection}. 
Banaji, Rutar and Troscheit~\cite{BanajiPreprintphiassouad} prove that the Assouad-like dimensions studied in~\cite{Garcia2019-2} fully interpolate between the quasi-Assouad and Assouad dimensions of all non-empty, bounded, doubling metric spaces. 

\begin{thm}\label{recoverinterpolation}
For each non-empty, compact subset $F$, there exists a family 
\[ \{\Phi_s\}_{s \in [\dim_\mathrm{H} F, \overline{\dim}_\mathrm{B} F]}\]
 of admissible functions such that if $s,t$ are such that $\dim_\mathrm{H} F \leq s \leq t \leq \overline{\dim}_\mathrm{B} F$ then the following three conditions hold: 
  \begin{enumerate}[label=(\roman*)]
\item $\overline{\dim}^{\Phi_s} F = s$;
\item $\underline{\dim}^{\Phi_s} F = \min\{s,\underline{\dim}_\mathrm{B} F\}$;
\item $\Phi_s \preceq \Phi_t$. 
\end{enumerate}
\end{thm}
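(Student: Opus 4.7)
Set $h := \dim_{\mathrm H} F$, $b := \overline{\dim}_{\mathrm B} F$ and $\underline{b} := \underline{\dim}_{\mathrm B} F$. The plan is to construct the two endpoints separately and then bridge. For the top endpoint take $\Phi_b(\delta) := \delta / |\log \delta|$: by Proposition~\ref{whenequalsbox} this gives $\overline{\dim}^{\Phi_b} F = b$ and $\underline{\dim}^{\Phi_b} F = \underline{b} = \min\{b,\underline{b}\}$. For the bottom endpoint, use compactness together with $\mathcal{H}^t(F) = 0$ for every $t > h$ to extract $s_n \searrow h$ and \emph{finite} covers $\mathcal{U}_n$ of $F$ with $\sum_{U\in\mathcal{U}_n}|U|^{s_n}\leq 2^{-n}$; after passing to a subsequence arrange the minimum and maximum diameters $m_n, M_n$ of $\mathcal{U}_n$ so that $M_n > m_n > M_{n+1}$ with $m_n/M_n$ decaying as fast as needed. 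Defining $\Phi_h$ to be monotonic with $\Phi_h(\delta) = m_n$ on $[M_n, M_{n-1})$ makes each $\mathcal{U}_n$ a valid $(\Phi_h, \delta)$-cover on its interval, so $\overline{\dim}^{\Phi_h} F \leq s_n$ for every $n$, which with Proposition~\ref{basicbounds} gives $\overline{\dim}^{\Phi_h} F = \underline{\dim}^{\Phi_h} F = h = \min\{h,\underline{b}\}$.

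The interpolation is handled in two ranges. For $s \in [h, \underline{b}]$, take $\Phi_s(\delta) := \delta^{\underline{b}/s}$: admissibility follows from $\underline{b}/s > 1$, and the counting inequality $\sum|U_i|^{s'} \geq N_\delta(F)\,\Phi_s(\delta)^{s'} \geq \delta^{-(\underline{b}-\varepsilon)+\underline{b}s'/s}$ diverges for every $s'<s$ at \emph{every} small $\delta$, yielding $\underline{\dim}^{\Phi_s} F \geq s$; for the matching upper bound $\overline{\dim}^{\Phi_s} F \leq s$ one picks the $M_n$ from the lower-box-counting subsequence (those witnessing $\underline{b}$), sets $m_n \approx M_n^{\underline{b}/s_n}$, and enlarges each $U \in \mathcal{U}_n$ with $|U| < \Phi_s(\delta)$ by adjoining a point at distance $\Phi_s(\delta)$ via uniform perfectness, checking that the new $s$-cost is $o(1)$. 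For $s \in (\underline{b}, b]$ the function $\Phi_s$ instead has to be large enough on an upper-box-counting subsequence $\{\delta_k\}$ witnessing $b$ that $N_{\delta_k}(F)\,\Phi_s(\delta_k)^{s'} \to \infty$ for every $s' < s$ (forcing $\overline{\dim}^{\Phi_s} F \geq s$), yet small enough on a lower-box-counting subsequence $\{\delta_j\}$ that the box cover of cardinality $\delta_j^{-(\underline{b}+\varepsilon)}$ is a valid $(\Phi_s,\delta_j)$-cover of $s$-cost $o(1)$ (forcing $\underline{\dim}^{\Phi_s} F \leq \underline{b}$); a piecewise interpolation between these regimes, monotonic in $\delta$, produces a single admissible function. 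Condition~(iii) reduces to the elementary observation that pointwise $\Phi_s \leq \Phi_t$ implies $\Phi_s \preceq \Phi_t$ (any $(\Phi_t,\delta)$-cover is a $(\Phi_s,\delta)$-cover), and this is visible from the explicit formulas; condition~(ii) combines $\underline{\dim}^{\Phi_s} F \leq \min\{s,\underline{b}\}$ (from~(i) and Proposition~\ref{basicbounds}) with the matching lower bounds already built into each range.

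The main obstacle is the calibration in the range $s \in (\underline{b}, b]$ when $\underline{b} < b$: a single admissible monotonic $\Phi_s$ has to oscillate sharply between the upper- and lower-box-counting subsequences, which requires careful alignment of the scales and is really what makes the theorem non-trivial. A secondary issue is that in the range $s \in [h,\underline{b}]$ the Hausdorff cover cardinality $|\mathcal{U}_n|$ is bounded below by $N_{M_n}(F)$, forcing $M_n$ to lie on the lower-box-counting subsequence and $m_n \approx M_n^{\underline{b}/s_n + o(1)}$; compactness of $F$ ensures this is always possible, and also explains why the hypothesis of compactness in the theorem cannot be dropped in view of Example~\ref{dirichlet}.
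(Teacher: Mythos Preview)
Your proposal has a genuine gap in the range $s \in [h, \underline{b}]$. You define $\Phi_s(\delta) := \delta^{\underline{b}/s}$, but this is precisely $\delta^{1/\theta}$ with $\theta = s/\underline{b}$, so $\overline{\dim}^{\Phi_s} F = \overline{\dim}_{s/\underline{b}} F$ is an ordinary intermediate dimension. There is no reason this equals $s$; indeed the whole point of the theorem is to handle sets where the intermediate dimensions are discontinuous at $\theta=0$ and therefore \emph{skip} values in $(h,b)$. Take $F = \{0\} \cup \{1/\log n : n \geq 3\}$: here $h=0$, $\underline{b}=b=1$, and $\dim_\theta F = 1$ for every $\theta \in (0,1]$. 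Your formula gives $\overline{\dim}^{\Phi_s} F = \overline{\dim}_s F = 1$ for every $s \in (0,1)$, not $s$. Your upper-bound sketch (enlarging Hausdorff-cover sets to size $\Phi_s(\delta)$) cannot rescue this: the Hausdorff covers witnessing $\mathcal{H}^{s_n}(F)=0$ may contain enormously many sets of tiny diameter, and after inflating each to diameter $\delta^{\underline{b}/s}$ the $s$-cost blows up. The counting bound you invoke for the lower bound is correct and gives $\underline{\dim}^{\Phi_s} F \geq s$, but that only makes matters worse here since you now have $s \leq \underline{\dim}^{\Phi_s} F \leq \overline{\dim}^{\Phi_s} F = 1$.

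The paper avoids this by defining $\Phi_s$ \emph{variationally} rather than by an explicit formula: for $s \in (h,b)$,
\[
\Phi_s(\delta) := \sup\Bigl\{\, x \in [0,\delta/(-\log\delta)] : \exists \text{ finite cover } \{U_i\} \text{ of } F,\ x \leq |U_i| \leq \delta,\ \textstyle\sum_i |U_i|^s \leq 1 \,\Bigr\}.
\]
Compactness guarantees the supremum is positive (a finite subcover of a near-optimal Hausdorff cover works), and the definition is tailored so that $\overline{\dim}^{\Phi_s} F \leq s$ is automatic while $\overline{\dim}^{\Phi_s} F \geq s$ follows by a short contradiction argument (if a strictly better cover existed, one could double $\Phi_s(\delta)$). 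Monotonicity in $s$ is immediate from the definition, and the lower-dimension statement is then deduced by a separate argument comparing $\Phi_s$ with box covers. The key conceptual point you are missing is that $\Phi_s$ must be \emph{adapted to the specific covering structure of $F$ at every scale}; no universal power law can work precisely because the theorem is designed for sets where power laws fail to interpolate.
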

The key definition in the proof is~\eqref{definephis}. The assumption of compactness allows us to take a \emph{finite} subcover in Definition~\ref{hausdorffdef} of Hausdorff dimension, which ensures that $\Phi_s(\delta)$ is well-defined and positive.   
  
\begin{proof} 
Define $\Phi_{\overline{\dim}_\mathrm{B} F}(\delta) \coloneqq \frac{\delta}{-\log \delta}$, so~(i) and (ii) are satisfied for $s = \overline{\dim}_\mathrm{B} F$ by Proposition~\ref{whenequalsbox}. 
We henceforth assume that $\dim_\mathrm{H} F < \overline{\dim}_\mathrm{B} F$, or else there is nothing more to prove.  
The same symbols may take different values in the proofs of parts (i), (ii), (iii). 

(i) %
Let $\Delta \in (0,1/5)$ be such that $0<\frac{\delta}{-\log \delta} < c\delta /3$ for all $\delta \in (0,\Delta)$. %
For now, let $s \in (\dim_\mathrm{H} F, \overline{\dim}_\mathrm{B} F)$. %
For each $\delta \in (0,\Delta)$ there exists a countable cover $\{V_i\}_{i \geq 1}$ of $F$ such that $|V_i| \leq \delta$ for all $i$, and $\sum_i|V_i|^s \leq 2^{-1-2s}$. We may assume that each $V_i$ is non-empty and fix $p_i \in V_i$. Each $V_i \subseteq B(p_i,\max\{2|V_i|,2^{-1-2i/s})\})$, so $\{B(p_i,\max\{2|V_i|,2^{-1-2i/s}\})\}_{i \geq 1}$ is an open cover for $F$. %
Since $F$ is compact, there is a finite subset $\{U_i\} \subseteq \{B(p_i,\max\{2|V_i|,2^{-1-2i/s}\})\}$ which also covers $F$. %
Now, 
\begin{align*}
\sum_i |U_i|^s \leq \sum_{i \geq 1} |B(p_i,\max\{2|V_i|,2^{-1-2i/s}\})|^s &\leq \sum_{i=1}^\infty (2^{-2i/s})^s + \sum_{i \geq 1} (4|V_i|)^s \\
&= 1/3 + 4^s \sum_i |V_i|^s \\
&< 1.
\end{align*} 
Since $\{U_i\}$ is a finite collection of sets, and each has positive diameter as $X$ is uniformly perfect, it follows that $\min_i |U_i| >0$. Therefore $\Phi_s \colon (0,\Delta) \to \mathbb{R}$ is positive and well-defined by  
\begin{equation}\label{definephis}
\begin{aligned}
\Phi_s(\delta) \coloneqq \sup \{ \,x \in [0,\delta/(-\log \delta)] : &\mbox{ there exists a finite cover } \{U_i\} \mbox{ of } F \\*
 & \mbox{ such that } x \leq |U_i| \leq \delta \mbox{ for all } i \mbox{ and } \sum_i|U_i|^s \leq 1 \, \}.
\end{aligned}
\end{equation}
 By construction, $\Phi_s(\delta)/\delta \leq \left(\frac{\delta}{-\log \delta}\right)/\delta \to 0$ as $\delta \to 0^+$, and $\Phi_s$ is increasing in $\delta$, so $\Phi_s$ is admissible. 
  
 We now show that $\overline{\dim}^{\Phi_s} F \leq s$. Given $\eta,\epsilon >0$, define $\delta_0 \coloneqq \min\{ \epsilon^{1/\eta}c^{s/\eta}4^{-s/\eta}, \Delta\}$. Then for all $\delta \in (0, \delta_0)$ there exists a finite cover $\{W_i\}$ of $F$ satisfying $\Phi_s(\delta)/2 \leq |W_i| \leq \delta$ for all $i$, and %
 $\sum_i|W_i|^s \leq 1$. If $|W_i| \geq \Phi_s(\delta)$ then leave $W_i$ in the cover unchanged. If $|W_i| < \Phi_s(\delta)$ then pick any $w_i \in W_i$ and $q_i \in X$ such that $\Phi_s(\delta) \leq d(q_i,w_i) \leq \Phi_s(\delta)/c$. Replace $W_i$ in the cover by $W_i \cup \{q_i\}$. Call the new cover $\{Y_i\}$. By the triangle inequality, 
 \begin{equation*} 
 \Phi_s(\delta) \leq d(q_i,w_i) \leq |W_i \cup \{q_i\}| < \Phi_s(\delta) + \Phi_s(\delta)/c \leq 2\delta/(-c\log \delta)\leq \delta.
 \end{equation*}
 Also 
 \[|W_i \cup \{q_i\}| \leq 2\Phi_s(\delta)/c \leq (4/c)\Phi_s(\delta)/2 \leq 4|W_i|/c. \]
 Therefore 
 \[\sum_i|Y_i|^{s+\eta} \leq \sum_i|Y_i|^s \delta^\eta \leq \delta_0^\eta (4/c)^s \sum_i|W_i|^s \leq \epsilon.\]
  It follows that $\overline{\dim}^{\Phi_s} F \leq s+\eta$, so in fact $\overline{\dim}^{\Phi_s} F \leq s$.
 
 To prove the reverse inequality, assume for a contradiction that $\overline{\dim}^{\Phi_s} F < s$. Then there exists $\delta_1 \in (0,\Delta)$ such that for all $\delta_2 \in (0,\delta_1)$ there exists a cover $\{Z_i\}$ of $F$ such that $\Phi_s(\delta_2) \leq |Z_i| \leq \delta_2$ for all $i$, and $\sum_i |Z_i|^s \leq 3^{-s}c^s$. 
 By Proposition~\ref{whenequalsbox} there exists $\delta_2 \in (0,\delta_1)$ such that $\Phi_s(\delta_2) < \delta_2/(-\log \delta_2)$, and let $\{Z_i\}$ be the cover corresponding to this $\delta_2$, as above. Choose any $z_i \in Z_i$ and let $x_i \in X$ be such that $2|Z_i| \leq d(z_i,x_i) \leq 2|Z_i|/c$. Then by the triangle inequality,  
 \begin{equation*}
 2\Phi_s(\delta_2) \leq 2|Z_i| \leq d(z_i,x_i) \leq |Z_i \cup \{x_i\}| \leq |Z_i| + 2|Z_i|/c \leq (3/c)\delta_2/(-\log \delta_2) <\delta_2. 
 \end{equation*}
 Moreover, $\{Z_i \cup \{x_i\}\}_i$ covers $F$, and 
 \[\sum_i|Z_i \cup \{x_i\}|^s \leq \sum_i (3|Z_i|/c)^s \leq 3^s c^{-s} \sum_i |Z_i|^s \leq 1.\] 
 Therefore 
 \[\Phi_s(\delta_2) \geq \min\{2\Phi_s(\delta_2),\delta_2/(-\log \delta_2)\} > \Phi_s(\delta_2),\]
  a contradiction. Hence $\overline{\dim}^{\Phi_s} F \geq s$ for all $s \in (\dim_\mathrm{H} F, \overline{\dim}_\mathrm{B} F)$, so $\overline{\dim}^{\Phi_s} F = s$. 
 
 Now consider the case $s=\dim_\mathrm{H} F$. Let $N \in \mathbb{N}$ satisfy 
 \[ N > \max\left\{\frac{1}{\overline{\dim}_\mathrm{B} F - \dim_\mathrm{H} F},\frac{1}{\Delta}\right\}. \]
 For $\delta \in (0,1/N]$, let $n \geq N$ be such that $\delta \in (\frac{1}{n+1}, \frac{1}{n}]$, and define 
 \[ \Phi_s(\delta) \coloneqq \min\{\Phi_{s+1/N} (\delta), \dotsc, \Phi_{s+1/n}(\delta)\}.\] 
  Then $\Phi_s(\delta) \leq \Phi_{s+1/N}(\delta) \leq \delta/(-\log \delta)$ for all $\delta \in (0,1/N]$, so $\Phi_s(\delta)/\delta \to 0$ as $\delta \to 0^+$. For all $n \geq N$ and $\delta \in (0,\Delta)$ it holds that $\Phi_{s+1/n}(\delta)>0$, so if $\delta>0$ then $\Phi_s(\delta)>0$. Moreover, if $\delta_1 \leq \delta_2$, say 
 $\delta_1 \in (\frac{1}{n+1}, \frac{1}{n}]$ and $\delta_2 \in (\frac{1}{m+1}, \frac{1}{m}]$ where $n\geq m \geq N$, then
 \begin{equation*}
 \Phi_s(\delta_1) \leq \min\{\Phi_{s+1/N} (\delta_1), \dotsc, \Phi_{s+1/m}(\delta_1)\} \leq \Phi_s(\delta_2)
 \end{equation*}
  by the monotonicity of each $\Phi_{s+1/i}$. Thus $\Phi_s$ is monotonic, so admissible. For all $n \geq N$ and $\delta \in (0,1/n)$, clearly $\Phi_s(\delta) \leq \Phi_{s+1/n}(\delta)$. 
  Therefore by Proposition~\ref{basicbounds} and Corollary~\ref{hardcomparisoncor}~(i),  
  \[ s = \dim_\mathrm{H} F \leq \underline{\dim}^{\Phi_s} F \leq \overline{\dim}^{\Phi_s} F \leq \overline{\dim}^{\Phi_{s+1/n}} F = s+\frac{1}{n}.\]
   Letting $n \to \infty$ gives $\underline{\dim}^{\Phi_s} F = \overline{\dim}^{\Phi_s} F = s = \dim_\mathrm{H} F$, as required. 
   
   (iii) %
   By construction, (iii) holds since if $\dim_\mathrm{H} F \leq s \leq t \leq \overline{\dim}_\mathrm{B} F$ then $\Phi_s(\delta) \leq \Phi_t(\delta)$ for all sufficiently small $\delta$. 
   
    (ii) It suffices to prove $\underline{\dim}^{\Phi_s} F \geq \min\{s,\underline{\dim}_\mathrm{B} F\}$, since the opposite inequality follows from Proposition~\ref{basicbounds} and~(i). 
    If $s = \dim_\mathrm{H} F$ or $s = \overline{\dim}_\mathrm{B} F$ then we are done by Propositions~\ref{basicbounds} and~\ref{whenequalsbox}.  
    Suppose $s \in (\dim_\mathrm{H} F, \underline{\dim}_\mathrm{B} F] \cap (\dim_\mathrm{H} F, \overline{\dim}_\mathrm{B} F)$. %
     Assume for a contradiction that $\underline{\dim}^{\Phi_s} F < s$. Let $t,t'$ be such that $\underline{\dim}^{\Phi_s} F < t < t' < s$. Since $t' < \underline{\dim}_\mathrm{B} F$, there exists $\Delta \in (0,\min\{1,|X|\})$ such that $N_\delta (F) \geq \delta^{-t'}$ for all $\delta \in (0,\Delta)$. Reducing $\Delta$ if necessary, we may assume further that $\frac{\delta^{t-t'}}{(-\log \delta)^t} > (1+2/c)^{-s}$ and $-\log \delta \geq 2(1+2/c)$ for all $\delta \in (0,\Delta)$. 
    Since $t > \underline{\dim}^{\Phi_s} F$, for all $\delta_0 > 0$ there exists $\delta \in (0,\min\{\Delta,\delta_0\})$ and a cover $\{U_i\}$ such that $\Phi_s(\delta) \leq |U_i| \leq \delta$ for all $i$, and 
    \begin{equation}\label{longlowersum}
    (1+2/c)^{-s} \geq \sum_i |U_i|^t \geq \sum_i |U_i|^s.
    \end{equation}
    
     But   
     \[ (1+2/c)^{-s} < \frac{\delta^{t-t'}}{(-\log \delta)^t} = \delta^{-t'}\left(\frac{\delta}{-\log \delta}\right)^t,\]
      so there exists $i$ such that $\delta/(-\log \delta) > |U_i| \geq \Phi_s(\delta)$. %
      If $i$ is such that $|U_i| \geq \min\{2\Phi_s(\delta),\delta/(-\log \delta)\}$ then leave $U_i$ in the cover unchanged. If, however, $i$ is such that $|U_i| <\min\{2\Phi_s(\delta),\delta/(-\log \delta)\}$ then fix $p_i \in U_i$. Fix $q_i \in X$ such that $2\Phi_s(\delta) \leq d(p,q) \leq 2\Phi_s(\delta)/c$, replace $U_i$ in the cover by $U_i \cup \{q_i\}$, and call the new cover $\{V_i\}_i$. In the case $|U_i| <\min\{2\Phi_s(\delta),\delta/(-\log \delta)\}$, 
    \begin{align*}
     2\Phi_s(\delta) \leq d(p_i,q_i) \leq |U_i \cup \{q_i\}| \leq |U_i| + 2\Phi_s(\delta)/c 
     &< 2(1+2/c)\Phi_s(\delta) \\
     &\leq 2(1+2/c)\delta/(-\log \delta) \\
     &\leq \delta.
     \end{align*}
    Then $\min\{\delta/(-\log \delta),2\Phi_s(\delta)\} \leq |V_i| \leq \delta$ for each $i$, and 
    \[ \sum_i |V_i|^s \leq \sum_i ((1+2/c)|U_i|)^s = (1+2/c)^s \sum_i|U_i|^s \leq 1, \]
    by~\eqref{longlowersum}. 
    This means that $\Phi_s(\delta) \geq \min\{2\Phi_s(\delta), \delta/(-\log \delta)\} > \Phi_s(\delta)$, a contradiction. Hence $\underline{\dim}^{\Phi_s} F \geq s$ for all $s \in (\dim_\mathrm{H} F, \underline{\dim}_\mathrm{B} F]$. %
    
   Now suppose $s \in (\underline{\dim}_\mathrm{B} F, \overline{\dim}_\mathrm{B} F)$. %
   By (iii), $\Phi_{\underline{\dim}_\mathrm{B} F} \preceq \Phi_s$, so by what we have just proved, $\min\{s,\underline{\dim}_\mathrm{B} F\} = \underline{\dim}_\mathrm{B} F \leq \underline{\dim}^{\Phi_{\underline{\dim}_\mathrm{B} F}} F \leq \underline{\dim}^{\Phi_s} F$. 
        Together, the cases show that for all $s \in [\dim_\mathrm{H} F,\overline{\dim}_\mathrm{B} F]$ it holds that $\underline{\dim}^{\Phi_s} F \geq \min\{s,\underline{\dim}_\mathrm{B} F\}$ and hence $\underline{\dim}^{\Phi_s} F = \min\{s,\underline{\dim}_\mathrm{B} F\}$, as required.  
\end{proof}

In the definition~\eqref{definephis} of $\Phi_s$, any positive constant would work in place of the constant 1, so there are many different $\Phi_s$ that will work. The family of dimensions $\overline{\dim}^{\Phi_s}$ and $\underline{\dim}^{\Phi_s}$ may not vary continuously for all sets, as shown by the following proposition. 

\begin{prop}\label{interpolatenotcts}
There exist non-empty, compact subsets $F,G$ of $\mathbb{R}$ such that: 

(i) if $(\Phi_s)_{s \in (\dim_\mathrm{H} F, \underline{\dim}_\mathrm{B} F)}$ is a family of admissible functions such that $\overline{\dim}^{\Phi_s} F = s$ for all $s \in (\dim_\mathrm{H} F, \underline{\dim}_\mathrm{B} F)$ then the function $s \mapsto \overline{\dim}^{\Phi_s} G$ is not continuous on $(\dim_\mathrm{H} F, \underline{\dim}_\mathrm{B} F)$, and

(ii) if $(\Psi_s)_{s \in (\dim_\mathrm{H} F, \underline{\dim}_\mathrm{B} F)}$ is such that $\underline{\dim}^{\Psi_s} F = s$ for all $s \in (\dim_\mathrm{H} F, \underline{\dim}_\mathrm{B} F)$ then the function $s \mapsto \underline{\dim}^{\Psi_s} G$ is not continuous on $(\dim_\mathrm{H} F, \underline{\dim}_\mathrm{B} F)$. 
\end{prop}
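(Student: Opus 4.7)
The plan is to exploit the freedom in choosing the admissible family $(\Phi_s)$ by constructing $F$ and $G$ as compact Moran-type Cantor subsets of $[0,1]$ whose upper $\Phi$-intermediate dimensions are each determined, essentially, by $\Phi$ at a specific sparse sequence of scales, with the two sequences interleaved. The compactness of $F$ is crucial (by the discussion before Example~\ref{dirichlet}) to ensure that interpolation is possible in the first place; beyond this, we arrange $F$ so that the family $(\Phi_s)$ is constrained only at $F$'s scales, leaving essentially binary freedom at $G$'s scales.

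Concretely, fix a super-exponentially decreasing sequence $r_n$ (e.g.\ $r_n = 2^{-2^n}$) and construct $F \subset [0,1]$ as a Cantor-like set whose $n$-th construction level consists of $N_n$ intervals of length $r_n$, with $N_n$ chosen so that $\dim_{\mathrm H} F = \alpha$ and $\underline{\dim}_{\mathrm B} F = \beta$ with $\alpha < \beta$ (cf.\ the Besicovitch-type constructions cited before Theorem~\ref{recoverinterpolation} and the proof of Proposition~\ref{finitestability}). The rapid decrease of $r_n$ ensures that for any admissible $\Phi$, every $(\Phi(\delta),\delta)$-cover of $F$ at an intermediate scale $\delta \in (r_{n+1},r_n)$ can be refined to a cover essentially governed by the value of $\Phi$ at the critical scale $r_n$. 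Consequently, the condition $\overline{\dim}^{\Phi_s} F = s$ pins down the quantity $\log \Phi_s(r_n)/\log r_n$ asymptotically (up to an error tending to $0$) to a specific value $h(s)$ depending continuously and strictly monotonically on $s$, leaving $\Phi_s$ largely unconstrained on scales lying strictly between consecutive $r_n$'s — subject only to monotonicity in $\delta$.

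Next, pick scales $t_n \in (r_{n+1}, r_n)$ with $-\log t_n/\log r_n \to \infty$, and construct $G$ as a second Cantor-like set at scales $t_n$ (using the same template as for $F$) such that $\dim_{\mathrm H} G < \underline{\dim}_{\mathrm B} G$ and such that $\overline{\dim}^{\Phi} G$ exhibits threshold behaviour: there is an explicit critical value $h^*$ such that if $\log \Phi(t_n)/\log t_n \le h^*$ asymptotically then $\overline{\dim}^{\Phi}G = \dim_{\mathrm H} G$, whereas if this quantity exceeds $h^*$ along a subsequence then $\overline{\dim}^{\Phi}G \geq \underline{\dim}_{\mathrm B} G$. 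Given any admissible family $(\Phi_s)$ with $\overline{\dim}^{\Phi_s} F = s$, the monotonicity of $\Phi_s$ gives $\Phi_s(r_{n+1}) \leq \Phi_s(t_n) \leq \Phi_s(r_n)$, so $\log \Phi_s(t_n)/\log t_n$ is sandwiched between two quantities controlled by $h(s)$ and the ratios $\log r_{n}/\log t_n$, $\log r_{n+1}/\log t_n$. By choosing $t_n$ appropriately within $(r_{n+1},r_n)$, one arranges that for $s$ near $\alpha$ the upper constraint forces $\log\Phi_s(t_n)/\log t_n > h^*$ eventually (so $\overline{\dim}^{\Phi_s} G \ge \underline{\dim}_{\mathrm B} G$), while for $s$ near $\beta$ the lower constraint forces the reverse inequality asymptotically (so $\overline{\dim}^{\Phi_s} G = \dim_{\mathrm H} G$). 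Thus $s \mapsto \overline{\dim}^{\Phi_s} G$ takes at least two distinct values on the connected interval $(\alpha,\beta)$, and therefore cannot be continuous.

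Part (ii) follows by an entirely parallel construction with the lower $\Phi$-intermediate dimension in place of the upper one, using Lemma~\ref{massdistprinc}(ii) and Lemma~\ref{frostman}(ii) in place of their upper counterparts. The main obstacle will be verifying the claimed threshold behaviour of $\overline{\dim}^{\Phi} G$ rigorously — this requires carefully tracking the efficiency of covers of $G$ at scales forced by admissibility of $\Phi$, and checking that the two regimes ($\Phi_s$ small near $s=\alpha$ vs.\ large near $s=\beta$) genuinely produce the two claimed values of $\overline{\dim}^{\Phi_s} G$ independently of the particular admissible family chosen, rather than merely allowing them.
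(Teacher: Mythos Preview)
Your approach is quite different from the paper's, and there is a genuine gap. The paper's argument is brief: take $G = \{0\} \cup \{1/n : n \in \mathbb{N}\}$, so that $\dim_\theta G = \theta/(1+\theta)$, and $F = E \cup G$ where $E$ is compact countable with $\underline{\dim}_\mathrm{B} E = \dim_\mathrm{A} E = 1/4$, giving $\dim_\theta F = \max\{\theta/(1+\theta),\, 1/4\}$ for $\theta > 0$. The key is the \emph{plateau}: $\dim_\theta F = 1/4$ on $(0,1/3]$. By Proposition~\ref{compareintermediate}, if $s < 1/4$ then any admissible $\Phi_s$ with $\overline{\dim}^{\Phi_s} F = s$ must have $\theta_2(\Phi_s) \coloneqq \limsup_{\delta\to 0^+}\frac{\log\delta}{\log\Phi_s(\delta)} = 0$, forcing $\overline{\dim}^{\Phi_s} G = 0$; if $s > 1/4$ then necessarily $\theta_2(\Phi_s) > 1/3$, forcing $\overline{\dim}^{\Phi_s} G \ge \dim_{1/3} G = 1/4$. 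This gives the discontinuity at $s = 1/4$, and (ii) is analogous.

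The interleaved-scale strategy you propose founders on the ``pinning down'' step. The upper $\Phi$-intermediate dimension is a $\limsup$-type quantity: the condition $\overline{\dim}^{\Phi_s} F = s$ forces $\Phi_s$ to be large enough only along \emph{some} subsequence of scales, not at every $r_n$, so it does not determine $\log\Phi_s(r_n)/\log r_n$ asymptotically. Consequently the monotonicity sandwich on $\Phi_s(t_n)$ is too weak to yield a specific value of $\overline{\dim}^{\Phi_s} G$ for \emph{every} admissible family --- you yourself flag this as the main obstacle. There are also inconsistencies of detail: with $r_n = 2^{-2^n}$ any $t_n \in (r_{n+1}, r_n)$ has $-\log t_n/(-\log r_n) \in (1,2)$, contradicting your requirement that this ratio diverge; and your threshold claim for $G$ has the direction reversed, since $\log\Phi(t_n)/\log t_n$ close to $1$ means $\Phi(t_n)$ close to $t_n$, hence narrow covering intervals and dimension close to box rather than Hausdorff. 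The paper's plateau device sidesteps all scale-by-scale analysis by working entirely through $\theta_2$ and Proposition~\ref{compareintermediate}.
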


\begin{proof}

Let $G \coloneqq \{0\} \cup \{1/n : n \in \mathbb{N}\}$, so $\dim_\theta G = \frac{\theta}{1+\theta}$ for all $\theta \in [0,1]$ by~\cite[Proposition~3.1]{Falconer2020firstintermediate}. 
Let $E \subset \mathbb{R}$ be a compact countable set with $\underline{\dim}_\mathrm{B} E = \dim_\mathrm{A} E = 1/4$ and let $F = E \cup G$, so as in~\cite[Example~3]{Falconer2020firstintermediate} $\dim_\mathrm{H} F = 0$ and $\dim_\theta F = \max\left\{ \frac{\theta}{1+\theta},1/4\right\}$ for all $\theta \in (0,1]$. 
We now prove (i) using Proposition~\ref{compareintermediate}; the proof of (ii) is similar. Suppose $(\Phi_s)_{s \in (\dim_\mathrm{H} F, \underline{\dim}_\mathrm{B} F)}$ satisfies $\overline{\dim}^{\Phi_s} F = s$ for all $s \in (\dim_\mathrm{H} F, \underline{\dim}_\mathrm{B} F)$. Then if $s>1/4$ then $\overline{\dim}^{\Phi_s} F = s > 1/4 = \dim_{1/3} F$, so by Proposition~\ref{compareintermediate}, 
\[ \limsup_{\delta \to 0^+} \frac{\log \Phi_s(\delta)}{\log \delta} > 1/3\] and $\overline{\dim}^{\Phi_s} G \geq \dim_{1/3} G = 1/4$. For all $s < 1/4$, $\frac{\log \Phi_s(\delta)}{\log \delta} \to 0$ as $\delta \to 0^+$, so since $\dim_\theta G = \frac{\theta}{1+\theta} \to 0$ as $\theta \to 0$, it follows that $\dim^{\Phi_s} G = 0$. Therefore the function $s \mapsto \overline{\dim}^{\Phi_s} G$ is not continuous at $s=1/4$. 
\end{proof}

We believe that the results of this chapter demonstrate that the $\Phi$-intermediate dimensions give rise to a rich and workable theory in their own right, and there are several further questions about them that we will not explore in this thesis. 
In particular, it would be natural to calculate the $\Phi$-intermediate dimensions of two extreme types of cutout sets. Specifically, decreasing sequences with decreasing gaps such as $\{0\} \cup \{\,\frac{1}{\log n} : n \in \mathbb{N},n \geq 3 \,\}$ are in some sense the least spatially homogeneous in space of all cutout sets corresponding to a given sequence of lengths. On the other hand, homogeneous Moran sets are the most spatially homogeneous, and in this case the $\Phi$-intermediate dimensions will satisfy the natural analogue of Proposition~\ref{p:int-dim} from Chapter~\ref{s:attainable}, page~\pageref{p:int-dim}.

\chapter{Attainable forms of intermediate dimensions}\label{s:attainable}

\section{Introduction}
This chapter considers the general form of intermediate dimensions of sets, and is mostly based on our joint paper~\cite{Banaji2022moran} with A.~Rutar. 
The main result is to obtain a full characterisation of possible intermediate dimension functions for subsets of Euclidean space.
Recall that the upper Dini derivative of a function $f\colon\R\to\R$ at $x$ is given by
\begin{equation}\label{e:Dini-intro}
    \diniu{+}f(x)=\limsup_{\epsilon\to 0^+}\frac{f(x+\epsilon)-f(x)}{\epsilon}.
\end{equation}
We will prove that the following characterisation holds. 
\begin{theorem}\label{it:intro-main-res}
    Let $h\colon[0,1]\to[0,d]$ be any function.
    Then there exists a non-empty bounded set $F\subset\Rd$ with $\dim_\theta F=h(\theta)$ for all $\theta \in [0,1]$ if and only if $h$ is non-decreasing, is continuous on $(0,1]$, and satisfies
    \begin{equation}\label{e:intro-gen-bound}
        \diniu{+}h(\theta)\leq\frac{h(\theta)(d-h(\theta))}{d\theta}
    \end{equation}
    for all $\theta\in(0,1)$.
\end{theorem}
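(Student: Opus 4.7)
The plan is to deduce the three required properties from tools already developed in the paper. Monotonicity of $\theta\mapsto\dim_\theta F$ is immediate from Definition~\ref{d:intdimdef}, because any $(\delta,\theta_2)$-cover is also a $(\delta,\theta_1)$-cover when $\theta_1\leq\theta_2$, and continuity of $\theta\mapsto\dim_\theta F$ on $(0,1]$ was established in~\cite[Section~2.1]{Falconer2020firstintermediate}. For the Dini bound on $(0,1)$, I would apply Theorem~\ref{maincty} with $\Phi(\delta)=\delta^{1/\theta}$ and $\Phi_1(\delta)=\delta^{1/\theta'}$ for $\theta'>\theta$, using the trivial estimates $\dim_{\mathrm{L}}F\geq 0$ and $\dim_{\mathrm{A}}F\leq d$. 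Writing $s=h(\theta)$ and assuming $0<s<d$, the parameters from~\eqref{e:definelambdaalpha} become $\gamma=s/(s+\eta)$ and $\alpha=(d-s)/(d-s-\eta)$, so condition~\eqref{continuityassumption} reduces to $\theta'/\theta\leq\alpha/\gamma=1+d\eta/(s(d-s-\eta))$. Hence whenever this holds we have $h(\theta')\leq s+\eta$. Given small $\theta'-\theta>0$, choose $\eta$ so that equality holds; then $\eta\to 0$ as $\theta'\to\theta^+$, and $(h(\theta')-h(\theta))/(\theta'-\theta)\leq\eta/(\theta'-\theta)\to s(d-s)/(d\theta)$, yielding $\diniu{+}h(\theta)\leq h(\theta)(d-h(\theta))/(d\theta)$. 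The boundary cases $s=0$ and $s=d$ are handled respectively by Proposition~\ref{zerocontinuityprop} and by the trivial bound $h\leq d$.

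\noindent\textbf{Sufficiency.} Given $h$ satisfying the hypotheses, I would construct a homogeneous Moran subset $F\subseteq[0,1]^d$ specified by integer sequences $(k_n)$ and $(m_n)$ with $1\leq m_n\leq k_n^d$: at level $n$, each surviving cube is subdivided into $k_n^d$ equal sub-cubes and $m_n$ of them are retained. Writing $R_n=\prod_{i\leq n}k_i^{-1}$ for the level-$n$ side length and $N_n=\prod_{i\leq n}m_i$ for the number of surviving cubes, the intermediate dimensions of $F$ can be computed explicitly. The upper bound comes from using, at each scale $\delta$, the most efficient of the natural level-$n$ covers among those levels with $R_n$ lying in $[\delta^{1/\theta},\delta]$, while the matching lower bound follows from the mass distribution principle (Proposition~\ref{prop:mdp}) applied to the uniform Bernoulli measure giving mass $N_n^{-1}$ to each level-$n$ cube. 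The computation reduces to an optimisation involving $(N_n,R_n)$, and by choosing the parameter sequences so that at scale $R_n$ the ratio $\log N_n/(-\log R_n)$ mimics a prescribed rate determined by $h$, one can arrange $\dim_\theta F=h(\theta)$ for every $\theta$. Crucially, the constraint~\eqref{e:intro-gen-bound} is exactly what allows the target rates to be achieved by integer choices at each stage.

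\noindent\textbf{Main obstacle.} The hard part will be realising \emph{all} the values $h(\theta)$ simultaneously from a single sequence of Moran data, since covers at different values of $\theta$ are all drawn from the same construction. I would address this in two stages: first, treat piecewise-linear targets that saturate~\eqref{e:intro-gen-bound} on designated intervals and are constant on the complement, for which the rate $\log N_n/(-\log R_n)$ can be controlled scale-by-scale in an explicit way; and second, approximate a general $h$ uniformly from below by such piecewise-linear functions, producing the required $F$ by interleaving the scales from these approximations. Two further technicalities need care: integer rounding of $(k_n,m_n)$ must be arranged without disturbing the asymptotic rates, and showing that the upper and lower intermediate dimensions coincide (so that $\dim_\theta F$ genuinely exists) requires the parameter sequences to vary slowly enough that the optimal covering cost does not oscillate between consecutive scales.
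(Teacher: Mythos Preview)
Your necessity argument is correct and follows the same route as the paper (which packages it as Corollary~\ref{c:int-dim-bound}, derived from Theorem~\ref{maincty} exactly as you describe).

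There is, however, a genuine gap in your sufficiency direction. You propose a homogeneous Moran set, but Proposition~\ref{p:int-dim} shows that for any such set $C$,
\[
\underline{\dim}_\theta C \;=\; \liminf_{\delta\to 0} s(\delta)
\]
for every $\theta\in(0,1]$, which is \emph{constant} in $\theta$. Hence a homogeneous Moran set can satisfy $\dim_\theta F=h(\theta)$ only when $h$ is constant on $(0,1]$. Your suggested fix of making the parameters ``vary slowly enough'' cannot repair this: the constancy of $\underline{\dim}_\theta C$ is structural, because for each small $\delta$ one may always pick the single scale $\phi\in[\delta^{1/\theta},\delta]$ minimising $s(\phi)$, and the liminf of that minimum does not see $\theta$ at all.

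The paper resolves this (Theorem~\ref{t:gen-h-form}) by building an \emph{inhomogeneous} Moran set which, at each fixed scale $\delta$, looks like a disjoint union of many homogeneous Moran sets whose defining functions are translates $\kappa_w(g)$ of a single $g$; as $\delta\to 0$ the number and density of translates both increase. This spatial inhomogeneity guarantees that at every scale some component has its sliding-window infimum close to $h(\theta)$, pushing $\underline{\dim}_\theta$ up to $h(\theta)$. Your ``interleaving the scales from these approximations'' gestures in this direction but misses the point: what is needed is inhomogeneity \emph{at each fixed scale}, not merely interleaving \emph{between} scales. Separately, for the upper intermediate dimension (Theorem~\ref{t:upper-h-form}) the paper does not use piecewise-linear approximation; it constructs explicit ``mountain'' functions $f_n$ engineered so that $\inf_{y\in[x,x+\log(1/\theta)]} g(y)=h(\theta)$ exactly, after the change of variable $x=\log\log(1/\delta)$, which is more direct than an approximation-and-limit scheme and sidesteps the uniformity issues you flag.
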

\begin{proof}
This follows immediately from Theorem~\ref{it:general-form} below. 
\end{proof}
We see that the intermediate dimensions can have highly varied behaviour; such behaviour has not been seen in any prior examples.
In particular, without stronger assumptions on the set $F$, very little can be said about the possible forms of the intermediate dimensions.
For example, it follows directly from~\eqref{e:intro-gen-bound} that if $f$ is any non-decreasing Lipschitz function on $[0,1]$, there exists some constants $a>0$, $b\in\R$, and a set $F\subset\R$ such that $\dim_\theta F=af(\theta)+b$ for all $\theta\in[0,1]$.
In particular, the following behaviours for the intermediate dimensions are all possible:
\begin{enumerate}%
    \item Constant on countably many disjoint closed intervals in $[0,1]$, and strictly increasing otherwise.
    \item Strictly concave, strictly convex, or linear and non-constant, on $[0,1]$.
    \item Non-differentiable at each point in a dense subset $E$ of $(0,1)$ with $\dimH E=1$ (in fact, the points of non-differentiability in $(0,1)$ can form an arbitrary $G_{\delta\sigma}$ subset of $(0,1)$ with Lebesgue measure zero~\cite{Zahorski1946lipdiff}).
\end{enumerate}
This resolves all remaining questions asked in Falconer's survey \cite{Falconer2021intdimsurvey}.

To state a stronger form of the main result of this chapter, we define the following class of functions. 
\begin{definition}\label{d:h-class}
    Let $0\leq\lambda\leq\alpha\leq d$.
    If $\lambda<\alpha$, we denote by $\mathcal{H}(\lambda,\alpha)$ the set of functions $h\colon[0,1]\to[\lambda,\alpha]$ which satisfy the following constraints:
    \begin{enumerate}%
        \item\label{i:hdefnondec} $h$ is non-decreasing,
        \item\label{i:hdefcty} $h$ is continuous on $(0,1]$, and
        \item\label{i:hdefdini} for each $\theta\in(0,1)$, 
            \begin{equation}\label{e:h-bound-general}
                \diniu{+}h(\theta)\leq\frac{(h(\theta)-\lambda)(\alpha-h(\theta))}{(\alpha-\lambda)\theta}.
            \end{equation}
    \end{enumerate}
    Otherwise, $\lambda=\alpha$ and we let $\mathcal{H}(\lambda,\alpha)$ be the set consisting only of the constant function $h(\theta)=\alpha$.
\end{definition}
We now state the main result of this chapter precisely.
\begin{theorem}\label{it:general-form}
    Suppose $F\subset\Rd$ has $\dimL F=\lambda$ and $\dimA F=\alpha$.
    Then if $\underline{h}$ and $\overline{h}$ denote the functions $\underline{h}(\theta) = \underline{\dim}_\theta F$ and $\overline{h}(\theta) = \overline{\dim}_\theta F$, it holds that $\underline{h},\overline{h}\in\mathcal{H}(\lambda,\alpha)$, $\underline{h}\leq\overline{h}$, and $\underline{h}(0)=\overline{h}(0)$. 
    
    Conversely, if $0\leq\lambda\leq\alpha\leq d$ and $\underline{h},\overline{h}\in\mathcal{H}(\lambda,\alpha)$ satisfy $\underline{h}\leq\overline{h}$ and $\underline{h}(0)=\overline{h}(0)$, then there exists a compact perfect set $F\subset\Rd$ such that $\dimL F=\lambda$, $\dimA F=\alpha$, $\underline{\dim}_\theta F=\underline{h}(\theta)$, and $\overline{\dim}_\theta F=\overline{h}(\theta)$ for all $\theta\in[0,1]$.
\end{theorem}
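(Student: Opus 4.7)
Monotonicity of $\underline{h}$ and $\overline{h}$, their continuity on $(0,1]$, the inequality $\underline{h}\leq\overline{h}$, and the boundary equality $\underline{h}(0)=\overline{h}(0)=\dim_{\mathrm H}F$ are all immediate from Definition \ref{d:intdimdef} together with the continuity result of \cite{Falconer2020firstintermediate}. The real content is the Dini bound \eqref{e:h-bound-general}. I would deduce it as a specialisation of Theorem \ref{maincty} to $\Phi(\delta)=\delta^{1/\theta}$ and $\Phi_1(\delta)=\delta^{1/\theta'}$. Writing $h=\overline{h}$ and using the notation $\gamma,A$ of \eqref{e:definelambdaalpha} (with $A$ denoting what is there called $\alpha$), the hypothesis \eqref{continuityassumption} becomes $\theta'\leq A\theta/\gamma$, and a short algebraic calculation gives
\begin{equation*}
\frac{A\theta}{\gamma}-\theta \;=\; \frac{\theta\,\eta\,(\alpha-\lambda)}{(\alpha-h(\theta)-\eta)(h(\theta)-\lambda)}.
\end{equation*}
Theorem \ref{maincty} then yields $h(\theta')\leq h(\theta)+\eta$ at this $\theta'$; dividing $\eta$ by $\theta'-\theta$ and letting $\eta\to 0^+$ produces precisely \eqref{e:h-bound-general}. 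The degenerate cases $h(\theta)\in\{\lambda,\alpha\}$ follow from Propositions \ref{basicbounds} and \ref{zerocontinuityprop}, and the argument for $\underline{h}$ is identical.

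\textbf{Sufficiency: construction.} For the converse I would realise any admissible pair $(\underline{h},\overline{h})$ by a Moran set $F\subset\Rd$ that is homogeneous within each generation but has generation-dependent splitting data $(N_k,\Delta_k)$: inside a fixed $d$-dimensional cube, at level $k$ each surviving cylinder is split into $N_k$ congruent children of diameter $\Delta_k$. Using the natural cover at each scale $\Delta_k$, together with the mass distribution principle of Lemma \ref{massdistprinc} applied to the uniform Bernoulli measure on the cylinders, one identifies $\overline{\dim}_\theta F$ and $\underline{\dim}_\theta F$ as the $\limsup$ and $\liminf$ respectively of ratios of the form
\begin{equation*}
\frac{\log(N_K/N_k)}{\log(\Delta_k/\Delta_K)} \qquad\text{over pairs}\qquad k\leq K \text{ with } \Delta_K\in[\Delta_k^{1/\theta},\Delta_k],
\end{equation*}
and parallel formulas compute $\dim_{\mathrm A}F$ and $\dim_{\mathrm L}F$ as extremes of such slope ratios over \emph{all} pairs.

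\textbf{Matching the prescribed data.} The problem then reduces to choosing $(N_k,\Delta_k)$ so that (a) these $\limsup$/$\liminf$ match $\overline{h}(\theta)$ and $\underline{h}(\theta)$ simultaneously for every $\theta\in(0,1]$, and (b) $\dim_{\mathrm A}F=\alpha$, $\dim_{\mathrm L}F=\lambda$. This is carried out by an inductive block construction, where each block moves the effective slope toward a target dictated by an inverse of the prescribed functions. The Dini inequality \eqref{e:h-bound-general} is exactly the condition that makes this feasible: it controls how fast the effective slope may change per unit of logarithmic scale, guaranteeing the existence of a single slope trajectory whose $\limsup$/$\liminf$ statistics realise the full prescribed dimension functions. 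To additionally enforce the extremal values $\dim_{\mathrm A}F=\alpha$ and $\dim_{\mathrm L}F=\lambda$, I would intersperse sparse blocks of pure slope $\alpha$ and pure slope $\lambda$, short enough to be invisible to the intermediate-dimension statistics at any fixed $\theta$ but long enough to be detected by the Assouad and lower dimensions. When $\underline{h}\neq\overline{h}$, $F$ is taken instead as a disjoint union (inside a common cube) of several such homogeneous pieces, arranged so that different pieces dominate the cover sum along different subsequences of scales, producing the prescribed gap between $\liminf$ and $\limsup$; matching lower bounds at every $\theta$ are obtained via a Frostman-type argument using Lemma \ref{frostman}.

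\textbf{Main obstacle.} The principal difficulty is the \emph{simultaneous} matching at all $\theta\in(0,1]$: the admissible scale windows $[\Delta_k^{1/\theta},\Delta_k]$ for nearby $\theta$ overlap heavily, so the constraints at different $\theta$ are strongly coupled and cannot be treated independently. Turning the philosophical statement "the Dini bound gives exactly the right budget" into a concrete inductive choice of $(N_k,\Delta_k)$ — and then verifying that the cover-sum estimates are tight from below at every $\theta$ and scale — is where the bulk of the technical work lies.
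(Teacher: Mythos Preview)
Your necessity argument is correct and coincides with the paper's: specialising Theorem~\ref{maincty} to $\Phi(\delta)=\delta^{1/\theta}$ and $\Phi_1(\delta)=\delta^{1/\theta'}$ is exactly Theorem~\ref{intermediatects}, from which Corollary~\ref{c:int-dim-bound} extracts the Dini bound by letting $\theta'\to\theta^+$.

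For sufficiency the Moran-set framework is right, but the formula you claim for the intermediate dimensions is not. The two-scale slope $\frac{\log(N_K/N_k)}{\log(\Delta_k/\Delta_K)}$ over pairs with $\Delta_K\in[\Delta_k^{1/\theta},\Delta_k]$ is an Assouad-spectrum-type quantity, not an intermediate-dimension one. For a homogeneous Moran set the correct formula (Proposition~\ref{p:int-dim}, via Lemma~\ref{l:flat-covers}) is
\[
\overline{\dim}_\theta C \;=\; \limsup_{\delta\to 0}\;\inf_{\phi\in[\delta^{1/\theta},\delta]} s(\phi),\qquad s(\phi)=\frac{\log(\text{count at scale }\phi)}{-\log\phi},
\]
a sliding-window infimum of a \emph{one}-parameter function; and, crucially, $\underline{\dim}_\theta C=\dim_{\mathrm H}C=\liminf_\delta s(\delta)$ is \emph{constant in $\theta$}. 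This already breaks your plan: a single homogeneous Moran set can never realise a nonconstant $\underline{h}$, so even the case $\underline{h}=\overline{h}$ (nonconstant) requires an inhomogeneous construction, not only the case $\underline{h}<\overline{h}$.

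The paper's route is to substitute $x=\log\log(1/\delta)$, turning the window $[\delta^{1/\theta},\delta]$ into an \emph{additive} window $[x,x+\log(1/\theta)]$, and then to build explicit ``mountain'' profiles $f_n\colon[0,\log(1/\epsilon_n)]\to(\lambda,\alpha)$ whose ascending side satisfies $f_n'+f_n=\alpha$ and whose descending side is defined by the requirement that the two points at height $h(\theta)$ sit exactly $\log(1/\theta)$ apart; the Dini bound~\eqref{e:h-bound-general} is then used precisely to verify $\dinil{-}f_n\geq\lambda-f_n$ on that side, so the concatenated profile lies in $\mathcal{G}(\lambda,\alpha)$ and Lemma~\ref{l:Moran-formula} applies (Theorem~\ref{t:upper-h-form}). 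Realising a prescribed $\underline{h}$ then needs a genuinely inhomogeneous Moran set that at each fixed scale looks like an ever-finer union of offset copies of the homogeneous one (Theorem~\ref{t:gen-h-form}); the final set in Corollary~\ref{c:upper-lower-match} is a disjoint union of one set of each type.
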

\begin{proof}
This follows from Corollaries~\ref{c:int-dim-bound} and~\ref{c:upper-lower-match}.
\end{proof}
This result gives a full characterisation of all possible forms of the upper and lower intermediate dimensions of a bounded set $F\subset\Rd$.
The constraint~\eqref{e:h-bound-general} generalises all previously known bounds~\cite{Falconer2021intdimsurvey,Falconer2020firstintermediate}.
We see that the Assouad and lower dimensions influence the possible forms of the intermediate dimensions in a natural way.
Note that~\eqref{e:h-bound-general} also provides quantitative information about the Assouad and lower dimensions in terms of the intermediate dimensions.
This is in contrast to the box and Hausdorff dimensions, which provide no more information about the Assouad and lower dimensions beyond the usual order constraints.
We can also view the bound in~\eqref{e:h-bound-general} as $(2\theta)^{-1}$ times the harmonic mean of $h(\theta)-\lambda$ and $\alpha-h(\theta)$.
In particular, if $0\leq\lambda'\leq\lambda\leq\alpha\leq\alpha'\leq d$, then $\mathcal{H}(\lambda',\alpha')\supseteq\mathcal{H}(\lambda,\alpha)$.
Of course, by taking $\underline{h}=\overline{h}$ we can also ensure that the intermediate dimensions exist.
Therefore, Theorem~\ref{it:intro-main-res} follows from Theorem~\ref{it:general-form}.

The proof of the bound~\eqref{e:h-bound-general} is given in Corollary~\ref{c:int-dim-bound}, using some similar ideas to the proof of Theorem~\ref{maincty}. 
In Sections~\ref{s:lattice} and~\ref{s:popcorn} we will use it to calculate the intermediate dimensions of some natural classes of sets, including certain lattice sets and the graph of the popcorn function; in particular, the bound~\eqref{e:intro-gen-bound} is attained at all $\theta\in(0,1)$ for the lattices. 
More remarkable than the bound itself, however, is the fact that it is essentially the only constraint that a function must satisfy in order to be realised as the intermediate dimensions of a set. 
In order to establish this converse result, our main strategy is to construct sets which we call \emph{homogeneous Moran sets}.
These sets are analogous to the $2^d$-corner Cantor sets in $\Rd$, except we only require the subdivision ratios to be equal within each stage in the construction, and not necessarily between stages. 
The following nice property was essentially observed in~\cite{Cabrelli1997cantor}: the optimal covers for a homogeneous Moran set can be taken to consist of sets with equal diameter.
This result is given in Lemma~\ref{l:flat-covers}.
A direct application of this result is a convenient formula for the upper intermediate dimensions of these sets, given in Proposition~\ref{p:int-dim}.
Using this formula, in Lemmas~\ref{l:exact-construction} and~\ref{l:Moran-formula}, we present a general strategy to construct homogeneous Moran sets with upper intermediate dimensions given by a certain infimum over a `sliding window' of a function $g$ satisfying certain derivative constraints.
Then for each $h(\theta)$ satisfying the general bounds, in Theorem~\ref{t:upper-h-form} we construct a function satisfying the derivative constraints so that the corresponding Moran set has upper intermediate dimensions given by the prescribed formula.
This establishes Theorem~\ref{it:general-form} for the upper intermediate dimensions.

Finally, in Theorem~\ref{t:gen-h-form}, we construct an inhomogeneous Moran set which, at a fixed scale, looks like a finite union of homogeneous Moran sets each with prescribed upper intermediate dimension $h(\theta)$.
This process is done in such a way to ensure that the intermediate dimensions exist.
Then, taking a disjoint union of this set with the set provided in Theorem~\ref{t:upper-h-form}, we complete the proof of Theorem~\ref{it:general-form}. 
The details are provided in Corollary~\ref{c:upper-lower-match}.
Heuristically, the covering strategy for Corollary~\ref{c:int-dim-bound} will be sharp when the relative covering numbers in the Assouad and lower dimensions are realised uniformly on the entire set for a fixed scale.
In some sense, this motivates the choice of homogeneous Moran sets, which have the maximum possible uniformity at a fixed scale.
The key observation is that inhomogeneity between scales is sufficient to obtain all possible forms of the upper intermediate dimensions. 
In order to prove that the lower and upper intermediate dimensions can be prescribed simultaneously in Section~\ref{ss:pres}, we use a set which behaves like a union of homogeneous Moran sets at each fixed scale, but whose resolution increases as the scale reduces. 

Rutar~\cite{Rutar2022assouad} has used ideas about homogeneous Moran sets from the paper~\cite{Banaji2022moran} on which this chapter is based to obtain a full characterisation for attainable forms of Assouad spectra, building on previous results in~\cite{Fraser2019twospectra}. 

\section{General bounds}\label{s:genbounds}
\subsection{Dini derivatives}
We begin with some standard results on Dini derivatives, which will be useful later in the chapter.
We refer the reader to \cite{Bruckner1994diff} for more details.
\begin{definition}
    Let $g\colon\R\to\R$ be a function.
    Then the \emph{upper right Dini derivative} is given by
    \begin{equation*}
        \diniu{+}g(x)=\limsup_{\epsilon\to 0^+}\frac{g(x+\epsilon)-g(x)}{\epsilon}.
    \end{equation*}
    The lower right Dini derivative is defined with $\liminf_{\epsilon \to 0^+}$ and denoted $\dinil{+}g$, and the left Dini derivatives are defined analogously using $\epsilon \to 0^-$ and denoted $\diniu{-}g$ and $\dinil{-}g$. 
\end{definition}
We first make the following observation.
\begin{lemma}\label{l:c1-bound}
    Let $f$ and $g$ be continuous functions on $[a,b]$ with $\dinil{+}g\leq \diniu{+}f$ and $g(a)=f(a)$.
    Then $g\leq f$.
\end{lemma}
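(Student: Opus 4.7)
I would argue by contradiction via a standard perturbation trick. Suppose, for contradiction, that $g(x^\ast) > f(x^\ast)$ for some $x^\ast \in (a,b]$. Choose $\epsilon \in \bigl(0,(g(x^\ast)-f(x^\ast))/(x^\ast-a)\bigr)$ and introduce the continuous auxiliary function
\[
    \phi(x) \coloneqq g(x) - f(x) - \epsilon(x-a),
\]
which satisfies $\phi(a)=0$ and $\phi(x^\ast)>0$. Setting $c \coloneqq \sup\{x \in [a,x^\ast] : \phi(x) \leq 0\}$, continuity gives $\phi(c)=0$, and $\phi(x^\ast)>0$ forces $c<x^\ast$ with $\phi(x) > 0$ for all $x \in (c, x^\ast]$.

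The strict inequality $\phi(c+t) > \phi(c)=0$ for every $t \in (0, x^\ast - c]$ rearranges (dividing by $t$) to the pointwise slope estimate
\[
    \frac{g(c+t) - g(c)}{t} - \frac{f(c+t) - f(c)}{t} > \epsilon,
\]
valid for all such $t$. The plan is then to take $\liminf_{t \to 0^+}$ of both sides, apply the elementary inequality $\liminf_t(X(t)-Y(t)) \leq \liminf_t X(t) - \liminf_t Y(t)$ with $X, Y$ the two difference quotients, and combine the resulting estimate with the hypothesis $\dinil{+}g(c) \leq \diniu{+}f(c)$ to reach a contradiction at the point $c$.

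The main obstacle I expect is executing this comparison carefully, since the hypothesis bounds the \emph{lower}-right Dini derivative of $g$ by the \emph{upper}-right Dini derivative of $f$, which does not immediately translate to a sign condition on the Dini derivatives of the difference $h \coloneqq g-f$. A clean alternative is to reduce everything to the classical monotonicity principle: show directly that $\diniu{+} h(x) \leq 0$ on $[a,b)$ using the above slope estimate together with a judicious choice of sub-sequential limits for the difference quotients (invoking, if needed, Denjoy's theorem on connectedness of the cluster-point set of right difference quotients of a continuous function). Once this Dini bound is in hand, the standard theorem that a continuous function on $[a,b]$ with non-positive upper-right Dini derivative on $[a,b)$ is non-increasing (proved itself by perturbing to $h - \eta x$ and examining a minimum point, since such a minimum cannot lie in $[a,b)$ without contradicting $\diniu{+}(h-\eta x) \leq -\eta < 0$) gives $h(x) \leq h(a) = 0$ on $[a,b]$, i.e., $g \leq f$, completing the proof.
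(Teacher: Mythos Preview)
You have put your finger on the real difficulty: the hypothesis $\dinil{+}g \leq \diniu{+}f$ does not directly yield a sign condition on any Dini derivative of $g-f$. Your displayed slope estimate, after taking the $\liminf$ via the inequality you quote, gives only $\dinil{+}g(c) \geq \dinil{+}f(c) + \epsilon$, and this does not contradict the hypothesis $\dinil{+}g(c) \leq \diniu{+}f(c)$ unless $\dinil{+}f(c) = \diniu{+}f(c)$. The ``clean alternative'' you propose---deducing $\diniu{+}h \leq 0$ for $h=g-f$ and then invoking the monotonicity theorem---runs into the same wall: the general inequality is $\diniu{+}(f-g) \leq \diniu{+}f - \dinil{+}g$, not $\geq$, so the hypothesis gives no lower bound on $\diniu{+}(f-g)$. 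The appeal to Denjoy's theorem on connected cluster sets is too vague to close this gap; connectedness of the cluster sets of the two difference quotients separately says nothing about whether a \emph{common} sequence $t_n\to 0^+$ can be chosen to realise both extremes.

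For comparison, the paper's proof is a one-liner: it asserts the identity $\diniu{+}(f-g) = \diniu{+}f - \dinil{+}g \geq 0$ and then cites the standard monotonicity theorem for continuous functions with non-negative upper right Dini derivative. But that asserted identity is in general only the inequality $\leq$ (this is just $\limsup(A-B)\leq\limsup A-\liminf B$), so the paper's argument shares exactly the gap you identified. The identity \emph{does} hold at any point where one of $f,g$ has a right derivative, since then one of the two difference quotients actually converges and can be pulled out of the $\limsup$. In every application of this lemma in the paper the function $f$ is differentiable (indeed smooth or affine), and in that case both arguments work cleanly: $\dinil{+}(g-f) = \dinil{+}g - f' \leq 0$, whence $g-f$ is non-increasing and the conclusion follows. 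A safe fix is simply to add the hypothesis that $f$ (or $g$) is differentiable.
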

\begin{proof}
    Observe that $\diniu{+}(f-g)=\diniu{+}f-\dinil{+}g\geq 0$ so by \cite[Corollary~11.4.2]{Bruckner1994diff}, $f-g$ is non-decreasing.
    But $(f-g)(a)=0$ so $g\leq f$.
\end{proof}
As an application, we obtain the following analogue of the mean value theorem.
\begin{corollary}\label{c:dmvt}
    Let $g$ be a continuous function on $[a,b]$ and set $s=\frac{g(b)-g(a)}{b-a}$.
    Then for all $\phi\in\{\diniu{+}g,\dinil{+}g,\diniu{-}g,\dinil{-}g\}$,
    \begin{enumerate}%
        \item there exists $x\in[a,b]$ such that $\phi(x)\leq s$, and
        \item there exists $x\in[a,b]$ such that $\phi(x)\geq s$.
    \end{enumerate}
\end{corollary}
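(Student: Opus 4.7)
The strategy is to reduce both parts to the single fact that underlies Lemma~\ref{l:c1-bound}: if any of the four Dini operators applied to a continuous function on a closed interval is non-negative throughout, then the function is non-decreasing there (Bruckner's Corollary~11.4.2 and its companions for $\dinil{+}$, $\diniu{-}$, $\dinil{-}$). I would introduce the linear interpolant $f(x)\coloneqq g(a)+s(x-a)$ and form $h\coloneqq g-f$, which is continuous on $[a,b]$ with $h(a)=h(b)=0$; since $f$ is differentiable with derivative $s$ everywhere, each of the four Dini derivatives of $h$ equals the corresponding Dini derivative of $g$ minus $s$.

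With this setup, both assertions are proved by contradiction in a uniform way. For the first assertion with $\phi=\diniu{+}g$, suppose $\diniu{+}g(x)>s$ for every $x\in[a,b)$. Then $\diniu{+}h(x)>0$ on $[a,b)$, so by the monotonicity fact $h$ is non-decreasing; combined with continuity and $h(a)=h(b)=0$ this forces $h\equiv 0$, so $\diniu{+}h\equiv 0$, a contradiction. For the second assertion with $\phi=\diniu{+}g$, assume instead $\diniu{+}g(x)<s$ on $[a,b)$; then $\dinil{+}(-h)(x)=-\diniu{+}h(x)>0$ on $[a,b)$, and the $\dinil{+}$-version of the monotonicity fact gives $-h$ non-decreasing, hence again $h\equiv 0$ and the same contradiction. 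The case $\phi=\dinil{+}g$ is handled by the identical argument with the roles of $\diniu{+}$ and $\dinil{+}$ swapped at each step.

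For $\phi\in\{\diniu{-}g,\dinil{-}g\}$, I would either invoke the left-Dini analogues of the monotonicity fact directly (which have the same proof) or, more cleanly, reduce to the right-handed cases via the reflection $\widetilde g(x)\coloneqq g(a+b-x)$, which satisfies $\widetilde g(a)=g(b)$, $\widetilde g(b)=g(a)$, slope $\frac{\widetilde g(b)-\widetilde g(a)}{b-a}=-s$, and whose right Dini derivatives at $x$ are the negatives of the corresponding left Dini derivatives of $g$ at $a+b-x$. The main obstacle is really bookkeeping: there are eight subcases but each collapses to the same two-line contradiction once one has selected the correct sign for $h$ versus $-h$ so that the forced monotonicity direction collides with the condition $h(a)=h(b)=0$; no finer analysis than evaluation at the endpoints is required.
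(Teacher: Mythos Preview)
Your proof is correct and uses the same key tool as the paper—the fact that a one-signed Dini derivative forces monotonicity (Bruckner's Corollary~11.4.2, which is what underlies Lemma~\ref{l:c1-bound}). The paper's argument differs only cosmetically: instead of forcing $h\equiv 0$ from both endpoint conditions, it picks an interior $x_0$ where $g$ strictly exceeds the linear interpolant and applies Lemma~\ref{l:c1-bound} on $[a,x_0]$ to obtain the contradiction directly.
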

\begin{proof}
    We prove that there is some $x$ such that $\dinil{+}g(x)\geq s$; the other cases are similar.
    Without loss of generality, there is some $x_0\in(a,b)$ such that $g(x_0)> g(a)+s(x_0-a)$.
    Suppose for a contradiction $\dinil{+}g(x)\leq s$ for all $x\in[a,x_0]$.
    By Lemma~\ref{l:c1-bound}, $g(x)\leq s(x-a)+g(a)$ for all $x\in[a,x_0]$, contradicting the choice of $x_0$.
\end{proof}
It follows from Corollary~\ref{c:dmvt} that in~\eqref{e:h-bound-general} one could take instead the lower Dini derivative and the class of functions would remain unchanged. 
We now have the following elementary result.
\begin{lemma}\label{l:g-check}
    Let $0\leq\lambda<\alpha\leq d$, let $g\colon\R \to(\lambda,\alpha)$ be continuous, and let $U\subset\R$ be an open set.
    Then the following are equivalent:
    \begin{enumerate}%
        \item $\diniu{+}g(x)\in[\lambda-g(x),\alpha-g(x)]$ for all $x\in U$.
        \item $\dinil{+}g(x)\in[\lambda-g(x),\alpha-g(x)]$ for all $x\in U$.
        \item $\diniu{-}g(x)\in[\lambda-g(x),\alpha-g(x)]$ for all $x\in U$.
        \item $\dinil{-}g(x)\in[\lambda-g(x),\alpha-g(x)]$ for all $x\in U$.
    \end{enumerate}
\end{lemma}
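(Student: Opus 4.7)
The plan is to rewrite all four conditions as monotonicity statements for the continuous auxiliary functions $h_\alpha(x) := e^x(g(x)-\alpha)$ and $h_\lambda(x) := e^x(g(x)-\lambda)$, and then invoke the Dini-derivative monotonicity theorem already used in Lemma~\ref{l:c1-bound}. First I would verify the algebraic identity that, for every Dini derivative $\phi \in \{\diniu{+},\dinil{+},\diniu{-},\dinil{-}\}$,
\begin{equation*}
    \phi h_\alpha(x) \;=\; e^x\bigl(\phi g(x) + g(x) - \alpha\bigr)
    \qquad\text{and}\qquad
    \phi h_\lambda(x) \;=\; e^x\bigl(\phi g(x) + g(x) - \lambda\bigr).
\end{equation*}
Writing the difference quotient of $h_\alpha$ as $e^x\cdot\frac{e^\epsilon-1}{\epsilon}(g(x+\epsilon)-\alpha) + e^x\cdot\frac{g(x+\epsilon)-g(x)}{\epsilon}$, this follows from continuity of $g$, the two-sided limit $(e^\epsilon-1)/\epsilon\to 1$, and the standard fact that a convergent summand can be pulled out of a $\limsup$ or $\liminf$. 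In particular, $\phi g(x)\leq\alpha-g(x)$ iff $\phi h_\alpha(x)\leq 0$, and $\phi g(x)\geq\lambda-g(x)$ iff $\phi h_\lambda(x)\geq 0$.

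Next I would appeal to the following classical strengthening of \cite[Corollary~11.4.2]{Bruckner1994diff}: if $h$ is continuous on an open interval $I$ and any single one of its four Dini derivatives is non-positive throughout $I$, then $h$ is non-increasing on $I$; conversely, if $h$ is non-increasing on $I$, then every Dini derivative of $h$ is non-positive at each point of $I$. The right-hand version is the cited corollary applied to $-h$, and the left-hand versions follow from the reflection identities
\begin{equation*}
    \diniu{+}\bigl(h(-\,\cdot\,)\bigr)(y) \;=\; -\dinil{-}h(-y),
    \qquad
    \dinil{+}\bigl(h(-\,\cdot\,)\bigr)(y) \;=\; -\diniu{-}h(-y),
\end{equation*}
together with their analogues for $\geq 0$ and non-decreasing.

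Combining the two steps finishes the argument. Suppose, say, condition~(1) holds on $U$: then $\diniu{+}h_\alpha\leq 0$ and $\diniu{+}h_\lambda\geq 0$ throughout $U$, so on each connected component of $U$ the function $h_\alpha$ is non-increasing and $h_\lambda$ is non-decreasing. This forces $\phi h_\alpha\leq 0$ and $\phi h_\lambda\geq 0$ on $U$ for \emph{every} one of the four Dini derivatives $\phi$, which translates back via the first step to each of the statements (2), (3), (4) holding. The implications starting from any of (2), (3), (4) are identical.

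The only genuine technical obstacle is ensuring that the monotonicity theorem is valid separately for each of the four choices of Dini derivative; this is a routine bookkeeping exercise handled by the reflections $h\mapsto -h$ and $x\mapsto -x$, and no deeper difficulty arises.
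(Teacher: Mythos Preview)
Your argument is correct and genuinely different from the paper's. The paper argues one implication directly by contradiction: assuming $\dinil{+}g(x)\in[\lambda-g(x),\alpha-g(x)]$ everywhere but $\diniu{+}g(x_0)>\alpha-g(x_0)$ for some $x_0$, it picks a short interval $[x_0,x_1]\subset U$ on which $g$ varies by less than $\epsilon/2$ and the secant slope exceeds $\alpha-g(x_0)+\epsilon$, then applies the Dini mean value theorem (Corollary~\ref{c:dmvt}) to locate $y$ with $\dinil{+}g(y)>\alpha-g(y)$, a contradiction; the remaining implications are declared ``similar.''

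Your route is cleaner in that it packages all twelve implications into a single equivalence with a $\phi$-free statement, namely that $h_\alpha(x)=e^x(g(x)-\alpha)$ is non-increasing and $h_\lambda(x)=e^x(g(x)-\lambda)$ is non-decreasing on each component of $U$. The product-rule computation is sound (the convergent summand can indeed be split off from a $\limsup/\liminf$, and the $e^x$ factor is positive), and the classical fact that for a continuous function any one Dini derivative being of fixed sign on an interval forces monotonicity --- reducible via $h\mapsto -h$ and $x\mapsto -x$ to the single case in \cite[Corollary~11.4.2]{Bruckner1994diff} --- does the rest. The paper's approach has the advantage of being entirely self-contained within the mean-value machinery already set up; yours has the advantage of exposing a unifying monotonicity criterion and avoiding any case analysis.
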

\begin{proof}
    We will see that $\dinil{+}g(x)\in[\lambda-g(x),\alpha-g(x)]$ for all $x\in U$ implies that $\diniu{+}g(x)\in[\lambda-g(x),\alpha-g(x)]$ for all $x\in U$; the remaining implications are similar.
    Suppose for a contradiction there is some $x_0\in U$ such that $\diniu{+}g(x_0)\notin[\lambda-g(x_0),\alpha-g(x_0)]$.
    If $\diniu{+}g(x_0)<\lambda-g(x_0)$ then there is an immediate contradiction, so we assume $\diniu{+}g(x_0)>\alpha-g(x_0)$.
    Then there is some $\epsilon>0$ and $x_1$ such that
    \begin{equation*}
        \frac{g(x_1)-g(x_0)}{x_1-x_0}\geq \alpha-g(x_0)+\epsilon,
    \end{equation*}
    $[x_0,x_1]\subset U$, and $|g(y)-g(x_0)|<\epsilon/2$ for all $y\in[x_0,x_1]$.
    Then by Corollary~\ref{c:dmvt}, there is some $y\in[x_0,x_1]$ such that
    \begin{equation*}
        \dinil{+}g(y)\geq \alpha-g(x_0)+\epsilon>\alpha-g(y)+\frac{\epsilon}{2}
    \end{equation*}
    a contradiction.
\end{proof}
\subsection{Bounding the intermediate dimensions}\label{s:generalbounds}

In this section, we prove general bounds for the intermediate dimensions which improve existing bounds in the literature. 
 Theorem~\ref{intermediatects} is a quantitative continuity result for the intermediate dimensions which improves~\cite[Proposition~2.1]{Falconer2020firstintermediate} and~\cite[(14.2.2)]{Falconer2021intdimsurvey}. The proof of~\cite[Proposition~2.1]{Falconer2020firstintermediate} involves breaking up the largest sets in the cover, while~\cite[(14.2.2)]{Falconer2021intdimsurvey} is proved by `fattening' the smallest sets in the cover. The novelty in the proof of Theorem~\ref{maincty} (from which Theorem~\ref{intermediatects} follows) is to deal with the smallest and largest sets at the same time in such a way that the `cost' of each (in terms of how much the dimension can increase) is the same. %
 
 \begin{thm}\label{intermediatects}
 Suppose $F$ is a non-empty, totally bounded subset of a uniformly perfect metric space with more than one point. Write $\lambda =  \dim_\mathrm{L} F$, $\alpha = \asd F$, and let $\underline{h}$ and $\overline{h}$ denote the functions $\underline{h}(\varphi) = \underline{\dim}_\varphi F$ and $\overline{h}(\varphi) = \overline{\dim}_\varphi F$. Assume that $0 \leq \lambda < \alpha < \infty$ and let $h \in \{\underline{h},\overline{h}\}$ and $0 < \theta \leq \phi \leq 1$. Then 
 \begin{equation}\label{e:thetaphibound}
   h(\theta)  \leq h(\phi)  \leq h(\theta) + \frac{(h(\theta) - \lambda) (\alpha - h(\theta))}{\phi (h(\theta)- \lambda) + \theta( \alpha -h(\theta))}(\phi-\theta). 
   \end{equation} 
Furthermore, $h$ is continuous on $(0,1]$, Lipschitz on $[\theta,1]$ with Lipschitz constant $\frac{\alpha - \lambda}{4\theta}$, and differentiable Lebesgue-almost everywhere on $(0,1)$. 
\end{thm}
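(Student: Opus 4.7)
The plan is to deduce Theorem~\ref{intermediatects} directly from the more general Theorem~\ref{maincty} by specialising the admissible functions to $\Phi(\delta) \coloneqq \delta^{1/\theta}$ and $\Phi_1(\delta) \coloneqq \delta^{1/\phi}$, so that $\overline{\dim}^\Phi F = \overline{h}(\theta)$ and $\overline{\dim}^{\Phi_1} F = \overline{h}(\phi)$, and analogously for the lower versions. Monotonicity $h(\theta) \leq h(\phi)$ is immediate from Definition~\ref{d:intdimdef}: when $\theta \leq \phi$ we have $\delta^{1/\theta} \leq \delta^{1/\phi}$, so every $(\delta,\theta)$-cover is a fortiori a $(\delta,\phi)$-cover.

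For the quantitative upper bound I will first treat the non-degenerate range $\lambda < h(\theta) < \alpha$. Writing $\beta$ for the scaling exponent labelled $\alpha$ in Theorem~\ref{maincty} (to avoid a clash with $\dim_\mathrm{A} F$), the hypothesis $\Phi_1(\delta) \leq \bigl(\Phi(\delta^{1/\beta})\bigr)^\gamma$ from~\eqref{continuityassumption} reduces to the algebraic inequality $\beta \theta \geq \gamma \phi$. Substituting the formulas~\eqref{e:definelambdaalpha} for $\gamma$ and $\beta$ in terms of $\eta$, clearing the (positive) denominators, and collecting $\eta$-terms, the smallest admissible $\eta$ turns out to be exactly
\[ \eta = \frac{(h(\theta)-\lambda)(\alpha - h(\theta))(\phi - \theta)}{\theta(\alpha - h(\theta)) + \phi(h(\theta) - \lambda)}, \]
and a quick check using $\alpha > \lambda$ confirms $\eta < \alpha - h(\theta)$, so the hypotheses of Theorem~\ref{maincty} are met and it delivers $h(\phi) \leq h(\theta) + \eta$: precisely the bound~\eqref{e:thetaphibound}. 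The same argument with $\underline{\dim}$ in place of $\overline{\dim}$ handles the lower version. The degenerate cases $h(\theta) = \alpha$ and $h(\theta) = \lambda$ are handled separately: in the former, monotonicity together with $h \leq \alpha$ forces equality, matching the right-hand side, which also equals $\alpha$; in the latter, one takes $\eta \to 0^+$, so that $\gamma = 0$ and the condition~\eqref{continuityassumption} becomes vacuous, giving $h(\phi) = \lambda$. The implicit ordering $\lambda \leq h(\theta)$ needed to make the formula meaningful is obtained by passing to the closure (all relevant dimensions are closure-stable) and invoking $\dim_\mathrm{L} \overline{F} \leq \dim_\mathrm{H} \overline{F}$, valid for compact subsets of doubling metric spaces.

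Continuity on $(0,1]$ is then immediate: as $\phi \to \theta^+$ the right-hand side of~\eqref{e:thetaphibound} tends to $h(\theta)$, yielding right-continuity, and applying the bound at parameters $\theta' \in (0, \theta)$ gives $h(\theta) \leq h(\theta') + O(\theta - \theta')$, which combined with monotonicity forces left-continuity. For the Lipschitz constant on $[\theta, 1]$, AM--GM gives $(h(\theta') - \lambda)(\alpha - h(\theta')) \leq (\alpha-\lambda)^2/4$ for all $\theta' \in [\theta, 1]$, while for $\theta \leq \theta' \leq \phi' \leq 1$ the denominator in~\eqref{e:thetaphibound} satisfies
\[ \phi'(h(\theta') - \lambda) + \theta'(\alpha - h(\theta')) \geq \theta'(\alpha - \lambda) \geq \theta(\alpha - \lambda), \]
so the slope is bounded above by $(\alpha - \lambda)/(4\theta)$. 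Differentiability Lebesgue-almost everywhere on $(0,1)$ then follows from the classical theorem that Lipschitz (equivalently, monotone) functions on an interval are differentiable almost everywhere.

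The main obstacle is the algebraic rearrangement needed to extract the explicit minimal $\eta$ from the abstract hypothesis~\eqref{continuityassumption}; once that identification is made, everything else -- monotonicity, continuity, the Lipschitz estimate, and a.e.\ differentiability -- follows routinely from the single master inequality~\eqref{e:thetaphibound}.
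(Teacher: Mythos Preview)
Your approach is the same as the paper's: specialise Theorem~\ref{maincty} to $\Phi(\delta)=\delta^{1/\theta}$ and $\Phi_1(\delta)=\delta^{1/\phi}$, and verify that the optimal $\eta$ in~\eqref{e:definelambdaalpha}--\eqref{continuityassumption} is exactly the fraction appearing in~\eqref{e:thetaphibound}. The Lipschitz and differentiability deductions are also as in the paper. Three small points need patching.

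(i) The cover inclusion for monotonicity is stated backwards: since $[\delta^{1/\phi},\delta]\subseteq[\delta^{1/\theta},\delta]$, it is every $(\delta,\phi)$-cover that is automatically a $(\delta,\theta)$-cover, not the other way round. The conclusion $h(\theta)\le h(\phi)$ is of course unaffected.

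(ii) When $\phi=1$ the choice $\Phi_1(\delta)=\delta^{1/\phi}=\delta$ is \emph{not} admissible (it fails $\Phi_1(\delta)/\delta\to 0$), so Theorem~\ref{maincty} cannot be invoked as written. The paper handles this by taking $\Phi_1(\delta)=\delta/(-\log\delta)$ instead: this is admissible, satisfies $\overline{\dim}^{\Phi_1}F=\ubd F$ by Proposition~\ref{whenequalsbox}, and the required inequality~\eqref{continuityassumption} still holds because $\delta/(-\log\delta)\le\delta$ for small $\delta$.

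(iii) Your treatment of the boundary case $h(\theta)=\lambda$ via ``$\gamma=0$, so~\eqref{continuityassumption} is vacuous'' works only when $h(\theta)>0$, which is an explicit hypothesis of Theorem~\ref{maincty}. If $\lambda=h(\theta)=0$ you must instead appeal to Proposition~\ref{zerocontinuityprop} (choosing $b>0$ small enough) to obtain $h(\phi)\le\eta$ for every $\eta>0$.
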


\begin{proof}
We prove the version for $\overline{\dim}$; the version for $\underline{\dim}$ is similar. 
The inequality $h(\theta) \leq h(\phi)$ is immediate from the definitions. 
The only non-trivial case of the other inequality is when $0 < \theta < \phi \leq 1$ and $0 < h(\theta) < \alpha$. Define $\Phi(\delta) \coloneqq \delta^{1/\theta}$. If $\phi<1$, define $\Phi_1(\delta) \coloneqq \delta^{1/\phi}$, but if $\phi = 1$ then define $\Phi_1(\delta) \coloneqq \delta/(-\log \delta)$. Then $\overline{\dim}^\Phi F = h(\theta)$ and $\overline{\dim}^{\Phi_1} F = h(\phi)$. Define 
\[\eta \coloneqq \frac{(h(\theta) - \lambda) (\alpha - h(\theta))}{\phi (h(\theta) - \lambda) + \theta( \alpha - h(\theta))}(\phi-\theta). \] 
Write 
\begin{equation*}
\gamma \coloneqq \frac{h(\theta) - \lambda}{h(\theta) + \eta - \lambda}; \qquad \beta \coloneqq \frac{\alpha - h(\theta)}{\alpha - h(\theta) - \eta}. 
\end{equation*}
A direct manipulation now shows that $\beta/\phi = \gamma/\theta$. 
Therefore 
\[ \Phi_1(\delta^\beta) = \delta^{\beta/\phi} = \delta^{\gamma/\theta} = (\Phi(\delta))^{\gamma}.\] %
Thus $\overline{\dim}^{\Phi_1} F \leq \overline{\dim}^\Phi F + \eta$ by Theorem~\ref{maincty}, as required. %

If $0 < \theta \leq \theta' \leq \phi \leq 1$ then 
\begin{equation*}
h(\phi) - h(\theta') 
\leq 
\frac{((\alpha - \lambda)/2)^2}{(\alpha - \lambda)\theta} (\phi - \theta')
\leq \frac{\alpha - \lambda}{4\theta}(\phi - \theta'),
\end{equation*}
proving Lipschitz continuity on $[\theta,1]$. 
Differentiability almost everywhere now follows from Lipschitz continuity and Rademacher's theorem, or alternatively from monotonicity and Lebesgue's theorem. 
\end{proof}

For the convenience of the reader, we now give an alternative direct proof of~\eqref{e:thetaphibound} for subsets of $\Rd$ and $0 < \theta < \phi < 1$ that does not rely on Theorem~\ref{maincty} but instead uses similar ideas to those used in the proof of Theorem~\ref{maincty}. 
 More specifically, we will bound $\overline{\dim}_{\theta+\epsilon}F$ in terms of $\theta$, $\overline{\dim}_\theta F$, and the Assouad and lower dimensions of $F$.
Given an optimal cover for $[\delta^{1/\theta},\delta]$, we want to convert this to a cover for the smaller range of scales $[\delta^{\beta/(\theta+\epsilon)},\delta^\beta]\subset[\delta^{1/\theta},\delta]$.
We then use the Assouad dimension to replace the sets with large diameter with sets with smaller diameter (corresponding to the indices in $I_3$), and the lower dimension to cover the sets with small diameter (corresponding to the indices in $I_1$).
The remaining elements of the cover remain essentially the same.
The parameter $\beta$ is chosen to optimise this process. 

Recall the definition of the lower dimension of a measure from~\eqref{e:lowerdimmeas} on page~\pageref{e:lowerdimmeas}. 
We also recall that $\mathcal{H}(\lambda,\alpha)$ is defined in Definition~\ref{d:h-class}.

\begin{proof}[Proof of~\eqref{e:thetaphibound}]
Assume $F \subset \Rd$ is non-empty and bounded with $\lambda < \alpha$, and let $0 < \theta < \phi < 1$. 
We prove~\eqref{e:thetaphibound} for the upper intermediate dimensions; the proof for the lower version is similar. 
    Let $\epsilon \coloneqq \phi - \theta$ and let $\eta,\beta$ be the unique solutions to the equations
    \begin{align*}
        \alpha - &h(\theta) - \beta (\alpha - h(\theta) - \eta) = 0 & \frac{\beta}{\theta + \epsilon}&(h(\theta) + \eta - \lambda) + \frac{\lambda - h(\theta)}{\theta} = 0.
    \end{align*}
    One can verify that $\eta$ and $\beta$ are given by
    \begin{align*}
        \eta &= \frac{( h(\theta) - \lambda) (\alpha -h(\theta))\epsilon}{(h(\theta) - \lambda)\epsilon + (\alpha - \lambda)\theta} & \beta &= \frac{(h(\theta) - \lambda)\epsilon}{(\alpha - \lambda)\theta}+1.
    \end{align*}
    Now for $s>h(\theta)$, let $s' \in (h(\theta),s)$, $\alpha' > \alpha$ and $\lambda' < \lambda$ satisfy
    \begin{align*}
        \alpha' - &s - \beta (\alpha' - s - \eta) > 0 & \frac{\beta}{\theta + \epsilon}&(s + \eta - \lambda') + \frac{\lambda' - s'}{\theta} > 0.
    \end{align*}
    For all sufficiently small $\delta \in (0,1)$ there exists a $(\delta,\theta)$-cover $\{U_i\}_{i \in I}$ of $F$ whose $s'$-cost is less than 1.
    Define
    \begin{align*}
        I_1 &= \{ \, i \in I : |U_i| < \delta^{\frac{\beta}{\theta + \epsilon}} \, \}\\*
        I_2 &= \{ \, i \in I :  \delta^{\frac{\beta}{\theta + \epsilon}} \leq  |U_i| \leq \delta^\beta / 2 \, \} \\*
        I_3 &= \{ \, i \in I :   |U_i| > \delta^\beta / 2 \, \}.
    \end{align*}
    
    There exists $C > 0$ such that for all $0 < r \leq 2R$, every set of diameter $R$ contained in $F$ can be covered by $\lfloor C(R/r)^{\alpha'} \rfloor$ balls of diameter $r$.
    Then for $k \in I_3$, let
    \begin{equation*}
        B_{k,1}, \dotsc, B_{k,\lfloor C((2|U_k|)/\delta^{\beta})^{\alpha'} \rfloor}
    \end{equation*}
    satisfy 
    \begin{equation*}
        |B_{k,i}|=\delta^\beta\quad\text{and}\quad \mathcal{S}_{\delta^{\beta/(\theta+\epsilon)}}(U_k)\cap F\subset\bigcup_{i=1}^{\lfloor C((2|U_k|)/\delta^{\beta})^{\alpha'} \rfloor}B_{k,i},
    \end{equation*}
    recalling that $\mathcal{S}_r$ denotes the $r$-neighbourhood. 
    Let $z_1,\dotsc,z_K$ be a maximal $4\delta^{\beta/(\theta + \epsilon)}$-separated subset of
    \begin{equation*}
        F \setminus \left( \bigcup_{i \in I_2 \cup I_3} \mathcal{S}_{\delta^{\beta/(\theta+\epsilon)}}(U_i) \right).
    \end{equation*}
    Set
    \begin{align*}
        \mathcal{U}_1 &\coloneqq \{ \, B(z_m, 5\delta^{\frac{\beta}{\theta + \epsilon}}) : 1 \leq m \leq K \, \}\\*
        \mathcal{U}_2 &\coloneqq \{ \, \mathcal{S}_{\delta^{\beta/(\theta+\epsilon)}}(U_j) : j \in I_2 \, \}\\*
        \mathcal{U}_3 &\coloneqq \bigcup_{k\in I_3}\bigl\{ \, B_{k,\ell}:\ell=1,\dotsc,\lfloor C((2|U_k|)/\delta^{\beta})^{\alpha'} \rfloor \, \bigr\}.
    \end{align*}
    Then for sufficiently small $\delta$,
    \begin{equation}\label{e:general-cover}
        \mathcal{U}\coloneqq\mathcal{U}_1\cup\mathcal{U}_2\cup\mathcal{U}_3
    \end{equation}
    is a $(\delta^\beta , \theta + \epsilon)$-cover of $F$. 
    
    We now bound the $(s+\eta)$-cost of $\mathcal{U}$ independently of $\delta$.
    First consider $\mathcal{U}_1$.
    By~\eqref{e:lowerdimexistmeas}, there exists a doubling Borel probability measure $\mu$ with $\supp\mu=\overline{F}$ and $\dimL \mu  \in (\lambda',\lambda]$.
    Let $M$ be a doubling constant for $\mu$.
    In particular, there is $c > 0$ such that if $0 < r < R \leq |F|$ and $x \in F$ then
    \begin{equation*}
        \frac{\mu(B(x,R))}{\mu(B(x,r))} \geq c \left(\frac{R}{r}\right)^{\lambda'}.
    \end{equation*}
    For $m \in \{1,\dotsc,K\}$ let
    \begin{equation*}
        J_m\coloneqq \{ \, i\in I_1:U_i \cap B(z_m,\delta^{\beta/(\theta + \epsilon)}) \neq \varnothing \, \}.
    \end{equation*}
    If $i \in J_m$, fixing $x_{i,m} \in U_i \cap B(z_m,\delta^{\beta/(\theta + \epsilon)})$,
    \begin{align*}
        \mu(U_i) &\leq \mu(B(x_{i,m},2|U_i|)) \leq c^{-1} \mu(B(x_{i,m},2\delta^{\frac{\beta}{\theta + \epsilon}})) \left(\frac{ \delta^{\frac{\beta}{\theta + \epsilon}} }{|U_i|}\right)^{-\lambda'} \\*
        &\leq c^{-1} \mu(B(z_m,4\delta^{\frac{\beta}{\theta + \epsilon}})) \left(\frac{ \delta^{\frac{\beta}{\theta + \epsilon}} }{|U_i|}\right)^{-\lambda'} \leq c^{-1} M^2\mu(B(z_m,\delta^{\frac{\beta}{\theta + \epsilon}})) \left(\frac{ \delta^{\frac{\beta}{\theta + \epsilon}} }{|U_i|}\right)^{-\lambda'}.
    \end{align*}
    Then
    \begin{equation*}
        \mu(B(z_m,\delta^{\frac{\beta}{\theta + \epsilon}}))  \leq \sum_{i\in J_m} \mu(U_i) \leq c^{-1} M^2 \mu(B(z_m,\delta^{\frac{\beta}{\theta + \epsilon}})) \delta^{\frac{-\lambda' \beta}{\theta + \epsilon}} \sum_{i\in J_m} |U_i|^{\lambda'}.
    \end{equation*}
    Note that $\mu(B(z_m,\delta^{\frac{\beta}{\theta + \epsilon}})) > 0$ since $\supp\mu=\overline{F}$.
    Moreover, if $i \in I_1$, then $U_i$ intersects at most one of the balls of radius $\delta^{\beta/(\theta + \epsilon)}$, so for sufficiently small $\delta$,
    \begin{align*}
    \sum_{U \in \mathcal{U}_1} |U|^{s+\eta} &= \sum_{i \in I_1} \big| B(z_i, 5\delta^{\frac{\beta}{\theta + \epsilon}})\big|^{s + \eta}\\
                                           &= \sum_{m = 1}^K (10\delta^{\frac{\beta}{\theta + \epsilon}})^{s + \eta}\leq 10^{s + \eta} c^{-1} M^2 \delta^{\frac{\beta}{\theta + \epsilon}(s + \eta - \lambda')} \sum_{i \in I} |U_i|^{\lambda'} \\
                                                                                   &\leq 10^{s + \eta} c^{-1} M^2 \delta^{\frac{\beta}{\theta + \epsilon}(s + \eta - \lambda')} \delta^{\frac{\lambda' - s'}{\theta}} \sum_{i \in I} |U_i|^{s'} \leq 10^{s + \eta} c^{-1} M^2.
    \end{align*}
    
    Next, consider $\mathcal{U}_2$:
    \begin{equation*}
        \sum_{U \in\mathcal{U}_2} |U|^{s+\eta} =\sum_{j \in I_2} \big|\mathcal{S}_{\delta^{\beta/(\theta+\epsilon)}}(U_j)\big|^{s+\eta} \leq  \sum_{j \in I_2} (3 |U_i|)^{s + \eta} \leq 3^{s + \eta}.
    \end{equation*}
    
    Finally, consider $\mathcal{U}_3$.
    Since $|U_k|\leq\delta$,
    \begin{align*}
         \sum_{U \in\mathcal{U}_3} |U|^{s+\eta} &=\sum_{k \in I_3} \sum_{\ell=1}^{\lfloor C((2|U_k|)/\delta^{\beta})^{\alpha'} \rfloor} |B_{k,\ell}|^{s+\eta}\leq \sum_{k \in I_3} 2^{\alpha'} C |U_k|^{\alpha'} \delta^{-\beta \alpha'} \delta^{\beta (s+\eta)}\\
                                           &\leq 2^{\alpha'} C \sum_{k \in I_3} |U_k|^s \delta^{\alpha' - s - \beta \alpha' + \beta(s+\eta)}\leq 2^{\alpha'} C \sum_{k \in I} |U_k|^s \leq 2^{\alpha'} C.
    \end{align*}
    Thus $\sum_{U \in\mathcal{U}} |U|^{s+\eta}\leq 10^{s + \eta} c^{-1} M^2 + 3^{s + \eta} + 2^{\alpha'} C$ which does not depend on $\delta$.
    Since $s > h$ was arbitrary, we have shown that $h(\theta + \epsilon) \leq h(\theta) + \eta$, as required. 
    \end{proof}
    
    \begin{corollary}\label{c:int-dim-bound}
     Suppose $F$ is a non-empty, totally bounded subset of a uniformly perfect metric space with more than one point, write $\lambda = \dim_\mathrm{L} F$, $\alpha = \asd F$, and assume that $0 \leq \lambda < \alpha < \infty$. 
     Let $\underline{h}$ and $\overline{h}$ denote the functions $\underline{h}(\varphi) = \underline{\dim}_\varphi F$ and $\overline{h}(\varphi) = \overline{\dim}_\varphi F$, and let $h \in \{\underline{h},\overline{h}\}$. 
    Then $h\in\mathcal{H}(\lambda,\alpha)$, and if $h(\theta)\in\{0,\lambda,\alpha\}$ for some $0<\theta\leq 1$ then $h(\theta)$ is constant on $(0,1]$.%
    \end{corollary}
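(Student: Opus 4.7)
The plan is to derive both claims as direct consequences of Theorem~\ref{intermediatects}. To establish $h \in \mathcal{H}(\lambda,\alpha)$, I would verify the three conditions of Definition~\ref{d:h-class} in turn. The range condition $h\colon[0,1]\to[\lambda,\alpha]$ follows from the standard chain $\lambda = \dim_\mathrm{L} F \leq \dim_\mathrm{H} F \leq h(\theta) \leq \overline{\dim}_\mathrm{B} F \leq \dim_\mathrm{A} F = \alpha$, combining the elementary fact that $\dim_\mathrm{L} F \leq \dim_\mathrm{H} F$ (via the Frostman-type mass distribution afforded by a lower-dimensional measure) with~\eqref{e:dimrelations}. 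Monotonicity is immediate from Definition~\ref{d:intdimdef}, and continuity on $(0,1]$ is part of Theorem~\ref{intermediatects}. For the Dini derivative bound, I would rewrite the right-hand inequality of~\eqref{e:thetaphibound} as
\[
\frac{h(\phi) - h(\theta)}{\phi - \theta} \leq \frac{(h(\theta) - \lambda)(\alpha - h(\theta))}{\phi(h(\theta) - \lambda) + \theta(\alpha - h(\theta))}
\]
for $0 < \theta < \phi \leq 1$, and take $\limsup_{\phi \to \theta^+}$: the right-hand side is continuous in $\phi$ and its limit is $(h(\theta) - \lambda)(\alpha - h(\theta))/[(\alpha - \lambda)\theta]$, as required in~\eqref{e:h-bound-general}.

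For the constancy claim, I would split into cases according to the value of $h(\theta_0)$. The case $h(\theta_0) = 0$ forces $\lambda = 0$ via $h \geq \lambda$, reducing to the next case. If $h(\theta_0) = \lambda$, then monotonicity together with $h \geq \lambda$ gives $h \equiv \lambda$ on $(0,\theta_0]$, while for $\phi \in (\theta_0, 1]$, substituting $h(\theta_0) = \lambda$ into~\eqref{e:thetaphibound} kills the numerator $(h(\theta_0) - \lambda)(\alpha - h(\theta_0))$ and yields $h(\phi) \leq \lambda$, whence $h(\phi) = \lambda$ by the codomain bound.

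The subtlest case is $h(\theta_0) = \alpha$. Monotonicity and the codomain bound handle $[\theta_0, 1]$, where $\alpha = h(\theta_0) \leq h(\varphi) \leq \alpha$. For $\theta \in (0, \theta_0)$, I would assume for contradiction that $h(\theta) = \mu < \alpha$ and apply~\eqref{e:thetaphibound} to the pair $(\theta, \theta_0)$, obtaining
\[
\alpha - \mu \;=\; h(\theta_0) - h(\theta) \;\leq\; \frac{(\mu - \lambda)(\alpha - \mu)}{\theta_0(\mu - \lambda) + \theta(\alpha - \mu)}\,(\theta_0 - \theta).
\]
Dividing through by the positive factor $\alpha - \mu$ and clearing denominators, the $\theta_0(\mu - \lambda)$ terms on each side cancel and what remains rearranges to $\theta(\alpha - \mu) \leq \theta(\lambda - \mu)$; dividing by $\theta > 0$ gives $\alpha \leq \lambda$, contradicting $\lambda < \alpha$. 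This elementary algebraic manipulation is the only nontrivial step of the argument; there is no serious obstacle, since the hard analytic work has already been done in Theorem~\ref{intermediatects}.
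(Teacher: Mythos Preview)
Your proposal is correct and follows essentially the same approach as the paper: rearrange~\eqref{e:thetaphibound}, pass to the limit $\phi\to\theta^+$ for the Dini bound, and read off the constancy cases directly from the inequality. You in fact give more detail than the paper does (the paper simply says the cases ``follow directly from~\eqref{e:thetaphibound}'' without spelling out the algebra for $h(\theta_0)=\alpha$), and you also make explicit the range, monotonicity, and continuity verifications that the paper leaves implicit.
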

    \begin{proof}
    Rearranging~\eqref{e:thetaphibound} and dividing through by $\phi - \theta$ gives 
    \begin{equation*}\label{e:h-epsilon-bound}
        \frac{h(\phi)-h(\theta)}{\phi - \theta}\leq \frac{ ( h(\theta) - \lambda) (\alpha -h(\theta))}{(h(\theta) - \lambda)(\phi - \theta) + (\alpha - \lambda)\theta} .
    \end{equation*}
    Taking the limit $\phi \to \theta^+$, we verify~\eqref{e:h-bound-general}. The particular cases $h(\theta)\in\{0,\lambda,\alpha\}$ for some $\theta\in(0,1]$ follow directly from~\eqref{e:thetaphibound}.
\end{proof}

Note that by Theorem~\ref{it:general-form} (or more precisely by Theorem~\ref{t:upper-h-form} below), every function $h \in \mathcal{H}(\lambda,\alpha)$ can be realised as the upper intermediate dimensions of some set, so must satisfy~\eqref{e:thetaphibound} by Theorem~\ref{intermediatects}. 
We now give a direct proof of this fact which does not use the intermediate dimensions. 
\begin{prop}
If $0 \leq \lambda < \alpha < \infty$ then every function $h \in \mathcal{H}(\lambda,\alpha)$ satisfies~\eqref{e:thetaphibound} for $0 < \theta \leq \phi \leq 1$. 
\end{prop}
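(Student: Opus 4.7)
The plan is to recognise that the right-hand side of~\eqref{e:thetaphibound}, viewed as a function $f(\phi)$ with $\theta$ and $h(\theta)$ held fixed, is precisely the solution of the ordinary differential equation $g'(\phi) = (g-\lambda)(\alpha-g)/((\alpha-\lambda)\phi)$ with initial value $g(\theta) = h(\theta)$. Since condition~\ref{i:hdefdini} in Definition~\ref{d:h-class} reads exactly $\diniu{+}h(\phi) \leq (h(\phi)-\lambda)(\alpha-h(\phi))/((\alpha-\lambda)\phi)$, the inequality~\eqref{e:thetaphibound} should follow from a Gr\"onwall-type comparison of $h$ against the solution of this scalar nonlinear ODE.

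The substitution that linearises the ODE is $G(\phi) \coloneqq (h(\phi)-\lambda)/(\alpha-h(\phi))$, which is well-defined on any interval where $\lambda \leq h < \alpha$. The map $\Psi(y) \coloneqq (y-\lambda)/(\alpha-y)$ is $C^1$ and strictly increasing on $[\lambda,\alpha)$ with $\Psi'(y) = (\alpha-\lambda)/(\alpha-y)^2$, so the chain rule for upper Dini derivatives through a $C^1$ monotone function gives $\diniu{+}G(\phi) = \Psi'(h(\phi))\diniu{+}h(\phi)$. Substituting the bound~\eqref{e:h-bound-general} and simplifying yields $\diniu{+}G(\phi) \leq G(\phi)/\phi$. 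On the set where $G > 0$ (equivalently, $h > \lambda$), applying the chain rule once more with $\log$ gives $\diniu{+}(\log G(\phi) - \log \phi) \leq 0$, and by Lemma~\ref{l:c1-bound} (applied to $\log G - \log \phi$ against a constant) the function $\phi \mapsto G(\phi)/\phi$ is non-increasing on any such interval.

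The next observation, which makes the rearrangement painless, is that the map $f$ in~\eqref{e:thetaphibound} is itself the unique function for which $G_f(\phi)/\phi$ is \emph{constant} and equal to $G(\theta)/\theta$; this is a direct algebraic check. Assuming first that $\lambda < h(\theta) < \alpha$, monotonicity of $h$ gives $h > \lambda$ throughout $[\theta,1]$, and I would then use the monotonicity of $G(\phi)/\phi$ on the largest initial interval $[\theta, \phi_0) \subseteq [\theta,1]$ on which $h < \alpha$, combined with the observation that $f(\phi) < \alpha$ for all finite $\phi$, to conclude by continuity that $\phi_0 = 1$ and $h(1) < \alpha$. Consequently $G(\phi)/\phi \leq G(\theta)/\theta$ on $[\theta,1]$, which, since $\Psi^{-1}$ is strictly increasing, rearranges exactly to $h(\phi) \leq f(\phi)$ and so establishes~\eqref{e:thetaphibound}.

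The edge cases are handled separately and require little work: $\theta = \phi$ and $\lambda = \alpha$ are immediate from the definitions; if $h(\theta) = \alpha$ then monotonicity forces $h(\phi) = \alpha$ while the right-hand side of~\eqref{e:thetaphibound} equals $\alpha$; and if $h(\theta) = \lambda$ one shows that $h \equiv \lambda$ on $[\theta,1]$, by taking $\theta' \searrow \theta$ with $\lambda < h(\theta') < \alpha$ (available by continuity of $h$ on $(0,1]$ if $h$ is not identically $\lambda$), applying the main case to $[\theta',\phi]$, and observing that the right-hand side tends to $\lambda$. The main obstacle is really just spotting the linearising substitution $G(\phi)$; once that is in place, every remaining step is routine Dini derivative calculus together with Lemma~\ref{l:c1-bound} and Corollary~\ref{c:dmvt}.
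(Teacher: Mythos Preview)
Your proposal is correct. Both your argument and the paper's recognise that the right-hand side of~\eqref{e:thetaphibound} is the solution of the ODE $g' = (g-\lambda)(\alpha-g)/((\alpha-\lambda)\phi)$ with $g(\theta) = h(\theta)$, and then compare $h$ to this solution. The paper solves the ODE explicitly by separation of variables, verifies that the solution coincides with the stated bound, and invokes Lemma~\ref{l:c1-bound}; you instead perform the Riccati-type substitution $G = (h-\lambda)/(\alpha-h)$, which converts the nonlinear differential inequality into $\diniu{+}G \leq G/\phi$, equivalently $\diniu{+}(\log G - \log\phi) \leq 0$. Your route buys a cleaner application of Lemma~\ref{l:c1-bound}: comparing $\log G - \log\phi$ against a constant is immediate, whereas applying that lemma directly to the pair $(h,g)$ requires $\dinil{+}h \leq g'$, and since the hypothesis bounds $\diniu{+}h$ in terms of $h$ rather than $g$, this really needs a nonlinear ODE comparison principle that the paper leaves implicit. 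Your algebraic check that $G_f(\phi)/\phi$ is constant equal to $G(\theta)/\theta$ is correct and is exactly the separation-of-variables computation in disguise. One small imprecision: in the edge case $h(\theta) = \lambda$, you should take $\theta'$ decreasing to $\theta_0 \coloneqq \sup\{\phi' \in [\theta,\phi] : h(\phi') = \lambda\}$ rather than to $\theta$ itself (since $h$ could remain equal to $\lambda$ on an initial segment), but the limiting argument then goes through as you describe.
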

\begin{proof}
All functions in $\mathcal{H}(\lambda,\alpha)$ are non-decreasing, so the bound $h(\theta) \leq h(\phi)$ is immediate. 
Suppose $g \colon (0,1] \to [\lambda,\alpha]$ is continuous on $(0,1]$, differentiable on $(0,1)$ and satisfies 
\[ g'(\theta') = \frac{(g(\theta') - \lambda)(\alpha - g(\theta'))}{(\alpha - \lambda)\theta'} \qquad \mbox{ for all } \theta' \in (0,1).\] 
Solving this differential equation by separation of variables gives $g(\theta') = \frac{A \theta' \alpha + \lambda}{A\theta' + 1}$ for some $A>0$ and all $\theta' \in (0,1]$. 
Therefore the unique solution satisfying $g(\theta) = h(\theta)$ has $A = \frac{h(\theta)-\lambda}{\theta(\alpha - h(\theta))}$ and satisfies 
\begin{equation*}
g(\phi) = h(\theta) + \frac{(h(\theta) - \lambda) (\alpha - h(\theta))}{\phi (h(\theta)- \lambda) + \theta( \alpha -h(\theta))}(\phi-\theta). 
\end{equation*}
Using conditions~\ref{i:hdefcty} and~\ref{i:hdefdini} from Definition~\ref{d:h-class}, we can apply Lemma~\ref{l:c1-bound} to give $h(\phi)\leq g(\phi)$, completing the proof. 
\end{proof} 

The fact that $h(\theta)\in\{0,\lambda,\alpha\}$ implies that $h$ is constant extends results in~\cite{Falconer2020firstintermediate,
Falconer2021intdimsurvey}. 
It implies a certain `mutual dependency' between box and intermediate dimensions: in order to check that the box dimension of a set is 0, it suffices to check the \emph{a priori} weaker condition that the $\theta$-intermediate dimension of the set is 0 at a small $\theta \in (0,1]$. 
It would be interesting to know if there are sets whose box dimension has resisted calculation by other methods but can be calculated in this way. 
Another mutual dependency result between different notions of dimension is that the upper box dimension of a set is 0 if and only if its Assouad spectrum and quasi-Assouad dimensions are 0, which follows from work in~\cite{Fraser2018firstassspec,Garcia2021qa,
Fraser2019twospectra}. 

Falconer noted that his continuity result~\cite[(14.2.2)]{Falconer2021intdimsurvey} shows that $\frac{\overline{\dim}_\theta F}{\theta}$ and $\frac{\underline{\dim}_\theta F}{\theta}$ are monotonically decreasing in $\theta \in (0,1]$, so the graphs of $\theta \to \overline{\dim}_\theta F$ and $\theta \to \underline{\dim}_\theta F$ for $\theta \in (0,1]$ are starshaped with respect to the origin. 
 Corollary~\ref{starshaped} shows that in fact the graphs are \emph{strictly} starshaped, and every half-line from the origin in the first quadrant intersects the graphs in a single point. 
 The following two corollaries again hold for any non-empty, totally bounded subset $F$ of a uniformly perfect metric space with more than one point. 

\begin{cor}\label{starshaped}
As above, let $\underline{h}$ and $\overline{h}$ denote the functions $\underline{h}(\theta) = \underline{\dim}_\theta F$ and $\overline{h}(\theta) = \overline{\dim}_\theta F$, let $h \in \{\underline{h},\overline{h}\}$, and write $\lambda = \dim_\mathrm{L} F$, $\alpha = \asd F$. 
If $0 < h(1) \leq \alpha < \infty$ then $h(\theta)/\theta$ is strictly decreasing in $\theta \in (0,1]$. 
\end{cor}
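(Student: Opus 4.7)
The plan is to deduce the claim directly from the quantitative continuity bound~\eqref{e:thetaphibound} of Theorem~\ref{intermediatects}. Fix $0 < \theta < \phi \le 1$; showing $h(\theta)/\theta > h(\phi)/\phi$ is equivalent to showing $\phi h(\theta) - \theta h(\phi) > 0$.

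First I would dispose of the degenerate cases using Corollary~\ref{c:int-dim-bound}. If $h(\theta_0) \in \{\lambda, \alpha\}$ for some $\theta_0 \in (0,1]$, then $h$ is constant on $(0,1]$, and since $h(1) > 0$ this constant equals $h(1) > 0$; in this case $h(\theta)/\theta = h(1)/\theta$ is manifestly strictly decreasing. The case $h(\theta_0) = 0$ is ruled out by the hypothesis $h(1) > 0$ combined with the same constancy statement. So from now on I may assume $\lambda < h(\theta) < \alpha$, and in particular $h(\theta) > 0$, for every $\theta \in (0,1]$.

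In the remaining case, write $a \coloneqq h(\theta) - \lambda > 0$ and $b \coloneqq \alpha - h(\theta) > 0$, and substitute the bound from~\eqref{e:thetaphibound} into $\phi h(\theta) - \theta h(\phi)$. After factoring out $(\phi - \theta)$ and using the identity $h(\theta) - a = \lambda$ in the numerator, a short algebraic manipulation gives
\begin{equation*}
\phi h(\theta) - \theta h(\phi) \;\ge\; (\phi - \theta) \cdot \frac{\phi\, a\, h(\theta) + \theta\, b\, \lambda}{\phi a + \theta b}.
\end{equation*}
Since $\phi > \theta > 0$, $a > 0$, $h(\theta) > 0$, and $b, \lambda \ge 0$, the right-hand side is strictly positive, which is the desired strict inequality.

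The proof is essentially an algebraic rearrangement of the bound already proved in Theorem~\ref{intermediatects}, and the only (minor) obstacle is handling the boundary values $h(\theta) \in \{\lambda, \alpha\}$, where the denominator $\phi a + \theta b$ in the continuity bound could degenerate; the constancy statement in Corollary~\ref{c:int-dim-bound} reduces these situations to the trivial case of a positive constant divided by $\theta$. No additional tools beyond those already established in the excerpt are needed.
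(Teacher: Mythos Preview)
Your proof is correct and follows essentially the same approach as the paper: both substitute the bound~\eqref{e:thetaphibound} from Theorem~\ref{intermediatects} into the expression $\phi h(\theta) - \theta h(\phi)$ (equivalently $h(\theta)/\theta - h(\phi)/\phi$) and verify strict positivity by a short algebraic rearrangement, using Corollary~\ref{c:int-dim-bound} only to rule out $h(\theta)=0$. Your case analysis on $h(\theta)\in\{\lambda,\alpha\}$ is slightly more explicit than the paper's, but the core argument is the same.
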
 

\begin{proof} 
The only non-trivial case is when $\lambda < \alpha$. Suppose $0 < \theta < \phi \leq 1$. By Corollary~\ref{c:int-dim-bound}, $h(\theta) > 0$, so by Theorem~\ref{intermediatects} and a direct algebraic manipulation, 
\[ \frac{h(\phi)}{\phi} \leq \frac{1}{\phi}\left( h(\theta) + \frac{(h(\theta) - \lambda) (\alpha -h(\theta))}{\phi (h(\theta) - \lambda) + \theta( \alpha - h(\theta))}(\phi-\theta)   \right) < \frac{h(\theta)}{\theta}. \qedhere \]
\end{proof}
If $0 < \theta \leq \phi \leq 1$, Theorem~\ref{intermediatects} gives an upper bound for $h(\phi)$ in terms of $h(\theta)$ which can be rearranged to 
\begin{align*}
 h(\phi) (\phi (h(\theta) - \lambda) + \theta( \alpha -h(\theta))) \leq h(\theta) &(\phi (h(\theta) - \lambda) + \theta( \alpha -h(\theta))) \\*
 &+ (h(\theta) - \lambda) (\alpha -h(\theta))(\phi-\theta). 
 \end{align*}
Expanding brackets, cancelling terms and rearranging, we obtain what can be thought of as a lower bound for $h(\theta)$ in terms of $h(\phi)$: 
\begin{equation}\label{ctylowerbound}
 h(\theta) \geq \frac{ \alpha \theta ( h(\phi) - \lambda ) +    \phi  \lambda ( \alpha -  h(\phi)) }{ \theta (h(\phi) - \lambda) + \phi ( \alpha - h(\phi))}.
 \end{equation}
Of particular interest is the lower bound for the intermediate dimensions in terms of the box dimension, because the box dimension of many sets is known independently. 
Recall that $\overline{\dim}_1 F=\dimuB F$ and $\underline{\dim}_1 F=\dimlB F$.
\begin{cor}\label{c:lower-bound-general}
    As above, let $\underline{h}$ and $\overline{h}$ denote the functions $\underline{h}(\theta) = \underline{\dim}_\theta F$ and $\overline{h}(\theta) = \overline{\dim}_\theta F$, and let $h \in \{\underline{h},\overline{h}\}$. 
    If $\lambda < \alpha < \infty$, then for all $\theta \in (0,1]$, 
    \begin{equation*}
        h(\theta)\geq\frac{\alpha \theta(h(1)-\lambda)+\lambda(\alpha-h(1))}{\theta(h(1)-\lambda)+(\alpha-h(1))}.
    \end{equation*}
\end{cor}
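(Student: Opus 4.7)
The plan is to read off this result directly from the upper continuity bound established in Theorem~\ref{intermediatects} by specialising $\phi = 1$ and inverting the inequality. Applying~\eqref{e:thetaphibound} with $\phi = 1$ gives
\[
  h(1) \;\leq\; h(\theta) + \frac{(h(\theta)-\lambda)(\alpha - h(\theta))}{(h(\theta)-\lambda) + \theta(\alpha - h(\theta))}(1-\theta),
\]
which, as already indicated in the paragraph deriving~\eqref{ctylowerbound}, rearranges algebraically into the stated lower bound on $h(\theta)$. So the only real work is to clear denominators, check a sign, and handle a handful of degenerate cases.

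Concretely, my first step is to justify that the denominator $(h(\theta)-\lambda) + \theta(\alpha - h(\theta))$ appearing after rearrangement is strictly positive, so that multiplying through does not flip the inequality. By Corollary~\ref{c:int-dim-bound}, $h \in \mathcal{H}(\lambda,\alpha)$, so $h(\theta) \in [\lambda,\alpha]$; moreover, the `constancy' clause of that corollary tells me that if $h$ ever takes the value $\lambda$, $\alpha$, or (when $\lambda = 0$) the value $0$ at some $\theta_0 \in (0,1]$, then $h$ is constant on $(0,1]$. I will dispose of these constant cases separately by direct substitution: if $h \equiv \lambda$, the right-hand side collapses to $\lambda(\alpha-\lambda)/(\alpha-\lambda) = \lambda$; if $h \equiv \alpha$, it collapses to $\alpha\theta(\alpha-\lambda)/(\theta(\alpha-\lambda)) = \alpha$; in each case both sides agree. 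This leaves the generic case $\lambda < h(\theta) < \alpha$ for all $\theta \in (0,1]$, for which both the denominator above and the denominator $\theta(h(1)-\lambda) + (\alpha - h(1))$ in the target expression are strictly positive.

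In this generic case I multiply both sides of the rearranged form of~\eqref{e:thetaphibound} by the positive quantity $(h(\theta)-\lambda) + \theta(\alpha - h(\theta))$, expand, cancel the symmetric cross-terms, and collect the remaining terms linearly in $h(\theta)$; the outcome is exactly
\[
  h(\theta)\bigl[\theta(h(1)-\lambda) + (\alpha - h(1))\bigr] \;\geq\; \alpha\theta(h(1)-\lambda) + \lambda(\alpha-h(1)),
\]
which is the claimed inequality after dividing by the bracketed positive quantity. The case $\theta = 1$ is the trivial equality.

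I do not expect any real obstacle: the bound is a corollary in the most literal sense, obtained by specialising $\phi = 1$ in Theorem~\ref{intermediatects} and then swapping which variable one solves for. The only mild subtlety is bookkeeping the sign of the denominator, which is handled by invoking the constancy clause of Corollary~\ref{c:int-dim-bound} to split off the boundary cases $h(\theta) \in \{\lambda,\alpha\}$.
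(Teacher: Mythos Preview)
Your proof is correct and follows exactly the paper's approach: the paper simply says ``Set $\phi = 1$ in~\eqref{ctylowerbound}'', where \eqref{ctylowerbound} is precisely the rearrangement of \eqref{e:thetaphibound} that you carry out explicitly. One small simplification: your separate treatment of the boundary cases $h(\theta)\in\{\lambda,\alpha\}$ is unnecessary, since under the standing hypothesis $\lambda<\alpha$ both denominators $(h(\theta)-\lambda)+\theta(\alpha-h(\theta))$ and $\theta(h(1)-\lambda)+(\alpha-h(1))$ are sums of two nonnegative terms that cannot simultaneously vanish, hence are strictly positive for every $\theta\in(0,1]$ without any appeal to the constancy clause.
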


\begin{proof}
Set $\phi = 1$ in~\eqref{ctylowerbound}.  
\end{proof}
 \begin{figure}[ht]
\center{\includegraphics[width=.8\textwidth]
        {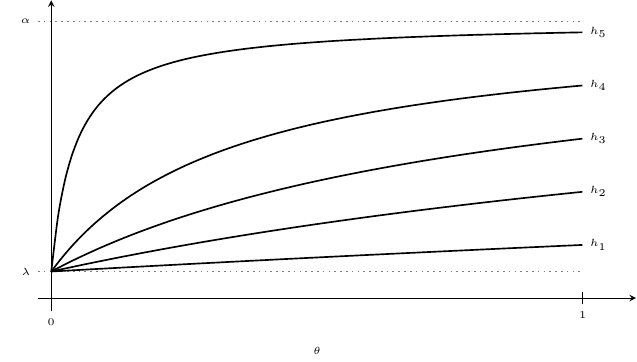}}
        \caption{\label{f:lower-bound-curves}
        Plots of the bound in Corollary~\ref{c:lower-bound-general} for $\lambda=0.05$, $\alpha=0.52$, and box dimensions $h_i(1)=i/10$ for $i=1,2,\dotsc,5$.}
\end{figure}
 Plots of this bound for particular parameters are given in Figure~\ref{f:lower-bound-curves}. 
 We make several remarks about the bound. 
\begin{itemize}
\item Working in $\Rd$, we can replace $\lambda$ by $0$ and $\alpha$ by $d$ to obtain bounds which hold for all subsets. 

\item This bound improves previous general lower bounds in the literature such as \cite[Proposition~2.4]{Falconer2020firstintermediate} and \cite[Corollary~14.4]{Falconer2021intdimsurvey}. 

\item Assume $\lambda < h(1) < \alpha$. Then one can differentiate the bound and show that it is real analytic, strictly increasing, strictly concave, and takes value $\lambda$ at $\theta = 0$ and $h(1)$ at $\theta = 1$. %

\item As $h(1)$ approaches $\alpha$ or $\lambda$ respectively, so does the lower bound pointwise. 

\item For some self-affine Bedford--McMullen carpets, in particular when the maps in the defining iterated function system are very unevenly distributed in the different columns, Corollary~\ref{c:lower-bound-general} can give non-trivial information when $\theta$ is close to 1. For much more on the intermediate dimensions of Bedford--McMullen carpets, we refer the reader to Chapter~\ref{s:bm}. 

\end{itemize}

\subsection{Lattice sets}\label{s:lattice}
In Proposition~\ref{p:lattice} below, we will use the bounds from Section~\ref{s:generalbounds} to calculate the intermediate dimensions of certain lattice sets. Specifically, consider the inversion of the lattice $\{ 1^p,2^p,3^p,\dotsc \}^d$ in the unit $d$-sphere in $\Rd$ and observe that the bound~\eqref{e:intro-gen-bound} is attained at each $\theta\in(0,1)$. 
In the case $d=1$, the sets are just the polynomial sequence sets $\{ 1^{-p},2^{-p},3^{-p},\dotsc \}$ whose intermediate dimensions were calculated in~\cite[Proposition~3.1]{Falconer2020firstintermediate} using a mass distribution principle. Since we have the benefit of the general bound, we do not need to use a mass distribution principle. 
Proposition~\ref{p:lattice} will be useful in Section~\ref{ctdfracsect} related to continued fraction sets.

\begin{prop}\label{p:lattice}
For $d \in \N$ and $p \in (0,\infty)$ define \[ G_{p,d} \coloneqq \{ \, x/||x||^2 : x \in \{ 1^p,2^p,3^p,\dotsc \}^d \, \}. \]
 Then for all $\theta \in [0,1]$, 
\[ \dim_{\theta} G_{p,d} = \frac{d\theta}{p + \theta}. \]
In particular, the intermediate dimensions are continuous at $\theta=0$. 
\end{prop}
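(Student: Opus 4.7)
The plan is to combine an explicit covering upper bound with the lower bound from Corollary~\ref{c:lower-bound-general}, anchored by determining the box dimension of $G_{p,d}$. Write $\psi(n) \coloneqq n^p/\|n^p\|^2$ for $n = (n_1,\dotsc,n_d) \in \N^d$, where $n^p \coloneqq (n_1^p,\dotsc,n_d^p)$, so that $G_{p,d} = \{\psi(n) : n \in \N^d\}$ and $\|\psi(n)\| = 1/\|n^p\|$. The central tool is the inversion identity
\[
|\psi(n) - \psi(m)| = \frac{|n^p - m^p|}{\|n^p\| \cdot \|m^p\|},
\]
which follows from the standard formula for inversion in the unit sphere. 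Since $G_{p,d}$ is countable, $\dimH G_{p,d} = 0$, which immediately handles $\theta = 0$, and $\dimL G_{p,d} = 0$ because $G_{p,d}$ has isolated points away from the origin.

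First I would establish that distinct $\psi(n), \psi(m)$ with $n, m \in (N, 2N]^d \cap \Z^d$ are separated by at least $c_p N^{-(p+1)}$. This uses the inversion identity, the bound $\|n^p\|, \|m^p\| \leq \sqrt{d}(2N)^p$, and the mean value estimate $|n_i^p - m_i^p| \geq c_p N^{p-1}|n_i - m_i|$ valid for $n_i, m_i \in (N, 2N]$ in both regimes $p \geq 1$ (where $t^{p-1}$ is non-decreasing) and $0 < p < 1$ (where $t^{p-1}$ is decreasing but bounded below by $(2N)^{p-1}$). Since there are $N^d$ such points and they all lie in $B(0, C N^{-p})$, this simultaneously gives $\dimlB G_{p,d} \geq d/(p+1)$ and $\dimA G_{p,d} \geq d$; combined with $G_{p,d} \subset \Rd$ we obtain $\dimA G_{p,d} = d$.

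Second, for the upper bound on $\uid G_{p,d}$ with $\theta \in (0,1]$, I would cover $G_{p,d}$ at scale $\delta$ by taking $N = \lfloor \delta^{-1/(p+\theta)} \rfloor$, placing one ball of diameter $\delta^{1/\theta}$ around each $\psi(n)$ with $\|n\|_\infty \leq N$, and covering the ball $B(0, N^{-p})$ by $\lesssim (N^{-p}/\delta)^d$ balls of diameter $\delta$. The latter ball contains all remaining $\psi(n)$ because $\|n\|_\infty > N$ forces $\|n^p\| \geq (N+1)^p$. The resulting $s$-cost is bounded by $N^d \delta^{s/\theta} + C(N^{-p}/\delta)^d \delta^s$; a direct computation shows that at $s = d\theta/(p+\theta)$ both exponents of $\delta$ vanish, and for any $s' > s$ both exponents become strictly positive, so the total cost tends to $0$ as $\delta \to 0^+$. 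Hence $\uid G_{p,d} \leq d\theta/(p+\theta)$, and setting $\theta = 1$ gives $\dimuB G_{p,d} \leq d/(p+1)$, so together with the first step $\dimB G_{p,d} = d/(p+1)$.

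Finally, applying Corollary~\ref{c:lower-bound-general} with $\lambda = 0$, $\alpha = d$, and $h(1) = \dimlB G_{p,d} = d/(p+1)$ gives, after direct simplification, $\lid G_{p,d} \geq d\theta/(p+\theta)$ for all $\theta \in (0,1]$. Combined with the upper bound from the second step, this completes the proof on $(0,1]$; continuity at $\theta = 0$ is automatic since $d\theta/(p+\theta) \to 0 = \dimH G_{p,d}$ as $\theta \to 0^+$. The main technical step is the uniform lower separation on the dyadic annulus $(N, 2N]^d$, which requires care in handling the regime $0 < p < 1$; the covering construction and final algebraic simplification are then routine.
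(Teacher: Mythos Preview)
Your proposal is correct and follows the same overall structure as the paper's proof: an explicit two-scale cover for the upper bound, then the lower bound via Corollary~\ref{c:lower-bound-general} after establishing $\dimB G_{p,d} = d/(p+1)$ and $\dimA G_{p,d} = d$. The one notable difference is in how you obtain the separation/counting estimates: the paper splits into the cases $0 < p < 1$ (where it uses that the full block $\{1,\dotsc,m\}^d$ is $\gtrsim \delta$-separated after inversion) and $p \geq 1$ (where it instead uses a density argument~\eqref{e:latticedense} to lower-bound covering numbers), whereas your restriction to the dyadic annulus $(N,2N]^d$ together with the exact inversion identity handles both regimes uniformly and yields the Assouad lower bound in the same stroke. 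Your route is slightly cleaner in this respect; the paper's density argument~\eqref{e:latticedense} does a bit more work but also gives a sharper picture of how $G_{p,d}$ fills out the cube $[0,m^{-p}]^d$ near the origin.
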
 

\begin{proof}
We begin with the upper bound. Let $\theta \in (0,1]$. 
For $\delta \in (0,1/10)$ let $n \coloneqq \lceil \delta^{-\theta/(p+\theta)} \rceil$. 
We form a cover $\mathcal{U}$ by covering each point in $\{ \, x/||x||^2 : x \in \{ 1^p,2^p,\dotsc, n^p\}^d \, \}$ with a ball of diameter $\delta$, and covering $[0,n^{-p}]^d$ with $\lesssim (n^{-p}/\delta^\theta + 1)^d$ sets of diameter $\delta^\theta$, where here $\lesssim$ means up to a multiplicative constant independent of $\delta$ and $n$. Then for $s > \frac{d\theta}{p + \theta}$, 
\begin{align*}
 \sum_{U \in \mathcal{U}} |U|^s &\lesssim (n^{-p}/\delta^\theta + 1)^d \delta^{\theta s} + n^d \delta^s \\
 &\lesssim \delta^{dp\theta/(p+\theta)}\delta^{-d \theta}\delta^{\theta s} + \delta^{\theta s} + \delta^{-d\theta/(p+\theta)}\delta^s \\
 &\lesssim \delta^{\theta(s-d\theta/(p+\theta))} + \delta^{s-d\theta/(p+\theta)} \\
 &\lesssim 1,
 \end{align*}
proving $\uid G_{p,d} \leq s$. 

For the lower bound, for $\delta \in (0,1/10)$, let $m \coloneqq \lceil \delta^{-1/(p+1)} \rceil$. 
A direct geometric argument shows that $\{ \, x/||x||^2 : x \in \{ 1^p,2^p,\dotsc, m^p\}^d \, \}$ is a $\gtrsim \delta$-separated set, so if $0 < p < 1$ then 
\[ N_\delta (G_{p,d} ) \geq N_\delta (\{ \, x/||x||^2 : x \in \{ 1^p,2^p,\dotsc, m^p\}^d \, \}) \gtrsim m^d \geq \delta^{-d/(p+1)}.\]
A geometric argument shows that for each $\eta > 0$, 
\begin{equation}\label{e:latticedense} 
\sup_{x \in [0,m^{-p}]^d} \inf_{y \in G_{p,d}} ||x-y|| \lesssim \delta^{1-\eta},
\end{equation} 
so if $p \geq 1$ then 
\[ N_\delta (G_{p,d}) \geq N_\delta (G_{p,d} \cap [0,m^{-p}]^d ) \gtrsim \delta^{d\eta} (m^{-p}/\delta)^d \gtrsim \delta^{-(d/(p+1) - d\eta)}. \]
Therefore $\lbd G_{p,d} \geq d/(p+1)$ for all $p \in (0,\infty)$. 
Moreover, by~\eqref{e:latticedense}, for all $\eta > 0$ sufficiently small, 
\[ N_{\delta^{1-\eta}} ([0,m^{-p}] \cap G_{p,d}) \approx \left(\frac{m^{-p}}{\delta^{1-\eta}}\right)^d, \]
so $\dim_{\mathrm A} G_{p,d} = d$. Furthermore, $G_{p,d}$ has isolated points so has lower dimension~$0$. 
Thus by our general lower bound Corollary~\ref{c:lower-bound-general}, 
\[ \lid G_{p,d} \geq \frac{d \theta \cdot \underline{\dim}_\mathrm{B} G_{p,d} }{d - (1-\theta) \underline{\dim}_\mathrm{B} G_{p,d} } \geq \frac{d\theta \cdot d/(p+1)}{d - (1-\theta) d/(p+1)} = \frac{d\theta}{p+\theta},\]
completing the proof. 
\end{proof}

These sets have isolated points so their lower dimension is $\lambda=0$, and we have shown that the Assouad dimension $\alpha = d$. 
A direct algebraic manipulation shows that the upper bound from~\eqref{e:thetaphibound} and the bound in Corollary~\ref{c:lower-bound-general} are attained: if $h(\theta) = \dim_{\theta} G_{p,d}$ and $0 < \theta \leq \phi \leq 1$, then 
\begin{equation*}
h(\theta) + \frac{(h(\theta)  - \lambda) (\alpha - h(\theta))}{\phi (h(\theta)- \lambda) + \theta( \alpha -h(\theta))}(\phi-\theta) = \frac{d\phi}{p+\phi} = h(\phi).
\end{equation*}
This family of examples also shows that the Lipschitz constant of $d/(4\theta)$ for subsets of $\Rd$ in Theorem~\ref{intermediatects} cannot be improved in general. 
Note also that it follows from Proposition~\ref{p:lattice} and Burrell's~\eqref{e:burrellbrownian} on page~\pageref{e:burrellbrownian} that for all $d \in \N$, $p \in (0,\infty)$ and $\alpha \in (0,1)$, if $B_\alpha \colon \Rd \to \Rd$ is index-$\alpha$ fractional Brownian motion on $\Rd$ then $\ubd B_\alpha(G_{p,d}) < d$.

\subsection{Popcorn-like pyramid sets}\label{s:popcorn}

Another family of sets whose intermediate dimensions can be calculated using the bounds in Section~\ref{s:generalbounds} are related to the \emph{popcorn function}, also known as \emph{Thomae's function}. This is an important example in real analysis, with many interesting properties, such as being Riemann integrable despite not being continuous on any open interval. In fact, it is discontinuous at the rationals but continuous at the irrationals. There are several intriguing connections between the popcorn function and different areas of mathematics~\cite{Gorodetski2023popcorn,Reeder2023lie,
Copar2020} and computer science~\cite{Shu2017}. 

In this section, as well as working with the popcorn function itself, we will consider the following higher-dimensional generalisations of it. 
Throughout the section, $d$ will denote an integer with $d \geq 2$, and $0 < t < \infty$. Then the popcorn pyramid function $f_{t,d} \colon [0,1]^{d-1} \to \mathbb{R}$ is defined by 
\begin{equation}
f_{t,d}(x) = 
\begin{cases}
q^{-t} & \text{if } x=(p_1/q,\dotsc,p_{d-1}/q), q \in \N, \, \forall i: \, p_i \in \{1,\dotsc,q-1\}, \gcd{(p_i,q)} = 1 , \\
0 & \text{otherwise}.
\end{cases}
\end{equation}
The popcorn function itself is $f_{1,2}$. Note that the function is 0 unless all numbers in the product have the same denominator, for example in the $d=3$ case ${f_{t,3}(1/2,1/3)=0}$. The graphs of the functions are denoted by 
\[
G_{t,d} \coloneqq \left\{ \, (x,f_{t,d}(x)) : x\in [0,1]^{d-1} \, \right\}. 
\]
Two of the graphs in the $d=2$ case are shown in Figure~\ref{f:sets}; the graph on the left is that of the popcorn function. 
For completeness, we also include the full set in our analysis, since we will see that the corresponding sets have the same dimensions: 
\[
F_{t,d} \coloneqq \left\{ \, \left( \frac{p_1}{q},\dotsc, \frac{p_{d-1}}{q}, \left(\frac{1}{q}\right)^t \right) : q \in \N, \forall i \, p_i \in \{1,\dotsc, q-1\} \, \right\} \cup ([0,1]^{d-1} \times \{0\} ).
\]
Then $G_{t,d} \subset F_{t,d} \subset [0,1]^d$; for example, the convex hull of $G_{1,3}$ (or $F_{1,3}$) is a square-based pyramid in $\mathbb{R}^3$, and $(1/2,1/2,1/4) \in  F_{1,3} \setminus G_{1,3}$. 
\begin{figure}[ht]
\centering
\subfigure[The popcorn graph $G_{1,2}$]
{\includegraphics[width=.4\textwidth]{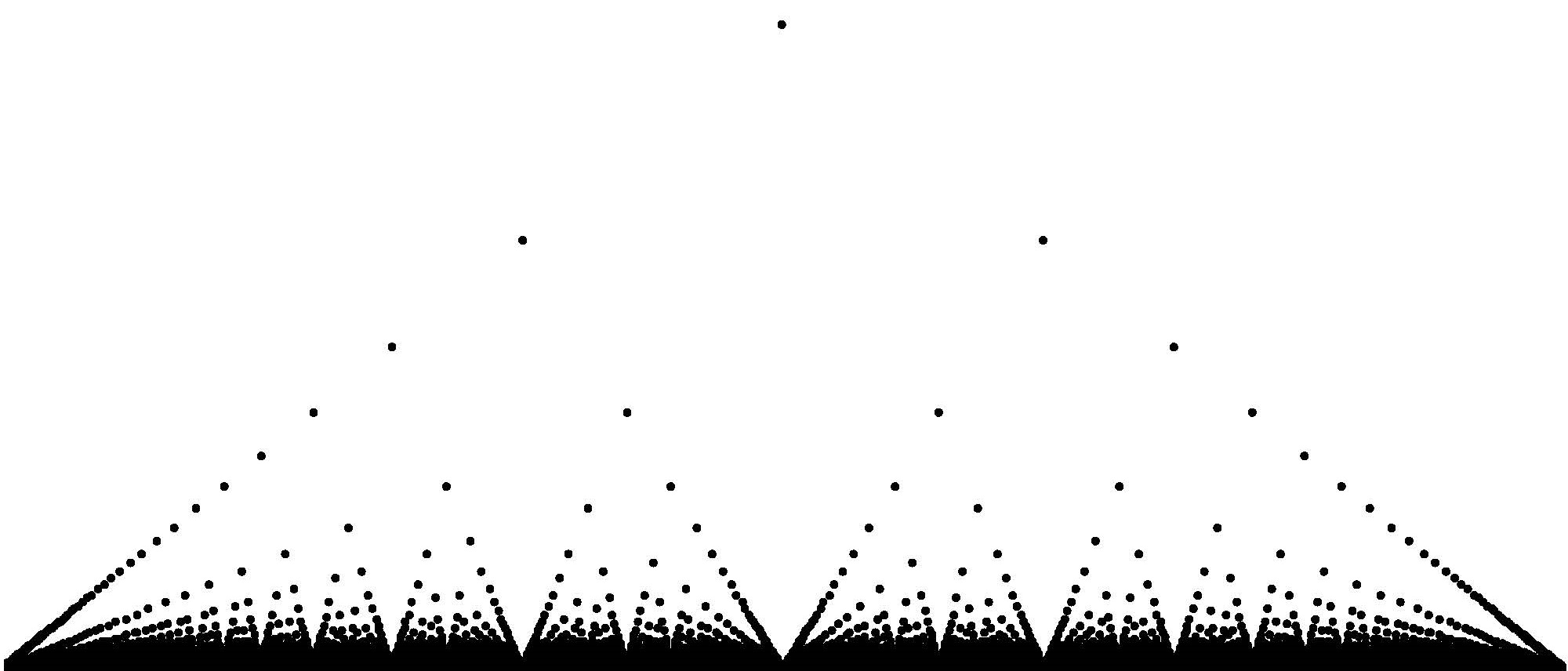}} \qquad
\subfigure[$G_{0.3,2}$]
{\includegraphics[width=.4\textwidth]{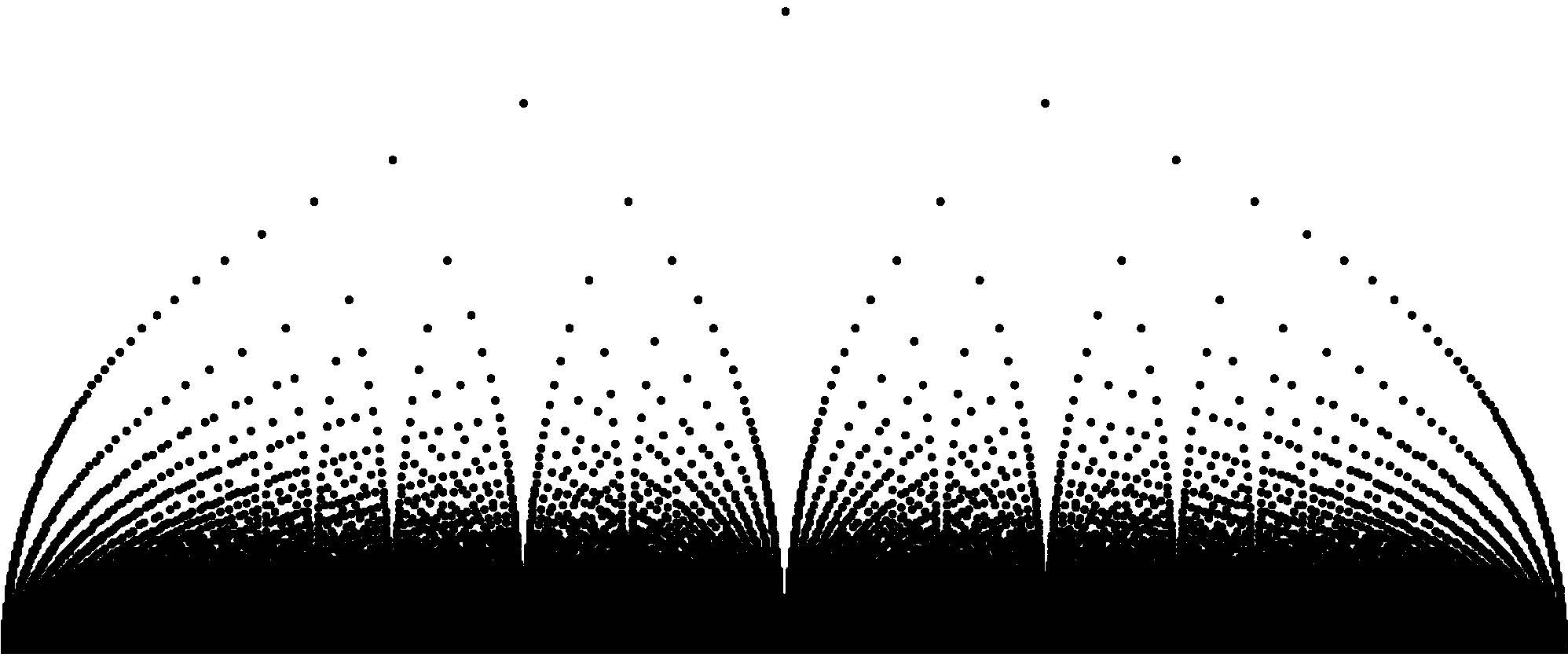}}
\caption{Two popcorn-like graphs}\label{f:sets}
\label{popcorn_graph}
\end{figure}

The sets $G_{t,d}$ and $F_{t,d}$ have an interesting fractal structure, and it is natural to consider different notions of dimension of these sets. This was done in the case $d=2$ in~\cite{Chen2022popcorn,Chen2022tpopcorn}. 
First consider the Assouad dimension. 
\begin{theorem}[Banaji--Chen, Theorem~1.1 from \cite{Banaji2022popcorn}]\label{t:assouad}
We have 
\[ 
\dim_{\mathrm A} G_{t,d} = \dim_{\mathrm A} F_{t,d} = \begin{cases} d & \mbox{ for } 0 < t < \frac{d}{d-1}, \\
d-1 & \mbox{ for } t \geq \frac{d}{d-1}. \end{cases} 
\]
\end{theorem}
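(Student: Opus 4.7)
The plan is to prove the four required inequalities using the containment $G_{t,d}\subset F_{t,d}\subset[0,1]^d$. First, the easy bounds: $\dim_\mathrm{A}F_{t,d}\leq d$ is immediate, covering the upper bound when $t<d/(d-1)$. For the universal lower bound $\dim_\mathrm{A}G_{t,d}\geq d-1$, I would use that $\overline{G_{t,d}}\supseteq[0,1]^{d-1}\times\{0\}$ (since rationals are dense in $[0,1]^{d-1}$ and $q^{-t}\to 0$ as $q\to\infty$) together with the stability of Assouad dimension under taking closures.

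For the upper bound $\dim_\mathrm{A}F_{t,d}\leq d-1$ when $t\geq d/(d-1)$, I would cover $B(x,R)\cap F_{t,d}$ at scale $r$ in two pieces. The base part $B(x,R)\cap([0,1]^{d-1}\times\{0\})$ requires $\lesssim(R/r)^{d-1}$ balls of radius $r$ and also absorbs all graph points of height at most $r$. The remaining graph points satisfy $q\leq r^{-1/t}$, so for each such $q$ the number of points $(p_1/q,\dotsc,p_{d-1}/q,q^{-t})$ lying in $B(x,R)$ is $\lesssim\max\{1,(Rq)^{d-1}\}$. Summing in $q$ up to $r^{-1/t}$ gives a bound of the form $\lesssim R^{d-1}r^{-d/t}$ plus lower-order terms, and the hypothesis $t\geq d/(d-1)$ is precisely what yields $R^{d-1}r^{-d/t}\leq(R/r)^{d-1}$.

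For the lower bound $\dim_\mathrm{A}G_{t,d}\geq d$ when $t<d/(d-1)$, I would centre a ball at a graph point $x=(1/q,\dotsc,1/q,q^{-t})$ with radius $R\approx q^{-t}$ chosen so the ball just reaches a neighbourhood of the base, and count separated graph points at scale $r$. When $t\geq 1$, graph points of a fixed denominator $q'\in[R^{-1},r^{-1/t}]$ are $r$-separated in the $x$-direction (because $1/q'\geq r^{1/t}\geq r$), so there are $\approx(Rq')^{d-1}$ of them in the ball. Summing in $q'$ yields $\gtrsim R^{d-1}r^{-d/t}$ separated points, and comparing with $(R/r)^d$ via the choice $R=r^{d(t-1)/t}$ gives the required exponent, the constraint $R\leq 1$ matching exactly $t\leq d/(d-1)$.

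The main obstacle is the sub-range $t<1$, where graph points of a fixed denominator $q>1/r$ cluster within scale $r$ in the $x$-direction and, since consecutive denominators have heights differing by only $\approx q^{-t-1}$, also cluster in the $y$-direction, so the direct geometric count breaks down. To handle this I would look for a richer local product-like structure at an intermediate scale by applying a mass distribution argument to a measure supported on graph points whose denominators range over a geometric window; alternatively, one could try to reduce to the planar case of~\cite{Chen2022tpopcorn} by slicing $G_{t,d}$ with coordinate hyperplanes and exploiting the product-type behaviour of Assouad dimension.
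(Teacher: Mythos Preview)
For the key lower bound $\dim_{\mathrm A}G_{t,d}\geq d$ when $t<d/(d-1)$, the thesis takes an entirely different, indirect route. It first establishes the intermediate dimensions formula $\dim_\theta G_{t,d}=d^2\theta/(d\theta+t)$ for $\theta>(d-1)t/d$ (Theorem~\ref{t:int}); the lower bound there comes from the box dimension (Theorem~\ref{t:box}, itself proved in \cite{Banaji2022popcorn} via the Chung--Erd\H{o}s inequality and Duffin--Schaeffer type Diophantine estimates) combined with Corollary~\ref{c:lower-bound-general} using only $\lambda=0$ and $\alpha\leq d$. Since the resulting formula saturates that general bound at $\alpha=d$, and the bound is strictly monotone in $\alpha$, one is forced to $\dim_{\mathrm A}G_{t,d}\geq d$. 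This handles all $0<t<d/(d-1)$ uniformly, bypassing entirely the $t<1$ obstacle you identify. Your covering argument for $\dim_{\mathrm A}F_{t,d}\leq d-1$ when $t\geq d/(d-1)$, and the easy bounds, are fine; the thesis simply defers the former to \cite{Banaji2022popcorn}.

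Your direct separated-point count has a gap even for $t\geq 1$. Summing $(Rq')^{d-1}$ over $q'$ counts graph points in the ball, not mutually $r$-separated ones: points with adjacent denominators $q'$ and $q'+1$ have heights differing by only $\approx t(q')^{-t-1}$, which for $q'$ near $r^{-1/t}$ is $\approx r^{1+1/t}\ll r$, so many of the counted points lie within $r$ of one another. Moreover your interval $[R^{-1},r^{-1/t}]$ is nonempty only when $R\geq r^{1/t}$, and with your choice $R=r^{d(t-1)/t}$ this fails once $t>1+1/d$, so even the claimed range $[1,d/(d-1))$ is not fully covered by your argument.
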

It follows from Theorem~\ref{t:assouad} and~\eqref{e:dimrelations} that if $t\geq d/(d-1)$ then the Hausdorff, box and Assouad dimensions are all equal to $d-1$, so in all other results in this section we assume that $t < d/(d-1)$. 
Next, consider box dimension. 
\begin{theorem}[Banaji--Chen, Theorem~1.2 from \cite{Banaji2022popcorn}]\label{t:box}
If $0 < t < d/(d-1)$ then 
\[ \dim_{\mathrm B} G_{t,d} = \dim_{\mathrm B} F_{t,d} = \frac{d^2}{d + t}. \]
\end{theorem}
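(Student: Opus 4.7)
Since $G_{t,d}\subset F_{t,d}$, monotonicity of box dimension reduces the theorem to the matching bounds $\dimuB F_{t,d}\le d^2/(d+t)$ and $\dimlB G_{t,d}\ge d^2/(d+t)$. For the upper bound I would fix small $\delta>0$ and choose the threshold $Q=Q(\delta)\asymp\delta^{-d/(d+t)}$; under the assumption $t<d/(d-1)$ this satisfies $Q^{-t}>\delta$. Cover $F_{t,d}$ by two families: a standard $\delta$-grid on the slab $[0,1]^{d-1}\times[0,Q^{-t}]$ — which contains the base $[0,1]^{d-1}\times\{0\}$ together with every spike at level $q\ge Q$ — uses $\lesssim\delta^{-(d-1)}\cdot Q^{-t}/\delta\asymp\delta^{-d}Q^{-t}$ cubes, while the remaining spikes at levels $q<Q$ number at most $\sum_{q<Q}\phi(q)^{d-1}\lesssim Q^d$ and each contributes one further cube. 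The choice of $Q$ is precisely the one equalising these two contributions at $\delta^{-d^2/(d+t)}$.

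For the lower bound the plan is a second moment argument applied to the set $\mathcal{P}\subset G_{t,d}$ consisting of the spike points at levels $q\in[Q/2,Q]$, using the same $Q\asymp\delta^{-d/(d+t)}$. Standard totient sum asymptotics give $|\mathcal{P}|=\sum_{q\in[Q/2,Q]}\phi(q)^{d-1}\asymp Q^d$. Writing $n_B\coloneqq|\mathcal{P}\cap B|$ for the $\delta$-boxes $B$ of a fixed dyadic grid, the trivial identity $\sum_B n_B=|\mathcal{P}|$ together with Cauchy--Schwarz gives
\[
    N_\delta(G_{t,d})\gtrsim\bigl|\{B:n_B>0\}\bigr|\ge\frac{|\mathcal{P}|^2}{\sum_B n_B^2},
\]
so it suffices to establish the second-moment bound $\sum_B n_B^2\lesssim\delta^{-d^2/(d+t)}$, after which the desired estimate follows by inserting $|\mathcal{P}|\asymp Q^d=\delta^{-d^2/(d+t)}$.

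The sum $\sum_B n_B^2$ counts ordered pairs of spikes $((p/q,q^{-t}),(p'/q',q'^{-t}))\in\mathcal{P}^2$ lying in a common $\delta$-box, and this is the main technical step. Such a pair forces $|q^{-t}-q'^{-t}|\lesssim\delta$ — so $q'$ lies in an interval of length $\asymp\delta Q^{t+1}$ around $q$ — together with $|p_i/q-p'_i/q'|\lesssim\delta$ for every coordinate $i$. For each coordinate the elementary Diophantine bound that the number of pairs $(p_i,p'_i)\in\{1,\dots,q-1\}\times\{1,\dots,q'-1\}$ with $|p_iq'-p'_iq|\le\delta qq'$ is $\lesssim\delta qq'+\gcd(q,q')\lesssim\delta Q^2$ (which is meaningful precisely because $t<d/(d-1)$ implies $t<d$ and hence $\delta Q^2\to\infty$) controls the horizontal contribution by $(\delta Q^2)^{d-1}$ per pair $(q,q')$. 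Summing over the $\asymp Q$ values of $q$ and $\asymp\delta Q^{t+1}$ admissible $q'$ yields $\sum_B n_B^2\lesssim Q\cdot\delta Q^{t+1}\cdot(\delta Q^2)^{d-1}=\delta^d Q^{2d+t}=\delta^{-d^2/(d+t)}$ for the chosen $Q$. The hardest part will be the coprimality-aware bookkeeping — one must extract, from the Diophantine estimate, a count that survives the restrictions $\gcd(p_i,q)=\gcd(p'_i,q')=1$, and one must control the average of $\phi$ on $[Q/2,Q]$ well enough to confirm $|\mathcal{P}|\asymp Q^d$ — but these are standard number-theoretic inputs that should go through.
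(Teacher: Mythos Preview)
Your upper bound and the overall second-moment/Chung--Erd\H{o}s framework for the lower bound are exactly the approach the paper describes (the thesis omits the proof but summarises it as using the Chung--Erd\H{o}s inequality, a totient estimate, and higher-dimensional Duffin--Schaeffer type overlap bounds). The first moment $|\mathcal{P}|\asymp Q^d$ is also fine.

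The gap is in the Diophantine step. Your inequality $\delta qq'+\gcd(q,q')\lesssim\delta Q^2$ is false as a pointwise bound: since $\delta Q=\delta^{t/(d+t)}\to 0$, one has $\delta Q^2=o(Q)$, while $\gcd(q,q')$ can be as large as $\min(q,q')\sim Q$. Even exploiting $\gcd(q,q')\mid(q-q')$ for $q\ne q'$ only gives $\gcd(q,q')\lesssim\delta Q^{t+1}$, which exceeds $\delta Q^2$ whenever $t>1$; feeding this into your sum yields $\sum_B n_B^2\lesssim\delta^2 Q^{2t+3}$ in the $d=2$ case, and one checks this is $\gg\delta^{-4/(2+t)}$ for $1<t<2$, so the argument as written collapses there.

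The repair is precisely what the paper names as the key input: Duffin--Schaeffer type overlap estimates (in the spirit of Pollington--Vaughan). These exploit the coprimality constraints $\gcd(p_i,q)=\gcd(p'_i,q')=1$ in an essential way --- roughly, the reduced fractions $p_i/q$ with $\gcd(p_i,q)=1$ are much better separated from reduced fractions $p'_i/q'$ than arbitrary fractions are, and this gains exactly the factor needed. So the coprimality is not peripheral ``bookkeeping that should go through'' but the heart of the second-moment bound; the paper also flags the pointwise totient bound $\phi(n)\gtrsim n/\log\log n$ as an ingredient, which enters the overlap lemma rather than the first moment.
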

Note that $\dim_{\mathrm B} G_{t,d}$ is continuous in $t$ but $\asd G_{t,d}$ is not, since Assouad dimension depends sensitively on the local scaling behaviour of the set. 
In order to keep this thesis to a reasonable length, we omit the proofs of Theorem~\ref{t:assouad} (in the $t \geq d/(d-1)$ case) and Theorem~\ref{t:box}, and refer the reader to \cite{Banaji2022popcorn}. 
The lower bound of Theorem~\ref{t:box} uses the Chung--Erd\H{o}s inequality from probability theory, the estimate $\phi(n) \gtrsim n/(\log \log n)$ for Euler's totient function $\phi(n)$, and higher-dimensional Duffin--Schaeffer type estimates from Diophantine approximation. Special cases were proved before the paper~\cite{Banaji2022popcorn} in~\cite{Chen2022popcorn,Chen2022tpopcorn}.

Next, we use Theorem~\ref{t:box} (more precisely, we only use the lower bound) as a black box to calculate the intermediate dimensions. 
Our proof also uses the general bounds from Section~\ref{s:generalbounds}; for a direct proof using similar ideas to those used in the proof of those bounds (fattening the small sets and breaking up the large ones), we refer the reader to~\cite{Banaji2022popcorn}. 
\begin{theorem}\label{t:int}
If $0 < t < d/(d-1)$ then 
\[ 
\dim_{\theta} G_{t,d} = \dim_{\theta} F_{t,d} =\begin{cases}
d-1 & \mbox{ for }0 \leq \theta \leq \frac{(d-1)t}{d}, \\
\frac{d^2 \theta}{d\theta + t} & \mbox{ for }\frac{(d-1)t}{d} < \theta \leq 1.
\end{cases}
\] 
\end{theorem}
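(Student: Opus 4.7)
}
The plan is to combine Theorems~\ref{t:assouad} and~\ref{t:box} with the general bounds from Section~\ref{s:genbounds}, reducing the work to a single explicit cover at the critical parameter $\theta_0 \coloneqq (d-1)t/d$. Note first that $\lambda \coloneqq \dim_\mathrm{L} F_{t,d} = \dim_\mathrm{L} G_{t,d} = 0$ (the peaks are isolated points from below) and, by Theorem~\ref{t:assouad}, $\alpha \coloneqq \dim_\mathrm{A} F_{t,d} = \dim_\mathrm{A} G_{t,d} = d$. The claimed formula $h(\theta) = \max\{d-1,\,d^2\theta/(d\theta+t)\}$ is continuous, with the two expressions meeting at $\theta_0$, so the upper and lower versions of the intermediate dimensions will automatically coincide.

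For the lower bounds, I would proceed as follows. The projection of $G_{t,d}$ onto its first $d-1$ coordinates is all of $[0,1]^{d-1}$, and $F_{t,d}$ contains $[0,1]^{d-1}\times\{0\}$, so in both cases $\dim_\mathrm{H} \geq d-1$, hence $\underline{\dim}_\theta \geq d-1$ for all $\theta$. Applying Corollary~\ref{c:lower-bound-general} with $\lambda = 0$, $\alpha = d$ and $h(1) = d^2/(d+t)$ (the box dimension from Theorem~\ref{t:box}) yields, after a direct algebraic simplification, $\underline{\dim}_\theta \geq d^2\theta/(d\theta+t)$ for every $\theta\in(0,1]$. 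Taking the maximum of these two lower bounds gives the claimed formula from below.

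The key step for the upper bound is to show that $\overline{\dim}_{\theta_0} \leq d-1$ by constructing an explicit $(\delta,\theta_0)$-cover. Given $\delta\in(0,1)$, set $N \coloneqq \lceil \delta^{-(d-1)/d}\rceil$. Cover each peak $(p_1/q,\dotsc,p_{d-1}/q,q^{-t})$ with $q\leq N$ by a single ball of diameter $\delta$; there are at most $\lesssim N^d$ such peaks. Then cover the slab $[0,1]^{d-1}\times[0,N^{-t}]$ (which contains all remaining peaks together with the base) by cubes of side $\rho \coloneqq N^{-t}$. A direct computation shows
\begin{equation*}
    \rho = N^{-t} = \delta^{(d-1)t/d} = \delta^{1/\theta_0},
\end{equation*}
so the slab has height exactly $\rho$, requiring only a single layer of $\lesssim \rho^{-(d-1)}$ cubes. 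All sets in this cover have diameter in $[\delta^{1/\theta_0},\delta]$ (possibly after shrinking the cubes by a factor of $\sqrt{d}$). For any $s>d-1$ the $s$-cost is
\begin{equation*}
    \lesssim N^d\delta^s + \rho^{-(d-1)}\rho^s = \delta^{s-(d-1)} + \rho^{s-(d-1)} \longrightarrow 0,
\end{equation*}
so $\overline{\dim}_{\theta_0} F_{t,d} \leq d-1$, and the same holds for $G_{t,d}\subset F_{t,d}$.

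With $\overline{\dim}_{\theta_0} = d-1$ established, monotonicity of $\theta\mapsto\overline{\dim}_\theta$ gives $\overline{\dim}_\theta \leq d-1$ for all $\theta\leq\theta_0$, handling the left half of the formula. For $\phi\in(\theta_0,1]$, apply the upper bound~\eqref{e:thetaphibound} of Theorem~\ref{intermediatects} at $\theta=\theta_0$ with $h(\theta_0)=d-1$, $\lambda=0$, $\alpha=d$:
\begin{equation*}
    \overline{\dim}_\phi \;\leq\; (d-1) + \frac{(d-1)(\phi-\theta_0)}{\phi(d-1)+\theta_0} \;=\; (d-1) + \frac{d\phi - (d-1)t}{d\phi+t} \;=\; \frac{d^2\phi}{d\phi+t},
\end{equation*}
where the middle equality is a direct computation using $\theta_0=(d-1)t/d$. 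This matches the lower bound, completing the proof. The only genuinely non-trivial input beyond Theorems~\ref{t:assouad} and~\ref{t:box} is the cover at $\theta_0$; everything else is forced by the general bounds. The clean cancellation in the final algebraic step — that the critical $\theta_0$ is precisely the point where the general upper bound starting from $h(\theta_0)=d-1$ agrees with the box dimension at $\phi=1$ — is what explains the phase transition appearing in the formula.
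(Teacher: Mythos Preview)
Your approach is correct and is precisely the alternative route the paper records just after its own proof of Theorem~\ref{t:int}: rather than building an explicit two-scale cover for each $\theta>\theta_0$, you establish $\overline{\dim}_{\theta_0}\leq d-1$ by a single cover and then propagate via the bound~\eqref{e:thetaphibound}. Two points need correction.

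First, the chain $\rho = N^{-t} = \delta^{(d-1)t/d} = \delta^{1/\theta_0}$ has a slip in the last equality: since $\theta_0=(d-1)t/d$, you actually get $\rho=\delta^{\theta_0}$, not $\delta^{1/\theta_0}$. Because $\theta_0<1$ this means $\rho>\delta$, so your cubes are the \emph{large} sets and your cover is a $(\rho,\theta_0)$-cover with $\delta=\rho^{1/\theta_0}$, not a $(\delta,\theta_0)$-cover as written. The $s$-cost estimate $N^d\delta^s+\rho^{s-(d-1)}=\delta^{s-(d-1)}+\rho^{s-(d-1)}\to 0$ for $s>d-1$ is unaffected, and the conclusion $\overline{\dim}_{\theta_0}\leq d-1$ stands; only the labelling needs fixing. (The paper's direct cover reverses the roles: it uses balls of size $\delta$ for the slab and $\delta^{1/\theta}$ for the peaks.)

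Second, citing Theorem~\ref{t:assouad} for $\dim_\mathrm{A}=d$ is circular here, since in this thesis the $t<d/(d-1)$ case of that theorem is deduced \emph{from} Theorem~\ref{t:int}. This is harmless: you only need $\dim_\mathrm{A}\leq d$, which is trivial from $F_{t,d}\subset\Rd$. Both Corollary~\ref{c:lower-bound-general} and~\eqref{e:thetaphibound} remain valid with $\alpha$ replaced by any upper bound for the Assouad dimension (the former is decreasing in $\alpha$, the latter increasing), and your algebra shows that with $\alpha=d$ both bounds are already sharp.
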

\begin{proof}
Note that ${\dim_{\mathrm L} F = 0}$ since $F$ has isolated points, and ${\asd F \leq d}$. Therefore combining Theorem~\ref{t:box} with the bound from Corollary~\ref{c:lower-bound-general} proves the lower bound for $\theta > (d-1)t/d$. 
The idea for the upper bound for this range of $\theta$ is to fix a small number $\delta > 0$ and separate $F_t$ into two parts: 
\[ F_{t,d}^{(1)} = F_{t,d} \cap ([0,1]^{d-1} \times [0,\delta^{dt/(d\theta + t)}]); \qquad F_{t,d}^{(2)} = F_{t,d} \cap ([0,1]^{d-1} \times (\delta^{dt/(d\theta + t)},1]). \]
Covering $[0,1]^{d-1} \times [0,\delta^{dt/(d\theta + t)}]$ with balls of size $\delta$ gives   
\[ N_\delta (F_{t,d}^{(1)}) \lesssim \delta^{-(d-1)} \delta^{dt/(d\theta + t) - 1} = \delta^{-d^2\theta/(d\theta+t)}. \]
It follows from a simple cardinality estimate that 
\[ \# F_{t,d}^{(2)} \lesssim (\delta^{-d/(d\theta + t)})^d = \delta^{-d^2/(d\theta + t)}. \]
We can cover each point in $F_{t,d}^{(2)}$ with a ball of size $\delta^{1/\theta}$ and the result now follows from the estimate  
\[ N_\delta (F_{t,d}^{(1)}) \cdot \delta^{d^2\theta/(d\theta + t)} + \# F_{t,d}^{(2)} \cdot (\delta^{1/\theta})^{d^2\theta/(d\theta + t)} \lesssim 1. \]
This proves the upper bound. 

Since the intermediate dimensions are non-decreasing in $\theta$, it follows that $\uid F_{t,d} \leq {\overline{\dim}}_{(d-1)t/d} F_{t,d} \leq d-1$ for all $\theta \in [0,(d-1)t/d]$. Moreover, $\lid G_{t,d} \geq \dim_{\mathrm H} G_{t,d} = d-1$ for all $\theta \in [0,1]$. Since $G_{t,d} \subset F_{t,d}$, we have $\lid G_{t,d} \leq \lid F_{t,d} \leq \uid F_{t,d}$ and $\lid G_{t,d} \leq \uid G_{t,d} \leq \uid F_{t,d}$, and Theorem~\ref{t:int} follows. 
\end{proof}
Note that one could alternatively have proved the upper bound for the intermediate dimensions by combining the simple special case $\dim_{(d-1)t/d} F_{t,d} \leq d-1$ with~\ref{e:thetaphibound}. 
The $t<d/(d-1)$ case of Theorem~\ref{t:assouad} on the Assouad dimension follows by combining Theorem~\ref{t:int} with the general bound~\eqref{c:lower-bound-general}, and the $t \geq d/(d-1)$ case is proved in~\cite{Banaji2022popcorn}. 
As a special case of Theorem~\ref{t:int}, we obtain a formula for the intermediate dimensions of the graph of the popcorn function. 
\begin{corollary}
\[ \dim_{\theta} G_{1,2} = \dim_{\theta} F_{1,2} = \begin{cases}
1 & \mbox{ for } 0 \leq \theta \leq \frac{1}{2}, \\
\frac{4 \theta}{2\theta + 1} & \mbox{ for } \frac{1}{2} < \theta \leq 1.
\end{cases} 
\] 
\end{corollary}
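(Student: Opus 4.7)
This is an immediate specialisation of Theorem~\ref{t:int} and requires no new work. The plan is simply to substitute the parameters $t = 1$ and $d = 2$ into the formula given there and verify that the constants match the statement.

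First I would compute the threshold: with $d = 2$ and $t = 1$, we have $(d-1)t/d = 1/2$, which is exactly the breakpoint appearing in the corollary. On the lower branch $0 \leq \theta \leq 1/2$, Theorem~\ref{t:int} gives $\dim_\theta G_{1,2} = \dim_\theta F_{1,2} = d - 1 = 1$. On the upper branch $1/2 < \theta \leq 1$, it gives
\begin{equation*}
    \dim_\theta G_{1,2} = \dim_\theta F_{1,2} = \frac{d^2 \theta}{d\theta + t} = \frac{4\theta}{2\theta + 1},
\end{equation*}
matching the corollary exactly.

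The only condition to check before applying Theorem~\ref{t:int} is the hypothesis $0 < t < d/(d-1)$, which here reads $0 < 1 < 2$, and this clearly holds. There is no real obstacle and no calculation beyond this substitution.
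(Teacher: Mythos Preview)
Your proposal is correct and matches the paper's approach exactly: the paper's proof is simply ``This is immediate from Theorem~\ref{t:int}.''
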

\begin{proof}
This is immediate from Theorem~\ref{t:int}.
\end{proof}

We see that the intermediate dimensions are constant and equal to the Hausdorff dimension until a phase transition at $\theta = (d-1)t/d$ (which is $\theta = 1/2$ for the popcorn function) and then strictly increasing, concave and analytic. 
This form has not previously been seen in `natural' examples of sets, though we will see in Section~\ref{ctdfracsect} that the intermediate dimensions of certain sets defined using continued fractions have a similar form. 
Note that the graph of the intermediate dimensions is neither convex on the whole domain nor concave on the whole domain. 
The fact that the phase transition takes place at $\theta = 1/2$ for the popcorn function means that for the dimension to increase above the Hausdorff dimension, the sizes of the covering sets need to be restricted to lie in intervals that are smaller than $[\delta^2,\delta]$. 

Recalling the discussion in Section~\ref{s:holderintro}, in the following corollary of Theorem~\ref{t:int}, we apply results of Burrell~\cite{Burrell2022brownian} to give bounds for the box dimension of images of the sets $G_{t,d}$ and $F_{t,d}$ under fractional Brownian motion. 
\begin{corollary}
Fix $d \in \mathbb{N}$ with $d \geq 2$, ${0 < t < d/(d-1)}$ and ${\alpha > (d-1)/d}$. 
If ${B_\alpha \colon \Rd \to \Rd}$ is index-$\alpha$ fractional Brownian motion then the following hold almost surely: 
\begin{align*}
\dim_{\mathrm H} B_\alpha(G_{t,d}) = \dim_{\mathrm H} B_\alpha(F_{t,d}) &= \frac{d-1}{\alpha}, \\
\ubd B_\alpha(G_{t,d}) \leq \ubd B_\alpha(F_{t,d}) &< d.
\end{align*}
\end{corollary}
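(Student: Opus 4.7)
The plan is to combine Theorem~\ref{t:int} with the classical Kahane formula for Hausdorff dimensions of fractional Brownian images and with Burrell's criterion~\eqref{e:burrellbrownian}. The first observation is that setting $\theta = 0$ in Theorem~\ref{t:int} (recalling the convention $\dim_0 = \dim_{\mathrm H}$) gives $\dim_{\mathrm H} G_{t,d} = \dim_{\mathrm H} F_{t,d} = d-1$, so both sets have the same Hausdorff dimension, and moreover $\theta \mapsto \overline{\dim}_\theta F_{t,d}$ is constantly equal to $d-1$ on the right-neighbourhood $[0,(d-1)t/d]$ of~$0$, and therefore is continuous at $\theta = 0$. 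The same holds for $G_{t,d}$.

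For the Hausdorff dimension claim, Kahane's theorem for index-$\alpha$ fractional Brownian motion gives that, for each Borel set $E \subset \Rd$, almost surely
\[
\dim_{\mathrm H} B_\alpha(E) = \min\{d, \alpha^{-1} \dim_{\mathrm H} E\}.
\]
The hypothesis $\alpha > (d-1)/d$ is precisely what ensures $\alpha^{-1}(d-1) < d$, so applying this to $E = G_{t,d}$ and $E = F_{t,d}$ (and recalling from the first paragraph that each has Hausdorff dimension $d-1$) yields the claimed equality to $(d-1)/\alpha$ almost surely.

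For the box dimension claim, the continuity of $\overline{\dim}_\theta F_{t,d}$ at $\theta = 0$ established in the first paragraph puts us in the setting of Burrell's result~\eqref{e:burrellbrownian}, which then directly implies that almost surely $\ubd B_\alpha(F_{t,d}) < d$ provided $\alpha > \dim_{\mathrm H} F_{t,d} / d = (d-1)/d$, and this matches the hypothesis exactly. Since $G_{t,d} \subset F_{t,d}$ implies $B_\alpha(G_{t,d}) \subset B_\alpha(F_{t,d})$, monotonicity of upper box dimension for sets finishes the proof.

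No step is a genuine obstacle: the corollary is set up as a direct black-box application of earlier results. The only subtlety worth flagging is that Burrell's criterion requires continuity of the upper intermediate dimensions at $\theta=0$, and the whole point of computing the piecewise formula in Theorem~\ref{t:int} (rather than merely the Hausdorff and box dimensions separately) is that it exhibits this continuity for free, because the formula is literally constant near $\theta = 0$.
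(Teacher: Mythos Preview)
The proposal is correct and follows essentially the same approach as the paper's proof: both invoke Theorem~\ref{t:int} to obtain $\dim_{\mathrm H} G_{t,d} = \dim_{\mathrm H} F_{t,d} = d-1$ and continuity of the intermediate dimensions at $\theta=0$, then apply Kahane's theorem for the Hausdorff dimension of the image and Burrell's criterion~\eqref{e:burrellbrownian} for the upper box dimension bound. The only minor point you could add for completeness is that $G_{t,d}$ and $F_{t,d}$ are Borel, which the paper notes explicitly when citing Kahane's result.
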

\begin{proof}
The value of the Hausdorff dimension of the fractional Brownian image is a direct consequence of Kahane's general results~\cite[Chapter~18]{Kahane1985fractbrown}, since $G_{t,d}$ and $F_{t,d}$ are Borel. The box dimension result follows from Burrell's~\eqref{e:burrellbrownian}, since the intermediate dimensions of $G_{t,d}$ and $F_{t,d}$ are continuous at $\theta = 0$ by Theorem~\ref{t:int}. 
\end{proof}
It is interesting to note that the condition $\alpha > (d-1)/d$ does not depend on $t$, even though the box dimension of the sets $G_{t,d}$ and $F_{t,d}$ does depend on $t$. 

Corollary~\ref{c:holder} shows that for the sets $G_{t,d}$ and $F_{t,d}$, the intermediate dimensions for $\theta \in (0,1)$ can give better information than either the Hausdorff or box dimensions. 
The bound achieved by different values of $\theta$ for a certain choice of parameters is shown in Figure~\ref{f:holder}. 

\begin{corollary}\label{c:holder}
Suppose $0 < t_1 < t_2 \leq d/(d - 1)$. Then if $f \colon G_{t_2,d} \to \mathbb{R}^{d}$ satisfies $f(G_{t_2,d}) \supseteq G_{t_1,d}$ and is $\alpha$-H\"older, then 
\[ \alpha \leq \frac{(d - 1)t_2 + t_1}{d t_2}. \]
The same holds if $G_{t_1,d}$ is replaced by $F_{t_1,d}$ or $G_{t_2,d}$ is replaced by $F_{t_2,d}$. 
\end{corollary}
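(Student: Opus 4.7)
The plan is to combine the H\"older distortion estimate~\eqref{generalholderint} for the intermediate dimensions with the explicit formula for $\dim_\theta G_{t,d}$ and $\dim_\theta F_{t,d}$ from Theorem~\ref{t:int}, and then optimise over $\theta \in (0,1]$. Since $\alpha$-H\"older maps satisfy $\overline{\dim}_\theta f(F) \leq \alpha^{-1} \overline{\dim}_\theta F$ and the intermediate dimensions are monotone under set inclusion, the hypothesis $f(G_{t_2,d}) \supseteq G_{t_1,d}$ yields
\[
\overline{\dim}_\theta G_{t_1,d} \;\leq\; \overline{\dim}_\theta f(G_{t_2,d}) \;\leq\; \alpha^{-1}\,\overline{\dim}_\theta G_{t_2,d}
\]
for every $\theta \in [0,1]$, and therefore $\alpha \leq \overline{\dim}_\theta G_{t_2,d}/\overline{\dim}_\theta G_{t_1,d}$ whenever the denominator is positive.

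First I would set $\theta_\ast \coloneqq (d-1)t_2/d$, which lies in $(0,1]$ because the standing assumption $t_2 \leq d/(d-1)$ is in force, and satisfies $\theta_\ast > (d-1)t_1/d$ because $t_1 < t_2$. At this choice, Theorem~\ref{t:int} gives $\dim_{\theta_\ast} G_{t_2,d} = d-1$ (the phase transition value) and
\[
\dim_{\theta_\ast} G_{t_1,d} \;=\; \frac{d^2 \theta_\ast}{d\theta_\ast + t_1} \;=\; \frac{d(d-1)t_2}{(d-1)t_2 + t_1}.
\]
Plugging these into the inequality above and solving for $\alpha$ yields exactly $\alpha \leq \bigl((d-1)t_2 + t_1\bigr)/(d t_2)$, as required.

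To see that $\theta_\ast$ is the right choice, one checks that on $[\theta_\ast,1]$ both dimensions are given by $d^2\theta/(d\theta + t_i)$, so the ratio $(d\theta + t_1)/(d\theta + t_2)$ is increasing in $\theta$ and minimised at $\theta_\ast$; while on $[(d-1)t_1/d,\theta_\ast]$ the numerator is constant at $d-1$ and an elementary derivative computation shows that $\frac{(d-1)(d\theta + t_1)}{d^2\theta}$ is decreasing in $\theta$, so again the minimum is attained at the endpoint $\theta_\ast$ (and below that the ratio is trivially $1$). The extension to the cases where $G_{t_1,d}$ is replaced by $F_{t_1,d}$ or $G_{t_2,d}$ by $F_{t_2,d}$ is immediate from Theorem~\ref{t:int}, which asserts equality of the intermediate dimensions of $G_{t,d}$ and $F_{t,d}$, together with the same monotonicity and H\"older estimates.

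There is no real obstacle in this argument; the only subtlety is recognising that the sharp H\"older bound is achieved at the phase transition $\theta_\ast$ of the larger set $G_{t_2,d}$, rather than at $\theta = 0$ (which would give only the trivial Hausdorff bound $\alpha \leq (d-1)/(d-1) = 1$) or $\theta = 1$ (which would give the weaker box-dimension bound $\alpha \leq (d\theta_\ast(d-1) + \text{extra})/(\cdot)$ worse than the one above). This is precisely an instance of the phenomenon described after~\eqref{generalholderint}: for these popcorn-like pyramid sets, intermediate values of $\theta$ yield strictly better H\"older distortion information than either endpoint.
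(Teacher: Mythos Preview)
Your proof is correct and follows essentially the same approach as the paper: choose $\theta_\ast = (d-1)t_2/d$, apply the H\"older distortion estimate~\eqref{generalholderint} together with monotonicity under inclusion, and read off the ratio using Theorem~\ref{t:int}. The only difference is that you additionally verify $\theta_\ast$ is the optimal choice, whereas the paper's proof simply uses this particular $\theta$ (any $\theta$ yields a valid upper bound on $\alpha$) and relegates the optimality observation to a remark after the proof.
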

\begin{proof}
If $\theta = (d-1)t_2/d$ then 
\[ \alpha \leq \frac{\uid G_{t_2,d}}{\uid f(G_{t_2,d})} \leq \frac{\dim_{\theta} G_{t_2,d}}{\dim_{\theta} G_{t_1,d}} = \frac{(d - 1)t_2 + t_1}{d t_2}. \qedhere \]
\end{proof}
It is straightforward to see that the value of $\theta$ which gives the best bound for $\alpha$ in the proof of Corollary~\ref{c:holder} is indeed $\theta = (d-1)t_2/d$ (the largest value $\theta$ for which $\dim_\theta G_{t_2,d} = d-1$). It may be of interest to determine whether the bounds in Corollary~\ref{c:holder} are sharp, but we will not pursue this. 
It follows from Corollary~\ref{c:holder} (and directly from Theorem~\ref{t:box}) that if $0 < t_1 < t_2 \leq d/(d - 1)$ then $G_{t_1,d}$ and $G_{t_2,d}$ are not bi-Lipschitz equivalent. 
\begin{figure}[ht]
\center{\includegraphics[width=1\textwidth]
        {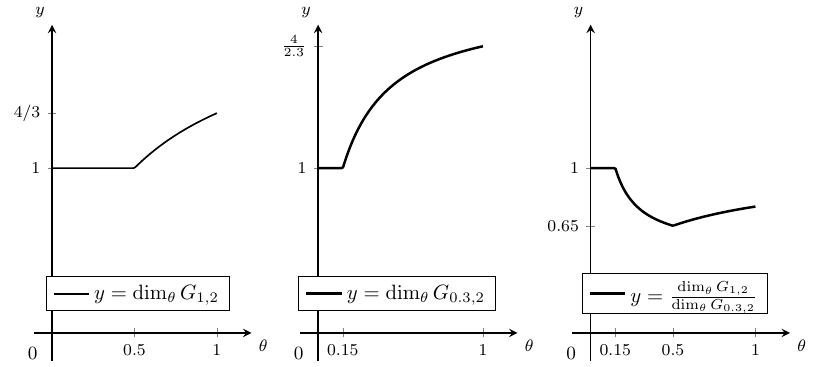}}
        \caption{\label{f:holder}
        Graph of the intermediate dimensions of the popcorn sets from Figure~\ref{f:sets} on page~\pageref{f:sets}, and their ratio (which gives upper bounds on the possible H\"older exponents of surjective maps from $G_{1,2}$ to $G_{0.3,2}$). 
 }
\end{figure}

\section{Moran sets}
\subsection{Definition and dimensions of homogeneous Moran sets}
In this section, we prove the converse direction to Theorem~\ref{it:general-form}. 
The main objects which we use to do so are what we call homogeneous Moran sets. 
The construction of these sets is analogous to the usual $2^d$-corner Cantor set, except that the subdivision ratios need not be the same at each level. 
Because of the nature of these Moran sets, in this chapter it is convenient to make two changes to notation when we are in $\Rd$ which will not affect any definitions or results related to dimensions (see \cite[Equivalent definitions~2.1]{Falconer2014main}). 
Specifically, we fix some $d\in\N$ and work in $\Rd$ equipped with the max norm rather than the Euclidean norm, and we define $N_\delta(F)$ to be the smallest number of sets of diameter $\delta$ needed to cover $F$. 

The construction of homogeneous Moran sets is as follows. 
Fix $\mathcal{I}=\{0,1\}^d$.
We write $\mathcal{I}^*=\bigcup_{n=0}^\infty\mathcal{I}^n$, and denote the word of length $0$ by $\varnothing$.
Suppose we are given a sequence $\bm{r}=(r_n)_{n=1}^\infty$ with $0<r_n\leq 1/2$ for each $n\in\N$.
Then for each $n$ and $\bm{i}\in\mathcal{I}$, we define $S^n_{\bm{i}}\colon\Rd\to\Rd$ by
\begin{equation*}
    S^n_{\bm{i}}(x)\coloneqq r_n x+b^n_{\bm{i}},
\end{equation*}
where $b^n_{\bm{i}}\in\Rd$ has
\begin{equation*}
    (b^n_{\bm{i}})^{(j)} =
    \begin{cases}
        0 &\mbox{ if } \bm{i}^{(j)}=0,\\
        1-r_n & \mbox{ if } \bm{i}^{(j)}=1,
    \end{cases}
\end{equation*} 
recalling that $y^{(j)}$ denotes the $j$\textsuperscript{th} coordinate of a point $y\in\Rd$. 
Given $\sigma=(\bm{i}_1,\dotsc,\bm{i}_n)\in\mathcal{I}^n$, we write $S_\sigma=S^1_{\bm{i}_1}\circ\dotsb\circ S^n_{\bm{i}_n}$.
Then set
\begin{equation}\label{e:definehomogmoran}
    C_n=\bigcup_{\sigma\in\mathcal{I}^n}S_\sigma([0,1]^d)\qquad\text{and}\qquad C=C(\bm{r})\coloneqq\bigcap_{n=1}^\infty C_n.
\end{equation}
We refer to the set $C$ as a \emph{homogeneous Moran set}.
Note that $C_n$ consists of $2^{dn}$ hypercubes each with diameter $\rho_n\coloneqq r_1\dotsm r_n$ (with respect to the max norm). 

Given $\delta>0$, let $k=k(\delta)$ be such that $\rho_k\leq \delta < \rho_{k-1}$.
We then define
\begin{equation}\label{e:morancovernumber}
    s(\delta)=s_{\bm{r}}(\delta)\coloneqq\frac{k(\delta)\cdot d\log 2}{-\log\delta}.
\end{equation}
One can interpret $s(\delta)$ as the best candidate for the `box dimension at scale $\delta$.' 
We now prove the following key covering lemma for intermediate dimensions.
This result essentially shows that the optimal covers for a homogeneous Moran set can be taken to consist of balls all of the same diameter.
\begin{lemma}\label{l:flat-covers}
    Let $\theta\in(0,1]$ be arbitrary.
    Then for all $\delta>0$ sufficiently small, with $t=\inf_{\phi\in[\delta^{1/\theta},\delta]}s(\phi)$,
    \begin{equation*}
        4^{-d}\leq \inf\Bigl\{ \, \sum_{U \in\mathcal{U}} |U|^{t}:\mathcal{U}\text{ is a $(\delta,\theta)$-cover of $C$} \, \Bigr\}\leq 1.
    \end{equation*}
\end{lemma}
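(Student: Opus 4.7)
The strategy is to exploit the fact that $C$ has exact self-similar structure at each fixed scale, so the natural uniform probability measure $\mu$ on $C$ (which assigns mass $2^{-dn}$ to every level-$n$ cube) together with the explicit level-$k$ Moran covers will give both bounds, with the function $s$ playing the role of a scale-dependent ``correct exponent.'' The key identity I will use throughout is
\[
\ell^{s(\ell)} = 2^{-d\,k(\ell)},
\]
which is immediate from the definition~\eqref{e:morancovernumber}.

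\textbf{Lower bound.} The crucial geometric estimate is that any set $U\subset\Rd$ of diameter $\ell$ meets at most $4^d$ of the $2^{dk(\ell)}$ cubes composing $C_{k(\ell)}$. This holds because the level-$k(\ell)$ cubes are disjoint max-norm cubes of side $\rho_{k(\ell)}$ arranged on a sub-grid of $[0,1]^d$, and $U$ has diameter $\ell<\rho_{k(\ell)-1}$, so a simple pigeonhole in each coordinate gives the $4^d$ bound. Consequently
\[
\mu(U)\leq 4^d\cdot 2^{-d k(\ell)} = 4^d \ell^{s(\ell)}.
\]
Now, given a $(\delta,\theta)$-cover $\mathcal{U}$, every $U\in\mathcal{U}$ has $|U|\in[\delta^{1/\theta},\delta]$, so $s(|U|)\geq t$ and (for $\delta<1$) $|U|^{s(|U|)}\leq |U|^t$. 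Therefore
\[
1=\mu(C)\leq\sum_{U\in\mathcal{U}}\mu(U)\leq 4^d\sum_{U\in\mathcal{U}}|U|^t,
\]
which gives the lower bound $4^{-d}$.

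\textbf{Upper bound.} I will exhibit an explicit cover realising cost at most $1$ by case analysis on where the infimum $t$ is attained. The function $s$ is continuous and strictly decreasing on each interval $(\rho_k,\rho_{k-1}]$ (as $\phi$ decreases) with an upward jump at each $\rho_k$, so on $[\delta^{1/\theta},\delta]$ its infimum is attained either at some $\rho_{k^*}\in[\delta^{1/\theta},\delta]$ or at the left endpoint $\phi^*=\delta^{1/\theta}$. In the first case, take the natural level-$k^*$ Moran cover consisting of $2^{dk^*}$ cubes of diameter $\rho_{k^*}\in[\delta^{1/\theta},\delta]$; its $t$-cost is
\[
2^{dk^*}\rho_{k^*}^{t}=2^{dk^*}\rho_{k^*}^{s(\rho_{k^*})}=1
\]
by the key identity. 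In the second case, write $k'=k(\delta^{1/\theta})$ and cover $C$ by $2^{dk'}$ max-norm cubes of diameter $\delta^{1/\theta}$, one around each level-$k'$ cube (these exist since $\rho_{k'}\leq\delta^{1/\theta}$); the $t$-cost is $2^{dk'}(\delta^{1/\theta})^{t}=2^{dk'}(\delta^{1/\theta})^{s(\delta^{1/\theta})}=1$ by the same identity applied at $\ell=\delta^{1/\theta}$.

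\textbf{Main obstacle.} The only subtle point is verifying that the infimum $t$ is always attained at one of the distinguished points $\{\rho_k\}\cup\{\delta^{1/\theta}\}$, and that in each case the corresponding Moran-type cover sits inside the allowed diameter window $[\delta^{1/\theta},\delta]$. This is really a matter of carefully analysing the sawtooth behaviour of $s$ and requires $\delta$ to be small enough that $k(\delta^{1/\theta})\geq 1$; everything else is bookkeeping with the identity $\ell^{s(\ell)}=2^{-dk(\ell)}$ and the $4^d$ grid-packing estimate.
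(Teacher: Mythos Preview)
Your proof is correct and follows essentially the same approach as the paper: the lower bound via the uniform Bernoulli measure and the $4^d$ intersection estimate is identical, and the upper bound exhibits a cover of $2^{dk(\phi)}$ cubes of diameter $\phi$ at a scale where the infimum $t=s(\phi)$ is attained. The only cosmetic difference is that the paper avoids your case split by observing directly that (since $s$ is continuous and increasing on each $[\rho_k,\rho_{k-1})$) there is some $\phi\in[\delta^{1/\theta},\delta]$ with $t=s(\phi)$, and then building the $\phi$-scale corner-cube cover uniformly in $\phi$; your two cases are just $\phi=\rho_{k^*}$ and $\phi=\delta^{1/\theta}$.
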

\begin{proof}
    We first prove the lower bound. 
    Let $\mu$ denote the uniform Bernoulli measure on $C$, i.e. the measure which gives mass $2^{-dn}$ to each hypercube in $C_n$ for all $n$. 
    Let $U$ be a set with $\delta^{1/\theta}\leq |U|\leq\delta$, and let $k$ be such that $\rho_{k}\leq |U|<\rho_{k-1}$.
    Note that $|U|^{s(|U|)}=2^{-kd}$.
    Then since $U$ intersects at most $4^d$ hypercubes in $C_k$,
    \begin{equation*}
        \mu(U)\leq 4^d\cdot 2^{-kd}= 4^d\cdot |U|^{s(|U|)}\leq 4^d|U|^t.
    \end{equation*}
    In particular, if $\mathcal{U}$ is an arbitrary $(\delta,\theta)$-cover of $C$,
    \begin{equation*}
        1=\mu(C)\leq\sum_{U\in\mathcal{U}}\mu(U)\leq 4^d\sum_{U\in\mathcal{U}}|U|^t
    \end{equation*}
    so that $\sum_{U \in\mathcal{U}} |U|^{t}\geq 4^{-d}$. 
    
    For the upper bound, since $s(\delta)$ is continuous and increasing on each interval $[\rho_k,\rho_{k-1})$, there is $\phi\in[\delta^{1/\theta},\delta]$ such that $t=s(\phi)=\frac{k(\phi)\cdot d\log 2}{-\log\phi}$.
    For each $y=(j_1,\dotsc,j_d)\in\{0,1\}^d$ and $\sigma\in\mathcal{I}^*$, let $E_{\sigma,\phi}(y)$ denote the hypercube with side length $\phi$ contained in $S_\sigma([0,1]^d)$, with edges aligned with the coordinate axes, and containing the point $S_\sigma(y)$.
    Since $\phi\geq\rho_{k(\phi)}$,
    \begin{equation*}
        \mathcal{V}\coloneqq\bigcup_{\sigma\in\mathcal{I}^{k(\phi)-1}}\{ \, E_{\sigma,\phi}(y):y\in\{0,1\}^d \, \}
    \end{equation*}
    is a cover for $C_{k(\phi)}$, and therefore $C$, consisting of $2^{k(\phi)\cdot d}$ hypercubes each with diameter $\phi$.
    Thus $\sum_{V \in\mathcal{V}} |V|^{t}=1$.
\end{proof}
A direct application is the following formula for the intermediate dimensions of $C$.
\begin{prop}\label{p:int-dim}
    Let $C$ be a homogeneous Moran set as above. For all $\theta\in(0,1]$,
    \begin{equation*}
        \overline{\dim}_\theta C = \limsup_{\delta\to 0}\bigl(\inf_{\phi\in[\delta^{1/\theta},\delta]}s(\phi)\bigr), \qquad \mbox{in particular } \ubd C = \limsup_{\delta \to 0} s(\delta),
    \end{equation*}
    and
    \begin{equation*}
        \dimH C = \underline{\dim}_\theta C = \underline{\dim}_{\mathrm B} C=\liminf_{\delta\to 0}s(\delta).
    \end{equation*}
\end{prop}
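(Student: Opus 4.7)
The plan is to derive both formulas from Lemma~\ref{l:flat-covers} together with the classical mass distribution principle and the natural level-$k$ cover of $C$. Write $t(\delta,\theta) \coloneqq \inf_{\phi \in [\delta^{1/\theta},\delta]} s(\phi)$, so Lemma~\ref{l:flat-covers} says that for all sufficiently small $\delta$ the infimum of the $t(\delta,\theta)$-cost over $(\delta,\theta)$-covers of $C$ lies in $[4^{-d},1]$.

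For the upper intermediate dimension formula, I would first take $s > \limsup_{\delta \to 0} t(\delta,\theta)$, pick $\eta > 0$ with $s - t(\delta,\theta) \geq \eta$ for all small $\delta$, and use Lemma~\ref{l:flat-covers} to extract a $(\delta,\theta)$-cover $\mathcal{U}$ with $t(\delta,\theta)$-cost at most $1$; then
\[ \sum_{U \in \mathcal{U}} |U|^s \leq \delta^{s - t(\delta,\theta)} \sum_{U \in \mathcal{U}} |U|^{t(\delta,\theta)} \leq \delta^\eta \to 0, \]
which gives $\uid C \leq s$. Conversely, for $s < \limsup_{\delta \to 0} t(\delta,\theta)$, I would choose $\delta_n \to 0$ along which $t(\delta_n,\theta) \to \limsup_{\delta \to 0} t(\delta,\theta)$, so $t(\delta_n,\theta) - s \geq \eta$ eventually for some $\eta > 0$. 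Every $(\delta_n,\theta)$-cover $\mathcal{U}_n$ of $C$ has $t(\delta_n,\theta)$-cost at least $4^{-d}$, and since $s - t(\delta_n,\theta) < 0$ and $|U| \leq \delta_n$,
\[ \sum_{U \in \mathcal{U}_n} |U|^s \geq \delta_n^{s - t(\delta_n,\theta)} \sum_{U \in \mathcal{U}_n} |U|^{t(\delta_n,\theta)} \geq 4^{-d} \delta_n^{-\eta} \to \infty, \]
forcing $\uid C \geq s$. Specialising to $\theta = 1$ gives $\dimuB C = \limsup_{\delta \to 0} s(\delta)$.

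For the second chain of equalities, by~\eqref{e:dimrelations} it suffices to prove $\dimH C \geq \liminf_{\delta \to 0} s(\delta)$ and $\dimlB C \leq \liminf_{\delta \to 0} s(\delta)$. The upper inequality is immediate from the natural cover: $C \subseteq C_{k(\delta)}$ consists of $2^{d k(\delta)}$ hypercubes of diameter $\rho_{k(\delta)} \leq \delta$, so $N_\delta(C) \leq 2^{d k(\delta)}$ and hence $\frac{\log N_\delta(C)}{-\log \delta} \leq s(\delta)$. For the lower inequality, I would fix $s < \liminf_{\delta \to 0} s(\delta)$ and invoke the bound $\mu(U) \leq 4^d |U|^{s(|U|)}$ for the uniform Bernoulli measure $\mu$ (extracted from the proof of Lemma~\ref{l:flat-covers}); since $s(|U|) > s$ for $|U|$ small, this reads $\mu(U) \leq 4^d |U|^s$, and the classical mass distribution principle yields $\dimH C \geq s$.

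The main subtlety lies in the lower bound for $\uid C$: Lemma~\ref{l:flat-covers} controls only the $t(\delta,\theta)$-cost, not the $s$-cost directly, so one must convert exponents using the diameter bound $|U| \leq \delta$. This conversion goes in the right direction precisely because $s < t(\delta,\theta)$ makes $|U|^{s-t}$ attain its minimum at $|U| = \delta$; the rest of the argument is routine bookkeeping with $\liminf$ and $\limsup$ together with the standard dimension inequalities.
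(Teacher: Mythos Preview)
Your proof is correct and takes essentially the same approach as the paper, which simply writes ``This is immediate from Lemma~\ref{l:flat-covers}.'' You have filled in exactly the details the paper suppresses: the exponent-conversion estimates using $|U|\leq\delta$ for the upper formula, and for the second chain the sandwich via~\eqref{e:dimrelations} with the natural level-$k(\delta)$ cover above and the uniform-measure bound $\mu(U)\leq 4^d|U|^{s(|U|)}$ from the proof of Lemma~\ref{l:flat-covers} below.
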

\begin{proof}
This is immediate from Lemma~\ref{l:flat-covers}. 
\end{proof}

\subsection{Constructing homogeneous Moran sets}\label{ss:Moran-cnst}

In this section, we establish a general strategy for constructing homogeneous Moran sets, which will be used to show that the bounds from Section~\ref{s:generalbounds} are sharp. 
We introduce the following definition, which is in some sense analogous to the definition of $\mathcal{H}(\lambda,\alpha)$.
\begin{definition}\label{d:g-class}
    Given $0\leq\lambda<\alpha\leq d$, we write $\mathcal{G}(\lambda,\alpha)$ to denote the functions $g\colon\R\to(\lambda,\alpha)$ which are continuous and satisfy
    \begin{equation*}
        \diniu{+}g(x)\in[\lambda-g(x),\alpha-g(x)]
    \end{equation*}
    for all $x\in\R$.
\end{definition}
We will essentially show that for any function $g\in\mathcal{G}(0,d)$, there exists a homogeneous Moran set such that $s(\delta)\approx g(\log\log(1/\delta))$.
The transformation $\delta\mapsto\log\log(1/\delta)$ is useful since it converts the exponentiation map $\delta\mapsto\delta^{1/\theta}$ into addition $x\mapsto x+\log(1/\theta)$.
In order to construct such a set, it suffices to define the corresponding contraction ratios by `discretising' the function $g$.
In particular, in Lemma~\ref{l:exact-construction}, we show that there exists a sequence of contractions $\bm{r} = (r_n)_{n=1}^{\infty}$ such that the corresponding covering numbers $s_{\bm{r}}(\delta)$ (recall~\eqref{e:morancovernumber}) are close to $g(\log\log(1/\delta))$ in the precise sense given in~\eqref{e:g-disc}.
Of course, depending on the choice of the function $g$, this bound may be impossible to attain for small $x$.
Thus we begin with a function $\tilde{g}$ and then translate it by some constant amount.
The contraction ratios are then used to define a corresponding Moran set $C$, and~\eqref{e:g-disc} is useful to prove dimension results for the Moran set $C$.
Then, in Sections~\ref{ss:pres-upper} and~\ref{ss:pres}, we use this technique to construct Moran sets with the desired properties.
In Theorem~\ref{t:upper-h-form}, we construct the function $g$ depending on some $h\in\mathcal{H}(\lambda,\alpha)$ such that the corresponding homogeneous Moran set has the desired dimension formulas.
This construction is also used in Theorem~\ref{t:gen-h-form}, where we use the sequence of contraction ratios provided by Lemma~\ref{l:exact-construction} directly.
Here, translations of the function $g$ are used to define an \emph{inhomogeneous} Moran set which `locally' looks like the homogeneous Moran set $C$, but with a much greater amount of uniformity between scales (so that the intermediate dimensions exist).
Finally, these results are combined in Corollary~\ref{c:upper-lower-match} to obtain a proof of Theorem~\ref{it:general-form}. 

We now begin to describe our general strategy for constructing homogeneous Moran sets. 
\begin{lemma}\label{l:scale-bound}
    Let $0\leq\lambda<\alpha\leq d$ and let $g \colon \R \to (\lambda,\alpha)$.
    Then $g\in\mathcal{G}(\lambda,\alpha)$ if and only if for all $x_0\in\R$ and $x>0$,
    \begin{equation*}
        \lambda-(\lambda-g(x_0))\exp(-x)\leq g(x_0+x)\leq \alpha-(\alpha-g(x_0))\exp(-x).
    \end{equation*}
\end{lemma}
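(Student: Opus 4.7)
The plan is to reformulate both inequalities as monotonicity of two natural auxiliary functions. Specifically, I introduce
\begin{equation*}
    u(x)\coloneqq(\alpha-g(x))e^{x}\qquad\text{and}\qquad v(x)\coloneqq(g(x)-\lambda)e^{x}.
\end{equation*}
A direct manipulation shows that the right-hand inequality $g(x_0+x)\leq\alpha-(\alpha-g(x_0))e^{-x}$ for all $x_0\in\R$ and $x>0$ is exactly the statement that $u$ is non-decreasing on $\R$, and similarly the left-hand inequality is equivalent to $v$ being non-decreasing on $\R$. So the lemma reduces to showing that $g\in\mathcal{G}(\lambda,\alpha)$ if and only if both $u$ and $v$ are non-decreasing.

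For each of the two auxiliary functions I will relate its monotonicity to a one-sided Dini inequality for $g$. Writing out the difference quotient for $u$ and splitting the product,
\begin{equation*}
    \frac{u(x+\epsilon)-u(x)}{\epsilon}=e^{x}(\alpha-g(x+\epsilon))\frac{e^{\epsilon}-1}{\epsilon}-e^{x}\frac{g(x+\epsilon)-g(x)}{\epsilon},
\end{equation*}
and using continuity of $g$ together with $\limsup(A_\epsilon-B_\epsilon)=\lim A_\epsilon-\liminf B_\epsilon$ whenever the first term has a genuine limit, I compute
\begin{equation*}
    \diniu{+}u(x)=e^{x}\bigl((\alpha-g(x))-\dinil{+}g(x)\bigr),
\end{equation*}
and analogously $\diniu{+}v(x)=e^{x}\bigl((g(x)-\lambda)+\diniu{+}g(x)\bigr)$. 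Hence $\diniu{+}u\geq 0$ everywhere is equivalent to $\dinil{+}g(x)\leq\alpha-g(x)$ for all $x$, and $\diniu{+}v\geq 0$ everywhere is equivalent to $\diniu{+}g(x)\geq\lambda-g(x)$ for all $x$.

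To pass between $\diniu{+}\geq 0$ and monotonicity I use Lemma~\ref{l:c1-bound}: applying it with $f=u$ and the constant function $g\equiv u(x_0)$ on any interval $[x_0,b]$ yields that $\diniu{+}u\geq 0$ on $\R$ implies $u$ is non-decreasing; the reverse implication is immediate from the definition of $\diniu{+}$. Finally, Lemma~\ref{l:g-check} guarantees that the two one-sided Dini conditions $\dinil{+}g(x)\leq\alpha-g(x)$ and $\diniu{+}g(x)\geq\lambda-g(x)$ together are equivalent to the full condition $\diniu{+}g(x)\in[\lambda-g(x),\alpha-g(x)]$ defining $\mathcal{G}(\lambda,\alpha)$. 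Combining these equivalences closes the loop and proves the lemma.

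I do not expect any real obstacle here: the calculation of $\diniu{+}u$ and $\diniu{+}v$ is routine once one notices that the $\frac{e^{\epsilon}-1}{\epsilon}$ factor has a limit and so contributes no $\limsup/\liminf$ asymmetry, and every monotonicity-from-Dini-derivative step is already packaged in Lemma~\ref{l:c1-bound}. The only mild care needed is in switching between $\diniu{+}$ and $\dinil{+}$, which is precisely what Lemma~\ref{l:g-check} supplies.
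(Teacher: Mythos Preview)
Your argument is correct and is exactly the natural way to unpack the paper's one-line proof (``a direct application of Lemma~\ref{l:c1-bound}''): the auxiliary functions $u,v$ are precisely what turn the comparison with the exponential solutions into a monotonicity statement to which Lemma~\ref{l:c1-bound} applies. One small point to add for the ``if'' direction: since continuity of $g$ is part of the definition of $\mathcal{G}(\lambda,\alpha)$ and is used in your computation of $\diniu{+}u$, you should note that the two-sided inequality forces $g$ to be continuous (squeeze as $x\to 0^+$ for right-continuity, and apply the inequality at $x_0-x$ for left-continuity); equivalently, $u$ and $v$ are non-decreasing with $u+v=(\alpha-\lambda)e^x$ continuous, which forces both to be continuous.
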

\begin{proof}
    This is a direct application of Lemma~\ref{l:c1-bound}.
\end{proof}
\begin{definition}
    Given a sequence of functions $(f_k)_{k=1}^\infty$ each defined on some interval $(0,a_k]$, the \emph{concatenation} of $(f_k)_{k=1}^\infty$ is the function $f\colon(-\infty,\sum_{k=1}^\infty a_k)\to\R$ given as follows: for each $x>0$ with $\sum_{j=0}^{k-1} a_j<x\leq\sum_{j=0}^{k}a_j$ where $a_0=0$, we define
    \begin{equation*}
        f(x)=f_k\left(x-\sum_{j=0}^{k-1} a_j\right),
    \end{equation*}
    and for $x\leq 0$ we define $f(x)=f_1(0)$.
\end{definition}
Given a function $g\in\mathcal{G}(\lambda,\alpha)$ and $w\in\R$, we define the \emph{offset} $\kappa_w(g)\in\mathcal{G}(\lambda,\alpha)$ by
\begin{equation*}
    \kappa_w(g)(x)=\begin{cases}
        g(x-w) &: x\geq w,\\
        g(0) &: x< w.
    \end{cases}
\end{equation*}
We also say that a function $g\in\mathcal{G}(\lambda,\alpha)$ is \emph{rapidly decreasing} if there exists $y\in\R$ and a constant $C>0$ such that for all $x\geq y$,
\begin{equation}\label{e:rapidly-decreasing}
    g(x)\leq g(y)\exp(y-x)+C\exp(-x).
\end{equation}
Note that for all $w\in\R$, $\kappa_w(g)$ is rapidly decreasing if and only if $g$ is rapidly decreasing.

The following technical lemma is stated to be useful in the proof of Theorem~\ref{t:gen-h-form}, where many offsets of the same function will be required.
\begin{lemma}\label{l:exact-construction}
    Let $0\leq\lambda<\alpha\leq d$ and let $\tilde{g}\in\mathcal{G}(\lambda,\alpha)$.
    Suppose $\tilde{g}$ is not rapidly decreasing.
    Then there is a constant $w_0\in\R$ depending only on $\tilde{g}(0)$ and $d$ such that for all $w\geq w_0$, there exists a sequence $\bm{r}\coloneqq (r_j)_{j=1}^\infty \in (0,1/2]^{\N}$ such that $g\coloneqq\kappa_{w}(\tilde{g})$ satisfies
    \begin{equation}\label{e:g-disc}
        |s_{\bm{r}}(\exp(-\exp(x)))-g(x)|\leq d\log(2)\cdot\exp(-x)
    \end{equation}
    for all $x\geq w_0$.
\end{lemma}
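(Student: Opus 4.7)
The plan is to reverse-engineer the contraction ratios from $g$ via the change of variable $\delta = \exp(-\exp(x))$, so that $-\log\delta = \exp(x)$ and $s_{\bm r}(\delta) = k(\delta)\cdot d\log 2 / \exp(x)$. To make this approximate $g(x)$, the integer $k(\delta)$ must track $g(x)\exp(x)/(d\log 2)$. I would therefore set, for each integer $j\geq 1$,
\[
y_j := \inf\{\,x\in\R : g(x)\exp(x)\geq j\cdot d\log 2\,\},
\]
and then define $\rho_j := \exp(-\exp(y_j))$ and $r_j := \rho_j/\rho_{j-1}$ (with $\rho_0 := 1$). By construction, $\delta=\exp(-\exp(x))$ lies in $[\rho_j,\rho_{j-1})$ exactly when $x\in(y_{j-1},y_j]$, in which case $k(\delta)=j$ and $s_{\bm r}(\delta) = g(y_j)\exp(y_j - x)$, which is the desired discretisation of $g$.

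The first step is to check that the $y_j$ are well-defined, finite, strictly increasing, and tend to infinity. Using that $g\in\mathcal{G}(\lambda,\alpha)$, the product-rule computation gives $\dinil{+}(g(x)\exp(x)) = \exp(x)\bigl(\dinil{+}g(x)+g(x)\bigr)\geq\lambda\exp(x)\geq 0$, so by Lemma~\ref{l:c1-bound} the function $g\cdot\exp$ is continuous and non-decreasing. The assumption that $\tilde g$ is not rapidly decreasing is precisely the statement that $\tilde g(x)\exp(x)$ is unbounded in $x$; since $g=\kappa_w(\tilde g)$ satisfies $g(x)\exp(x)=\exp(w)\cdot\tilde g(x-w)\exp(x-w)$ for $x\geq w$, it follows that $g(x)\exp(x)\to\infty$ as $x\to\infty$. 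Hence each $y_j$ is finite, and the strict equality $g(y_j)\exp(y_j)=j\cdot d\log 2$ together with continuity forces $y_{j-1}<y_j$.

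For the bound $r_j\leq 1/2$ with $j\geq 2$, I would use $\diniu{+}g(x)\leq\alpha-g(x)\leq d-g(x)$ to obtain $\diniu{+}(g(x)\exp(x))\leq d\exp(x)$. Applying Lemma~\ref{l:c1-bound} to the difference $g(x)\exp(x)-d\exp(x)$ on $[y_{j-1},y_j]$ yields
\[
d\log 2 \;=\; g(y_j)\exp(y_j)-g(y_{j-1})\exp(y_{j-1}) \;\leq\; d\bigl(\exp(y_j)-\exp(y_{j-1})\bigr),
\]
so $\exp(y_j)-\exp(y_{j-1})\geq\log 2$ and therefore $r_j=\exp(\exp(y_{j-1})-\exp(y_j))\leq 1/2$. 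The $j=1$ case is handled by the offset: setting $w_0 := \log(d\log 2/\tilde g(0))$ (which depends only on $\tilde g(0)$ and $d$), any $w\geq w_0$ places $y_1$ inside the flat regime where $g\equiv\tilde g(0)$, forcing $y_1=\log(d\log 2/\tilde g(0))$; then $\exp(y_1)=d\log 2/\tilde g(0)\geq\log 2$ since $\tilde g(0)\leq\alpha\leq d$, and hence $r_1\leq 1/2$.

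To finish, for $x\geq w_0$ lying in some interval $(y_{j-1},y_j]$, monotonicity of $g\cdot\exp$ sandwiches $g(x)\exp(x)$ between $(j-1)\cdot d\log 2$ and $j\cdot d\log 2$ (between $0$ and $d\log 2$ in the case $j=1$), so
\[
0\;\leq\;s_{\bm r}(\exp(-\exp(x))) - g(x) \;=\; \frac{g(y_j)\exp(y_j)-g(x)\exp(x)}{\exp(x)}\;\leq\;\frac{d\log 2}{\exp(x)},
\]
which is exactly the required inequality~\eqref{e:g-disc}. The main subtlety is the coordination between $w_0$ and the first scale: ensuring $r_1\leq 1/2$ requires a lower bound on $y_1$, and the only way to make this depend solely on $\tilde g(0)$ and $d$ is to anchor $y_1$ inside the constant regime of $g$ by taking $w\geq w_0$. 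Every other step is handled uniformly by the two one-sided Dini comparisons, and the analogous arguments work verbatim when $\lambda=0$, where the monotonicity becomes non-strict but still yields all the required bounds.
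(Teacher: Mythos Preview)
Your proof is correct and follows essentially the same strategy as the paper: both identify the jump scales as the points where $g(x)\exp(x)$ crosses successive integer multiples of $d\log 2$. The paper phrases this recursively via a function $\psi$ satisfying $g(\psi(y))\exp(\psi(y))=g(y)\exp(y)+d\log 2$, whereas you define the levels directly as $y_j=\inf\{x:g(x)\exp(x)\geq jd\log 2\}$; these produce the same sequence up to an index shift, and your $w_0=\log(d\log 2/\tilde g(0))$ is slightly smaller than the paper's $\log(2d\log 2/\tilde g(0))$, which only strengthens the conclusion. Your explicit identification of ``not rapidly decreasing'' with unboundedness of $g\cdot\exp$, and the clean sandwich $0\leq s_{\bm r}-g\leq d\log 2\,e^{-x}$ from monotonicity of $g\cdot\exp$, make the argument a bit more transparent than the paper's inductive verification, but the underlying mathematics is the same.
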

\begin{proof}
	Noting that $\tilde{g}(0)\in(0,d)$, choose $r_1$ such that $\frac{2d\log(2)}{\log(1/r_1)}=\tilde{g}(0)$.
    Then let $w_0=\log\log(1/r_1)$, let $w\geq w_0$ be arbitrary, and let $g=\kappa_w(\tilde{g})$.
	Since $\tilde{g}$ is not rapidly decreasing, $g$ is also not rapidly decreasing, so by \eqref{e:rapidly-decreasing}, for every $y\in\R$ there exists a minimal $\psi(y)>y$ such that
    \begin{equation*}
        g(y)\exp(y-\psi(y))=g(\psi(y))-d\log(2)\cdot\exp(-\psi(y)).
    \end{equation*}
    By Lemma~\ref{l:scale-bound}, for all $y\in \R$, $g(\psi(y))\leq d-(d-g(y))\exp(-\psi(y) + y)$, which after algebraic manipulation yields $\psi(y) \geq \log ( e^y + \log 2 )$, i.e. $\exp(-\exp(\psi(y))) \leq \exp(-\exp(y))/2$. 
    
    Now set $x_1=w_0$ and, inductively, set $x_{k+1}=\psi(x_k)$ for each $k\in\N$.
    Let $\rho_k=\exp(-\exp(x_k))$ denote the corresponding scales (noting that $\rho_1 = r_1$), and set $r_k\coloneqq\rho_k/\rho_{k-1}$ for $k\geq 2$.
    Observe that $\rho_{k+1}\leq\rho_k/2$ and $r_k \in (0,1/2]$ for all $k \in \N$. 
    Thus for $0<\delta\leq r_1$, if $k$ is such that $\rho_k<\delta\leq\rho_{k-1}$, we set
    \begin{equation*}
        \overline{s}(\delta)=\frac{kd\log 2}{-\log\delta}.
    \end{equation*}
    
    It suffices to prove by induction that for each $k\in\N$ we have $\overline{s}(\rho_k)=g(x_k)$, and
    \begin{equation}\label{e:bound}
        g(x)-d\log(2)\exp(-x)\leq\overline{s}(\exp(-\exp(x)))\leq g(x) \qquad \mbox{ for all } x\in[x_1,x_k]. 
    \end{equation}
    We first note that $\overline{s}(\rho_1)=g(x_1) = \tilde{g}(0)$ by construction. 
    In general, suppose the hypothesis holds for $k\in\N$.
    By the definition of $\psi$ and the fact that $g(x_k)=\overline{s}(\rho_k)$,
    \begin{align*}
        g(x_{k+1}) &=\overline{s}(\rho_k)\exp(-x_{k+1}+x_k)+d\log(2)\exp(-x_{k+1})\\
                   &=\frac{d(k+1)\log 2}{\exp(x_k)}\cdot\exp(-x_{k+1})\exp(x_k)+d\log(2)\exp(-x_{k+1})\\
                   &= \frac{d(k+2)\log 2}{\exp(x_{k+1})}\\*
                   &= \overline{s}(\rho_{k+1}).
    \end{align*}
    Moreover, by Lemma~\ref{l:scale-bound}, $g(x)\geq g(x_k)\exp(-x+x_k)$ for all $x\geq x_k$, so~\eqref{e:bound} follows for $x\in[x_k,x_{k+1}]$ by the minimality of $x_{k+1}$ in the definition of $\psi$. 
\end{proof}

We make several observations about Lemma~\ref{l:exact-construction}, which be will used in the proof of Theorems~\ref{t:upper-h-form} and~\ref{t:gen-h-form}. 
\begin{rem}\label{r:exactconstruction}
\begin{enumerate}[label=(\roman*)]
\item\label{i:limsuppositive} If the continuous function $\tilde{g}$ satisfies $\limsup_{x\to\infty} \tilde{g}(x)>0$ then $\tilde{g}$ is not rapidly decreasing. 

\item
    If, contrary to the assumption of Lemma~\ref{l:exact-construction}, $g$ is rapidly decreasing, then the function $g$ is decays faster than any function $s_{\bm{r}}$ for a sequence $\bm{r}  \in (0,1/2]^{\N}$.
    
\item Since $s(\delta)$ has discontinuities of size $\frac{d\log 2}{\log(1/\delta)}$, the gap of $\overline{s}(\exp(-\exp(x)))$ between the lower and upper bounds in~\eqref{e:bound} cannot be reduced. 
\end{enumerate}
\end{rem}
We now use the sequence $\bm{r}$ constructed in the previous lemma to define a homogeneous Moran set $C$, and prove that it satisfies the correct properties.
Recall that $\mathcal{G}$ is defined in Definition~\ref{d:g-class} and homogeneous Moran sets are defined in~\eqref{e:definehomogmoran}. 
\begin{lemma}\label{l:Moran-formula}
    Let $g\in\mathcal{G}(0,d)$, and suppose $\bm{r}=(r_j)_{j=1}^\infty  \in (0,1/2]^{\N}$ is such that 
    \begin{equation}\label{e:tight-bound}
        |s_{\bm{r}}(\exp(-\exp(x)))-g(x)|\leq d\log(2)\cdot\exp(-x)
    \end{equation}
    for all $x$ sufficiently large. 
    Then the corresponding homogeneous Moran set $C=C(\bm{r})\subset\R^d$ satisfies:
    \begin{enumerate}
        \item\label{i:upperintmoran} $\displaystyle\overline{\dim}_\theta C =\limsup_{x\to\infty}\left(\inf_{y\in[x,x+\log(1/\theta)]}g(y)\right)$ for $\theta\in(0,1]$,
        \item\label{i:lowerintmoran} $\displaystyle\underline{\dim}_\theta C=\dimH C=\liminf_{x\to\infty}g(x)$ for $\theta\in(0,1]$,
        \item\label{i:assouadmoran} $\dimA C\leq\limsup_{x\to\infty}(\diniu{+}g(x)+g(x))$, and
        \item\label{i:lowermoran} $\dimL C\geq\liminf_{x\to\infty}(\diniu{+}g(x)+g(x))$.
    \end{enumerate}
    Moreover, suppose $\psi \colon \R \to \R^+$ is any function such that $\exp(\psi(x))-\exp(x) \to \infty$ as $x \to \infty$. 
    Then
    \begin{enumerate}[resume]
        \item\label{i:A-dim-block} $\displaystyle\dimA C\geq\limsup_{x\to\infty}\left(\inf_{y\in[x,\psi(x)]}(\diniu{+}g(y)+g(y))\right)$, and
        \item\label{i:L-dim-block} $\displaystyle\dimL C\leq\liminf_{x\to\infty}\left(\sup_{y\in[x,\psi(x)]}(\diniu{+}g(y)+g(y))\right)$.
    \end{enumerate}
\end{lemma}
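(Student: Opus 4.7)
My plan is to push every claim through the change of variables $X = \log(1/\delta)$ and $x = \log X$, under which the hypothesis~\eqref{e:tight-bound} becomes $s_{\bm{r}}(\delta) = g(\log\log(1/\delta)) + O(1/\log(1/\delta))$, with error vanishing as $\delta \to 0^+$. Claims~\ref{i:upperintmoran} and~\ref{i:lowerintmoran} then follow at once from Proposition~\ref{p:int-dim}: the substitution $\phi = \exp(-\exp(y))$ sends $\phi \in [\delta^{1/\theta}, \delta]$ bijectively onto $y \in [x, x+\log(1/\theta)]$, and the infimum, $\limsup$ and $\liminf$ operations pass through the vanishing error.

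For the Assouad and lower dimension parts I would set $G(X) := X\,g(\log X)$ and combine the Moran covering estimate $N_{\rho_n}(B(x,\rho_k)\cap C) \asymp 2^{d(n-k)}$ for $k\leq n$ and $x\in C$ (essentially contained in the proof of Lemma~\ref{l:flat-covers}) with~\eqref{e:tight-bound} to obtain
\begin{equation*}
\log N_r(B(x,R)\cap C) = G(X_r) - G(X_R) + O(1)
\end{equation*}
uniformly in $x \in C$, where $X_\delta := \log(1/\delta)$. A short computation using continuity of $g$ then yields the key analytic identity
\begin{equation*}
\diniu{+}G(X) = g(\log X) + \diniu{+}g(\log X),
\end{equation*}
and similarly for $\dinil{+}G$, converting hypotheses on $g + \diniu{+}g$ into pointwise bounds on the Dini derivatives of $G$, and hence (via Lemma~\ref{l:c1-bound}) on chord slopes of $G$.

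Claim~\ref{i:assouadmoran} is now immediate: if $\limsup_{x\to\infty}(\diniu{+}g(x) + g(x)) < s$ then $\diniu{+}G(X) \leq s$ for all sufficiently large $X$, so Lemma~\ref{l:c1-bound} gives $G(X_r) - G(X_R) \leq s(X_r - X_R)$ whenever $X_R$ is large enough, hence $N_r(B(x,R)\cap C) \lesssim (R/r)^s$ for all small $R$, yielding $\dimA C \leq s$. The dual argument using $\dinil{+}G$ and Lemma~\ref{l:g-check} gives~\ref{i:lowermoran}. For~\ref{i:A-dim-block}, given a subsequence $x_n \to \infty$ with $\inf_{y \in [x_n,\psi(x_n)]}(\diniu{+}g(y) + g(y)) \geq t$, Lemma~\ref{l:c1-bound} applied on $[e^{x_n}, e^{\psi(x_n)}]$ yields $G(e^{\psi(x_n)}) - G(e^{x_n}) \geq t(e^{\psi(x_n)} - e^{x_n})$; taking $R_n = \exp(-e^{x_n})$ and $r_n = \exp(-e^{\psi(x_n)})$, the hypothesis $e^{\psi(x_n)} - e^{x_n} \to \infty$ forces the additive $O(1)$ to become negligible relative to $\log(R_n/r_n)$, so the ratio $\log N_{r_n}(B(x,R_n)\cap C)/\log(R_n/r_n)$ tends to a value at least $t$, witnessing $\dimA C \geq t$. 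Claim~\ref{i:L-dim-block} follows by the dual argument.

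The main technical obstacle will be the careful bookkeeping of the two simultaneous discretisations: the integer jumps of $k(\delta)$ (hence of $s_{\bm r}$) at the Moran scales $\rho_k$, and the gap between $s_{\bm r}(\delta)$ and the smooth model $g \circ \log \circ \log$. Both contribute $O(1)$ slack to the numerator of the covering-number ratio, which is harmless for~\ref{i:assouadmoran} and~\ref{i:lowermoran} but requires verification in~\ref{i:A-dim-block} and~\ref{i:L-dim-block}, where the window length $e^{\psi(x)} - e^x$ is allowed to grow arbitrarily slowly. The hypothesis on $\psi$ is precisely what forces the slack to be negligible on the relevant scale, and it is fortunate that no stronger quantitative control on $\psi$ is required.
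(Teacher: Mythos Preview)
Your proposal is correct and follows essentially the same route as the paper, but your introduction of the auxiliary function $G(X) = X\,g(\log X)$ is a genuine simplification worth noting. The paper works in the $x = \log\log(1/\delta)$ variable throughout and compares $g$ with the one-parameter family of exponential functions
\[
\overline{g}_x(y) = \overline{\alpha} + \epsilon - (\overline{\alpha}+\epsilon - g(x))\exp(x-y),
\]
which satisfy $\overline{g}_x'(y) + \overline{g}_x(y) \equiv \overline{\alpha}+\epsilon$; the quantity it tracks is
\[
\Phi(x,y) = \frac{s_{\bm r}(\exp(-\exp(y))) - s_{\bm r}(\exp(-\exp(x)))}{1 - \exp(x-y)} + s_{\bm r}(\exp(-\exp(x))).
\]
Your substitution $X = e^x$ linearises both: one checks that $\Phi(x,y)$ is, up to the $O(1)$ slack, precisely the chord slope $\bigl(G(e^y)-G(e^x)\bigr)/(e^y-e^x)$, and the paper's exponential comparison functions $\overline{g}_x$ correspond exactly to affine functions of $X$. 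Thus the paper's appeal to Lemma~\ref{l:c1-bound} with exponential comparators is the same application as yours with linear comparators, in different coordinates. Your formulation is arguably the cleaner of the two, since the identity $\diniu{+}G(X) = g(\log X) + \diniu{+}g(\log X)$ explains at once why the combination $g + \diniu{+}g$ is the natural quantity, whereas in the paper this emerges from the explicit form of the comparison ODE.
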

\begin{proof}
    We first observe that~\ref{i:upperintmoran} and~\ref{i:lowerintmoran} follow immediately from Proposition~\ref{p:int-dim}.
    We verify~\ref{i:assouadmoran} and~\ref{i:A-dim-block}; \ref{i:lowermoran} and~\ref{i:L-dim-block} are given by an analogous argument.
    
        We first establish a general formula for the Assouad dimension of $C$ in terms of the numbers $s_{\bm{r}}(\delta)$.
    Suppose $0<\delta_1\leq\delta_2$ are arbitrary.
    Then the number of subdivision steps between scales $\delta_1$ and $\delta_2$, up to an error of size $2$, is
    \begin{equation*}
        \frac{s_{\bm{r}}(\delta_1)\log(1/\delta_1)-s_{\bm{r}}(\delta_2)\log(1/\delta_2)}{d\log 2}.
    \end{equation*}
    Thus there is a bounded function $h(x,\delta_1,\delta_2)$ such that
    \begin{equation*}
        \frac{\log N_{\delta_1}(B(x,\delta_2)\cap C)}{\log(\delta_2/\delta_1)}=\frac{s_{\bm{r}}(\delta_1)\log(1/\delta_1)-s_{\bm{r}}(\delta_2)\log(1/\delta_2)+h(x,\delta_1,\delta_2)}{\log(1/\delta_1)-\log(1/\delta_2)}.
    \end{equation*}
    Therefore by the definition of the Assouad dimension, for all $\delta_0\in(0,1)$,
    \begin{equation}\label{e:assouadlim}
        \dimA C=\lim_{\epsilon\to 0}\sup_{\substack{0<\delta_1<\delta_2<\delta_0\\\delta_1\leq\epsilon\delta_2}}\frac{s_{\bm{r}}(\delta_1)\log(1/\delta_1)-s_{\bm{r}}(\delta_2)\log(1/\delta_2)}{\log(1/\delta_1)-\log(1/\delta_2)}.
    \end{equation}
    Now we may inductively choose sequences of positive numbers $(\delta_{1,n})_{n=1}^\infty$ and $(\delta_{2,n})_{n=1}^\infty$ such that $\delta_{1,n}/\delta_{2,n}$ and $\delta_{2,n}$ converge to $0$, and
    \begin{equation*}
        \frac{s_{\bm{r}}(\delta_{1,n})\log(1/\delta_{1,n})-s_{\bm{r}}(\delta_{2,n})\log(1/\delta_{2,n})}{\log(1/\delta_{1,n})-\log(1/\delta_{2,n})} \in \Big(\dimA C - \frac{1}{n},\dimA C + \frac{1}{n}\Big)
    \end{equation*}
    for all $n \in \N$.

    For $0<x<y$, let
    \begin{equation*}
        \Phi(x,y)\coloneqq \frac{s_{\bm{r}}(\exp(-\exp(y)))-s_{\bm{r}}(\exp(-\exp(x)))}{1-\exp(x-y)}+s_{\bm{r}}(\exp(-\exp(x))).
    \end{equation*}
    Moreover, write $x_n=\log\log(1/\delta_{2,n})$ and $y_n=\log\log(1/\delta_{1,n})$.
    Next, let $\mathcal{W}$ denote the family of functions $\psi\colon\R\to\R^+$ such that $\lim_{x\to\infty}(\exp(\psi(x))-\exp(x))=\infty$.
    The condition that $\delta_{1,n}/\delta_{2,n}$ converges to $0$ is equivalent to $\exp(y_n)-\exp(x_n)$ diverging to infinity.
    Thus we may choose a function $\psi_0 \in\mathcal{W}$ so that $\psi_0(x_n)=y_n$ for infinitely many $n$. Then with some rearrangement using~\eqref{e:assouadlim} and the definition of $\Phi$,
    \begin{equation*}
        \limsup_{x\to\infty}\Phi(x,\psi_0(x)) \geq \dimA C.
    \end{equation*}
    Conversely, if $\psi \in\mathcal{W}$ is arbitrary, applying the substitutions $\delta_2=\exp(-\exp(x))$ and $\delta_1=\exp(-\exp(\psi(x)))$ and using the fact that $\delta_1/\delta_2$ converges to $0$ as $x \to \infty$ gives
    \begin{equation*}
        \limsup_{x\to\infty}\Phi(x,\psi(x))\leq\dimA C.
    \end{equation*}
    Therefore
    \begin{equation}\label{e:assouad-formula}
        \dimA C=\sup_{\psi\in\mathcal{W}}\limsup_{x\to\infty}\Phi(x,\psi(x)),
    \end{equation}
    and moreover the supremum is attained.

    To conclude the preliminaries, we also note, for $0<x<y$,
    \begin{equation}\label{e:error-bound}
        \frac{\exp(-y)+\exp(-x)}{1-\exp(x-y)}+\exp(-x)=\frac{2}{\exp(y)-\exp(x)}+2\exp(-x).
    \end{equation}
    This bound will be used to control the error resulting from~\eqref{e:tight-bound}.

    We now prove~\ref{i:assouadmoran}.
    Write
    \begin{equation*}
        \overline{\alpha} \coloneqq \limsup_{x\to\infty}(\diniu{+}g(x)+g(x)),
    \end{equation*}
    and let $\epsilon > 0$.
    Then there exists $M_\epsilon>0$ such that for all $x \geq M_\epsilon$ we have $\diniu{+}g(x)+g(x) \leq\overline{\alpha} + \epsilon$.
    For $x\geq M_\epsilon$, define $\overline{g}_x\colon [x,\infty) \to \R$ by
    \begin{equation*}
        \overline{g}_x(y) \coloneqq \overline{\alpha} + \epsilon - (\overline{\alpha} + \epsilon - g(x))\exp(x-y).
    \end{equation*}
    Then $g(x) = \overline{g}_x(x)$, and
    \begin{equation*}
        \overline{g}_x'(y) + \overline{g}_x(y) = \overline{\alpha} + \epsilon \geq \diniu{+}g(y)+g(y)
    \end{equation*}
    for all $y>x$.
    It follows from Lemma~\ref{l:c1-bound} that $g(y) \leq \overline{g}_x(y)$ for all $y \geq x$.
    Now taking a function $\psi_0$ which attains the supremum in~\eqref{e:assouad-formula}, for all $x\geq M_\epsilon$, using~\eqref{e:tight-bound} and~\eqref{e:error-bound} combined with the condition on $\psi_0$,
    \begin{align*}
        \Phi(x,\psi_0(x)) \leq{}& \frac{g(\psi_0(x)) - g(x)}{1-\exp(x-\psi_0(x))} + g(x)\\
              &+2d\log(2)\left(\frac{1}{\exp(\psi_0(x))-\exp(x)}+\exp(-x)\right).
    \end{align*}
    Moreover, since $\psi_0\in\mathcal{W}$,
    \begin{equation*}
        \limsup_{x\to\infty}2d\log(2)\left(\frac{1}{\exp(\psi_0(x))-\exp(x)}+\exp(-x)\right)=0.
    \end{equation*}
    Thus
    \begin{align*}
        \limsup_{x\to\infty}\Phi(x,\psi_0(x)) &\leq\limsup_{x\to\infty}\left(\frac{g(\psi_0(x)) - g(x)}{1-\exp(x-\psi_0(x))} + g(x)\right)\\
                                           &\leq\limsup_{x\to\infty}\left(\frac{\overline{g}_x(\psi_0(x)) - g(x)}{1-\exp(x-\psi_0(x))} + g(x)\right)\\
                                           &=\overline{\alpha}+\epsilon.
    \end{align*}
    But $\epsilon>0$ was arbitrary, giving the claim.

    Finally, we prove~\ref{i:A-dim-block}.
    Fix any $\psi \in \mathcal{W}$ and $\epsilon>0$, and write
    \begin{equation*}
        \underline{\alpha}=\limsup_{x\to\infty}\left(\inf_{y\in[x,\psi(x)]}(\diniu{+}g(y)+g(y))\right).
    \end{equation*}
    Get a sequence $(x_k)_{k=1}^\infty$ diverging to infinity such that for all $k\in\N$,
    \begin{equation*}
        \inf_{y\in[x_k,\psi(x_k)]}(\diniu+ g(y)+g(y))\geq \underline{\alpha}-\epsilon.
    \end{equation*}
    As above, define $\underline{g}_k\colon[x_k,\infty)\to\R$ by
    \begin{equation*}
        \underline{g}_k(x) \coloneqq \underline{\alpha} + \epsilon - (\underline{\alpha} + \epsilon - g(x_k))\exp(x_k-x).
    \end{equation*}
    Then $g(x_k)=\underline{g}_k(x_k)$ and $g(x)\geq\underline{g}_k(x)$ for all $x\in[x_k,\psi(x_k)]$.
    Thus the same computations as before yield that
    \begin{align*}
        \limsup_{x\to\infty}\Phi(x,\psi_0(x))&\geq\limsup_{k\to\infty}\Phi(x_k,\psi(x_k))\\
                                          &\geq\underline{\alpha}-\epsilon-\frac{2d\log 2}{\exp(\psi(x_k))-\exp(x_k)}-2d\log(2)\exp(-x_k).
    \end{align*}
    Since $\exp(\psi(x_k))-\exp(x_k)$ diverges to infinity and $\epsilon>0$ was arbitrary, the claimed inequality follows.
\end{proof}
    In general,~\ref{i:assouadmoran} and~\ref{i:lowermoran} will not be equalities since one would require more robust regularity assumptions about the function $g$. 

\subsection{Prescribing the upper intermediate dimensions}\label{ss:pres-upper}
Now, using the general construction in the previous section, we show how to construct homogeneous Moran sets with upper intermediate dimensions given by a function $h\colon[0,1]\to(0,d)$.
The main idea is to construct functions which we call mountains, which have the property that there are exactly two points $\{x,x+\log(1/\theta)\}$ which have value $h(\theta)$.
This ensures that the limit supremum of infima over windows $[x,x+\log(1/\theta)]$ is exactly $h(\theta)$.
Figure~\ref{f:bump-ctr} depicts this construction.
 \begin{figure}[ht]
\center{\includegraphics[width=.98\textwidth]
        {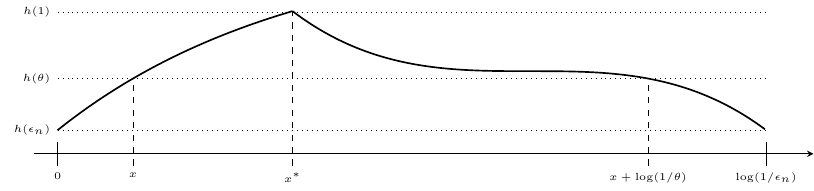}}
        \caption{\label{f:bump-ctr}
        The construction of the mountain $f_n$.}
\end{figure}
    
\begin{theorem}\label{t:upper-h-form}
    Let $0\leq\lambda\leq\alpha\leq d$ and let $h\in\mathcal{H}(\lambda,\alpha)$.
    Then there exists a homogeneous Moran set $C$ such that $\dimL C=\lambda$, $\dimA C = \alpha$, $\underline{\dim}_\theta C=h(0)$, and
    \begin{equation*}
        \overline{\dim}_\theta C=h(\theta)
    \end{equation*}
    for all $\theta\in[0,1]$.
\end{theorem}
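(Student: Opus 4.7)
The strategy is to invoke Lemma~\ref{l:exact-construction} followed by Lemma~\ref{l:Moran-formula}, which reduces the theorem to constructing a function $g\in\mathcal{G}(\lambda,\alpha)$ whose associated quantities on the right-hand sides of items~\ref{i:upperintmoran}--\ref{i:L-dim-block} of Lemma~\ref{l:Moran-formula} match the prescribed dimensions. I would first dispatch the degenerate cases: $\lambda=\alpha$ forces $h\equiv\alpha$ and the full cube works; if $h$ is otherwise constant an Ahlfors regular Moran set works. So assume $\lambda<h(\theta)<\alpha$ on $(0,1]$, adjusting minor details at boundary values.

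The fundamental building block is a mountain of the shape suggested by Figure~\ref{f:bump-ctr}. The key observation is that the descending profile
\[
g(x)=h(e^{-(x-x_R)}),\qquad x\geq x_R,
\]
is admissible: differentiating gives $-g'(x)=e^{-(x-x_R)}h'(e^{-(x-x_R)})$, and the upper Dini bound~\eqref{e:h-bound-general} combined with $\alpha-h\leq\alpha-\lambda$ yields $-g'\in[0,g-\lambda]$, which is the admissible descent range in $\mathcal{G}(\lambda,\alpha)$. The window $[x_R,x_R+\log(1/\theta)]$ lies entirely in the descending profile and, since $g$ is monotonically decreasing there, has infimum attained at the right endpoint, equal to $h(e^{-\log(1/\theta)})=h(\theta)$. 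Thus \emph{a single mountain realises the target infimum $h(\theta)$ for every $\theta\in(0,1]$ simultaneously}. Each mountain then consists of a short maximum-rate rise from baseline $h(0)$ up to $h(1)$, followed by a descending profile truncated at length $L_n$, followed by a smooth return to $h(0)$.

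I would then define $g$ as an infinite concatenation of such mountains $f_n$ separated by long baseline intervals of length $B_n$ at height $h(0)$, with $L_n\to\infty$ and $B_n$ growing fast enough to swamp the cumulative mountain width through stage $n$. Lemma~\ref{l:Moran-formula}\ref{i:lowerintmoran} then gives $\underline{\dim}_\theta C=\dim_H C=\liminf g=h(0)$. Interleaving mountains whose peaks approach $\alpha$ produces rising segments of unbounded length along which $\diniu+ g+g\equiv\alpha$, so Lemma~\ref{l:Moran-formula}\ref{i:A-dim-block} with $\psi(x)=x+L$ for each fixed $L$ gives $\dimA C\geq\alpha$, matched by~\ref{i:assouadmoran}. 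Near the tails of long descending profiles, $g$ falls at maximal rate so $\diniu+ g+g\equiv\lambda$ on arbitrarily long $x$-intervals, yielding $\dimL C\leq\lambda$ via~\ref{i:L-dim-block}, matched by~\ref{i:lowermoran}. The lower bound $\overline{\dim}_\theta C\geq h(\theta)$ is immediate from the optimally-placed window analysis above.

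The main technical obstacle I expect is the \emph{upper bound} $\overline{\dim}_\theta C\leq h(\theta)$: \emph{a priori}, a window straddling the rising and descending portions of a single mountain, or crossing between mountains, could yield an infimum exceeding $h(\theta)$. Straddling across distinct mountains is defused by the long baseline separators, which force at least one endpoint down to $h(0)\leq h(\theta)$. Within-mountain straddling requires establishing the algebraic inequality $\theta(\alpha-h(\theta))\leq\alpha-h(1)$ for all $\theta\in(0,1]$, which follows from~\eqref{e:h-bound-general} by noting that the function $\theta\mapsto\theta(\alpha-h(\theta))$ has upper Dini derivative $(\alpha-h(\theta))-\theta\diniu+ h(\theta)\geq 0$; this inequality together with the explicit form of the mountain's rising branch shows that any straddling window's balanced-endpoint value cannot exceed $h(\theta)$. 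Once this bound is verified, Lemma~\ref{l:exact-construction} converts $g$ to a Moran ratio sequence $\bm{r}$ satisfying~\eqref{e:g-disc} (noting $g$ is not rapidly decreasing by Remark~\ref{r:exactconstruction}\ref{i:limsuppositive} since $\limsup g\geq h(1)>0$, or else $h(1)=0$ is itself degenerate), and Lemma~\ref{l:Moran-formula} delivers $C=C(\bm{r})$ with the four prescribed dimension equalities.
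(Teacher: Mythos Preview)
Your descending profile $g(x)=h(e^{-(x-x_R)})$ is admissible and does deliver the lower bound $\overline{\dim}_\theta C\geq h(\theta)$ via the window $[x_R,x_R+\log(1/\theta)]$, but the upper bound fails. Consider a window $[y,y+\log(1/\theta)]$ with $y$ strictly between $x_\theta$ (the point on the rising branch where $g=h(\theta)$) and the peak $x_R$. The left endpoint is $g(y)>h(\theta)$ by construction, and the right endpoint lies on your descending profile at value $h(\theta e^{x_R-y})$; since $x_R-y>0$ this is $h$ evaluated at an argument strictly larger than $\theta$, hence $\geq h(\theta)$ and strictly larger whenever $h$ is strictly increasing near $\theta$. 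The window infimum is then the minimum of the two endpoints, both exceeding $h(\theta)$. Concretely, with $\lambda=0$, $\alpha=1$, $h(\theta)=\theta/(\theta+1)$ and $\theta=1/2$, the two level-$h(1/2)=1/3$ points on your mountain are at distance $\log(8/3)\approx 0.981$ apart while the window has length $\log 2\approx 0.693$; a window centred appropriately has both endpoints near $0.382>1/3$. This issue recurs at every mountain, so $\limsup_x\inf_{[x,x+\log(1/\theta)]}g>h(\theta)$.

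Your inequality $\theta(\alpha-h(\theta))\leq\alpha-h(1)$ is correct and shows that a window of length $\log(1/\theta)$ starting at $x_\theta$ reaches the descending branch, but it does \emph{not} show the right endpoint sits at or below level $h(\theta)$. The paper's fix is to choose a different descending profile: rather than $h(e^{-(x-x_R)})$, it defines the descent implicitly by declaring that if $f_n(x)=h(\theta)$ on the rising branch, then $f_n(x+\log(1/\theta))=h(\theta)$. This forces the two level-$h(\theta)$ points to be \emph{exactly} $\log(1/\theta)$ apart, so any window of length $\log(1/\theta)$ must have at least one endpoint at or below height $h(\theta)$. The monotonicity you established for $\theta\mapsto\theta(\alpha-h(\theta))$ is precisely what makes this implicit profile a well-defined non-increasing function, and the Dini bound~\eqref{e:h-bound-general} is then used to verify $\dinil{-}f_n\geq\lambda-f_n$ on the descent via a chain-rule computation. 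Your profile descends too slowly; the paper's descends just fast enough.
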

\begin{proof}
    We will assume that $\lambda\leq h(0)<h(\theta)<\alpha$ for all $\theta\in(0,1]$, and that there exists $\theta_0 > 0$ such that $h(\theta_0) < h(1)$. 
        The other cases are easier and can be proven with minor modifications, except the case $h(1) = 0$, which we address separately at the end of the proof. 
    
    Let $(\epsilon_n)_{n=1}^\infty\subset(0,1)$ converge monotonically to 0 and $(\gamma_n)_{n=1}^\infty\subset(\lambda,\alpha)$ converge monotonically to $h(0)$ in such a way that $\gamma_n < h(\epsilon_{n+1}) < h(1)$ for all $n \in \N$, and $\gamma_n + 1/n \leq h(\epsilon_{n+1})$ for all $n$ sufficiently large.
    Note that if $h(0)>\lambda$ we can take $\gamma_n=h(0)$ for all $n$.
    We will define functions which we refer to as \emph{mountains} $f_n$ and \emph{valleys} $e_n$; a valley will be used to connect two mountains. 
    Graphical representations of the functions $f_n$ and $e_n$ are given in Figures~\ref{f:bump-ctr} and~\ref{f:connector-ctr} respectively.
    Then we will define a function $g$ by concatenating the $f_n$ and $e_n$, and the corresponding Moran set $C$ will be defined using the sequence given in Lemma~\ref{l:exact-construction}.
    The functions $f_n$ will ensure that $\overline{\dim}_\theta C=h(\theta)$ for $\theta>0$, and the functions $e_n$ will ensure that $g$ is continuous, $\dimH C=h(0)$, $\dimL C=\lambda$, and $\dimA C=\alpha$. 
    \begin{proofpart}
        Construction of the mountains $f_n\colon[0,\log(1/\epsilon_n)]\to[h(\epsilon_n),h(1)]$ for $n\in\N$.
    \end{proofpart}
    First set
    \begin{equation*}
        x^*\coloneqq\log\left(\frac{\alpha-h(\epsilon_n)}{\alpha-h(1)}\right)
    \end{equation*}
    and for $x\in [0,x^*]$ define $f_n(x)=\alpha-(\alpha-h(\epsilon_n))\exp(-x)$. 
    Observe that $f_n(0)=h(\epsilon_n)$, $f_n(x^*)=h(1)$, and
    \begin{equation}\label{e:0-star-bound}
        \dinil{-}f_n(x)=(\alpha-h(\epsilon_n))\exp(-x) = \alpha-f_n(x)
    \end{equation}
    for $x\in(0,x^*]$.
    Now for $x\in[0,x^*]$, if $\theta \in (0,1]$ is such that $h(\theta)=f_n(x)$, we define $f_n(x+\log(1/\theta))=h(\theta)$.
    This is well-defined since $h$ is non-decreasing and continuous.
    In particular, $f_n(x)$ is non-increasing and continuous on $[x^*,\log(1/\epsilon_n)]$ with $f_n(\log(1/\epsilon_n))=f_n(0)=h(\epsilon_n)$.
    
    We now wish to bound $\dinil{-}f_n(x+\log(1/\theta))$ for $x\in(0,x^*]$.
    First, note that
    \begin{equation*}
        x=\log\left(\frac{\alpha-h(\epsilon_n)}{\alpha-h(\theta)}\right).
    \end{equation*}
    Then rearranging~\eqref{e:h-bound-general}, we obtain
    \begin{equation*}
        \diniu{+}h(\theta)\leq (h(\theta)-\lambda)\left(\frac{\diniu{+}h(\theta)}{h(\theta)-\alpha}+\frac{1}{\theta}\right).
    \end{equation*}
    Since $h(\theta)<\alpha$, $\diniu{+}h(\theta)<\frac{\alpha-h(\theta)}{\theta}$ so that $\frac{\diniu{+}h(\theta)}{h(\theta)-\alpha}+\frac{1}{\theta}>0$.
    Therefore 
    \begin{equation}\label{e:hbd-2}
        \frac{\diniu{+}h(\theta)}{\frac{\diniu{+}h(\theta)}{\alpha-h(\theta)}-\frac{1}{\theta}}\geq\lambda-h(\theta)=\lambda-f_n(x+\log(1/\theta)).
    \end{equation}
    But if $x$ and $\theta$ are related as above, $x+\log(1/\theta)$ is a smooth function of $\theta$ and $h(\theta)$, and $h(\theta)=f_n(x+\log(1/\theta))$, and $x$ decreases as $\theta$ decreases. Thus  
    \begin{equation*}
        \diniu{+}h(\theta)= \dinil{-}f_n(x+\log(1/\theta))\cdot\left(\frac{\diniu{+}h(\theta)}{\alpha-h(\theta)}-\frac{1}{\theta}\right)
    \end{equation*}
    which when combined with~\eqref{e:hbd-2} yields $\dinil{-}f_n(x+\log(1/\theta))\geq \lambda-f_n(x+\log(1/\theta))$.
    Note that we have shown that
    \begin{equation*}
        \dinil{-}f_n(x)\in[\lambda-f_n(x),\alpha-f_n(x)]
    \end{equation*}
    for all $x\in(0,\log(1/\epsilon_n)]$.
    \begin{proofpart}
        Construction of the valleys $e_n\colon[0,w_n]\to[\gamma_n,h(\epsilon_n)]$ where $w_n$ is given in~\eqref{e:wn-def} for $n\in\N$.
    \end{proofpart}
    Set
    \begin{equation*}
        w^*\coloneqq\log\left(\frac{h(\epsilon_n)-\lambda}{\gamma_n-\lambda}\right)
    \end{equation*}
    and for $x\in[0,w^*]$ define $e_n(x)=\lambda-(\lambda-h(\epsilon_n))\exp(-x)$.
    Observe that $e_n(w^*)=\gamma_n$.
    Let
    \begin{equation}\label{e:wn-def}
        w_n\coloneqq w^*+\log\left(\frac{\alpha-\gamma_n}{\alpha-h(\epsilon_{n+1})}\right)
    \end{equation}
    and for $x\in[w^*,w_n]$ define $e_n(x)=\alpha-(\alpha-\gamma_n)\exp(-x+w^*)$.
    Of course, $e_n(w_n)=h(\epsilon_{n+1})$.
    It is clear that $\dinil{-}e_n(x)=\lambda-e_n(x)$ for $x\in(0,w^*]$ and $\dinil{-}e_n(x)=\alpha-e_n(x)$ for all $x\in(w^*,w_n]$.
\begin{figure}[ht]
\center{\includegraphics[width=.98\textwidth]
       {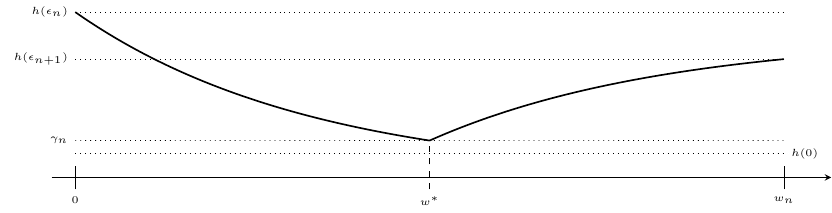}}
       \caption{\label{f:connector-ctr}
       The construction of the valley~$e_n$.}
\end{figure}
    
    \begin{proofpart}
        Construction of $g\in\mathcal{G}(\lambda,\alpha)$ and the corresponding Moran set $C$.
    \end{proofpart}
    Let $\tilde{g}$ denote the concatenation of the sequence $(f_1,e_1,f_2,e_2,\dotsc)$. 
    By Lemma~\ref{l:g-check}, $\tilde{g}$ satisfies the hypotheses of Lemma~\ref{l:exact-construction} (note that $g$ is not rapidly decreasing since since $\limsup_{x\to\infty}\tilde{g}(x)>0$), and get a corresponding function $g$ and sequence $\bm{r}$.
    Let~$g$ and~$\bm{r}$ be the function and sequence respectively given by this lemma. 
    Note that $g\in\mathcal{G}(\lambda,\alpha)$, and let $C=C(\bm{r})$ denote the corresponding Moran set.
    That $\overline{\dim}_\theta C=h(\theta)$ for $\theta\in(0,1]$ follows by definition of the functions $f_n$ and the fact that
    \begin{equation*}
        \lim_{n\to\infty}\left(\sup_{x\in[0,w_n]}e_n(x)\right)\leq\lim_{\theta\to 0}h(\theta).
    \end{equation*}
    Moreover, Lemma~\ref{l:Moran-formula} directly gives that $\dimH C=\underline{\dim}_\theta C = h(0)$ for $\theta\in[0,1]$, $\lambda\leq\dimL C$, and $\dimA C\leq\alpha$.
    
        To see that $\dimA C\geq\alpha$, note that the derivative of the strictly increasing part of each mountain is uniformly bounded above by $\alpha$. Therefore, for all $n \in \N$, the length of the domain of the $n$\textsuperscript{th} mountain can be uniformly bounded below:
    \begin{equation*}
        \log \left(\frac{1}{\epsilon_n}\right) \geq \log \left(\frac{1}{\epsilon_1}\right) > \frac{h(1)-h(\epsilon_1)}{\alpha} > 0.
    \end{equation*}
    Similarly, for all $n$ sufficiently large, the length $w_n$ of the domain of the $n$\textsuperscript{th} valley can be bounded below by $1/(\alpha n)$.
    Therefore there exists $\delta>0$ and a sequence $(b_m)_{m=1}^\infty$ such that for all $m \in \N$ we have $b_m \geq \delta m$, and $\diniu{+}g(x)+g(x)=\alpha$ for all $x\in[b_m,b_m+\delta/m]$.
    Then for all $m \in \N$,
    \begin{equation*}
        \exp\left(b_m + \frac{\delta}{m}\right) - \exp(b_m) =  \left(\exp\left(\frac{\delta}{m}\right) - 1\right) \cdot \exp(b_m) \geq \frac{\delta \cdot \exp(\delta m)}{m}
    \end{equation*}
    which diverges to $\infty$ as $m \to \infty$. 
    Thus we can define a function $\psi \colon \R \to \R^+$ such that $\psi(b_m) = b_m + \delta/m$ for all $m\in\N$ and $\lim_{x\to\infty}(\exp(\psi(x))-\exp(x))=\infty$.
    In particular, $D^+g(y) + g(y) = \alpha$ for all $y \in [b_m,\psi(b_m)]$ and infinitely many $m$.
    By part~\ref{i:A-dim-block} of Lemma~\ref{l:Moran-formula}, it follows that $\dimA C\geq\alpha$.
    An analogous application of part~\ref{i:L-dim-block} gives that ${\dimL C\leq\lambda}$. 
    
    Finally, we address the case when $h(1)=0$. We avoid using Lemma~\ref{l:exact-construction}; an alternative strategy would be to apply this lemma to a carefully-chosen function $\tilde{g}$ which is not rapidly decreasing. 
    By the same arguments as \cite[Lemma~3.2]{Olson2016cantor}, if $M$ is any homogeneous Moran set with notation as above, then for all $K\in\N$,
    \begin{equation}\label{e:assouad-formula-ors}
        \dimA M=\limsup_{n\to\infty}\sup_{k\geq K}\frac{n d\log 2}{\log(\rho_k/\rho_{k+n})}.
    \end{equation}
    Therefore if $\alpha=0$, then choosing the sequence $\bm{r} \in (0,1/2]^{\N}$ inductively such that $\rho_{n+1} \leq \rho_n^{\exp(n)}$ for all $n \in \N$, we have $\dim_{\mathrm A} M = 0$. 
    Now suppose that $\alpha \in (0,d]$. Let $n_1 = 1$, and for $k \in \N$, inductively define $n_{k+1} \coloneqq n_{k} + k$. 
    Define a homogeneous Moran set $M$ which satisfies $\rho_{n_k} \leq \rho_{n_k - 1}^{\exp(n_k)}$ for all $k \in \N$, and $r_j = 2^{-d/\alpha}$ for all integers $j$ which are not of the form $n_k$. 
    It follows directly from~\eqref{e:assouad-formula-ors} that $\dim_{\mathrm A} M = \alpha$, and after a short calculation, Proposition~\ref{p:int-dim} gives that $\overline{\dim}_{\mathrm B} M = 0$, as required. 
\end{proof}

\subsection{Prescribing the intermediate dimensions}\label{ss:pres}
We can get more varied behaviour for the lower intermediate dimensions by taking a finite union of Moran sets, as illustrated by the following result. 
\begin{prop}\label{p:finite-union}
    Suppose $g_i\in\mathcal{G}(0,d)$ for $i=1,\dotsc,m$ have corresponding sequences $\bm{r}_i \in (0,1/2]^{\N}$ satisfying
    \begin{equation*}
        |g_i(x)-s_{\bm{r}_i}(\exp(-\exp(x)))|\leq d\log(2)\exp(-x).
    \end{equation*}
    Let $M$ be a disjoint union of translations of the homogeneous Moran sets $C(\bm{r}_i)$.
    Then for $\theta\in(0,1]$,
    \begin{enumerate}
        \item $\displaystyle\overline{\dim}_\theta M=\limsup_{x\to\infty}\max_{i=1,\dotsc,m}\bigl(\inf_{y\in[x,x+\log(1/\theta)]}g_i(y)\bigr)$,
        \item $\displaystyle\underline{\dim}_\theta M=\liminf_{x\to\infty}\max_{i=1,\dotsc,m}\bigl(\inf_{y\in[x,x+\log(1/\theta)]}g_i(y)\bigr)$,
        \item $\displaystyle\dimH M=\max_{i=1,\dotsc,m}\liminf_{x\to\infty}g_i(x)$.
    \end{enumerate}
\end{prop}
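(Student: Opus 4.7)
The plan is to handle the three parts in order of difficulty, starting with the easier stable dimensions and ending with the lower intermediate dimensions where finite stability fails. Throughout, write $C_i$ for the translate of $C(\bm{r}_i)$ inside $M$, let $s_i = s_{\bm{r}_i}$, and set
\[
t_i(\delta) \coloneqq \inf_{\phi \in [\delta^{1/\theta},\delta]} s_i(\phi).
\]
Since the translates $C_1,\dotsc,C_m$ are pairwise disjoint and compact, there is some $\delta_* > 0$ such that any set of diameter at most $\delta_*$ meets at most one of the $C_i$.

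For part~(3), the Hausdorff dimension is countably stable, so $\dimH M = \max_i \dimH C(\bm{r}_i)$; applying Lemma~\ref{l:Moran-formula}~\ref{i:lowerintmoran} to each $g_i$ gives $\dimH M = \max_i \liminf_{x\to\infty} g_i(x)$, which is the claimed formula. For part~(1), Proposition~\ref{finitestability}~(i) gives finite stability of $\overline{\dim}_\theta$, so $\overline{\dim}_\theta M = \max_i \overline{\dim}_\theta C(\bm{r}_i)$, and Lemma~\ref{l:Moran-formula}~\ref{i:upperintmoran} rewrites each summand as a limsup. It then remains to observe the elementary fact that for any finite family of real-valued functions $h_i(x)\coloneqq \inf_{y\in[x,x+\log(1/\theta)]} g_i(y)$,
\[
\limsup_{x\to\infty} \max_{1\leq i\leq m} h_i(x) \;=\; \max_{1\leq i\leq m} \limsup_{x\to\infty} h_i(x).
\]
The inequality $\geq$ is immediate, and the reverse follows by choosing, for given $\epsilon>0$, thresholds $X_i$ beyond which $h_i(x)\leq \limsup h_i + \epsilon$ and then taking $X=\max_i X_i$.

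The real work is in part~(2). Here finite stability fails, so we must argue directly from the cover definition. Since any $(\delta,\theta)$-cover of $M$ with $\delta<\delta_*$ splits into a disjoint union of $(\delta,\theta)$-covers of each $C_i$, we get the exact decomposition $S_{\delta,\theta}^s(M) = \sum_{i=1}^m S_{\delta,\theta}^s(C_i)$ for such $\delta$. Applying Lemma~\ref{l:flat-covers} to each $C_i$ with exponent $t_i(\delta)$ gives $4^{-d} \leq S_{\delta,\theta}^{t_i(\delta)}(C_i) \leq 1$. Writing $\tau(\delta)\coloneqq \max_i t_i(\delta)$, a monotonicity-in-$s$ argument ($|U|^{s'}\leq \delta^{s'-s}|U|^s$ when $s'\geq s$ and $|U|\leq \delta$) upgrades this to
\[
S_{\delta,\theta}^{\tau(\delta)}(M) \;\leq\; \sum_{i=1}^m \delta^{\tau(\delta)-t_i(\delta)} \;\leq\; m
\qquad\text{and}\qquad
S_{\delta,\theta}^{\tau(\delta)}(M) \;\geq\; S_{\delta,\theta}^{\tau(\delta)}(C_{i^*(\delta)}) \;\geq\; 4^{-d},
\]
where $i^*(\delta)$ achieves the maximum. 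A short argument using $S_{\delta,\theta}^{s'}(M)\leq \delta^{s'-s}S_{\delta,\theta}^s(M)$ now shows that for any $s>\liminf_{\delta\to 0}\tau(\delta)$ one can find arbitrarily small $\delta$ with $S_{\delta,\theta}^s(M)\leq \epsilon$, whence $\underline{\dim}_\theta M \leq \liminf\tau$; and for any $s<\liminf\tau$ the lower bound forces $S_{\delta,\theta}^s(M)\geq 4^{-d}$ eventually, giving $\underline{\dim}_\theta M \geq \liminf\tau$. Finally, converting scales via $x=\log\log(1/\delta)$ and $y=\log\log(1/\phi)$ turns $\phi\in[\delta^{1/\theta},\delta]$ into $y\in[x,x+\log(1/\theta)]$, and the error bound~\eqref{e:tight-bound} replaces $s_i(\phi)$ by $g_i(y)$ up to a term that vanishes in the limit, yielding the claimed formula.

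The main obstacle is part~(2): showing that the critical exponent at scale $\delta$ really is $\max_i t_i(\delta)$ rather than, say, some weaker function such as $\max_i (\text{limiting covering exponent of } C_i)$. This requires both the disjointness of the translates to split the cost $S_{\delta,\theta}^s(M)$ and the sharp two-sided bound from Lemma~\ref{l:flat-covers}; a one-sided bound alone would not pin down $\tau(\delta)$ in a way that survives taking $\liminf$.
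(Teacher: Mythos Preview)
Your proposal is correct. The paper itself leaves this as a ``straightforward exercise'' without any details, so you have effectively carried out that exercise: parts~(1) and~(3) follow immediately from finite and countable stability together with Lemma~\ref{l:Moran-formula}, and your treatment of part~(2) via the decomposition $S_{\delta,\theta}^s(M)=\sum_i S_{\delta,\theta}^s(C_i)$ for small $\delta$, combined with the two-sided estimate from Lemma~\ref{l:flat-covers}, is exactly the intended argument (the lower bound for $s<\liminf\tau(\delta)$ is perhaps cleanest by noting that $|U|^s\geq|U|^{t_{i^*}(\delta)}$ when $|U|\leq 1$, so $S_{\delta,\theta}^s(C_{i^*})\geq S_{\delta,\theta}^{t_{i^*}(\delta)}(C_{i^*})\geq 4^{-d}$ directly).
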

\begin{proof}
It is a straightforward exercise to verify these dimension formulae.
\end{proof}
Suppose $h(\theta)$ satisfies $h(\epsilon)=h(0)$ for some $\epsilon>0$, and let $g$ denote the infinite concatenation of a mountain $f\colon[0,\log(1/\epsilon)]\to(0,d)$ constructed as in Theorem~\ref{t:upper-h-form}.
If $C$ denotes the corresponding Moran set, then $\overline{\dim}_\theta C=h(\theta)$.
Now suppose $N$ is large, and define functions $g_i\coloneqq\kappa_{w_i}(g)$ where $w_i=\frac{(i-1)}{N}\log(1/\epsilon)$ for each $i\in\{1,\dotsc,N\}$.
Write $A=d\log(1/\epsilon)$.
Then if $x$ is arbitrary, since the $g_i$ are Lipschitz continuous with constant $d$, there is some $i$ depending on $x$ such that
\begin{equation*}
    \inf_{y\in[x,x+\log(1/\theta)]}g_i(y)\geq h(\theta)-\frac{A}{N}
\end{equation*}
for all large $x$.
In particular, if $M$ denotes the set given by Proposition~\ref{p:finite-union}, this implies that
\begin{equation*}
    h(\theta)-\frac{A}{N}\leq\underline{\dim}_\theta M\leq\overline{\dim}_\theta M= h(\theta).
\end{equation*}
In other words, by taking a finite union of homogeneous Moran sets, we can ensure that the upper and lower intermediate dimensions are arbitrarily close.

Motivated by this observation, we now construct a set such that the intermediate dimensions exist and are given by a prescribed formula $h(\theta)$.
At a fixed scale $\delta>0$, the set $M$ will look like a finite union of Moran sets each with the same upper intermediate dimensions.
As $\delta$ goes to zero, the resolution increases, so that the intermediate dimensions exist.
The construction here is mildly complicated by the fact that the mountains $f_n$ and valleys $e_n$ can have arbitrarily large support if $h(\theta)>h(0)$ for all $\theta>0$.
\begin{theorem}\label{t:gen-h-form}
    Let $0\leq\lambda\leq\alpha\leq d$ and let $h\in\mathcal{H}(\lambda,\alpha)$.
    Then there exists a compact perfect set $M$ such that $\dimL M=\lambda$, $\dimA M = \alpha$ and
    \begin{equation*}
        \dim_\theta C=h(\theta)
    \end{equation*}
    for all $\theta\in[0,1]$.
\end{theorem}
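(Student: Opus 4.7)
The plan is to build on Theorem~\ref{t:upper-h-form}, which already produces a homogeneous Moran set $C = C(\bm{r})$ with $\dimL C = \lambda$, $\dimA C = \alpha$ and $\overline{\dim}_\theta C = h(\theta)$ for every $\theta \in [0,1]$; the only defect is that $\underline{\dim}_\theta C = h(0)$, which is in general strictly smaller than $h(\theta)$. The task is therefore to raise the lower intermediate dimensions up to $h(\theta)$ without disturbing the other dimensions. I would follow the heuristic from the discussion preceding the statement: use a carefully arranged family of offset functions $g_i \coloneqq \kappa_{w_i}(g)$, where $g \in \mathcal{G}(\lambda,\alpha)$ is the concatenation of mountains and valleys from the proof of Theorem~\ref{t:upper-h-form}, and exploit Proposition~\ref{p:finite-union}.

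The key quantitative input is that $g$ is $d$-Lipschitz, since $\diniu{+}g(x) \in [\lambda - g(x), \alpha - g(x)] \subseteq [-d, d]$ by Lemma~\ref{l:g-check}. Consequently, if $g$ has bounded period $A$ and $w_1, \dotsc, w_N$ are evenly spaced offsets of spacing $A/N$, then for every sufficiently large $x$ and every $\theta \in (0,1]$ there exists $i = i(x,\theta)$ with
\[
 \inf_{y \in [x,\,x+\log(1/\theta)]} g_i(y) \;\geq\; h(\theta) - \frac{dA}{N}.
\]
In the special case $h(\epsilon) = h(0)$ for some $\epsilon > 0$, a single mountain of bounded length suffices, and a finite disjoint union of $N$ offset Moran sets produced via Lemma~\ref{l:exact-construction} already yields a compact perfect set whose upper and lower intermediate dimensions lie within $O(1/N)$ of $h(\theta)$. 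For general $h$, however, the mountains $f_n$ built in Theorem~\ref{t:upper-h-form} have unboundedly large supports once $h(\theta) > h(0)$ for all $\theta > 0$, so the number of offsets must be allowed to grow with scale.

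For general $h$ I would truncate: for each $m \in \mathbb{N}$ let $g^{(m)}$ be a periodic function of finite period $A_m$ (obtained by periodically repeating the first $m$ mountain--valley pairs) whose concatenation approximates $g$ to within $1/m$ in the relevant sliding-window sense. Choose a rapidly increasing sequence of stopping levels $0 = k_0 < k_1 < k_2 < \dotsb$ and integers $N_m \to \infty$, and define $M$ as an inhomogeneous branching Moran-type set: between levels $k_m$ and $k_{m+1}$ each live cube follows the contraction-ratio sequence produced by Lemma~\ref{l:exact-construction} applied to $g^{(m)}$, but is simultaneously partitioned among $N_m$ geometrically disjoint branches corresponding to the offsets $w_1^{(m)}, \dotsc, w_{N_m}^{(m)}$. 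The nested Cantor-type structure makes $M$ compact and perfect. The upper bound $\overline{\dim}_\theta M \leq h(\theta)$ follows from equal-diameter covers as in Lemma~\ref{l:flat-covers} applied branch-by-branch, the lower bound $\underline{\dim}_\theta M \geq h(\theta)$ from the mass distribution principle Proposition~\ref{prop:mdp} with the uniform Bernoulli measure on branches, and the Assouad and lower dimensions from branch-wise adaptations of parts~\ref{i:assouadmoran}--\ref{i:L-dim-block} of Lemma~\ref{l:Moran-formula}.

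The main obstacle will be calibrating the sequences $(k_m)$ and $(N_m)$ in tandem so that all error terms vanish in the limit and, crucially, so that transition scales $\rho_{k_m}$ do not create anomalous windows $[x, x+\log(1/\theta)]$ straddling two regimes, where no single offset remains aligned. This will force $k_{m+1} - k_m$ to grow fast enough that within each regime every window length $\log(1/\theta)$ fits comfortably before the regime changes, while at the same time $N_m$ must grow fast enough compared with $m$ to drive the error $dA_m/N_m$ from the displayed inequality below $1/m$. A secondary technical point is ensuring that the branching geometry at level $k_m$ does not inflate $\dimA M$ above $\alpha$; this should follow by placing the $N_m$ branches inside narrow tubes whose total covering number is already controlled by the contraction ratios supplied by Lemma~\ref{l:exact-construction} for $g^{(m)}$.
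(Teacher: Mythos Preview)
Your core idea is right and matches the paper's: the homogeneous Moran set from Theorem~\ref{t:upper-h-form} already has the correct upper intermediate dimensions, and the lower ones are lifted by running many phase-shifted copies in parallel so that at every scale at least one copy is near the peak of its mountain. Proposition~\ref{p:finite-union} and the $d$-Lipschitz bound on $g$ are exactly the ingredients the paper uses to motivate the construction.

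The paper's implementation differs from yours in two significant ways, and the first of these is where your proposal has a genuine gap. You propose periodic truncations $g^{(m)}$ of bounded period $A_m$ and a separate ``branching into $N_m$ narrow tubes'' at levels $k_m$. But you never specify how those $N_m$ branches are realised geometrically inside $[0,1]^d$ without either overlapping or distorting the contraction ratios; ``narrow tubes whose total covering number is already controlled'' is precisely the step that needs an actual construction, and it is not obvious how to do it while keeping $\dimA M \leq \alpha$. The paper sidesteps this entirely: rather than introducing auxiliary geometry, it encodes the offset in the symbolic address itself. A map $L\colon \mathcal{I}^3 \to \Psi$ with $\Psi = \{0,1\}\times\{0,1,2,3\}$ reads off three consecutive Moran digits and returns one offset symbol, so that the first $3k$ digits of $\sigma \in \mathcal{I}^*$ determine $\eta \in \Psi^k$, and the cylinder $S_\sigma([0,1]^d)$ then follows the contraction sequence $\bm{r}(\eta)$. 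The branching is thus absorbed into the existing $2^d$-ary tree, and the Assouad and lower dimension bounds go through by the same covering arguments as for a single homogeneous Moran set (Part~3 of the proof).

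Second, the paper does not truncate to periodic $g^{(m)}$. Instead it keeps the full sequence of mountains $f_n$ and valleys $e_n$ (with $w_n + z_n = 2^n$ so the supports do grow) and inserts constant spacer functions $c_{k,i}$ of lengths $\psi(k,i) = u\,2^{-k} + v\,4^{k-1}$ between them. The resulting offsets $a(\eta)$ for $\eta \in \Psi^k$ form exactly the lattice $\{j\,2^{-k} : j \in \mathbb{Z}\}\cap[0,4^k)$, so the resolution improves like $2^{-k}$ while the range of available offsets grows like $4^k$, comfortably outpacing the mountain periods $2^k$. This is what replaces your delicate calibration of $(k_m)$, $(N_m)$ and $A_m$, and it handles the ``anomalous windows straddling two regimes'' you flag as an obstacle: for any $x$ one simply chooses $\eta$ so that the offset aligns a mountain peak with the window $[x,x+\log(1/\theta)]$, and the inequality $x \geq y_m \geq 2^m - 1$ versus $8^m$ branches makes the cost of the extra branching negligible (see the displayed estimate $\delta^{-\epsilon}/8^m \to \infty$ in Part~2 of the proof).
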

\begin{proof}
    As in the proof of Theorem~\ref{t:upper-h-form}, we will assume that $\lambda\leq h(0)<h(\theta)<\alpha$ for all $\theta\in(0,1]$, and that there exists $\theta_0 > 0$ such that $h(\theta_0) < h(1)$. 
    The remaining cases follow by similar, but slightly easier, arguments. 
    \begin{proofpart}
        Construction of the set $M$.
    \end{proofpart}
    As in the proof of Theorem~\ref{t:upper-h-form}, fix non-increasing sequences $(\epsilon_n)_{n=1}^\infty$ and $(\gamma_n)_{n=1}^\infty$ and construct corresponding mountains $(f_n)_{n=1}^\infty$ defined on intervals $[0,z_n]$ and valleys $(g_n)_{n=1}^\infty$ defined on intervals $[0,w_n]$, where $z_n=\log(1/\epsilon_n)$ and $w_n$ is defined as in~\eqref{e:wn-def}. 
    We may choose $\epsilon_n$ and $\gamma_n$ such that $w_n+z_n=2^n$.
    Let $\Psi=\{0,1\}\times\{0,1,2,3\}$ and let $\Psi^*=\bigcup_{n=0}^\infty\Psi^n$.
    We first associate to each $\eta\in\Psi^*$ a number $a(\eta)\in[0,\infty)$ as follows.
    Given $k\in\N$ and $i=(u,v)\in\Psi$, we define
    \begin{equation*}
        \psi(k,i)= u 2^{-k}+v 4^{k-1}
    \end{equation*}
    and then for $\eta=(i_1,\dotsc,i_k)$, we set
    \begin{equation*}
        a(\eta)=\sum_{n=1}^k\psi(n,i_n).
    \end{equation*}
    Observe that $a(\Psi^k)=\{j2^{-k}:j\in\Z\}\cap[0,4^{k})$.
    
    For $k\in\N$ and $i\in\Psi$, we define $c_{k,i}(x)=h(\epsilon_k)$ for all $x\in[0,\psi(k,i)]$.
    Now for each $\eta=(i_1,\dotsc,i_n)\in\Psi^*$, let $\tilde{g}_\eta$ denote the concatenation of the sequence
    \begin{equation*}
        (f_1,e_1,c_{1,i_1},f_2,e_2,c_{2,i_2},\dotsc,f_n,e_n,c_{n,i_n},f_{n+1},e_{n+1},f_{n+2},e_{n+2},\dotsc).
    \end{equation*}
    Set $g_\eta\coloneqq\kappa_{w_0}(\tilde{g}_\eta)$, where $w_0$ is guaranteed by Lemma~\ref{l:exact-construction}, and $w_0$ does not depend on the choice of $\eta$ since $g_\eta(0)=f_1(0)$ for all $\eta$, and moreover $w_0$ can be taken to be arbitrarily large.

    There is a sequence $\bm{r}(\eta)\coloneqq(r_j(\eta))_{j=1}^\infty  \in (0,1/2]^{\N}$ such that for all $x\geq w_0$,
    \begin{equation*}
        |s_\eta(\exp(-\exp(x)))-g_{\eta}(x)|\leq d\log(2)\cdot\exp(-x),
    \end{equation*}
    where $s_\eta\coloneqq s_{\bm{r}(\eta)}$.
    Let $\varnothing$ denote the word of length $0$, and let $\rho_k=r_1(\varnothing)\dotsm r_k(\varnothing)$.
    For $k\geq 0$, let 
    \[ y_k=w_0+\sum_{i=1}^k(w_i+z_i)=w_0+2^{k+1}-1.\]
    Then let $n_k$ be the maximal index such that $\log\log(1/\rho_{n_k})\leq y_k$. %
    Choosing $w_0$ large, we may assume that $n_k\geq 3k$ for all $k\in\N$.
    Let $\mathcal{I}=\{0,1\}^d$ and let $L\colon\mathcal{I}^3\to\Psi$ be given by $L(\bm{i},\bm{j},\bm{k})=(\bm{i}^{(1)},\bm{j}^{(1)}+2(\bm{k}^{(1)}))$.
    For $\ell\in\N$, we let $k_\ell$ denote the maximal index such that $n_{k_\ell}\leq\ell$.
    We then define a map $\Lambda\colon\mathcal{I}^*\to\Psi^*$ by
    \begin{equation*}
        \Lambda(\bm{i}_1,\dotsc,\bm{i}_\ell) = (L(\bm{i}_1,\bm{i}_2,\bm{i}_3),L(\bm{i}_4,\bm{i}_5,\bm{i}_6),\dotsc,L(\bm{i}_{3(k_\ell-1)+1},\bm{i}_{3(k_\ell-1)+2},\bm{i}_{3(k_\ell-1)+3})).
    \end{equation*}
    This is well-defined since $\ell\geq n_{k_\ell}\geq 3k_\ell$.
    
    We now construct our inhomogeneous Moran set $M$ as follows.
    Given a word $\sigma=(\bm{i}_1,\dotsc,\bm{i}_\ell)\in\mathcal{I}^\ell$, let $\eta=\Lambda(\sigma)$.
    We then set $S_\sigma=S^1_{\bm{i}_1,\eta}\circ\dotsb\circ S^\ell_{\bm{i}_\ell,\eta}$, where $S^i_{\bm{i},\eta}(x)=r_i(\eta)\cdot x+b^i_{\bm{i}}(\eta)$ with
    \begin{equation*}
        b^i_{\bm{i}}(\eta)^{(j)} \coloneqq
        \begin{cases}
            0 &\mbox{ if } \bm{i}^{(j)}=0,\\
            1-r_i(\eta) &\mbox{ if } \bm{i}^{(j)}=1.
        \end{cases}
    \end{equation*}
    We now set
    \begin{equation*}
        M_\ell \coloneqq \bigcup_{\sigma\in\mathcal{I}^\ell}S_\sigma([0,1]^d).
    \end{equation*}
    Note that if $\sigma$ is a prefix of $\tau$, then $\Lambda(\sigma)$ is a prefix of $\Lambda(\tau)$ and therefore $S_\sigma([0,1]^d)\supseteq S_\tau([0,1]^d)$.
    Thus $M_0\supseteq M_1\supseteq\dotsb$, so the set
    \begin{equation*}
        M\coloneqq\bigcap_{\ell=0}^\infty M_\ell
    \end{equation*}
    is non-empty.
    
    Intuitively, at a fixed scale $\delta$, $M$ looks like a union of $8^k$ homogeneous Moran sets corresponding to the sequences $\bm{r}(\eta)$ for $\eta\in\Psi^k$.
    We can make this precise in the following sense.
    For $\eta\in\Psi^k$, we define
    \begin{align*}
        \mathcal{B}_k(\eta) &= \{ \, (\sigma_1,\dotsc,\sigma_k)\in\mathcal{I}^{3k}:L(\sigma_i)=\eta_i\text{ for each }1\leq i\leq k \, \},\\*
        J_\eta &= \bigcup_{\sigma\in\mathcal{B}_k(\eta)}S_\sigma([0,1]^d).
    \end{align*}
    Let $C_\eta\coloneqq C(\bm{r}(\eta))$ denote the homogeneous Moran set corresponding to the function $g_\eta$.
    Let $\ell\in\N$ satisfy
    \begin{equation*}
        y_k+a(\eta^-)<\log\log(1/(r_1(\eta)\dotsm r_\ell(\eta)))\leq y_{k+1}+a(\eta),
    \end{equation*}
    where $\eta^-\in\Psi^{k-1}$ is the unique prefix of $\eta$.
    Since $g_\varnothing(y_{k})=g_\eta(y_{k}+a(\eta^-))$, if $\sigma\in \mathcal{I}^\ell$, then $\eta$ is a prefix of $\Lambda(\sigma)$.
    Moreover, if $\tau\in\Psi^*$ is a word with $\eta$ as a prefix, then $g_\tau(x)=g_\eta(x)$ for all $x\leq y_{k+1}+a(\eta)$.
    Thus for any such $\ell$, we have
    \begin{equation}\label{e:Ml-formula}
        M_\ell\cap J_\eta=(C_\eta)_\ell\cap J_\eta.
    \end{equation}
    But then if $\eta'$ is a prefix of $\eta$, then $r_\ell(\eta')=r_\ell(\eta)$ for all $\ell$ such that
    \begin{equation}\label{e:eta-valid}
        \log\log(1/(r_1(\eta)\dotsm r_\ell(\eta)))\leq y_{k+1}+a(\eta).
    \end{equation}
    Thus~\eqref{e:Ml-formula} holds for all $\ell$ satisfying~\eqref{e:eta-valid}.
    We also note that $(C_\eta)_\ell\cap J_\eta$ consists of exactly $2^{d\ell-3k}$ hypercubes with diameter $r_1(\eta)\dotsm r_\ell(\eta)$.
    
    \begin{proofpart}
        Proof that $\dim_\theta M=h(\theta)$ for $\theta\in(0,1]$.
    \end{proofpart}
    Fix $\theta\in(0,1]$.
    We first show that $\overline{\dim}_\theta M\leq h(\theta)$.
    Let $\delta$ be sufficiently small such that $\delta\leq\rho_{k_0}$ where $\epsilon_{k_0}\leq\theta$.
    Now let $k$ be such that $\rho_{n_{k}}<\delta^{1/\theta}$.
    It now follows by the same argument as Lemma~\ref{l:flat-covers} that for each $\eta\in\Psi^k$, with $s_\eta\coloneqq\inf_{\phi\in[\delta^{1/\theta},\delta]}s_\eta(\phi)$,
    \begin{align*}
        \inf\Bigl\{ \, \sum_{U \in \mathcal{U}} |U|^{s_\eta} : \mathcal{U}\text{ is a $(\delta,\theta)$-cover of $(C_\eta)_{\ell(\eta)}\cap J_\eta$} \, \Bigr\}\leq 8^{-k},
    \end{align*}
    where $\ell(\eta)$ is minimal such that $r_1(\eta)\dotsm r_\ell(\eta)\leq\delta^{1/\theta}$.
    But $\ell(\eta)$ satisfies~\eqref{e:eta-valid} since $\rho_{n_{k}}<\delta^{1/\theta}$, so $M\subseteq \bigcup_{\eta\in\Psi^k}(C_\eta)_\ell\cap J_\eta$.
    Therefore, $s_\eta\leq h(\theta)+d\log(2)\cdot\exp(-y_{n_k})$.
    This implies that $\overline{\dim}_\theta M\leq h(\theta)$.
    
    Now fix $\epsilon>0$; we will show that $\underline{\dim}_\theta M\geq h(\theta)-(2+d)\epsilon$.
    The various variables in this proof are depicted in Figure~\ref{f:offset-diag}.
    Let $k$ be such that $2^{-k}\leq\epsilon$.
    Let $\delta>0$ be small and let $x\coloneqq\log\log(1/\delta)$.
    We may assume that
    \begin{enumerate}%
        \item $d\log(2)\exp(-x)\leq\epsilon$,
        \item $x\geq y_k$, and
        \item $x\geq y_m$ for some $m$ with $\epsilon_m\leq\theta$.
    \end{enumerate}
    
    For each $m\in\N$, there is some $v_m$ such that $f_m(v_m)=h(\theta)$.
    Equivalently, $g_\varnothing(y_m+v_m)=h(\theta)$.
    Let $m$ be maximal such that $y_m+v_m\leq x$.
    Since $y_{m+1}+v_{m+1}-(y_m+v_m)\leq 4^m$, there is some $\eta_0\in\Psi^m$ such that $|a(\eta_0)-(x-y_m-v_m)|\leq 2^{-k}$.
    Then since $\diniu{+}g_{\eta}(x)\in[-d,d]$ for all $x\in\R$,
    \begin{equation*}
        \inf_{\phi\in[\delta^{1/\theta},\delta]}s_\eta(\phi)\geq \inf_{y\in[x,x+\log(1/\theta)]}g_\eta(y)-\epsilon\geq h(\theta)-(1+d)\epsilon.
    \end{equation*}
    Set $s=h(\theta)-(2+d)\epsilon$.
    Again by the same argument as Lemma~\ref{l:flat-covers}, since $x+\log(1/\theta)<y_{m+1}+v_{m+1}< y_{m+2}$, with $\eta\in\Psi^{m+1}$ satisfying $g_{\eta_0}=g_\eta$, we have
    \begin{align*}
        C\cdot\frac{\delta^{-\epsilon}}{8^m}&\leq \inf\Bigl\{ \, \sum_{U \in\mathcal{U}} |U|^{s} :\mathcal{U}\text{ is a $(\delta,\theta)$-cover of $M\cap J_\eta$} \, \Bigr\}\\*                                                        &\leq \inf\Bigl\{ \, \sum_{U \in\mathcal{U}} |U|^{s}:\mathcal{U}\text{ is a $(\delta,\theta)$-cover of $M$} \, \Bigr\}
    \end{align*}
    for some constant $C>0$ independent of $\delta$.
    But $x\geq y_m\geq 2^m-1$, so
    \begin{equation*}
        \frac{\delta^{-\epsilon}}{8^m}\geq\frac{\bigl(\exp(\exp(2^m-1))\bigr)^\epsilon}{8^m} \xrightarrow[m \to \infty]{} \infty
    \end{equation*}
    as required.
\begin{figure}[ht]
\center{\includegraphics[width=.99\textwidth]{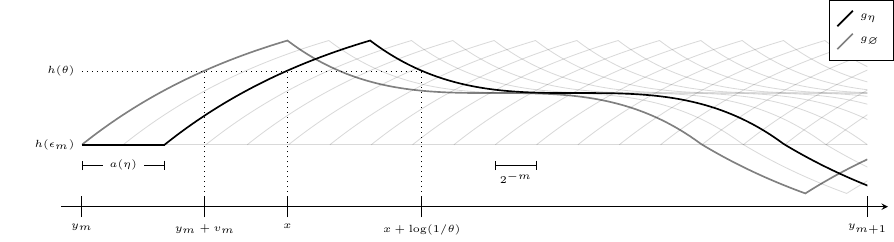}}
        \caption{\label{f:offset-diag} Choice of $g_\eta$ for the lower bound of $\dim_\theta M$.}
\end{figure}

    \begin{proofpart}
        Proof that $\dimH M=h(0)$, $\dimL M=\lambda$, and $\dimA M=\alpha$.
    \end{proofpart}
    It is clear that $\dimH M\geq h(0)$ since $\liminf_{\delta\to 0}s_\eta(\delta)\geq h(0)$ for all $\eta\in\Psi^*$.
    Conversely, let $\epsilon>0$; we will show that $\dimH M\leq h(0)+2\epsilon$.
    Let $n_0$ be sufficiently large such that $\gamma_{n_0}\leq h(0)+\epsilon$.
    Then let $\delta>0$ be sufficiently small such that with $x=\log\log(1/\delta)$, we have $x\geq y_{n_0+1}$ and $d\log(2)\cdot\exp(-x)\leq\epsilon$.
    Let $m$ be such that $x<y_m$.
    For each $\eta\in\Psi^m$, by the choice of $n_0$, there exists some $x\leq x_\eta<y_{m+1}+a(\eta)$ such that
    \begin{equation*}
        g_\eta(x_\eta)= \gamma_m\leq h(0)+\epsilon.
    \end{equation*}
    Then by the same argument as Lemma~\ref{l:flat-covers}, since $d\log(2)\cdot\exp(-x)\leq\epsilon$, with $s=h(0)+2\epsilon$ and with $\ell(\eta)$ minimal so that $\log\log(1/\rho_{\ell(\eta)})\geq x_\eta$, we have
    \begin{align*}
        \inf\Bigl\{ \, \sum_{U \in\mathcal{U}} |U|^{s}:\mathcal{U}\text{ is a $(\delta,0)$-cover of $(C_\eta)_{\ell(\eta)}\cap J_\eta$} \, \Bigr\}\leq 8^{-m}.
    \end{align*}
    Moreover, for $x$ sufficiently large, we can ensure that $\log\log(1/\rho_{\ell(\eta)})\leq y_{m+1}\leq y_{m+1}+a(\eta)$.
    Thus $M\subseteq \bigcup_{\eta\in\Psi^m}(C_\eta)_{\ell(\eta)}\cap J_\eta$, so
    \begin{equation*}
        \inf\Bigl\{\sum_{U \in\mathcal{U}} |U|^{s} : \mathcal{U}\text{ is a $(\delta,0)$-cover of $M$}\Bigr\}\leq 1.
    \end{equation*}
    But $\delta>0$ was arbitrary, so $\dimH M\leq h(0)+2\epsilon$, as required.
    
    Now we will show that $\dimA M=\alpha$; the proof that $\dimL M=\lambda$ follows similarly.
    Observe that there is some $\delta>0$ such that $\diniu{+}g_\varnothing(x)+g_\varnothing(x)=\alpha$ for all $m\in\N$ and $x\in[y_{m+1}-\delta,y_{m+1}]$.
    Let $\tau=\{(0,0),(0,0,),\dotsc,(0,0)\}\in\Psi^m$ and observe that $g_\varnothing=g_\tau$.
    Then if $\log\log(1/\rho_\ell)\in[y_{m+1}-\delta,y_{m+1}]$, we have $M_\ell\cap J_\tau = C_\varnothing\cap J_\tau$.
    Thus $\dimA M\geq\alpha$ follows by the same computation from Theorem~\ref{t:upper-h-form}.
    
    Conversely, it suffices to show that for all $\epsilon>0$ there exist $a,\ell_0 > 0$ such that for all $\ell \geq \ell_0$ and $I\in M_\ell$ with $|I|=R$, and all $r \in (0,aR)$, we have 
    \begin{equation*}
        N_r(I\cap M)\leq (R/r)^{\alpha+2 \epsilon}.
    \end{equation*}
    Let $r>0$ and let $m,k$ be minimal such that $\log\log(1/r)\leq\log\log(1/\rho_m)\leq y_k$.
    First suppose $\ell\geq 3k$, and suppose $I \in M_\ell$ satisfies $|I|=R$. 
    Then there exists a unique $\eta\in\Psi^k$ such that $I\subset J_\eta$, so 
    \begin{equation*}
        (C_\eta)_j\cap I\cap M_j=I\cap M_j
    \end{equation*}
    for all $\ell\leq j\leq m$.
    Then since $\diniu{+}g_\eta+g_\eta\leq\alpha$, a similar computation to the proof of Lemma~\ref{l:Moran-formula} gives that for all $\epsilon>0$ there exists $\ell_0$ depending only on $\epsilon$ and $M$ such that if we additionally assume that $\ell \geq \ell_0$, then
    \begin{equation*}
        N_r(I\cap M)\leq (2^d)^{m-\ell}\leq\left(r_{\ell+1}(\eta)\dotsm r_{m}(\eta)\right)^{-\bigl(\alpha+\frac{2d}{m-\ell}\bigr)}\leq(R/r)^{\alpha+\frac{2d}{m-\ell}}.
    \end{equation*}
    
    For the other case, suppose $\ell<3k$.
    Let $\eta\in\Psi^k$ satisfy $J_\eta\cap I\neq\varnothing$ and let $\sigma\in\mathcal{I}^{3k}$ satisfy $S_\sigma([0,1]^d)\subseteq I\cap J_\eta$.
    Again,
    \begin{equation*}
        N_r(I\cap(C_\eta)_m)\leq(2^d)^{m-\ell}\leq(R/r)^{\alpha+\frac{2d}{m-\ell}},
    \end{equation*}
    so 
    \begin{align*}
        N_r(I\cap(C_\eta)_m\cap S_\sigma([0,1])^d)&\leq (2^d)^{m-3k}\\
        &=(2^d)^{\ell-3k}(2^d)^{m-\ell}\\*
        &\leq (2^d)^{\ell-3k}(R/r)^{\alpha+\frac{2d}{m-\ell}}.
    \end{align*}
    But $I\cap(C_\eta)_m\cap S_\sigma([0,1]^d)=I\cap M_m\cap S_\sigma([0,1]^d)$ and there are precisely $(2^d)^{3k-\ell}$ words $\sigma$, so 
    \begin{equation*}
        N_r(I\cap M)\leq N_r(I\cap M_m)\leq (R/r)^{\alpha+\frac{2d}{m-\ell}}.
    \end{equation*}
    We can therefore choose $a$ small enough so that in either case $N_r(I\cap M)\leq (R/r)^{\alpha+2 \epsilon}$, as required.
\end{proof}
Using this construction, along with the preceding construction for the upper intermediate dimensions, we can now simultaneously prescribe the upper and lower intermediate dimensions.
\begin{corollary}\label{c:upper-lower-match}
    Let $0\leq\lambda\leq\alpha\leq d$ and let $\underline{h},\overline{h}\in\mathcal{H}(\lambda,\alpha)$ satisfy $\underline{h}(0)=\overline{h}(0)$ and $\underline{h}\leq\overline{h}$.
    Then there exists a compact perfect set $M$ such that $\dimL M=\lambda$, $\dimA M = \alpha$ and
    \begin{align*}
        \underline{\dim}_\theta C&=\underline{h}(\theta) & \overline{\dim}_\theta C&=\overline{h}(\theta)
    \end{align*}
    for all $\theta\in[0,1]$.
\end{corollary}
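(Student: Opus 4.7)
The plan is to take $M$ to be a disjoint union $M = A \sqcup B$, where $A$ is the set produced by Theorem~\ref{t:upper-h-form} applied to $\overline{h}$ (so that $\overline{\dim}_\theta A = \overline{h}(\theta)$, $\underline{\dim}_\theta A = \overline{h}(0)$, $\dimL A = \lambda$, $\dimA A = \alpha$) and $B$ is the set produced by Theorem~\ref{t:gen-h-form} applied to $\underline{h}$ (so that $\dim_\theta B = \underline{h}(\theta)$, $\dimL B = \lambda$, $\dimA B = \alpha$). Both sets can be realised inside disjoint unit boxes of $\Rd$, so $M$ is compact and perfect, and $A$ and $B$ are separated by a positive distance $\rho > 0$ with $|M|<\infty$.

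For the upper $\theta$-intermediate dimensions, finite stability (Proposition~\ref{finitestability}~(i)) immediately yields
\[
  \overline{\dim}_\theta M = \max\{\overline{\dim}_\theta A,\overline{\dim}_\theta B\} = \max\{\overline{h}(\theta),\underline{h}(\theta)\} = \overline{h}(\theta),
\]
using $\underline{h}\leq\overline{h}$. For the lower $\theta$-intermediate dimensions, the lower bound $\underline{\dim}_\theta M \geq \underline{\dim}_\theta B = \underline{h}(\theta)$ follows since any $(\delta,\theta)$-cover of $M$ restricts to one of $B$. For the matching upper bound, fix $s > \underline{h}(\theta)$. Since $s > \underline{h}(\theta)\geq\underline{h}(0) = \overline{h}(0) = \underline{\dim}_\theta A$, there is a sequence $\delta_n \to 0^+$ along which $A$ admits $(\delta_n,\theta)$-covers of $s$-cost less than $\epsilon/2$. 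Since $\overline{\dim}_\theta B = \underline{h}(\theta) < s$, for all sufficiently small $\delta$, and in particular all sufficiently large $n$, $B$ admits a $(\delta_n,\theta)$-cover of $s$-cost less than $\epsilon/2$. Concatenating these covers gives a $(\delta_n,\theta)$-cover of $M$ of $s$-cost less than $\epsilon$, proving $\underline{\dim}_\theta M \leq s$ and hence $\underline{\dim}_\theta M = \underline{h}(\theta)$.

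It remains to verify that $\dimA M = \alpha$ and $\dimL M = \lambda$. The Assouad identity is routine: finite stability of $\dimA$ for bounded sets gives $\dimA M = \max\{\dimA A, \dimA B\} = \alpha$. For the lower dimension, if $x\in A$ and $0 < r < R \leq \rho/2$, then $B(x,R)\cap M = B(x,R)\cap A$, so $N_r(B(x,R)\cap M)\geq C_A(R/r)^{\lambda}$ from $\dimL A\geq\lambda$; the analogous bound holds for $x\in B$. For $\rho/2 < R \leq |M|$, we use $N_r(B(x,R)\cap M)\geq N_r(B(x,\rho/2)\cap A)\geq C_A(\rho/(2r))^{\lambda}\geq C(R/r)^{\lambda}$, where $C$ depends on $|M|/\rho$. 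This gives $\dimL M \geq \lambda$, and the reverse inequality $\dimL M \leq \lambda$ follows by choosing $x\in A$ and $R < \rho$, so that $\dimL M = \lambda$.

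The main obstacle in this argument is the lower intermediate dimension calculation, since $\underline{\dim}_\theta$ is not finitely stable (Proposition~\ref{finitestability}~(ii)). The crucial point that makes the argument go through is the hypothesis $\overline{h}(0) = \underline{h}(0)$, which ensures $\underline{\dim}_\theta A = \underline{h}(0) \leq \underline{h}(\theta)$ so that the sequence of ``good'' scales for $A$ guaranteed by Theorem~\ref{t:upper-h-form} is compatible with the uniform control on $B$ coming from existence of $\dim_\theta B$. Without this hypothesis, the proof would fail, as indeed the conclusion must by the bound $\underline{h}(0) = \underline{\dim}_0 M = \overline{\dim}_0 M = \overline{h}(0)$ forced by Definition~\ref{d:intdimdef}.
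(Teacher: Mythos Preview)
Your proposal is correct and takes essentially the same approach as the paper: take the disjoint union of the set from Theorem~\ref{t:upper-h-form} (for $\overline{h}$) and the set from Theorem~\ref{t:gen-h-form} (for $\underline{h}$), then verify the dimension formulae. The paper's proof is terser, invoking the general inequality $\underline{\dim}_\theta(E\cup F)\leq\max\{\overline{\dim}_\theta E,\underline{\dim}_\theta F\}$ and $\dimL(E\cup F)=\min\{\dimL E,\dimL F\}$ without spelling them out, whereas you supply those arguments explicitly; the substance is the same.
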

\begin{proof}
    Let $E,F$ be disjoint compact perfect sets such that $\dimL E=\dimL F=\lambda$, $\dimA E=\dimA F=\alpha$, $\dimH E=\dimH F=\underline{h}(0)=\overline{h}(0)$, and for $\theta\in(0,1]$
    \begin{equation*}
        \underline{\dim}_\theta F\leq\dim_\theta E=\underline{h}(\theta)\leq\overline{h}(\theta)=\overline{\dim}_\theta F.
    \end{equation*}
    For example, such a set $E$ is provided by Theorem~\ref{t:gen-h-form} and such a set $F$ is provided by Theorem~\ref{t:upper-h-form}.
    Let $M=E\cup F$.
    Then $\dimL M=\min\{\dimL E,\dimL F\}=\lambda$, $\dimA M=\max\{\dimA E,\dimA F\}=\alpha$,
    \begin{equation*}
        \underline{h}(\theta)=\underline{\dim}_\theta E\leq \underline{\dim}_\theta M\leq\max\{\overline{\dim}_\theta E,\underline{\dim}_\theta F\}=\underline{h}(\theta),
    \end{equation*}
    and
    \begin{equation*}
        \overline{\dim}_\theta M=\max\{\overline{\dim}_\theta E,\overline{\dim}_\theta F\}=\overline{h}(\theta)
    \end{equation*}
    for $\theta\in(0,1]$.
    Thus $M$ satisfies the requirements.
\end{proof}

\chapter{Infinitely generated attractors}\label{s:infinite}

\section{Introduction}

\subsection{Background}

This chapter focuses on infinite IFSs and is based on our joint paper~\cite{Banaji2021infinite} with J.~M.~Fraser. 
The dimension theory of limit sets of finite iterated function systems (IFSs) has been studied extensively since Hutchinson's paper~\cite{Hutchinson1981attractor}. In a seminal 1996 paper~\cite{Mauldin1996iifs} Mauldin and Urbański extended the theory to infinite iterated function systems (IIFSs) consisting of countably many contractions, with the contraction ratios uniformly bounded above by some $\rho < 1$. The dimension theory of IIFSs has been studied further in~\cite{Mauldin1999ctdfrac,Ngai2016iifs,
Chu2020iifs,Banaji2022assouad,
Kaenmaki2014infiniteaffine,
Mauldin1995iifssurvey:Fractals1995} and many other works. 
Mauldin and Urbański paid particular attention to (infinite) conformal iterated function systems (CIFSs, defined in Definition~\ref{cifs}), where the contractions are conformal and are sufficiently separated. 
Approximations to one such limit set are shown in Figure~\ref{f:infiniteselfsim}. 
\begin{figure}[ht]
\center{\includegraphics[width=.9\textwidth]{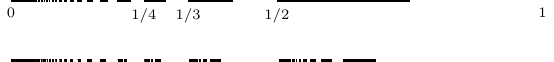}}
\caption{\label{f:infiniteselfsim}
First and second level cylinders for an infinitely generated self-similar set.
}
\end{figure}

There are many similarities, but also many differences, between finite and infinite iterated function systems. One notable difference is that Hausdorff and box dimension coincide for the limit set of every finite CIFS, but can differ for infinite CIFSs, as the presence of infinitely many maps can cause the limit set to have greater inhomogeneity in space. 
In particular, Mauldin and Urbański showed that for a CIFS the Hausdorff dimension can be determined from a certain topological pressure function defined in~\eqref{MUpressure} below (see \cite[Theorem~3.15]{Mauldin1996iifs}). 
The same authors proved that the upper box and packing dimensions are given by the maximum of the Hausdorff dimension of the limit set and the upper box dimension of images of any given point under the maps in the CIFS (noting that the box dimension of a countable set, unlike the Hausdorff dimension, can be strictly positive), see \cite[Theorem~2.11]{Mauldin1999ctdfrac}. 
They applied their results to sets of irrational numbers whose continued fraction expansions have restricted entries, as these are limit sets of an appropriate CIFS (see Section~\ref{ctdfracsect}).  
 
 The following result describes the Assouad type dimensions of the limit set of a CIFS. 
 The Assouad spectra can display interesting behaviour, such as having two phase transitions. 
 \begin{theorem}[Banaji--Fraser~\cite{Banaji2022assouad}]\label{t:conformalassouad}
 Let $F$ be the limit set of an infinite CIFS and let $P$ be the set of fixed points of the contractions. Then for all $\theta \in (0,1)$, 
 \begin{align*}
 \max\{\dim_{\mathrm H} F,\uasp P \} \leq \asp F &= \uasp F \\*
 &\leq \max_{\phi \in [\theta,1]} \frac{(\phi^{-1} - 1) \overline{\dim}_\mathrm{A}^\phi P + (\theta^{-1} - \phi^{-1}) \ubd F}{\theta^{-1} - 1},
 \end{align*}
 and these bounds are sharp in general. 
 If we assume the additional separation condition that $\overline{S_i}(V) \cap \overline{S_j}(V) = \varnothing$ for all distinct $i,j \in I$ (using notation from Definition~\ref{cifs} below), then 
 \[ 
 \dim_{\mathrm A} F = \max\{ \dim_{\mathrm H} F, \dim_{\mathrm A} P \}.
 \]
 \end{theorem}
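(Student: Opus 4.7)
The plan is to establish Theorem~\ref{t:conformalassouad} in three stages: lower bounds, upper bound for the Assouad spectrum (with sharpness), and the Assouad dimension equality under strong separation.

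The lower bounds $\dim_\mathrm{H} F \le \uasp F$ and $\uasp P \le \uasp F$ are routine: the first is the general inequality chain stated earlier in Chapter~\ref{s:intro}, and the second follows from monotonicity of $\uasp$ once we observe that $P \subseteq F$, since every fixed point of a single contraction lies in the attractor of a CIFS. To obtain $\asp F = \uasp F$, I would exploit bounded distortion for conformal maps together with the fact, noted after Definition~\ref{d:intdimdef}, that $\uasp$ is the sup over smaller $\phi$ of $\asp$; bounded distortion lets one translate covering data at one scale to nearby scales with uniformly controlled loss, which propagates to monotonicity of $\asp$ in $\phi$ and hence the desired equality.

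For the upper bound on $\uasp F$, fix $\theta \in (0,1)$, a point $x \in F$, and $R \in (0,1]$, and let $\phi \in [\theta,1]$ be an intermediate scale. I would decompose the cylinders $S_\omega(V)$ meeting $B(x,R)$ into those of diameter $\ge R^\phi$ and those of diameter $< R^\phi$. The first class is essentially a collection of bi-Lipschitz copies of $F$ at scale at least $R^\phi$ (by bounded distortion), so the $R^{1/\theta}$-covering number within each such cylinder is controlled by the Mauldin--Urbański pressure formula giving $\ubd F$, and the number of such cylinders is controlled by a $\ubd F$-volume bound; the combined contribution grows like $R^{-(\theta^{-1}-\phi^{-1})\ubd F}$. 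The second class accumulates at fixed points of $P$, so bounded distortion identifies its $R^{1/\theta}$-covering number with that of $B(p,cR^\phi) \cap P$ at scale $R^{1/\theta}$, which by definition of $\uasp$ is bounded by $R^{-(\phi^{-1}-1)\uasp P}$. Multiplying these, taking logs, normalising by $\theta^{-1}-1$, and optimising over $\phi \in [\theta,1]$ yields the claimed maximum. Sharpness of the bound is most naturally witnessed by self-similar IIFSs whose contraction ratios decay polynomially, with the fixed point set $P$ realised as an inverted polynomial lattice as in Proposition~\ref{p:lattice}; there the Assouad spectrum of $P$ is computable and the bound is attained.

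For the equality $\dim_\mathrm{A} F = \max\{\dim_\mathrm{H} F,\dim_\mathrm{A} P\}$ under $\overline{S_i}(V)\cap\overline{S_j}(V)=\varnothing$, the inequality $\ge$ is immediate from $P\subseteq F$ together with $\dim_\mathrm{H} F\le\dim_\mathrm{A} F$. For $\le$, given $0<r<R$ and a ball $B(x,R)\cap F$, decompose this set into (i) the union of sets of the form $S_\omega(F)\cap B(x,R)$ where $\mathrm{diam}(S_\omega(V))\in[r,R]$ and (ii) a ``residual'' piece lying within an $O(r)$-neighbourhood of finitely many fixed points in $P$. The strong separation hypothesis guarantees a uniform positive gap between $\overline{S_i}(V)$ and $\overline{S_j}(V)$, which scales down under iteration and forces distinct accumulation points to be genuinely separated; this is what prevents uncontrolled overlapping tails. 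Class (i) is covered by at most $(R/r)^{\dim_\mathrm{H} F + \varepsilon}$ balls using Ahlfors regularity of the conformal measure at the Hausdorff dimension, which holds under strong separation by results of Mauldin--Urbański. Class (ii) is comparable via bounded distortion to a neighbourhood of $P$ at scales in $(cr,CR)$ and is thus covered by at most $(R/r)^{\dim_\mathrm{A} P+\varepsilon}$ balls. Summing and sending $\varepsilon\to 0$ gives the bound.

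The main obstacle is Class (ii) without the strong separation hypothesis: points in distinct cylinders may accumulate to a common point not lying in $P$ but created by the piling-up of many contractions, and then the geometry of $F$ near that point is not bi-Lipschitz modelled by any neighbourhood of $P$. This is precisely why strong separation is needed for the equality but only a one-sided bound appears for $\uasp F$ in general; making the decomposition (i)/(ii) rigorous, and quantifying the separation gap uniformly across all cylinder depths, is the technical heart of the proof.
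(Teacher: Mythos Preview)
The thesis does not actually prove Theorem~\ref{t:conformalassouad}; immediately after the statement it reads ``To prevent this thesis from becoming unreasonably long, we omit the proof of Theorem~\ref{t:conformalassouad} and instead refer the reader to~\cite{Banaji2022assouad}.'' There is therefore no proof in the paper against which to compare your sketch.

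That said, one step in your outline deserves caution. In the Assouad dimension upper bound you invoke ``Ahlfors regularity of the conformal measure at the Hausdorff dimension, which holds under strong separation by results of Mauldin--Urba\'nski.'' For \emph{infinite} CIFSs the $h$-conformal measure is typically not Ahlfors regular: if it were, Hausdorff, box and Assouad dimensions of $F$ would all coincide, contradicting the very phenomenon that makes these systems interesting (and that Theorem~\ref{mainint} is designed to capture). The bound you want for class~(i) cannot come from measure regularity; it has to come instead from a direct cylinder count using bounded distortion and the pressure characterisation of $\dim_{\mathrm H} F$, together with the fact that within a single cylinder of diameter comparable to $R$ the system looks like a finite CIFS at scales down to $r$. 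Your class~(ii) argument also needs to track not just accumulation at points of $P$ but at all points of $\overline{P}$, which under strong separation is still controlled by $\dim_{\mathrm A} P = \dim_{\mathrm A} \overline{P}$.
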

 To prevent this thesis from becoming unreasonably long, we omit the proof of Theorem~\ref{t:conformalassouad} and instead refer the reader to~\cite{Banaji2022assouad}. 
In this chapter we study the intermediate dimensions of limit sets of infinite iterated function systems.

 \subsection{Structure of chapter and discussion of results}
 
 In Section~\ref{setting} we introduce notation, define limit sets, and define the notions of dimension we will be working with. We introduce and prove basic properties about the topological pressure function that we will use to obtain bounds for dimensions of the limit sets. We also define conformal iterated function systems (CIFSs) and prove geometric consequences of the definition of a CIFS that we will use in the proof of the main result of this chapter, Theorem~\ref{mainint}. 
 
 In Section~\ref{mainsect} we prove Theorem~\ref{inttypeub}, which gives upper bounds for the Hausdorff, box, intermediate and $\Phi$-intermediate dimensions in terms of the topological pressure function that hold in the very general setting of arbitrary IIFSs (without any conformality assumptions or separation conditions). The proof is an induction argument, using efficient covers at larger scales to construct efficient covers at smaller scales. In the conformal setting, Mauldin and Urbański~\cite{Mauldin1996iifs,Mauldin1999ctdfrac} proved results for the Hausdorff and upper box dimensions. We use our upper bound to prove the main result of this chapter, a simplified version of which we now state. 
 \begin{theorem*}[See Theorem~\ref{mainint} for a stronger statement]
 If $F$ is the limit set of an (infinite) CIFS (defined in Definition~\ref{cifs}) and $P$ is the set of fixed points of the contractions then for all $\theta \in [0,1]$, 
 \[ \uid F = \max\{\dim_\mathrm{H} F, \uid P\}.\] 
 \end{theorem*}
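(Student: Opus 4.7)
The lower bound $\uid F\geq\max\{\dim_{\mathrm H}F,\uid P\}$ is immediate: Proposition~\ref{basicbounds} gives $\uid F\geq\dim_{\mathrm H}F$, and each fixed point $\fix(S_i)$ lies in the closed set $F$ (as the limit of $S_i^n(x)$ for any $x\in F$), so $P\subseteq F$ and hence $\uid P\leq \uid F$ by Proposition~\ref{unprovedprop}. The boundary cases $\theta\in\{0,1\}$ reduce to the Mauldin--Urba\'nski results on Hausdorff and upper box dimension quoted in the introduction, so I focus on $\theta\in(0,1)$.

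For the upper bound, fix $s>\max\{\dim_{\mathrm H}F,\uid P\}$ and choose $s_2\in(\dim_{\mathrm H}F,s)$. The plan is to construct a $(\delta,\theta)$-cover of $F$ whose $s$-cost is bounded independently of $\delta$. The natural device is the Moran-type stopping-time cover $W_\delta:=\{\omega\in I^*:r_\omega|V|\leq\delta,\, r_{\omega'}|V|>\delta \text{ for every proper prefix }\omega'\text{ of }\omega\}$; by the bounded-distortion machinery for CIFSs from Section~\ref{setting}, $\{S_\omega(\overline V):\omega\in W_\delta\}$ covers $F$ with all diameters at most $C\delta$. I split $W_\delta$ into a "good" part $W_\delta^{\mathrm g}$ of words with $|S_\omega(\overline V)|\geq\delta^{1/\theta}$ and a "bad" part $W_\delta^{\mathrm b}$; for $\omega=\omega'\cdot i\in W_\delta^{\mathrm b}$ one must have $r_i\lesssim\delta^{1/\theta}/r_{\omega'}$, and then $S_\omega(\overline V)\subseteq S_{\omega'}\bigl(B(\fix(S_i),Cr_i)\bigr)$ with $\fix(S_i)\in P$.

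The good part contributes $s$-cost $\lesssim 1$ uniformly in $\delta$ by Theorem~\ref{inttypeub} applied with $\Phi(\delta)=\delta^{1/\theta}$, since $s_2>\dim_{\mathrm H}F$ forces the Mauldin--Urba\'nski pressure to be strictly negative at $s_2$. For the bad part I group $W_\delta^{\mathrm b}$ by the maximal prefix $\omega'$ with $r_{\omega'}|V|>\delta$: the union $\bigcup_{i:\omega' i\in W_\delta^{\mathrm b}} S_{\omega' i}(\overline V)$ lies in a $(C\delta^{1/\theta}/r_{\omega'})$-neighbourhood of $S_{\omega'}(P)$. Taking a $(\tilde\delta,\theta)$-cover of $P$ of $s$-cost at most $1$ with $\tilde\delta:=\delta/r_{\omega'}$ (which exists since $s>\uid P$), inflating each member by $C\delta^{1/\theta}/r_{\omega'}$ (which is comparable to $\tilde\delta^{1/\theta}$ because $r_{\omega'}^{1/\theta}\leq r_{\omega'}$, hence no larger than the minimum diameter in the $P$-cover), and pushing forward by $S_{\omega'}$ (Lipschitz constant $\asymp r_{\omega'}$) yields a cover of the union above with diameters in $[r_{\omega'}^{1-1/\theta}\delta^{1/\theta},\,C\delta]\subseteq[\delta^{1/\theta},C\delta]$.

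Summing, the bad part contributes $s$-cost $\lesssim\sum_{\omega'}r_{\omega'}^s$. The relevant prefixes $\omega'$ lie in $\{\omega\in I^*:r_\omega|V|>\delta\}$, so partitioning by word length and applying the pressure estimate $\sum_{\omega\in I^n}r_\omega^s\leq e^{-cn}$ for large $n$ (available since $s>\dim_{\mathrm H}F$) bounds this sum by a convergent geometric series, independently of $\delta$. Rescaling $\delta$ by $C^{-1}$ converts the resulting family into a genuine $(\delta,\theta)$-cover and gives $\uid F\leq s$. The main technical obstacle is the scale-matching: one must verify carefully that the pushed-forward and inflated $P$-covers remain in the window $[\delta^{1/\theta},\delta]$ once both the distortion constants and the $\delta^{1/\theta}/r_{\omega'}$ thickening are accounted for. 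This is tightest when $\theta$ is close to $1$, where the admissible window of diameters is narrow and the errors coming from inflation and push-forward must be tracked with care.
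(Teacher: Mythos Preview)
Your approach is genuinely different from the paper's and essentially correct, though one step is mis-cited. The paper proves the upper bound in three stages: first a general upper bound (Theorem~\ref{inttypeub}) obtained by fixing a deep level $q$ and inducting on the scale index $n$ with $\delta\in(1/(n+1),1/n]$; then two auxiliary lemmas (Lemma~\ref{samewithinlevel}, which needs the cone condition via Lemma~\ref{fitin}, and Lemma~\ref{changelevel}) to show that the infimum over level-$n$ representative sets collapses to the level-$1$ set $P$. Your stopping-time decomposition $W_\delta$ with the good/bad split is more direct: by pushing forward a $(\delta/r_{\omega'},\theta)$-cover of $P$ through $S_{\omega'}$, you use the level-$1$ fixed points immediately and never need those two lemmas. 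The scale check you flag is exactly the one the paper uses in disguise: the paper's key inequality $R_\omega(\delta/R_\omega)^\theta\le\delta^\theta$ (equation~\eqref{intkeybound}) and your observation that $r_{\omega'}^{1-1/\theta}\ge 1$ are the same statement, and both hold simply because $r_{\omega'}\le 1$.

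The one genuine slip is your justification of the good-part bound. Theorem~\ref{inttypeub} is a statement about $\uid F$, not about the $s$-cost of a stopping-time antichain, so invoking it is circular here. What actually controls $\sum_{\omega\in W_\delta^{\mathrm g}}|S_\omega(\overline V)|^s$ is the same pressure estimate you already use for the bad part: since $s>h$ gives $\overline P(s)<0$, Lemma~\ref{fekete} yields $\sum_{n\ge 0}\phi_n(s)<\infty$, and $W_\delta^{\mathrm g}\subset I^*$ so $\sum_{\omega\in W_\delta^{\mathrm g}}r_\omega^s\le\sum_{n\ge 0}\phi_n(s)D^s$. With that correction your argument goes through; a small bookkeeping point is that $\tilde\delta=\delta/r_{\omega'}$ can be close to $|V|$ (and in particular $\ge 1$ if $|V|>1$), so you need the easy observation that $S^s_{\tilde\delta,\theta}(P)$ is bounded uniformly for $\tilde\delta\in(0,|V|]$, not just for small $\tilde\delta$.
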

 Our methods also apply to infinite parabolic IFSs. 
 In Example~\ref{proj} we consider an example with intermediate dimensions continuous at $\theta=0$ and apply a result of Burrell, Falconer and Fraser~\cite{Burrell2021projections} to give an upper bound for the upper box dimension of orthogonal projections. 
 
 In Section~\ref{ctdfracsect} we apply our results to give a formula in Theorem~\ref{ctdfracintthm} for the upper intermediate dimensions of sets of irrational numbers whose continued fraction expansions have restricted entries. %
Recalling the discussion in Section~\ref{s:intdimsintro}, we show that the intermediate dimensions of continued fraction sets are continuous at $\theta=0$ apply our results and those in~\cite{Burrell2022brownian} to give information related to H\"older distortion and fractional Brownian images of continued fraction sets. 
We also obtain similar results in Section~\ref{compsect} for sets of complex numbers which have complex continued fraction expansions with restricted entries.

 In Section~\ref{genericsect} we consider the limit sets of `generic' IIFSs, in the same vein as the seminal paper~\cite{Falconer1988generic} where Falconer considered the generic dimension of a (finitely-generated) self-affine set by fixing a set of matrices and randomising the translates in a suitable way. We show that under certain conditions, the limit set of an IIFS with `generic' translates is somewhere dense, and so in particular the box and intermediate dimensions equal the ambient spatial dimension, where `generic' can mean either almost surely with respect to a natural measure, or comeagre with respect to a natural topology. %
 This is in stark contrast to the Hausdorff dimension, which K\"aenm\"aki and  Reeve~\cite{Kaenmaki2014infiniteaffine} showed satisfies an analogue of Falconer's affinity dimension formula for a generic IIFS of affine contractions. 

In this chapter, as in~\cite{Mauldin1996iifs,Mauldin1999ctdfrac}, the separation condition we assume in Definition~\ref{cifs} for a CIFS is the open set condition (OSC). Ngai and Tong~\cite{Ngai2016iifs} and Chu and Ngai~\cite{Chu2020iifs} study the Hausdorff, box and packing dimensions of the limit sets of IIFSs with overlaps that do not satisfy the OSC but do satisfy suitable extensions of the weak separation condition. It is therefore natural to ask (though we will not pursue this) what can be said about the intermediate or $\Phi$-intermediate dimensions of the limit sets of infinite iterated function systems with overlaps that do not satisfy the OSC but perhaps satisfy weaker separation conditions such as the extensions of the weak separation condition considered in~\cite{Ngai2016iifs}. 

\section{Infinite IFSs and pressure functions}\label{setting}

We will work with infinite iterated function systems, defined as in~\cite{Mauldin1996iifs} as follows. 

\begin{defn}\label{iifs}
Let $d \in \N$ and let $X$ be a compact, connected subset of $\Rd$ with more than one point, %
equipped with the metric induced by the Euclidean norm $||\cdot ||$. We say that an \emph{infinite iterated function system (IIFS)} on $X$ is a collection of maps $S_i \colon X \to X$, $i \in I$, where $I$ is a countable index set, such that there exists $\rho \in [0,1)$ such that 
\[ ||S_i(x) - S_i(y)|| \leq \rho ||x-y|| \quad \mbox{for all } x,y \in X \mbox{ and } i \in I.\]
\end{defn}

The assumption that the maps are uniformly contracting will be important when defining the limit set. We now introduce some notation. Define $I_0 \coloneqq \{\varnothing\}$ and $I^* \coloneqq \bigcup_{i=1}^\infty I^n$. We call elements of $I^*$ \emph{finite words} and elements of $I^\N$ \emph{infinite words}. We usually denote words by the letter $w$, and we write $w = i_1 \dotsb i_n$ and $w=i_1i_2\dotsb$ instead of $w= (i_1,\dotsc,i_n)$ and $w=(i_1,i_2,\dotsc)$ respectively. We say that a word in $I^n$ has \emph{length} $n$, and an infinite word has \emph{length} $\infty$. If $w \in I^* \cup I^\N$ and $n \in \N$ does not exceed the length of $w$ then we write $w|_n \coloneqq w_1 \dotsb w_n \in I^n$, and $w|_0 \coloneqq \varnothing$. If $w \in I_0 \cup I^* \cup I^\N$ and $v \in I_0 \cup I^*$ then we say that $v$ is a \emph{prefix} of $w$ if there exists $n \in \{0,1,2,\dotsc\}$ such that $v = w|_n$. 
For $w \in I^n$ we define 
\[ S_w \coloneqq S_{w_1} \circ \dotsb \circ S_{w_n},\]  
and we define $S_\varnothing$ to be the identity function on $X$.

Mauldin and Urbański~\cite{Mauldin1996iifs,Mauldin1999ctdfrac} study the IIFSs in Definition~\ref{cifs}, where the contractions are assumed to extend to \emph{conformal} maps (see~\ref{conformal} below for a formal definition), which means that locally they preserve angles. This assumption is crucial for Mauldin and Urbański's formulae for the Hausdorff and upper box dimensions of the limit set (though there has been some subsequent work on IIFSs consisting of affine contractions~\cite{Kaenmaki2014infiniteaffine,
KaenmakiPreprintinfiniteaffine}). 
The conformality assumption will also be crucial when we prove a formula for the intermediate dimensions in Section~\ref{conformalsect}. In one dimension, conformal maps are simply functions with non-vanishing H{\"o}lder continuous derivative. In two dimensions, they are holomorphic functions with non-vanishing derivative on their domain. In dimension three and higher, by a theorem of Liouville (1850) they have a very restricted form: they are M{\"o}bius transformations, so can be composed from homotheties, isometries, reflections in hyperplanes, and inversions in spheres. 
Recall that $\mathcal{L}_d$ denotes $d$-dimensional Lebesgue measure. 

\begin{defn}\label{cifs}%
A \emph{conformal iterated function system (CIFS)} is an IIFS (as in Definition~\ref{iifs}) which satisfies the following additional properties: 
\begin{enumerate}[label=(\roman*)]

\item\label{osc} (Open set condition (OSC)) 
The set $X$ has non-empty interior $U \coloneqq \mathrm{Int}_{\Rd} X$, and $S_i(U) \subset U$ for all $i \in I$ and $S_i(U) \cap S_j(U) = \varnothing$ for all $i,j \in I$ with $i \neq j$. 

\item\label{cone} (Cone condition) $\inf_{x \in X} \inf_{r \in (0,1)} \mathcal{L}_d (B(x,r) \cap \mathrm{Int}_{\Rd} X)/r^d > 0$. 

\item\label{conformal} (Conformality) There exists an open, bounded, connected subset $V \subset \Rd$ such that $X \subset V$ and such that for each $i \in I$, $S_i$ extends to a $C^{1+\epsilon}$ diffeomorphism from $V$ to an open subset of $V$ which is \emph{conformal}, so for all $x \in V$ the differential $S_i'|_x$ exists, is non-zero, is a similarity map (so $||S_i'|_x (y)|| = ||S_i'|_x||\cdot||y||$ for all $y \in \Rd$), and is $\epsilon$-H{\"o}lder continuous in $x$. Moreover, there exists $\rho \in (0,1)$ such that $||S_i'|| <\rho$ for all $i \in I$, where $||\cdot||$ is the supremum norm over $V$. %

\item\label{bdp} (Bounded distortion property (BDP)) There exists $K>0$ such that $||S_w'|_y|| \leq K||S_w'|_x||$ for all $x,y \in V$ and $w \in I^*$. 

\end{enumerate}
\end{defn}
Mauldin and Urbański~\cite[(2.8)]{Mauldin1996iifs} use a stronger form of the cone condition, but note on page~110 that~\ref{cone} is sufficiently strong for their aims. Both forms of this technical condition will be satisfied if $X$ is `reasonable,' for example if $X$ is convex or has a smooth enough boundary. 
In the published version of~\cite{Banaji2021infinite}, there is a typo in the BDP (it should read `for all $x,y \in V$' rather than `for all $x,y \in X$'). 

For every IIFS, since $|S_{w|_n}(X)| \leq \rho^n |X|$ by the uniform contractivity, the map 
\[ \pi \colon I^\N \to X, \qquad \pi(w) \coloneqq \bigcap_{n=1}^\infty S_{w|_n}(X) \]
is well-defined and continuous. 
We are interested in the following set, which will often be fractal in nature. 
\begin{defn}
The \emph{limit set} or \emph{attractor} of an IIFS is defined by
\[ F \coloneqq \pi(I^\N) = \bigcup_{w \in I^\N} \bigcap_{n=1}^\infty S_{w|_n}(X). \]
\end{defn}
For $w \in I^n$ define $F_w = F_{S_w} \coloneqq S_w(F)$ and $X_w = X_{S_w} \coloneqq S_w(X)$. %
Now, $F$ is clearly non-empty and satisfies the relation 
\begin{equation}\label{attractor} F = \bigcup_{i \in I} F_i. 
\end{equation}
It is the largest (by inclusion) of many sets which satisfy~\eqref{attractor}. If $I$ is finite then $F$ is compact (and is indeed the only non-empty compact set which satisfies~\eqref{attractor} by Hutchinson's application of the Banach contraction mapping theorem~\cite{Hutchinson1981attractor}), but if $I$ is infinite then $F$ will not generally be closed. When $I$ is finite, the limit set $F$ equals the closure of the set of fixed points of all finite compositions of maps in the IFS, and also satisfies $F = \cap_{n=1}^\infty S^n(X)$ (where $S(E) \coloneqq \cup_{i \in I} S_i(E)$ for $E \subseteq X$), but when $I$ is infinite these sets may strictly contain $F$. 
Some cylinder sets in the construction of an infinitely generated self-similar set are shown in Figure~\ref{f:infiniteselfsim} on page~\pageref{f:infiniteselfsim}. 

We define some more quantities that will enable us to define a topological pressure function for the system. For every IIFS, for $w \in I_n$ define 
\begin{align*}
r_w = r_{S_w} &\coloneqq \inf_{x,y \in X, x \neq y} \frac{||S_w(x)-S_w(y)||}{||x-y||}; \\* 
R_w = R_{S_w} &\coloneqq \sup_{x,y \in X, x \neq y} \frac{||S_w(x)-S_w(y)||}{||x-y||},
\end{align*}
noting that $0 \leq r_w \leq R_w \leq \rho$. The value $R_w$ is the smallest possible Lipschitz constant for $S_w$, and these constants are clearly submultiplicative; $R_{vw} \leq R_v R_w$ for all $v,w \in I^*$. 
For $n \in \N$ define $M_n \coloneqq \{ \, S_w : w \in I^n \, \}$, and for $t \in [0,\infty)$ define 
\begin{equation}\label{e:definephin} 
\phi_n(t) \coloneqq \sum_{\sigma \in M_n} R_\sigma^t \in [0,\infty].
\end{equation}
 Note that as in~\cite{Ngai2016iifs}, we sum over $\sigma \in M_n$ instead of $w \in I^n$ so that distinct words $w$ that give rise to the same $S_w$ contribute only one term in the sum (exact overlaps are removed). 
\begin{lma}\label{fekete}
For every IIFS, for all $t \in (0,\infty)$, using the convention $\log \infty = \infty$ and $\log 0 = -\infty$, 
\[ \frac{1}{n}\log \phi_n(t) \xrightarrow[n \to \infty]{} \inf_{n \in \N} \frac{1}{n}\log \phi_n(t) \in [-\infty,\infty]. \]
\end{lma}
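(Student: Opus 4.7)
The plan is to apply Fekete's subadditive lemma (valid in the extended reals $[-\infty,\infty]$) to the sequence $a_n \coloneqq \log \phi_n(t)$. It suffices to establish the submultiplicative inequality
\begin{equation}\label{e:submult-plan}
\phi_{m+n}(t) \leq \phi_m(t)\,\phi_n(t) \qquad \text{for all } m,n \in \N,
\end{equation}
with the usual conventions $0 \cdot \infty = 0$ and $\log 0 = -\infty$, $\log \infty = \infty$. Once~\eqref{e:submult-plan} is in place, taking logarithms gives $a_{m+n} \leq a_m + a_n$, and Fekete's lemma (extended to allow $\pm \infty$) yields $\frac{1}{n} a_n \to \inf_{n \in \N} \frac{1}{n} a_n$, which is exactly the claim.

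To prove~\eqref{e:submult-plan}, the key point is that we sum over the sets $M_n$ rather than over $I^n$, so we must argue carefully around possible exact overlaps. For each $\tau \in M_{m+n}$, choose a word $w(\tau) = u(\tau)v(\tau) \in I^{m+n}$ with $u(\tau) \in I^m$, $v(\tau) \in I^n$ and $S_{w(\tau)} = \tau$, and define $\Psi(\tau) \coloneqq (S_{u(\tau)}, S_{v(\tau)}) \in M_m \times M_n$. The map $\Psi$ is injective: if $\Psi(\tau_1) = \Psi(\tau_2)$ then $\tau_1 = S_{u(\tau_1)} \circ S_{v(\tau_1)} = S_{u(\tau_2)} \circ S_{v(\tau_2)} = \tau_2$. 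Combining this with the submultiplicativity $R_{uv} \leq R_u R_v$ of the best Lipschitz constants gives, for $t \in (0,\infty)$,
\begin{equation*}
\phi_{m+n}(t) = \sum_{\tau \in M_{m+n}} R_\tau^t \leq \sum_{\tau \in M_{m+n}} R_{S_{u(\tau)}}^t R_{S_{v(\tau)}}^t \leq \sum_{(\sigma,\sigma') \in M_m \times M_n} R_\sigma^t R_{\sigma'}^t = \phi_m(t)\phi_n(t),
\end{equation*}
which is~\eqref{e:submult-plan}.

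To close, one invokes Fekete's lemma in extended form: if $(a_n)$ is subadditive in $[-\infty,\infty]$ then $a_n/n$ converges to $\inf_n a_n/n$. This standard fact is proved by fixing $N$, writing $n = qN + r$ with $0 \leq r < N$, using subadditivity to bound $a_n \leq q a_N + a_r$ (with the convention $a_0 = 0$ handled by setting $a_0 = -\infty$ or simply by allowing $r \in \{1,\dotsc,N\}$ with $q$ adjusted), dividing by $n$, and letting $n \to \infty$ then $N \to \infty$. The only technical subtlety, and the main thing to keep in mind while writing out the argument, is the bookkeeping for the cases $\phi_n(t) = 0$ (where $a_n = -\infty$ and the infimum is $-\infty$, so there is nothing to prove) and $\phi_n(t) = \infty$ for some $n$ (where~\eqref{e:submult-plan} forces $\phi_{kn}(t) \leq \phi_n(t)^k$ to be interpreted in the extended reals and the infimum may still be finite or $+\infty$); in either case Fekete's lemma delivers the claimed identity.
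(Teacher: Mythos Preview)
Your proof is correct and follows the same approach as the paper: establish submultiplicativity $\phi_{m+n}(t) \leq \phi_m(t)\phi_n(t)$ via $R_{uv} \leq R_u R_v$, then apply Fekete's lemma. You are in fact more careful than the paper on one point: since the sums defining $\phi_n$ run over $M_n$ rather than $I^n$, the injectivity argument you give (choosing a decomposition and checking the map $\tau \mapsto (S_{u(\tau)},S_{v(\tau)})$ is injective) is needed, whereas the paper simply writes the inequality without spelling this out.
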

\begin{proof}
By the submultiplicativity of the Lipschitz constants, if $n,m \in \N$ then 
\[ \log \phi_{n+m} \leq \log \left(\sum_{\sigma \in M_n} \sum_{\tau \in M_m} (R_\sigma R_\tau)^t\right) = \log \left(\sum_{\sigma \in M_n}R_\sigma^t \sum_{\tau \in M_m} R_\tau^t\right) = \log \phi_n + \log \phi_m. \]
Therefore the sequence $(\log \phi_n)_{n=1}^\infty$ is subadditive, so the claim follows from Fekete's lemma. %
\end{proof}
In light of Lemma~\ref{fekete} we can make the following definition, which will later be used when giving bounds and formulae for the different notions of dimension for the limit sets. 
\begin{defn}
For an IIFS, define the \emph{(topological) pressure function} $\overline{P} \colon (0,\infty) \to [-\infty,\infty]$ by 
\[ \overline{P}(t) \coloneqq \lim_{n \to \infty} \frac{1}{n}\log \phi_n(t) = \inf_{n \in \N} \frac{1}{n}\log \phi_n(t).\]
\end{defn}

\begin{lma}\label{pressurestrict}
For every IIFS, $\overline{P}$ is a decreasing function, and if $0<t<s<\infty$ and $\overline{P}(t) \in \mathbb{R}$ then $\overline{P}(t)>\overline{P}(s)$. %
\end{lma}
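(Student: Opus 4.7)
The plan is to exploit the uniform contractivity of the IIFS to obtain a quantitative comparison between $\phi_n(s)$ and $\phi_n(t)$. The key observation is that, by submultiplicativity of the Lipschitz constants and the hypothesis $R_i \leq \rho$ for all $i \in I$, we have $R_\sigma \leq \rho^n$ for every $\sigma \in M_n$, where $\rho \in [0,1)$ is the uniform contraction ratio from Definition~\ref{iifs}.

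First I would write, for $0 < t < s$ and any $\sigma \in M_n$,
\[
R_\sigma^s = R_\sigma^t \cdot R_\sigma^{s-t} \leq \rho^{n(s-t)}\, R_\sigma^t,
\]
and sum over $\sigma \in M_n$ to get $\phi_n(s) \leq \rho^{n(s-t)} \phi_n(t)$. Taking logarithms, dividing by $n$, and sending $n \to \infty$ via Lemma~\ref{fekete} yields
\[
\overline{P}(s) \;\leq\; (s-t)\log \rho + \overline{P}(t),
\]
with the usual conventions $\log \infty = \infty$ and $\log 0 = -\infty$ (these edge cases pose no genuine obstacle since the inequality $\phi_n(s) \leq \rho^{n(s-t)}\phi_n(t)$ is valid in $[0,\infty]$).

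Since $\rho < 1$ gives $\log \rho < 0$, the above inequality immediately gives monotonicity: if $\overline{P}(t) = \infty$ the inequality $\overline{P}(s) \leq \overline{P}(t)$ is trivial, if $\overline{P}(t) \in \mathbb{R}$ then $\overline{P}(s) \leq (s-t)\log\rho + \overline{P}(t) < \overline{P}(t)$, and if $\overline{P}(t) = -\infty$ then the bound forces $\overline{P}(s) = -\infty$. The middle case also yields the strict inequality claimed. The argument is essentially a one-line estimate, so there is no real obstacle beyond correctly handling the $\pm\infty$ cases and noting that the uniform bound $R_\sigma \leq \rho^n$ holds across all $\sigma \in M_n$ simultaneously, which is what makes the constant $(s-t)\log \rho$ independent of $n$ and hence survives the limit.
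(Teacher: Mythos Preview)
Your proof is correct and follows essentially the same approach as the paper's: both bound $\phi_n(s) \leq \rho^{n(s-t)}\phi_n(t)$ using $R_\sigma \leq \rho^n$, then take logarithms, divide by $n$, and pass to the limit to obtain $\overline{P}(s) \leq (s-t)\log\rho + \overline{P}(t)$. Your treatment of the $\pm\infty$ cases is slightly more explicit than the paper's, but the argument is otherwise identical.
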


\begin{proof}
For all $n \in \N$, for all $w \in I^n$, $R_w \leq \rho^n < 1$, so $\phi_n$ is a decreasing function. Therefore $\overline{P}$ is a decreasing function. If $0<t<s<\infty$ and $\overline{P}(t) \in \mathbb{R}$ then for all $n \in \N$, $\phi_n(s) \leq \rho^{(s-t)n} \phi_n(t)$ so $\frac{1}{n} \log \phi_n(s) \leq (s-t) \log \rho + \frac{1}{n} \log \phi_n(t)$, hence $\overline{P}(s) \leq (s-t) \log \rho + \overline{P}(t) < \overline{P}(t)$, as required. 
\end{proof}

\begin{defn}\label{d:h}
Throughout this chapter, using the convention that $\inf \varnothing = \infty$, we define the \emph{finiteness parameter} of the system 
\[ \theta_S \coloneqq \inf\{ \, t > 0 : \overline{P}(t) < \infty \, \} \in [0, \infty], \]
and the quantity 
\[ h \coloneqq \inf\{ \, t > 0 : \overline{P}(t) < 0\, \} \in [0, \infty]. \]
\end{defn}
We use the letter $h$ because we will see that it is related to the Hausdorff dimension of the limit set. 
For all $n \in \N$ and $t \in [0,\infty)$, 
\[ \phi_n(t) \leq \sum_{w \in I^n}R_w^t \leq \sum_{i_1\dotsb i_n \in I^n} \prod_{k=1}^n R_{i_k}^t.\]
Therefore if $\phi_n(t)$ is replaced by either $\sum_{w \in I^n}R_w^t$ or $\sum_{i_1\dotsb i_n \in I^n} \prod_{k=1}^n R_{i_k}^t$ in the definition of the pressure function then the resulting functions would overestimate $\overline{P}(t)$. Thus the infimal values of $t>0$ for which these new functions are negative provide upper bounds for $h$. These may be easier to compute than $h$ itself. For all $n \in \N$, the function $\frac{1}{n} \log \phi_n(t)$ can also be used to give an upper bound, which will be very good when $n$ is large, by Lemma~\ref{fekete}.

We now establish several geometric facts that hold for all CIFSs and will be important when proving a formula for the intermediate dimensions of the limit set in Section~\ref{conformalsect}. The following geometric property was proved in~\cite[page~111]{Mauldin1996iifs}, recalling that $K$ is the constant from the bounded distortion principle. 
\begin{equation}\label{e:mubdp} 
\qquad S_w(B(x,r)) \supseteq B(S_w(x),K^{-1}||S_w'||r) \quad \mbox{for all } x \in X, r \in (0,\mathrm{dist}(X,\partial V)], w \in I^*. 
\end{equation}

The following lemma says that the Lipschitz constants are comparable to the norm of the derivatives of the corresponding map. 
\begin{lma}\label{diameterslemma}
For every CIFS there exists $D \geq 1$ such that for all $w \in I^*$, 
\[ D^{-1} ||S_w'|| \leq r_w \leq R_w \leq D||S_w'||. \]
\end{lma}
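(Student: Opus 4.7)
The plan is to use the bounded distortion property~\ref{bdp} and the inclusion~\eqref{e:mubdp} as the two main tools. Taking $\sup_y$ and $\inf_x$ on the two sides of~\ref{bdp} yields $K^{-1}||S_w'|| \leq ||S_w'|_z|| \leq ||S_w'||$ for every $z \in V$, so $||S_w'|_z||$ is essentially constant on $V$ up to the factor $K$. Throughout, set $r_0 \coloneqq \mathrm{dist}(X,\partial V) > 0$.

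For the lower bound $r_w \geq D^{-1}||S_w'||$, fix distinct $x,y \in X$. If $||x-y|| \leq r_0$, set $r \coloneqq ||x-y||$ in~\eqref{e:mubdp}: were $||S_w(x) - S_w(y)|| < K^{-1}||S_w'||\,r$, we would have $S_w(y) \in B(S_w(x), K^{-1}||S_w'||r) \subseteq S_w(B(x,r))$, and since $B(x,r) \subset V$ and $S_w$ is a diffeomorphism on $V$ by~\ref{conformal}, injectivity would force $y \in B(x,r)$, contradicting $||x-y|| = r$. Hence $||S_w(x) - S_w(y)|| \geq K^{-1}||S_w'||\,||x-y||$ in this range. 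When $||x-y|| > r_0$, taking $r = r_0$ in~\eqref{e:mubdp} yields $||S_w(x) - S_w(y)|| \geq K^{-1}||S_w'||\,r_0 \geq (K^{-1}r_0/|X|)||S_w'||\,||x-y||$.

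For the upper bound $R_w \leq D||S_w'||$, the idea is to combine the mean value inequality on small segments with a global diameter bound coming from a finite chain cover. Cover the compact set $X$ by finitely many balls $B(x_i,r_0/4)$ with $x_i \in X$ and let $N$ be their number. Since $X$ is connected, the nerve graph (edges between indices of intersecting balls) is connected, so any two centres can be linked by a chain $x_{i_0}, \dotsc, x_{i_n}$ with $n \leq N-1$ and $||x_{i_k} - x_{i_{k+1}}|| \leq r_0/2$ for each $k$. Each such segment lies in the $(r_0/2)$-neighbourhood of $X$ and hence in $V$, so for any $a,b \in X$ one may connect $a$ to a centre, traverse the chain, and then reach $b$, producing a piecewise-linear path in $V$ of total length at most $(N+1)r_0/2$. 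The mean value inequality combined with $||S_w'|_z|| \leq ||S_w'||$ then gives $|S_w(X)| \leq (N+1)(r_0/2)||S_w'||$. Now for distinct $x,y \in X$: if $||x-y|| \leq r_0/2$ the segment $[x,y]$ lies in $V$ and the mean value inequality gives $||S_w(x) - S_w(y)|| \leq ||S_w'||\,||x-y||$; if $||x-y|| > r_0/2$ then $||S_w(x)-S_w(y)|| \leq |S_w(X)| \leq (N+1)(r_0/2)||S_w'|| \leq (N+1)||S_w'||\,||x-y||$.

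Setting $D \coloneqq \max\{K|X|/r_0,\, N+1\}$ then completes the proof. The argument is a clean consequence of tools already in hand: the inclusion~\eqref{e:mubdp}, injectivity of $S_w$ on $V$ coming from~\ref{conformal}, and the bounded distortion property. The only mild technicality is the chain argument underpinning the upper bound, which is a standard consequence of compactness and connectedness of $X$ together with openness of $V$, and presents no serious obstacle.
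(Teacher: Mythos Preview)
Your proof is correct and takes essentially the same approach as the paper: the inclusion~\eqref{e:mubdp} together with injectivity of $S_w$ on $V$ for the lower bound (with the same case split on $||x-y||$ versus $r_0$), and a finite chain of small balls plus the mean value inequality for the upper bound. One minor oversight: your constant $D$ should also include $K$, since if $|X| < r_0$ then your case~2 is vacuous and only the case~1 bound $r_w \geq K^{-1}||S_w'||$ applies; the paper accordingly takes $D \geq \max\{q, K, K|X|/r_0\}$.
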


\begin{proof}

The proof is similar to the proofs of some of the consequences of the bounded distortion principle in~\cite[page~110--111]{Mauldin1996iifs}. %
For the upper bound, note that $\mathrm{dist}(X,{\Rd\setminus V}) > 0$ since $X$ is compact and disjoint from the closed set ${\Rd\setminus V}$. Let $w \in I^*$. If $B$ is a ball of radius at most $\mathrm{dist}(X,{\Rd\setminus V})$ centred at a point in $X$ and $x,y \in B$ then by the mean value inequality $||S_w(x)-S_w(y)|| \leq ||S_w'|| \cdot ||x-y||$. Since $X$ is compact and connected, it can be covered by a finite chain of open balls $B_1,\dotsc,B_q$ centred at points in $X$ and with radii at most $\mathrm{dist}(X,\Rd\setminus V)/(2K)$ (chain in the sense that $B_i \cap B_{i+1} \neq \varnothing$ for $i=1,2,\dotsc,q-1$). 
Suppose 
\[ D \geq \max\left\{q, K, \frac{K|X|}{\mathrm{dist}(X,\Rd \setminus V)}\right\}.\]
Then since $D \geq q$, the upper bound $R_w \leq D||S_w'||$ holds. 

Trivially, $r_w \leq R_w$. For the lower bound, if $x,y \in X$ and $||x-y|| \leq \mathrm{dist}(X,\Rd \setminus V)$ then by~\eqref{e:mubdp} and the bijectivity of $S_w$, 
 \[||S_w(x) - S_w(y)|| \geq K^{-1}||S_w'||\cdot ||x-y|| \geq D^{-1}||S_w'||\cdot||x-y||.\]%
  If $x,y \in X$ and $|X| \geq ||x-y|| > \mathrm{dist}(X,\Rd \setminus V)$ then since 
  \[ S_w(B(x,\mathrm{dist}(X,\Rd \setminus V))) \supseteq B(S_w(x),K^{-1}||S_w'||\mathrm{dist}(X,\Rd \setminus V)),\]
   it holds that 
 \begin{align*} 
 ||S_w(x) - S_w(y)|| \geq K^{-1}||S_w'||\mathrm{dist}(X,\Rd \setminus V) &\geq K^{-1}||S_w'||\mathrm{dist}(X,\Rd \setminus V) ||x-y||\cdot |X|^{-1} \\*
 &\geq D^{-1}||S_w'||\cdot ||x-y||.
 \end{align*}
  Therefore the lower bound $r_w \geq D^{-1} ||S_w'||$ holds, as required. 
\end{proof}

Lemma~\ref{coneimplication} essentially says that the cone condition holds not just for the set $X$ itself but also for its images under the conformal map corresponding to any given finite word. The purpose of Lemma~\ref{coneimplication} is to prove Lemma~\ref{fitin}. 
\begin{lma}\label{coneimplication}
For all CIFSs, with $D$ as in Lemma~\ref{diameterslemma}, 
\[ \inf_{w \in I^*} \inf_{x \in S_w(X)} \inf_{r \in (0,D||S_w'||)} r^{-d} \cdot \mathcal{L}_d (B(x,r) \cap S_w(\mathrm{Int}_{\Rd} X)) > 0. \]
\end{lma}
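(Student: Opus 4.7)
The plan is to transfer the cone condition for $X$ across the conformal map $S_w$ using the bounded distortion property. Given $x \in S_w(X)$, write $x = S_w(y)$ for some $y \in X$, and given $r \in (0, D\|S_w'\|)$, set
\[
r' \coloneqq \frac{r}{D\|S_w'\|} \in (0,1).
\]
Then I would apply the cone condition~\ref{cone} to the point $y \in X$ at the admissible radius $r' \in (0,1)$ to obtain a constant $C_0 > 0$ (depending only on the system) with $\mathcal{L}_d(B(y,r') \cap \mathrm{Int}_{\Rd} X) \geq C_0 (r')^d$.

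Next I would push this ball forward through $S_w$. By the definition of $R_w$ together with Lemma~\ref{diameterslemma}, $S_w(B(y,r')) \subseteq B(S_w(y), R_w r') \subseteq B(x, D\|S_w'\| r') = B(x,r)$, so
\[
S_w\bigl(B(y,r') \cap \mathrm{Int}_{\Rd} X\bigr) \subseteq B(x,r) \cap S_w(\mathrm{Int}_{\Rd} X).
\]
Since $S_w$ extends to a $C^1$ diffeomorphism on $V \supset X$, I would estimate the Lebesgue measure of the left-hand side using the change of variables formula $\mathcal{L}_d(S_w(A)) = \int_A |\det S_w'(z)|\,dz$. Here is where conformality is essential: since $S_w'|_z$ is a similarity for every $z \in V$, we have $|\det S_w'(z)| = \|S_w'|_z\|^d$, and by the bounded distortion property~\ref{bdp} applied to $z$ and a point realising the supremum defining $\|S_w'\|$, we get $\|S_w'|_z\| \geq K^{-1}\|S_w'\|$ for all $z \in V$.

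Combining these estimates would give
\[
\mathcal{L}_d\bigl(B(x,r) \cap S_w(\mathrm{Int}_{\Rd} X)\bigr) \geq K^{-d}\|S_w'\|^d \cdot C_0 (r')^d = \frac{C_0}{(KD)^d}\, r^d,
\]
so that $r^{-d}\mathcal{L}_d(B(x,r) \cap S_w(\mathrm{Int}_{\Rd} X)) \geq C_0/(KD)^d$, a positive constant independent of $w$, $x$ and $r$, as required. The main subtlety is making sure the chosen $r'$ is simultaneously small enough that the image ball fits inside $B(x,r)$ (which forces $r' \leq r/R_w$, guaranteed by Lemma~\ref{diameterslemma}) and less than $1$ so that the cone condition applies; both are arranged by the restriction $r < D\|S_w'\|$ and the normalisation $r' = r/(D\|S_w'\|)$.
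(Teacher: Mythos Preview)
Your proof is correct and takes a different, more streamlined route than the paper. The paper argues geometrically: it takes the same pre-image ball $B(y,r/(D\|S_w'\|))$, extracts a compact subset of $B(y,r')\cap U$ with nearly full measure, covers it by dyadic cubes, places a disjoint family of small balls inside $B(y,r')\cap U$, and then pushes each ball forward using the containment property $S_w(B(z,\rho))\supseteq B(S_w(z),K^{-1}\|S_w'\|\rho)$ (equation~\eqref{e:mubdp}) to produce disjoint balls inside $B(x,r)\cap S_w(U)$ whose total volume is at least a fixed multiple of $r^d$. You instead apply the change-of-variables formula directly, using that conformality gives $|\det S_w'(z)| = \|S_w'|_z\|^d$ and that the BDP gives the uniform lower bound $\|S_w'|_z\|\geq K^{-1}\|S_w'\|$. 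Both approaches exploit the same two ingredients (conformality and bounded distortion), but yours is shorter and avoids the dyadic-cube packing step entirely; the paper's argument has the minor advantage of relying only on the already-established ball-image property~\eqref{e:mubdp} rather than invoking the Jacobian explicitly.
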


\begin{proof}

Write $U=\mathrm{Int}_{\Rd} X$. Let $n \in \N$ and let $w \in I^n$. The idea is that given any ball centred on $S_w(X)$ whose diameter is not too large we can find a large enough ball centred on $X$ that is mapped into it under $S_w$, a uniform proportion of which intersects $U$ by the cone condition. The measure of the image of this part under $S_w$ is large enough by conformality and the BDP. 

 Consider an arbitrary point in $S_w(X)$, which we can write as $S_w(x)$ for some $x \in X$, and let $r \in (0,D||S_w'||)$. 
 By the cone condition there exists $c>0$ such that $\mathcal{L}_d (B(x,r) \cap U)/r^d > c$ for all $x \in X$ and $r \in (0,1)$. By the upper bound of Lemma~\ref{diameterslemma}, $S_w(B(x,r/(D||S_w'||))) \subseteq B(S_w(x),r)$. Now, $\mathcal{L}_d (B(x,r/(D||S_w'||)) \cap U) r^{-d} D^d||S_w'||^d > c$, so by the inner regularity of the Lebesgue measure, there exists a compact $C \subset B(x,r/(D||S_w'||)) \cap U$ such that $\mathcal{L}_d(C) > cr^dD^{-d}||S_w'||^{-d}$. 
 Since $C$ is compact and disjoint from the closed set $\Rd \setminus (B(x,r/(D||S_w'||)) \cap U)$, it follows that $\mathrm{dist}(C,\Rd \setminus (B(x,r/(D||S_w'||)) \cap U)) > 0$. Let $n \in \N$ be large enough so that 
\[2^{-n} < \min\{\mathrm{dist}(C,\Rd \setminus (B(x,r/(D||S_w'||)) \cap U))/\sqrt{d},\mathrm{dist}(X,\Rd\setminus V)\}.\] 
Define $c_d \in (0,1)$ by 
\[ c_d \coloneqq \frac{\mathcal{L}_d(B(0,1))}{\mathcal{L}_d([-1,1]^d)}.\]
Then the balls of diameter $2^{-n}$ inside each of the dyadic cubes of sidelength $2^{-n}$ which intersect $C$ form a disjoint collection of balls inside $B(x,r/(D||S_w'||)) \cap U$ whose total Lebesgue measure is greater than $cc_dr^dD^{-d}||S_w'||^{-d}$. By~\eqref{e:mubdp}, the image of each of these balls under $S_w$ contains a ball of radius $K^{-1}||S_w'||2^{-(n+1)}$. These balls are disjoint subsets of $B(S_w(x),r) \cap S_w(U)$ whose total Lebesgue measure is greater than $c c_d r^dD^{-d}||S_w'||^{-d}||S_w'||^{d}K^{-d} = c c_d r^dD^{-d}K^{-d}$. 
Therefore 
\[ \inf_{w \in I^*} \inf_{x \in S_w(X)} \inf_{r \in (0,D||S_w'||)} \mathcal{L}_d (B(x,r) \cap S_w(\mathrm{Int}_{\Rd} X))/r^d \geq c c_d D^{-d}K^{-d} > 0, \]
as required. 
\end{proof}

Lemma~\ref{fitin} says that it is impossible for too many cylinder sets that are larger than a given size to cluster together and intersect a region that is smaller than that size. This will be useful when proving facts about dimensions, as it shows that a set of a given size cannot cover another set which intersects too many cylinders that are larger than that given size. 
Lemma~\ref{fitin} and its proof are an application of Lemma~\ref{coneimplication} and the OSC, and are similar to (2.2) in~\cite[Proposition~2.9]{Mauldin1999ctdfrac}. The use of the cone condition to obtain Lemma~\ref{fitin} is similar to~\cite[Theorem~4.9]{Graf1988iifs}. 

\begin{lma}\label{fitin}%
For any CIFS there exists $M \in \N$ such that for all $z \in \Rd$ and $r>0$, if $w_1,\dotsc,w_l$ are distinct words in $I^*$ such that for all $i,j \in \{1,\dotsc,l\}$, $w_i$ is not a prefix of $w_j$, and for all $i \in \{1,\dotsc,l\}$ it holds that $B(z,r) \cap S_{w_i}(X) \neq \varnothing$ and $|S_{w_i}(X)| \geq r/2$, then $l \leq M$. 
\end{lma}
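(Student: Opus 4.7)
The plan is a volume-packing argument: each cylinder $S_{w_i}(X)$ contains, near any point of it, a definite amount of Lebesgue mass of $S_{w_i}(U)$ (where $U = \mathrm{Int}_{\Rd}X$) by Lemma~\ref{coneimplication}; the antichain hypothesis together with the OSC will force the sets $S_{w_i}(U)$ to be pairwise disjoint; and all the relevant mass can be shown to lie inside a ball of radius comparable to $r$ around $z$. Dividing the total available volume by the per-cylinder contribution then yields an $M$ that depends only on the system.

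First I would quantify the size of each $S_{w_i}$. Since $|S_{w_i}(X)| \ge r/2$ and, by Lemma~\ref{diameterslemma} applied to a pair of points in $X$ realising the diameter, $|S_{w_i}(X)| \le R_{w_i}|X| \le D\,\|S_{w_i}'\|\,|X|$, we get $\|S_{w_i}'\| \ge r/(2D|X|)$. Set $s \coloneqq r/(4|X|)$, so that $s < D\,\|S_{w_i}'\|$ for every $i$, making $s$ an admissible radius in Lemma~\ref{coneimplication}. Pick any $x_i \in B(z,r) \cap S_{w_i}(X)$. Then Lemma~\ref{coneimplication} supplies a constant $C_0 > 0$, depending only on the system, with
\[
\mathcal{L}_d\bigl(B(x_i,s) \cap S_{w_i}(U)\bigr) \ge C_0\, s^d.
\]
Moreover $B(x_i,s) \subseteq B(z, r + s) \subseteq B(z, 2r)$, so every one of these pieces sits inside the fixed ball $B(z,2r)$.

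The main obstacle is disjointness of the sets $S_{w_i}(U)$, for which the antichain hypothesis is essential. Let $i \ne j$ and let $k$ be the smallest index at which $w_i$ and $w_j$ differ; write $w_i = v\, a\, u_i$ and $w_j = v\, b\, u_j$ with $v \in I^{k-1}$, $a, b \in I$, $a \ne b$, and $u_i, u_j \in I_0 \cup I^*$. By iterating Definition~\ref{cifs}\ref{osc}, $S_{u_i}(U) \subseteq U$ and $S_{u_j}(U) \subseteq U$, so $S_a S_{u_i}(U) \cap S_b S_{u_j}(U) \subseteq S_a(U) \cap S_b(U) = \varnothing$ by the OSC. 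Since $S_v$ is a composition of diffeomorphisms of $V$ (by Definition~\ref{cifs}\ref{conformal}) and in particular is injective, applying $S_v$ preserves disjointness, giving $S_{w_i}(U) \cap S_{w_j}(U) = \varnothing$.

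Putting the ingredients together, the sets $B(x_i,s) \cap S_{w_i}(U)$ for $i = 1,\ldots,l$ are pairwise disjoint subsets of $B(z, 2r)$, so
\[
l \cdot C_0\, s^d \le \sum_{i=1}^{l} \mathcal{L}_d\bigl(B(x_i,s) \cap S_{w_i}(U)\bigr) \le \mathcal{L}_d(B(z,2r)) = \mathcal{L}_d(B(0,1)) \cdot (2r)^d.
\]
Substituting $s = r/(4|X|)$ cancels the $r$-dependence and yields the uniform bound
\[
l \le \frac{\mathcal{L}_d(B(0,1))\cdot 2^d \cdot (4|X|)^d}{C_0},
\]
so we may take $M$ to be any integer exceeding this constant.
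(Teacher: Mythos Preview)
Your proof is correct and follows the same volume-packing route as the paper (Lemma~\ref{coneimplication} for definite mass of $S_{w_i}(U)$ near $x_i$, OSC plus the antichain hypothesis for disjointness of the $S_{w_i}(U)$, then compare with the volume of a ball around $z$); the paper applies Lemma~\ref{coneimplication} at radius $r/2$ where you use $s=r/(4|X|)$, but the idea is identical. One small slip: the inclusion $B(z,r+s)\subseteq B(z,2r)$ needs $s\le r$, i.e.\ $|X|\ge 1/4$; simply replace $B(z,2r)$ by $B(z,r+s)$ and use $(r+s)^d=(1+4|X|)^d s^d$ to obtain the same system-dependent bound without that assumption.
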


\begin{proof}
Write $U=\mathrm{Int}_{\Rd} X$, and let $k_d$ be the $d$-dimensional Lebesgue measure of a ball in $\Rd$ of unit radius. 
By Lemma~\ref{coneimplication} there exists $c>0$ %
such that 
\[ \inf_{w \in I^*} \inf_{x \in S_w(X)} \inf_{r \in (0,D||S_w'||\cdot |X|)} \mathcal{L}_d (B(x,r) \cap S_w(U))/r^d > c.\] %
For each $i=1,\dotsc,l$ there exists $x_i \in X$ such that $S_{w_i}(x_i) \in B(z,r)$. By the upper bound from Lemma~\ref{diameterslemma}, $r/2 \leq |S_{w_i}(X)| \leq D||S_w'||\cdot |X|$, so $\mathcal{L}_d(B(S_{w_i}(x_i),r/2) \cap S_w(U))2^d r^{-d} > c$. Since no $w_i$ is a prefix of any $w_j$, by the OSC the $l$ sets $B(S_{w_i}(x_i),r/2) \cap S_w(U)$ are disjoint subsets of $B(z,2r)$, each having $d$-dimensional Lebesgue measure at least $c r^d 2^{-d}$. Therefore $l c r^d 2^{-d} \leq \mathcal{L}_d(B(z,2r)) = k_d 2^d r^d$, so if we let $M \coloneqq k_d 2^{2d} c^{-1}$ then $l \leq M$, as required. 
\end{proof}

Lemma~\ref{fitin} shows in particular that for all CIFSs, for all $n \in \N$, 
\[ \# \{ \, w \in I^n : B(z,r) \cap S_w(X) \neq \varnothing \textup{ and } |S_w(X)| \geq r/2 \, \} \leq M,\] 
so the family $\{ \, S_w(X) : w \in I^n \, \}$ is \emph{pointwise finite} in the sense that each element of $X$ belongs to at most finitely many elements of this family. Therefore the limit set satisfies 
\[ F = \bigcap_{n=1}^\infty \bigcup_{w \in I^n} S_w(X), \]
and so is a Borel subset of $X$ in the class $F_{\sigma \delta}$. 
Mauldin and Urbański noted this in~\cite[(2.5)]{Mauldin1996iifs} and also showed that the limit set need not be in the class $G_\delta$ (i.e. it need not be a countable intersection of open sets). 
Lemma~\ref{diameterslemma} says that the Lipschitz constants are comparable to the norm of the derivative of the corresponding map; Lemma~\ref{2nddiams} uses this to show that the sizes of the corresponding cylinder sets are also comparable. 
\begin{lma}\label{2nddiams}
For every CIFS there exists $D \geq 1$ such that for all $w \in I^*$, 
\[ D^{-1} ||S_w'|| \leq |F_w| \leq |S_w(X)| \leq D||S_w'||. \]
\end{lma}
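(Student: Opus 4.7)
The plan is to deduce the lemma directly from Lemma~\ref{diameterslemma}, with the only genuine work being to verify that the limit set $F$ has positive diameter. Let $D_0 \geq 1$ be the constant from Lemma~\ref{diameterslemma}, so that $D_0^{-1}\|S_w'\| \leq r_w \leq R_w \leq D_0 \|S_w'\|$ for every $w \in I^*$.

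The middle inequality $|F_w| \leq |S_w(X)|$ is immediate since $F \subseteq X$ gives $F_w = S_w(F) \subseteq S_w(X)$. For the upper bound, by the definition of $R_w$ we have $|S_w(X)| \leq R_w |X| \leq D_0 |X| \cdot \|S_w'\|$. For the lower bound, by the definition of $r_w$ we have $|F_w| = |S_w(F)| \geq r_w |F| \geq D_0^{-1} |F| \cdot \|S_w'\|$. Setting $D \coloneqq D_0 \max\{|X|, |F|^{-1}\}$ (which is $\geq 1$) then yields all three inequalities.

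The only obstacle, and the step that requires genuine use of the hypotheses, is showing that $|F| > 0$. This is where I will invoke the OSC together with the fact that the index set $I$ contains at least two elements. Take distinct $i, j \in I$ and let $p_i, p_j \in X$ be the fixed points of $S_i, S_j$; since $R_i^n, R_j^n \to 0$, the iterates $S_i^n(X)$ and $S_j^n(X)$ shrink to $\{p_i\}$ and $\{p_j\}$ respectively, so $p_i = \pi(iii\dotsb) \in F$ and $p_j = \pi(jjj\dotsb) \in F$. To see $p_i \neq p_j$, suppose for contradiction $p_i = p_j = p$. Then $p \in S_i(X) \cap S_j(X)$, and since the conformal extensions $S_i,S_j$ are homeomorphisms of $V$ mapping $U$ into $U$ with $S_i(U) \cap S_j(U) = \varnothing$, the point $p$ must lie on the boundary of at least one of $S_i(X)$ or $S_j(X)$. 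Iterating the fixed point relation yields $p = S_i^n(p) \in S_i^n(X) \subseteq S_i(U)$ (using continuity of $S_i$ and openness of $U$ together with the fact that, for $n$ large, small neighbourhoods of $p$ are mapped strictly inside $U$ by $S_i^n$), contradicting $p \in \partial S_i(X)$.

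The main obstacle is this non-coincidence of fixed points, which is the kind of standard consequence of the OSC that is usually taken for granted; all the diameter comparisons then follow from Lemma~\ref{diameterslemma} by linear bounds. Once $|F| > 0$ is established, the choice $D \coloneqq D_0 \max\{|X|,|F|^{-1}\}$ completes the proof.
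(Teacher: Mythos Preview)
Your overall strategy matches the paper's exactly: reduce to Lemma~\ref{diameterslemma} via $|S_w(X)| \leq R_w|X|$ and $|F_w| \geq r_w|F|$, so that the only nontrivial step is verifying $|F| > 0$. That reduction is correct.

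The gap is in your argument that the fixed points $p_i,p_j$ of distinct maps differ. You correctly observe that if $p_i = p_j = p$ then the OSC forces $p \notin S_i(U)$ or $p \notin S_j(U)$; since $S_i$ is a diffeomorphism on $V$ this gives $p \in S_i(\partial X)$ (say), and since $p = S_i(p)$ this means $p \in \partial X$. But your final step then breaks: you claim $S_i^n(X) \subseteq S_i(U)$ for large $n$, which is equivalent to $S_i^{n-1}(X) \subseteq U$. The sets $S_i^{n-1}(X)$ shrink to $\{p\}$, and since $p \in \partial X = X \setminus U$, they can never be contained in $U$ --- indeed $p \in S_i^{n-1}(X) \setminus U$ for every $n$. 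The parenthetical justification (``small neighbourhoods of $p$ are mapped strictly inside $U$ by $S_i^n$'') is exactly the assertion $p \in U$ in disguise, so it begs the question.

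The paper handles $|F| > 0$ differently: Lemma~\ref{fitin} shows the family $\{S_i(X) : i \in I\}$ is pointwise finite, and if $F$ were a single point $p$ then $F = \bigcup_i S_i(F)$ would force $S_i(p) = p$ for every $i$, placing $p$ in infinitely many $S_i(X)$ (using that $I$ is infinite in this chapter), a contradiction. Your two-map idea can also be rescued via Lemma~\ref{fitin}: if $p_i = p_j = p$ then the words $j, ij, i^2 j, \dotsc, i^M j$ are pairwise prefix-incomparable with $p \in S_{i^k j}(X)$ for each $k$, and choosing $r = 2\min_k |S_{i^k j}(X)|$ contradicts the bound $l \leq M$.
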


\begin{proof}

Lemma~\ref{fitin} shows in particular that the family $\{ \, S_i : i \in I \, \}$ is pointwise finite, so since $I$ is infinite, $F$ has positive diameter. Therefore the result follows from Lemma~\ref{diameterslemma} if we increase $D$ as required. 
\end{proof}

For a CIFS, for each $n \in \N$ define $\psi_n \colon [0,\infty) \to \mathbb{R}$ by 
\[ \psi_n(t) = \sum_{w \in I^n} ||S_w'||^t.\] Mauldin and Urbański~\cite[Section~3]{Mauldin1996iifs} define the pressure function by 
\begin{equation}\label{MUpressure}
\lim_{n \to \infty}\frac{1}{n}\log \psi_n(t),
\end{equation} 
 showing that this limit always exists in $[-\infty,\infty]$ and proving many properties about this function. Lemma~\ref{pressureequal} shows that this coincides with our definition for the pressure function $\overline{P}(t)$, and is in particular independent of the open set $V$.

\begin{lma}\label{pressureequal}
For all CIFSs, for all $t \in (0,\infty)$, 
\[ \frac{1}{n}\log \psi_n(t) \to \overline{P}(t) \textup{ as } n \to \infty. \]
\end{lma}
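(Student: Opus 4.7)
The plan is to reduce the comparison between $\psi_n(t)$ and $\phi_n(t)$ to a uniform multiplicative bound, using Lemma~\ref{diameterslemma}, after first observing that the OSC rules out exact overlaps between distinct words of the same length.

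First I would verify that the map $I^n \to M_n$ sending $w$ to $S_w$ is injective. For distinct $w,w' \in I^n$, let $k$ be the first index at which they differ. Since $S_{w_k}(U) \cap S_{w'_k}(U) = \varnothing$ by the OSC, and $S_{w|_{k-1}}$ extends to a diffeomorphism on $V$ and is in particular injective, the sets $S_w(U)$ and $S_{w'}(U)$ are non-empty (by the cone condition, $U$ is non-empty) and disjoint, so $S_w \neq S_{w'}$. Consequently $\phi_n(t) = \sum_{w \in I^n} R_w^t$, i.e.\ one may sum over $I^n$ rather than $M_n$ without any loss.

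Next, raising the inequalities $D^{-1}\|S_w'\| \leq R_w \leq D\|S_w'\|$ from Lemma~\ref{diameterslemma} to the $t$\textsuperscript{th} power and summing over $w \in I^n$ gives
\[
D^{-t}\psi_n(t) \leq \phi_n(t) \leq D^t \psi_n(t).
\]
Taking logarithms (with the conventions from Lemma~\ref{fekete}, noting that the double inequality forces $\phi_n(t)$ and $\psi_n(t)$ to be simultaneously finite or simultaneously infinite) and dividing by $n$ yields
\[
\frac{\log \phi_n(t)}{n} - \frac{t\log D}{n} \;\leq\; \frac{\log \psi_n(t)}{n} \;\leq\; \frac{\log \phi_n(t)}{n} + \frac{t\log D}{n}.
\]
Since $\tfrac{t\log D}{n}\to 0$ as $n\to\infty$ and $\tfrac{1}{n}\log \phi_n(t)\to\overline{P}(t)$ by definition, the squeeze gives $\tfrac{1}{n}\log\psi_n(t)\to\overline{P}(t)$.

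There is essentially no serious obstacle here once the word-to-map correspondence is seen to be injective; the remainder is a routine sandwich argument with constants that are negligible on the exponential scale. The only mildly delicate point is ensuring the argument is valid when the sums are infinite, which is handled automatically by the two-sided bound forcing finiteness on both sides simultaneously.
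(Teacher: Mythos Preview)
Your proof is correct and follows essentially the same route as the paper: use the OSC to identify $\phi_n(t)$ with $\sum_{w\in I^n} R_w^t$, apply Lemma~\ref{diameterslemma} to obtain the two-sided bound $D^{-t}\psi_n(t)\leq\phi_n(t)\leq D^t\psi_n(t)$, and then squeeze after taking $\tfrac{1}{n}\log$. You give more detail than the paper on why the OSC rules out exact overlaps and on handling the infinite-sum case, but the argument is otherwise identical.
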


\begin{proof}
The OSC means that there are no exact overlaps, so $\phi_n(t) = \sum_{w \in I^n} R_w^t$ for all $n \in \N$ and $t \in (0,\infty)$. Therefore by Lemma~\ref{diameterslemma},
\[ D^{-t}\phi_n(t) \leq \psi_n(t) \leq D^t \phi_n(t). \]
Taking logarithms and dividing through by $n$ gives 
\[-\frac{1}{n}t\log D + \frac{1}{n}\log \phi_n(t) \leq \frac{1}{n}\log \psi_n(t) \leq \frac{1}{n}t\log D + \log \phi_n(t),\]
and the result follows upon taking the limit $n \to \infty$. 
\end{proof} 

One of the properties proved in~\cite[Section~3]{Mauldin1996iifs} for a CIFS is that the finiteness parameter for the pressure function is the same as the value at which each of the finite-level approximations to the pressure function becomes finite: $\theta_S = \inf\{ \, t > 0 : \psi_n(t) < \infty \, \}$ for all $n \in \N$. %
Note that for a CIFS where each of the maps $S_i$ are similarities, which means that there exists $c_i \in (0,1)$ such that $||S_i(x)-S_i(y)|| = c_i||x-y||$ for all $x,y \in X$, the quantity $h$ from Definition~\ref{d:h} satisfies the simple form 
\begin{equation}\label{hausdorffsimilarity} h = \inf\left\{ \, t > 0 : \sum_{i \in I} c_i^t < 1 \, \right\}.
\end{equation}

\section{Dimension results}\label{mainsect}

\subsection{General upper bounds}

 In Theorem~\ref{inttypeub} we provide general upper bounds for the Hausdorff, box, intermediate and $\Phi$-intermediate dimensions of the limit set of an arbitrary IIFS.

\begin{thm}\label{inttypeub}
For a IIFS with limit set $F$ and notation as above, 
\begin{enumerate}[label=(\roman*)]
\item\label{hausdorffub} 
$\dim_\mathrm{H} F \leq h$
\item\label{boxub} $\ubd F \leq \max\{h,\lim_{n \to \infty} \inf\{ \, \ubd  P : P \subseteq X \textup{ and } \forall w \in I^n, \, P \cap S_w(X) \neq \varnothing \,  \} \}$
\item\label{intub} For all $\theta \in [0,1]$,  
\[ \uid F \leq \max\{h,\lim_{n \to \infty} \inf\{ \, \uid  P : P \subseteq X \textup{ and } \forall w \in I^n, \, P \cap S_w(X) \neq \varnothing \,  \} \}\]
\item\label{phiub} If $\Phi$ is monotonically admissible (recall Definition~\ref{admissible} from page~\pageref{admissible}) then 
\[ \upd F \leq \max\{h,\lim_{n \to \infty} \inf\{ \, \upd  P : P \subseteq X \textup{ and } \forall w \in I^n, \, P \cap S_w(X) \neq \varnothing \, \} \}\]
\end{enumerate}

\end{thm}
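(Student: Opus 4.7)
Part~\ref{hausdorffub} follows immediately: for every $t > h$ we have $\overline{P}(t) < 0$, so Lemma~\ref{fekete} yields $\phi_n(t) \to 0$, and the family $\{\, S_w(X) : w \in I^n \,\}$ covers $F$ with total $t$-cost at most $|X|^t\phi_n(t)$. Hence $\mathcal{H}^t(F) = 0$ and $\dim_{\mathrm H} F \leq t$ for every $t > h$, giving $\dim_{\mathrm H} F \leq h$.

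For parts~\ref{boxub}--\ref{phiub} I would prove~\ref{phiub} directly; then~\ref{intub} is the case $\Phi(\delta) = \delta^{1/\theta}$, and~\ref{boxub} follows by applying Proposition~\ref{whenequalsbox} to $\Phi(\delta) = \delta/(-\log\delta)$. Fix $s$ strictly greater than the right-hand side of~\ref{phiub}. Choose $n \in \N$ and $P \subseteq X$ with $\upd P < s$ and $P \cap S_w(X) \neq \varnothing$ for every $w \in I^n$. Since $\overline{P}(s) < 0$, pick $N$ a multiple of $n$ with $\phi_N(s) \leq 1/2$; by submultiplicativity, $\phi_{kN}(s) \leq 2^{-k}$ for all $k \in \N$. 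Iteration of the hitting property shows that, for every $k$, the set $T_k := \bigcup_{w \in I^{(k-1)N}} S_w(P)$ meets every level-$kN$ cylinder $S_v(X)$ (writing $v = uv'$ with $u \in I^{(k-1)N}$ and $v' \in I^N$, take any $p \in P \cap S_{v'}(X)$, so $S_u(p) \in S_u(P) \cap S_v(X)$). Consequently every point of $F$ lies within $\rho^{kN}|X|$ of $T_k$.

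The plan is then, given small $\delta$, to pick $k = k(\delta)$ so that $\rho^{kN}|X| \leq \Phi(\delta)/2$ and build a cover of $F$ in two steps. For each $w \in I^{(k-1)N}$, use $\upd P < s$ to cover $P$ by sets with diameters in $[\Phi(\delta/R_w),\delta/R_w]$ and $s$-cost at most $\epsilon$; pushing forward by $S_w$ gives a cover of $S_w(P)$ of total $s$-cost at most $R_w^s\epsilon$. Summing over $w \in I^{(k-1)N}$ yields a cover of $T_k$ with total $s$-cost $\leq \phi_{(k-1)N}(s)\epsilon \leq 2^{-(k-1)}\epsilon$. Finally, I would take $\Phi(\delta)/2$-neighbourhoods of each set to absorb $F \setminus T_k$, and adjust by appending nearby points from $X$ so that every resulting set has diameter in $[\Phi(\delta),\delta]$; this inflates the $s$-cost only by a bounded factor. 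The resulting cost being uniformly bounded in $\delta$ gives $\upd F \leq s$.

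The main obstacle is scale matching. In a general IIFS the push-forward $S_w$ stretches diameters non-uniformly (by factors ranging between $r_w$ and $R_w$), so the image of a $[\Phi(\delta/R_w),\delta/R_w]$-cover of $P$ need not lie in $[\Phi(\delta),\delta]$. Monotonic admissibility of $\Phi$ (together with Lemma~\ref{phiinvertible} to pass to an invertible representative) gives $\Phi(\delta/R_w) \geq \Phi(\delta)/R_w$, so the upper endpoint after push-forward is at most $\delta$. For the lower endpoint one groups words $w$ by dyadic ranges of $R_w$, chooses the cover of $P$ at the scale appropriate to each group, and handles words with $R_w|X| \leq \Phi(\delta)$ separately by covering $S_w(X)$ directly (fattened to diameter $\Phi(\delta)$ by adjoining nearby points of $X$). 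The geometric decay $\phi_{kN}(s) \leq 2^{-k}$ ensures that the contributions from all these pieces remain summable.
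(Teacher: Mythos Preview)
Your argument for part~(i) is correct and matches the paper's. The gap is in the construction for parts~(ii)--(iv).

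Your choice of $k$ (so that $\rho^{kN}|X| \leq \Phi(\delta)/2$) forces
\[
R_w \leq \rho^{(k-1)N} = \rho^{-N}\rho^{kN} \leq \frac{\Phi(\delta)}{2|X|\rho^N}
\]
for \emph{every} $w \in I^{(k-1)N}$; in particular every such $R_w$ is bounded above by a fixed constant times $\Phi(\delta)$. Hence $\delta/R_w \geq 2|X|\rho^N\,\delta/\Phi(\delta) \to \infty$ as $\delta \to 0$. But the hypothesis $\upd P < s$ only furnishes covers of $P$ with small $s$-cost at \emph{small} scales, so it tells you nothing at the scale $\delta/R_w$. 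Your proposed fixes do not escape this: the dyadic grouping is over words that already all satisfy $R_w \lesssim \Phi(\delta)$, so every group has the same problem; and covering the cylinders with $R_w|X| \leq \Phi(\delta)$ individually at cost $\Phi(\delta)^s$ apiece is uncontrolled, since $\phi_{(k-1)N}(s) = \sum_w R_w^s$ only bounds a weighted count in which each weight $R_w^s$ can be arbitrarily smaller than $\Phi(\delta)^s$ (and there can be infinitely many such $w$).

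The paper's proof is organised differently: it fixes one level $q$ with $\phi_q(s) \leq 1/2$ and $\upd P_q < s$, and inducts on $n$ where $\delta \in (\tfrac{1}{n+1},\tfrac{1}{n}]$. Level-$q$ cylinders with $R_\sigma > \delta$ are handled by applying the inductive hypothesis to $F$ itself (not to $P$) at the strictly larger, hence already-handled, scale $\delta/R_\sigma \in (1/n,1)$ and pushing forward by $S_\sigma$; the key estimate $R_\sigma(\delta/R_\sigma)^\theta \leq \delta^\theta$ keeps diameters in range, and the total cost of this part is at most $B\phi_q(s) \leq B/2$. Level-$q$ cylinders with $|F_\sigma| \leq |X|\delta$ are covered \emph{collectively}, not one by one, by the $|X|\delta$-neighbourhood of a single cover of $P_q$ at the valid small scale $\delta$ --- this is where $\upd P_q < s$ is actually invoked. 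The recursion on $F$ is precisely what keeps the push-forward scale $\delta/R_\sigma$ small; no single-shot choice of depth can achieve this.
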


Since $\dim_\mathrm{P} F \leq \ubd F$ always holds,~\ref{boxub} also gives an upper bound for the packing dimension. 
Note that the above upper bounds hold even when there are overlaps of cylinders, and for contractions which are not differentiable and do not satisfy any bi-Lipschitz or bounded distortion condition. However, in some such cases $h$ can overestimate $\dim_\mathrm{H} F$ significantly, and may even be infinite. 

\begin{proof} 

All the bounds are trivial if $h=\infty$, so assume $h<\infty$.

\ref{hausdorffub}. The proof is similar to the proof of the first part of~\cite[Theorem~3.15]{Mauldin1996iifs}. Let $s>h$. By Lemma~\ref{pressurestrict} and the definition of $h$, $\overline{P}(s)<0$. Therefore there exists $N \in \N$ such that $\frac{1}{n}\log \phi_n(s) < \overline{P}(s)/2$, so $\phi_n(s) < e^{n\overline{P}(s)/2}$, for all $n \geq N$. Therefore 
\[ \sum_{\sigma \in M_n} |\sigma(X)|^s \leq |X|^s \phi_n(s) <  |X|^s e^{n\overline{P}(s)/2} \xrightarrow[n \to \infty]{} 0.\]
 But $\{\, \sigma(X) : \sigma \in M_n \, \}$ forms a $|X|\rho^n$-cover of $F$, and $|X|\rho^n \to 0$ as $n \to \infty$, so this means that the $s$-dimensional Hausdorff measure of $F$ is 0. Thus $\dim_\mathrm{H} F \leq s$. Letting $s \to h^+$ gives $\dim_\mathrm{H} F \leq h$, as required. 

\ref{boxub}. follows from the case $\theta=1$ of~\ref{intub}. 

\ref{intub}. The proof is motivated by the proof of~\cite[Lemma~2.8]{Mauldin1999ctdfrac}, which gives a result for the box dimension in the less general setting of a CIFS. 
We will consider $\delta \in \left(\frac{1}{n+1},\frac{1}{n}\right]$ and induct on $n$. The idea is that if we fix a large enough $q \in \N$, the level-$q$ cylinders with size $\lesssim \delta$ can be covered efficiently using a cover of a set $P$ corresponding to level~$q$, and the cylinders with size $\gtrsim \delta$ can be covered efficiently using images of efficient covers of $F$ with larger diameters that are assumed to exist by the inductive hypothesis, and the fact that $\overline{P}(s) < 0$ if $s > h$.  

By definition $\overline{\dim}_{0} = \dim_\mathrm{H}$, so since we can take $P$ to be a countable set (with Hausdorff dimension 0) for all $n \in \N$, the case $\theta = 0$ follows from~\ref{hausdorffub}. Henceforth suppose $\theta \in (0,1]$. Let 
\[ s > \max\{h,\lim_{n \to \infty} \inf\{ \, \uid  P : P \subseteq X \textup{ and } \forall w \in I^n, \, P \cap S_w(X) \neq \varnothing \,  \} \}. \]
Since $s>h$, it holds that $\overline{P}(s)<0$. Therefore there exists $Q \in \N$ such that $\frac{1}{q}\log \phi_q(s) < \overline{P}(s)/2$ for all $q \geq Q$. Fix $q \geq Q$ large enough such that $\phi_q(s) \leq 1/2$ %
and $\rho^q < 1/4$, so $R_w < 1/4$ for all words $w$ of length at least $q$. By the definition of $s$, increasing $q$ further if necessary, we may assume there exists a subset $P_q \subseteq X$ such that $P_q \cap S_w(X) \neq \varnothing$ for all $w \in I^q$, and $\uid P_q < s$. 
Therefore there exists $A>0$ such that for all $\delta \in (0,1]$ there exists a cover $\{V_j\}$ of $P_q$ such that $\delta \leq |V_j| \leq \delta^\theta$ for all $j$, and $\sum_j |V_j|^s \leq A$. 
Let $A_{d,1+2|X|}$ be as in~\eqref{doublingconst}. 
Fix any $B>\frac{A_{d,1+2|X|} A}{1-\phi_q(s)}$ large enough so that for all $\delta \in (1/2,1]$ there exists a cover $\{U_i^\delta\}_i$ of $F$ such that $\delta \leq |U_i| \leq \delta^\theta$ for all $i$, and $\sum_i |U_i|^s \leq B$. It suffices to show that $\uid F \leq s$, which follows from the following claim. 

\textbf{Claim:} For all $n \in \N$, for all $\delta \in \left(\frac{1}{n+1},\frac{1}{n}\right]$ there exists a cover $\{U_i^\delta\}_i$ of $F$ such that $\delta \leq |U_i| \leq \delta^\theta$ for all $i$, and $\sum_i |U_i|^s \leq B$. 

\textbf{Proof of claim:} We prove the claim by induction on $n$. The claim holds for $n=1$ by the definition of $B$. Let $n \in \N$, $n >1$, and assume the claim holds for $1,2,\dotsc,n-1$. Let $\delta \in \left(\frac{1}{n+1},\frac{1}{n}\right]$. By the definition of $A$ there exists a cover $\{V_j\}$ of $P_q$ such that $\delta \leq |V_j| \leq \delta^\theta$ for all $j$, and $\sum_j |V_j|^t \leq A$. 
By the definition of $A_{d,1+2|X|}$, for all $j$ there exist $V_{j,1},\dotsc,V_{j,A_{d,1+2|X|}} \subseteq \Rd$, each of diameter 
\[\max\{\delta,|\mathcal{S}_{|X|\delta}(V_j)|/(1+2|X|)\},\]
 such that 
 \[\mathcal{S}_{|X|\delta}(V_j) \subseteq \bigcup_{k=1}^{A_{d,1+2|X|}} V_{j,k}.\]
  By the triangle inequality, 
\[|\mathcal{S}_{\delta |X|} (V_j)| \leq |V_j| + 2|X|\delta \leq (1+2|X|)|V_j| \leq (1+2|X|)\delta^\theta,\]
so $\delta \leq |V_{j,k}| \leq \delta^\theta$ and $|V_{j,k}| \leq |V_j|$ for all $j,k$. 
Recalling that $M_q$ is the set of maps corresponding to words of length $q$, let $C_{\delta} \coloneqq \{ \, \tau \in M_q : |F_\sigma| \leq |X|\delta \, \}$. 
Since $\{V_j\}$ covers $P_q$, $\{\mathcal{S}_{|X|\delta}(V_j)\}$ covers $\cup_{\tau \in C_\delta} F_\tau$, so $\cup_{k=1}^{A_{d,1+2|X|}} V_{j,k}$ covers $\cup_{\tau \in C_\delta} F_\tau$. 

Now suppose $\sigma \in M_q \setminus C_\delta$, so $|X|\delta < |F_\sigma| \leq |X|R_\sigma$, so $\delta/R_\sigma < 1$, and since $R_\sigma <1/4$, 
\[ \frac{\delta}{R_\sigma} \geq \frac{1}{(n+1)R_\sigma} > \frac{4}{n+1} > \frac{1}{n}.\] 
Therefore by the inductive assumption there exists a cover $\{U_i^{\delta/R_\sigma}\}$ of $F$ such that $\delta/R_\sigma \leq |U_i^{\delta/R_\sigma}| \leq (\delta/R_\sigma)^\theta$ for all $i$ and $\sum_i |U_i^{\delta/R_\sigma}|^s \leq B$. 
For each $i$, let $W_{\sigma,i}$ be a set with diameter 
\begin{equation}\label{wsigma} 
|W_{\sigma,i}| = \max\{|S_\sigma(U_i^{\delta/R_\sigma})|,\delta\}
\end{equation}
 such that $S_\sigma(U_i^{\delta/R_\sigma}) \subseteq W_{\sigma,i}$. Since $\{S_\sigma (U_i^{\delta/R_\sigma})\}_i$ covers $F_\sigma$, also $\{W_{\sigma,i}\}_i$ covers $F_\sigma$. By the definition of $R_\sigma$, $|S_\sigma(U_i^{\delta/R_\sigma})| \leq R_\sigma|U_i^{\delta/R_\sigma}|$ for all $j$, and also $\delta = R_\sigma \delta/R_\sigma \leq R_\sigma |U_i^{\delta/R_\sigma}|$, so by~\eqref{wsigma},  
\begin{equation}\label{intkeybound} \delta \leq |W_{\sigma,i}| \leq R_\sigma|U_i^{\delta/R_\sigma}| \leq R_\sigma (\delta/R_\sigma)^\theta \leq \delta^\theta.
\end{equation}
 The last inequality (which is crucial to the argument) holds since $R_\sigma < 1$. 
 
 Now, $\{V_{j,k}\} \cup \{W_{\sigma,i}\}$ is a cover of $F$ and the diameter of each of these sets lies in the interval $[\delta,\delta^\theta]$. 
Moreover, since $|V_{j,k}| \leq |V_j|$ and by~\eqref{intkeybound}, 
\begin{align*}
\sum_j \sum_{k=1}^{A_{d,1+2|X|}} |V_{j,k}|^s + \sum_{\sigma \in M_q \setminus C_\delta} \sum_i |W_{\sigma,i}|^s 
&\leq A_{d,1+2|X|} \sum_j |V_j|^s + \sum_{\sigma \in M_q \setminus C_\delta} R_\sigma^s \sum_i |U_i^{\delta/R_\sigma}|^s \\
&\leq A_{d,1+2|X|} A + B\phi_q(s) \\
&\leq B,
\end{align*}
so the claim holds by induction. 

\ref{phiub}. By Lemma~\ref{phiinvertible} from page~\pageref{phiinvertible} we may assume without loss of generality that $\Phi$ is invertible. Then~\ref{phiub} holds by almost exactly the same proof as~\ref{intub}, with $\uid$, $\delta^\theta$ and $(\delta/R_w)^\theta$ replaced by $\upd$, $\Phi^{-1}(\delta)$ and $\Phi^{-1}(\delta/R_w)$ respectively throughout. In place of~\eqref{intkeybound}, the key inequality $R_w \Phi^{-1}(\delta/R_w) \leq \Phi^{-1}(\delta)$ holds since $\Phi(\delta)/\delta \searrow 0$ monotonically as $\delta \to 0^+$ by assumption. 
\end{proof}

\subsection{Precise formulae for conformal iterated function systems}\label{conformalsect}

In order to use the upper bounds in Theorem~\ref{inttypeub} to prove a simpler formula for intermediate dimensions of the limit set of a CIFS in Theorem~\ref{mainint} as the maximum of the Hausdorff dimension and the intermediate dimensions of the fixed points, we need further lemmas. 
The following lemma and proof are similar to~\cite[Proposition~2.9]{Mauldin1999ctdfrac} for the box dimension. 

\begin{lma}\label{samewithinlevel}%
Fix any CIFS, let $n \in \N$, and assume that $P$ and $Q$ are both subsets of $\cup_{w \in I^n} S_w(X)$ satisfying 
\begin{align*} 
&0 < \inf_{w \in I^n} \# (P \cap S_w(X))   \leq  \sup_{w \in I^n} \# (P \cap S_w(X)) < \infty, \\*
&0 < \inf_{w \in I^n} \# (Q \cap S_w(X))   \leq  \sup_{w \in I^n} \# (Q \cap S_w(X)) < \infty. 
\end{align*}
Then $\uid P = \uid Q$ for all $\theta \in [0,1]$, and $\upd P = \upd Q$ for all monotonically admissible functions $\Phi$. 
The same holds with $\overline{\dim}$ replaced by $\underline{\dim}$ throughout. 
\end{lma}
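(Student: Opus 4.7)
The plan is to prove the four inequalities $\uid P \leq \uid Q$, $\lid P \leq \lid Q$, $\upd P \leq \upd Q$, and $\lpd P \leq \lpd Q$; the reverse inequalities then follow by interchanging the roles of $P$ and $Q$. All four will be deduced from a single scale-by-scale comparison: there exist constants $K>0$ and $c\in(0,1)$, depending only on the CIFS, the level $n$, and the bounds $M_P \coloneqq \sup_w \#(P\cap S_w(X))$ and $M_Q \coloneqq \sup_w \#(Q\cap S_w(X))$, such that for every $s \geq 0$ and every sufficiently small $\delta>0$,
\[
  S^s_{\delta,\theta}(P) \;\leq\; K \cdot S^s_{c\delta,\theta}(Q),
\]
together with the analogous bound for the $\Phi$-intermediate sums (in which $\delta^{1/\theta}$ is replaced by $\Phi(\delta)$ throughout). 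Taking logarithms, dividing by $-\log \delta$, and using that $(-\log c\delta)/(-\log \delta) \to 1$ as $\delta \to 0$, the $\limsup$ and $\liminf$ characterisations via~\eqref{eq:45} then yield all four dimension inequalities simultaneously (the $\theta=0$ case is trivial since both $P$ and $Q$ are countable).

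To construct the required cover of $P$ from a near-optimal $(c\delta,\theta)$-cover $\{U_i\}$ of $Q$, I would proceed in two steps. First, for each $i$ \emph{absorb small cylinders} into an enlarged set
\[
  \widetilde{U}_i \;\coloneqq\; U_i \cup \bigcup\bigl\{\, S_w(X) : w \in I^n,\ U_i \cap S_w(X) \neq \varnothing,\ |S_w(X)| \leq |U_i| \,\bigr\},
\]
which by the triangle inequality satisfies $|U_i| \leq |\widetilde{U}_i| \leq 3|U_i| \leq 3c\delta \leq \delta$; if necessary $\widetilde U_i$ is enlarged by adjoining a nearby point so that $|\widetilde U_i|\geq\delta^{1/\theta}$. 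Second, handle the \emph{residual cylinders}: those $S_w(X)$ with $|S_w(X)|>|U_i|$ meeting some $U_i$. By Lemma~\ref{fitin} there are at most $M$ such cylinders per $U_i$, since they all satisfy $|S_w(X)|\geq|U_i|/2$; for each, cover $P\cap S_w(X)$ by a single set of diameter $|S_w(X)|$ if $|S_w(X)|\leq\delta$, and otherwise by at most $M_P$ balls of diameter $\delta^{1/\theta}$, one per point. Together, the $\widetilde U_i$ and the residual cover sets form a valid $(\delta,\theta)$-cover of $P$.

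The $s$-cost of the absorbed sets is at most $3^s\sum_i|U_i|^s$. For the residuals, each residual cylinder $S_w(X)$ is charged to a cover set $U_{i(w)}$ containing a chosen point $q_w \in Q\cap S_w(X)$ (nonempty by hypothesis), and by Lemma~\ref{fitin} each $U_i$ is charged by at most $M$ residuals. The main obstacle lies in bounding the residual contribution by a constant multiple of $\sum_i|U_i|^s$ when $\theta<1$, since residual cylinders can be substantially larger than their charged cover sets and thus carry a seemingly disproportionate $s$-cost. This is resolved by exploiting the lower bound $\#(Q\cap S_w(X))\geq m_Q>0$, which guarantees that every residual cylinder already contributes at least $m_Q$ points of $Q$ to the original cover (possibly spread across several $U_i$), together with a size-bracket decomposition of the residual cylinders and the bounded-multiplicity estimate of Lemma~\ref{fitin}. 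This extension to the intermediate setting of the strategy used by Mauldin and Urba\'nski in the box-dimension case of \cite[Proposition~2.9]{Mauldin1999ctdfrac} is the most delicate part of the argument.
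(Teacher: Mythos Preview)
Your overall structure is sound and matches the paper's strategy, but you have manufactured an unnecessary difficulty in the residual step. When $|U_i| < |S_w(X)| \leq \delta$ you propose covering $P\cap S_w(X)$ by a \emph{single} set of diameter $|S_w(X)|$, and then correctly observe that $|S_w(X)|^s$ can dwarf $|U_{i(w)}|^s$, leading you to invoke an unspecified ``size-bracket decomposition'' which you call the most delicate part. But there is no delicacy here at all: since $\#(P\cap S_w(X))\leq M_P$, simply cover each of those points by a ball of diameter $|U_{i(w)}|$ (which lies in the admissible range $[(c\delta)^{1/\theta},c\delta]$). This contributes at most $M_P\,|U_{i(w)}|^s$ per residual cylinder, and since each $U_i$ is charged by at most $M$ residuals via Lemma~\ref{fitin}, the total residual cost is bounded by $M\,M_P\sum_i|U_i|^s$. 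This is exactly what the paper does (with the same choice of diameter $|U_j|$ for both the neighbourhood pieces and the residual balls), and it removes the obstacle entirely.

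With that one change your argument becomes correct and essentially identical to the paper's; the $c\delta$ rescaling and the separate treatment of the $|S_w(X)|>\delta$ case are then unnecessary. The lower bound $m_Q>0$ on $\#(Q\cap S_w(X))$ is used only to guarantee that every level-$n$ cylinder is hit by the cover of $Q$ (so every point of $P$ is accounted for), not to control costs.
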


\begin{proof} 
The idea is to use an efficient cover of $Q$ at scale $\delta$ to construct an efficient cover of $P$ at scale $\delta$. Elements of $P$ in cylinders of size $\lesssim \delta$ can be covered using the cover of the element of $Q$ in the same cylinder, and the conditions of a CIFS (via Lemma~\ref{fitin}) mean that each element of the cover can intersect only a bounded number of cylinders that are larger than the covering set in question. 

Since for each $n \in \N$, $\{ \, S_w : w \in I^n \, \}$ forms a CIFS with the same limit set, we may henceforth assume without loss of generality that $n=1$. 
Let $A_{d,3}$ be as in~\eqref{doublingconst} on page~\pageref{doublingconst} and let $M$ be as in Lemma~\ref{fitin}. 
Let 
\[ C \coloneqq \max\Big\{    \sup_{w \in I^n} \# (P \cap S_w(X)),  \sup_{w \in I^n} \# (Q \cap S_w(X))    \Big\}. \]
If $\theta = 0$ then $\uid P = \uid Q = 0$ because $P$ and $Q$ are countable, so henceforth assume that $\theta \in (0,1]$. 

\textbf{Claim:} Given any $\delta > 0$, if $\{U_j\}$ is a cover of $Q$ such that $\delta \leq |U_j| \leq \delta^\theta$ for all $j$ then there exists a cover $\{V_m\}$ of $P$ such that $\delta \leq |V_m| \leq \delta^\theta$ for all $m$, and 
\[ \sum_m |V_m|^s \leq (A_{d,3} + M C)\sum_j |U_j|^s \] 
for all $s\geq 0$. 

\textbf{Proof of claim:} For each $j$, if $i \in I$ is such that $|S_i(X)| \leq |U_j|$ and $S_i(X) \cap U_j \neq \varnothing$, then 
\[ S_i(X) \subseteq \mathcal{S}_{|U_j|}(U_j) \subseteq \bigcup_{l=1}^{A_{d,3}} \mathcal{S}_{|U_j|}(U_j)_l,\]
 where $\mathcal{S}_{|U_j|}(U_j)$ is the neighbourhood set, which has diameter $3|U_j|$. 
 By Lemma~\ref{fitin} there exist $i_1, \dotsc, i_M \in I$, not necessarily distinct, such that $S_{i_k}(X) \cap U_j \neq \varnothing$ for $k=1,\dotsc,M$, and such that if $i \in I \setminus \{i_1,\dotsc,i_M\}$ and $|S_i(X)| > |U_j|$ then $S_i(X) \cap U_j = \varnothing$. 
 If $k=1,\dotsc,M$ then we can cover $P \cap S_{i_k}(X)$ by $C$ balls $\{B_{j,k,p}\}_{p=1}^C$, each of diameter $|U_j|$. Since $\{U_j\}$ covers $Q$, 
\[P \subseteq \bigcup_j \left( \bigcup_{l=1}^{A_{d,3}} \mathcal{S}_{|U_j|}(U_j)_l \cup \bigcup_{k=1}^M \bigcup_{p=1}^C B_{j,k,p} \right).\]
 Each element of this cover of $P$ has diameter in the interval $[\delta,\delta^\theta]$ by construction. 
 Moreover, 
 \[ \sum_j \left(\sum_{l=1}^{A_{d,3}} |\mathcal{S}_{|U_j|}(U_j)_l|^s + \sum_{k=1}^M \sum_{p=1}^C |B_{j,k,p}|^s\right) = (A_{d,3} + M C)\sum_j |U_j|^s, \]
 proving the claim. 
 
 The claim shows that $\uid P \leq \uid Q$ and $\lid P \leq \lid Q$. The reverse inequalities hold by symmetry, so $\uid P = \uid Q$ and $\lid P = \lid Q$, as required. 
 If $\Phi$ is a monotonically admissible function then by Lemma~\ref{phiinvertible} on page~\pageref{phiinvertible} we may assume without loss of generality that $\Phi$ is invertible. 
 Then the same proof works with $\delta^\theta$, $\uid$ and $\lid$ replaced by $\Phi^{-1}(\delta)$, $\upd$ and $\lpd$ respectively throughout. 
 \end{proof}

Lemma~\ref{changelevel} shows that the upper intermediate dimensions of a set of points corresponding to the $n$-th level cylinders are either all bounded above by the finiteness parameter, and hence the Hausdorff dimension of the limit set, or they all equal the upper intermediate dimensions of the level-1 fixed points. 
We will combine this lemma with the upper bounds in Theorem~\ref{inttypeub} (which considers arbitrarily deep levels) to prove that the dimensions in fact depend only on the level-1 fixed points (and the Hausdorff dimension). 
Mauldin and Urbański prove in~\cite[Lemma~2.10]{Mauldin1999ctdfrac} that the upper box dimension of the level-1 iterates of a given point is greater than or equal to the finiteness parameter $\theta_S$, and deduce that it equals the box dimension of the $n$-th level iterates for all $n \in \N$. The intermediate dimensions, on the other hand, will \emph{not} always exceed the finiteness parameter, so we cannot make the same conclusion for the intermediate dimensions in Lemma~\ref{changelevel}. 

 \begin{lma}\label{changelevel}
 Consider a CIFS, and suppose that for each $n \in \N$, $P_n \subseteq \cup_{w \in I^n} S_w(X)$ is any set satisfying $0 < \inf_{w \in I^n} \# (P_n \cap S_w(X))   \leq  \sup_{w \in I^n} \# (P_n \cap S_w(X)) < \infty$. Then 
 
 \begin{enumerate}[label=(\roman*)]
 \item\label{changeleveluid} for all $\theta \in [0,1]$, either $\uid P_n \leq \theta_S \leq h$ for all $n \in \N$ or $\uid P_n = \uid P_1$ for all $n \in \N$. 
 \item\label{changelevelupd} If $\Phi$ is monotonically admissible then either $\upd P_n \leq \theta_S \leq h$ for all $n \in \N$ or $\upd P_n = \upd P_1$ for all $n \in \N$. 
 \end{enumerate}
 \end{lma}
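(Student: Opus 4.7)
The plan is to reduce to a canonical choice of $P_n$ via Lemma~\ref{samewithinlevel}, prove matching lower and upper bounds for that canonical choice, and then read off the claimed dichotomy.

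First I would fix $P_1 = \{p_i : i \in I\}$ with each $p_i \in S_i(X)$ and set $\hat P_n \coloneqq \bigcup_{w \in I^{n-1}} S_w(P_1) \subseteq \bigcup_{v \in I^n} S_v(X)$. A short application of Lemma~\ref{fitin} shows that $\hat P_n$ satisfies the hypothesis of Lemma~\ref{samewithinlevel}, so the intermediate and $\Phi$-intermediate dimensions of $\hat P_n$ agree with those of any admissible $P_n$. The lower bound $\uid P_n \geq \uid P_1$ is then immediate: each $S_w$ is bi-Lipschitz on $X$ by Lemma~\ref{diameterslemma}, so $\uid \hat P_n \geq \uid S_w(P_1) = \uid P_1$ by monotonicity and bi-Lipschitz invariance, and the same argument works for $\upd$.

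The heart of the proof is the upper bound $\uid \hat P_n \leq \max\{\theta_S, \uid P_1\}$. I would fix $s$ exceeding this maximum and $\epsilon>0$, and for every small $\delta$ construct a $(\delta,\theta)$-cover of $\hat P_n$ of total $s$-cost at most $\epsilon$ by splitting $I^{n-1}$ at a cutoff $R_w \asymp \delta^{1/\theta}$. When $R_w$ lies below the cutoff, the entire cylinder $S_w(X)$, and hence $S_w(P_1)$, can be covered by $O(1)$ sets of diameter in $[\delta^{1/\theta},\delta]$ whose total $s$-cost is controlled by a uniform multiple of $R_w^s$; summing over such $w$ yields a bound by a multiple of $\phi_{n-1}(s)$, which is finite because $s > \theta_S$. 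When $R_w$ lies above the cutoff, I would apply $S_w$ to an efficient $(\delta',\theta)$-cover of $P_1$ at scale $\delta' \asymp \delta/R_w$, using bounded distortion (Lemma~\ref{diameterslemma}) to ensure the images genuinely lie in $[\delta^{1/\theta},\delta]$; summing these contributions gives a uniform multiple of the $s$-cost of the cover of $P_1$, which can be made arbitrarily small because $s > \uid P_1$.

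With these bounds in hand, the dichotomy is formal. If $\uid P_1 \leq \theta_S$, the upper bound yields $\uid P_n \leq \theta_S \leq h$ for every $n$; if instead $\uid P_1 > \theta_S$, the upper bound collapses to $\uid P_n \leq \uid P_1$, which combined with the lower bound forces $\uid P_n = \uid P_1$ for every $n$. Part (ii) follows by running the same template with the window $[\delta^{1/\theta},\delta]$ replaced by $[\Phi(\delta),\delta]$ and $\uid$ replaced by $\upd$; the covering step carries over using Lemma~\ref{phiinvertible} to assume $\Phi$ is invertible, exactly as in the passage from Theorem~\ref{inttypeub}\ref{intub} to Theorem~\ref{inttypeub}\ref{phiub}. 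The hard part will be the upper bound, and specifically the boundary regime $R_w \asymp \delta^{1/\theta}$: the bounded distortion constant $D$ from Lemma~\ref{diameterslemma} can shift images of chosen cover elements of $P_1$ slightly outside $[\delta^{1/\theta},\delta]$, and one must fatten or break up such sets carefully so that the $s$-cost is not inflated by more than a uniform constant. The endpoint case $\theta = 1$, where the scale window collapses to a point, requires a separate argument and is most efficiently handled by invoking Mauldin and Urba\'nski's box-dimension formula.
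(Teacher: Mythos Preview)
Your scaffold (reduce to a canonical $\hat P_n$ via Lemma~\ref{samewithinlevel}, get $\uid \hat P_n \geq \uid P_1$ from a bi-Lipschitz copy, prove $\uid \hat P_n \leq \max\{\theta_S,\uid P_1\}$, then read off the dichotomy) matches the paper's, and the lower bound and the push-forward covering of large cylinders are both essentially right. The gap is in your treatment of the small cylinders, and it is not the boundary regime you flag but the regime well below it. You claim that when $R_w$ lies below the cutoff $\delta^{1/\theta}$, the cylinder $S_w(X)$ can be covered by $O(1)$ sets with diameters in $[\delta^{1/\theta},\delta]$ whose total $s$-cost is a uniform multiple of $R_w^s$. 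That is impossible: every admissible covering set has diameter at least $\delta^{1/\theta}$, hence $s$-cost at least $(\delta^{1/\theta})^s > R_w^s$. For each fixed $\delta>0$ there are infinitely many $w \in I^{n-1}$ with $R_w < \delta^{1/\theta}$, so summing the true per-cylinder cost $\gtrsim \delta^{s/\theta}$ over these diverges, and no bound via $\phi_{n-1}(s)$ is available. Covering the tiny cylinders one at a time cannot work.

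The paper circumvents this by induction on $n$: every small cylinder $S_w(X)$ with $w \in I^{n-1}$ contains a point of $P_{n-1}$ and therefore lies within a fixed-ratio neighbourhood of that point; hence a good cover of $P_{n-1}$ with bounded $s$-cost $B_{n-1}$, supplied by the inductive hypothesis and then fattened, covers \emph{all} the small cylinders simultaneously. Large cylinders are handled by push-forward essentially as you propose, contributing cost $\lesssim B_1\phi_{n-1}(s)$, and the recursion $B_n \lesssim B_{n-1} + B_1\phi_{n-1}(s)$ stays finite because $s>\theta_S$. The induction is not decorative: it is exactly what collapses the divergent sum over infinitely many tiny cylinders into the single finite constant $B_{n-1}$.
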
%
 
 \begin{proof}
 \ref{changeleveluid}. This is true for the Hausdorff dimension because each $P_n$ is countable, so henceforth suppose $\theta \in (0,1]$. By Lemma~\ref{samewithinlevel}, $\uid P_n$ does not depend on the particular set $P_n$, so $\uid P_1 \leq \uid P_2 \leq \dotsb$. 
 By Lemmas~\ref{samewithinlevel} and~\ref{fitin}, we can henceforth fix $x \in X$ and assume without loss of generality that $P_n \coloneqq \{ \, S_w(x) : w \in I^n \, \}$ for all $n \in \N$. 
 It suffices to prove that $\uid P_n \leq \max\{\theta_S, \uid P_1\}$ for all $n \in \N$, which follows from the following claim. 
 
 \textbf{Claim:} For all $s>\max\{\theta_S, \uid P_1\}$, for all $n \in \N$ there exists $B_n \in (0,\infty)$ such that for all $\delta \in (0,1]$ there exists a cover $\{U_j^{\delta,n}\}_j$ of $P_n$ such that $\delta \leq |U_j^{\delta,n}| \leq \delta^\theta$ for all $i$ and $\sum_j |U_j^{\delta,n}|^s \leq B_n$. 
 
 \textbf{Proof of claim:} Fix $s>\max\{\theta_S, \uid P_1\}$. We prove the claim by induction on $n$. Suppose $n>1$ and assume the claim holds for $1,2,\dotsc,n-1$. 
 Let $\delta \in (0,1]$. 
 By the definition of $A_{d,1+2|X|}$ in~\eqref{doublingconst}, for all $j$ there exist $U_{j,1}^{\delta,n-1},\dotsc,U_{j,A_{d,1+2|X|}}^{\delta,n-1} \subseteq \Rd$, each of diameter 
 \[ \max\left\{\delta,\frac{|\mathcal{S}_{|X|\delta}(U_j^{\delta,n-1})|}{1+2|X|}\right\},\]
  such that 
  \[\mathcal{S}_{|X|\delta}(U_j^{\delta,n-1}) \subseteq \bigcup_{k=1}^{A_{d,1+2|X|}} U_{j,k}^{\delta,n-1}.\]
   By the triangle inequality, 
\[|\mathcal{S}_{\delta |X|} (U_j^{\delta,n-1})| \leq |U_j^{\delta,n-1}| + 2|X|\delta \leq (1+2|X|)|U_j^{\delta,n-1}| \leq (1+2|X|)\delta^\theta.\]
Therefore $\delta \leq |U_{j,k}^{\delta,n-1}| \leq \delta^\theta$ and $|U_{j,k}^{\delta,n-1}| \leq |U_j^{\delta,n-1}|$ for all $j,k$.
 
 Let $C_{\delta} \coloneqq \{ \, w \in I^{n-1} : |X_w| \leq |X|\delta \, \}$. If $w \in C_\delta$ then there exists $p_w \in S_w(X) \cap P_{n-1}$, and there exists $j$ such that $p_w \in U_j^{\delta,n-1}$, so the neighbourhood set $\mathcal{S}_{|X|\delta} (U_j^{\delta,n-1})$ covers $S_w(X)$. Thus 
 \begin{equation}\label{changelevelinclusion} P_n \cap S_w(X) \subseteq S_w(X) \subseteq \mathcal{S}_{|X|\delta} (U_j^{\delta,n-1}) \subseteq \bigcup_{k=1}^{A_{d,1+2|X|}} U_{j,k}^{\delta,n-1}. 
 \end{equation}
 If, on the other hand, $w \in I^{n-1}\setminus C_\delta$, then $|X|\delta < |X_w| \leq |X|R_w$ and $\delta/R_w < 1$. Consider the cover $\{U_l^{\delta/R_w,1}\}_l$ of $P_1$ whose existence is guaranteed by the base case $n=1$. 
  For each $l$, let $W_{w,l}$ be a set with diameter $|W_{w,l}| = \max\{|S_w(U_l^{\delta/R_w,1} \cap X)|,\delta\}$ such that $S_w(U_l^{\delta/R_w,1} \cap X) \subseteq W_{w,l}$. Since $P_n \coloneqq \{ \, S_w(x) : w \in I^n \, \}$, the sets $\{S_w(U_l^{\delta/R_w,1} \cap X)\}_l$ cover $P_n \cap S_w(X)$, so $\{W_{w,l}\}_l$ covers $P_n \cap S_w(X)$. 
  By the definition of $R_w$, for all $l$ we have $|S_w(U_l^{\delta/R_w,1} \cap X)| \leq R_w |U_l^{\delta/R_w,1} \cap X| \leq R_w |U_l^{\delta/R_w,1}|$, %
  and also $\delta = R_w\delta/R_w \leq R_w |U_l^{\delta/R_w,1}|$, so 
 \begin{equation}\label{intlemmakeybound}
 \delta \leq |W_{w,l}| \leq R_w |U_l^{\delta/R_w,1}| \leq R_w (\delta/R_w)^\theta \leq \delta^\theta. 
 \end{equation}
 Now, $\{U_{j,k}^{\delta,n-1}\} \cup \{W_{w,l}\}$ covers $P_n$ and the diameter of each element of this cover lies in the interval $[\delta,\delta^\theta]$. Moreover, since $|U_{j,k}^{\delta,n-1}| \leq |U_j^{\delta,n-1}|$ for all $j,k$, and by~\eqref{intlemmakeybound},  
 \begin{align*}
  \sum_j \sum_{k=1}^{A_{d,1+2|X|}}|U_{j,k}^{\delta,n-1}|^s &+ \sum_{w \in I^n \setminus C_\delta} \sum_l |W_{w,l}|^s \\
  &\leq A_{d,1+2|X|} \sum_j |U_j^{\delta,n-1}|^s + \sum_{w \in I^n \setminus C_\delta} R_w \sum_l |U_l^{\delta/R_w,1}|^s \\
 &\leq A_{d,1+2|X|} B_{n-1} + B_1 \phi_n(s),
 \end{align*}
 recalling the definition of $\phi_n$ from~\eqref{e:definephin}. 
 Therefore letting $B_n \coloneqq A_{d,1+2|X|} B_{n-1} + B_1 \phi_n(s)$, since $s>\theta_S$, $\phi_n(s) < \infty$, so $B_n < \infty$, and the claim holds by induction. 
 
 \ref{changelevelupd}. By Lemma~\ref{phiinvertible} we may assume that $\Phi$ is invertible. Then~\ref{changelevelupd} holds by the same proof as~\ref{changeleveluid}, with $\uid$, $\delta^\theta$ and $(\delta/R_w)^\theta$ replaced by $\upd$, $\Phi^{-1}(\delta)$ and $\Phi^{-1}(\delta/R_w)$ respectively. In place of~\eqref{intlemmakeybound}, $R_w \Phi^{-1}(\delta/R_w) \leq \Phi^{-1}(\delta)$ holds since $\Phi(\delta)/\delta \searrow 0$ monotonically as $\delta \to 0^+$ by assumption. 
 \end{proof}

Mauldin and Urbański~\cite[Theorem~3.15]{Mauldin1996iifs} show that the Hausdorff dimension of the limit set $F$ of a CIFS is $h$. 
In fact, this is true even if the cone condition~\ref{cone} is not assumed (see~\cite[Theorem~19.6.4]{Urbanski2022book}), but in this chapter we do use the cone condition in the proof of Lemma~\ref{coneimplication} (and hence Lemmas~\ref{fitin} and~\ref{samewithinlevel}). 
We now use the fact that $h = \dim_{\mathrm H} F$, together with the upper bounds in Theorem~\ref{inttypeub} and Lemmas~\ref{changelevel} and~\ref{samewithinlevel}, to prove the main result of this chapter, Theorem~\ref{mainint}. This gives the following simple formulae for other dimensions of the limit set as the maximum of the Hausdorff dimension of the limit set and the corresponding dimension of any set $P$ satisfying certain conditions. 
From Lemma~\ref{fitin} we see that examples for the set $P$ include $\{ \, S_i(x) : i \in I \, \}$ for any given $x \in X$, as in~\cite{Mauldin1999ctdfrac}, or the set of fixed points in $X$ of the contractions $S_i$. 
There is a typo in the definition of $P$ in the published version of~\cite{Banaji2021infinite}. 

\begin{thm}\label{mainint}
For all CIFSs with limit set $F$ and notation as above, for all subsets $P \subseteq \cup_{i \in I} S_i(X)$ which satisfy $0 < \inf_{i \in I} \# (P \cap S_i(X))  \leq  \sup_{i \in I} \# (P \cap S_i(X)) < \infty$, 
\begin{enumerate}[label=(\roman*)]
\item\label{boxeq} $\ubd F = \dim_\mathrm{P} F = \max\{h,\ubd P\}$ (very similar to Mauldin and Urbański \cite[Theorem~2.11]{Mauldin1999ctdfrac} but with a more general condition on $P$)
\item\label{inteq} $\uid F = \max\{h,\uid P\}$ for all $\theta \in [0,1]$
\item\label{phieq} $\upd F = \max\{h,\upd P\}$ if $\Phi$ is monotonically admissible
\end{enumerate}
\end{thm}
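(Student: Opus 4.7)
The plan is to prove all three statements by a common strategy: combine the upper bounds from Theorem~\ref{inttypeub} with Lemmas~\ref{samewithinlevel} and~\ref{changelevel} to simplify the right-hand sides, and then establish matching lower bounds using a concrete set $P_* \subseteq F$ that satisfies the hypotheses.

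For the \emph{upper bounds}, I would apply parts \ref{boxub}, \ref{intub}, \ref{phiub} of Theorem~\ref{inttypeub}. For each $n \in \N$, fix any set $Q_n \subseteq X$ containing exactly one point from each $S_w(X)$ for $w \in I^n$; then $Q_n$ appears in the infimum of Theorem~\ref{inttypeub}, so the infimum is at most $\uid Q_n$. By Lemma~\ref{samewithinlevel} the value $\uid Q_n$ coincides with $\uid P_n$ for any set satisfying its cardinality hypotheses at level $n$, and by Lemma~\ref{changelevel} either $\uid P_n \leq \theta_S \leq h$ or $\uid P_n = \uid P_1$. In either case the limit as $n \to \infty$ is bounded by $\max\{h,\uid P_1\}$, and a further appeal to Lemma~\ref{samewithinlevel} shows $\uid P_1 = \uid P$ for any $P$ as in the theorem. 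The arguments for $\ubd F$ and $\upd F$ are identical, using the corresponding parts of Theorem~\ref{inttypeub} and of the two lemmas.

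For the \emph{lower bounds} I need a specific $P_* \subseteq F$ that satisfies the uniform cardinality hypotheses; once I have it, Lemma~\ref{samewithinlevel} gives $\uid P_* = \uid P$, and monotonicity yields $\uid F \geq \uid P_*$. Combined with $\uid F \geq \dim_\mathrm{H} F = h$ (the latter from Mauldin--Urbański's \cite[Theorem~3.15]{Mauldin1996iifs}), this gives $\uid F \geq \max\{h,\uid P\}$. A natural choice for $P_*$ is the set of fixed points $\{p_i : i \in I\}$ of the contractions, since $p_i \in F_i \subseteq F$, which immediately yields $\inf_i \#(P_* \cap S_i(X)) \geq 1$; the more delicate requirement $\sup_i \#(P_* \cap S_i(X)) < \infty$ is obtained by applying Lemma~\ref{fitin} at appropriate scales together with the OSC to control how many cylinders can overlap a single fixed point. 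The argument for $\ubd F$ and $\upd F$ is the same.

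The remaining equality $\dim_\mathrm{P} F = \ubd F$ in~\ref{boxeq} splits into the trivial $\dim_\mathrm{P} F \leq \ubd F$ and the reverse $\dim_\mathrm{P} F \geq \max\{h,\ubd P\}$. The inequality $\dim_\mathrm{P} F \geq h = \dim_\mathrm{H} F$ is immediate, while $\dim_\mathrm{P} F \geq \ubd P$ can be deduced from Mauldin--Urbański~\cite[Theorem~2.11]{Mauldin1999ctdfrac}; alternatively one can prove it directly by observing that $F \supseteq S_w(F)$ for every $w \in I^*$ (so, by bi-Lipschitz invariance, each $S_w(F)$ has the same upper box dimension as $F$), and combining this with countable stability of packing dimension to rule out decompositions $F = \bigcup_i F_i$ in which every $F_i$ has $\ubd F_i < \ubd F$.

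The main obstacle I anticipate is the lower bound: verifying that a concrete subset of $F$ (such as the fixed-point set) genuinely satisfies the uniform upper bound $\sup_i \#(P_* \cap S_i(X)) < \infty$ demands careful bookkeeping of boundary overlaps via the OSC and Lemma~\ref{fitin}. The packing-dimension step is also somewhat delicate, since the naive attempt $\dim_\mathrm{P} F \geq \dim_\mathrm{P} P_*$ gives nothing useful when $P_*$ is countable, so one must instead exploit the self-reproducing structure $F \supseteq S_w(F)$ rather than work with $P_*$ itself.
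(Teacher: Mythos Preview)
Your proposal is correct and follows essentially the same route as the paper. Two minor simplifications in the paper's version are worth noting. First, rather than choosing separate sets $Q_n$ (for the upper bound) and $P_*$ (for the lower bound), the paper uses the \emph{same} level-$n$ fixed-point sets $P_n \coloneqq \{x \in X : x = S_w(x) \text{ for some } w \in I^n\}$ throughout: these lie in $F$ automatically, and the uniform upper cardinality bound $\sup_{w}\#(P_n \cap S_w(X)) < \infty$ is simply read off from Lemma~\ref{fitin}, avoiding the ``careful bookkeeping'' you anticipate. Second, for the equality $\dim_\mathrm{P} F = \ubd F$ the paper just cites \cite[Theorem~3.1]{Mauldin1996iifs} directly, so your self-contained countable-stability argument, while correct, is not needed.
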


\begin{proof}

\ref{boxeq}. follows from the case $\theta=1$ of~\ref{inteq} and the fact that $\ubd F = \dim_\mathrm{P} F$ by~\cite[Theorem~3.1]{Mauldin1996iifs}. 

\ref{inteq}. For each $n \in \N$ let $P_n \coloneqq \{ \, x \in X : x = S_w(x) \mbox{ for some } w \in I^n \, \}$, so $P_n \subseteq F \subseteq \cup_{w \in I^n} S_w(X)$. Then by Lemmas~\ref{samewithinlevel} and~\ref{changelevel}~\ref{changeleveluid}, $\uid P_n \leq \max\{h,\uid P\}$ for all $n \in \N$. Therefore by Theorem~\ref{inttypeub}~\ref{intub}, 
\[ \uid F \leq \max\{ h, \lim_{n \to \infty} \uid P_n \} \leq \max\{ h, \max\{h,\uid P \} \} = \max\{h,\uid P\}.\]
 But $P_n \subseteq F$ so since $\uid$ is monotonic for subsets, by Lemma~\ref{samewithinlevel}, $\uid P = \uid P_1 \leq \uid F$, and by~\cite[Theorem~3.15]{Mauldin1996iifs}, $h = \dim_\mathrm{H} F \leq \uid F$, so $\max\{h,\uid P\} \leq \uid F$. Therefore $\uid F = \max\{h,\uid P\}$, as required. 

\ref{phieq} is similar to~\ref{inteq}. 
\end{proof}

A consequence is the following bounds for the lower versions of the dimensions in Theorem~\ref{mainint}. 

\begin{cor}\label{lowerintcor}
For all CIFSs with limit set $F$ and notation as above, for all subsets $P \subseteq \cup_{i \in I} S_i(X)$ which satisfy $0 < \inf_{i \in I} \# (P \cap S_i(X))  \leq  \sup_{i \in I} \# (P \cap S_i(X)) < \infty$,  
\begin{enumerate}[label=(\roman*)]
\item\label{lowerbox} $\max\{h,\lbd P\} \leq \lbd F \leq \max\{h,\ubd P\}$
\item\label{lowerint} $\max\{h,\lid P\} \leq \lid F \leq \max\{h,\uid P\}$ for all $\theta \in [0,1]$ 
\item\label{lowerphi} $\max\{h,\lpd P\} \leq \lpd F \leq \max\{h,\upd P\}$ if $\Phi$ is monotonically admissible. 
\end{enumerate}
\end{cor}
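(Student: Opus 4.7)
The plan is to derive the upper and lower bounds separately, in both cases reducing to Theorem~\ref{mainint} together with the replacement principle from Lemma~\ref{samewithinlevel}.

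For the upper bounds, I would observe that for every $\theta \in [0,1]$ and every monotonically admissible $\Phi$ one has $\lbd P \leq \ubd P$, $\lid P \leq \uid P$ and $\lpd P \leq \upd P$ by definition, and likewise $\lbd F \leq \ubd F$, $\lid F \leq \uid F$, $\lpd F \leq \upd F$. Since Theorem~\ref{mainint} identifies the upper-version dimensions of $F$ with $\max\{h,\ubd P\}$, $\max\{h,\uid P\}$, and $\max\{h,\upd P\}$ respectively, the three upper inequalities in \ref{lowerbox}, \ref{lowerint}, and \ref{lowerphi} are immediate.

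For the lower bounds, each inequality splits as $h \leq \lbd F$, etc., together with $\lbd P \leq \lbd F$, etc. The first of these follows from Mauldin--Urba\'nski's identity $h = \dim_\mathrm{H} F$ (which was used already in Theorem~\ref{mainint}) combined with the general order $\dim_\mathrm{H} F \leq \lid F \leq \lbd F$ and, for monotonically admissible $\Phi$, $\dim_\mathrm{H} F \leq \lpd F$ from Proposition~\ref{basicbounds}. The second part, namely bounding the lower dimensions of $P$ by those of $F$, is the step that is not completely formal, because in general $P$ need not be contained in $F$. To handle this, I would introduce the auxiliary set $P_1 \coloneqq \{\, S_i(x) : i \in I\,\}$ for some fixed choice of $x \in F$. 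Then $P_1 \subseteq F$ by construction, and Lemma~\ref{fitin} guarantees that $\{\,S_i(X) : i \in I\,\}$ is pointwise finite, so $1 \leq \# (P_1 \cap S_i(X)) \leq M$ for all $i \in I$; together with the assumption on $P$, both sets satisfy the hypotheses of Lemma~\ref{samewithinlevel} (applied either at $\theta=1$ for box dimension, at general $\theta$ for intermediate dimension, or for $\Phi$-intermediate dimension). Hence $\lbd P = \lbd P_1$, $\lid P = \lid P_1$, and $\lpd P = \lpd P_1$.

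Finally, since $P_1 \subseteq F$ and all of the lower box, lower intermediate, and lower $\Phi$-intermediate dimensions are monotonic for set inclusion (the first is standard, the latter two are recorded in Proposition~\ref{unprovedprop}), one obtains $\lbd P_1 \leq \lbd F$, $\lid P_1 \leq \lid F$, and $\lpd P_1 \leq \lpd F$. Combining these with the preceding equalities yields the remaining lower bounds and completes the proof. The main potential pitfall is simply making sure that the auxiliary set $P_1$ genuinely satisfies the cardinality condition of Lemma~\ref{samewithinlevel}; this is where the pointwise finiteness from Lemma~\ref{fitin} (and ultimately the open set condition and cone condition) is essential, but no new estimates are required beyond what has already been established in this chapter.
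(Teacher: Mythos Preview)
Your approach is essentially identical to the paper's: the upper bounds come from $\underline{\dim} \leq \overline{\dim}$ combined with Theorem~\ref{mainint}, and the lower bounds come from $h = \dim_{\mathrm H} F$ together with comparing $P$ to an auxiliary $P_1 \subseteq F$ via Lemma~\ref{samewithinlevel} and monotonicity. The only cosmetic difference is your choice of $P_1 = \{S_i(x) : i \in I\}$ for a fixed $x \in F$ versus the paper's choice of the set of fixed points of the $S_i$; both are mentioned as valid examples in the paper's discussion, though your inference ``pointwise finite, so $\#(P_1 \cap S_i(X)) \leq M$'' conflates bounding how many $S_i(X)$ contain a given point with bounding how many points of $P_1$ lie in a given $S_i(X)$---a small wrinkle neither you nor the paper fully spells out.
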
 

\begin{proof}
We prove~\ref{lowerint};~\ref{lowerbox} and~\ref{lowerphi} are similar. By Theorem~\ref{mainint}~\ref{inteq} $\lid F \leq \uid F = \max\{h,\uid P\}$. If $P_1$ is the set of fixed points in $X$ of the maps $\{S_i\}_{i \in I}$ then by Lemma~\ref{samewithinlevel}, $\lid P = \lid P_1 \leq \lid F$, and by~\cite[Theorem~3.15]{Mauldin1996iifs}, $h = \dim_\mathrm{H} F \leq \lid F$, so $\max\{h,\lid P\} \leq \lid F$, as required. 
\end{proof} 

\begin{question}\label{q:sharp}
Are the bounds in Corollary~\ref{lowerintcor} sharp or can they be improved in general? 
\end{question}

Very recently, the author and Rutar~\cite{BanajiPreprintinfinitelowerbox} have calculated a formula for $\lbd F$, which interestingly depends on more refined properties of the covering function $r \mapsto N_r(P)$ than merely $\lbd P$ and $\ubd P$. The box dimension of $F$ exists if and only if the bounds in part~\ref{lowerbox} of Question~\ref{q:sharp} coincide, and if this is not the case then the sharp upper bound for $\lbd F$ in terms of $h$, $\lbd P$ and $\ubd P$ is strictly smaller than $\max\{h,\ubd P\}$. 

If the fixed points are arranged to be at $\frac{1}{\log n}$ then by Falconer, Fraser and Kempton \cite[Example~1]{Falconer2020firstintermediate}, the intermediate dimensions of the set of fixed points will be~$1$ for all $\theta \in (0,1]$. However, the contraction ratios can be made small enough (and tending to~$0$ rapidly enough) so that $h < 1$. In this case, the limit set and its closure will have the same Hausdorff dimension as they differ by a countable set (namely the images of the point 0 under maps corresponding to finite words) by Mauldin and Urbański \cite[Lemma~2.1]{Mauldin1996iifs}. Therefore by Theorem~\ref{recoverinterpolation} from page~\pageref{recoverinterpolation}, the $\Phi$-intermediate dimensions can be used to `recover the interpolation' between the Hausdorff and box dimensions of the limit set $F$ in the sense that for all $s \in [h,1]$ there exists an admissible function $\Phi_s$ with $\dim^{\Phi_s} F = s$. Theorem~\ref{mainint}~\ref{phieq} can help to find these functions. 
It is possible for the intermediate dimensions to be discontinuous at $\theta = 0$ even when the box dimension is less than 1. Indeed, consider a countable compact subset $P \subset \mathbb{R}$ with box and Assouad dimension equal and strictly between 0 and 1, so by~\cite[Proposition~2.4]{Falconer2020firstintermediate}, $\dim_{\theta} P = \dim_\mathrm{B} P$ for all $\theta \in (0,1)$. We may then choose a set of similarity maps whose fixed points form the set $P$ and whose contraction ratios are small enough that the system forms a CIFS with the Hausdorff dimension of the limit set being smaller than $\dim_\mathrm{B} P$. Then the intermediate dimensions of the limit set will be discontinuous at $\theta = 0$ by Theorem~\ref{mainint}.

Our results are also relevant to the dimension theory of infinite parabolic iterated function systems. 
In such a system, each map $S \colon X \to X$ still satisfies $||S(x) - S(y)|| < ||x-y||$ for all $x,y \in X$, but finitely many of the maps may contain a \emph{parabolic fixed point} $p \in X$, meaning that $S(p) = p$ but the derivative of $S$ (or an extension of $S$) at $p$ has norm 1. The other countably many maps must be uniformly contracting, and are called \emph{hyperbolic}. 
Manneville--Pomeau maps of the form ${x \mapsto x - x^q}$ for fixed $q>1$ are examples of functions with a parabolic fixed point at $0$. 
The theory of parabolic IFSs has been developed by Mauldin and Urbański in~\cite{Mauldin2000parabolic}, and they have also been studied in~\cite{Mauldin2002parabolic,
Banaji2022assouad}, \cite[Section~9.2]{Fraser2020book}, and many other works. 
Given an infinite parabolic IFS as defined in~\cite[Section~2]{Mauldin2000parabolic}, one can associate an `induced' uniformly contracting infinite CIFS (see~\cite[Theorem~5.2]{Mauldin2000parabolic}). It is clear that if $F$ is the limit set of the parabolic IFS and $F^*$ is the limit set of the induced CIFS then $F^* \subseteq F$ with $F \setminus F^*$ countable, and $F$ and $F^*$ have the same closure. Therefore if $\dim$ is Hausdorff/box/intermediate/Assouad dimension, then $\dim F = \dim F^*$. In particular, Theorem~\ref{mainint} can be applied directly to the induced system to give information about the corresponding dimension of $F$. 
This is only relevant for systems consisting of infinitely many hyperbolic maps (and finitely many parabolic maps), because the limit sets of finite parabolic IFSs have equal Hausdorff and upper box dimensions (see \cite[Remark~6.6]{Urbanski1996paraboliccantor} and \cite{Mauldin2002parabolic}). 
In~\cite[Section~6]{Banaji2022assouad}, however, we use inducing to calculate the Assouad spectrum of a class of `parabolic Cantor sets' (see~\cite{Urbanski1996paraboliccantor}), which are generated by finite parabolic IFSs. 

\subsection{An example and a first application}

We use Proposition~\ref{p:lattice} from page~\pageref{p:lattice} and a result of Burrell, Falconer and Fraser to give an application of Theorem~\ref{mainint} to orthogonal projections. Dimension theory of orthogonal projections has a long history in fractal geometry, see~\cite{Falconer2015proj:FractalsV,
Shmerkin2015proj:FractalsV}. 
There has been particular interest in orthogonal projections of dynamically defined sets, where one can often obtain more precise information than is provided by the general projection theorems, see~\cite{Hochman2012entropy,Shmerkin2015proj:FractalsV}. The following example falls into this category. 
Recall the definition 
\[ G_{p,d} \coloneqq \{ \, x/||x||^2 : x \in \{ 1^p,2^p,3^p,\dotsc \}^d \, \}. \]

\begin{example}\label{proj}
Let $p>0$ and consider a set of contracting similarity maps on $\mathbb{R}^2$ with fixed points lying in the set $G_{p,2}$ from Proposition~\ref{p:lattice}, with no two maps having the same fixed point. Assume the contraction ratios are small enough that the system forms a CIFS, with limit set $F$, say, and small enough that $\dim_\mathrm{H} F < 1$. Then by Theorem~\ref{mainint} and Corollary~\ref{lowerintcor}, 
\[ \dim_{\theta} F = \max\left\{\dim_\mathrm{H} F, \frac{2\theta}{p+\theta} \right\},\]
which is continuous at $\theta=0$. Therefore by Burrell, Falconer and Fraser's Theorem~\ref{t:burrellproj} from page~\pageref{t:burrellproj} there exists $c<1$ such that $\ubd \pi(F) \leq c$ for every orthogonal projection $\pi \colon \mathbb{R}^2 \to \mathbb{R}$, and $\ubd \pi(F) = c$ for almost every orthogonal projection $\pi$ (with respect to the natural measure on projective space). This conclusion is perhaps most interesting when $p$ is very close to 0 (and so $\bd F$ is very close to 2) and $\dim_\mathrm{H} F$ is very close to 1. 
\end{example}
More generally, if $1 \leq k < d$ are integers and the contraction ratios lie on $G_{p,d}$ and $\dim_\mathrm{H} F < k$, then $\ubd \pi(F) < k$ for every orthogonal projection $\pi \colon \Rd \to \R^k$.

\section{Continued fraction sets}\label{ctdfracsect}

\subsection{Real continued fractions}

In this section we apply Theorem~\ref{mainint} to give information about sets of irrational numbers whose continued fractions have restricted entries, as in the following definition. 

\begin{defn}\label{ctdfracdefn}
For a non-empty, proper subset $I \subset \N$, define 
\[ F_I \coloneqq \left\{ \, z \in (0,1) \setminus \mathbb{Q} : z = \frac{1}{b_1 + \frac{1}{b_2 + \frac{1}{\ddots}}}, b_n \in I \mbox{ for all } n \in \N \, \right\}. \]
\end{defn}

The following lemma shows why our general results can be applied in this setting. 

\begin{lma}
Working in $\mathbb{R}$, letting $X \coloneqq [0,1]$ and $V \coloneqq (-1/8,9/8)$, 
\begin{enumerate}[label=(\roman*)]
\item\label{ctdfracnot1} If $1 \notin I$ then $\{ \, S_b(x) \coloneqq 1/(b+x) : b \in I \, \}$ is a CIFS with limit set $F_I$. 
\item\label{ctdfrac1} If $1 \in I$ then $\{ \, S_b(x) \coloneqq 1/(b+x) : b \in I, b \neq 1 \, \} \cup \left\{ \, S_{1b}(x) \coloneqq \frac{1}{b+\frac{1}{1+x}} : b \in I \, \right\}$ is a CIFS with limit set $F_I$. 
\end{enumerate}
\end{lma}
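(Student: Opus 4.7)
The plan is to verify the four conditions of Definition~\ref{cifs} separately in each case, and then identify the limit set with $F_I$. Throughout, I would take $X = [0,1]$ and $V = (-1/8, 9/8)$. In case~\ref{ctdfracnot1}, since $1 \notin I$, every $b \in I$ satisfies $b \geq 2$, and each $S_b(x) = 1/(b+x)$ extends to a real-analytic diffeomorphism on $V$ with derivative $S_b'(x) = -1/(b+x)^2$ that is non-vanishing on $V$. Conformality is automatic in dimension one (the differential at each point is multiplication by a scalar, trivially a similarity), and $S_b \in C^\infty \subset C^{1+\epsilon}$. For uniform contraction, $|S_b'(x)| = 1/(b+x)^2 \leq (8/15)^2 < 1$ for $x \in V$ and $b \geq 2$. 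The OSC holds with $U = (0,1)$ since $S_b([0,1]) = [1/(b+1), 1/b]$ gives pairwise disjoint open intervals contained in $U$. The cone condition is immediate because $X$ is an interval.

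For the bounded distortion property, I would use the standard telescoping argument: for $w = (b_1,\dotsc,b_n) \in I^n$ and $x, y \in V$, write
\begin{equation*}
\log\left|\frac{S_w'(x)}{S_w'(y)}\right| = \sum_{k=1}^{n} \log\left|\frac{S_{b_k}'(x_k)}{S_{b_k}'(y_k)}\right|,
\end{equation*}
where $x_k = S_{b_{k+1}} \circ \dotsb \circ S_{b_n}(x)$ and similarly for $y_k$. Since $|(\log|S_b'|)'(x)| = 2/|b+x|$ is uniformly bounded on $V$ over all $b \geq 2$, the function $\log|S_b'|$ is Lipschitz on $V$ with a uniform constant; combined with $|x_k - y_k| \leq \rho^{n-k}|V|$ from uniform contractivity, the series is dominated by a convergent geometric sum. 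The identification $\pi(I^\N) = F_I$ is then the standard correspondence with continued fractions: $\pi((b_1,b_2,\dotsc)) = [b_1,b_2,\dotsc]$, and every element of $F_I$ has a unique such expansion with entries in $I$.

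In case~\ref{ctdfrac1}, the issue is that $|S_1'(0)| = 1$, so $S_1$ alone fails the uniform contraction requirement of Definition~\ref{iifs}. The fix is to absorb each use of $S_1$ into a single composite map by replacing the non-contractive symbol $1$ by the countable family $\{1b : b \in I\}$, where $S_{1b}$ is a Möbius transformation obtained by composing $S_1$ with $S_b$. A direct calculation yields $|S_{1b}'(x)| = 1/(b(1+x)+1)^2$, and for $x \in V$ and $b \geq 1$, $b(1+x)+1 \geq 15/8$, giving $\|S_{1b}'\|_V \leq (8/15)^2 < 1$ uniformly in $b$. Conformality, the cone condition, and (by the same telescoping argument applied to the two Möbius factors of each $S_{1b}$) the bounded distortion property all follow as in case~\ref{ctdfracnot1}. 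For the OSC with $U = (0,1)$: the sets $S_{1b}([0,1])$ are pairwise disjoint subintervals of $S_1([0,1]) = [1/2,1]$, while the sets $S_b([0,1]) \subseteq [0,1/2]$ for $b \in I \setminus \{1\}$ are pairwise disjoint from each other and from all the $S_{1b'}([0,1]) \subseteq [1/2,1]$.

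To identify the limit set with $F_I$ in case~\ref{ctdfrac1}, I would show that every continued fraction $[a_1,a_2,\dotsc]$ with $a_i \in I$ admits a unique parsing into the new alphabet $A = (I \setminus \{1\}) \cup \{1b : b \in I\}$ by scanning greedily from the left: each digit $\neq 1$ becomes a standalone symbol, while each $1$ is paired with the immediately following digit $b$ to form the composite symbol $1b$; an infinite tail of $1$'s is parsed into successive copies of $11$ (which is well-defined since $1 \in I$). This yields a bijection between $A^\N$ and $F_I$ realised by $\pi$. The main obstacle in the whole argument is the bounded distortion property, which is the only condition requiring a nontrivial analytic input (the telescoping of the distortion via the Lipschitz constant of $\log|S_b'|$, together with exponential contraction of the word length); the remaining conditions reduce to direct inspection of Möbius transformations of an interval.
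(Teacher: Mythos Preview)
Your overall strategy—directly verifying each of the four CIFS conditions and then identifying the limit set via the continued fraction coding—is sound; the paper itself simply defers to Mauldin–Urba\'nski for~\ref{ctdfracnot1} and asserts that~\ref{ctdfrac1} follows similarly. Your argument for~\ref{ctdfracnot1} is correct.

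However, your treatment of~\ref{ctdfrac1} is internally inconsistent, and this traces back to what appears to be a typo in the lemma statement. The stated formula $S_{1b}(x) = \frac{1}{b + \frac{1}{1+x}}$ equals $(S_b \circ S_1)(x)$, not $(S_1 \circ S_b)(x)$. Your derivative computation $|S_{1b}'(x)| = 1/(b(1+x)+1)^2$ is correct for $S_b \circ S_1$, but your OSC claim that $S_{1b}([0,1]) \subseteq S_1([0,1]) = [1/2,1]$ and your greedy left-to-right parsing (pairing each $1$ with the \emph{following} digit) are only valid for $S_1 \circ S_b$. With the formula as stated and $I = \{1,2\}$, one computes $S_{12}((0,1)) = S_2((1/2,1)) = (1/3, 2/5) \subset (1/3, 1/2) = S_2((0,1))$, so the OSC with $U = (0,1)$ fails; moreover, the continued fraction $[1,2,2,\dotsc]$ cannot be parsed into blocks of the form $b$ (with $b \neq 1$) or $b,1$, so the limit set would not equal $F_I$.

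The intended definition is almost certainly $S_{1b}(x) = \frac{1}{1 + \frac{1}{b+x}} = (S_1 \circ S_b)(x)$, for which $|S_{1b}'(x)| = 1/(b+x+1)^2 \leq (8/15)^2$ on $V$, and your OSC and parsing arguments then go through verbatim. You should either flag this discrepancy explicitly or work consistently with $S_1 \circ S_b$ throughout.
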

\begin{proof}
\ref{ctdfracnot1} is verified in~\cite[page~4997]{Mauldin1999ctdfrac}, and~\ref{ctdfrac1} can be verified similarly, noting that when $1 \in I$ the different CIFS is needed to ensure that the system is uniformly contractive, because the derivative of $x \to 1/(1+x)$ at $x=0$ is $-1$. 
\end{proof}

It follows from~\cite[Theorem~3.15]{Mauldin1996iifs} that $\dim_\mathrm{H} F_I = h$; the Hausdorff dimension of such limit sets has been studied in~\cite{Mauldin1999ctdfrac,Kessebohmer2006iifs,Heinemann2002cifs,
Ingebretson2020ctdfrac,Chousionis2020ctdfrac} %
 and other works. It follows from~\cite[Theorem~2.11]{Mauldin1999ctdfrac} that $\ubd F_I = \max\{h,\ubd\{ \, 1/b : b \in I \, \}\}$. In Theorem~\ref{ctdfracintthm} we apply Theorem~\ref{mainint} to give information about the intermediate dimensions of $F_I$. 

\begin{thm}\label{ctdfracintthm}
Using the notation in Definition~\ref{ctdfracdefn}, for all non-empty proper $I \subset \N$, 
\begin{enumerate}[label=(\roman*)]
\item\label{ctdfracint} For all $\theta \in [0,1]$,  
\begin{gather*}
 \uid F_I = \max\{h,\uid\{ \, 1/b : b \in I \, \}\}; \\*
 \max\{h,\lid\{ \, 1/b : b \in I \, \}\} \leq \lid F_I \leq \max\{h,\uid\{ \, 1/b : b \in I \, \}\}.
 \end{gather*}
\item\label{ctdfraccts} The maps $\theta \mapsto \uid F_I$ and $\theta \mapsto \lid F_I$ are continuous at $\theta = 0$.
\end{enumerate}
\end{thm}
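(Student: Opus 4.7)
The plan is to invoke Theorem~\ref{mainint}\ref{inteq} and Corollary~\ref{lowerintcor}\ref{lowerint} directly, once a suitable ``sample set'' $P$ is chosen and its intermediate dimensions are identified with those of $\{\,1/b : b \in I\,\}$. In the case $1 \notin I$, the system $\{S_b\}_{b\in I}$ is already a CIFS, and the most natural choice is $P = \{S_b(0) : b \in I\} = \{\,1/b : b \in I\,\}$. Since $S_b([0,1]) = [1/(b+1), 1/b]$, this set has exactly one point in each first-level cylinder, so the boundedness hypothesis of Theorem~\ref{mainint} holds, giving the formula in~\ref{ctdfracint} immediately. The lower-intermediate bounds then come from Corollary~\ref{lowerintcor}\ref{lowerint}.

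The case $1 \in I$ requires slightly more care because we must use the second CIFS listed in the excerpt. Here I would take $P = \{S_b(0) : b \in I,\ b \neq 1\} \cup \{S_{1b}(0) : b \in I\} = \{1/b : b \in I,\ b\neq 1\} \cup \{b/(b+1) : b \in I\}$, which again has exactly one point in each level-$1$ cylinder. Writing $P = P_1 \cup P_2$ and using finite stability of $\uid$ (Proposition~\ref{finitestability}), it suffices to show $\uid P_1 = \uid P_2 = \uid\{\,1/b : b\in I\,\}$. Removing a single point changes no dimensions, so $\uid P_1 = \uid\{\,1/b : b\in I\,\}$. For $P_2$, the reflection $x\mapsto 1-x$ is an isometry, so $\uid P_2 = \uid\{1/(b+1):b\in I\}$, and a direct computation shows the bijection $1/b \leftrightarrow 1/(b+1)$ is bi-Lipschitz with constants in $[1,4]$, so bi-Lipschitz invariance of intermediate dimensions (Corollary~\ref{philipschitz}) gives $\uid P_2 = \uid\{\,1/b : b\in I\,\}$. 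Thus part~\ref{ctdfracint} follows in both cases.

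For continuity at $\theta=0$ in part~\ref{ctdfraccts}, the key observation is that $\{\,1/b : b \in I\,\} \subseteq G_{1,1} = \{\,1/n : n \in \N\,\}$. By monotonicity of intermediate dimensions under inclusion and by Proposition~\ref{p:lattice} with $p=d=1$,
\[
\uid\{\,1/b : b \in I\,\} \leq \dim_{\theta} G_{1,1} = \frac{\theta}{1+\theta} \xrightarrow[\theta\to 0^+]{} 0.
\]
Combining this with~\ref{ctdfracint}, $\lim_{\theta\to 0^+} \uid F_I \leq \max\{h, 0\} = h = \dim_{\mathrm H} F_I$, while the reverse inequality $\uid F_I \geq \dim_{\mathrm H} F_I = h$ is trivial. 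The identical bound applies to $\lid F_I$ via the upper estimate in Corollary~\ref{lowerintcor}\ref{lowerint}, yielding continuity at $\theta = 0$ for both versions.

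The only non-routine step is the identification $\uid P = \uid\{\,1/b : b\in I\,\}$ when $1 \in I$, where one must confirm that the bi-Lipschitz map between $\{1/b\}$ and $\{1/(b+1)\}$ has uniformly bounded distortion; this is where I expect the main bookkeeping to sit, although it is essentially a one-line computation because $b_n b_m / ((b_n+1)(b_m+1)) \in [1/4,1]$ for $b_n, b_m \geq 1$. Everything else reduces to invocation of the machinery already established in this chapter.
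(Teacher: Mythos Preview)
Your approach is correct and essentially identical to the paper's: choose $P$ to be the images of $0$ under the level-$1$ maps, identify $\uid P$ with $\uid\{1/b:b\in I\}$ via bi-Lipschitz invariance and finite stability, invoke Theorem~\ref{mainint} and Corollary~\ref{lowerintcor}, and deduce continuity at $0$ from $\uid\{1/b:b\in I\}\le\dim_\theta\{1/n:n\in\N\}=\theta/(1+\theta)$. The only cosmetic differences are that the paper handles $P_2$ in one step via the bi-Lipschitz map $x\mapsto 1/(1+x)$ on $[0,1]$ (rather than composing a reflection with a second bi-Lipschitz map), and that for the $\lid$ lower bound when $1\in I$ you should make explicit that although $\lid$ is not finitely stable, $\lid P\ge\lid P_1=\lid\{1/b:b\in I\}$ follows immediately from monotonicity, which closes the argument.
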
 

\begin{proof}

\ref{ctdfracint} \textbf{Case 1:} Assume $1 \notin A$. Then the result follows from Theorem~\ref{mainint}~\ref{inteq} and Corollary~\ref{lowerintcor}~\ref{lowerint} if we take $P=\{ \, S_b(0) : b \in I \, \} = \{ \, 1/b : b \in I \, \}$. 

\textbf{Case 2:} Assume $1 \in A$. Since the map $x \mapsto 1/(1+x)$ is bi-Lipschitz on $[0,1]$ and $\uid$ is stable under bi-Lipschitz maps, 
\[\uid \left\{ \, \frac{1}{1+\frac{1}{b}} \, \right\} = \uid \{ \, 1/b : b \in I \, \}.\]
 Since the removal of finitely many points from a set does not change its dimension, \[\uid \{ \, 1/b : b \in I, b \neq 1 \, \} = \uid \{ \, 1/b : b \in I \, \}.\] It is clear from the definition that $\uid$ is finitely stable, so the equality for $\uid F_I$ follows from Theorem~\ref{mainint}~\ref{inteq} if we take 
\[ P \coloneqq \left\{ \, \frac{1}{1+\frac{1}{b}} \, \right\} \cup \{ \, 1/b : b \in I, b \neq 1 \, \}.\] 
The lower bound holds since 
\[ \max\{h,\lid\{ \, 1/b : b \in I \, \}\} = \max\{ \dim_\mathrm{H} F_I, \lid \{ \, 1/b : b \in I, b \neq 1 \, \} \} \leq \lid F_I \]
by~\cite[Theorem~3.15]{Mauldin1996iifs}. 

\ref{ctdfraccts} For all $\theta \in (0,1]$, 
\begin{align*}
 \uid F_I &= \max\{h,\uid\{ \, 1/b : b \in I \, \}\} 
 &&\text{by~\ref{ctdfracint}} \\
 &\leq \max\{ h,\uid\{ \, 1/b : b \in \N \, \}\} \\
  &= \max\{ h, \theta/(1+\theta) \} &&\text{by \cite[Proposition~3.1]{Falconer2020firstintermediate}} \\ 
 &\xrightarrow[\theta \to 0^+]{} \max\{ h, 0 \} = h = \dim_\mathrm{H} F_I = \overline{\dim}_{0} F_I &&\text{by \cite[Theorem~3.15]{Mauldin1996iifs}},
 \end{align*}
 so $\theta \mapsto \uid F_I$ is continuous at $\theta = 0$. Since $\dim_\mathrm{H} F_I \leq \lid F_I \leq \uid F_I$ for all $\theta \in [0,1]$ it follows that $\theta \mapsto \lid F_I$ is also continuous at $\theta = 0$. 
\end{proof}

The following example is similar to~\cite[Theorem~6.2]{Mauldin1999ctdfrac}; we consider a nice family of subsets $I$ which result in the upper and lower intermediate dimensions coinciding. 

\begin{corollary}\label{c:ctdfracex}
Fix $p>1$ and for $l \in \N$, $l \geq 2$ define 
\[ I_{p,l} \coloneqq \{ \, \lfloor n^p \rfloor : n \geq l \}. \]
Then the intermediate dimensions of the continued fraction set exist and are given by  
  \begin{equation}\label{e:realctdfracex}
   \dim_{\theta} F_{I_{p,l}} = \max\left\{\dim_{\mathrm{H}} F_{I_{p,l}},\frac{\theta}{p+\theta} \right\}. 
   \end{equation}
  Moreover, there exists $q \in \N$ such that for all $l \geq q$ we have 
   \[ \dim_{\mathrm{H}} F_{I_{p,l}} < \dim_{\mathrm{B}} F_{I_{p,l}} = \frac{1}{p+1}. \] 
  \end{corollary}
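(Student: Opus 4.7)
The plan is to reduce everything to the fixed-point set $P_l \coloneqq \{1/\lfloor n^p\rfloor : n \geq l\}$ and then appeal to Theorem~\ref{ctdfracintthm}. Since $\lfloor n^p\rfloor = n^p+O(1)$, the bijection $1/n^p\leftrightarrow 1/\lfloor n^p\rfloor$ moves each point by at most $O(n^{-2p})$, which is dominated by the consecutive spacing $\Theta(n^{-p-1})$ for large $n$; in particular $P_l$ admits, up to a universal multiplicative constant, the same $(\delta,\theta)$-covers as $\{1/n^p:n\in\N\}$. Hence the $d=1$ case of Proposition~\ref{p:lattice} together with the facts that the intermediate dimensions are stable under removing or perturbing finitely many points (and are monotone under inclusion) yields $\underline{\dim}_\theta P_l=\overline{\dim}_\theta P_l=\theta/(p+\theta)$ for all $\theta\in[0,1]$.

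Feeding this into Theorem~\ref{ctdfracintthm}\ref{ctdfracint} (with $I=I_{p,l}$) squeezes $\underline{\dim}_\theta F_{I_{p,l}}$ and $\overline{\dim}_\theta F_{I_{p,l}}$ between the same two bounds, giving~\eqref{e:realctdfracex}. Specialising to $\theta=1$ yields $\dim_{\mathrm B} F_{I_{p,l}}=\max\{\dim_{\mathrm H}F_{I_{p,l}},\,1/(p+1)\}$, so the entire second assertion reduces to producing $q\in\N$ such that $\dim_{\mathrm H}F_{I_{p,l}}<1/(p+1)$ for every $l\geq q$.

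For this, recall that $\dim_{\mathrm H}F_{I_{p,l}}=h$ and that $\overline{P}(t)\leq\log\phi_1(t)$ by the infimum formulation in Lemma~\ref{fekete}, so it suffices to exhibit $t<1/(p+1)$ with $\phi_1(t)<1$ for all sufficiently large $l$. The contractions $S_b(x)=1/(b+x)$ on $X=[0,1]$ have $S_b'(x)=-(b+x)^{-2}$, so the bounded distortion property (Definition~\ref{cifs}\ref{bdp}) combined with Lemma~\ref{diameterslemma} gives $R_b\asymp b^{-2}$ with absolute implicit constants, independent of $l$. Therefore
\begin{equation*}
\phi_1(t)\;\asymp\;\sum_{n\geq l}\lfloor n^p\rfloor^{-2t}\;\asymp\;\sum_{n\geq l}n^{-2pt}.
\end{equation*}
Since $p>1$, the interval $(1/(2p),\,1/(p+1))$ is non-empty; fixing any $t$ in it makes the tail $\sum_{n\geq l}n^{-2pt}$ converge and tend to $0$ as $l\to\infty$. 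Thus for all sufficiently large $l$ we have $\phi_1(t)<1$, hence $h\leq t<1/(p+1)$, completing the proof (with $q$ taken accordingly; for $l\geq 2$ below this threshold one uses instead that $\dim_{\mathrm H}F_{I_{p,l}}\leq\dim_{\mathrm B}F_{I_{p,l}}=1/(p+1)$, and the strict inequality in the range $l\geq q$ is what is asserted).

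The main obstacle is the uniformity of the constants in $R_b\asymp b^{-2}$ as we shrink the index set: because we are tempted to redefine the CIFS for each $l$, we must check that the distortion constant $K$ and the derivative comparison in Lemma~\ref{diameterslemma} do not degrade with $l$. This is painless here because the full continued-fraction IFS on $[0,1]$ has an absolute distortion constant, and every $I_{p,l}$ is just a sub-IFS of it; alternatively one can perform the tail argument with $\phi_n$ for a fixed $n\geq 1$ (still finite by $t>1/(2p)>\theta_S$) and use Lemma~\ref{fekete}.
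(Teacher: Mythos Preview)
Your proof is correct, and for the formula~\eqref{e:realctdfracex} it is essentially the paper's argument: the paper phrases the reduction by saying $\{1/b:b\in I_{p,l}\}$ is bi-Lipschitz equivalent to a cofinite subset of $\{n^{-p}:n\in\N\}$, which is exactly your perturbation-versus-spacing observation packaged differently, and then invokes \cite[Proposition~3.1]{Falconer2020firstintermediate} (the $d=1$ case of Proposition~\ref{p:lattice}) together with Theorem~\ref{ctdfracintthm}.

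For the second assertion, your route is genuinely different and more self-contained. The paper does not bound $\phi_1(t)$ directly; instead it computes the finiteness parameter $\theta_{I_{p,l}}=1/(2p)$ and then cites \cite[Theorem~1.5]{Mauldin1999ctdfrac}, which says that $\theta_S$ equals the infimum of the Hausdorff dimensions of the cofinite subsystems, to conclude $\dim_{\mathrm H}F_{I_{p,l}}\to 1/(2p)$ as $l\to\infty$. Your tail-sum argument (pick $t\in(1/(2p),1/(p+1))$ and force $\phi_1(t)<1$ for large $l$) avoids this external citation entirely, at the cost of not identifying the exact limit. Your remark about uniformity of the distortion constant across $l$ is the right way to handle the only subtle point, and noting that every $I_{p,l}$ is a subsystem of the full continued-fraction CIFS is the cleanest resolution.
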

  \begin{proof}
 Since $I_{p,l}$ is bi-Lipschitz equivalent to a cofinite subset of $\{ \, i^{-p} : i \in \N \, \}$, \cite[Proposition~3.1]{Falconer2020firstintermediate} gives 
 \[ \dim_{\theta} \{ \, 1/b : b \in I_{p,l} \, \} = \dim_{\theta} \{ \, i^{-p} : i \in \N \, \} = \frac{\theta}{p+\theta}\]
 for $\theta \in [0,1]$. 
Therefore the bounds in Theorem~\ref{ctdfracintthm}~\ref{ctdfracint} coincide and~\eqref{e:realctdfracex} holds. 
It was shown in~\cite[Section~3]{Mauldin1996iifs} that $\theta_S = \inf\{ \, t > 0 : \psi_1(t) < \infty \, \}$. 
But there exists $C \geq 1$ such that $1/(C b^2) \leq ||S_b'|| \leq C/b^2$ for all $b \in \N$. 
Therefore, since  
  \[ \sum_{n=1}^\infty ((n^p)^{-2})^{1/(2p)} = \sum_{n=1}^\infty n^{-1} = \infty;\] 
  \[ \sum_{n=1}^\infty ((n^p)^{-2})^s = \sum_{n=1}^\infty n^{-2p/s} < \infty \qquad \mbox{for all } s>1/(2p), \]
   it follows that the finiteness parameter $\theta_{I_{p,l}} = 1/(2p)$ and so $\dim_{\mathrm{H}} F_{I_{p,l}} > 1/(2p)$. But in~\cite[Theorem~1.5]{Mauldin1999ctdfrac} Mauldin and Urbański showed that $\theta_{I_{p,l}}$ is the infimum of the Hausdorff dimension of cofinite subsystems, so $\dim_{\mathrm{H}} F_{I_{p,l}} \to 1/(2p)$ as $l \to \infty$. Since $1/(2p) < 1/(p+1) = \dim_{\mathrm{B}} \{ \, 1/b : b \in I_{p,l} \, \}$, for all sufficiently large $l$ we have $\dim_{\mathrm{H}} F_{I_{p,l}} < \dim_{\mathrm{B}} F_{I_{p,l}} = 1/(p+1)$. 
   \end{proof}
The graph of the intermediate dimensions of the continued fraction set from Corollary~\ref{c:ctdfracex} in the case $p=2$ and $l$ large enough that $1/4 < \dim_{\mathrm{H}} F_{I_{p,l}} < \dim_{\mathrm{B}} F_{I_{p,l}} = 1/3$ is the black curve in Figure~\ref{fig:holder}. 
Note that the intermediate dimensions of the graph of the popcorn function in Figure~\ref{f:holder} on page~\pageref{f:holder} has a similar form. 

Recall the discussion in Section~\ref{s:holderintro}.  Example~\ref{holderint} shows that the intermediate dimensions can give better information about H{\"o}lder exponents than either the Hausdorff or box dimensions. 

\begin{example}\label{holderint}
Let $p,q$ be such that $1<p<q < 2p-1 < \infty$. As in Corollary~\ref{c:ctdfracex} there exists $l \in \N$ large enough so that if $I_{p,l} \coloneqq \{ \, \lfloor n^p \rfloor : n \geq l \}$ then 
\[ 1/(2p) < h_p < 1/(q+1) < 1/(p+1)\]
 where $h_p$ is the Hausdorff dimension of the continued fraction set $\dim_\mathrm{H} F_{I_{p,l}}$, and then $\dim_{\theta} F_{I_{p,l}} = \max \left\{ h_p, \frac{\theta}{p+\theta}\right\}$. Similarly, if $I_q$ is a subset of $\N$ whose symmetric difference with $\{ \, \lfloor n^q \rfloor : n \in \N \}$ is finite then $\dim_{\theta} F_{I_q} = \max \left\{ h_q, \frac{\theta}{q+\theta} \right\}$, where $h_q \coloneqq \dim_\mathrm{H} F_{I_q}$. If $I_q$ is also such that $h_q \in \left( \frac{ph_p}{q-qh_p+ph_p},\frac{1}{q+1} \right)$ and $f \colon F_{I_q} \to \mathbb{R}$ is an $\alpha$-H{\"o}lder map such that $f(F_{I_q}) \supseteq F_{I_{p,l}}$ then~\eqref{generalholderint} gives the best upper bound for $\alpha$ when $\theta = \frac{qh_q}{1-h_q}$, when 
\[ \alpha^{-1} h_q = \alpha^{-1} \dim_{\theta} F_{I_q} \geq  \uid f(F_{I_q}) \geq \dim_{\theta} F_{I_{p,l}} = \frac{\theta}{p+\theta} = \frac{qh_q}{p-ph_q + qh_q},\]
and so \[ \alpha \leq \frac{p-ph_q + qh_q}{q}.\] 
Using the Hausdorff dimension merely gives that $\alpha \leq h_q/h_p$, and the box dimension merely gives $\alpha \leq \frac{p+1}{q+1}$. The intermediate dimensions of the two sets and the upper bound with a certain choice of parameters are plotted in Figure~\ref{fig:holder}. 
\end{example}
\begin{figure}[ht]
\center{\includegraphics[width=0.7\textwidth]
        {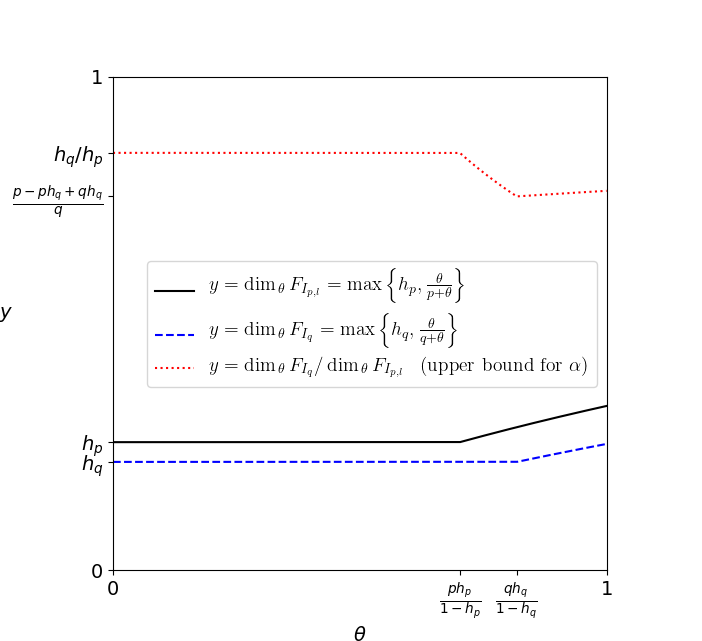}}
        \caption{\label{fig:holder}
        Graph of the intermediate dimensions of the real continued fraction sets in Example~\ref{holderint} and the upper bound for $\alpha$ against $\theta$ in the case $p=2$, $q=2.9$, $h_p \approx 0.26$, $h_q \approx 0.22$. 
 }
\end{figure}

In the following corollary of Theorem~\ref{ctdfracintthm} we apply results of Burrell~\cite{Burrell2022brownian} to give some consequences of the continuity of the intermediate dimensions of continued fraction sets for dimensions of images of $F_I$ under index-$\alpha$ fractional Brownian motion. 
Perhaps the most interesting part of Corollary~\ref{brownian} is the sufficiency of the condition $\alpha > h$ for the upper box dimension of the image to be strictly less than 1; this is an example of how the intermediate dimensions can be used to obtain information about the box dimension of sets. 

\begin{cor}\label{brownian}
Let $\alpha \in (0,1)$ and let $B_\alpha \colon \mathbb{R} \to \mathbb{R}$ denote index-$\alpha$ fractional Brownian motion. Then for all non-empty, proper subsets $I \subset \N$, recalling that $h = \dim_{\mathrm H} F_I$, 
\begin{enumerate}[label=(\roman*)]
\item\label{browncts} The maps $\theta \mapsto \uid B_\alpha(F_I)$ and $\theta \mapsto \lid B_\alpha(F_I)$ are almost surely continuous at $\theta = 0$. %
\item\label{brownlarge} If $\alpha > h$ then almost surely 
\[ h/\alpha = \dim_\mathrm{H} B_\alpha(F_I) \leq \ubd B_\alpha(F_I) < 1.\]  
\item\label{brownsmall} If $\alpha \leq h$ then almost surely 
\[ \dim_\mathrm{H} B_\alpha(F_I) = \dim_{\mathrm{B}} B_\alpha(F_I) = 1.\] 
\end{enumerate}
\end{cor}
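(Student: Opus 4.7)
The plan is to reduce all three parts to standard facts about fractional Brownian motion combined with the continuity of $\uid F_I$ at $\theta = 0$ established in Theorem~\ref{ctdfracintthm}\ref{ctdfraccts}. First I would establish the Hausdorff dimension statements in parts~\ref{brownlarge} and~\ref{brownsmall} using Kahane's classical formula for the Hausdorff dimension of fractional Brownian images of Borel sets (recall that $F_I$ is Borel as a limit set of a CIFS), which in dimension $d=1$ gives $\dim_{\mathrm H} B_\alpha(F_I) = \min\{1, h/\alpha\}$ almost surely. When $\alpha > h$ this equals $h/\alpha$, and when $\alpha \leq h$ this equals $1$.

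For the box dimension in part~\ref{brownlarge}, the hypothesis $\alpha > h = \dim_{\mathrm H} F_I$ and continuity of $\uid F_I$ at $\theta = 0$ allow a direct application of Burrell's bound~\eqref{e:burrellbrownian} with $d = 1$, yielding $\ubd B_\alpha(F_I) < 1$ almost surely. In part~\ref{brownsmall}, once $\dim_{\mathrm H} B_\alpha(F_I) = 1$ almost surely, the squeeze $1 = \dim_{\mathrm H} B_\alpha(F_I) \leq \lbd B_\alpha(F_I) \leq \ubd B_\alpha(F_I) \leq 1$ forces the box dimension to exist and equal~$1$.

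For part~\ref{browncts}, I would split by whether $\alpha \leq h$ or $\alpha > h$. When $\alpha \leq h$, part~\ref{brownsmall} gives $\ubd B_\alpha(F_I) = 1$ almost surely, so by~\eqref{e:dimrelations} both $\uid B_\alpha(F_I)$ and $\lid B_\alpha(F_I)$ are identically $1$ on $[0,1]$ and continuity at $0$ is immediate. When $\alpha > h$, the inequality $\uid B_\alpha(F_I) \geq \lid B_\alpha(F_I) \geq \dim_{\mathrm H} B_\alpha(F_I) = h/\alpha$ holds almost surely. For the matching upper bound, fractional Brownian motion is locally $(\alpha - 1/n)$-H\"older on any compact set almost surely for each $n \in \N$, so Corollary~\ref{holderintermediate} applied to $B_\alpha$ restricted to $F_I$ (noting $\dim_{\mathrm A} B_\alpha(F_I) \leq 1 < \infty$) gives $\uid B_\alpha(F_I) \leq (\alpha - 1/n)^{-1} \uid F_I$ for all $\theta \in [0,1]$ simultaneously. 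Intersecting the countably many almost sure events indexed by $n$, this inequality holds for every $n$ on a single almost sure event; letting $\theta \to 0^+$ and invoking Theorem~\ref{ctdfracintthm}\ref{ctdfraccts} gives $\limsup_{\theta \to 0^+} \uid B_\alpha(F_I) \leq h/(\alpha - 1/n)$ for every $n$, hence $\leq h/\alpha$. The analogous argument yields the corresponding statement for $\lid$.

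The only real subtlety is ensuring that a single almost sure event supports the continuity statement in~\ref{browncts} uniformly in $\theta$ rather than a different null set for each $\theta$; this is handled by working with the countable sequence of H\"older exponents $\alpha - 1/n$ instead of a generic $\epsilon > 0$. All remaining steps are direct applications of results already proved in the thesis or classical in the literature, so no part of the argument is genuinely hard.
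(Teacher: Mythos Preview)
Your proof is correct. Parts \ref{brownlarge} and \ref{brownsmall} match the paper's argument exactly: Kahane's formula for the Hausdorff dimension of Borel images (noting $F_I$ is Borel as a CIFS limit set), Burrell's bound~\eqref{e:burrellbrownian} for the strict upper box inequality in \ref{brownlarge}, and the squeeze against the ambient dimension~$1$ in \ref{brownsmall}.

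For part \ref{browncts} you take a different route from the paper. The paper simply invokes \cite[Corollary~3.5]{Burrell2022brownian}, a black-box statement that continuity of $\theta \mapsto \uid F$ at $\theta = 0$ transfers almost surely to $\theta \mapsto \uid B_\alpha(F)$. You instead argue directly: in the case $\alpha \leq h$ everything is forced to equal~$1$; in the case $\alpha > h$ you exploit that $B_\alpha$ is almost surely locally $(\alpha - 1/n)$-H\"older on the compact set $\overline{F_I}$, apply the H\"older distortion estimate Corollary~\ref{holderintermediate}, intersect over the countable family of exponents, and squeeze against Kahane's value $h/\alpha$. Your handling of the null-set issue via the countable sequence $\alpha - 1/n$ is careful and correct. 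What you gain is a self-contained argument using only tools already developed in the thesis plus the classical H\"older regularity of Brownian paths; what the paper's one-line citation buys is brevity and the observation that Burrell has already packaged exactly this transfer of continuity as a corollary.
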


\begin{proof}

\ref{browncts}. follows immediately from Theorem~\ref{ctdfracintthm}~\ref{ctdfraccts} and Burrell~\cite[Corollary~3.5]{Burrell2022brownian}. 

\ref{brownlarge}. The set $F_I$ is the limit set of a CIFS so it is Borel (see the discussion after Lemma~\ref{fitin}), so Kahane's general results~\cite[Chapter~18]{Kahane1985fractbrown} %
give $h/\alpha = \dim_\mathrm{H} B_\alpha(F_I)$ almost surely. The middle inequality is a general property of the dimensions, and $\ubd B_\alpha(F_I) < 1$ almost surely by Theorem~\ref{ctdfracintthm}~\ref{ctdfraccts} and Burrell~\cite[Corollary~3.7]{Burrell2022brownian}. 

\ref{brownsmall} follows from Kahane~\cite[Chapter~18]{Kahane1985fractbrown} since $F_I$ is Borel. 
\end{proof}

Note that since $\dim_\mathrm{P} B_\alpha(F_I), \lbd B_\alpha(F_I) \in (\dim_\mathrm{H} B_\alpha(F_I),\ubd B_\alpha(F_I))$, if $\alpha > h$ then almost surely $\dim_\mathrm{P} B_\alpha(F_I) < 1$ and $\lbd B_\alpha(F_I) < 1$. On the other hand, if $\alpha \leq h$ then almost surely $\dim_\mathrm{P} B_\alpha(F_I) = \lbd B_\alpha(F_I) = 1$. 

\subsection{Complex continued fractions}\label{compsect}

In this section we study sets of complex numbers which have a complex continued fraction expansion with restricted entries. For a non-empty $I \subseteq \{ \, m + n i : m \in \N, n \in \mathbb{Z} \, \}$, define 
\[ F_I \coloneqq \left\{ \, z \in \mathbb{C} : z = \frac{1}{b_1 + \frac{1}{b_2 + \frac{1}{\ddots}}}, b_n \in I \mbox{ for all } n \in \N \, \right\}. \]
If $1 \notin I$ then it can be verified, as in~\cite[Section~6]{Mauldin1996iifs}, that if $1 \notin I$ then $\{ \, S_b(z) \coloneqq 1/(b+z) : b \in I \, \}$ is a CIFS with limit set $F_I$, with $X \subset \mathbb{C}$ being the closed disc centred at 1/2 with radius 1/2, and $V = B(1/2,3/4)$. If $1 \in I$ then $S_1$ is not uniformly contracting but it is straightforward to verify that 
\[ \{ \, S_b(z) \coloneqq {1/(b+z)} : {b \in I,} {b \neq 1} \, \} \cup \left\{ \, S_{1b}(z) \coloneqq {\frac{1}{b+\frac{1}{1+z}}} : {b \in I} \, \right\}\]
 is a CIFS with the same limit set. By~\cite[Theorem~3.15]{Mauldin1996iifs}, the Hausdorff dimension can be determined by the topological pressure function, and has been studied in~\cite{Hanus1998complexctd} with estimates given in~\cite[Section~6]{Mauldin1996iifs} and~\cite{Gardner1983complexctd,Priyadarshi2016complexctd,
Falk2018complexctd,Ingebretson2020ctdfrac}. 
\begin{thm}\label{compmain}
Using the notation above, for all $\theta \in [0,1]$, 
\begin{enumerate}[label=(\roman*)]
\item\label{compctdfracint} For all $\theta \in [0,1]$, 
\begin{gather*}
 \uid F_I = \max\{h,\uid\{ \, 1/b : b \in I \, \}\}; \\*
 \max\{h,\lid\{ \, 1/b : b \in I \, \}\} \leq \lid F_I \leq \max\{h,\uid\{ \, 1/b : b \in I \, \}\}.
 \end{gather*}
\item\label{compctdfraccts} The maps $\theta \mapsto \uid F_I$ and $\theta \mapsto \lid F_I$ are continuous at $\theta = 0$.
\end{enumerate}
\end{thm}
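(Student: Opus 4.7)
The plan is to mirror the proof of Theorem~\ref{ctdfracintthm}, adapting it to the two-dimensional (complex) setting. The key observation is that the CIFS structure on $X\subset\mathbb{C}$ has already been identified in the paragraph preceding the statement, so Theorem~\ref{mainint} and Corollary~\ref{lowerintcor} can be applied provided we choose a suitable set $P$.

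For part~\ref{compctdfracint}, I would split into two cases. If $1\notin I$, the natural choice is $P\coloneqq\{\,S_b(0):b\in I\,\}=\{\,1/b:b\in I\,\}$, and since $\{S_b\}_{b\in I}$ already forms a CIFS, Theorem~\ref{mainint}\ref{inteq} gives the upper formula for $\uid F_I$, while Corollary~\ref{lowerintcor}\ref{lowerint} gives the bounds for $\lid F_I$. In the case $1\in I$ the subtlety is that $S_1$ is not uniformly contracting, so one must work with the modified CIFS $\{S_b:b\in I,b\neq 1\}\cup\{S_{1b}:b\in I\}$. Here I would take
\[ P\coloneqq\Bigl\{\,\tfrac{1}{1+1/b}:b\in I\,\Bigr\}\cup\{\,1/b:b\in I, b\neq 1\,\}, \]
and use that the M\"obius map $z\mapsto 1/(1+z)$ is bi-Lipschitz on a neighbourhood of $X$ (since $|1+z|$ is bounded away from $0$ on $X$), so that $\uid\{1/(1+1/b):b\in I\}=\uid\{1/b:b\in I\}$ by bi-Lipschitz invariance (Corollary~\ref{philipschitz}). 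Finite stability of $\uid$ and the fact that removing finitely many points does not change dimension then give the desired equality; the lower bound follows similarly using $\dim_{\mathrm H}F_I=h$ from \cite[Theorem~3.15]{Mauldin1996iifs}.

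For part~\ref{compctdfraccts}, the key is to bound $\uid\{1/b:b\in I\}$ from above uniformly by something tending to $0$ as $\theta\to 0^+$. The set $\{\,1/b:b\in I\,\}$ is contained in $\{\,1/b:b\in\N+i\mathbb{Z}\,\}$, which is, up to finitely many points and up to bi-Lipschitz equivalence on the complement of a neighbourhood of~$0$, comparable to the inverted lattice $G_{1,2}$ studied in Proposition~\ref{p:lattice}. That proposition gives $\dim_\theta G_{1,2}=2\theta/(1+\theta)\to 0$ as $\theta\to 0^+$. Therefore
\[ \uid F_I=\max\{h,\uid\{1/b:b\in I\}\}\leq\max\{h,2\theta/(1+\theta)\}\xrightarrow[\theta\to 0^+]{} h=\dim_{\mathrm H}F_I, \]
and the reverse inequality is automatic from~\eqref{e:dimrelations}. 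Continuity of $\lid F_I$ at $\theta=0$ then follows from the sandwich $\dim_{\mathrm H}F_I\leq\lid F_I\leq\uid F_I$.

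The main technical obstacle is the lattice comparison in part~\ref{compctdfraccts}: Proposition~\ref{p:lattice} is stated for $\{1^p,2^p,\dotsc\}^d$ rather than $\N\times\mathbb{Z}$. However, the proof there only uses a separation estimate and a density estimate for the inverted lattice in a neighbourhood of the origin, and both go through verbatim for $\N\times\mathbb{Z}$ once one restricts to a half-plane avoiding the origin (one can split $\{1/b:b\in\N+i\mathbb{Z}\}$ into countably many bi-Lipschitz copies of such a half-lattice inversion, or more directly observe that $\N+i\mathbb{Z}$ contains $\{(m,n):m\geq 1,n\in\mathbb{Z}\}$ which is bi-Lipschitz equivalent to $\N^2$ union finitely many rotated copies). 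Alternatively, one can give a direct covering/separation argument reproducing the estimate $\dim_\theta\{1/b:b\in\N+i\mathbb{Z}\}\leq 2\theta/(1+\theta)$, which is all that is needed.
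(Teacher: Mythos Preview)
The proposal is correct and takes essentially the same approach as the paper. For part~\ref{compctdfracint} you reproduce the case analysis of Theorem~\ref{ctdfracintthm} exactly as the paper intends; for part~\ref{compctdfraccts} the paper is more direct, simply noting that $\{1/b:b\in I\}$ is a disjoint union of bi-Lipschitz copies of two subsets of $G_{1,2}$ (split by the sign of the imaginary part of $b$, using that complex inversion is the $\R^2$ sphere-inversion composed with a reflection), which is precisely the resolution you arrive at after your ``technical obstacle'' discussion---your initial phrasing about ``the complement of a neighbourhood of $0$'' is confused, but you immediately self-correct to the splitting argument.
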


\begin{proof}

We sketch the proof as it is similar to the proof of Theorem~\ref{ctdfracintthm}. 

\ref{compctdfracint}. follows from Theorem~\ref{mainint}. 

\ref{compctdfraccts}. follows from~\ref{compctdfracint} since $\{ \, 1/b : b \in I \, \}$ is the disjoint union of bi-Lipschitz copies of two subsets of $G_{1,2}$, whose intermediate dimensions are continuous by Proposition~\ref{p:lattice}. 
\end{proof}

\begin{corollary}\label{c:complexexample}
For $p \in (1,\infty)$ and $R \in [0,\infty)$ let 
\[ I_{p,R} \coloneqq \{ \, \lfloor m^p \rfloor + \lfloor n^p \rfloor i : {n,m \in \N} \, \} \setminus {B(0,R)}.\]
Then 
\begin{equation}\label{e:complexctdfracexample} \dim_{\theta} F_{I_{p,R}} = \max\left\{ \dim_\mathrm{H} F_{I_{p,R}} , \frac{2\theta}{p+\theta} \right\}, 
\end{equation}
and for all $R$ sufficiently large, $\dim_\mathrm{H} F_{I_{p,R}} < \dim_\mathrm{B} F_{I_{p,R}} = 2/(p+1)$. 
\end{corollary}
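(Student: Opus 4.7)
The plan is to mirror the strategy used for the real case in Corollary~\ref{c:ctdfracex}, combining Proposition~\ref{p:lattice} with Theorem~\ref{compmain}.

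First I would show that $P_R \coloneqq \{\, 1/b : b \in I_{p,R}\,\}$ satisfies $\dim_\theta P_R = \tfrac{2\theta}{p+\theta}$ for all $\theta\in[0,1]$. Identifying $\mathbb{C}$ with $\R^2$, each $b=\lfloor m^p\rfloor + i\lfloor n^p\rfloor \in I_{p,R}$ corresponds to a vector $x_{m,n}=(\lfloor m^p\rfloor,\lfloor n^p\rfloor)$, and complex inversion sends $b$ to $\bar{x}_{m,n}/||x_{m,n}||^2$. Reflection in the real axis is an isometry, and removing finitely many points does not affect any of the dimensions considered. Since $|\lfloor m^p\rfloor - m^p| \leq 1$, after these reductions the set $P_R$ has the same lattice-inversion structure as $G_{p,2}$ up to a controlled perturbation whose size is negligible compared with the local lattice spacing. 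The proof of Proposition~\ref{p:lattice} then adapts: one covers the near-origin portion with $\approx \delta^{-2\theta/(p+\theta)}$ balls of diameter $\delta$ and the far-from-origin portion with balls of diameter $\delta^\theta$, and applies Corollary~\ref{c:lower-bound-general} using $\dim_\mathrm{A} P_R = 2$ and $\underline{\dim}_\mathrm{B} P_R = 2/(p+1)$. Substituting this into Theorem~\ref{compmain}(i) causes the upper and lower bounds to coincide, yielding~\eqref{e:complexctdfracexample}.

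For the second assertion, setting $\theta=1$ in~\eqref{e:complexctdfracexample} gives $\dim_\mathrm{B} F_{I_{p,R}} = \max\{\dim_\mathrm{H} F_{I_{p,R}},\, 2/(p+1)\}$, so it suffices to show that $\dim_\mathrm{H} F_{I_{p,R}} < 2/(p+1)$ for all $R$ sufficiently large. I would compute the finiteness parameter $\theta_S$ of the CIFS defining $F_{I_{p,R}}$. On the disc $X$ we have $||S_b'||_\infty \approx |b|^{-2}$, so
\[
\psi_1(t) \approx \sum_{\substack{(m,n)\in\N^2 \\ m^{2p}+n^{2p}\gtrsim R^2}} (m^{2p}+n^{2p})^{-t}.
\]
Splitting the sum into level sets $\max(m,n)=N$, each of cardinality $\approx N$ on which $m^{2p}+n^{2p}\approx N^{2p}$, gives $\psi_1(t) \approx \sum_N N^{1-2pt}$, which is finite precisely when $t>1/p$. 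Hence $\theta_S = 1/p$, independently of $R$.

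Finally, by~\cite[Theorem~1.5]{Mauldin1999ctdfrac}, which states that $\theta_S$ is the infimum of the Hausdorff dimensions over cofinite subsystems and applies equally in the complex setting, we have $\dim_\mathrm{H} F_{I_{p,R}} \to 1/p$ as $R\to\infty$. Since $p>1$ forces $1/p < 2/(p+1)$, we conclude that $\dim_\mathrm{H} F_{I_{p,R}} < 2/(p+1) = \dim_\mathrm{B} F_{I_{p,R}}$ for all sufficiently large $R$. The main technical point is the first step, namely checking that the floor-function perturbation of the $G_{p,2}$ lattice is absorbed into the constants of the covering estimates in the proof of Proposition~\ref{p:lattice}; this is routine but the only place where care is needed.
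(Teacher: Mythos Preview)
Your proposal is correct and follows essentially the same route as the paper: compute $\dim_\theta\{1/b:b\in I_{p,R}\}$ via Proposition~\ref{p:lattice}, feed this into Theorem~\ref{compmain}(i), compute the finiteness parameter as $1/p$, and invoke \cite[Theorem~1.5]{Mauldin1999ctdfrac} to push $\dim_{\mathrm H} F_{I_{p,R}}$ down to $1/p<2/(p+1)$.

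The one point of divergence is the first step. The paper does not re-run the covering argument of Proposition~\ref{p:lattice}; it simply asserts that $\{1/b:b\in I_{p,R}\}$ is bi-Lipschitz equivalent to a cofinite subset of $G_{p,2}$ (after the reflection you identify), and then cites Proposition~\ref{p:lattice} together with bi-Lipschitz invariance of the intermediate dimensions. Your observation that the floor perturbation is negligible compared with the local lattice spacing is exactly what is needed to verify this bi-Lipschitz equivalence, so you are one line away from the shorter argument: rather than adapting the proof, conclude bi-Lipschitz equivalence and quote the result. Your level-set computation of $\theta_S$ (grouping by $\max(m,n)=N$) is a harmless variant of the paper's dyadic decomposition $2^n\leq |b|<2^{n+1}$; both give $\psi_1(t)<\infty$ iff $t>1/p$.
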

\begin{proof}
The set $\{ \, 1/b : {b \in I} \, \}$ is bi-Lipschitz equivalent to a cofinite subset of the set~$G_{p,2}$ from Proposition~\ref{p:lattice}, so 
\[ \dim_{\theta} \{ \, 1/b : b \in I \, \} = \dim_{\theta} G_{p,2} = \frac{2\theta}{p+\theta}.\] 
Therefore the bounds in Theorem~\ref{compmain}~\ref{compctdfracint} coincide and~\eqref{e:complexctdfracexample} holds. 
For all $t \geq 0$, writing $\simeq$ to mean up to multiplication by a positive, finite function of $t$, $p$ and $R$ and/or addition by a real-valued function of $t$, $p$ and $R$, and using the convention that $a \infty = \infty + c = \infty$ for $a \in (0,\infty)$ and $c \in \mathbb{R}$, we have
 \begin{align*}
  \psi_1(t) &= \sum_{b \in I_{p,R}} ||S_b||^t \\
  &\simeq \sum_{b \in I_{p,R}} |b|^{-2t} &\text{(Koebe distortion theorem)} \\
  &= \sum_{n = 0}^\infty \sum_{\substack{b \in I_{p,R} \\ 2^n \leq |b| < 2^{n+1}}} |b|^{-2t} \\
  &\simeq \sum_{n = 0}^\infty \# \{ \, b \in I_{p,R} : 2^n \leq |b| < 2^{n+1} \, \} (2^n)^{-2t} \\
  &\simeq \sum_{n=0}^\infty (2^{n/p})^2 (2^n)^{-2t} \\
  &= \sum_{n=0}^\infty 4^{n(p^{-1} - t)}. 
  \end{align*}
  Therefore the finiteness parameter $\theta_{I_{p,R}} = 1/p$. By~\cite[Theorem~1.5]{Mauldin1999ctdfrac}, 
  \begin{equation*}
  \dim_\mathrm{H} F_{I_{p,R}} \xrightarrow[R\to \infty]{} \frac{1}{p} < \frac{2}{p+1}, 
  \end{equation*}
  as required. 
\end{proof}

Again there are consequences for fractional Brownian images. 

\begin{cor}
Let $\alpha \in (0,1)$ and let $B_\alpha \colon \mathbb{C} \to \mathbb{C}$ denote index-$\alpha$ fractional Brownian motion (identifying $\mathbb{C}$ with $\mathbb{R}^2$). Then for all non-empty $I \subseteq \{ \, m + n i : m \in \N, n \in \mathbb{Z} \, \}$, 
\begin{enumerate}[label=(\roman*)]
\item The maps $\theta \mapsto \uid B_\alpha(F_I)$ and $\theta \mapsto \lid B_\alpha(F_I)$ are almost surely continuous at $\theta = 0$. %
\item If $\alpha > (\dim_\mathrm{H} F_I)/2$ then almost surely \[(\dim_\mathrm{H} F_I)/\alpha = \dim_\mathrm{H} B_\alpha(F_I) \leq \ubd B_\alpha(F_I) < 2.\]
\item If $\alpha \leq (\dim_\mathrm{H} F_I)/2$ then almost surely $\dim_\mathrm{H} B_\alpha(F_I) = \dim_{\mathrm{B}} B_\alpha(F_I) = 2$.
\end{enumerate}
\end{cor}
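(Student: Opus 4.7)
The proof strategy closely parallels that of Corollary~\ref{brownian}, with the key change being that the ambient spatial dimension is now $d = 2$ instead of $d = 1$, so one applies the relevant Burrell and Kahane results in the plane rather than on the line. Each of the three parts reduces to a black-box application of an existing theorem, using Theorem~\ref{compmain}\ref{compctdfraccts} (continuity of the intermediate dimensions of complex continued fraction sets at $\theta = 0$) as the input hypothesis.

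For \ref{browncts}, I would apply~\cite[Corollary~3.5]{Burrell2022brownian}: since $\theta \mapsto \uid F_I$ and $\theta \mapsto \lid F_I$ are continuous at $\theta = 0$ by Theorem~\ref{compmain}\ref{compctdfraccts}, this continuity is almost surely preserved under index-$\alpha$ fractional Brownian motion on $\mathbb{C} \cong \mathbb{R}^2$. This is essentially identical to the argument for the real case, requiring only that Burrell's result is stated for fractional Brownian motion into $\mathbb{R}^d$ for arbitrary $d$, which it is.

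For \ref{brownlarge}, the set $F_I$ is the limit set of a CIFS and therefore Borel (see the discussion after Lemma~\ref{fitin}), so Kahane's general results~\cite[Chapter~18]{Kahane1985fractbrown} give $\dim_\mathrm{H} B_\alpha(F_I) = (\dim_\mathrm{H} F_I)/\alpha$ almost surely, since $\alpha > (\dim_\mathrm{H} F_I)/2$ ensures the value $(\dim_\mathrm{H} F_I)/\alpha$ does not exceed the ambient dimension $2$. The middle inequality $\dim_\mathrm{H} \leq \ubd$ is a general property. For the strict upper bound $\ubd B_\alpha(F_I) < 2$, I would invoke~\cite[Corollary~3.7]{Burrell2022brownian} in the form~\eqref{e:burrellbrownian} with $d = 2$: the hypothesis $\alpha > \dim_\mathrm{H} F_I/2 = (1/d)\dim_\mathrm{H} F_I$ and the continuity statement from Theorem~\ref{compmain}\ref{compctdfraccts} together give $\ubd B_\alpha(F_I) < 2$ almost surely.

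For \ref{brownsmall}, the condition $\alpha \leq (\dim_\mathrm{H} F_I)/2$ is precisely $\alpha \leq (1/d)\dim_\mathrm{H} F_I$ with $d = 2$, so the second half of~\eqref{e:burrellbrownian} (or equivalently Kahane's result combined with the observation that $\dim_\mathrm{H} F_I/\alpha \geq 2$) yields $\dim_\mathrm{H} B_\alpha(F_I) = 2$ almost surely; then $\dim_\mathrm{B} B_\alpha(F_I) = 2$ as well since $\dim_\mathrm{H} \leq \lbd \leq \ubd \leq 2$. I do not anticipate any genuine obstacle here: the entire argument is a translation of the real-line proof to the plane, and the only small point to verify is that Burrell's continuity-based results are formulated for images into $\mathbb{R}^d$ for arbitrary $d \geq 1$, which is the case. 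The Borel measurability of $F_I$, needed for Kahane's theorem, follows exactly as in the real case from the pointwise finiteness consequence of Lemma~\ref{fitin}.
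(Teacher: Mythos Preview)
Your proposal is correct and matches the paper's approach exactly: the paper's proof simply states that this follows from Theorem~\ref{compmain} in the same way that Corollary~\ref{brownian} follows from Theorem~\ref{ctdfracintthm}, which is precisely the translation you describe. The only cosmetic point is that you refer to the items via the labels \ref{browncts}, \ref{brownlarge}, \ref{brownsmall} from the real corollary rather than this one, but the mathematical content is identical.
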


\begin{proof}
This follows from Theorem~\ref{compmain} in a similar way to how Corollary~\ref{brownian} follows from Theorem~\ref{ctdfracintthm}. 
\end{proof}

Since $\dim_\mathrm{P} B_\alpha(F_I), \lbd B_\alpha(F_I) \in (\dim_\mathrm{H} B_\alpha(F_I),\ubd B_\alpha(F_I))$, if $\alpha > (\dim_\mathrm{H} F_I)/2$ then almost surely $\dim_\mathrm{P} B_\alpha(F_I) < 2$ and $\lbd B_\alpha(F_I) < 2$, whereas if $\alpha \leq (\dim_\mathrm{H} F_I)/2$ then almost surely $\dim_\mathrm{P} B_\alpha(F_I) = \lbd B_\alpha(F_I) = 2$.

\section{Generic attractors}\label{genericsect}

\subsection{Background and motivation}

Often, and especially in non-conformal settings and in the presence of overlaps, it is difficult to compute the dimension of a particular IFS attractor.  In a seminal paper from 1988 ~\cite{Falconer1988generic} Falconer introduced the idea of studying the generic dimension of a (finitely generated) self-affine set by fixing a set of matrices and then randomising the translations in a  suitable way. It turns out that, for Lebesgue almost every choice of translations, the Hausdorff and box dimension of the associated self-affine set are given by the affinity dimension: a dimension formula depending only on the matrices. 

K\"aenm\"aki and  Reeve~\cite{Kaenmaki2014infiniteaffine} extended Falconer's  approach to the theory of infinitely generated self-affine sets.  Here one needs to randomise infinitely many translations, and do so in a manner which outputs a bounded set.  As such, the natural space to draw from is $V^\mathbb{N}$ for some bounded set $V \subseteq \Rd$ with positive $d$-dimensional Lebesgue measure $\mathcal{L}_d(V)>0$. 
For convenience from now on we assume $V= [0,1)^d$.
The space $V^{\N}$ carries a natural infinite product probability measure
\[
\mu = \prod_{i \in \mathbb{N}} \mathcal{L}_d \vert_V.
\]
 K\"aenm\"aki and  Reeve proved that if one fixes an infinite collection of strictly contracting matrices on $\Rd$ and randomises the translations according to $\mu$, then  the Hausdorff dimension of the associated attractor is almost surely given by the natural extension of the affinity dimension to the infinite case.  In comparison with Falconer's result, this notably omits the box dimension.  There is good reason for this since the box dimension of an infinitely generated attractor depends much more sensitively on the translations themselves, as we have seen above. 

We show here that almost surely the box dimension is $d$, that is, the ambient spatial dimension.  In fact we show more.  We show that for an arbitrary IIFS (not necessarily consisting of affine maps), the associated attractor is generically somewhere dense, and therefore the box and intermediate dimensions are all generically equal to $d$. 
 Moreover, \emph{generically} can refer to either $\mu$-almost surely, or for a comeagre set of translates (topologically generic). 
 If one equips $V^\mathbb{N}$ with a topological group structure as the infinite product of the group $V$ under addition mod 1 with the product topology, then being \emph{prevalent} in the sense of~\cite{Elekes2020prevalence,
Ott2005prevalence,Christensen1973prevalence,
Hunt1992prevalence} is the same as holding $\mu$-almost surely.

\subsection{Results}

Fix an IIFS $\{S_i\}_{i \in \mathbb{N}}$ defined on $[0,2]^d$ with the property that $S_i([0,2]^d) \subseteq [0,1]^d$ for all $i \in \mathbb{N}$. For $t = (t_1, t_2, \dots) \in V^\mathbb{N}$ let  $F_t$ denote the attractor of the IIFS $\{S_i+t_i\}_{i \in \mathbb{N}}$ with $S_i+t_i$ defined on $[0,2]^d$ by $(S_i+t_i)(x) = S_i(x)+t_i$. We assume throughout that the contraction ratios of the maps $S_i$ only accumulate at zero. The assumption that $[0,2]^d$ maps into $[0,1]^d$ is to ensure the maps can be composed with translations in a well-defined way. 

Write $\fix(g)$ to denote the unique fixed point of a contraction $g$ on $[0,2]^d$.  We use the following simple lemma to relate random translations to random fixed points. 
Fixed points are useful because they necessarily belong to the attractor.

\begin{lma}\label{genericlma}
Let $u \in V$, $g$  be a contraction on $[0,2]^d$ with $g([0,2]^d) \subseteq [0,1]^d$, $q \in [0,2]^d$, and $\delta>0$.  Then
\[
\fix(g+u) \in B(q,\delta) \Leftrightarrow u \in B(q-g(\fix(g+u)), \delta).
\]
\end{lma}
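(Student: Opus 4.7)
The plan is to observe that the lemma is essentially a restatement of the defining equation of a fixed point, combined with a trivial translation of the ball. The key identity to exploit is that if $p = \fix(g+u)$ then by definition $(g+u)(p) = p$, which rearranges to $u = p - g(p)$.

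First I would set $p \coloneqq \fix(g+u)$ (noting that $p$ exists and is unique since $g+u$ is a contraction of $[0,2]^d$ into itself by the hypothesis that $g([0,2]^d) \subseteq [0,1]^d$ and $u \in V = [0,1)^d$). Then from the fixed-point equation $g(p) + u = p$ I obtain the equality
\[
p - q = u - (q - g(p)),
\]
so the Euclidean norms of the two sides are equal. Therefore $\|p - q\| < \delta$ if and only if $\|u - (q - g(p))\| < \delta$, which is exactly the claimed equivalence $\fix(g+u) \in B(q,\delta) \Leftrightarrow u \in B(q - g(\fix(g+u)), \delta)$.

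There is no real obstacle here; the content of the lemma is just that the map $u \mapsto \fix(g+u) - (q - g(\fix(g+u)))$ agrees with $u \mapsto u - q + g(\fix(g+u))$ trivially, and both sides of the equivalence measure the same displacement. The role of the lemma in the subsequent genericity argument is to convert a statement about the fixed point of $g+u$ lying in a prescribed ball into a statement about $u$ lying in a ball (whose centre depends on $u$ through $g(\fix(g+u))$), which is the form needed to integrate against the product measure $\mu$ or to apply a Baire category argument over $V^{\N}$.
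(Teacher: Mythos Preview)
Your proof is correct and follows essentially the same approach as the paper: both use the fixed-point identity $g(p)+u=p$ to rewrite $\|\fix(g+u)-q\|$ as $\|u-(q-g(\fix(g+u)))\|$ and conclude the equivalence immediately.
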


\begin{proof}
By the definition of a fixed point, 
\begin{align*}
\fix(g+u) \in B(q,\delta) &\Leftrightarrow ||\fix(g+u) - q || < \delta \\
&\Leftrightarrow ||g(\fix(g+u))+u - q || < \delta \\
&\Leftrightarrow u \in B(q-g(\fix(g+u)), \delta),
\end{align*}
as required.
\end{proof}

\begin{thm}\label{measure}
For $\mu$-almost all $t \in V^\mathbb{N}$, the attractor $F_t$ is somewhere dense in $[0,2]^d$, so for all $\theta \in (0,1]$,
\[
\dim_{\theta} F_t = \dim_\mathrm{B} F_t = d.
\]
\end{thm}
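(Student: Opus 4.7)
The plan is to show that with $\mu$-probability one, the attractor $F_t$ contains a dense subset of some open ball $B \subset [0,2]^d$. Since $F_t \subseteq [0,2]^d$ and both $\overline{\dim}_\theta$ and $\underline{\dim}_\theta$ are stable under taking closure and bounded above by $d$, this immediately gives $\dim_\theta F_t = \dim_\mathrm{B} F_t = d$ for every $\theta \in (0,1]$. The key input will be that each $\fix(S_i + t_i)$ belongs to $F_t$ (it is $\pi(i,i,i,\dotsc)$), so it suffices to show that the single-map fixed points are $\mu$-a.s.\ dense in some fixed ball. Since these fixed points depend on disjoint coordinates of $t$, the events of interest are independent, and the natural tool will be the second Borel--Cantelli lemma.

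First, for each $i \in \N$ introduce the map $\Phi_i \colon V \to [0,2]^d$, $\Phi_i(u) \coloneqq \fix(S_i + u)$, which is well-defined because $S_i([0,2]^d) \subseteq [0,1]^d$. A direct calculation from $\Phi_i(u) = S_i(\Phi_i(u)) + u$ gives the bi-Lipschitz bounds $(1-r_i)\|\Phi_i(u)-\Phi_i(u')\| \le \|u-u'\| \le (1+r_i)\|\Phi_i(u)-\Phi_i(u')\|$, and also $\|\Phi_i(u) - u - S_i(0)\| \le 2\sqrt{d}\,r_i$. Hence $\Phi_i(V)$ is within Hausdorff distance $O(r_i)$ of the translate $S_i(0) + V$. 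By compactness of $[0,1]^d$, pass to a subsequence $(i_k)$ with $S_{i_k}(0) \to p$ for some $p \in [0,1]^d$; since $r_i \to 0$ by assumption, for $k$ large the ball $B(q^*, 1/4)$ with $q^* \coloneqq p + (1/2,\dotsc,1/2)$ lies in the interior of $\Phi_{i_k}(V)$ at distance $\ge 1/8$ from its boundary.

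Now fix any pair $(q',\delta) \in \Q^d \times \Q_{>0}$ with $B(q',\delta) \subseteq B(q^*, 1/4)$. For all sufficiently large $k$ the preimage $\Phi_{i_k}^{-1}(B(q',\delta))$ is contained in $V$ and, by the bi-Lipschitz bound, contains a ball of radius $(1-r_{i_k})\delta$; hence $\mu(E_k(q',\delta)) \ge c_d\,\delta^d$ for a dimensional constant $c_d > 0$, where $E_k(q',\delta) \coloneqq \{\,t : t_{i_k} \in \Phi_{i_k}^{-1}(B(q',\delta))\,\}$. Because these events depend on disjoint coordinates $t_{i_k}$, they are $\mu$-independent; since $\sum_k \mu(E_k(q',\delta)) = \infty$, the second Borel--Cantelli lemma shows that $\mu$-a.s.\ infinitely many of them occur, and consequently $\mu$-a.s.\ $F_t \cap B(q',\delta) \neq \varnothing$. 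Intersecting this full-measure event over the countable family $\mathcal{D} \coloneqq \{(q',\delta) \in \Q^d \times \Q_{>0} : B(q',\delta) \subseteq B(q^*,1/4)\}$ yields $\mu$-a.s.\ that $F_t$ meets every such ball; equivalently, $\overline{F_t} \supseteq B(q^*,1/4)$. Closure-stability of the intermediate dimensions then gives $\dim_\theta F_t \geq d$, and the reverse inequality is immediate from $F_t \subset \Rd$.

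The main technical point to be careful about is the uniform lower bound $\mu(E_k(q',\delta)) \ge c_d \delta^d$: this requires choosing $q^*$ so that $B(q^*,1/4)$ sits well inside $\Phi_{i_k}(V)$ for all large $k$ along the \emph{same} subsequence, which is precisely why the compactness argument selects one accumulation point $p$ of $\{S_i(0)\}_i$ and targets the centre of the corresponding shifted cube. Once this is in place, the rest of the argument is just Borel--Cantelli together with a standard countable intersection of full-measure events.
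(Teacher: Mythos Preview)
Your argument is correct and follows essentially the same strategy as the paper: both show that the single-map fixed points $\fix(S_i+t_i)\in F_t$ are $\mu$-a.s.\ dense in a fixed ball determined by an accumulation point (the paper uses an accumulation point of $\{\fix(S_i)\}$, you use one of $\{S_i(0)\}$, which are asymptotically the same since $r_i\to 0$), exploiting independence of the coordinates $t_i$. The only cosmetic difference is that the paper bounds the measure of the complementary event directly via the product structure ($\mu(T_{q,\delta,N})\to 0$), whereas you invoke the second Borel--Cantelli lemma; these are of course two phrasings of the same probabilistic fact.
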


\begin{proof}
Let $z \in [0,1]^d$ be an accumulation point of the set $\{\fix(S_i)\}_i$. We prove that $F_t$ is almost surely dense in the explicit (unit) square $V + z$. The idea is that for infinitely many $i \in \N$ the fixed point of $S_i$ will be close to $z$ and the contraction ratio of $S_i$ will be small, so if the fixed point of $S_i + t_i$ is far away from a given point $q \in V + z$ then $t_i$ must be far away from $q-z$, which we use to bound the measure. We have 
\begin{align*}
&\left\{ \, t \in V^\mathbb{N} \ : \  F_t \text{ is nowhere dense} \, \right\}  \subseteq    \left\{ \, t \in V^\mathbb{N} \ : \  F_t \text{ is not dense in $V+z$} \, \right\} \\
& \subseteq \bigcup_{q \in (V+z) \cap \mathbb{Q}^d} \bigcup_{\delta  \in \mathbb{Q}^+} \left\{ \, t \in V^\mathbb{N} \ : \  \forall i \in \mathbb{N} , \, \fix(S_i+t_i) \notin  B(q,\delta)  \, \right\} \\
& =  \bigcup_{q \in (V+z) \cap \mathbb{Q}^d} \bigcup_{\delta  \in \mathbb{Q}^+}   \left\{ \, t \in V^\mathbb{N} \ : \   \forall i \in \mathbb{N} , \,  t_i  \notin  B(q-S_i(\fix(S_i +t_i)),\delta)  \, \right\} \quad \text{(Lemma~\ref{genericlma})} \\
&=  \bigcup_{q \in (V+z) \cap \mathbb{Q}^d} \bigcup_{\delta  \in \mathbb{Q}^+} \bigcap_{N=1}^\infty T_{q,\delta, N}
\end{align*}
where for $N \in \mathbb{N}$,
\[
T_{q,\delta, N} \coloneqq \left(\prod_{i=1}^N \{ \, t \in V : || q-S_i(\fix(S_i +t)) - t || \geq \delta \, \} \right) \times \left(\prod_{j=N+1}^\infty V \right).
\]

For each $i \in \N$, $\{ \, t \in V : || q-S_i(\fix(S_i +t)) - t || \geq \delta \, \}$ is a Borel set as it is the preimage of $[\delta,\infty)$ by a continuous function, so each $T_{q,\delta, N}$ is $\mu$-measurable. 
By definition of $z$ we can find infinitely many $i \in \mathbb{N}$ such that the maximum of $||\fix(S_i) - z||$ and the contraction ratio of $S_i$ is less than $\delta/10$. 
For all such $i \in \N$, if $t \in B(q-z,\delta/2) \cap V$ then 
\begin{align*}
 || q-S_i(\fix(S_i +t)) - t || &\leq ||q-z-t|| + ||z-\fix(S_i)|| + ||\fix(S_i) - S_i(\fix(S_i + t))|| \\*
 &< \frac{\delta}{2} + \frac{\delta}{10} + 2\frac{\delta}{10} \\
 &< \delta.
 \end{align*}
Therefore for infinitely many $i \in \mathbb{N}$,   
\[
\mathcal{L}_d \Big( \{ t \in V : || q-S_i(\fix(S_i +t)) - t || \geq \delta \} \Big) \leq 1-2^{-d} \left(\frac{\min\{\delta,1/2\}}{2}\right)^d k_d < 1, 
\] %
where $k_d$ is the $d$-dimensional Lebesgue measure of a ball in $\Rd$ of unit radius.
This uniform bound away from 1 is independent of $i$, so for all $q$ and $\delta$, $\mu(T_{q,\delta, N}) \to 0$ as $N \to \infty$, so $\bigcap_{i=1}^\infty T_{q,\delta, N}$ is $\mu$-measurable (as the countable intersection of $\mu$-measurable sets) with $\mu\left(\bigcap_{i=1}^\infty T_{q,\delta, N}\right) = 0$. 
Therefore 
\[ \bigcup_{q \in (V+z) \cap \mathbb{Q}^d} \bigcup_{\delta  \in \mathbb{Q}^+} \bigcap_{i=1}^\infty T_{q,\delta, N}\]
 is a countable union of $\mu$-measurable sets with $\mu$-measure 0, so it is itself $\mu$-measurable with $\mu$-measure 0, which proves the result. 
\end{proof}

Next, we establish a topological result. 
We endow $V^\mathbb{N}$ with the Hilbert cube metric
\[
d(t, s) = \left(\sum_{i=1}^\infty \frac{||t_i-s_i||^2}{i^2}\right)^{1/2},
\]
noting that this generates the product topology on $V^\mathbb{N}$. 
Recall that a subset of $V^{\N}$ is called \emph{residual} or \emph{comeagre} (topologically generic) if it contains a countable intersection of open dense sets. 
Note that $V$ is homeomorphic to the separable complete metric space $[0,\infty)$, so $V$ is a Polish space, hence the countable product $V^{\N}$ is also a Polish space. The Baire Category Theorem therefore implies that $V^{\N}$ is a Baire space, meaning that residual subsets are dense. 

\begin{thm}\label{baire}
For a residual set of $t \in V^\mathbb{N}$, the attractor $F_t$ is somewhere dense in $[0,2]^d$, so for all $\theta \in (0,1]$, 
\[
\dim_{\theta} F_t = \dim_\mathrm{B} F_t = d.
\]
\end{thm}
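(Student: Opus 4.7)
The plan is to mirror the proof of Theorem~\ref{measure} step by step, but to replace the measure-zero argument with a Baire category argument. As before, let $z \in [0,1]^d$ be an accumulation point of $\{\fix(S_i)\}_{i \in \N}$. Since residual sets in the Baire space $V^{\N}$ are dense, and since $F_t$ being dense in the unit cube $V+z$ implies $F_t$ is somewhere dense in $[0,2]^d$, it suffices to show that the set
\[
E \coloneqq \bigcup_{q \in (V+z) \cap \mathbb{Q}^d}\;\bigcup_{\delta \in \mathbb{Q}^+} E_{q,\delta}, \qquad E_{q,\delta} \coloneqq \{\, t \in V^{\N} : F_t \cap B(q,\delta) = \varnothing \,\},
\]
is meagre. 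As $E$ is a countable union, I would fix $q$ and $\delta$ and show that $E_{q,\delta}$ is meagre.

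Arguing exactly as in the proof of Theorem~\ref{measure} via Lemma~\ref{genericlma}, I would note that $E_{q,\delta} \subseteq A \coloneqq \bigcap_{i=1}^\infty A_i$, where
\[
A_i \coloneqq \{\, t \in V^{\N} : \lVert q - S_i(\fix(S_i+t_i)) - t_i \rVert \geq \delta \,\}.
\]
Each $A_i$ is a cylinder set depending only on coordinate $t_i$, and because the fixed-point map $t_i \mapsto \fix(S_i+t_i)$ is continuous (by Banach's contraction mapping theorem, since $S_i$ is a strict contraction), $A_i$ is the preimage of a closed set under a continuous function and is therefore closed. Hence $A$ is closed, and the task reduces to showing that $A$ has empty interior, since a closed set with empty interior is nowhere dense, hence meagre.

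For this, let $U = \prod_j U_j$ be a non-empty basic open set in the product topology, with $U_j = V$ for all $j > N$. Because $z$ is an accumulation point of $\{\fix(S_i)\}$ and the contraction ratios accumulate only at zero, there exists some $i > N$ with $\lVert \fix(S_i) - z \rVert < \delta/10$ and contraction ratio of $S_i$ less than $\delta/10$. The calculation in the proof of Theorem~\ref{measure} shows that for such $i$, any choice $t_i \in B(q-z, \delta/2) \cap V$ yields $\lVert q - S_i(\fix(S_i+t_i)) - t_i \rVert < \delta$, so $t \notin A_i$ and hence $t \notin A$. Since $B(q-z,\delta/2)\cap V$ is a non-empty open subset of $V$, one can choose $t \in U$ with $t_i$ in this set, so $U \not\subseteq A$. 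Thus $A$ has empty interior, $E_{q,\delta}$ is meagre, and $E$ is meagre as a countable union. The complement of $E$ is residual, and for every $t$ in it the set $F_t$ contains a dense subset of the unit cube $V+z$, so the closure of $F_t$ has non-empty interior in $\Rd$, forcing $\dim_\mathrm{B} F_t = d$ and $\dim_\theta F_t = d$ for all $\theta \in (0,1]$ by Proposition~\ref{basicbounds}. The proof presents no real obstacle: the only subtlety is that emptiness of the interior of $A$ is exactly the topological analogue of the uniform measure bound ``bounded away from $1$'' in Theorem~\ref{measure}, and the accumulation hypothesis on $\{\fix(S_i)\}$ and the contraction ratios gives infinitely many chances to violate membership in $A$, which is more than enough to defeat any finite coordinate constraint from a basic open set.
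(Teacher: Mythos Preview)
Your proof is correct and is essentially the same as the paper's: where the paper directly exhibits the good set as a countable intersection of open dense sets $T_{q,\delta}$, you work with the complements $A = V^{\N}\setminus T_{q,\delta}$ and show each is closed with empty interior, which is an equivalent reformulation. Your empty-interior argument (picking $i$ beyond the finitely many constrained coordinates of a basic open set and reusing the $\delta/10$ computation from Theorem~\ref{measure}) is exactly the density argument of the paper recast in complementary language.
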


\begin{proof}
Let $z \in [0,1]^d$ be as in the proof of Theorem~\ref{measure}.  Then
\begin{align*}
& \left\{ \, t \in V^\mathbb{N} \ : \  F_t \text{ is somewhere dense} \, \right\}   \\ 
& \supseteq   \left\{ \, t \in V^\mathbb{N} \ : \  F_t \text{ is  dense in $V+z$} \, \right\} \\ 
& =  \bigcap_{q \in (V+z) \cap \mathbb{Q}^d} \bigcap_{\delta  \in \mathbb{Q}^+}  \left\{ \, t \in V^\mathbb{N} \ : \  \exists i \in \mathbb{N}, \, \fix(S_i+t_i) \in  B(q,\delta) \, \right\} \\ 
& =  \bigcap_{q \in (V+z) \cap \mathbb{Q}^d} \bigcap_{\delta  \in \mathbb{Q}^+}  \left\{ \, t \in V^\mathbb{N} \ : \  \exists i \in \mathbb{N}, \, t_i  \in  B(q-S_i(\fix(S_i+t_i)) ,\delta)  \, \right\} &\text{(Lemma~\ref{genericlma})}\\ 
& \eqqcolon  \bigcap_{q \in (V+z) \cap \mathbb{Q}^d} \bigcap_{\delta  \in \mathbb{Q}^+} T_{q,\delta}.
\end{align*}
Fix $q \in (V+z) \cap \mathbb{Q}^d $ and $ \delta  \in \mathbb{Q}^+$. The set $T_{q,\delta}$ is immediately seen to be open since $\fix(S_i+t_i)$ is continuous in $t_i$ and we use open balls. Moreover, it is also dense since an element $t \in V^{\mathbb{N}}$ may be approximated arbitrarily well in the metric $d$ within the set $T_{q,\delta}$ by replacing $t_i$ with $q-z$ for sufficiently large $i$.
\end{proof}

\begin{rem}\label{modifycontractions}
In the above setting, all the contraction ratios were bounded above by 1/2, but we can easily avoid this. Indeed, fix any $c>0$ and fix any IIFS $\{S_i\}_{i \in \N}$ of contractions defined on $[0,1+c]^d$ satisfying $S_i([0,1+c]^d) \subseteq [0,1]^d$ for all $i \in \N$. Then the contraction ratios are bounded above by $\frac{1}{1-c}$ and we assume they accumulate only at 0. We can let $V \coloneqq [0,c)^d$, and again $V^\N$ can be equipped with a natural probability measure by taking the infinite product of the Lebesgue measure on $V$ and then normalising. Moreover, $V^\N$ can be equipped with a topological group structure by taking the infinite product (with the product topology) of the group $V$ under addition \emph{mod $c$} on each of the $d$ coordinates. 
If $t = (t_1,t_2,\dotsc) \in V^\N$ then $\{S_i + t_i\}_{i \in \N}$ is an IIFS of contractions on $[0,1+c]^d$, with limit set $F_t$, say. Similar proofs show that under the assumptions of either Theorem~\ref{measure} or Theorem~\ref{baire}, $F_t$ is somewhere dense in $[0,1+c]^d$, so the same conclusions about dimensions hold. 
\end{rem}

Under the assumptions of either Theorem~\ref{measure} or Theorem~\ref{baire}, either in the setting of those theorems or in the more general setting of Remark~\ref{modifycontractions}, the attractor $F_t$ is somewhere dense. This means that if $\dim$ is any notion of dimension which is stable under closure, for example any of the $\Phi$-intermediate or Assouad type dimensions, then $\dim F_t = d$. 
If all of the $S_i$ are bi-Lipschitz, then by a result of Mauldin and Urbański~\cite[Theorem~3.1]{Mauldin1996iifs} the upper box and packing dimensions of the attractor coincide, so under the assumptions of Theorem~\ref{measure} or~\ref{baire}, it holds that $\dim_\mathrm{P} F_t = d$.

\chapter{Bedford--McMullen carpets}\label{s:bm}

\section{Introduction}\label{sec:01}

\subsection{Self-affine carpets}

In this chapter, which is based on~\cite{Banaji2021bedford} (joint with I.~Kolossv\'ary), we consider \emph{self-affine sets}, which are the attractors of IFSs consisting of affine contractions. The dimension theory of self-affine sets (surveyed in~\cite{Falconer2013affine}) is broadly divided into two strands of research. The former is concerned with verifying that in many `generic' situations the Hausdorff and box dimensions coincide with Falconer's affinity dimension~\cite{Barany2019HochmanRapaport,
Falconer1988generic}. This chapter, however, is concerned with the exceptional theory, where the dimensions can take different values, and the intermediate dimensions are therefore relevant. 
In particular, self-affine sets in the plane for which the matrices of the defining contractions are diagonal are often called `carpets,' and various models with different levels of generality have been studied~\cite{Bedford1984phd,McMullen1984carpet,
Lalley1992gatzouras,Baranski2007carpet,Feng2005lq,
Kenyon1996bmexactdim}. 
The three-dimensional versions are often called sponges, and surprisingly a class of such sets appear in the paper~\cite{Das2017dimgap} as the first examples of expanding repellers whose Hausdorff dimension is not attained as the Hausdorff dimension of any ergodic invariant measure; it is not known whether a repeller with this property exists in the plane. There are many interesting open problems about self-affine sets, such as whether the box dimension of every self-affine set exists. 

In this chapter, we work with the simplest model of self-affine carpets, which were originally introduced independently by~Bedford~\cite{Bedford1984phd} and McMullen~\cite{McMullen1984carpet}. %
A Bedford--McMullen carpet $\Lambda$ is a subset of the plane, but can also be viewed as an invariant subset of the 2-torus $[0,1)^2$ under the toral endomorphism $(x,y)\mapsto (mx \mod 1, ny \mod 1)$.  
Figure~\ref{f:carpetIFS} shows a simple example of a Bedford--McMullen carpet with distinct Hausdorff and box dimension. The three shaded rectangles show the image of $[0,1]^2$ under the three maps in the IFS, and the attractor is also shown. For this carpet, $\dim_{\mathrm H}\Lambda\approx1.34968<1.36907\approx\dim_{\mathrm B}\Lambda$. 
These carpets can be realised as cross-sections of invariant sets of discrete dynamical systems given by iterating three-dimensional horseshoe maps, showing that dynamically invariant sets can have distinct Hausdorff and box dimensions. 
We refer the interested reader to the survey~\cite{Fraser2021bedford} for background on the dimension theory of these carpets. 
Our main objective is to determine a formula for the intermediate dimensions of all Bedford--McMullen carpets. In the process, we uncover new interesting features about the form of the intermediate dimensions and make an unexpected connection to multifractal analysis and bi-Lipschitz equivalence of these carpets. 
\begin{figure}[ht]
\centering
\includegraphics[width=.6\textwidth]{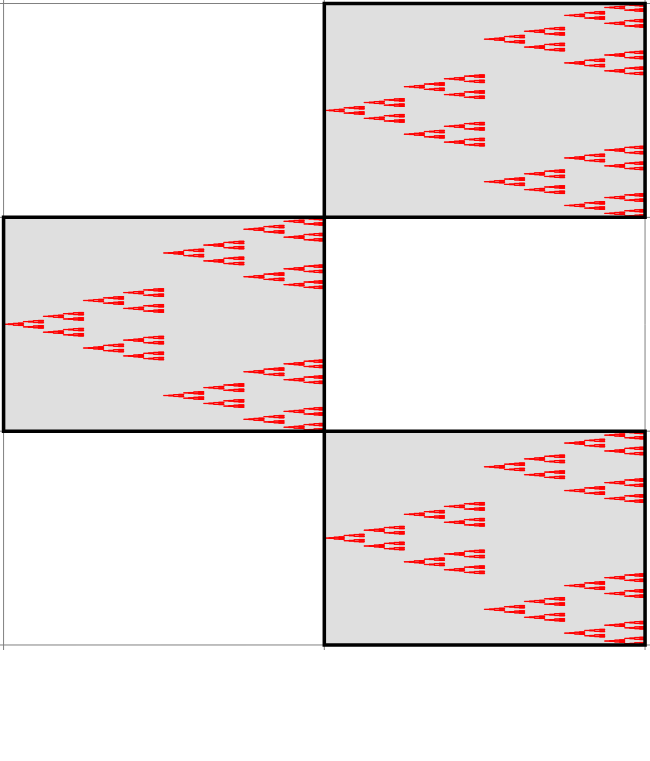}
\caption{A Bedford--McMullen carpet with non-uniform fibres. The images of $[0,1]^2$ under the iterated function system generating the carpet are shaded.}\label{f:carpetIFS}
\end{figure}
\begin{figure}[ht]
\centering
\subfigure{\includegraphics[width=.9\textwidth]{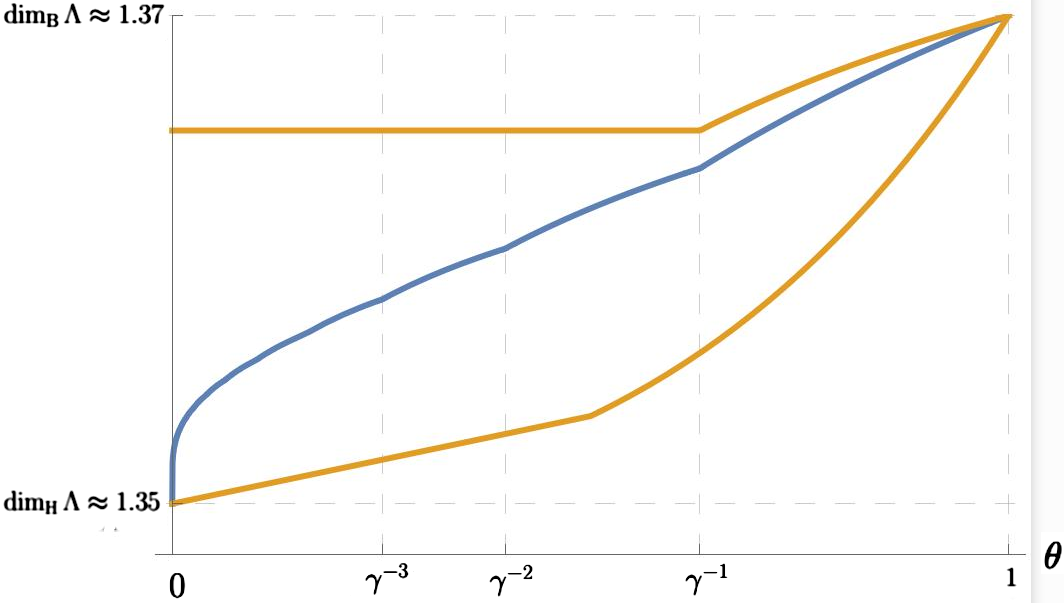}}
\caption{The graph of the intermediate dimensions of the Bedford--McMullen carpet from Figure~\ref{f:carpetIFS} is plotted in \textcolor{blue}{blue}, using the formula obtained in this chapter. Note that there are countably many phase transitions (one at each negative integer power of $\gamma$, the first three of which are labelled). 
Certain bounds which were obtained previously are plotted in \textcolor{orange}{orange}.}\label{f:bmintdimgraph}
\end{figure}

These carpets are constructed by splitting $[0,1]^2$ into $m$ columns of equal width and $n$ rows of equal height for some integers $n>m\geq 2$ and considering maps of the form
\begin{equation*}
	f_{(i,j)}(\underline{x})\coloneqq \begin{pmatrix} 1/m & 0 \\ 0 & 1/n \end{pmatrix} \begin{pmatrix} x \\ y \end{pmatrix} + \begin{pmatrix} (i-1)/m \\ (j-1)/n
	\end{pmatrix}
\end{equation*}
for the index set $(i,j)\in \mathcal{A}\subseteq \{1,\dotsc,m\}\times\{1,\dotsc,n\}$. 
The attractor 
\begin{equation*}
	\Lambda = \bigcup_{(i,j)\in \mathcal{A}} f_{(i,j)} ( \Lambda)
\end{equation*}
of the IFS $\mathcal{F}=\{f_{(i,j)}\}_{(i,j)\in \mathcal{A}}$ is called a \emph{Bedford--McMullen carpet}. 
Fix notation 
\[\gamma \coloneqq \frac{\log n}{\log m}>1.\]
For the remainder of the chapter, we index the maps of $\mathcal{F}$ by $i\in\{1,\dotsc,N\}$. Let $1<M\leq m$ denote the number of non-empty columns and $\mathbf{N}\coloneqq (N_1,\dotsc, N_M)$, where $N_{\jh}$ is the number of maps $f_i$ that map $[0,1]^2$ to the $\jh$-th non-empty column. Observe that $N=N_1+\dotsb+N_M$. 

Let $\mathcal{P}_M$ denote the set of probability vectors on $\{1,\dotsc,M\}$. 
The \emph{entropy} of $\mathbf{q}\in\mathcal{P}_M$ is
\begin{equation*}
	H(\mathbf{q}) = -\sum_{\jh=1}^M q_{\jh}\log q_{\jh},
\end{equation*}
where we use the convention that $0 \log 0 = 0$. 
We introduce
\begin{equation*}
	\mathbf{P}=(P_1,\dotsc,P_M) \coloneqq \left(\frac{N_1}{N},\dotsc,\frac{N_M}{N} \right)  \quad \mbox{ and } \quad   \mathbf{Q} \coloneqq \left( \frac{1}{M}, \dotsc, \frac{1}{M} \right) . 
\end{equation*}
For $\mathbf{q}\in\mathcal{P}_M$, it holds that $H(\mathbf{q})\leq \log M$ with equality if and only if $\mathbf{q} = \mathbf{Q}$. 
For the entire chapter, we also introduce  
\begin{equation}\label{e:underlineoverlinedef}
\underline{t}\coloneqq \frac{1}{M} \sum_{\jh=1}^{M} \log N_{\jh} \quad \mbox{ and } \quad \overline{t}\coloneqq \log N-H(\mathbf{P}).
\end{equation}
We say that $\Lambda$ has \emph{uniform (vertical) fibres} if and only if $\mathbf{P}=\mathbf{Q}$, in other words each non-empty column has the same number of maps. 

Bedford and McMullen showed that the Hausdorff and box dimensions of $\Lambda$ are equal to 
\begin{align}
	\dim_{\mathrm H}\Lambda &= \frac{1}{\log m}\log \Bigg( \sum_{\jh=1}^MN_{\jh}^{\log m / \log n} \Bigg) \label{eq:102}, \\*          %
	\dim_{\mathrm B}\Lambda &= \frac{\log N}{\log n} + \left(1-\frac{\log m}{\log n}\right)\frac{\log M}{\log m}. \label{eq:103} 
\end{align}
In particular, $\dim_{\mathrm H}\Lambda = \dim_{\mathrm B}\Lambda$ if and only if $\Lambda$ has uniform fibres. 
In this case, $\theta\mapsto \dim_{\theta}\Lambda$ is just a constant function. Therefore, we assume throughout that the carpet has non-uniform fibres, in which case the appropriate dimensional Hausdorff measure of $\Lambda$ is infinite~\cite{Peres1994infinitemeasure}. Using the concavity of the logarithm function, an immediate consequence of non-uniform fibres is that $\underline{t} < \log(N/M)$. 
Carpets with uniform and non-uniform fibres are shown in \cite[Figure~15.1]{Fraser2021bedford}. 

\subsection{Previous results}

Previous papers on the topic~\cite{Falconer2020firstintermediate,
Fraser2021bedford,Kolossvary2022bm} have established crude bounds for the intermediate dimensions and speculated about the possible form. The question of determining the intermediate dimensions of all Bedford--McMullen carpets was explicitly asked in \cite{Falconer2021intdimsurvey,
Falconer2020firstintermediate,Fraser2021bedford,  Kolossvary2022bm}. Loosely speaking, the results of Falconer, Fraser and Kempton~\cite{Falconer2020firstintermediate} concentrate on the behaviour of $\dim_{\theta}\Lambda$ for $\theta$ close to 0, while the results of Kolossv\'ary~\cite{Kolossvary2022bm} concentrate on the behaviour for $\theta\geq \gamma^{-1}$. 

A linear lower bound for the intermediate dimensions of Bedford--McMullen carpets with non-uniform fibres was obtained in~\cite{Falconer2020firstintermediate} which shows that $\underline{\dim}_{\theta}\Lambda>\dim_{\mathrm H}\Lambda$ for every $\theta\in(0,1]$. %
For many carpets, but not all, a lower bound in~\cite{Kolossvary2022bm} performs better than the linear bound and general bounds such as Corollary~\ref{c:lower-bound-general} from page~\pageref{c:lower-bound-general} for large values of $\theta$. The lower bound depicted in Figure~\ref{f:bmintdimgraph} is the best combination of these results. 
Falconer, Fraser and Kempton~\cite[Proposition~4.1]{Falconer2020firstintermediate} show an upper bound of the form $\overline{\dim}_{\theta}\Lambda \leq \dim_{\mathrm H}\Lambda + c/(-\log \theta)$ for an explicit $c>0$ and $\theta$ sufficiently small. In particular, this implies that $\theta\mapsto\underline{\dim}_{\theta}\Lambda$ and $\theta\mapsto\overline{\dim}_{\theta}\Lambda$ are continuous also at $\theta=0$. Hence, the results of Burrell, Falconer and Fraser~\cite[Section~6]{Burrell2021projections} and Burrell~\cite[Section~3]{Burrell2022brownian} can be applied. For example, if $\dim_{\mathrm H} \Lambda < 1$ then $\overline{\dim}_{\mathrm B} \pi(\Lambda) < 1$ for every orthogonal projection $\pi$ from $\mathbb{R}^2$ onto a 1-dimensional subspace, regardless of the value of $\dim_{\mathrm B} \Lambda$. For almost every projection, $\underline{\dim}_{\theta} \pi(\Lambda)$ and $\overline{\dim}_{\theta}\pi(\Lambda)$ are continuous at $\theta = 0$, and if $\gamma \notin \mathbb{Q}$ then this holds for \emph{every} orthogonal projection. 
Furthermore, if $B_h \colon \mathbb{R}^2 \to \mathbb{R}^2$ is index-$h$ fractional Brownian motion, then $\theta\mapsto\underline{\dim}_{\theta}B_h(\Lambda)$ and $\theta\mapsto\overline{\dim}_{\theta}B_h(\Lambda)$ are almost surely continuous, and if $h > (\dim_{\mathrm H} \Lambda)/2$ then almost surely $\overline{\dim}_{\mathrm B} B_h (\Lambda) < 2$. 

A cover of $\Lambda$ is constructed in~\cite{Kolossvary2022bm} using just the two extreme scales to obtain an explicit upper bound of the form $\overline{\dim}_{\theta}\Lambda\leq\dim_{\mathrm B}\Lambda -\Delta(\theta)$ for $\theta\geq \gamma^{-1}$, where $\Delta(\theta)\searrow 0$ as $\theta\to 1$ and has a strictly positive derivative at $\theta=1$. This bound was used to show that $\overline{\dim}_{\theta}\Lambda$ is not concave for the whole range of $\theta$ in general, already hinting at richer behaviour than previously witnessed in other examples. Figure~\ref{f:bmintdimgraph} shows this upper bound. 

\subsection{Summary of results}
The formal statements are presented in Section~\ref{sec:CompResults}. In Theorem~\ref{thm:main} we state an explicit formula for $\underline{\dim}_{\theta}\Lambda=\overline{\dim}_{\theta}\Lambda$ for all $\theta$, thus fully resolving the problem of calculating the intermediate dimensions of all Bedford--McMullen carpets $\Lambda$. For illustration see Figure~\ref{f:bmintdimgraph}, where $\theta\mapsto\dim_{\theta}\Lambda$ is plotted for the carpet from Figure~\ref{f:carpetIFS}. 
Central to the formula is a large deviations rate function for which we give three additional equivalent characterisations in Proposition~\ref{prop:1}: one in terms of a pressure-like function, another as a certain probability vector with an entropy maximising property, and finally a relationship to the multifractal spectrum of the uniform self-affine measure on $\Lambda$. In the proof of Theorem~\ref{thm:main}, we construct a cover of $\Lambda$ that uses an increasing number of different scales in the permissible range as $\theta\to 0$. We also show in Corollary~\ref{cor:twoscales} that using more than two scales is necessary.

In Corollary~\ref{cor:allprop}, we prove all the features suggested by the plot in Figure~\ref{f:bmintdimgraph} about the form of the intermediate dimensions for all carpets. Namely, $\theta\mapsto\dim_{\theta}\Lambda$ is strictly increasing and has phase transitions at all negative integer powers of $\gamma$. Between consecutive phase transitions the intermediate dimensions are analytic and strictly concave. Moreover, for $\theta$ small enough $\dim_{\theta}\Lambda$ behaves like $\dim_{\mathrm H}\Lambda+c(\log\theta)^{-2}$. In particular, the derivative tends to $+\infty$ as $\theta\to0$. No previous family of sets has shown such rich and complex behaviour. Some illustrative examples are presented in Section~\ref{subsec:ex}.

We show in Theorem~\ref{thm:multifractal} that two different carpets with non-uniform fibres have equal intermediate dimensions for every $\theta\in[0,1]$ if and only if the multifractal spectra of the uniform Bernoulli measure on the two carpets are equal. If, in addition, it is assumed that the two carpets are defined on the same grid, then Theorem~\ref{thm:multifractal} provides further equivalent conditions for their intermediate dimensions to be the same: a certain condition on the rate functions appearing in the formula or certain relationships between the parameters of the carpets, or the equality of the intermediate dimensions on any one open interval of $[\gamma^{-1},1]$. 

Our main application relates to bi-Lipschitz equivalence. It is known~\cite[Corollary~1.1]{RaoPreprintlipschitz} that the equality of these multifractal spectra is necessary for two carpets to be bi-Lipschitz equivalent if it is assumed that the two carpets are defined on the same grid and are totally disconnected. Since bi-Lipschitz maps preserve intermediate dimensions, Theorem~\ref{thm:multifractal} implies that both of these assumptions can be dropped, see Corollary~\ref{cor:biLip}. In Proposition~\ref{p:biLip} we construct two carpets which are not bi-Lipschitz equivalent by Corollary~\ref{cor:biLip}, but where this does not follow from~\cite{RaoPreprintlipschitz}. This is the first instance where intermediate dimensions are used to show that two sets are not bi-Lipschitz equivalent, but where this fact does not follow from any other notion of dimension or existing result. For this example, we also use the intermediate dimensions to give estimates on the H\"older distortion of the two carpets. 

For comparison, we mention that the calculation of the Assouad spectrum of Bedford--McMullen carpets~\cite{Fraser2018secondassouad} is not as involved as the proof of Theorem~\ref{thm:main} for the intermediate dimensions. Indeed, the intermediate dimensions are more subtle in that they depend on all the $N_i$ individually, as does the Hausdorff dimension. This is in contrast to the Assouad spectrum (and indeed the lower spectrum and the box, packing, Assouad, quasi-Assouad, lower and quasi-lower dimensions) which depend only on $m,n,N,M,\max_{1 \leq \ih \leq M} N_{\ih}$ and $\min_{1 \leq \ih \leq M} N_{\ih}$, see~\cite{Fraser2021bedford}. The Assouad spectrum also has just one phase transition at $\theta=\gamma^{-1}$, which occurs when the spectrum reaches the Assouad dimension and thus is constant for $\theta\in(\gamma^{-1},1)$.

\section{Results and examples}\label{sec:CompResults}

\subsection{Main result: formula for intermediate dimensions}

Recalling~\eqref{e:underlineoverlinedef}, let $t\in(\underline{t},\overline{t})$. 
This is a non-empty interval because the fact that there are non-uniform fibres implies that $\underline{t} < \log (N/M) < \overline{t}$. 
Let $X_1,X_2,\dotsc,X_J,\dotsc $ be a sequence of independent and identically distributed random variables taking values in the set $\{\log N_1,\dotsc,\log N_M\}$, with 
\begin{equation}\label{e:definerandomvar} 
\mathds{P}(X_1 = \log N_{\ih}) = \frac{1}{M} \cdot \# \{ \, \jh \in \{1,\dotsc,M\} : N_{\jh} = N_{\ih} \, \} .
\end{equation}
 Then $\underline{t}$ is the expectation of $X_1$. 
The large deviations rate function of the average $\frac{1}{J}\sum_{i=1}^J X_i$ is
\begin{equation}\label{eq:22}
I(t)=\sup_{\lambda\in \R} \bigg\{\lambda t - \log\bigg( \frac{1}{M} \sum_{\jh=1}^{M} N_{\jh}^{\lambda}\bigg)\bigg\},
\end{equation} 
noting that $\frac{1}{M} \sum_{\jh=1}^{M} N_{\jh}^{\lambda}$ is the expectation of $e^{\lambda X_1}$. 
For $t \in [\underline{t},\max_{1 \leq \ih \leq M} \log N_{\ih})$, differentiating shows that the supremum in the definition of $I(t)$ is attained at the unique $\lambda \geq 0$ satisfying $t = \sum_{\ih=1}^M \frac{ N_{\ih}^\lambda }{\sum_{\jh=1}^M N_{\jh}^\lambda}\log N_{\ih}$. This allows $I(t)$ to be calculated numerically. On this interval, $I(t)$ is real analytic (as the Legendre transform of an analytic function). %
The derivative $I'(t)>0$ is the value of $\lambda$ at which the supremum is attained and $I(t)$ is strictly increasing for $t\in[\underline{t},\max \log N_{\ih}]$. %
Moreover, $I''(t) > 0$, so the function is strictly convex on this interval. Some particular values of interest are 
\begin{alignat*}{2}
I(\underline{t}) &= 0, \qquad \qquad &&I'(\underline{t}) = 0, \\ 
I(\overline{t}) &= \log M - H(\mathbf{P}), \qquad \qquad &&I'(\overline{t}) = 1, 
\end{alignat*}
see~\cite[Lemma~2.2.5]{Dembo2010largedeviations}. 
Moreover, 
\[ I(\max_{1 \leq \ih \leq M} \log N_{\ih}) = \log M - \log \# \{ \, \jh \in \{1,\dotsc,M\} : N_{\jh} = \max_{1 \leq \kh \leq M} N_{\kh} \, \}, \]
and 
\[ I'(t) \to \infty\quad \mbox{ as } t \to (\max_{1 \leq \ih \leq M} \log N_{\ih})^- . \] 

For $s \in \mathbb{R}$, %
we define the function $T_s \colon \mathbb{R} \to \mathbb{R}$ by 
\begin{equation}\label{eq:defineiteratingfunction}
T_s(t) \coloneqq \left(s-\frac{\log M}{\log m}\right)\log n +\gamma I(t). 
\end{equation}
For $\ell \in \mathbb{N}$ we denote the composition by $T_s^{\ell} \coloneqq \underbrace{T_s \circ \dotsb \circ T_s }_{\ell \mbox{ times}}$, and $T_s^0$ denotes the identity function. 
We use the sequences $(t_{\ell})_{\ell=1}^\infty = (t_{\ell} (s))_{\ell=1}^\infty$ defined by 
\begin{equation}\label{eq:definetsequence}
t_{\ell}(s) \coloneqq T_s^{\ell-1}\left(\left(s-\frac{\log M}{\log m}\right)\log n \right).
\end{equation}
Note that these depend only on $s$ and the carpet, but not on $\theta$. Observe that $T_s(\underline{t}) = t_1(s)$ for all $s \in \mathbb{R}$. 
We are now ready to state the main result of this chapter (see Section~\ref{sec:proofofmain} for the rather involved proof). 
\begin{theorem}\label{thm:main}
Let $\Lambda$ be a Bedford--McMullen carpet with non-uniform fibres. For all $\theta \in (0,1)$, $\dim_{\theta} \Lambda$ exists %
and is given in the following way. For fixed $\theta \in (0,1)$ let $L = L(\theta) \coloneqq 1 + \lfloor \frac{-\log \theta}{\log \gamma} \rfloor$, so $\gamma^{-L} < \theta \leq \gamma^{-(L-1)}$. %
Then there exists a unique solution $s = s(\theta) \in (\dim_{\mathrm H} \Lambda, \dim_{\mathrm B} \Lambda)$ to the equation 
\begin{equation}\label{eq:mainformula}
\gamma^L\theta \log N - (\gamma^L\theta - 1) t_L(s) + \gamma(1-\gamma^{L-1}\theta)(\log M - I(t_L(s))) - s\log n = 0, 
\end{equation}
and $s(\theta) = \dim_{\theta} \Lambda$. 
\end{theorem}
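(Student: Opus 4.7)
The plan is to establish both matching upper and lower bounds on $\dim_\theta \Lambda$ of the form $s(\theta)$, where $s(\theta)$ solves~\eqref{eq:mainformula}, and to verify the existence and uniqueness of this solution. The key geometric insight driving the argument is that for $\theta$ in the window $\gamma^{-L} < \theta \leq \gamma^{-(L-1)}$, the natural optimal covers and measures are not built from just the two extreme scales $\delta$ and $\delta^{1/\theta}$, but from a telescoping sequence of $L+1$ intermediate scales $\delta = \delta_0 > \delta_1 > \dotsb > \delta_L \approx \delta^{1/\theta}$, geometrically adapted so that consecutive scales $\delta_{\ell-1},\delta_\ell$ bear a $\gamma$-type ratio. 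At each nested scale, approximate-square cylinders in columns with many maps $N_{\hat\jmath}$ are subdivided further, whereas cylinders in thin columns are left coarse; the optimal proportion of columns subdivided at depth $\ell$ is precisely the probability vector that arises in the large deviations principle for averages of the i.i.d.\ random variables~\eqref{e:definerandomvar}, which is why the rate function $I$ and the iterates $t_\ell(s)=T_s^{\ell-1}(T_s(\underline{t}))$ appear.

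For the upper bound, I would build an explicit $(\delta,\theta)$-cover of $\Lambda$ by recursion on $\ell = 0,1,\dotsc,L$. At level $0$ one starts with the level-$k_0$ approximate-square cylinders for an integer $k_0 \sim -\log\delta/\log n$. Then, mimicking the standard combinatorial estimate underlying Bedford and McMullen's formula~\eqref{eq:102}, at each level $\ell$ a fraction governed by $e^{-I(t_\ell(s))\cdot(\text{number of columns at that level})}$ of the cylinders are refined further by subdividing columns (horizontal subdivision) and rows (vertical subdivision) in the carpet-appropriate ratio $\gamma$. Summing the $s$-th powers of the diameters across all scales and balancing the contributions from each of the $L$ levels produces a telescoping identity; the total cost equals one precisely when $s$ satisfies~\eqref{eq:mainformula}, via the recursive definition of $t_\ell(s)$ through $T_s$. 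The equation $T_s(t)=(s-\log M/\log m)\log n + \gamma I(t)$ encodes the cost of moving from one intermediate scale to the next.

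For the lower bound, I would invoke the mass distribution principle for intermediate dimensions (Proposition~\ref{prop:mdp}) with a family of Borel measures $\mu_\delta$ supported on $\Lambda$, constructed to be dual to the cover above: at each of the $L$ intermediate scales, $\mu_\delta$ distributes mass across columns according to the entropy-maximising vector from the Legendre transform in~\eqref{eq:22}, i.e.\ the vector $(q_{\hat\jmath})$ with $q_{\hat\jmath} \propto N_{\hat\jmath}^{I'(t_\ell)}$. A direct computation, using that $I$ is the Legendre transform of $\lambda \mapsto \log((1/M)\sum N_{\hat\jmath}^\lambda)$ and that the measure ratio on any approximate-square cylinder factorises across scales, yields $\mu_\delta(U) \lesssim |U|^s$ uniformly over sets $U$ of diameter in $[\delta^{1/\theta},\delta]$, for the same $s$ solving~\eqref{eq:mainformula}. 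Combined with the upper bound, this shows $\underline{\dim}_\theta\Lambda = \overline{\dim}_\theta \Lambda = s(\theta)$.

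The hard part will be organising the recursive cover/measure correctly and verifying the telescoping identity that collapses the $L$-fold iteration of $T_s$ into the single equation~\eqref{eq:mainformula}; this requires keeping careful track of the aspect ratios of cylinders at each intermediate scale, because approximate squares of different generations fit together in a non-uniform way governed by $\gamma$ and the fractional part of $-\log\theta/\log\gamma$. Existence and uniqueness of $s\in(\dim_{\mathrm H}\Lambda,\dim_{\mathrm B}\Lambda)$ is a relatively soft postscript: fixing $\theta$, the left-hand side of~\eqref{eq:mainformula} is continuous and strictly monotone in $s$ (one checks via implicit differentiation using $I'>0$ and $I''>0$ that each $t_\ell(s)$ depends monotonically on $s$), equals $0$ at Bedford's and McMullen's formulae for Hausdorff and box dimensions at the two endpoints respectively, by direct substitution. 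The final cosmetic step is to extend continuously to the endpoints $\theta = \gamma^{-(L-1)}$ to check that the piecewise definitions across adjacent $L$-windows agree, which follows from $T_s$ fixing $t_L = \underline{t}$ at these boundary values of $\theta$.
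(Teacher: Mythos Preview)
Your overall strategy---multi-scale covers with thresholds governed by the $t_\ell(s)$, and a dual family of measures built from the tilted vectors $\mathbf{Q}^*_{t_\ell}\propto(N_{\hat\jmath}^{I'(t_\ell)})$ applied via the mass distribution principle---is exactly the paper's approach. But two substantive points are underestimated or mis-stated.

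First, the scale count is wrong. For generic $\theta\in(\gamma^{-L},\gamma^{-(L-1)})$ the optimal cover does \emph{not} use a single chain of $L+1$ geometrically spaced scales; it uses $2L$ scales that come in two interleaved families, namely $\delta^{\gamma^{\ell}}$ for $\ell=0,\dotsc,L-1$ (descending from $\delta$) and $\delta^{1/(\gamma^{\ell}\theta)}$ for $\ell=0,\dotsc,L-1$ (ascending from $\delta^{1/\theta}$). Only at the endpoint $\theta=\gamma^{-(L-1)}$ do these families coalesce into $L$ scales. The fractional part you mention is precisely what forces the two families apart, and the resulting cover (the paper's $\mathcal{U}_l$, $\mathcal{U}_{l,k}$, $\mathcal{U}_{l,k}'$) is considerably more intricate than a single recursion; the threshold condition at a given level depends on how the string behaves across several adjacent scale windows, not just one.

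Second, and more seriously, you have not addressed a genuine obstruction: for your combinatorial estimates (the $e^{-JI(t)}$ asymptotics from large deviations) to be valid you need each $t_\ell(s)$ to lie in $(\underline{t},\overline{t})$, since outside this interval the behaviour of the counting changes (the derivative $I'$ crosses $1$ at $\overline{t}$). A priori, iterating $T_s$ could push $t_L(s)$ past $\overline{t}$, and then the cover cost calculation breaks. The paper handles this by a bootstrapping argument (Lemma~\ref{lem:51}): it first proves the upper bound at the special values $\theta=\gamma^{-(L-1)}$ using a cover whose cost can be bounded for \emph{arbitrary} threshold tuples $\btau\in(\underline{t},\overline{t})^{L-1}$, not just the optimal $(t_1,\dotsc,t_{L-1})$; this yields $\overline{\dim}_{\gamma^{-(L-1)}}\Lambda\leq$ something explicit, which is then fed back to show $t_L(\overline{\dim}_{\gamma^{-(L-1)}}\Lambda)<T_{\dim_{\mathrm H}\Lambda}(t^*)<\overline{t}$. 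Without this step the inductive cover/measure construction cannot be carried out.

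Finally, the continuity across $\theta=\gamma^{-(L-1)}$ is not because $T_s$ fixes $\underline{t}$ (it does not: $T_s(\underline{t})=t_1(s)\neq\underline{t}$); rather, at that endpoint the coefficient $(1-\gamma^{L-1}\theta)$ in~\eqref{eq:mainformula} vanishes, so the $I(t_L)$ term drops out and what remains matches the simplified formula from the adjacent window.
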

In the case $L=1$ the formula~\eqref{eq:mainformula} simplifies to 
\[ \dim_{\mathrm B} \Lambda - \frac{1}{\log n} \left( \frac{1}{\theta} - 1\right)I(t_1(s)) - s = 0.\] 
If $\theta = \gamma^{-(L-1)}$ for some $L \in \mathbb{N}$ with $L \geq 2$, then it becomes 
\[ \dim_{\mathrm B}\Lambda - \frac{1}{\log m}\left(1-\frac{1}{\gamma}\right)I(t_{L-1}(s)) - s = 0.\] %

Theorem~\ref{thm:main} and Corollary~\ref{cor:allprop} below fully resolve \cite[Problem~15.8.1]{Fraser2021bedford} and the questions about Bedford--McMullen carpets in Falconer's survey paper~\cite[Section~14.8]{Falconer2021intdimsurvey}, and indeed provide more information. In particular, this is the first time it has been shown that the intermediate dimensions of Bedford--McMullen carpets exist for $\theta \in (0,1)$. %
Tools used in the proof include the method of types (see~\cite{Kolossvary2022lqtypes}) and a variant of a mass distribution principle for the intermediate dimensions, see Proposition~\ref{prop:mdp} from page~\pageref{prop:mdp}. 
The proof of the upper bound involves the construction of an explicit cover using scales $\delta, \delta^{\gamma}, \delta^{\gamma^2}, \dotsc, \delta^{\gamma^{L-1}}$ and $\delta^{1/\theta},\delta^{1/(\gamma\theta)}, \dotsc, \delta^{1/(\gamma^{L-1}\theta)}$. This cover consists of \emph{approximate squares}, which we define in Section~\ref{subsec:approxsquare}. We decide which parts of each approximate square to cover at which scale depending on how the different parts of the symbolic representation of the approximate square relate to each other. 
The proof simplifies when $\theta \geq 1/\gamma$ (where we just use the smallest and largest permissible scales), and when $\theta = \gamma^{-k}$ for $k \in \mathbb{N}$ (where we use scales $\delta,\delta^{\gamma},\dotsc,\delta^{\gamma^k}$ due to scales `lining up'). 
Note that the cover jumps from using $2k$ scales when $\theta \in (\gamma^{-k},\gamma^{-(k-1)})$ to using $2k + 2$ scales when $\theta \in (\gamma^{-(k+1)},\gamma^{-k})$ (and uses only $k$ scales when $\theta = \delta^{-k}$), which gives an indication of why one might expect a phase transition at $\theta = \gamma^{-k}$.

For sets whose intermediate dimensions have previously been calculated such as spirals~\cite{Burrell2022spirals}, sequences~\cite[Section~3.1]{Falconer2020firstintermediate}, and concentric spheres and topologist's sine curves~\cite{Tan2020}, only the two extreme scales were used in the cover. Several of the results in this thesis, however, use many different scales to obtain the upper bound. In particular, we see from~\eqref{intkeybound} on page~\pageref{intkeybound} that many different scales in the interval $[\delta,\delta^\theta]$ are generally used in the construction of the cover for infinitely generated self-similar sets. Moreover, it is clear from the construction in Theorem~\ref{t:gen-h-form} on page~\pageref{t:gen-h-form} that there are inhomogeneous Moran sets for which every cover approximating the intermediate dimensions arbitrarily closely would require an unbounded number of scales as $\delta$ tends to zero. This answers a question of Falconer~\cite[Section~14.8]{Falconer2021intdimsurvey}. Corollary~\ref{cor:twoscales}, which we prove in Section~\ref{sec:proofcorollaries}, %
shows that for Bedford--McMullen carpets with non-uniform fibres, more than two scales are needed when $\theta$ is small. 
\begin{corollary}\label{cor:twoscales}
Let $\Lambda$ be a Bedford--McMullen carpet with non-uniform fibres. There exist $\theta_0, \epsilon, \delta_0 > 0$ such that for all $\theta \in (0,\theta_0)$ and all covers $\{U_i\}$ of $\Lambda$ that uses at most two scales, both of which are less than $\delta_0$, we have $\sum_i |U_i|^{\dim_{\theta} \Lambda + \epsilon} \geq 1$. 
\end{corollary}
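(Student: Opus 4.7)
My plan is to combine continuity of $\theta \mapsto \dim_\theta\Lambda$ at $\theta = 0$ with a quantitative gap between the Hausdorff dimension of $\Lambda$ and the effective dimension achievable by any two-scale cover. First I would verify continuity at $\theta=0$: since $L(\theta) \to \infty$ as $\theta \to 0^+$, equation~\eqref{eq:mainformula} together with the boundedness of $I$ on $[\underline t,\overline t]$ forces $I(t_L(s(\theta))) \to 0$ and hence $t_L(s(\theta)) \to \underline t$; rearranging~\eqref{eq:mainformula} then gives $s(\theta) \to \dim_{\mathrm H}\Lambda$. This reduces the corollary to exhibiting a carpet-dependent constant $\eta > 0$ such that for any two-scale cover $\{U_i\}$ of $\Lambda$ with both diameters less than some threshold, $\sum_i |U_i|^{\dim_{\mathrm H}\Lambda + \eta} \geq 1$, and then setting $\epsilon = \eta/2$, $\theta_0$ so small that $\dim_\theta\Lambda < \dim_{\mathrm H}\Lambda + \eta/2$ for $\theta<\theta_0$.

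For the quantitative gap I would retrace the upper-bound proof of Theorem~\ref{thm:main}, restricted to two diameters. After rounding the scales to the form $\delta_2 = n^{-k}$, $\delta_1 = n^{-K}$ at bounded multiplicative cost (using the doubling structure of $\Lambda$), any two-scale cover can be compared to one using approximate squares at levels $k$ and $K$, where each level-$k$ square $Q$ is either kept whole (cost $n^{-ks}$) or replaced by its $B(Q)$ level-$K$ sub-squares (cost $B(Q)n^{-Ks}$). Grouping level-$k$ approximate squares by the type $\tau$ of their column extension and applying the large deviations principle that underlies the rate function $I$, the exponential order of the total cost equals $\sup_\tau[\log N + (\gamma-1)(\log M - I(\tau)) + \min(-s\log n,\;b_\kappa(\tau) - \kappa s\log n)]$, where $b_\kappa(\tau) = (\gamma-1)\tau + (\kappa - \gamma)^+\log N + \kappa(\gamma-1)\log M$ is the exponential rate of $B(Q)$ and $\kappa = K/k$. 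The crossover in the $\min$ occurs at $\tau_c = t_1(s)$ when $\kappa = \gamma$, recovering exactly the $L=1$ case of equation~\eqref{eq:mainformula}; optimizing over $\kappa \in [1,\infty)$ yields a value $s^{(2)}$ satisfying this same $L=1$ equation, which is precisely the intermediate dimension at $\theta = 1/\gamma$ given by Theorem~\ref{thm:main}.

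Thus $s^{(2)} = \dim_{1/\gamma}\Lambda$; by Corollary~\ref{cor:allprop}, $\theta \mapsto \dim_\theta\Lambda$ is strictly increasing on $(0,1]$ (for non-uniform fibres), so $s^{(2)} = \dim_{1/\gamma}\Lambda > \lim_{\theta \to 0^+}\dim_\theta\Lambda = \dim_{\mathrm H}\Lambda$. Setting $\eta \coloneqq (s^{(2)} - \dim_{\mathrm H}\Lambda)/2 > 0$, $\epsilon = \eta/2$, and $\theta_0$ so that $\dim_\theta\Lambda < \dim_{\mathrm H}\Lambda + \eta/2$ for $\theta < \theta_0$, any two-scale cover with diameters $<\delta_0$ satisfies $\dim_\theta\Lambda + \epsilon < s^{(2)}$, and the large-deviations lower bound from Step~2, applied with a matching Frostman-type measure from the proof of the lower bound in Theorem~\ref{thm:main}, gives $\sum_i |U_i|^{\dim_\theta\Lambda + \epsilon} \geq 1$. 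The main technical obstacle is verifying that the optimization over the discrete parameter $\kappa$ (and over the crossover type $\tau_c$) genuinely reproduces the $L=1$ case of~\eqref{eq:mainformula} for all admissible two-scale covers, and making the comparison to integer-level approximate squares precise enough that the losses can be absorbed into $\eta$; this requires a careful counting argument in the boundary regimes $\kappa < \gamma$ and $\kappa \gg \gamma$, where the structure of $B(Q)$ changes.
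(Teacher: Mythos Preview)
Your overall strategy matches the paper's in spirit: exploit continuity of $\theta\mapsto\dim_\theta\Lambda$ at $0$ together with a quantitative gap between $\dim_{\mathrm H}\Lambda$ and the best dimension any two-scale cover can achieve. However, you leave a genuine gap at precisely the point you flag as the ``main technical obstacle''. Your claim that the infimum over the scale ratio $\kappa\in[1,\infty)$ of the two-scale achievable dimension equals $\dim_{1/\gamma}\Lambda$ is only justified for $\kappa\in[1,\gamma]$ (where Remark~\ref{rem:pressureexp} applies); for $\kappa>\gamma$ the sub-square count $B(Q)$ picks up an extra factor $N^{K-\gamma k}$, the crossover structure changes, and you do not actually show the resulting dimension is bounded below by $\dim_{1/\gamma}\Lambda$. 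Without that, your choice of $\eta$ and $\epsilon$ is unjustified.

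The paper avoids this optimization entirely by a case split on the ratio of the two scales. Fix $\theta_1<1/\gamma$ with $\dim_{\theta_1}\Lambda+\eta<\dim_{\mathrm B}\Lambda$. For $s\le\dim_{\theta_1}\Lambda+\eta$ the subdivision condition~\eqref{eq:50} rearranges to a lower bound on the column average that tends to $-\infty$ as the ratio $1/\theta\to\infty$; hence for all ratios exceeding $1/(2\theta_0)$ (some fixed threshold) subdividing is \emph{never} beneficial for any approximate square, so the two-scale cost equals the single-scale cost, which is $\gtrsim 1$ since $s<\dim_{\mathrm B}\Lambda$. For ratios below $1/(2\theta_0)$ the cover is by definition a $(\delta,2\theta_0)$-cover, so the lower bound from Theorem~\ref{thm:main} applies directly and gives cost $\ge 1$ at any $s<\dim_{2\theta_0}\Lambda$. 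Strict monotonicity (Corollary~\ref{cor:allprop}\ref{itemiii}) then lets one choose $\epsilon$ with $\dim_{\theta_0}\Lambda+\epsilon<\dim_{2\theta_0}\Lambda$, and both cases close. This route needs no large-deviations optimization over~$\kappa$, no new Frostman measure, and no analysis of the $\kappa>\gamma$ regime at all.
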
%

We now continue with corollaries of Theorem~\ref{thm:main} about the form of the graph of the function $\theta \mapsto \dim_{\theta} \Lambda$ that do not follow from the general theory, give a rather unexpected connection to multifractal analysis and bi-Lipschitz equivalence of two carpets, and provide equivalent formulations of the rate function $I(t)$.

\subsection{Form of the intermediate dimensions}

We assume that the carpet has non-uniform fibres, otherwise, $\theta \mapsto \dim_{\theta} \Lambda$ is just a constant function. %
We denote the left and right derivatives at $\theta$ by 
\begin{equation*}
\partial_- \dim_{\theta} \Lambda \coloneqq \lim_{h \to 0^+} \frac{\dim_{\theta} \Lambda - \dim_{\theta - h} \Lambda}{h} \;\text{ and }\; \partial_+ \dim_{\theta} \Lambda \coloneqq \lim_{h \to 0^+} \frac{\dim_{\theta + h} \Lambda - \dim_{\theta} \Lambda}{h}. 
\end{equation*}

\begin{corollary}\label{cor:allprop}
Let $\Lambda$ be a Bedford--McMullen carpet with non-uniform fibres. Then the function $\theta \mapsto \dim_{\theta} \Lambda$ has the following properties:
\begin{enumerate}[label=(\roman*)]
\item it is real analytic on the interval $(\gamma^{-L},\gamma^{-(L-1)})$ for all $L \in \mathbb{N}$; \label{itemi}
\item $\partial_- \dim_{\theta} \Lambda$ exists at every $\theta \in (0,1]$ and $\partial_+ \dim_{\theta} \Lambda$ exists at every $\theta \in (0,1)$; \label{itemii}
\item it is strictly increasing and has phase transitions at every negative integer power of $\gamma$. More precisely, there exists $C_0>0$ depending only on $\Lambda$ such that for all $\theta \in (0,1)$,
\begin{equation*}
C_0 < \partial_- \dim_{\theta} \Lambda  \leq  \partial_+ \dim_{\theta} \Lambda
\end{equation*}
with equality if and only if for all $L \in \mathbb{N}$ we have $\theta\neq \gamma^{-L}$. Moreover, $\frac{\partial_+ \dim_{\gamma^{-L}} \Lambda}{\partial_- \dim_{\gamma^{-L}} \Lambda}$ converges to a constant in $(1,\infty)$ as $L \to \infty$; \label{itemiii}
\item there exist $C \in [1,\infty)$ and $\theta_0 > 0$ depending only on $\Lambda$ such that for all $\theta \in (0,\theta_0]$,
\[ \dim_{\mathrm H} \Lambda + \frac{C^{-1}}{(\log \theta)^2} \leq \dim_{\theta} \Lambda \leq \dim_{\mathrm H} \Lambda + \frac{C}{(\log \theta)^2}; \]
\label{itemiv}
\item it is strictly concave on the interval $[\gamma^{-L},\gamma^{-(L-1)}]$ for all $L \in \mathbb{N}$. \label{itemv}
\end{enumerate}
\end{corollary}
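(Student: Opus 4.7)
The strategy is to treat the defining equation~\eqref{eq:mainformula} as an implicit relation $F_L(\theta, s) = 0$, where
\[
F_L(\theta, s) \coloneqq \gamma^L\theta \log N - (\gamma^L\theta - 1) t_L(s) + \gamma(1-\gamma^{L-1}\theta)(\log M - I(t_L(s))) - s\log n,
\]
and apply the implicit function theorem on each interval $[\gamma^{-L},\gamma^{-(L-1)}]$. The preparatory step is to verify that for $s \in (\dim_{\mathrm H}\Lambda, \dim_{\mathrm B}\Lambda)$ the iterates $t_\ell(s)$ stay in the open interval $(\underline{t}, \max_{\ih} \log N_{\ih})$ where $I$ is real analytic and strictly convex, so that $t_L(s)$ is a real analytic function of $s$. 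The key algebraic identity underlying everything is that $T_s$ has $\underline{t}$ as a fixed point when $s = \dim_{\mathrm H}\Lambda$, in which case~\eqref{eq:102} gives $I'(\underline{t}) = 0$, and that $T_s$ is increasing in $s$ and strictly increasing on $(\underline{t}, \max\log N_{\ih})$ since $\gamma I' > 0$ there. This makes $s \mapsto t_L(s)$ strictly increasing and real analytic on the relevant range.

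For~\ref{itemi}, I would differentiate $F_L$ in $s$ and use the chain rule on $t_L = T_s \circ T_s \circ \dotsb$ together with $T_s'(t) = \gamma I'(t)$ and $\partial_s T_s(t) = \log n$. The derivative $\partial_s F_L$ can be rearranged as a product/sum whose sign is governed by the positivity of $I''$, $\log n > 0$, and $\gamma^L \theta - 1 \in [0, \gamma-1]$ for $\theta \in [\gamma^{-L}, \gamma^{-(L-1)}]$. Showing $\partial_s F_L < 0$ on the relevant region then yields real analyticity on the open interval by the analytic implicit function theorem. For~\ref{itemii}, continuity of the solution at $\theta = \gamma^{-(L-1)}$ follows by verifying that the formulas for $L$ and $L-1$ give the same $s$ at the boundary (this comes down to $\gamma^{L-1}\theta = 1$ collapsing one term), while the one-sided derivatives are read off from the implicit function theorem applied on each side.

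For~\ref{itemiii} and~\ref{itemv}, I would compute
\[
\partial_\theta F_L \;=\; \gamma^L \log N - \gamma^L t_L(s) - \gamma^{L}(\log M - I(t_L(s))),
\]
which one checks is strictly negative (it encodes the fact that a finer restriction on the covering sets can only increase dimension). Then $-\partial_\theta F_L/\partial_s F_L > 0$ is a bounded ratio away from~$0$, giving the uniform lower bound $C_0$ on $\partial_\pm\dim_\theta\Lambda$. The phase transition behavior at $\theta = \gamma^{-L}$ comes from the discontinuity of $\partial_\theta F_L$ versus $\partial_\theta F_{L+1}$: as $\theta$ crosses $\gamma^{-L}$ the effective chain rule for $t_L$ acquires one extra factor of $\gamma I'(t_L(s))$, and this factor is strictly less than $1$ in a quantitative way for $s < \dim_{\mathrm B}\Lambda$, producing the jump and its limiting ratio. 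Strict concavity~\ref{itemv} should then follow by differentiating $\partial_\theta F_L + s'(\theta) \partial_s F_L = 0$ once more and using $I'' > 0$ plus the fact, established in the previous step, that $s'(\theta)$ stays in a compact set; the positivity of $I''$ on $(\underline{t},\max\log N_{\ih})$ is what forces the sign.

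For~\ref{itemiv}, the asymptotic behaviour as $\theta \to 0$ is the most delicate point and will be the main obstacle. Here $s = s(\theta) \to \dim_{\mathrm H}\Lambda$ and one writes $s = \dim_{\mathrm H}\Lambda + \epsilon$. The fixed point of $T_{\dim_{\mathrm H}\Lambda}$ is $\underline{t}$ with multiplier $T_{\dim_{\mathrm H}\Lambda}'(\underline{t}) = \gamma I'(\underline{t}) = 0$, so $t_L(\dim_{\mathrm H}\Lambda) = \underline{t}$ for all $L$. Expanding $T_{\dim_{\mathrm H}\Lambda+\epsilon}$ to first order near $\underline{t}$ gives $T_s(\underline{t}+u) \approx \underline{t} + \epsilon\log n + \tfrac{\gamma}{2}I''(\underline{t})u^2$, and iterating $L$ times (with $L \asymp -\log\theta/\log\gamma$) shows that $t_L(s) - \underline{t}$ and hence $\epsilon$ must balance like $\epsilon \asymp L^{-2} \asymp (\log\theta)^{-2}$ to make~\eqref{eq:mainformula} hold. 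The hard part is controlling this iteration uniformly and matching constants in both directions of the inequality; this will require a careful two-sided estimate on the orbit of $T_s$ near its neutral fixed point, analogous to standard estimates for parabolic dynamics.
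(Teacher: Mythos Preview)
Your outline for parts~\ref{itemi},~\ref{itemii},~\ref{itemiii} and~\ref{itemv} is broadly on the right track and close to what the paper does: implicit function theorem on $F_L(\theta,s)=0$, explicit computation of $s'(\theta)$ via the chain rule along the composition $t_L = T_s^{L-1}(t_1(s))$, and bounds coming from the fact that all $t_\ell(s)$ stay in a compact subinterval of $(\underline t,\overline t)$ (Lemma~\ref{lem:51}) on which $I', I''$ are bounded and $0<I'<1$. One small correction: the extra factor appearing at the phase transition is not ``strictly less than $1$'' for the reason you give; rather the paper establishes the jump via the telescopic identity $\gamma^{-1}A_{L+1}(\gamma^{-L})-I'(t_L)A_L(\gamma^{-L})=\gamma^{-1}$ together with the inequality $I'<1$ on $(\underline t,\overline t)$, which forces the numerator of $\partial_+$ to exceed that of $\partial_-$.

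Part~\ref{itemiv} contains a genuine error. You claim that $\underline t$ is a fixed point of $T_{\dim_{\mathrm H}\Lambda}$ with multiplier $0$, and hence $t_L(\dim_{\mathrm H}\Lambda)=\underline t$ for all $L$. This is false: $T_s(\underline t)=t_1(s)$ and $t_1(\dim_{\mathrm H}\Lambda)>\underline t$ strictly (see~\eqref{e:sunderlinedef} and Lemma~\ref{lem:51}). The actual fixed point of $T_{\dim_{\mathrm H}\Lambda}$ is the point $t'$ defined by $I'(t')=\gamma^{-1}$ (Lemma~\ref{lem:41}), where the multiplier is $T_s'(t')=\gamma I'(t')=1$, i.e.\ genuinely \emph{neutral}. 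A multiplier of $0$ would be super-attracting and would give a completely different (much faster) rate; your expansion $u\mapsto \epsilon\log n+\tfrac{\gamma}{2}I''(\underline t)u^2$ does not produce $\epsilon\asymp L^{-2}$. The correct mechanism (as in the paper's proof of~\ref{itemiv}) is parabolic dynamics near $t'$: Taylor expansion gives $T_s(t)\approx t+c^{-1}(t-t')^2$ near $t'$, and since $t_1(s)<t'<\liminf t_{L(\theta)}(s(\theta))$ (Lemmas~\ref{lem:tprimelesststar} and~\ref{lem:tlconvergence}), most of the $L$ iterates lie in a window of width $\asymp L(s-\dim_{\mathrm H}\Lambda)$ around $t'$. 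Counting how many iterates can lie in dyadic shells around $t'$ then yields $L\asymp (s-\dim_{\mathrm H}\Lambda)^{-1/2}$, i.e.\ $\dim_\theta\Lambda-\dim_{\mathrm H}\Lambda\asymp L^{-2}\asymp(\log\theta)^{-2}$.
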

In~\cite[Proposition~4.1]{Falconer2020firstintermediate} for small enough $\theta$ the upper bound $\overline{\dim}_{\theta}\Lambda\leq \dim_{\mathrm H}\Lambda+c(-\log \theta)^{-1}$ was proved for a constant $c$ depending only on $\Lambda$. Corollary~\ref{cor:allprop}~\ref{itemiv} shows that although this bound is not sharp, we do indeed have that $\frac{\dim_{\theta} \Lambda - \dim_{\mathrm H} \Lambda}{\theta} \to \infty$ as $\theta \to 0^+$. 

\subsection{Multifractal analysis and bi-Lipschitz equivalence}\label{subsec:multifractal}

In this section, and in Section~\ref{s:multifractalproof} where we prove the results in this section, it is convenient to change notation. Here, the parameters $(M_0,N_1,\dotsc,N_{M_0}, R_1,\dotsc, R_{M_0})$ will define a Bedford--McMullen carpet. 
Now $M_0$ denotes the number of different values that the number of maps in a non-empty column can take. 
Note that $M_0\geq 2$, since $M_0=1$ corresponds to the uniform fibre case. 
We write $N_1,\dotsc,N_{M_0}$ for the actual values that the number of maps in a non-empty column can take, and we order them as $N_1>N_2>\dotsb>N_{M_0}$. 
For each $\ih \in \{1,\dotsc,M_0\}$, we write $R_{\ih}$ for the number of columns containing exactly $N_{\ih}$ maps. 
As with the previous notation, we write $M = \sum_{\ih = 1}^{M_0} R_{\ih}$ for the number of non-empty columns, and $N = \sum_{\ih = 1}^{M_0} R_{\ih} N_{\ih}$ for the total number of maps. 
For example, for the carpet in Figure~\ref{f:carpetIFS}, the number of maps in a non-empty column is either~$1$ or~$2$, so $N_1 = 2$ and $N_2 = 1$; each corresponds to just one column, so $R_1 = R_2 = 1$, and $M_0 = \# \{1,2\} = 2$.

A central problem in multifractal analysis is to examine the way a Borel measure $\mu$ is spread over its support $\supp\, \mu$. For a survey of this topic, we refer the reader to~\cite[Chapter~17]{Falconer2014main}. 
The \emph{local dimension of $\mu$ at $x$} is
\begin{equation*}
\dim_{\mathrm{loc}}(\mu,x) = \lim_{r\to 0} \frac{\log \mu(B(x,r))}{\log r}
\end{equation*}
if the limit exists, which approximately measures the rate of decay of $\mu(B(x,r))$ as a power law $r^{\alpha}$. The measure $\mu$ is \emph{exact dimensional} if $\dim_{\mathrm{loc}}(\mu,x)$ is equal to a specific $\alpha$ for $\mu$-almost all $x$. However, $\dim_{\mathrm{loc}}(\mu,x)$ can still potentially take up a whole spectrum of different $\alpha$. This motivates the definition of the \emph{fine} or \emph{Hausdorff multifractal spectra}
\begin{equation*}
f_{\mu}(\alpha) \coloneqq \dim_{\mathrm H} \{ \, x\in\supp\, \mu : \dim_{\mathrm{loc}}(\mu,x)=\alpha \, \}.
\end{equation*}

Concentrating on the self-affine setting, given a self-affine iterated function system $\mathcal{S}=\{S_1,\dotsc,S_N\}$, meaning that all $S_i\colon \Rd\to\Rd$ are contracting affine maps, and a probability vector $\mathbf{p}$ with strictly positive entries, the \emph{self-affine measure} $\mu_{\mathbf{p}}$ is the unique probability measure supported on the attractor of $\mathcal{S}$ satisfying
\begin{equation*}
\mu_{\mathbf{p}}(A) = \sum_{i=1}^N p_i \mu_{\mathbf{p}}(S_i^{-1}A) \;\text{  for all Borel sets } A\subset \Rd.
\end{equation*} 
It is known that all self-affine measures are exact dimensional~\cite{Barany2017exactdim,
Feng2023exactdim}; this was resolved earlier in~\cite{Kenyon1996bmexactdim} for those supported on Bedford--McMullen carpets. Moreover, the dimension satisfies a Ledrappier--Young type formula, as per the strand of research initiated in~\cite{Ledrappier1985youngfirst,Ledrappier1985youngsecond}. 
The fine multifractal spectrum of self-affine measures on Bedford--McMullen carpets is also known~\cite{Barral2007bmmultifractal,
Jordan2011bm,King1995bmmultifractal} and (under the separation condition assumed in~\cite{King1995bmmultifractal}) has been generalised to higher dimensions in~\cite{Olsen1998sponge}. 
When $\mathbf{p}=(1/N,\dotsc,1/N)$ is the uniform vector, we simply write $\nu=\mu_{\mathbf{p}}$ and call it the \emph{uniform self-affine measure}. 
In this case, define the function $\beta_{\nu}(\xi)$ for $\xi \geq 0$ by \begin{equation}\label{eq:definebeta}
m^{-\beta_{\nu}(\xi)}N^{-\xi} \sum_{\ih=1}^{M_0} R_{\ih} N_{\ih}^{\gamma^{-1} + (1-\gamma^{-1})\xi} = 1.
\end{equation}
 Note that because of the minus sign before $\beta_{\nu}(\xi)$ (which in some papers is erroneously omitted), $\beta_{\nu}(\xi)$ is a convex function. 
 Define
 \begin{equation*}
\alpha_{\text{min}} \coloneqq \frac{\log N}{\log m} - \frac{1 - \gamma^{-1}}{\log m} \log N_{1}; \qquad \alpha_{\text{max}} \coloneqq \frac{\log N}{\log m} - \frac{1 - \gamma^{-1}}{\log m} \log N_{M_0}. 
\end{equation*}%
 Then by \cite[Theorem~1]{Jordan2011bm}, the multifractal spectrum is 
\begin{equation}\label{eq:uniformmultifractal} f_{\nu}(\alpha) = \inf_{\xi} (\alpha \xi + \beta_{\nu}(\xi)) = -\beta_{\nu}^*(-\alpha) \qquad \mbox{for all} \qquad \alpha \in (\alpha_{\text{min}},\alpha_{\text{max}}), 
\end{equation}
where $\beta_{\nu}^*(\alpha') \coloneqq \sup_{\xi'} (\alpha' \xi' - \beta_{\nu}(\xi'))$ is the Legendre transform of $\beta_{\nu}$ (defined in the same way as for the rate function in~\eqref{eq:22}). 

Another quantity that is closely related to the multifractal spectrum is the $L^q$ spectrum of a measure $\mu$ (see~\cite[Chapter~11]{Falconer1997techniques}). It is a function $T_\mu \colon \R \to \R$ which quantifies the global fluctuation of $\mu$, and its value at $q=0$ describes the box dimension of the support of the measure. %
The $L^q$ spectrum can be defined by 
\begin{equation}\label{e:lqdef}
T_\mu(q) \coloneqq \lim_{\delta \to 0^+} \frac{\log T_{\delta}(\mu,q)}{-\log \delta} 
\end{equation}
if the limit exists, where 
\[ T_{\delta}(\mu,q) \coloneqq \sup \left\{ \, \sum_i (\mu(B_i))^q : B_i \mbox{ disjoint balls of radius } \delta \mbox{ centred in } \supp(\mu) \, \right\}. \]

The $L^q$ spectrum of self-similar measures has been studied in~\cite{Cawley1992multifractal}, \cite[Chapter~11]{Falconer1997techniques}, and more recently (under the exponential separation condition) in Shmerkin's groundbreaking paper~\cite{Shmerkin2019furstenberg}. 
For self-similar measures satisfying the open set condition, the limit~\eqref{e:lqdef} exists and the multifractal formalism is satisfied, meaning that the Legendre transform of the $L^q$ spectrum equals the multifractal spectrum~\cite{Olsen1995multifractalformalism}. 
Now let $\nu$ be the uniform self-affine measure on a Bedford--McMullen carpet with non-uniform fibres, and let $\nu_x$ be the measure obtained by projecting $\nu$ orthogonally onto the $x$-axis. 
Note that $\nu_x$ is a homogeneous self-similar measure with all contraction ratios equal to $1/m$, satisfying the open set condition. For $1 \leq \ih \leq M_0$, the weight $N_{\ih}/N$ occurs with multiplicity $R_{\ih}$. 
Therefore for $q$ in an open neighbourhood of $1$, $T_{\nu_x}(q)$ satisfies  
\[ \sum_{\ih = 1}^{M_0} R_{\ih} \left(\frac{N_{\ih}}{N}\right)^q \left(\frac{1}{m}\right)^{T_{\nu_x}(q)} = 1. 
\]%
A direct calculation shows that 
\[ 
T_{\nu_x}(q) = -\frac{\log N}{\log m} q + \frac{\log \sum_{\ih = 0}^{M_0} R_{\ih} (N_{\ih})^q}{\log m}.
\] 
The $L^q$ spectra of self-affine measures have been studied in~\cite{Kolossvary2022lqtypes,Feng2005lq,
Falconer1999lq}. 
For self-affine measures supported on Bedford--McMullen carpets, the limit in~\eqref{e:lqdef} still exists but the multifractal formalism does not generally hold. 
Applying a result of Feng and Wang \cite[Theorem~2]{Feng2005lq} shows that $T_\nu(q)$ satisfies 
\[ 
N \cdot N^{-q} m^{-T_{\nu_x}(q)} n^{-(T_\nu(q) - T_{\nu_x}(q))} = 1.
\]
A direct manipulation shows that for $q$ in an open neighbourhood of $1$, 
\begin{align}\label{e:lqformula}
\begin{split}
T_\nu(q) &= \frac{\log N}{\log n} - q \frac{\log N}{\log m} + \left( 1 - \frac{\log m}{\log n}\right) \frac{\log \sum_{\ih=1}^{M_0} R_{\ih} (N_{\ih})^q}{\log m} \\*
&= \left( 1-\frac{\log m}{\log n}\right) \beta_{\nu}\left( \frac{\log (n^{q}/m)}{\log(n/m)} \right). 
\end{split}
\end{align}

Before describing the connection between the multifractal spectra and the intermediate dimensions, we observe that the grid on which a carpet can be defined is not unique, and in fact by iterating the IFS one can see that every carpet can be defined on infinitely many grids. For example, by iterating the IFS, the carpet from Figure~\ref{f:carpetIFS} can be defined on a $2 \times 3$ grid or on a $4 \times 9$ grid (though of course many carpets on a $4 \times 9$ grid cannot be realised on a $2 \times 3$ grid). 
Theorem~\ref{t:grid} gives information about the grids on which a carpet can be defined, and is proved in Section~\ref{s:multifractalproof}. %
It demonstrates for example that since $2$ and $3$ are multiplicatively independent, a carpet with non-uniform fibres defined on a $2 \times 4$ grid cannot be defined on a $3 \times 9$ grid (even though $\log 4 / \log 2 = \log 9 / \log 3$). 

\begin{thm}\label{t:grid}
\begin{enumerate}[label=(\roman*)]
\item\label{i:gridonecarpet} 
If a Bedford--McMullen carpet $\Lambda$ with non-uniform fibres can be defined on both a $m \times n$ grid and on a $m' \times n'$ grid, then $\log n / \log m = \log n' / \log m'$ and $\log n / \log n' \in \Q$. %
\item\label{i:gridtwocarpets} 
Consider two carpets $\Lambda_1$ and $\Lambda_2$ with non-uniform fibres which are defined by IFSs $\mathcal{S}_1$ and $\mathcal{S}_2$ on grids of size $m_1 \times n_1$ and $m_2 \times n_2$ respectively. 
Then they can be realised on the same grid if and only if 
\[ \frac{\log n_1}{\log n_2} = \frac{\log m_1}{\log m_2} \in \Q. \]  
\end{enumerate}
\end{thm}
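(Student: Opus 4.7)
The plan is to prove part (i) in two steps---first $\gamma = \gamma'$, then the rationality $\log n/\log n' \in \Q$---and then derive part (ii) from part (i) together with a simple iteration argument.

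First I would establish $\gamma = \gamma'$, where $\gamma = \log n/\log m$ and $\gamma' = \log n'/\log m'$. The assumption of non-uniform fibres is intrinsic to $\Lambda$, since it is equivalent to $\dim_{\mathrm H}\Lambda < \dim_{\mathrm B}\Lambda$ by~\eqref{eq:102} and~\eqref{eq:103}, so both realisations have non-uniform fibres. By Corollary~\ref{cor:allprop}~\ref{itemiii}, the function $\theta \mapsto \dim_\theta\Lambda$ has a phase transition (a strict inequality between the one-sided derivatives) at $\theta_0 \in (0,1)$ if and only if $\theta_0 = \gamma^{-L}$ for some $L \in \N$. Since $\dim_\theta\Lambda$ is intrinsic to the set $\Lambda$, the same statement must hold with $\gamma$ replaced by $\gamma'$, and comparing maximal elements of the two sets $\{\gamma^{-L}\}_{L \in \N}$ and $\{(\gamma')^{-L}\}_{L \in \N}$ forces $\gamma = \gamma'$.

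The rationality $\log n/\log n' \in \Q$ is the main obstacle. Given $\gamma = \gamma'$, on the interval $(\gamma^{-1},1]$ the formula of Theorem~\ref{thm:main} reduces to
\begin{equation*}
\dim_{\mathrm B}\Lambda - \frac{1}{\log n}\Big(\tfrac{1}{\theta}-1\Big) I(t_1(s)) - s = 0,
\end{equation*}
with $t_1(s) = (s - \log M/\log m)\log n$, and an analogous equation with primed data holds for the second realisation. Since $\dim_\theta\Lambda$ is intrinsic, subtracting these two equations yields a real-analytic functional identity on $(\gamma^{-1},1)$ relating the rate functions $I$ and $I'$. My plan is to unwind the definitions of $I$ and $I'$ as Legendre transforms of the log-moment-generating functions $\lambda \mapsto \log\bigl(M^{-1}\sum_{\hat\imath}N_{\hat\imath}^\lambda\bigr)$ and its primed analogue, and to argue that such an analytic identity between Legendre transforms of log-sums of integer powers can only hold when the two integer-valued combinatorial data sets $(N_{\hat\imath})$ and $(N'_{\hat\jmath'})$ are related via multiplicative compatibility of the grids: iterating both realisations to a common depth, this identity should force $m^a = (m')^b$ for some $a, b \in \N$, which is precisely $\log m/\log m' \in \Q$ and hence, via $\gamma = \gamma'$, $\log n/\log n' \in \Q$. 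I expect this step to be the delicate one because the rate functions encode the combinatorics of the column sizes in a highly non-linear way, and the analytic identity must be leveraged together with the integrality of the $N_{\hat\imath}$ and the underlying grid sizes.

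For part (ii), the forward direction is a direct consequence of part (i): if both carpets are realised on a common $m' \times n'$ grid, then applying part (i) separately to $\Lambda_1$ and $\Lambda_2$ gives $\log m_i/\log m' = \log n_i/\log n' \in \Q$ for $i = 1, 2$, and taking the ratio of these two rational numbers produces the required $\log m_1/\log m_2 = \log n_1/\log n_2 \in \Q$. For the converse, write the common ratio as $a/b$ with coprime $a, b \in \N$, so that $m_1^b = m_2^a$ and $n_1^b = n_2^a$; iterating the IFS defining $\Lambda_1$ to depth $b$ and the IFS defining $\Lambda_2$ to depth $a$ then realises both carpets as Bedford--McMullen carpets on the common grid of size $m_1^b \times n_1^b = m_2^a \times n_2^a$.
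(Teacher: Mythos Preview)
Your argument for $\gamma=\gamma'$ via the phase transitions, and both directions of part~(ii), match the paper's approach exactly. Your reduction for the rationality step is also on track: after Legendre transforming the rate-function identity you obtain (as the paper does in~\eqref{eq:rational1}) an identity of the form
\[
\frac{1}{\log n}\log\Bigl(\tfrac{1}{M}\sum_{\ih} R_{\ih} N_{\ih}^{\lambda}\Bigr) + \lambda\,\frac{\log M}{\log m}
=
\frac{1}{\log n'}\log\Bigl(\tfrac{1}{M'}\sum_{\jh} R'_{\jh} (N'_{\jh})^{\lambda}\Bigr) + \lambda\,\frac{\log M'}{\log m'}
\]
for all~$\lambda$.

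However, your plan for extracting $\log n/\log n'\in\Q$ from this identity---hoping that ``integrality of the $N_{\ih}$ and the underlying grid sizes'' will force $m^a=(m')^b$---is where the genuine gap lies. The identity above does not directly compare integer bases on the two sides; rather, exponentiating it expresses one finite Dirichlet-type sum as the other raised to the irrational(?) power $\alpha\coloneqq\log n'/\log n$. The paper's key idea, which you have not identified, is number-theoretic: expand $\bigl(\sum_{\ih} R_{\ih} N_{\ih}^{\lambda}\bigr)^{\alpha}$ by the generalised binomial theorem as $\lambda\to+\infty$ and compare with the primed side. Matching exponential terms forces certain coefficients to vanish, and those coefficients are polynomials in~$\alpha$ with rational coefficients; hence $\alpha$ is algebraic. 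Since $n^{\alpha}=n'$ is an integer, the Gelfond--Schneider theorem then forces $\alpha\in\Q$. Without this Gelfond--Schneider step (or an equivalent transcendence argument), the analytic identity alone does not obviously rule out an irrational~$\alpha$, so your sketch does not yet close the argument.
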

In fact, we will see below that if two carpets with non-uniform fibres are bi-Lipschitz equivalent then they can be defined on the same grid. The same is true even if we merely assume the carpets have the same intermediate dimensions, or support uniform Bernoulli measures with equal multifractal spectra. 

We make some remarks about part~\ref{i:gridtwocarpets}. The reverse implication is immediate. Indeed, if $\log n_1 / \log n_2 = \log m_1 / \log m_2  = a/b \in \Q$, then the $b$-th iterate of $\mathcal{S}_1$ and the $a$-th iterate of $\mathcal{S}_2$ are both defined on the same grid of size $m_1^b \times n_1^b$. 
It is straightforward to see that the rate function $I(t)$ of $\mathcal{S}_1$ and the rate function $I^{(b)}(t)$ of the $b$-th iterate of $\mathcal{S}_1$ are related by $I^{(b)}(bt) =b I(t)$. 
For the forward implication, if both carpets can be realised on the same grid, then the fact that $\log n_1 / \log m_1 = \log n_2 / \log m_2$ was noted by Fraser and Yu using the Assouad spectrum in \cite[Theorem~3.3]{Fraser2018secondassouad}, and also follows from the intermediate dimension formula, noting that geometric quantities such as dimensions of course do not change by taking an iterate of the system. 
The fact that $n$ and $n'$ must be multiplicatively dependent (as must $m$ and $m'$) follows from Lemma~\ref{lem:multifractallemma} on page~\pageref{lem:multifractallemma}, and is related to work of Meiri and Peres~\cite[Theorem~1.2]{Meiri1999furstenberg}. 
This is in turn related to Furstenberg's $\times 2, \times 3$ principle (which suggests that expansions in multiplicatively independent bases should have no common structure). 
Other work along these lines includes~\cite{Furstenberg1967principle,Wu2019furstenberg,
Shmerkin2019furstenberg}, but there are many challenging open problems, such as whether there exists a non-atomic measure on the torus that is $\times 2$- and $\times 3$-invariant but not Lebesgue.

Our next result, which we prove in Section~\ref{s:multifractalproof} using Theorem~\ref{thm:main}, gives a direct connection between the intermediate dimensions and the multifractal and $L^q$ spectra of the uniform self-affine measure. 

\begin{theorem}\label{thm:multifractal}
Let $\Lambda$ and $\Lambda'$ be two Bedford--McMullen carpets with non-uniform fibres, and denote the corresponding uniform self-affine measures by $\nu$ and $\nu'$. Then the following are equivalent:
\begin{enumerate}[label=(\roman*)]
\item $\dim_{\theta}\Lambda = \dim_{\theta}\Lambda'$ for every $\theta\in[0,1]$; \label{item1}
\item $f_{\nu}(\alpha)=f_{\nu'}(\alpha)$ for all $\alpha \in (\alpha_{\text{min}},\alpha_{\text{max}})$. \label{itemnow2}
\end{enumerate}
Moreover, if~\ref{item1},~\ref{itemnow2} hold, then both carpets can be defined on the same grid. %

Now assume that $\Lambda$ and $\Lambda'$ are defined on the same $m\times n$ grid to begin with, with parameters $\{M_0,N_1,\dotsc,N_{M_0}, R_1,\dotsc, R_{M_0}\}$ and $\{M'_0,N'_1,\dotsc,N'_{M'_0}, R'_1,\dotsc, R'_{M'_0}\}$, respectively. Denote the corresponding rate functions defined in~\eqref{eq:22} by $I(t)$ and $I'(t)$. Let $\underline{t}$ and $\overline{t}$ be as defined previously, for the carpet $\Lambda$. Let $(a,b) \subset (\gamma^{-1},1)$ be a (non-empty) open interval. Then each of~\ref{item1},~\ref{itemnow2} is equivalent to each of the following: 
\begin{enumerate}[label=(\roman*)]
\setItemnumber{3}\item $\dim_{\theta}\Lambda = \dim_{\theta}\Lambda'$ for every $\theta \in (a,b)$; \label{itemnow3}
\setItemnumber{4}\item $T_{\nu}(q) = T_{\nu'}(q)$ for all $q \in \R$. \label{itemlq}
	\setItemnumber{5}\item $I(t) = I'(t-\gamma \log(M'/M))$ for all $t \in (\underline{t},\overline{t})$; \label{item4}
	\setItemnumber{6}\item $M_0=M'_0$, furthermore, $N_{\ih}/N'_{\ih}=(R'_{\ih}/R_{\ih})^{\gamma}= (M'/M)^{\gamma}$ for all $\ih=1,\dotsc,M_0$. \label{itemnow5}
\end{enumerate}   
\end{theorem}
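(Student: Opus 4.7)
The strategy is to first show that any of the hypothesised equalities (i), (ii), or (iii) forces both carpets to share the same multiplicative grid structure; then, assuming both carpets are defined on the same $m \times n$ grid, establish the cycle of equivalences (iii)$\Leftrightarrow$(vi)$\Leftrightarrow$(v)$\Leftrightarrow$(iv)$\Leftrightarrow$(i)$\Leftrightarrow$(ii), with (iii)$\Rightarrow$(v) being the main technical step. The organisation is driven by the observation that the formula in Theorem~\ref{thm:main} reduces, for $\theta\in(\gamma^{-1},1)$, to a single scalar equation linking $s=\dim_\theta\Lambda$ to $I(t_1(s))$, which is ideally suited to extracting $I$ from knowledge of $s$ on an interval.

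\textbf{Reduction to a common grid.} The box dimension formula~\eqref{eq:103} recovered from $\theta=1$ already forces several invariants to coincide, but the finer input is the location of phase transitions. By Corollary~\ref{cor:allprop}~\ref{itemiii}, $\theta\mapsto\dim_\theta\Lambda$ is non-differentiable precisely at $\{\gamma^{-L}\}_{L\in\N}$. If (i) or (iii) holds, the non-differentiability loci of $\dim_\theta\Lambda$ and $\dim_\theta\Lambda'$ must agree at least on the open interval $(a,b)$, and in the case (i) everywhere, forcing $\gamma=\gamma'$; equivalently $\log n_1/\log m_1=\log n_2/\log m_2$. The additional rationality $\log n_1/\log n_2\in\Q$ needed to invoke Theorem~\ref{t:grid}~\ref{i:gridtwocarpets} will come from the behaviour near the phase transitions themselves: the one-sided derivative ratios in Corollary~\ref{cor:allprop}~\ref{itemiii} depend explicitly on $\log m$ and $\log n$, so their coincidence gives the required commensurability. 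For case (ii), I will derive the analogous information via the relation $f_\nu(\alpha)=-\beta_\nu^*(-\alpha)$ and the presence of $\log m$, $\log n$ in~\eqref{eq:definebeta}.

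\textbf{Same-grid equivalences.} Assume both carpets live on the $m\times n$ grid. For (vi)$\Rightarrow$(v), substitute the parametric identities $N_{\hat i}=(M'/M)^\gamma N'_{\hat i}$ and $R_{\hat i}=(M/M')R'_{\hat i}$ into the log-moment generating function
\[
\tfrac{1}{M}\textstyle\sum_{\hat j} N_{\hat j}^{\lambda}=\tfrac{1}{M}\sum_{\hat i} R_{\hat i} N_{\hat i}^\lambda,
\]
and verify that this equals $e^{\lambda\gamma\log(M'/M)}\cdot\tfrac{1}{M'}\sum_{\hat i} R'_{\hat i}(N'_{\hat i})^\lambda$, whence (v) follows by applying the Legendre transform. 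For (v)$\Rightarrow$(vi), read the same computation backwards: a translation of $I$ corresponds to a multiplicative scaling of the moment generating function, and uniqueness of m.g.f.s of discrete measures (on the multiset $\{N_{\hat j}\}$) forces the data to be related in the stated way. Next, (v)$\Rightarrow$(i) follows by substituting $I(t)=I'(t-\gamma\log(M'/M))$ into the formula of Theorem~\ref{thm:main} and noting that an affine shift of $t_L(s)$ is absorbed by the corresponding shift of $\log M$ in the term $\gamma(1-\gamma^{L-1}\theta)(\log M-I(t_L(s)))$; the implication (i)$\Rightarrow$(iii) is trivial.

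\textbf{The main step: (iii)$\Rightarrow$(v).} On $(\gamma^{-1},1]$, equation~\eqref{eq:mainformula} with $L=1$ reduces, after the substitution $u=t_1(s)=s\log n-\gamma\log M$, to
\[
\theta\bigl(u-\log(N/M)\bigr)+(1-\theta)I(u)=0,
\]
and similarly for the primed carpet (with $M'$ in place of $M$ and $I'$ in place of $I$). Since both carpets share $N$ (by the box dimension, $\theta=1$), equality of $\dim_\theta\Lambda=\dim_\theta\Lambda'$ at even a single $\theta\in(a,b)$ pairs solutions $u$ and $u'$ of the two equations. Parametrising over $\theta\in(a,b)$ sweeps out an open interval of values of $u$, giving $I(u)=I'(u')$ with $u'=u-\gamma\log(M'/M)$ by a direct algebraic rearrangement using the known relationship between $s$ and $u$. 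Analyticity of $I$ and $I'$ on their open domains (Proposition~\ref{prop:1}, as inherited from the Legendre-transform structure) then propagates this identity to all of $(\underline t,\overline t)$, yielding (v). This is the step I expect to be the main obstacle, because the coupling of $u$ and $u'$ through $s$ must be unwound carefully and the logarithmic identifications performed without slipping constants.

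\textbf{Link to multifractal and $L^q$ spectra.} For (v)$\Leftrightarrow$(iv), rearrange the implicit relation~\eqref{eq:definebeta} to write $\beta_\nu(\xi)$ as an affine function of $\log\sum R_{\hat i} N_{\hat i}^{\gamma^{-1}+(1-\gamma^{-1})\xi}$; this log-sum is, up to a linear change of variable, the log-moment generating function whose Legendre transform defines $I$. Under (v), the two log-moment generating functions differ only by an affine term (as established in the (vi)$\Rightarrow$(v) step), which via~\eqref{e:lqformula} forces $T_\nu=T_{\nu'}$. Conversely, equality of the $L^q$ spectra on any open set, combined with analyticity of $T_\nu$, forces equality globally and thus (v). Finally, (iv)$\Leftrightarrow$(ii) follows from~\eqref{eq:uniformmultifractal}: the multifractal spectrum on $(\alpha_{\min},\alpha_{\max})$ is the (negated) Legendre transform of $\beta_\nu$, hence of $T_\nu$ via~\eqref{e:lqformula}, and two concave upper-semicontinuous functions agree iff their Legendre transforms do on the corresponding open interval.

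\textbf{Bi-Lipschitz corollary and the grid conclusion.} Once (i)$\Leftrightarrow$(ii) is proved for arbitrary (possibly different) grids, the statement that both carpets must live on a common grid is immediate from the reduction step above, since all the equivalences (iii)$\Leftrightarrow$(vi) only make sense after such a common grid has been fixed. Corollary~\ref{cor:biLip}, which strengthens~\cite{RaoPreprintlipschitz}, follows directly by combining Theorem~\ref{thm:multifractal} with the bi-Lipschitz invariance of the intermediate dimensions established in Corollary~\ref{philipschitz}.
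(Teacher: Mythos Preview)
Your overall architecture for the same-grid equivalences is sound and largely matches the paper's cycle $(\text{iii})\Rightarrow(\text{v})\Rightarrow(\text{vi})\Rightarrow(\text{iv})\Rightarrow(\text{vi})\Rightarrow(\text{v})\Rightarrow(\text{i})$, together with the citation of \cite{RaoPreprintlipschitz} for $(\text{ii})\Leftrightarrow(\text{vi})$. However, there is a slip in your $(\text{iii})\Rightarrow(\text{v})$ step: equality of box dimensions on a common $m\times n$ grid does \emph{not} give $N=N'$; it gives $\log(N/M)=\log(N'/M')+\gamma\log(M'/M)$ (equivalently $N'=N(M/M')^{\gamma-1}$). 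This is exactly what is needed to make your reduced equation for $\Lambda'$ collapse onto that for $\Lambda$ and yield $I(u)=I'(u-\gamma\log(M'/M))$, so the argument repairs cleanly once you replace the false claim by the correct identity.

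The genuine gap is the reduction to a common grid, i.e.\ showing that either (i) or (ii) forces $\log n/\log n'\in\Q$. Your proposal to read this off from the one-sided derivative ratio at the phase transitions does not work: the factor $\gamma^L/\log n$ in Lemma~\ref{lem:60} cancels in the ratio $\partial_+\dim_{\gamma^{-L}}\Lambda/\partial_-\dim_{\gamma^{-L}}\Lambda$, and the surviving limit depends only on $t^*$ and $T_{\dim_{\mathrm H}\Lambda}(t^*)$, which are determined by the rate function rather than by $\log n$ separately. The paper's argument (Lemma~\ref{lem:multifractallemma}) is substantially different and more delicate. From equality of $\dim_\theta$ on $(\gamma^{-1},1)$ one gets, after Legendre transforming, the identity~\eqref{eq:rational1} equating two exponential sums weighted by $\log n$ and $\log n'$. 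Expanding the $\Lambda$-side by the generalised binomial theorem in the ratio $(N_{\hat\imath}/N_1)^\lambda$ and matching against the finitely many exponentials on the $\Lambda'$-side forces $\log n'/\log n$ to satisfy a nontrivial polynomial with rational coefficients, hence to be algebraic; since $n^{\log n'/\log n}=n'$ is an integer, the Gelfond--Schneider theorem then rules out irrationality. The case of (ii) is handled by an analogous manipulation starting from $\beta_\nu=\beta_{\nu'}$. This transcendence-theoretic ingredient is the missing idea in your proposal.
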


We make several comments about Theorem~\ref{thm:multifractal}. 
\begin{rem}\label{r:multifraccomments}
\begin{enumerate}%

\item\label{i:citerao}
For carpets defined on the same grid, the equivalence of~\ref{itemnow2} and the explicit condition~\ref{itemnow5} was proved by Rao, Yang and Zhang in~\cite[Theorem~1.2]{RaoPreprintlipschitz}, using~\cite{Jordan2011bm}. 

\item\label{i:multifractalratelink}
In Step~4 of the proof of Proposition~\ref{prop:1} in Section~\ref{sec:proofProp}, we use scaling properties of Legendre transforms to establish a direct link between the multifractal spectrum of the uniform Bernoulli measure and the rate function $I(t)$. 
Since $I(t)$ appears in the intermediate dimension formula, this indicates why the link between the intermediate dimensions and multifractal spectrum in Theorem~\ref{thm:multifractal} is to be expected. 

\item\label{i:raosamem}
For carpets defined on the same grid with $M=M'$,~\ref{itemnow5} is simply saying that the column sequence of one carpet is a permutation of the column sequence of the other.

\item\label{i:coarsespec}
Equality of the $L^q$ dimensions and the coarse multifractal spectra can be added to the above equivalences in Theorem~\ref{thm:multifractal} if the carpets are defined on the same grid, since these quantities can be obtained from the $L^q$ spectrum by dividing by $1-q$ or taking the Legendre transform respectively. 

\item\label{i:otherdimmeas}
If~\ref{itemlq} holds then other notions of dimension of $\nu$ and $\nu'$ which can be deduced from their $L^q$ spectra must be equal, such as exact (Hausdorff/packing/entropy) dimension, correlation dimension (R\'enyi entropy), Frostman dimension, and box dimension (in the sense of~\cite{Falconer2022boxmeas}). 

\item\label{i:otherdimsets}
The formulae in~\cite{Fraser2021bedford} and~\ref{itemnow5} can be used to show that equality of intermediate dimensions implies equality of other notions of dimensions of sets such as packing, Assouad, quasi-Assouad, lower, quasi-lower or modified lower dimensions, or the Assouad spectrum or lower spectrum for any fixed $\theta \in (0,1)$. 

\item\label{i:analytic}
Since the multifractal spectrum is analytic (as the Legendre transform of an analytic function), if $I \subseteq (\alpha_{\text{min}},\alpha_{\text{max}})$ is an open interval, then~\ref{itemnow2} holds for all $\alpha \in I$ if and only if it holds for all $\alpha \in (\alpha_{\text{min}},\alpha_{\text{max}})$. 
Similarly, if $J \subseteq (\underline{t},\overline{t})$ is an open interval then~\ref{item4} holds for all $t \in J$ if and only if it holds for all $t \in (\underline{t},\overline{t})$. 
\end{enumerate}
\end{rem}

\begin{question}
In the statement of Theorem~\ref{thm:multifractal}, can $(a,b)$ be taken to be an arbitrary non-empty open subinterval of $(0,1)$? 
\end{question}

Since the proof strategy of Lemma~\ref{lem:multifractallemma} on page~\pageref{lem:multifractallemma} does not seem to work under the assumption that the $L^q$ spectra are equal, we ask the following question. 

\begin{question}
Do there exist two Bedford--McMullen carpets with non-uniform fibres which cannot be realised on the same grid but whose uniform Bernoulli measures have the same $L^q$ spectra? 
\end{question}

Turning now to bi-Lipschitz equivalence, recall that two metric spaces $(X,d_X)$ and $(Y,d_Y)$ are \emph{bi-Lipschitz equivalent} if there is a bi-Lipschitz map $f\colon X\to Y$. In our setting $X$ and $Y$ are two Bedford--McMullen carpets with the Euclidean distance. The following open problem seems challenging: 
\begin{question}\label{ques:bilip}
Find an explicit necessary and sufficient condition that determines, given two iterated function systems each generating a Bedford--McMullen carpet, whether or not the two carpets are bi-Lipschitz equivalent. 
\end{question}%
Partial progress towards Question~\ref{ques:bilip} has been made in~\cite{Li2013lipschitz,RaoPreprintlipschitz,
Yang2020lipschitz}, all of which assume some disconnectivity property. Fraser and Yu~\cite{Fraser2018secondassouad} used the Assouad spectrum to show that $\gamma$ is a bi-Lipschitz invariant within the class of Bedford--McMullen carpets, a fact which is also evident from observing the form of the intermediate dimensions. Moreover, the \emph{gap sequence} of a set is a topological quantity which has been shown to be bi-Lipschitz invariant \cite{Rao2008lipschitz}, and which is known for Bedford--McMullen carpets \cite{Miao2017gapseq,Liang2022gapseq}. Using the fact that the intermediate dimensions are stable under bi-Lipschitz maps, we obtain the following necessary condition for bi-Lipschitz equivalence as an immediate corollary of Theorem~\ref{thm:multifractal}. 

\begin{corollary}\label{cor:biLip}
Let $\Lambda$ and $\Lambda'$ be two Bedford--McMullen carpets with non-uniform fibres which are bi-Lipschitz equivalent, and let $\nu$ and $\nu'$ be the corresponding uniform Bernoulli measures. Then $f_{\nu}(\alpha)=f_{\nu'}(\alpha)$ for all $\alpha \in (\alpha_{\text{min}},\alpha_{\text{max}})$ and $T_{\nu}(q) = T_{\nu'}(q)$ for all $q \in \R$, and both carpets can be defined on the same $m \times n$ grid, on which condition~\ref{itemnow5} above holds. 
\end{corollary}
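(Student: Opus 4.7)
The plan is to derive the corollary directly from Theorem~\ref{thm:multifractal} together with the bi-Lipschitz invariance of intermediate dimensions. First I would invoke the general fact, recorded earlier in Corollary~\ref{philipschitz}~\ref{i:bilipschitzlabel}, that every $\theta$-intermediate dimension (and also the Hausdorff dimension $\overline{\dim}_{0}=\underline{\dim}_{0}$) is preserved by bi-Lipschitz bijections; this is immediate from the definitions, because a bi-Lipschitz map distorts diameters by only a bounded multiplicative factor, so that an efficient $(\delta,\theta)$-cover of $\Lambda$ transports to a $(\delta',\theta)$-cover of $\Lambda'$ with diameters comparable up to a constant and the same $s$-cost up to a constant. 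Applying this to the given bi-Lipschitz equivalence yields $\dim_{\theta}\Lambda=\dim_{\theta}\Lambda'$ for every $\theta\in[0,1]$, which is exactly condition~\ref{item1} of Theorem~\ref{thm:multifractal}.

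Next I would feed this equality into Theorem~\ref{thm:multifractal}. The implication \ref{item1}$\Rightarrow$\ref{itemnow2} in that theorem gives $f_{\nu}(\alpha)=f_{\nu'}(\alpha)$ for all $\alpha\in(\alpha_{\text{min}},\alpha_{\text{max}})$, which is the first stated conclusion. The moreover clause of Theorem~\ref{thm:multifractal} then guarantees that $\Lambda$ and $\Lambda'$ admit defining iterated function systems on a common $m\times n$ grid, which is the second stated conclusion. Once a common grid is in hand, the second part of Theorem~\ref{thm:multifractal} applies and provides the full chain of equivalences; in particular conditions~\ref{itemlq} (equality of the $L^q$ spectra $T_{\nu}\equiv T_{\nu'}$) and~\ref{itemnow5} (the explicit parameter relations $N_{\ih}/N'_{\ih}=(R'_{\ih}/R_{\ih})^{\gamma}=(M'/M)^{\gamma}$, together with $M_0=M'_0$) are forced. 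This gives the remaining assertions of the corollary.

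The argument is essentially a single invocation of Theorem~\ref{thm:multifractal}, so there is no serious obstacle at this stage; the substance is entirely hidden inside that theorem. The only point deserving a word of care is that condition~\ref{item1} asks for equality at $\theta=0$ as well, but this comes for free since bi-Lipschitz maps preserve Hausdorff dimension (which is by convention $\dim_{0}$). Note also that this is where the hypothesis of non-uniform fibres is used via Theorem~\ref{thm:multifractal}; in the uniform fibre case $\theta\mapsto\dim_{\theta}\Lambda$ is constant and carries strictly less information, so one could not bootstrap from the intermediate dimensions to condition~\ref{itemnow5} in that degenerate situation.
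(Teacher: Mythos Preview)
Your proposal is correct and matches the paper's approach exactly: the paper states this corollary as an ``immediate corollary of Theorem~\ref{thm:multifractal}'' using ``the fact that the intermediate dimensions are stable under bi-Lipschitz maps,'' and gives no further proof. You have simply spelled out the details of that one-line deduction.
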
 
This strengthens~\cite[Corollary~1.1]{RaoPreprintlipschitz}, where it is assumed that $\Lambda$ and $\Lambda'$ are totally disconnected and defined on the same grid. 
In Proposition~\ref{p:biLip}, we construct two carpets which we know are not bi-Lipschitz equivalent by Corollary~\ref{cor:biLip}, but where \cite[Corollary~1.1]{RaoPreprintlipschitz} does not apply. 
Corollary~\ref{cor:biLip} also shows in particular that if two carpets defined on the same grid with the same number of non-empty columns are bi-Lipschitz equivalent then the column sequence of one must be a permutation of the column sequence of the other (though we are not able to draw this conclusion if the number of non-empty columns is different, see Example~\ref{ex:rao}). 

One natural question would be to investigate the intermediate dimensions of self-affine carpets of Lalley and Gatzouras~\cite{Lalley1992gatzouras} or Bara\'nski~\cite{Baranski2007carpet}, or higher-dimensional self-affine sponges. 
Indeed, in light of the recent paper~\cite{BanajiPreprintgl} proving that the Assouad spectrum of Gatzouras--Lalley carpets (unlike Bedford--McMullen carpets) can be a differentiable function of $\theta$, it is natural to ask whether the intermediate dimensions of Gatzouras--Lalley carpets can also be differentiable on $(0,1)$. 
We expect calculating a formula for the intermediate dimensions of such self-affine sets to be challenging, not least because there is no clear single analogue of the important quantity~$\gamma$, and this is not something which we will explore in this thesis. 
We remark, however, that Huang, Rao, Wen and Xu~\cite{HuangPreprintboxcounting} have introduced so-called box-counting measures of metric spaces and shown that Bedford--McMullen and generalised Gatzouras--Lalley type sponges and Bara\'nski carpets admit such measures. Indeed, for Bedford--McMullen carpets, these are simply the uniform Bernoulli measures. 
After the paper~\cite{Banaji2021bedford} on which this chapter is based appeared on arXiv, Huang \emph{et al.} proved without any connectivity assumption that the multifractal spectrum of box-counting measures is a bi-Lipschitz invariant; this was proved directly, without using the intermediate dimensions. %
Their result therefore generalises both the result from~\cite{RaoPreprintlipschitz} and our result that the multifractal spectrum of uniform Bernoulli measures on Bedford--McMullen carpets with non-uniform fibres is bi-Lipschitz invariant. 
Huang \emph{et al.} also ask in \cite[Open problem~1]{HuangPreprintboxcounting} whether two generalised Gatzouras--Lalley or Bara\'nski sponges have the same intermediate dimensions if and only if their corresponding box-counting measures have the same multifractal spectra.

\subsection{Equivalent forms of the rate function}\label{subsec:eqformsofrate}%

In this section we provide equivalent formulations of the rate function $I(t)$ in terms of a pressure-like function, a certain probability vector with an entropy maximising property, and the multifractal spectra $f_{\nu}(\alpha)$ defined in~\eqref{eq:uniformmultifractal}. As a result, our main formula~\eqref{eq:mainformula} for $\dim_{\theta}\Lambda$ can be expressed with any of these quantities. 

We begin by defining the pressure-like function. For $\iiv=(i_1,\dotsc,i_J)\in\{1,\dotsc,M\}^J$ and $k\in\{0,1,\dotsc,J\}$, we introduce
\begin{equation}\label{eq:20}
	\psi_{\iiv|k}(s) \coloneqq M^{k \gamma}\cdot n^{-sk}\cdot \prod_{\ell=1}^k N_{i_\ell}.
\end{equation}
In particular, for $k=0$, $\psi_{\iiv|0}(s)\equiv 1$. The interpretation of $\psi_{\iiv|k}(s)$ later is that it gives the $s$-cost of a set in the cover with diameter related to $k$, see Remark~\ref{rem:pressureexp}. Moreover, we define the sum
\begin{equation*}
	\Psi_J(s)\coloneqq \sum_{\iiv\in\{1,\dotsc,M\}^J}\min_{k\in\{0,1,\dotsc,J\}} \psi_{\iiv|k}(s).
\end{equation*}
This is connected to the total $s$-cost of the optimal cover, see Remark~\ref{rem:pressureexp} for additional explanation. To determine the critical exponent it is natural to define a pressure-like quantity as the exponential growth rate of $\Psi_J(s)$, more formally,
\begin{equation}\label{eq:21}
	\underline{P}(s)\coloneqq \liminf_{J\to\infty}\frac{1}{J}\log \Psi_J(s) \;\text{ and }\; \overline{P}(s)\coloneqq \limsup_{J\to\infty}\frac{1}{J}\log \Psi_J(s).
\end{equation} 

The probability vector $\mathbf{Q}^*_{t}\in\mathcal{P}_M$ is defined by
\begin{equation}\label{eq:104}
	H(\mathbf{Q}^*_{t}) = \sup\Big\{ \, H(\mathbf{p}): \mathbf{p}\in\mathcal{P}_M \;\text{ such that }\; \sum_{\jh=1}^M p_{\jh}\log N_{\jh} = t \, \Big\}.
\end{equation}
It is well defined, see Lemma~\ref{lem:32}. Moreover, $H(\mathbf{Q}^*_{t})<\log M$ since $t>\underline{t}$.

We regularly relate the arguments $s$ and $t$ to each other via the transformation
\begin{equation}\label{eq:23}
	t= t_1(s) = \left(s-\frac{\log M}{\log m}\right) \log n, \;\text{ or equivalently } s=\frac{t}{\log n} + \frac{\log M}{\log m} .
\end{equation}
We do so to ensure that 
\begin{equation}\label{eq:309}
\psi_{\iiv|J}(s)\leq 1 \;\Longleftrightarrow\; \frac{1}{J} \sum_{\ell=1}^{J} \log N_{i_\ell} \leq \Big(s-\frac{\log M}{\log m}\Big)\cdot \log n =t,
\end{equation}
which now follows from~\eqref{eq:20} and straightforward algebraic manipulations. 
Now, using~\eqref{eq:102} and~\eqref{eq:103}, $\underline{t}$ maps to
\begin{equation}\label{e:sunderlinedef}
	\underline{s}\coloneqq \dim_{\mathrm H}\Lambda - \frac{1}{\log n} \left( \frac{\log n}{\log m} \log \Bigg(  \frac{1}{M}\sum_{\jh=1}^{M} N_{\jh}^{\frac{\log m}{\log n}} \Bigg) - \underline{t} \right),
\end{equation}
while $\overline{t}$ maps to
\begin{equation}\label{e:soverlinedef}
	\overline{s} \coloneqq \dim_{\mathrm B}\Lambda + \frac{\log M - H(\mathbf{P})}{\log n}.
\end{equation}
Observe that by Jensen's inequality and non-uniform fibres, $\underline{s}<\dim_{\mathrm H}\Lambda<\dim_{\mathrm B}\Lambda<\overline{s}$. %
In Proposition~\ref{prop:1}, the key technical result of Section~\ref{subsec:eqformsofrate}, we make a clear connection between~\eqref{eq:21}, \eqref{eq:104}, \eqref{eq:22} and~\eqref{eq:uniformmultifractal} for pairs of $(s,t)$ related by~\eqref{eq:23}. 
The proof is non-trivial and is given in Section~\ref{sec:proofProp}. 

\begin{prop}\label{prop:1}
	Fix $s\in(\underline{s},\overline{s})$. Then $\underline{P}(s)=\overline{P}(s)$; let $P(s)$ denote this common value. Furthermore, for every pair $(s,t)$ related by~\eqref{eq:23},
	\begin{equation*}
		\log M - I(t) = P(s) = H(\mathbf{Q}^*_{t}) =  (\log m ) f_{\nu} \left(\frac{\log N}{\log m} - \left(\frac{1}{\log m} - \frac{1}{\log n}\right) t   \right)  - \frac{t}{\gamma}. 
	\end{equation*}
\end{prop}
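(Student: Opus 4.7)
My plan is to prove the four-way equality in three stages corresponding to three distinct types of identity: a constrained-entropy variational principle for $H(\mathbf{Q}^*_t) = \log M - I(t)$, a Cramér-type large deviations analysis for $P(s) = \log M - I(t)$, and a Legendre-transform calculation linking this to the multifractal spectrum. First I would handle the entropic identity by applying Lagrange multipliers to~\eqref{eq:104}. Writing $H(\mathbf{p}) = \log M - D(\mathbf{p} \| \mathbf{Q})$ with $D$ the relative entropy from the uniform vector, maximising $H(\mathbf{p})$ subject to $\sum_\jh p_\jh \log N_\jh = t$ forces $\mathbf{Q}^*_t$ to be the tilted distribution $p_\jh \propto N_\jh^{\lambda(t)}$, where $\lambda(t) \geq 0$ is the unique value at which the supremum~\eqref{eq:22} is attained; evaluating $H$ on this distribution yields $\log M - I(t)$ after a brief calculation.

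For the pressure identity, the key reduction is that under the coupling~\eqref{eq:23} we have $\gamma \log M - s \log n = -t$, so by~\eqref{eq:20},
\begin{equation*}
    \log \psi_{\iiv|k}(s) = \sum_{\ell=1}^{k}(\log N_{i_\ell} - t) =: S_k(\iiv),
\end{equation*}
and hence $\Psi_J(s) = M^J \cdot \mathbb{E}[\min_{0 \leq k \leq J} \exp(S_k(\iiv))]$ where $\iiv$ is uniform on $\{1,\dotsc,M\}^J$ and $S_k$ is a random walk whose i.i.d.\ increments have law~\eqref{e:definerandomvar} shifted by $-t$ and mean $\underline{t} - t < 0$. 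For the upper bound on $\overline{P}(s)$, using the pointwise inequality $\min_k e^{S_k} \leq \min(1, e^{S_J})$ (valid because $S_0 = 0$), I would split the resulting integral into the regions $\bar{X}_J \geq t$ and $\bar{X}_J < t$: Cramér's theorem, combined with the fact that $I$ is increasing past $\underline{t}$ and that $\sup_{y < t}[(y - t) - I(y)]$ is attained at $y \to t^-$ whenever $t < \overline{t}$ (since $I'(\overline{t}) = 1$), shows both regions contribute at most $\exp(-JI(t) + o(J))$. The matching lower bound for $\underline{P}(s)$ restricts the sum to sequences $\iiv$ whose empirical distribution on $\{1,\dotsc,M\}$ is $\epsilon$-close to $\mathbf{Q}^*_t$; by the method of types there are $\exp(J H(\mathbf{Q}^*_t) + o(J))$ such sequences, each having $S_J \approx 0$ and running minimum of order $-O(\sqrt J)$ by a bridge-type estimate for the walk under the tilted measure, contributing $e^{-o(J)}$ each.

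Finally, the multifractal identity is purely algebraic. Solving~\eqref{eq:definebeta} for $\beta_\nu(\xi)$ gives
\begin{equation*}
    (\log m)\beta_\nu(\xi) = -\xi \log N + \log M + \log \bigl(M^{-1} \textstyle\sum_\ih R_\ih N_\ih^{\lambda(\xi)}\bigr)
\end{equation*}
with $\lambda(\xi) = \gamma^{-1} + (1 - \gamma^{-1})\xi$. Substituting into $f_\nu(\alpha) = \inf_\xi[\alpha \xi + \beta_\nu(\xi)]$ with $\alpha = \log N / \log m - (\gamma - 1)(\gamma \log m)^{-1} t$ cancels the $\xi \log N$ contributions; changing variables from $\xi$ to $\lambda$ converts $-(1 - \gamma^{-1}) t\xi$ to $t/\gamma - \lambda t$, so the infimum becomes $\log M + t/\gamma - \sup_\lambda[\lambda t - \log \mathbb{E}[e^{\lambda X_1}]] = \log M + t/\gamma - I(t)$ by definition~\eqref{eq:22}, giving the desired equality. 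The main obstacle will be the lower bound for $\underline{P}(s)$, since one needs that a walk conditioned to have empirical distribution near $\mathbf{Q}^*_t$ (an exponentially rare event under the uniform measure) has running minimum fluctuations of at most logarithmic order in $J$; making this quantitative uniformly in $J$ requires combining the method of types with a bridge-type concentration estimate for the tilted random walk.
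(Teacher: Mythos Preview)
Your overall decomposition into three stages matches the paper's Steps~1--4, and your arguments for $H(\mathbf{Q}^*_t)=\log M - I(t)$ (the paper explicitly notes Lagrange multipliers as an alternative), for the upper bound $\overline{P}(s)\le H(\mathbf{Q}^*_t)$ via $\min_k e^{S_k}\le\min(1,e^{S_J})$, and for the multifractal identity via Legendre transforms are all essentially the same as the paper's.

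The one genuine gap is in your lower bound for $\underline{P}(s)$. You write that each sequence with empirical distribution $\epsilon$-close to $\mathbf{Q}^*_t$ has running minimum $-O(\sqrt{J})$, but this is false: a sequence in the type class $T_J(\mathbf{Q}^*_{t,J})$ can front-load all occurrences of the indices with smallest $\log N_{\ih}$, producing a running minimum of order $-\Theta(J)$. What is true is that a \emph{uniformly random} element of the type class is an exchangeable bridge with running minimum $-O(\sqrt{J})$ with positive probability, so one could restrict to the subset of the type class where this holds without losing the exponential count. This is workable but requires a genuine probabilistic estimate.

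The paper avoids this entirely with a combinatorial trick that you may find cleaner: partition $\{1,\dotsc,J\}$ into $R$ equal blocks and restrict to sequences whose type \emph{on each block separately} is (the best approximation to) $\mathbf{Q}^*_t$. The product of $R$ type classes still has cardinality $e^{JH(\mathbf{Q}^*_t)+o(J)}$, and now the partial sums $S_k$ are forced to return to $O(1)$ at the end of every block, so the running minimum is deterministically at least $-CJ/R$ for an explicit constant. Letting $R\to\infty$ after taking $\liminf_{J}$ gives $\underline{P}(s)\ge H(\mathbf{Q}^*_t)$ with no bridge estimate needed. This block structure is also reused later in the paper (the sets $S_{t,J,R}$ reappear in the construction of the measure for the mass distribution principle), so it is worth adopting.
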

Note also that by~\eqref{e:lqformula} and standard properties of Legendre transforms, $f_{\nu}$ and $I$ can be written in terms of the Legendre transform of $T_{\nu}$. 

\subsection{Illustrative examples}\label{subsec:ex}%

A simple example of a carpet with non-uniform fibres is shown in Figure~\ref{f:carpetIFS}. The examples in this section show additional interesting behaviour. 

\begin{rem}
All figures of the graphs in this chapter were created using \emph{Wolfram Mathematica 12.3}, keeping simple implementation in mind rather than efficiency. 
For a fixed $\theta\in(0,1)$, the value of $\dim_{\theta}\Lambda$ was approximated by taking $2^{25}$ equally spaced points in the interval $(\dim_{\mathrm{H}}\Lambda,\dim_{\mathrm B}\Lambda)$ and choosing the point $s(\theta)$ for which the expression in~\eqref{eq:mainformula} was closest to~$0$.
\end{rem}

\begin{rem}
It was first observed in~\cite{Kolossvary2022bm} that the graph $\theta\mapsto \dim_{\theta}\Lambda$ can approach $\dim_{\mathrm B}\Lambda$ from below the straight line $\ell(\theta)=\dim_{\mathrm H}\Lambda +\theta(\dim_{\mathrm B}\Lambda-\dim_{\mathrm H}\Lambda)$, indicating that it is possible for $\dim_{\theta}\Lambda$ not to be concave on the whole range of $\theta$. 
From Corollary~\ref{cor:allprop} it follows that in this case the graph $\theta\mapsto \dim_{\theta}\Lambda$ must intersect $\ell(\theta)$. 
In fact, there are even carpets where the graph intersects $\ell(\theta)$ twice, as shown on the left of Figure~\ref{fig:exSeries}. 
For the carpets in this figure, all parameters remain the same except for $m$, which causes different behaviour for larger values of $\theta$ as it changes. 
For $m\leq 25$, the graph stays above $\ell(\theta)$ for all $\theta$. 
\end{rem}

\begin{figure}[ht]
	\centering
	\includegraphics[width=0.99\textwidth]{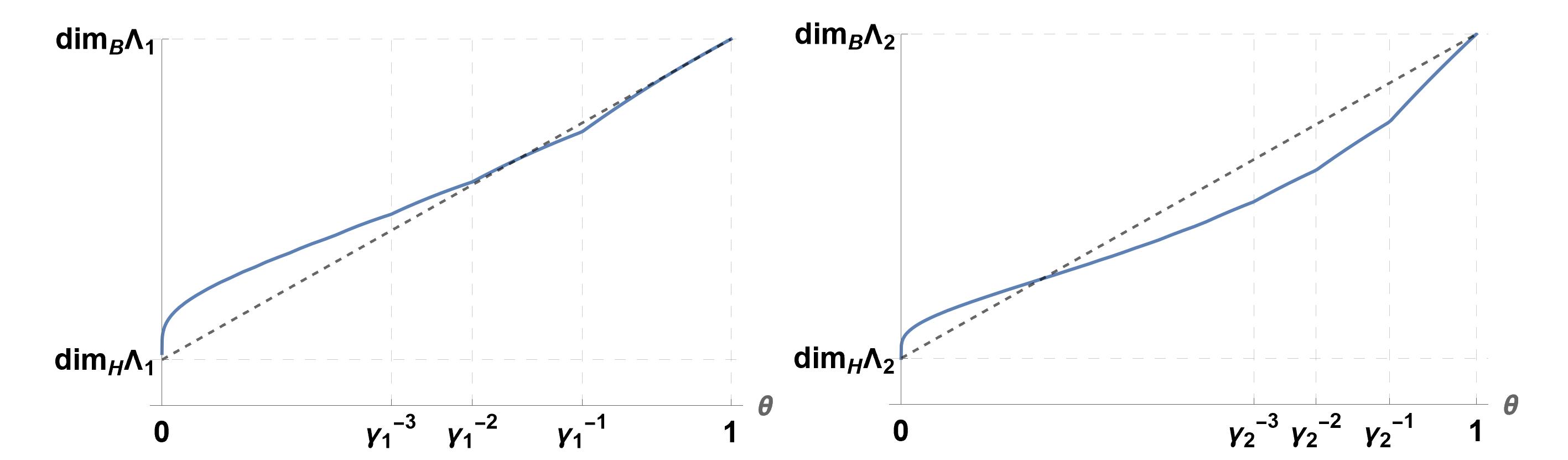}
	\caption{Parameters $n=100$ and $\mathbf{N}=(51,50,50,50,50,50)$ are the same in each example; only $m$ varies from $30$ on the left to $50$ on the right.}
	\label{fig:exSeries}
\end{figure}

Fraser and Yu \cite[Proposition~3.4]{Fraser2018secondassouad} proved a similar result to Proposition~\ref{p:biLip} for the Assouad spectrum. 
\begin{prop}\label{p:biLip}
Consider the two Bedford--McMullen carpets %
$\Lambda$ and $\Lambda'$ with $m=M=32$ and $n=243$ and the following parameters: 
\begin{align*}
&\Lambda \colon \qquad M_0=3, \quad \{N_1,N_2,N_3\} = \{27,3,1\} \text{ and } \{R_1,R_2,R_3\} = \{2,11,19\}, \\*
&\Lambda' \colon  \qquad M'_0=3, \quad \{N'_1,N'_2,N'_3\} = \{27,9,1\} \text{ and } \{R'_1,R'_2,R'_3\} = \{1,6,25\}.
\end{align*}
There exists $\theta \in (0,1)$ with $\dim_{\theta} \Lambda \neq \dim_{\theta} \Lambda'$, so $\Lambda$ and $\Lambda'$ are not bi-Lipschitz equivalent. 
However, $\dim \Lambda = \dim \Lambda'$ where $\dim$ can be Hausdorff or box dimension or any of the notions of dimensions mentioned in part~\ref{i:otherdimsets} of Remark~\ref{r:multifraccomments}. 
\end{prop}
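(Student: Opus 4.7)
The plan is to apply Theorem~\ref{thm:multifractal} to obtain inequivalence of intermediate dimensions, and then verify by direct computation that the listed dimensions agree. Both carpets are defined on the same $32\times 243$ grid, so $\gamma = \log 243/\log 32 = \log 3/\log 2$ is shared. A quick count gives $N = N' = 2\cdot 27+11\cdot 3+19 = 27+6\cdot 9+25 = 106$ maps and $M = M' = 2+11+19 = 1+6+25 = 32$ non-empty columns. Writing the distinct column values in decreasing order produces $(N_1,N_2,N_3) = (27,3,1)$ for $\Lambda$ and $(N'_1,N'_2,N'_3) = (27,9,1)$ for $\Lambda'$, so $M_0 = M'_0 = 3$, $\max_\ih N_\ih = \max_\ih N'_\ih = 27$ and $\min_\ih N_\ih = \min_\ih N'_\ih = 1$.

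Next, I would apply the equivalence (i)$\Leftrightarrow$(vi) of Theorem~\ref{thm:multifractal} to the common grid. Since $M'=M$, condition~(vi) reduces to requiring $N_\ih = N'_\ih$ for each $\ih\in\{1,2,3\}$; this fails, because $N_2 = 3\neq 9 = N'_2$. Hence there exists $\theta\in(0,1)$ with $\dim_\theta\Lambda\neq\dim_\theta\Lambda'$. Bi-Lipschitz inequivalence then follows immediately from Corollary~\ref{cor:biLip} (equivalently, from the bi-Lipschitz invariance of $\dim_\theta$).

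The remaining task is to verify that under the other notions of dimension the two carpets agree. The box dimension formula~\eqref{eq:103} and the formulae collected in~\cite{Fraser2021bedford} for the packing, Assouad, quasi-Assouad, lower and quasi-lower dimensions (and for the Assouad and lower spectra at any fixed $\theta$) depend only on $m,n,N,M,\max_\ih N_\ih$ and $\min_\ih N_\ih$, all of which coincide. Only the Hausdorff dimension requires an explicit arithmetical check, since~\eqref{eq:102} involves every $N_\ih$. Using $\log m/\log n = 1/\gamma = \log 2/\log 3$ to obtain the integer simplifications $27^{1/\gamma} = 2^3 = 8$, $9^{1/\gamma} = 2^2 = 4$, $3^{1/\gamma} = 2$ and $1^{1/\gamma} = 1$, the sums become $2\cdot 8 + 11\cdot 2 + 19 = 57$ for $\Lambda$ and $1\cdot 8 + 6\cdot 4 + 25 = 57$ for $\Lambda'$, giving $\dim_\mathrm{H}\Lambda = \dim_\mathrm{H}\Lambda' = \log 57/\log 32$.

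There is no real obstacle — the whole argument is a single application of Theorem~\ref{thm:multifractal} plus bookkeeping. The genuine content lies in engineering the parameters: the multiplicative dependence $32 = 2^5$, $243 = 3^5$ forces all relevant $N_\ih^{1/\gamma}$ to be integers, which is what makes it possible to choose two different column sequences whose weighted sums agree while their extremal values and cardinalities also agree. Without this coincidence, the Hausdorff dimension (or one of the Assouad-type dimensions) would already distinguish the carpets, and intermediate dimensions would be unnecessary.
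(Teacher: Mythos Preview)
Your proposal is correct and follows essentially the same approach as the paper: use Theorem~\ref{thm:multifractal}(vi) to show the intermediate dimensions differ (the paper observes the ratios $R_{\hat\imath}/R'_{\hat\imath}$ are not all equal, you observe $N_2\neq N'_2$; either shows condition~(vi) fails since $M=M'$), then verify the other dimensions coincide via the formulae in~\cite{Fraser2021bedford}. Your explicit computation of $\sum N_{\hat\imath}^{1/\gamma}=57$ is a nice addition. One small omission: the modified lower dimension, which is among the dimensions listed in Remark~\ref{r:multifraccomments}\ref{i:otherdimsets}, does not depend only on $m,n,N,M,\max N_{\hat\imath},\min N_{\hat\imath}$, so it needs a separate check (the paper cites \cite[Corollary~15.5.3]{Fraser2021bedford} for this).
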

\begin{proof}
Since all $R_{\ih}/R'_{\ih}$ are different, it follows from part~\ref{itemnow5} of Theorem~\ref{thm:multifractal} that there exists $\theta \in (0,1)$ with $\dim_{\theta} \Lambda \neq \dim_{\theta} \Lambda'$. 
Since $N = 106$, $\max_{1 \leq i \leq 3} N_i =\max_{1 \leq i \leq 3} N'_i= 27$, and $\min_{1\leq i \leq M} N_i = \min_{1\leq i \leq M} N'_i = 1$, we can use~\eqref{eq:102} and \cite[Corollary~15.5.3]{Fraser2021bedford} to show that the Hausdorff and modified lower dimensions are equal, and the formulae in~\cite{Fraser2021bedford} to show that the other dimensions are equal. 
\end{proof}
Figure~\ref{fig:exRatio} shows the plots of $\dim_{\theta}\Lambda$ and $\dim_{\theta}\Lambda'$ from Proposition~\ref{p:biLip} side-by-side on the left, and the ratio $\dim_{\theta}\Lambda'/\dim_{\theta}\Lambda$ on the right. 
Note that the fact that $\Lambda$ and $\Lambda'$ from are not bi-Lipschitz equivalent is revealed only by the intermediate dimensions, not by any of the other dimensions mentioned above. 
If all the rectangles are chosen in a specific row, then neither $\Lambda$ nor $\Lambda'$ is totally disconnected, so~\cite[Corollary~1.1]{RaoPreprintlipschitz} does not apply. 
We can use H\"older distortion to obtain a quantitative improvement of the assertion that $\Lambda$ and $\Lambda'$ are not bi-Lipschitz equivalent. Indeed, assume $f \colon \Lambda' \to \mathbb{R}^2$ is $\alpha$-H\"older with $f(\Lambda') \supseteq \Lambda$. Then the optimal value of $\theta$ to consider is $\theta = \gamma^{-2} = \left(\frac{\log 2}{\log 3}\right)^2 \approx 0.40$. 
By~\eqref{generalholderint}, 
\[ \alpha \leq \frac{\dim_{\gamma^{-2}} \Lambda'}{\dim_{\gamma^{-2}} f(\Lambda')} \leq \frac{\dim_{\gamma^{-2}} \Lambda'}{\dim_{\gamma^{-2}} \Lambda} < 0.9995, \]
with the last inequality computed numerically using Theorem~\ref{thm:main}. 
\begin{figure}[ht]
	\centering
	\includegraphics[width=0.99\textwidth]{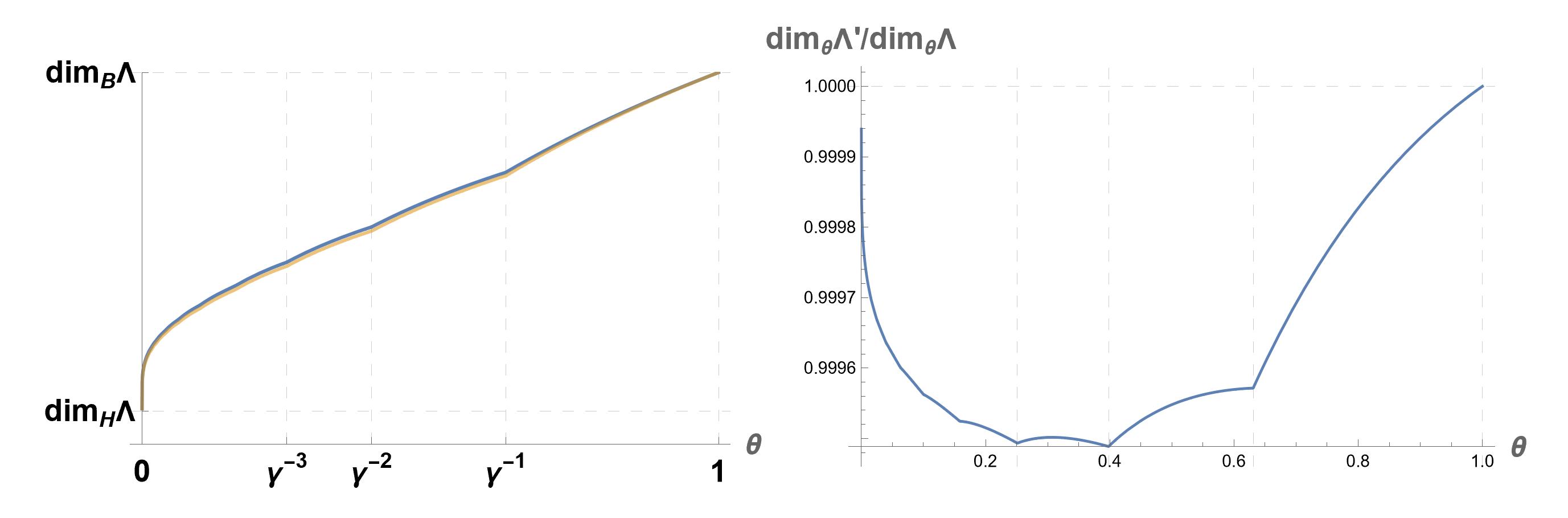}
	\caption{Left: plot of $\dim_{\theta}\Lambda$ (\textcolor{blue}{blue}) and $\dim_{\theta}\Lambda'$ (\textcolor{orange}{orange}) from Proposition~\ref{p:biLip}. Right: ratio of $\dim_{\theta}\Lambda'/\dim_{\theta}\Lambda$ for $\theta\geq \gamma^{-35}$.}
	\label{fig:exRatio}
\end{figure}

\begin{prop}
For any carpet with just two column types, meaning that $M_0=2$ using notation from Section~\ref{subsec:multifractal}, the rate function can be given explicitly by $I(t)=\log M -H(\mathbf{Q}^*_t)$, where 
\[ H(\mathbf{Q}^*_t) = \frac{-1}{\log (N_1/N_2)}\left(\!\! (t-\log N_2)\log \frac{t-\log N_2}{R_1\log (N_1/N_2)}+ (\log N_1-t)\log \frac{\log N_1-t}{R_2\log (N_1/N_2)}  \!\right). \]
\end{prop}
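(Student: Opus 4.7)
The strategy is to invoke Proposition~\ref{prop:1}, which asserts that $\log M - I(t) = H(\mathbf{Q}^*_t)$ for every $t\in(\underline{t},\overline{t})$, thus rearranging immediately to $I(t) = \log M - H(\mathbf{Q}^*_t)$. The problem therefore reduces to obtaining a closed-form expression for the maximal entropy $H(\mathbf{Q}^*_t)$ defined in~\eqref{eq:104}, under the specific structural assumption $M_0 = 2$.

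With $M_0=2$, the $M$ non-empty columns split into $R_1$ columns of size $N_1$ and $R_2$ columns of size $N_2$ with $R_1+R_2=M$. I would first argue that the maximiser $\mathbf{Q}^*_t$ is symmetric within each column type, i.e.\ it assigns a common weight $a$ to each of the $R_1$ columns of the first type and a common weight $b$ to each of the $R_2$ columns of the second type. This is the standard Lagrange multiplier computation: maximising $-\sum_{\jh} p_{\jh}\log p_{\jh}$ subject to the linear constraints $\sum_{\jh} p_{\jh} = 1$ and $\sum_{\jh} p_{\jh} \log N_{\jh} = t$ yields a Gibbs-form optimiser $p_{\jh} = e^{-1-\lambda}N_{\jh}^{-\mu}$, whose value depends on $\jh$ only through $N_{\jh}$. (Strict concavity of entropy on the convex feasible set ensures uniqueness.)

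It then remains to solve the two-dimensional linear system in the pair $(x,y) = (R_1 a,\, R_2 b)$ obtained from the constraints, namely $x+y=1$ and $x\log N_1+y\log N_2 = t$. This gives
\[
x = \frac{t-\log N_2}{\log(N_1/N_2)}, \qquad y = \frac{\log N_1 - t}{\log(N_1/N_2)},
\]
and substituting $a = x/R_1$, $b = y/R_2$ into
\[
H(\mathbf{Q}^*_t) = -R_1 a\log a - R_2 b\log b = -x\log(x/R_1) - y\log(y/R_2)
\]
produces, after factoring out $1/\log(N_1/N_2)$, exactly the displayed formula. The only minor technical point to check is that $t\in(\underline{t},\overline{t})$ forces $x,y\in(0,1)$ so that every logarithm in the expression is finite and the Lagrange-multiplier derivation is legitimate; this is straightforward since $\underline{t}$ and $\overline{t}$ lie in the open interval $(\log N_2,\log N_1)$ whenever the carpet has non-uniform fibres. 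No real obstacle arises: the whole proof is essentially a one-line reduction via Proposition~\ref{prop:1} followed by the elementary two-parameter entropy computation.
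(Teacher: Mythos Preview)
Your proposal is correct and follows essentially the same approach as the paper: invoke Proposition~\ref{prop:1} to reduce to computing $H(\mathbf{Q}^*_t)$, observe that the entropy maximiser is constant on columns of the same type (the paper states this as ``straightforward'' while you justify it via the Gibbs form from Lagrange multipliers), then solve the resulting $2\times 2$ linear system in the total masses $(x,y)=(R_1a,R_2b)$, which are exactly the paper's $(q_1^*,q_2^*)$. The entropy computation $H(\mathbf{Q}^*_t)=-x\log(x/R_1)-y\log(y/R_2)$ then matches the displayed formula verbatim.
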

\begin{proof}
Let $\mathbf{q}=(q_1,\dotsc,q_{M_0})\in\mathcal{P}_{M_0}$.
It is straightforward to see that the supremum in~\eqref{eq:104} will not change if we restrict to vectors $\mathbf{p}\in\mathcal{P}_M$ of the form 
\begin{equation}\label{eq:105}
p_{\ih} = q_j/R_j, \;\text{ if the } \ih\text{-th non-empty column has } N_j \text{ maps},
\end{equation}
in other words measure is distributed uniformly amongst columns with the same number of maps. 
As a result, the linear constraints in~\eqref{eq:104} can be rewritten as 
\begin{equation}\label{eq:106}
1 = \sum_{j=1}^{M_0} q_j  \,\;\text{ and }\;\, t =  \sum_{j=1}^{M_0} q_j \log N_j.
\end{equation}
In particular, since $M_0=2$, there is a single vector $(q^*_1,q^*_2)$ which satisfies~\eqref{eq:106}, namely 
\begin{equation*}
q^*_1= \frac{t-\log N_2}{\log( N_1/N_2)} \;\;\text{ and }\;\; q^*_2 =\frac{\log N_1 -t}{\log (N_1/N_2)},
\end{equation*}
recalling $N_1>N_2$. 
Using~\eqref{eq:105}, we can calculate the entropy of the entropy-maximising vector, 
\[ H(\mathbf{Q}^*_t) = - \sum_{j=1}^{M_0} q^*_j \log (q^*_j/R_j), \]
and conclude from Proposition~\ref{prop:1} that $I(t)=\log M -H(\mathbf{Q}^*_t)$, as required. 
\end{proof}

\begin{example}[using the parameters from Example~1.2 of Rao, Yang and Zhang~\cite{RaoPreprintlipschitz}]\label{ex:rao}%
Consider two Bedford--McMullen carpets defined on the same grid with $m=8$, $n=27$ and the following parameters: 
\begin{align*}
 \Lambda: \qquad &M_0 = 2, \quad \{N_1,N_2\} = \{6,3\} \mbox{ and } \{R_1,R_2\} = \{1,1\}, \\*
\Lambda': \qquad &M_0' = 2, \quad \{N_1',N_2'\} = \{2,1\} \mbox{ and } \{R_1',R_2'\} = \{2,2\}. 
\end{align*}
Then condition~\ref{itemnow5} from Theorem~\ref{thm:multifractal} holds, so $\dim_{\theta} \Lambda = \dim_{\theta} \Lambda'$ for all $\theta \in [0,1]$, despite the fact that the carpets are defined on the same grid with different parameters. This is only possible because the number of non-empty columns is different. 
\end{example}
It is shown in~\cite{RaoPreprintlipschitz} that the carpets in Example~\ref{ex:rao} are not bi-Lipschitz equivalent. Therefore equality of the intermediate dimensions is not a sufficient condition for two carpets with non-uniform fibres to be bi-Lipschitz equivalent, even if they are assumed to be defined on the same grid and totally disconnected. This raises the following question.  

\begin{question}
Suppose two Bedford--McMullen carpets both have non-uniform fibres, are defined on the same grid, and are bi-Lipschitz equivalent. Does it follow that both carpets must have identical parameters $(M_0,N_1,\dotsc,N_{M_0}, R_1,\dotsc, R_{M_0})$? 
\end{question}

\begin{example}\label{ex:Nice}%
Consider the two carpets $\Lambda$ and $\Lambda'$ with parameters 
\begin{align*}
\Lambda : \qquad &n=36, \quad m=6, \quad M = M_0 = 2,\quad \{ N_1,N_2\} = \{9,6\} \mbox{ and }\{R_1,R_2\} = \{1,1\} \\
\Lambda' : \qquad &n=36, \quad m=4,\quad M = M_0 = 2, \quad \{ N_1,N_2\} = \{6,4\} \mbox{ and }\{R_1,R_2\} = \{1,1\}.
\end{align*}
Then it can be checked from Theorem~\ref{thm:main} that $\dim_{\theta} \Lambda = \dim_{\theta} \Lambda'$ for all $\theta \in [1/2,1]$, but not for the whole range of $\theta$; by Theorem~\ref{thm:multifractal} this is only possible because the carpets are defined on different grids. By Corollary~\ref{cor:allprop}, the graph of $\dim_{\theta} \Lambda$ has a phase transition at $\theta = 1/2$ but the graph of $\dim_{\theta} \Lambda'$ does not, see Figure~\ref{fig:exNice}.
\end{example}
\begin{figure}[ht]
	\centering
	\includegraphics[width=0.5\textwidth]{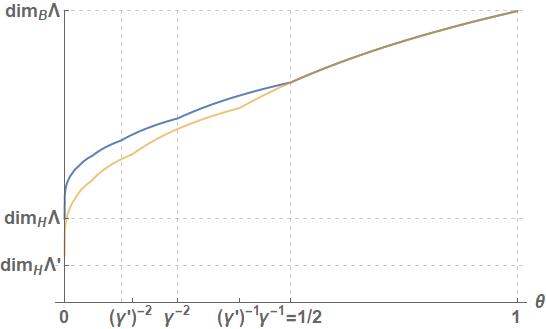}
	\caption{Plots of the intermediate dimensions of carpets in Example~\ref{ex:Nice}.}
	\label{fig:exNice}
\end{figure}

\section{Proof of equivalent forms of the rate function}\label{sec:proofProp}

In this section we will prove Proposition~\ref{prop:1}. 
Recall notation from Section~\ref{sec:CompResults}, and assume that $s\in(\underline{s},\overline{s})$ and that $(s,t)$ are related by~\eqref{eq:23}. We will prove Proposition~\ref{prop:1} in four steps. 
\begin{description}
\item[Step 1] $H(\mathbf{Q}^*_{t}) = \log M - I(t)$ 
\item[Step 2] $\overline{P}(s) \leq H(\mathbf{Q}^*_{t})$ 
\item[Step 3] $H(\mathbf{Q}^*_{t}) \leq \underline{P}(s)$ 
\item[Step 4] $I(t) = -(\log m ) f_{\nu} \left(\frac{\log N}{\log m} - \left(\frac{1}{\log m} - \frac{1}{\log n}\right) t   \right)  + \frac{t}{\gamma} + \log M$ 
\end{description}
We will prove the steps in separate subsections, which will also contain some auxiliary results which are important in their own right in the proof of Theorem~\ref{thm:main}. 

\subsection{Preliminaries} 
First, we need some preliminaries, in particular to describe the probability vectors $\mathbf{P}^*_t$ and $\mathbf{Q}^*_t$ which have certain optimising properties, and to recall some facts from the method of types. 
If $k,k_1,k_2 \in \mathbb{N}$ satisfy $0 \leq k_1 < k_2 \leq k$, and $\iih \in \{1,\dotsc,M\}^k$, then we define the average 
\begin{equation}\label{eq:tauaverage} \tau(\iih,k_1,k_2) \coloneqq \max\Bigg\{ \underline{t} , \frac{1}{k_2 - k_1} \sum_{j=k_1 + 1}^{k_2} \log N_{\ih_j} \Bigg\}. 
\end{equation}%
Recall, $\mathcal{P}_M$ denotes the set of probability vectors on $[M]=\{1,\dotsc,M\}$ and we introduced two distinguished probability vectors $\mathbf{P}\coloneqq (N_1/N,\dotsc,N_M/N)$ and $\mathbf{Q} \coloneqq (1/M,\dotsc,1/M)$.
Recall that the \emph{Kullback--Leibler divergence}, also known as the \emph{relative entropy} of $\mathbf{p}\in\mathcal{P}_M$ with respect to $\mathbf{q}\in\mathcal{P}_M$ is
\begin{equation*}
H(\mathbf{p}\| \mathbf{q}) \coloneqq \sum_{\ih=1}^M p_{\ih} \log \left( \frac{p_{\ih}}{ q_{\ih}}\right) = - H(\mathbf{p}) - \sum_{\ih=1}^M p_{\ih} \log q_{\ih} ,
\end{equation*}
where we set $0 \log 0 = 0$ and $0 \log (0/q_{\ih}) = 0$ regardless of the value of $q_{\ih}$. 
It is asymmetric and $H(\mathbf{p}\| \mathbf{q})\geq 0$ with equality if and only if $\mathbf{p}=\mathbf{q}$. In particular, 
\begin{equation}\label{eq:32}
H(\mathbf{p}\| \mathbf{P}) = \log N -H(\mathbf{p}) -\sum_{\ih=1}^M p_{\ih}\log N_{\ih} \quad \text{ and } \quad H(\mathbf{p}\| \mathbf{Q}) = \log M - H(\mathbf{p}).
\end{equation}

Recall that $\underline{t}\coloneqq \frac{1}{M} \sum_{\jh=1}^{M} \log N_{\jh}$ and $\overline{t}\coloneqq \log N-H(\mathbf{P})$, and let $t\in(\underline{t},\overline{t})$. We divide the set $\mathcal{P}_M$ into two parts: 
\begin{equation}\label{eq:300}
\mathcal{G}_t \coloneqq \bigg\{ \, \mathbf{p}\in\mathcal{P}_M: \sum_{\ih=1}^M p_{\ih}\log N_{\ih} \leq t \, \bigg\} \;\text{ and }\; \mathcal{F}_t \coloneqq \bigg\{ \, \mathbf{p}\in\mathcal{P}_M: \sum_{\ih=1}^M p_{\ih}\log N_{\ih} \geq t \, \bigg\},
\end{equation}
so that $\mathcal{E}_t\coloneqq \mathcal{G}_t \cap \mathcal{F}_t = \big\{ \, \mathbf{p}\in\mathcal{P}_M: \sum_{\ih} p_{\ih}\log N_{\ih} = t \, \big\}$. 
The reason for doing this will become clear in~\eqref{eq:301} in the proof of Step~2. 
Since $t>\underline{t}$, it follows that $\mathbf{Q}\in\mathcal{G}_t\setminus\mathcal{E}_t$, whereas $\mathbf{P}\in\mathcal{F}_t\setminus\mathcal{E}_t$ because $t<\overline{t} = \sum_{\ih=1}^M P_{\ih}\log N_{\ih}$.

\begin{lemma}\label{lem:31}
Let $t\in(\underline{t},\overline{t})$ and $\mathbf{p}\in\mathcal{E}_t$. Then 
\begin{equation*}
H(\mathbf{p}\| \mathbf{P}) = H(\mathbf{p}\| \mathbf{Q})+\log(N/M) -t.
\end{equation*}
\end{lemma}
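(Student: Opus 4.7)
The statement is a direct algebraic identity relating two Kullback--Leibler divergences under the linear constraint defining $\mathcal{E}_t$, so the plan is simply to compute both sides using the explicit formulas already recorded in~\eqref{eq:32} and to subtract.

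More precisely, I would begin by recalling from~\eqref{eq:32} that
\begin{equation*}
H(\mathbf{p}\| \mathbf{P}) = \log N - H(\mathbf{p}) - \sum_{\ih=1}^{M} p_{\ih}\log N_{\ih}
\quad\text{and}\quad
H(\mathbf{p}\| \mathbf{Q}) = \log M - H(\mathbf{p}),
\end{equation*}
where the first formula uses $P_{\ih} = N_{\ih}/N$ and the second uses $Q_{\ih} = 1/M$. Since by hypothesis $\mathbf{p}\in\mathcal{E}_t$, the defining linear constraint $\sum_{\ih=1}^{M} p_{\ih}\log N_{\ih} = t$ holds, and substituting this into the first formula yields $H(\mathbf{p}\| \mathbf{P}) = \log N - H(\mathbf{p}) - t$.

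Subtracting the two expressions then gives
\begin{equation*}
H(\mathbf{p}\| \mathbf{P}) - H(\mathbf{p}\| \mathbf{Q}) = (\log N - H(\mathbf{p}) - t) - (\log M - H(\mathbf{p})) = \log(N/M) - t,
\end{equation*}
which is exactly the claimed identity. There is no real obstacle here — the only thing that needs to be mentioned explicitly is that the convention $0\log 0 = 0$ (and $0 \log(0/q_{\ih}) = 0$) makes the formulas in~\eqref{eq:32} valid even when some coordinates of $\mathbf{p}$ vanish, so the manipulation is unconditional on $\mathbf{p}\in\mathcal{E}_t$. The restriction $t\in(\underline{t},\overline{t})$ is not used in the computation itself; it is only relevant to ensure that $\mathcal{E}_t$ is non-empty (so the lemma is not vacuous), which was already observed in the paragraph immediately preceding the lemma.
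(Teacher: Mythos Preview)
Your proof is correct and is in fact cleaner than the paper's. Both arguments ultimately rest on the same two ingredients: the explicit formulas in~\eqref{eq:32} and the constraint $\sum_{\ih} p_{\ih}\log N_{\ih}=t$ defining $\mathcal{E}_t$. You simply subtract the two formulas and use the constraint, which gives the identity in two lines. The paper instead groups the coordinates of $\mathbf{p}$ by common value of $N_{\ih}$, writes out the resulting linear system in the aggregated variables $q_j$, performs Gaussian elimination to parametrise its solution set, and then verifies that $\sum_j q_j\log N'_j=t$ --- but this is just the original constraint restated, so that detour does no work, and the paper then finishes exactly as you do by invoking~\eqref{eq:32}. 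Your direct argument loses nothing and is preferable.
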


\begin{proof}
Let $\mathbf{p}\in\mathcal{E}_t$. Let $N'_1<N'_2< \dotsb <N'_{M_0}$ denote the different values that the set $\{N_1,\dotsc,N_M\}$ takes. %
For $1\leq j\leq M_0$ let $I_j\coloneqq \{\, \ih\in[M] :  N_{\ih} = N'_j \, \}$ and  $q_j\coloneqq \sum_{\ih\in I_j}p_{\ih}$. Then,
\begin{equation*}
1=\sum_{\ih=1}^M p_{\ih} = \sum_{j=1}^{M_0} q_j  \,\;\text{ and }\;\, t = \sum_{\ih=1}^M p_{\ih}\log N_{\ih} =  \sum_{j=1}^{M_0} q_j \log N'_j.
\end{equation*}
This is a linear system of equations for $\{q_j\}_{j=1}^{M_0}$. Straightforward Gaussian elimination yields
\begin{equation*}
\left[\begin{array}{@{}ccc|c@{}}
1  & \dots & 1 & 1 \\
\log N'_1 & \dots & \log N'_{M_0} & t
\end{array}\right] %
\;\sim\;
\left[\begin{array}{@{}ccccc|c@{}}
1 & 1 & 1 & \dots & 1 & 1 \\
0 & 1 & \frac{\log(N'_3/N'_1)}{\log(N'_2/N'_1)} & \dots & \frac{\log(N'_{M_0}/N'_1)}{\log(N'_2/N'_1)} & \frac{t- \log N'_1}{\log(N'_2/N'_1)} 
\end{array}\right].
\end{equation*}
Thus, for every solution $(q_1,\dotsc,q_{M_0})$, we see that $q_3,\dotsc,q_{M_0}$ are free variables, and moreover
\begin{equation*}
q_2 = \frac{1}{\log (N'_2 / N'_1)} \bigg( t - \sum_{j=3}^{M_0} q_j \log N'_j - \bigg(1-\sum_{j=3}^{M_0} q_j\bigg) \log N'_1\bigg)
\end{equation*}
and $q_1=1-q_2-\sum_{j=3}^{M_0} q_j$. It now follows by a straightforward calculation that $\sum_{j=1}^{M_0} q_j \log N'_j=t$. By~\eqref{eq:32}, the result follows. 
\end{proof}

We introduce $\mathbf{P}^*_t\in\mathcal{G}_t$, $\mathbf{Q}^*_t\in\mathcal{F}_t$ defined by
\begin{equation}\label{eq:defineqstar}
H(\mathbf{P}^*_t\| \mathbf{P}) = \inf_{\mathbf{p}\in\mathcal{G}_t} H(\mathbf{p}\| \mathbf{P}) \qquad \text{ and }\qquad H(\mathbf{Q}^*_t\| \mathbf{Q}) = \inf_{\mathbf{q}\in\mathcal{F}_t} H(\mathbf{q}\| \mathbf{Q}).
\end{equation}
Due to~\eqref{eq:32} and Lemma~\ref{lem:32}, %
 this definition of $\mathbf{Q}^*_t$ is equivalent to~\eqref{eq:104}.
\begin{lemma}\label{lem:32}
Let $t\in(\underline{t},\overline{t})$. Then both $\mathbf{P}^*_t$ and $\mathbf{Q}^*_t$ are well defined and unique, with $\mathbf{P}^*_t,\mathbf{Q}^*_t\in\mathcal{E}_t$. Moreover, 
\begin{equation}\label{eq:33}
H(\mathbf{P}^*_t\| \mathbf{P}) = H(\mathbf{Q}^*_t\| \mathbf{Q})+\log(N/M)-t.
\end{equation}
\end{lemma}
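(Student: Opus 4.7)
The plan is to first establish existence and uniqueness of both minimisers via a standard compactness plus strict convexity argument, then show that both minimisers are forced onto the boundary $\mathcal{E}_t$, and finally use Lemma~\ref{lem:31} to identify $\mathbf{P}^*_t$ with $\mathbf{Q}^*_t$ and deduce the identity~\eqref{eq:33}.

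For existence and uniqueness, observe that the Kullback--Leibler divergences $\mathbf{p}\mapsto H(\mathbf{p}\|\mathbf{P})$ and $\mathbf{q}\mapsto H(\mathbf{q}\|\mathbf{Q})$ are continuous and strictly convex on the simplex $\mathcal{P}_M$ (one should first note that the infima are only searched over the subsets of $\mathcal{P}_M$ where the divergences are finite, namely where $p_{\ih}=0$ whenever $P_{\ih}=0$; since every $P_{\ih}>0$ and every $Q_{\ih}>0$ this causes no issue). The sets $\mathcal{G}_t$ and $\mathcal{F}_t$ defined in~\eqref{eq:300} are closed convex subsets of the compact simplex $\mathcal{P}_M$, hence compact, so continuity yields existence of the infima in~\eqref{eq:defineqstar}, and strict convexity of the divergences on these convex feasible sets yields uniqueness.

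Next, I would argue that $\mathbf{P}^*_t,\mathbf{Q}^*_t\in\mathcal{E}_t$. The unconstrained minimiser of $\mathbf{p}\mapsto H(\mathbf{p}\|\mathbf{P})$ on $\mathcal{P}_M$ is $\mathbf{P}$ itself (attaining value $0$); by the text just before~\eqref{eq:defineqstar}, $\mathbf{P}\in\mathcal{F}_t\setminus\mathcal{E}_t$, so $\mathbf{P}\notin\mathcal{G}_t$. If $\mathbf{P}^*_t$ lay in the relative interior of $\mathcal{G}_t$, i.e.\ satisfied the strict inequality $\sum_{\ih}(\mathbf{P}^*_t)_{\ih}\log N_{\ih}<t$, then for small $\lambda\in(0,1)$ the convex combination $(1-\lambda)\mathbf{P}^*_t+\lambda\mathbf{P}$ would still lie in $\mathcal{G}_t$ but (by strict convexity) would give a strictly smaller divergence than $\mathbf{P}^*_t$, a contradiction. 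Hence $\mathbf{P}^*_t\in\mathcal{E}_t$. The analogous argument, using that $\mathbf{Q}\in\mathcal{G}_t\setminus\mathcal{E}_t$ and hence $\mathbf{Q}\notin\mathcal{F}_t$, gives $\mathbf{Q}^*_t\in\mathcal{E}_t$.

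Finally, I would combine these facts with Lemma~\ref{lem:31}. Since both $\mathbf{P}^*_t$ and $\mathbf{Q}^*_t$ lie in $\mathcal{E}_t\subseteq\mathcal{G}_t$ and $\mathcal{E}_t\subseteq\mathcal{F}_t$, each is also a (strictly convex) minimiser of its respective divergence over the smaller set $\mathcal{E}_t$. But by Lemma~\ref{lem:31}, for every $\mathbf{p}\in\mathcal{E}_t$,
\begin{equation*}
H(\mathbf{p}\|\mathbf{P}) \;=\; H(\mathbf{p}\|\mathbf{Q}) + \bigl(\log(N/M)-t\bigr),
\end{equation*}
where the bracketed quantity is independent of $\mathbf{p}$. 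Therefore minimising $H(\cdot\|\mathbf{P})$ and $H(\cdot\|\mathbf{Q})$ over $\mathcal{E}_t$ are equivalent optimisation problems with the same (unique) minimiser, which forces $\mathbf{P}^*_t=\mathbf{Q}^*_t$. Applying Lemma~\ref{lem:31} one more time to this common vector immediately yields~\eqref{eq:33}.

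The only mildly delicate point is the boundary argument in the middle paragraph: one has to rule out an interior minimiser, for which strict convexity together with the observation that the unconstrained minimiser is outside the feasible set is decisive. Everything else is a routine assembly of standard facts about relative entropy.
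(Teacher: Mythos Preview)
Your proof is correct and follows essentially the same approach as the paper: compactness plus strict convexity for existence and uniqueness, a boundary argument to force both minimisers onto $\mathcal{E}_t$, and then Lemma~\ref{lem:31} to conclude. The paper phrases the boundary step by explicitly differentiating $t\mapsto H(\mathbf{q}+t\mathbf{v}\|\mathbf{q})$ along lines, and for~\eqref{eq:33} it writes the chain $H(\mathbf{Q}^*_t\|\mathbf{P})\le H(\mathbf{P}^*_t\|\mathbf{P})\le H(\mathbf{Q}^*_t\|\mathbf{P})$ via Lemma~\ref{lem:31} and the minimality properties, but your ``equivalent optimisation problems on $\mathcal{E}_t$'' formulation is an equally valid repackaging and even makes the (unstated in the paper, but implicit) identity $\mathbf{P}^*_t=\mathbf{Q}^*_t$ explicit.
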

\begin{proof}
Both $H(\cdot\,\| \mathbf{P})$ and $H(\cdot\,\| \mathbf{Q})$ are continuous on the domain $\mathcal{P}_M$, and $\mathcal{G}_t$ and $\mathcal{F}_t$ are compact, so both infima are attained. 

We proceed to differentiate the relative entropy (with respect to a fixed vector $\mathbf{q}$ in the interior of $\mathcal{P}_M$) along straight lines in $\mathcal{P}_M$. 
Fix $\mathbf{p},\mathbf{q} \in \mathcal{P}_M$, and assume that all entries of $\mathbf{q}$ are positive. Fix $\mathbf{v} \in \R^M \setminus \{0\}$ satisfying $\sum_{\ih=1}^M v_{\ih} = 0$. 
Let $[-t_-,t_+]$ denote the maximal interval (containing the origin) such that $H(\mathbf{p} + t \mathbf{v} \| \mathbf{q} ) \in \mathcal{P}_M$ for all $t \in [-t_-,t_+]$. 
Then for all $t \in (-t_-,t_+)$, %
a direct computation gives 
\[ \frac{d}{dt} H(\mathbf{p} + t \mathbf{v} \| \mathbf{q} )  = \sum v_{\ih} \log \left( \frac{p_{\ih} + t v_{\ih}}{q_{\ih}} \right) ; \qquad \frac{d^2}{dt^2} H(\mathbf{p} + t \mathbf{v} \| \mathbf{q} ) = \sum \frac{v_{\ih}^2}{p_{\ih} + t v_{\ih}} > 0, \]
where the sums are taken over all indices $1 \leq \ih \leq M$ for which $p_{\ih} + t v_{\ih} > 0$ for all $t \in (-t_-,t_+)$. %
Therefore $t \mapsto H(\mathbf{p} + t \mathbf{v} \| \mathbf{q} )$ is strictly convex, and has at most one minimum in $[-t_-,t_+]$. 
In particular, the uniqueness of $\mathbf{P}^*_t$ and $\mathbf{Q}^*_t$ follows from the convexity of $\mathcal{G}_t$ and $\mathcal{F}_t$ respectively. 
Note also that for all $t \in (0,t_+)$,
\[ \frac{d}{dt} H(\mathbf{q} + t \mathbf{v} \| \mathbf{q} )  = \sum_{\ih=1}^M v_{\ih} \log \left(1 + \frac{v_{\ih}}{q_{\ih}}t \right) > 0,\] %
so $t \mapsto H(\mathbf{q} + t \mathbf{v} \| \mathbf{q} )$ is strictly increasing on $(0,t_+)$. 
Applying this with $\mathbf{q}$ taken to be $\mathbf{P}$ or $\mathbf{Q}$ respectively gives that $\mathbf{P}^*_t,\mathbf{Q}^*_t\in\mathcal{E}_t$.  

To conclude, Lemma~\ref{lem:31} gives that
\begin{align*}
H(\mathbf{Q}^*_t\| \mathbf{P}) = H(\mathbf{Q}^*_t\| \mathbf{Q}) +\log(N/M)-t \leq H(\mathbf{P}^*_t\| \mathbf{Q}) +\log(N/M)-t &= H(\mathbf{P}^*_t\| \mathbf{P}) \\
&\leq H(\mathbf{Q}^*_t\| \mathbf{P}),
\end{align*}
so there is equality throughout. 
\end{proof}
The importance of choosing $t$ to lie in the interval $(\underline{t},\overline{t})$ (or equivalently $s\in(\underline{s},\overline{s})$) is that in this case the hyperplane $\mathcal{E}_t$ separates $\mathbf{P}$ and $\mathbf{Q}$. Otherwise, either $H(\mathbf{P}^*_t\| \mathbf{P})=0$ or $H(\mathbf{Q}^*_t\| \mathbf{Q})=0$, and~\eqref{eq:33} does not necessarily hold.

\subsection{Method of types} 
The method of types is an elementary tool developed to study discrete memoryless systems in information theory. It has since found applications in hypothesis testing, combinatorics and large deviations (see~\cite{Csiszar1998methodoftypes} for some background). Kolossv\'ary used it to calculate the box dimension of Gatzouras--Lalley and Bara\'nski sponges in $\Rd$~\cite{Kolossvary2022lqtypes}. %

Let $[M]=\{1,\dotsc,M\}$ denote a finite alphabet and assume $\iiv=(i_1,\dotsc,i_J)\in[M]^J$. For $J'\leq J$, the \emph{type of $\iiv$ at level $J'$} is the empirical probability vector
\begin{equation*}%
\boldsymbol{\tau}_{J'}^{J}(\iiv) = \frac{1}{J'}\big( \#\{ \, 1\leq\ell\leq J': i_{\ell}=j \, \} \big)_{j\in[M]} \in [0,1]^M.
\end{equation*}
When $J'=J$, we simply write $\boldsymbol{\tau}_J(\iiv) \coloneqq \boldsymbol{\tau}_J^{J}(\iiv)$. The set of all possible types of $[M]^J$ is
\begin{equation*}
\mathcal{T}_J=\big\{ \, \mathbf{p}\in\mathcal{P}_M: \text{ there exists } \iiv\in[M]^J \text{ such that } \mathbf{p}=\boldsymbol{\tau}_J(\iiv) \, \big\},
\end{equation*}
and for $J'\leq J$, the \emph{type class of} $\mathbf{p}\in \mathcal{T}_{J'}$ amongst $[M]^J$ is the set
\begin{equation*}
T_{J'}^{J}(\mathbf{p}) = \big\{ \, \iiv\in[M]^J: \boldsymbol{\tau}_{J'}^J(\iiv)=\mathbf{p} \, \big\}.
\end{equation*}
Similarly, $T_{J}(\mathbf{p}) \coloneqq T_{J}^{J}(\mathbf{p})$. 

We use the following two simple facts:
\begin{equation}\label{eq:205}
\# \mathcal{T}_J \leq (J+1)^M
\end{equation}
and for every type class
\begin{equation}\label{eq:206}
\big(J+1\big)^{-M} e^{J\cdot H(\mathbf{p})} 
\leq \#T_{J}(\mathbf{p}) \leq e^{J\cdot H(\mathbf{p})},
\end{equation}
see~\cite[Lemmas~2.1.2 and~2.1.8]{Dembo2010largedeviations}. 
The importance of~\eqref{eq:205} is that $\# \mathcal{T}_J$ grows only polynomially in $J$; on the other hand, the exponential terms in~\eqref{eq:206} are the same in both the lower and upper bounds. Since we are looking for critical exponents, sub-exponential multiplicative terms do not influence our calculations. To simplify notation, we write $f(J)\asymp g(J)$ if the exponential rates of growth of $f(J)>0$ and $g(J)>0$ exist and are equal to each other, so 
\begin{equation*}
f(J)\asymp g(J) \;\Longleftrightarrow\; \lim_{J\to\infty}\frac{1}{J}\log f(J) = \lim_{J\to\infty}\frac{1}{J}\log g(J). 
\end{equation*}  
In particular, if $f(J)$ is sub-exponential in $J$ (for example when $f(J) = \# \mathcal{T}_J$), then $f(J)\asymp 1$. %

The set $\mathcal{T}_J$ is a discrete set with polynomially many points which becomes dense in $\mathcal{P}_M$ as $J\to\infty$. For $\mathbf{p}\in\mathcal{P}_M$ let $\mathbf{p}_{J}$ denote the `best approximation' of $\mathbf{p}$ in $\mathcal{T}_J$, in the sense that $\|\mathbf{p}-\mathbf{p}_{J}\| = \min_{\mathbf{q}\in\mathcal{T}_J} \|\mathbf{p}-\mathbf{q}\|$, where we can take any norm. If there are many such $\mathbf{p}_{J}$ then we can choose the one with smallest coordinates when ordered lexicographically. %
For large enough $J$, $\|\mathbf{p}-\mathbf{p}_{J}\|$ is arbitrarily small. In particular, property~\eqref{eq:206} and the continuity of the entropy imply that $\#T_{J}(\mathbf{p}_J) \asymp e^{J\cdot H(\mathbf{p})}$. 

\subsection{Proof of Step 1}

This is a standard argument in large deviations theory, and in fact holds for all $t \in (\underline{t},\max_{1\leq i \leq M} \log N_i)$. We include a sketch of it for the convenience of the reader. An alternative approach would be to use Lagrange multipliers.

Let $\mathbf{I}=I_1,I_2,\dotsc$ be an infinite sequence of independent and identically distributed random variables on the set $\{1,\dotsc,M\}$ according to $\mathbf{q}\in\mathcal{P}_M$. Let $\mathds{P}_{\mathbf{q}}\coloneqq \mathbf{q}^{\mathds{N}}$ denote the product measure corresponding to the distribution of the sequence $\mathbf{I}$. Then $\boldsymbol{\tau}_J(\mathbf{I})$, the type of $(I_1,\dotsc,I_J)$, is a vector-valued random variable. For all $\mathbf{p}\in\mathcal{T}_J$,
\begin{equation*}
(J+1)^{-M} e^{-J\cdot H(\mathbf{p}\|\mathbf{q})} 
\leq \mathds{P}_{\mathbf{q}}(\boldsymbol{\tau}_J(\mathbf{I})=\mathbf{p}) \leq e^{-J\cdot H(\mathbf{p}\|\mathbf{q})},
\end{equation*}
see~\cite[Lemma~2.1.9]{Dembo2010largedeviations}. Sanov's theorem~\cite[Theorem~2.1.10]{Dembo2010largedeviations} shows that the family of laws $\mathds{P}_{\mathbf{q}} (\boldsymbol{\tau}_J(\mathbf{I})\in \cdot )$ satisfies a large deviations principle with the rate function $H(\cdot\|\mathbf{q})$. In particular, for $\mathbf{q}=\mathbf{Q}=(1/M,\dotsc,1/M)$ and the subset $\mathcal{F}_{t}$:
\begin{equation*}
\mathds{P}_{\mathbf{Q}} (\boldsymbol{\tau}_J(\mathbf{I})\in\mathcal{F}_{t} ) \asymp e^{-J \inf_{\mathbf{q}\in\mathcal{F}_t} H(\mathbf{q}\|\mathbf{Q})} = e^{-J H(\mathbf{Q}^*_{t}\|\mathbf{Q})}. 
\end{equation*}
Now define the random variable $X_\ell\coloneqq \log N_{I_\ell}$ and the averages $Y_J\coloneqq \frac{1}{J}\sum_{\ell=1}^JX_{\ell}$. Then
\begin{equation*}
Y_J = \sum_{\ih=1}^M \tau_{J,\ih}(\mathbf{I}) \cdot \log N_{\ih}
\end{equation*}
is a continuous function of $\boldsymbol{\tau}_{J}(\mathbf{I})$. Hence, by the `contraction principle'~\cite[Section~4.2.1]{Dembo2010largedeviations}, the rate function $I(t)$ of $\mathds{P}_{\mathbf{Q}} (Y_J\in \cdot )$ is equal to
\begin{equation*}
I(t) = \inf \Big\{ \, H(\mathbf{q}\|\mathbf{Q}): \sum_{\ih=1}^M q_{\ih} \log N_{\ih} = t \, \Big\}.
\end{equation*}
In particular, Lemma~\ref{lem:32} implies that
\begin{equation*}
I(t) = \inf_{\mathbf{q}\in\mathcal{E}_t} H(\mathbf{q}\|\mathbf{Q}) \stackrel{\eqref{eq:defineqstar}}{=} H(\mathbf{Q}^*_{t}\|\mathbf{Q}) \stackrel{\eqref{eq:32}}{=} \log M - H(\mathbf{Q}^*_{t}).
\end{equation*}

\subsection{Proof of Step 2}

Since $\min_{k\in\{0,1,\dotsc,J\}} \psi_{\iiv|k}(s)\leq \min\{1,\psi_{\iiv|J}(s)\}$, recall the definition of $\psi_{\iiv|k}(s)$ from~\eqref{eq:20},
\begin{equation}\label{eq:302}
\Psi_J(s) \leq \#\big\{ \, \iiv\in[M]^J: 1<\psi_{\iiv|J}(s) \, \big\} + \sum_{\iiv\in[M]^J:\, \psi_{\iiv|J}(s)\leq 1} \psi_{\iiv|J}(s).
\end{equation} 

\begin{lemma}\label{lem:33-first}
Assume $t \in (\underline{t},\max_{1\leq i \leq N} \log N_i)$ and that $(s,t)$ are related by~\eqref{eq:23}. Then
\begin{equation*}
\#\big\{ \, \iiv\in[M]^J: 1<\psi_{\iiv|J}(s) \, \big\} \asymp e^{J\cdot H(\mathbf{Q}^*_{t})}.
\end{equation*}
\end{lemma}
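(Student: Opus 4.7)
The plan is to use the method of types, exploiting the observation from~\eqref{eq:309} that for $(s,t)$ related by~\eqref{eq:23} and $\iiv \in [M]^J$,
\[
\psi_{\iiv|J}(s) > 1 \;\Longleftrightarrow\; \frac{1}{J}\sum_{\ell=1}^J \log N_{i_\ell} > t \;\Longleftrightarrow\; \boldsymbol{\tau}_J(\iiv) \in \mathcal{F}_t^{>},
\]
where $\mathcal{F}_t^{>} \coloneqq \{\mathbf{p} \in \mathcal{P}_M : \sum_{\ih} p_{\ih} \log N_{\ih} > t\}$ is the strict version of $\mathcal{F}_t$ from~\eqref{eq:300}. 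Thus, partitioning into type classes,
\[
\#\{\iiv \in [M]^J : \psi_{\iiv|J}(s) > 1\} = \sum_{\mathbf{p} \in \mathcal{T}_J \cap \mathcal{F}_t^{>}} \#T_J(\mathbf{p}).
\]

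The key analytic step I would carry out is to show that
\[
\sup_{\mathbf{p} \in \mathcal{F}_t} H(\mathbf{p}) = H(\mathbf{Q}^*_{t}).
\]
Since $t > \underline{t}$, the uniform vector $\mathbf{Q}$ lies strictly outside $\mathcal{F}_t$. For any $\mathbf{p} \in \mathcal{F}_t \setminus \mathcal{E}_t$, the line segment from $\mathbf{p}$ to $\mathbf{Q}$ crosses $\mathcal{E}_t$ at some interior point $\mathbf{p}'$, and strict concavity of entropy gives $H(\mathbf{p}') > H(\mathbf{p})$. Hence the supremum over $\mathcal{F}_t$ is attained on $\mathcal{E}_t$ and by definition~\eqref{eq:104} equals $H(\mathbf{Q}^*_t)$.

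The upper bound on the count then follows immediately by combining this with~\eqref{eq:205} and~\eqref{eq:206}:
\[
\sum_{\mathbf{p} \in \mathcal{T}_J \cap \mathcal{F}_t^{>}} \#T_J(\mathbf{p}) \leq \#\mathcal{T}_J \cdot e^{J \sup_{\mathbf{p} \in \mathcal{F}_t} H(\mathbf{p})} \leq (J+1)^M e^{J H(\mathbf{Q}^*_t)},
\]
which is $\asymp e^{J H(\mathbf{Q}^*_t)}$. For the matching lower bound, I would fix $\varepsilon > 0$ and choose $\mathbf{p}_\varepsilon \in \mathcal{F}_t^{>}$ (i.e., with $\sum p_{\varepsilon,\ih} \log N_{\ih} > t + \delta$ for some small $\delta = \delta(\varepsilon) > 0$) such that $H(\mathbf{p}_\varepsilon) \geq H(\mathbf{Q}^*_t) - \varepsilon$; this is possible by perturbing $\mathbf{Q}^*_t \in \mathcal{E}_t$ slightly away from $\mathbf{Q}$, using continuity of entropy. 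For all sufficiently large $J$, the best approximation $\mathbf{p}_{\varepsilon,J} \in \mathcal{T}_J$ still lies in $\mathcal{F}_t^{>}$ and satisfies $H(\mathbf{p}_{\varepsilon,J}) \geq H(\mathbf{Q}^*_t) - 2\varepsilon$, so the lower bound in~\eqref{eq:206} yields
\[
\#\{\iiv : \psi_{\iiv|J}(s) > 1\} \geq \#T_J(\mathbf{p}_{\varepsilon,J}) \geq (J+1)^{-M} e^{J(H(\mathbf{Q}^*_t) - 2\varepsilon)}.
\]
Letting $\varepsilon \to 0$ completes the proof.

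The only mild subtlety I anticipate is the handling of the strict inequality in $\mathcal{F}_t^{>}$: the supremum of $H$ over $\mathcal{F}_t^{>}$ and over its closure $\mathcal{F}_t$ coincide (both equal $H(\mathbf{Q}^*_t)$) but the sup over $\mathcal{F}_t^{>}$ is not attained, which is why the lower bound requires the perturbation argument above. This is routine, and I do not anticipate any deeper obstacle.
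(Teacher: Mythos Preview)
Your proposal is correct and takes essentially the same approach as the paper: both use the method of types, the equivalence from~\eqref{eq:309} reducing the count to types in $\mathcal{F}_t$, the fact that $\sup_{\mathbf{p}\in\mathcal{F}_t}H(\mathbf{p})=H(\mathbf{Q}^*_t)$, and the polynomial bound on $\#\mathcal{T}_J$. The paper packages the lower bound slightly differently (taking $\mathbf{Q}^*_{t,J}$ to be the entropy-maximising type in $\mathcal{F}_t\cap\mathcal{T}_J$ and observing $H(\mathbf{Q}^*_{t,J})\to H(\mathbf{Q}^*_t)$, rather than your $\varepsilon$-perturbation into $\mathcal{F}_t^{>}$), and also notes a one-line alternative via Cram\'er's theorem, but these are cosmetic differences.
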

\begin{proof}
For any given word $\iiv\in[M]^J$, the average of the $\log N_{i_\ell}$ does not depend on the particular order of the symbols in $\iiv$, just on the relative frequency of each symbol. In other words, only the type $\boldsymbol{\tau}_J(\iiv)=(\tau_{J,1}(\iiv),\dotsc,\tau_{J,M}(\iiv))$ of $\iiv$ matters, and (recalling~\eqref{eq:309})
\begin{equation*}
\psi_{\iiv|J}(s)\leq 1 \;\Longleftrightarrow\;  \sum_{\ih=1}^{M} \tau_{J,\ih}(\iiv) \log N_{\ih} \leq t.
\end{equation*}
This reduces the problem back to a condition on probability vectors $\mathbf{p}\in\mathcal{P}_M$. This is the reason why we introduced $\mathcal{G}_t$ and $\mathcal{F}_t$ the way we did in~\eqref{eq:300}; we now see that
\begin{equation}\label{eq:301}
\psi_{\iiv|J}(s)\leq 1 \;\Longleftrightarrow\; \boldsymbol{\tau}_J(\iiv)\in \mathcal{G}_t.
\end{equation}

We are now ready to determine the exponential rate of growth of the two terms in~\eqref{eq:302} separately by grouping together words according to type class.
Let $\mathbf{Q}^*_{t,J}\in(\mathcal{F}_{t}\cap \mathcal{T}_J)$ be the type for which $H(\mathbf{Q}^*_{t,J}) = \max_{\mathbf{q}\in(\mathcal{F}_{t}\cap \mathcal{T}_J)} H(\mathbf{q})$ (if there is more than one such vector then we can choose the smallest lexicographically). Then 
\begin{equation*}
(J+1)^{-M}e^{J\cdot H(\mathbf{Q}^*_{t,J})} \leq \#\big\{ \, \iiv\in[M]^J: 1<\psi_{\iiv|J}(s) \, \big\} = \sum_{\mathbf{q}\in(\mathcal{F}_{t}\cap \mathcal{T}_J)} \# T_{J}(\mathbf{q})  \leq \#\mathcal{T}_J\cdot e^{J\cdot H(\mathbf{Q}^*_{t,J})}, 
\end{equation*}
where we used~\eqref{eq:206} for the two inequalities. As $J\to\infty$, the set $\mathcal{T}_J$ becomes dense in $\mathcal{P}_M$, and as a result $\|\mathbf{Q}^*_{t,J}-\mathbf{Q}^*_{t}\|\to 0$ so $H(\mathbf{Q}^*_{t,J})\to H(\mathbf{Q}^*_{t})$. 
Hence, it follows from~\eqref{eq:205} that $\#\big\{ \, \iiv\in[M]^J: 1<\psi_{\iiv|J}(s) \, \big\} \asymp e^{J\cdot H(\mathbf{Q}^*_{t})}$. 

An alternative way to see this would be to let $X_1,X_2,\dotsc,X_J,\dotsc $ be a sequence of i.i.d. random variables defined by~\eqref{e:definerandomvar}. Then 
\[ \#\big\{ \, \iiv\in[M]^J: 1<\psi_{\iiv|J}(s) \, \big\} = M^J \mathds{P}\left( \sum_{i = 1}^J X_i > tJ \right) \asymp M^J e^{-J \cdot I(t)} = e^{J\cdot H(\mathbf{Q}^*_{t})}, \]
where we used~\eqref{eq:309} for the first equality, Cram\'er's theorem from large deviations theory for the asymptotic equality, and Step~1 for the final equality. 
\end{proof}

\begin{lemma}\label{lem:33-second}
Assume $s\in(\underline{s},\overline{s})$ and that $(s,t)$ are related by~\eqref{eq:23}. Then
\begin{equation*}
 \sum_{\iiv\in[M]^J:\, \psi_{\iiv|J}(s)\leq 1} \psi_{\iiv|J}(s) \asymp e^{J\cdot H(\mathbf{Q}^*_{t})}.
\end{equation*}
\end{lemma}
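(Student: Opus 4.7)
\textbf{Proof proposal for Lemma~\ref{lem:33-second}.} The plan is to group the sum by type class of the word~$\iiv$. The key observation is that $\psi_{\iiv|J}(s)$ depends on $\iiv$ only through its type $\boldsymbol{\tau}_J(\iiv)\in\mathcal{T}_J$: indeed, if $\boldsymbol{\tau}_J(\iiv)=\mathbf{p}$ then $\prod_{\ell=1}^J N_{i_\ell}=\exp\bigl(J\sum_{\ih}p_{\ih}\log N_{\ih}\bigr)$, and using the relation $s\log n = t+\gamma\log M$ from~\eqref{eq:23} one finds
\begin{equation*}
\psi_{\iiv|J}(s)=\exp\Bigl(J\bigl(\textstyle\sum_{\ih}p_{\ih}\log N_{\ih}-t\bigr)\Bigr).
\end{equation*}
Combined with~\eqref{eq:301}, this shows that the condition $\psi_{\iiv|J}(s)\le 1$ becomes $\boldsymbol{\tau}_J(\iiv)\in\mathcal{G}_t$, and therefore
\begin{equation*}
\sum_{\iiv:\,\psi_{\iiv|J}(s)\le 1}\psi_{\iiv|J}(s)=\sum_{\mathbf{p}\in\mathcal{T}_J\cap\mathcal{G}_t}\#T_J(\mathbf{p})\cdot\exp\Bigl(J\bigl(\textstyle\sum_{\ih}p_{\ih}\log N_{\ih}-t\bigr)\Bigr).
\end{equation*}

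Next I would use~\eqref{eq:206} to replace $\#T_J(\mathbf{p})$ by $e^{J\cdot H(\mathbf{p})}$ up to the sub-exponential factor $(J+1)^M$. Since $\#\mathcal{T}_J\leq(J+1)^M$ is also sub-exponential, the whole sum is $\asymp$ to $\exp\bigl(J\cdot\sup_{\mathbf{p}\in\mathcal{G}_t}f(\mathbf{p})-Jt\bigr)$, where $f(\mathbf{p})\coloneqq H(\mathbf{p})+\sum_{\ih}p_{\ih}\log N_{\ih}$. For the upper bound this is immediate; for the lower bound one takes a type $\mathbf{P}^*_{t,J}\in\mathcal{T}_J\cap\mathcal{G}_t$ which best approximates $\mathbf{P}^*_t$ (which exists for $J$ large since $\mathbf{P}^*_t\in\mathcal{E}_t\subset\mathcal{G}_t$ by Lemma~\ref{lem:32}), and applies continuity of $f$.

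The final step is the variational identification $\sup_{\mathbf{p}\in\mathcal{G}_t}f(\mathbf{p})=H(\mathbf{Q}^*_t)+t$. Using $f(\mathbf{p})=\log N-H(\mathbf{p}\|\mathbf{P})$ from~\eqref{eq:32}, the supremum becomes $\log N-\inf_{\mathbf{p}\in\mathcal{G}_t}H(\mathbf{p}\|\mathbf{P})=\log N-H(\mathbf{P}^*_t\|\mathbf{P})$ by the definition of $\mathbf{P}^*_t$ in~\eqref{eq:defineqstar}. Applying Lemma~\ref{lem:32} (for which we need $t\in(\underline{t},\overline{t})$, i.e.\ $s\in(\underline{s},\overline{s})$) together with $H(\mathbf{Q}^*_t\|\mathbf{Q})=\log M-H(\mathbf{Q}^*_t)$ yields $H(\mathbf{P}^*_t\|\mathbf{P})=\log N-H(\mathbf{Q}^*_t)-t$, so $\sup_{\mathbf{p}\in\mathcal{G}_t}f(\mathbf{p})-t=H(\mathbf{Q}^*_t)$, as required.

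The main obstacle is the variational computation in the last paragraph: the rest is a fairly standard method-of-types calculation analogous to Lemma~\ref{lem:33-first}, but the identification of the optimal exponent with $H(\mathbf{Q}^*_t)$ relies crucially on the hyperplane $\mathcal{E}_t$ separating $\mathbf{P}$ and $\mathbf{Q}$ (which is where the hypothesis $s\in(\underline{s},\overline{s})$ enters), so that the duality $H(\mathbf{P}^*_t\|\mathbf{P})-H(\mathbf{Q}^*_t\|\mathbf{Q})=\log(N/M)-t$ from Lemma~\ref{lem:32} can be invoked.
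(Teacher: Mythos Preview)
Your proposal is correct and follows essentially the same approach as the paper: group the sum by type class, use the method-of-types bounds~\eqref{eq:205}--\eqref{eq:206} to reduce to the variational problem $\sup_{\mathbf{p}\in\mathcal{G}_t}\bigl(H(\mathbf{p})+\sum_{\ih}p_{\ih}\log N_{\ih}\bigr)$, rewrite this via~\eqref{eq:32} as $\log N-H(\mathbf{P}^*_t\|\mathbf{P})$, and apply the duality~\eqref{eq:33} from Lemma~\ref{lem:32} to land on $H(\mathbf{Q}^*_t)$. The paper's proof is essentially identical, with the lower bound handled by the approximating type $\mathbf{P}^*_{t,J}\to\mathbf{P}^*_t$ exactly as you describe.
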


\begin{proof}
If $\iiv\in T_J(\mathbf{p})$, then
\begin{equation}\label{eq:308}
\psi_{\iiv|J}(s) = M^{J \log_m n} n^{-sJ}\cdot \prod_{\ih=1}^M N_{\ih}^{p_{\ih}J} = e^{J\left( \left(\frac{\log M}{\log m}-s\right)\cdot \log n  + \sum_{\ih} p_{\ih}\log N_{\ih} \right)} = e^{J( -t+ \sum_{\ih} p_{\ih}\log N_{\ih} )}.
\end{equation}
Using~\eqref{eq:206} and~\eqref{eq:32}, 
\begin{equation*}
\#T_J(\mathbf{p})\cdot \psi_{\iiv|J}(s) \asymp e^{J( -t + \sum_{\ih} p_{\ih}\log N_{\ih}+H(\mathbf{p})  )} 
= e^{J\left( -t + \log N - H(\mathbf{p}\| \mathbf{P})\right)}.
\end{equation*} 
Similarly to $\mathbf{Q}^*_{t,J}$, let $\mathbf{P}^*_{t,J}\in(\mathcal{G}_{t}\cap \mathcal{T}_J)$ satisfy $H(\mathbf{P}^*_{t,J}\| \mathbf{P}) = \min_{\mathbf{p}\in(\mathcal{G}_{t}\cap \mathcal{T}_J)} H(\mathbf{p}\|\mathbf{P})$. We have $H(\mathbf{P}^*_{t,J}\| \mathbf{P})\to H(\mathbf{P}^*_{t}\| \mathbf{P})$ as $J\to\infty$. Then
\begin{equation*}
\sum_{\iiv\in[M]^J:\, \psi_{\iiv|J}(s)\leq 1}  \psi_{\iiv|J}(s) = 
\sum_{\mathbf{p}\in(\mathcal{G}_{t}\cap \mathcal{T}_J)} \sum_{\iiv\in T_J(\mathbf{p})} \psi_{\iiv|J}(s) 
\asymp e^{J\left( -t + \log N - H(\mathbf{P}^*_{t,J}\| \mathbf{P})\right)}.
\end{equation*}
Using~\eqref{eq:33} and~\eqref{eq:32} in the exponent, $-t + \log N - H(\mathbf{P}^*_{t}\| \mathbf{P}) = \log M-H(\mathbf{Q}^*_{t}\| \mathbf{Q}) = H(\mathbf{Q}^*_{t})$, and the assertion follows. 
\end{proof}

Note the importance of the assumption $t < \overline{t}$ in the proof of Lemma~\ref{lem:33-second}. 
Lemmas~\ref{lem:33-first} and~\ref{lem:33-second} and~\eqref{eq:302} immediately imply that $\overline{P}(s) \leq H(\mathbf{Q}^*_{t})$.

\subsection{Proof of Step 3}

This in fact holds for all $t \in (\underline{t},\max_{1\leq i \leq N} \log N_i)$. Assume $(s,t)$ are related by~\eqref{eq:23}. Fix $R \in \mathbb{N}$. For $J \in \mathbb{N}$, if $l \in \{0,1,\dotsc,R-1\}$, let $J_{l,R} \coloneqq  \lfloor (l+1) J / R \rfloor - \lfloor l J / R \rfloor$. We introduce 
\begin{align}\label{eq:definestjr}
\begin{split}
 S_{t,J,R} \coloneqq \big\{\, \iiv = (i_1,\dotsc,i_J) \in [M]^J : &(i_{\lfloor l J / R \rfloor + 1} , \dotsc, i_{\lfloor (l+1) J / R \rfloor }) \in T_{J_{l,R}}(\mathbf{Q}^*_{t,J_{l,R}}) \\*
&\mbox{ for all } l \in \{0,1,\dotsc,R-1\} \, \big\}.
\end{split}
 \end{align}
Then 
\begin{equation}\label{eq:lowerboundcard}
 \#  S_{t,J,R} = \prod_{l=0}^{R-1} \# T_{J_{l,R}}(\mathbf{Q}^*_{t,J_{l,R}}) \stackrel{\eqref{eq:206}}{\asymp} \prod_{l=0}^{R-1} e^{J_{l,R} \cdot H(\mathbf{Q}^*_{t,J_{l,R}})} \asymp e^{J \cdot H(\mathbf{Q}^*_{t})} = e^{J(\log M - I(t))}
 \end{equation}
as $J \to \infty$, using Step~1 of Proposition~\ref{prop:1} in the last step. %

Suppose $(i_1,\dotsc,i_J) \in S_{t,J,R}$. For all $l \in \{0,1,\dotsc,R-1\}$, $\mathbf{Q}^*_{t,J_{l,R}} \to \mathbf{Q}^*_t \in \mathcal{E}_t$, so $\psi_{(i_{\lfloor l J / R \rfloor + 1} , \dotsc, i_{\lfloor (l+1) J / R \rfloor }) | J_{l,R}}(s) \asymp 1$ as $J \to \infty$ by~\eqref{eq:309}. Let $J' \in \mathbb{N}$ be large enough that for all $J \geq J'$ and $l \in \{0,1,\dotsc,R-1\}$, $\psi_{(i_{\lfloor l J / R \rfloor + 1} , \dotsc, i_{\lfloor (l+1) J / R \rfloor }) | J_{l,R}}(s) \geq e^{-\overline{t}J_{l,R}/R}$. %
Assume $J \geq J'$. For each $k \in \{1,\dotsc,J\}$ let $\mathbf{p}(k)$ denote the type class of $(i_1,\dotsc,i_k)$ and let $l \in \{0,1,\dotsc,R-1\}$ be such that $\lfloor l J / R \rfloor < k \leq \lfloor (l+1) J / R \rfloor$. 
Then 
\begin{align*}
\psi_{(i_1,\dotsc,i_k) | k}(s) \stackrel{\eqref{eq:308}}{=} e^{k( -t+ \sum_{\ih} p_{\ih}(k)\log N_{\ih})} &\geq e^{\lfloor lJ/R\rfloor (-t  + \sum_{\ih} p_{\ih}(\lfloor lJ/R\rfloor)\log N_{\ih})} e^{(\lfloor lJ/R\rfloor - k)t} \\
&\geq \psi_{(i_1,\dotsc,i_{\lfloor l J / R \rfloor }) |  \lfloor l J / R \rfloor}(s) e^{-2\overline{t}J/R} \\*
&\geq e^{-3\overline{t}J/R},\stepcounter{equation}\tag{\theequation}\label{eq:lowerboundcombinatorialfudge}
\end{align*}
where in the last step we use that by~\eqref{eq:20}, 
\begin{equation*} \psi_{(i_1,\dotsc,i_A,i_{A+1},\dotsc,i_{A+A'}) | A+A'}(s) = \psi_{(i_1,\dotsc,i_A) | A}(s) \psi_{(i_{A+1},\dotsc,i_{A+A'}) | A'} (s).
\end{equation*} Therefore  
\begin{equation*}
\underline{P}(s) = \liminf_{J\to\infty} \frac{1}{J} \log  \Psi_J(s) \geq \liminf_{J\to\infty} \frac{1}{J} \log  \sum_{\iiv\in S_{t,J,R}} \min_{k\in\{0,1,\dotsc,J\}} \psi_{\iiv|k}(s) \geq
H(\mathbf{Q}^*_{t})-3\overline{t}/R. 
\end{equation*}
Since $R \in \mathbb{N}$ was arbitrary, the assertion $\underline{P}(s) \geq H(\mathbf{Q}^*_{t})$ follows.

	\subsection{Proof of Step 4}
This is an application of~\cite[Theorem~1]{Jordan2011bm}, and in fact holds for all 
\[ t \in \left(\min_{1 \leq i \leq M} \log N_i, \max_{1 \leq i \leq M} \log N_i\right).\]
 Indeed, by~\eqref{eq:definebeta}, for all $\lambda$, 
\begin{gather*}
(\log m) \left(  \beta_{\nu}\left( \frac{\lambda - \gamma^{-1}}{(\log m) (\frac{1}{\log m} - \frac{1}{\log n})}  \right)  + \frac{(\lambda - \gamma^{-1})\frac{\log N}{\log m}}{(\log m) (\frac{1}{\log m} - \frac{1}{\log n})}  \right)  -  \log M \\*
\qquad \qquad = \log \left( \frac{1}{M}\sum_{\ih=1}^M N_i^{\lambda} \right). 
\end{gather*}
Taking the Legendre transform of each side and using standard properties of Legendre transforms and~\eqref{eq:22} and~\eqref{eq:uniformmultifractal} proves Step~4. 
This completes the proof of Proposition~\ref{prop:1}. 

\section{Proof of the intermediate dimensions formula}\label{sec:proofofmain}

In this section, we prove Theorem~\ref{thm:main}. 
We begin by collecting some notation and facts used in the proof. Throughout the section, $\Lambda$ is a Bedford--McMullen carpet with non-uniform fibres.

\subsection{Approximate squares}\label{subsec:approxsquare}

Let $\mathcal{F}=\{f_i\}_{i=1}^N$ be an IFS generating a Bedford--McMullen carpet $\Lambda$ with $M$ non-empty columns. Recall, $[N]=\{1,2,\dotsc,N\}$ and $[M]=\{1,2,\dotsc,M\}$. To keep track of which column $f_i$ maps to, we introduce the function
\begin{equation}
\phi\colon [N]\to[M],\;\; \phi(i)\coloneqq \ih\; \text{ if } f_i \text{ maps to column } \ih.
\end{equation}\label{eq:40}
We define the symbolic spaces
$\Sigma=[N]^\mathbb N$ and $\Sigma_{\mathcal{H}}=[M]^\mathbb N$
with elements $\ii=i_1i_2\dotsb\in\Sigma$ and $\iih=\ih_1\ih_2\dotsb\in\Sigma_{\mathcal{H}}$. We use the convention that indices $i$ corresponding to maps have a `dot' while the indices $\ih$ corresponding to columns have a `hat' on top. 
To truncate~$\ii$ (respectively~$\iih$) at the first $n$ symbols we write $\ii|n=i_1i_2\dotsb i_n$ (respectively $\iih|n$). The longest common prefix of $\ii$ and $\jj$ is denoted by $\ii\wedge\jj$: its length is $|\ii\wedge\jj|=\min \{\, k : i_k\neq j_k \, \}-1$. The function $\phi$ naturally induces the map $\Phi\colon  \Sigma\to\Sigma_{\mathcal{H}}$ defined by
\begin{equation*}
\Phi(\ii)\coloneqq \iih = \phi(i_1)\phi(i_2)\dotsb.
\end{equation*}
Slightly abusing notation, $\Phi$ is also defined on finite words: $\Phi(i_1\dotsb i_n)=\phi(i_1)\dotsb\phi(i_n)$.

For compositions of maps, we use the standard notation $f_{i_1\dotsb i_n}\coloneqq f_{i_1}\circ f_{i_2}\circ\dotsb \circ f_{i_n}$.
The $n$-th level cylinder corresponding to $\ii$ is
$ C_{n}(\ii)\coloneqq f_{\ii|n}([0,1]^2)$.
The sets $\{C_{n}(\ii)\}_{n=1}^\infty$ form a nested sequence of compact sets with diameter tending to zero, hence their intersection is a unique point $x\in\Lambda$. This defines the natural projection $\Pi\colon \Sigma\to\Lambda$,
\begin{equation*}
\Pi(\ii)\coloneqq \lim_{n\to\infty} \bigcap_{n=1}^\infty C_{n}(\ii)=\lim_{n\to\infty} f_{\ii|n}(\underline 0).
\end{equation*}
The coding of a point $x\in\Lambda$ is not necessarily unique, but $\Pi$ is finite-to-one.

It is not efficient to cover cylinder sets separately; instead they are grouped together to form `approximate squares' which play the role of balls in a cover of the attractor. 
Recall, $\gamma=\log_m n$ and for $\ell,K\in\mathbb{N}$ let
\begin{equation*}
\gamma^{\ell}(K)\coloneqq \lfloor \gamma^{\ell} \cdot K \rfloor.
\end{equation*}
In particular, we write $\gamma(K) = \gamma^1(K)$, and $n^{-K}\leq m^{-\gamma(K)} < n^{-(K-1)}$. A level-$K$ \emph{approximate square} is
\begin{equation*}
B_K(\ii) \coloneqq \big\{ \, C_{\gamma(K)}(\jjv): \jjv\in[N]^{\gamma(K)},\, \jjv|K=\ii|K,\, \Phi(\jjv) = \Phi(\ii|\gamma(K)) \, \big\}.
\end{equation*}
It is a collection of level-$\gamma(K)$ cylinder sets that lie in the same level-$\gamma(K)$ column of a specific level-$K$ cylinder set. In other words, $\Pi(\jj)\in B_K(\ii)$ if and only if $|\ii\wedge\jj|\geq K$ and $|\Phi(\ii)\wedge\Phi(\jj)|\geq \gamma(K)$. Hence, abusing notation slightly, we identify $B_K(\ii)$ with the single sequence
\begin{equation*}%
	B_K(\ii) = (i_1,\dotsc,i_{K}, \phi(i_{K+1}), \dotsc, \phi(i_{\gamma(K)})) = (i_1,\dotsc,i_{K}, \ih_{K+1}, \dotsc, \ih_{\gamma(K)}).
\end{equation*}
The choice of $\gamma(K)$ implies that there exists $C\geq 1$ independent of $K$ and $\ii$ such that $C^{-1}n^{-K}\leq |B_K(\ii)|\leq C n^{-K}$. The constant $C$ does not influence the behaviour of the $s$-cost of any cover with approximate squares. It is easy to see that two  approximate squares are either disjoint, completely agree or intersect just on the boundary. Hence, the set of all level-$K$ approximate squares, denoted by $\mathcal{B}_K$, gives an efficient $n^{-K}$-cover of $\Lambda$ with cardinality
\begin{equation*}
\# \mathcal{B}_K = N^{K}\cdot M^{\gamma(K)-K} \stackrel{\eqref{eq:103}}{\asymp} n^{K\dim_{\mathrm B}\Lambda}.
\end{equation*}

\subsection{Two lemmas}

Recall that $T_s(t)=\big(s-\frac{\log M}{\log m}\big)\log n +\gamma I(t)$. Since $I(t)$ is strictly convex, there exists a unique~$t'$ such that $I'(t')=\gamma^{-1}$. 

\begin{lemma}\label{lem:41}
For each fixed $s \in \R$, the function $T_s(t)$ is strictly convex with a minimum at $\underline{t}$, and satisfies $T_s'(t') = 1$. 
Moreover, for all $s\geq \dim_{\mathrm H}\Lambda$ and $t\in\mathbb{R}$ we have $T_s(t) \geq t$ with equality if and only if $s=\dim_{\mathrm H}\Lambda$ and $t=t'$.
\end{lemma}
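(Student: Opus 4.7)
The first two assertions are essentially immediate from the established properties of the rate function $I(t)$. Since $I''(t)>0$ on the interior of its domain, we have $T_s''(t)=\gamma I''(t)>0$, so $T_s$ is strictly convex. Its critical point is where $T_s'(t)=\gamma I'(t)=0$, which by the listed values $I'(\underline{t})=0$ happens at $t=\underline{t}$. This gives the minimum location. For the second claim, by the very definition of $t'$ we have $I'(t')=\gamma^{-1}$, so $T_s'(t')=\gamma I'(t')=1$, independent of $s$.

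For the third assertion, introduce $g_s(t):=T_s(t)-t$. Then $g_s''=\gamma I''>0$ and $g_s'(t')=T_s'(t')-1=0$, so $g_s$ attains its unique global minimum at $t=t'$ for every $s$. Moreover, $g_s(t)=g_{\dim_{\mathrm H}\Lambda}(t)+(s-\dim_{\mathrm H}\Lambda)\log n$, so it suffices to show that $g_{\dim_{\mathrm H}\Lambda}(t')=0$; then $g_s(t)\geq g_s(t')=(s-\dim_{\mathrm H}\Lambda)\log n\geq 0$ with equality precisely when $s=\dim_{\mathrm H}\Lambda$ and $t=t'$.

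The remaining computation is where the algebraic content of the lemma lies, and it is the one step that is not a tautology. I would exploit the Legendre duality in the definition~\eqref{eq:22} of $I$: the supremum defining $I(t')$ is attained at $\lambda=I'(t')=\gamma^{-1}$, so
\begin{equation*}
I(t')=\frac{t'}{\gamma}-\log\Bigl(\tfrac{1}{M}\sum_{\jh=1}^M N_{\jh}^{1/\gamma}\Bigr),
\end{equation*}
hence $\gamma I(t')=t'-\gamma\log\bigl(\sum_{\jh}N_{\jh}^{1/\gamma}\bigr)+\gamma\log M$. Using the Bedford--McMullen formula~\eqref{eq:102}, namely $\dim_{\mathrm H}\Lambda=\frac{1}{\log m}\log(\sum_{\jh}N_{\jh}^{1/\gamma})$, together with $\gamma=\log n/\log m$, one finds $(\dim_{\mathrm H}\Lambda)\log n=\gamma\log(\sum_{\jh}N_{\jh}^{1/\gamma})$ and $(\log M/\log m)\log n=\gamma\log M$. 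Substituting these into the expression for $g_{\dim_{\mathrm H}\Lambda}(t')=(\dim_{\mathrm H}\Lambda-\log M/\log m)\log n+\gamma I(t')-t'$ causes all four terms to cancel in pairs, yielding zero.

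The only real obstacle is keeping the bookkeeping between $\gamma$, $\log m$, $\log n$, $M$, and the sum $\sum_{\jh}N_{\jh}^{1/\gamma}$ straight; everything else is convexity. No mass distribution principle, no symbolic dynamics, and no appeal to Proposition~\ref{prop:1} is needed—just the explicit form of the Legendre transform at the distinguished slope $\lambda=1/\gamma$, which is precisely the slope that aligns the rate function with the Hausdorff dimension formula.
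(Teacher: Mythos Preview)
Your proof is correct and follows essentially the same route as the paper: both arguments derive the first two claims directly from the convexity and derivative values of $I$, and for the third both reduce to verifying $T_{\dim_{\mathrm H}\Lambda}(t')=t'$ by evaluating $I(t')$ at the Legendre-optimal slope $\lambda=\gamma^{-1}$ and plugging in the Hausdorff dimension formula~\eqref{eq:102}. Your introduction of $g_s(t)=T_s(t)-t$ and the shift identity $g_s=g_{\dim_{\mathrm H}\Lambda}+(s-\dim_{\mathrm H}\Lambda)\log n$ makes the mechanism slightly more explicit than the paper's phrasing, but the underlying argument is the same.
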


\begin{proof}
Since $I(t)$ is strictly convex with a minimum at $\underline{t}$, the same is true of $T_s(t)$ for each fixed $s \in \R$, and the definition of $t'$ implies that $T'_s(t')=1$. 
Using the formula~\eqref{eq:102} for $\dim_{\mathrm H}\Lambda$ and then that 
\[I(t')=\gamma^{-1}t'-\log\bigg(\frac{1}{M}\sum_{\ih=1}^M N_{\ih}^{\gamma^{-1}}\bigg),\]
one gets $T_{\dim_{\mathrm H}\Lambda}(t')=t'$ after simplifications. 
Note that $T_s(t') > T_{\dim_{\mathrm H}\Lambda}(t')=t'$ for all $s > \dim_{\mathrm H}\Lambda$, which is enough to complete the proof. 
\end{proof}

 Since $I(t)$ is strictly increasing, let $t^*$ denote the unique solution to the equation
\begin{equation}\label{eq:44}
\dim_{\mathrm H}\Lambda = \dim_{\mathrm B}\Lambda - \big(1-\gamma^{-1}\big)\frac{I(t^*)}{\log m}. 
\end{equation}
Recall the notation for $t_\ell(s)$ from~\eqref{eq:definetsequence}. 

\begin{lemma}\label{lem:tprimelesststar}
We have $t_1(\dim_{\mathrm H} \Lambda) < t' < t^*$. 
\end{lemma}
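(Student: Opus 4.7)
The plan is to prove the two inequalities separately. For the first one, $t_1(\dim_{\mathrm H}\Lambda) < t'$, I would exploit Lemma~\ref{lem:41} directly. Since $I(\underline{t})=0$, the definition of $T_s$ yields $T_{\dim_{\mathrm H}\Lambda}(\underline{t}) = t_1(\dim_{\mathrm H}\Lambda)$, while Lemma~\ref{lem:41} asserts $T_{\dim_{\mathrm H}\Lambda}(t') = t'$. Strict convexity of $I$ combined with $I'(\underline{t}) = 0 < \gamma^{-1} = I'(t')$ forces $\underline{t} < t'$, and strict convexity of $T_{\dim_{\mathrm H}\Lambda}$ (as a function of $t$) with unique minimum at $\underline{t}$ then gives $t_1(\dim_{\mathrm H}\Lambda) = T_{\dim_{\mathrm H}\Lambda}(\underline{t}) < T_{\dim_{\mathrm H}\Lambda}(t') = t'$, as required.

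For the second inequality, $t' < t^*$, my approach would be to reduce it to $I(t') < I(t^*)$ via the strict monotonicity of $I$ on $[\underline{t}, \max_{\ih} \log N_{\ih})$. From the fixed-point relation $T_{\dim_{\mathrm H}\Lambda}(t') = t'$ together with the identity $\gamma^{-1}\log n = \log m$, one obtains
\begin{equation*}
I(t') = \gamma^{-1} t' - (\log m)\dim_{\mathrm H}\Lambda + \log M.
\end{equation*}
Substituting this into the defining equation~\eqref{eq:44} for $t^*$ and using the formula $\log m \cdot \dim_{\mathrm B}\Lambda = \gamma^{-1}\log N + (1-\gamma^{-1})\log M$ derived from~\eqref{eq:103}, a short algebraic manipulation should give
\begin{equation*}
I(t^*) - I(t') = \frac{\gamma^{-1}}{1-\gamma^{-1}}\bigl(I(t') + \log(N/M) - t'\bigr).
\end{equation*}
Since $\gamma > 1$, the prefactor is strictly positive, so the problem reduces to proving $I(t') > t' - \log(N/M)$.

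For this final inequality the key observation is the identity
\begin{equation*}
I(\overline{t}) - \overline{t} = -\log(N/M),
\end{equation*}
which is immediate from $\overline{t} = \log N - H(\mathbf{P})$ and $I(\overline{t}) = \log M - H(\mathbf{P})$. Setting $f(t) \coloneqq I(t) - t$, we have $f'(t) = I'(t) - 1$, and the strict convexity of $I$ together with $I'(\overline{t}) = 1$ ensures that $f$ is strictly decreasing on $[\underline{t}, \overline{t}]$. The inequality $I'(t') = \gamma^{-1} < 1 = I'(\overline{t})$ then forces $t' < \overline{t}$, whence $f(t') > f(\overline{t}) = -\log(N/M)$, giving $I(t') > t' - \log(N/M)$ and hence $t' < t^*$.

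No substantive obstacle is expected, since both inequalities reduce to convexity/monotonicity properties of $I$ that are already encoded in the preliminaries of Section~\ref{sec:CompResults} and in Lemma~\ref{lem:41}. The most delicate step will be the algebraic simplification leading to the factorisation of $I(t^*) - I(t')$, which requires careful bookkeeping of the formulas for $I(t')$, $\dim_{\mathrm B}\Lambda$ and the definition of $t^*$, but it is purely mechanical.
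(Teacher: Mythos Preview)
Your proof of the first inequality $t_1(\dim_{\mathrm H}\Lambda) < t'$ is essentially identical to the paper's: both use $T_{\dim_{\mathrm H}\Lambda}(\underline{t}) = t_1(\dim_{\mathrm H}\Lambda)$, $T_{\dim_{\mathrm H}\Lambda}(t') = t'$, the fact that $\underline{t} < t'$ via strict convexity of $I$, and monotonicity of $T_{\dim_{\mathrm H}\Lambda}$ on $[\underline{t},\infty)$.

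For the second inequality $t' < t^*$, your route is genuinely different from the paper's and arguably more transparent. The paper introduces the auxiliary function
\[
f(z) = \log n \log M - \log n \log \sum_{\ih} N_{\ih}^{z/\log n} + \log n \log(N/M) - (\log n - z)\frac{\sum_{\kh} N_{\kh}^{z/\log n}\log N_{\kh}}{\sum_{\jh} N_{\jh}^{z/\log n}},
\]
computes $f'(z)$, and shows $f'(z) < 0$ on $[\log m,\log n)$ via a variance-type Jensen inequality. This yields $f(\log m) > f(\log n) = 0$, which after further algebra is equivalent to $I'(T_{\dim_{\mathrm H}\Lambda}(t^*)) > \gamma^{-1}$, whence $T_{\dim_{\mathrm H}\Lambda}(t^*) > t'$ and then $t^* > t'$ by Lemma~\ref{lem:41}. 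Your approach instead eliminates $(\log m)\dim_{\mathrm H}\Lambda$ between the fixed-point equation for $t'$ and the defining equation~\eqref{eq:44} for $t^*$, arriving at the clean identity
\[
I(t^*) - I(t') = \frac{\gamma^{-1}}{1-\gamma^{-1}}\bigl(I(t') + \log(N/M) - t'\bigr),
\]
and then observes that $t \mapsto I(t) - t$ is strictly decreasing on $[\underline{t},\overline{t}]$ with value $-\log(N/M)$ at $\overline{t}$, so $t' < \overline{t}$ gives the positivity of the bracket. This bypasses the auxiliary function $f$ entirely and uses only the standard facts $I'(\overline{t}) = 1$, $I(\overline{t}) = \overline{t} - \log(N/M)$, and the strict convexity of $I$ already recorded in Section~\ref{sec:CompResults}. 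The paper's approach has the side benefit of producing the stronger intermediate statement $T_{\dim_{\mathrm H}\Lambda}(t^*) > t'$, but for the lemma as stated your argument is shorter and more direct. I verified your algebraic identity and the monotonicity claims; the argument is correct.
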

\begin{proof}
Since $I(\underline{t}) = 0$, we have $t_1(\dim_{\mathrm H} \Lambda) = T_{\dim_{\mathrm H} \Lambda} (\underline{t})$. 
Also $I'(\underline{t}) < \gamma^{-1} = I'(t')$ and $I$ is strictly convex, so $\underline{t} < t'$. 
But $T_{\dim_{\mathrm H} \Lambda}$ is strictly increasing, so 
\[ t_1(\dim_{\mathrm H} \Lambda) = T_{\dim_{\mathrm H} \Lambda} (\underline{t}) < T_{\dim_{\mathrm H} \Lambda} (t') = t',\] 
where the last equality is by Lemma~\ref{lem:41}. 

To prove $t' < t^*$, for $z \in \mathbb{R}$ define 
\[ f(z) \coloneqq \log n \log M - \log n \log \sum_{\ih=1}^M N_{\ih}^{\frac{z}{\log n}}  + \log n \log (N/M) - (\log n - z)\frac{\sum_{\hat k=1}^M N_{\hat k}^{\frac{z}{\log n}}\log N_{\hat k}}{\sum_{\jh=1}^M N_{\jh}^{\frac{z}{\log n}}}. \]
Then after some algebraic manipulations, 
\begin{equation*}%
f'(z) = -\left(1 - \frac{z}{\log n}\right) \left( \sum_{\ih} \frac{N_{\ih}^{\frac{z}{\log n}}}{\sum_{\jh} N_{\jh}^{\frac{z}{\log n}}} (\log N_{\ih})^2  -  \Bigg( \sum_{\hat k} \frac{N_{\hat k}^{\frac{z}{\log n}}}{\sum_{\hat\ell} N_{\hat\ell}^{\frac{z}{\log n}}} \log N_{\hat k} \Bigg)^2  \right)  < 0 
\end{equation*}
for all $z \in [\log m,\log n)$ by Jensen's inequality, using that $\Lambda$ has non-uniform fibres. Moreover, $f$ is continuous on $\mathbb{R}$, so $f(\log m) > f(\log n) = 0$, so  
\begin{align}
0 &< \frac{f(\log m)}{\log (n/m)} \nonumber \\*
&= \frac{\mathrm{d}}{\mathrm{d}\lambda} \Bigg(   \lambda \Big( \Big( \dim_{\mathrm H} \Lambda - \frac{\log M}{\log m} \Big) \log n  +  \gamma (\dim_{\mathrm B} \Lambda - \dim_{\mathrm H} \Lambda)  \frac{\log m}{1-\gamma^{-1}} \Big)  \nonumber \\*
&\phantom{----} -  \log\Bigg(\frac{1}{M}\sum_{\jh = 1}^M N_{\jh}^\lambda \Bigg)  \Bigg) \Big|_{\lambda = \gamma^{-1}} \label{eq:tprimelesststar2} \\
&= \frac{\mathrm{d}}{\mathrm{d}\lambda} \left( \lambda T_{\dim_{\mathrm H} \Lambda}(t^*) - \log \Bigg(\frac{1}{M}\sum_{\jh = 1}^M N_{\jh}^\lambda \Bigg) \right) \Big|_{\lambda = \gamma^{-1}} \label{eq:tprimelesststar3}, 
\end{align}
where~\eqref{eq:tprimelesststar2} is by~\eqref{eq:102} and~\eqref{eq:103}, and~\eqref{eq:tprimelesststar3} is by~\eqref{eq:44} and~\eqref{eq:defineiteratingfunction}. 
This means that the value of $\lambda$ at which the supremum in the definition of $I(T_{\dim_{\mathrm H} \Lambda}(t^*))$ in~\eqref{eq:22} is attained is greater than $\gamma^{-1}$. Equivalently, $I'(T_{\dim_{\mathrm H} \Lambda}(t^*)) > \gamma^{-1}$. By the definition of $t'$, this means that $T_{\dim_{\mathrm H} \Lambda}(t^*) > t'$. By Lemma~\ref{lem:41}, it follows that $t' < t^*$. 
\end{proof}

\subsection{Upper bound \texorpdfstring{for $\theta = \gamma^{-(L-1)}$}{when θ is a power of 1/γ}}\label{sec:upperinteger}

In Lemma~\ref{lem:50} we construct a cover in the case when $\theta = \gamma^{-(L-1)}$ in order to establish certain relations which are crucial in bounding the $s$-cost of the more complicated general cover in Section~\ref{sec:mainupper}. 
In Section~\ref{sec:prooflower} we will establish the matching lower bound. 
Recall, for $\theta\in(0,1)$ we defined $L \coloneqq 1 + \lfloor \frac{-\log \theta}{\log \gamma}\rfloor$ so that $\gamma^{-L} < \theta \leq \gamma^{-(L-1)}$, and for $\delta>0$ we define $K \coloneqq \lfloor\frac{-\log \delta}{\log n}\rfloor$. 
Figure~\ref{f:insideapproxsquare} is a diagram representing some approximate squares of levels $K$, $\gamma(K)$ and $\gamma^2(K)$. 
The proof strategy is to keep a level-$K$ approximate square at level $K$ if and only if $\tau(\iih,K,\gamma(K))$ exceeds a constant $\tau_1$ which remains unspecified for now (recalling notation from~\eqref{eq:tauaverage}). Of those which we subdivide, we keep them at level $\gamma(K)$ if and only if $\tau(\iih,\gamma(K),\gamma^2(K)) \geq \tau_2$. Continuing this process gives a cover of $\Lambda$ using approximate squares at levels $K, \gamma(K),\dotsc,\gamma^{L-1}(K) = \lfloor K/\theta\rfloor$. This means that, up to some constant, all covering sets have diameter in the correct range $[\delta,\delta^{1/\theta}]$. In Lemma~\ref{lem:50} we calculate the $s$-cost of this cover for an arbitrary tuple, which will allow us to prove in Lemma~\ref{lem:51} that the relevant $t_i(s)$ are bounded above by $\overline{t}$, so results from Section~\ref{sec:proofProp} will apply. At the end we will optimise the thresholds so that the exponential rate of growth of each part is the same. The unique $s$ for which this can be done gives us the upper bound for $\overline{\dim}_{\theta}\Lambda$. 
\begin{figure}[ht]
\center{\includegraphics[width=.35\textwidth]
        {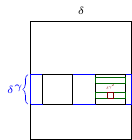}}
        \caption{\label{f:insideapproxsquare}
        Approximate squares of size $\delta^\gamma$ and $\delta^{\gamma^2}$ inside an approximate square of size $\delta$. 
 }
\end{figure}

Lemma~\ref{lem:simpleindcomb} will be used to calculate the cost of the cover in Lemma~\ref{lem:50}. 
Lemma~\ref{lem:simpleindcomb} is rather similar to Lemma~\ref{lem:33-second}, and is also proved using the method of types. 
For a probability vector $\mathbf{p}$ we write $t_{\mathbf{p}} \coloneqq \sum_{\ih} p_{\ih} \log N_{\ih}$. For a word $(i_1,\dotsc,i_J) \in [N]^J$, we write $\iih=(\ih_1,\dotsc,\ih_J)=(\phi(i_1),\dotsc,\phi(i_J))\in [M]^J$, recall~\eqref{eq:40}.
\begin{lemma}\label{lem:simpleindcomb}
For all $t \in [\underline{t}, \max_{1\leq \ih \leq M} \log N_{\ih})$, 
\[ \# \{ \, (i_1,\dotsc,i_J) \in [N]^J : \tau(\iih,0,J) \leq t \, \} \asymp e^{(\min\{t,\overline{t}\} + \log M - I(\min\{t,\overline{t}\} ))J}. \]
If, on the other hand, $t \in (\min_{1\leq \ih \leq M} \log N_{\ih}, \underline{t})$, then 
\[ \limsup_{J \to \infty} \frac{1}{J} \log \# \Big\{ \, (i_1,\dotsc,i_J) \in [N]^J : \sum_{j=1}^J \log N_{\ih_j} \leq t J \, \Big\} \leq t + \log M < \underline{t} + \log M. \]
\end{lemma}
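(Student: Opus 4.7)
The plan is to count the set
\[
\mathcal{W}_J(t)\coloneqq \{(i_1,\dots,i_J)\in [N]^J : \sum_{j=1}^J\log N_{\ih_j}\leq tJ\}
\]
by grouping words according to the type of their image under $\phi$. Observe that for a given $\iih=(\ih_1,\dots,\ih_J)\in[M]^J$, the preimage $\phi^{-1}(\iih)$ has cardinality $\prod_{j=1}^J N_{\ih_j}=e^{Jt_{\boldsymbol\tau_J(\iih)}}$, where I write $t_{\mathbf p}=\sum_{\ih}p_{\ih}\log N_{\ih}$ as in Section~\ref{sec:proofProp}. Hence
\[
\#\mathcal{W}_J(t)=\sum_{\mathbf p\in\mathcal{T}_J:\,t_{\mathbf p}\leq t}\#T_J(\mathbf p)\cdot e^{Jt_{\mathbf p}}.
\]

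For the first part, with $t\in[\underline t,\max_{\ih}\log N_{\ih})$, since $t\geq\underline t$ the condition $\tau(\iih,0,J)\leq t$ is equivalent to $\iih$ being the image of some word in $\mathcal{W}_J(t)$, so the count in question is exactly $\#\mathcal{W}_J(t)$. Applying~\eqref{eq:205} and~\eqref{eq:206} together with the continuity of $\mathbf p\mapsto H(\mathbf p)+t_{\mathbf p}$, one obtains
\[
\#\mathcal{W}_J(t)\asymp \exp\Bigl(J\cdot\sup_{\mathbf p\in\mathcal{P}_M:\,t_{\mathbf p}\leq t}\bigl(H(\mathbf p)+t_{\mathbf p}\bigr)\Bigr),
\]
with lower bound coming from any rational approximation in $\mathcal{T}_J$ of the extremiser, and upper bound from the fact that there are only polynomially many types. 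Rewriting using~\eqref{eq:32}, the supremum equals $\log N-\inf\{H(\mathbf p\|\mathbf P):t_{\mathbf p}\leq t\}$. When $t\in[\underline t,\overline t)$ the infimum is $H(\mathbf P^*_t\|\mathbf P)$ by definition, and Lemma~\ref{lem:32} combined with Step~1 of Proposition~\ref{prop:1} gives $H(\mathbf P^*_t\|\mathbf P)=I(t)+\log(N/M)-t$, so the exponent simplifies to $t+\log M-I(t)$, matching the claim with $\min\{t,\overline t\}=t$. When $t\geq\overline t$ the vector $\mathbf P$ itself is feasible, so the infimum is $0$ and the exponent is $\log N$; a direct computation using $I(\overline t)=\log M-H(\mathbf P)$ and $\overline t=\log N-H(\mathbf P)$ verifies that $\overline t+\log M-I(\overline t)=\log N$, matching $\min\{t,\overline t\}=\overline t$.

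For the second part, with $t\in(\min_{\ih}\log N_{\ih},\underline t)$, the trivial bound $\prod_{j}N_{\ih_j}\leq e^{tJ}$ on every word in $\mathcal{W}_J(t)$ combined with $\#[M]^J=M^J$ gives $\#\mathcal{W}_J(t)\leq M^J\cdot e^{tJ}$, yielding the stated $\limsup$ bound immediately.

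The only mild subtlety is passing cleanly from $\mathcal{T}_J$ back to $\mathcal{P}_M$ in the $\sup$ and ensuring that the optimising type $\mathbf P^*_t$ (which in general lies on the boundary $\mathcal{E}_t$ by Lemma~\ref{lem:32}) can be approximated by elements of $\mathcal{T}_J$ lying in the closed feasible region $\mathcal{G}_t$; this is handled exactly as in the final paragraph of the proof of Step~3 in Section~\ref{sec:proofProp}, where a near-optimal type in $\mathcal{T}_J\cap\mathcal{G}_t$ is extracted by approximating $\mathbf P^*_t$ and using continuity of entropy. No truly new ideas are needed beyond the method-of-types machinery already assembled in Section~\ref{sec:proofProp}.
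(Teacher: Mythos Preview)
Your proof is correct and follows essentially the same method-of-types strategy as the paper: decompose $\#\mathcal{W}_J(t)=\sum_{\mathbf p\in\mathcal T_J:\,t_{\mathbf p}\leq t}\#T_J(\mathbf p)\cdot e^{Jt_{\mathbf p}}$, reduce to a Laplace-type maximum via \eqref{eq:205}--\eqref{eq:206}, and identify the exponent. The only minor difference is how the supremum is evaluated: the paper bounds $\#T_J(\mathbf p)$ above by the cardinality from Lemma~\ref{lem:33-first}, obtaining $e^{J(t_{\mathbf p}+\log M-I(t_{\mathbf p}))}$, and then maximises over $t_{\mathbf p}\leq t$ using the fact that $I'<1$ on $(\underline t,\overline t)$ and $I'>1$ beyond $\overline t$; you instead optimise $H(\mathbf p)+t_{\mathbf p}=\log N-H(\mathbf p\|\mathbf P)$ directly over $\mathcal G_t$, identify the infimum as $H(\mathbf P^*_t\|\mathbf P)$, and convert via Lemma~\ref{lem:32} and Step~1. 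Your route is marginally more direct (avoiding the detour through Lemma~\ref{lem:33-first}), while the paper's route makes the appearance of $\min\{t,\overline t\}$ transparent via the sign change of $1-I'$; both are entirely standard.
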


\begin{proof}
The second part of the statement holds simply by the fact that there are $M^J$ strings of length $J$ on the alphabet $[M]$, so we only prove the first part of the statement. 
The strategy for the upper bound is to work with an arbitrary type class and then use the fact that there are only polynomially many type classes. Note that if $\mathbf{p} \in \mathcal{T}_J$ and $\jjv$ is any representative of the type class $T_J(\mathbf{p})$ then 
\[ \# \{ \, \mathbf{i} \in [N]^J : \iih = \jjv \, \} = \prod_{\kh = 1}^M N_{\kh}^{p_{\kh} J}.\]  Therefore 
\begin{align*}
\# &\{ \, (i_1,\dotsc,i_J) \in [N]^J : \tau(\iih,0,J) \leq t \, \} = \sum_{\mathbf{p} \in \mathcal{T}_J : t_{\mathbf{p}} \leq t}  \# T_J(\mathbf{p}) \cdot \prod_{\kh = 1}^M N_{\kh}^{p_{\kh} J}  \\
&\leq \sum_{\mathbf{p} \in \mathcal{T}_J : t_{\mathbf{p}} \leq t} \prod_{\kh = 1}^M N_{\kh}^{p_{\kh} J} \cdot \# \Big\{ \, \iih \in [M]^J : \frac{1}{J} \sum_{\ell=1}^{J} \log N_{\ih_\ell} \geq t_{\mathbf{p}} \, \Big\}  \\
&\asymp \sum_{\mathbf{p} \in \mathcal{T}_J : t_{\mathbf{p}} \leq t} e^{t_{\mathbf{p}} J} e^{J\cdot H(\mathbf{Q}^*_{t_{\mathbf{p}}})} \qquad \text{by Lemma~\ref{lem:33-first}} \\
&\asymp \sum_{\mathbf{p} \in \mathcal{T}_J : t_{\mathbf{p}} \leq t} e^{J( t_{\mathbf{p}} + \log M - I(t_{\mathbf{p}}))} \qquad \text{by Step~1 of Proposition~\ref{prop:1}} \\
& \leq \# \mathcal{T}_J \cdot e^{(\min\{t,\overline{t}\} + \log M - I(\min\{t,\overline{t}\} ))J} \quad \text{since } \substack{ 0 < I'(\tau) < 1 \text{ if } \tau \in (\underline{t},\overline{t}], \\ I'(\tau) > 1 \text{ if } \tau \in (\overline{t},\max_{1\leq \ih \leq M} \log N_{\ih})} \\
&\asymp e^{(\min\{t,\overline{t}\} + \log M - I(\min\{t,\overline{t}\} ))J} \qquad \text{by the upper bound of~\eqref{eq:206}}. 
\end{align*}

For the lower bound, if $\mathbf{q}$ is the closest approximation in $\mathcal{T}_J$ to $\mathbf{Q}^*_{\min\{t,\overline{t}\}}$ for which $t_{\mathbf{q}} \leq t$, then 
\begin{align*}
 &\# \{ \, (i_1,\dotsc,i_J) \in [N]^J : \tau(\iih,0,J) \leq t \, \} \geq  \# T_J(\mathbf{q}) \cdot \prod_{\kh = 1}^M N_{\kh}^{q_{\kh} J} \\
 &\geq e^{t_{\mathbf{q}} J} (J+1)^{-M} e^{J\cdot H(\mathbf{q})} \qquad \text{by the lower bound of~\eqref{eq:206}}\\
 &\asymp e^{(\min\{t,\overline{t}\} + H(\mathbf{Q}^*_{\min\{t,\overline{t}\}}))J} \qquad  \substack{\text{since } \mathbf{q} \to \mathbf{Q}^*_{\min\{t,\overline{t}\}} \in \mathcal{E}_{\min\{t,\overline{t}\}} \text{ by Lemma~\ref{lem:32}}\\
 \text{and since } H \text{ is continuous}}\\
 &\asymp e^{(\min\{t,\overline{t}\} + \log M - I(\min\{t,\overline{t}\}))J} \qquad \text{by Step~1 of Proposition~\ref{prop:1}}.
 \end{align*}
Therefore the first part of the statement of Lemma~\ref{lem:simpleindcomb} holds. 
\end{proof}

In order to calculate the $s$-cost of the cover we construct in the proof of Lemma~\ref{lem:50}, for $\btau = (\tau_1,\dotsc,\tau_{L-1}) \in (\underline{t},\overline{t})^{L-1}$ and $s \in [\dim_{\mathrm H} \Lambda,\dim_{\mathrm B} \Lambda]$ we introduce 
\begin{align*}
G_{1}^{\btau}(s) \coloneqq \frac{\log N}{\log n} &+ \gamma^{L-1}\big(1-\gamma^{-1}\big)\frac{\log M}{\log m} - \gamma^{L-1}s \\*
&+ \frac{\gamma-1}{\log n} \sum_{i=0}^{L-2}\gamma^{i}(\tau_{L-1-i}+\log M -I(\tau_{L-1-i})) 
\end{align*}
and 
\begin{align*}%
 G_{\ell}^{\btau}(s) \coloneqq \frac{\log N}{\log n} &+\frac{\gamma-1}{\log n} \Big( \gamma^{L-\ell}(\log M - I(\tau_{\ell - 1}))+\sum_{i=0}^{L-1-\ell} \gamma^{i}(\tau_{L-1-i}+\log M -I(\tau_{L-1-i})) \Big) \\*
 &-\gamma^{L-\ell}s 
\end{align*}
for $\ell = 2,3,\dotsc,L$. In particular, when $\ell=L$ the sum is empty and 
\[ G_{L}^{\btau}(s) = \dim_{\mathrm B} \Lambda - (1-\gamma^{-1})\frac{I(\tau_{L-1})}{\log m} - s\]
(note the similarity with~\eqref{eq:44}). 
Recall from~\eqref{eq:41} on page~\pageref{eq:41} that 
\begin{equation*}
S_{\delta, \theta}^{s}(\Lambda) = \inf \Big\{ \, \sum_{i}\left|U_{i}\right|^{s}:\left\{U_{i}\right\}_{i} \text { is a cover of } \Lambda \text { such that } \delta^{1/\theta} \leq\left|U_{i}\right| \leq \delta \text { for all } i \, \Big\}.
\end{equation*}

\begin{lemma}\label{lem:50}
For all $L \geq 2$, all tuples $\btau = (\tau_1,\dotsc,\tau_{L-1}) \in (\underline{t},\overline{t})^{L-1}$, and all $s \in [\dim_{\mathrm H} \Lambda,\dim_{\mathrm B} \Lambda]$, 
\begin{equation*}
\limsup_{\delta\searrow 0} \frac{\log S_{\delta, \gamma^{-(L-1)}}^{s}(\Lambda)}{-\log \delta} \leq \max_{1 \leq \ell \leq L} G_{\ell}^{\btau}(s).
\end{equation*}
\end{lemma}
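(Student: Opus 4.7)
The plan is to construct an explicit cover of $\Lambda$ using approximate squares at the $L$ levels $K,\gamma(K),\dotsc,\gamma^{L-1}(K)$, where $K \coloneqq \lfloor -\log\delta/\log n \rfloor$. Since $\theta = \gamma^{-(L-1)}$, a level-$\gamma^{j-1}(K)$ approximate square has diameter $\asymp n^{-\gamma^{j-1}(K)} \asymp \delta^{\gamma^{j-1}}$, which lies in $[\delta^{1/\theta},c\delta]$ for a constant $c>0$ independent of $\delta$; the resulting multiplicative factor is harmless for the final $\limsup$.

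The cover is defined recursively. Set $\mathcal{C}_1 \coloneqq \mathcal{B}_K$. For $j=1,\dotsc,L-1$, split $\mathcal{C}_j = \mathcal{K}_j \cup \mathcal{S}_j$, where $\mathcal{K}_j$ (``keep'') consists of those level-$\gamma^{j-1}(K)$ approximate squares whose free column extension $(\ih_{\gamma^{j-1}(K)+1},\dotsc,\ih_{\gamma^j(K)})$ has average at least $\tau_{L-j}$, and $\mathcal{S}_j$ is the complement. Let $\mathcal{C}_{j+1}$ be the collection of all level-$\gamma^j(K)$ approximate squares contained in elements of $\mathcal{S}_j$, and set $\mathcal{K}_L \coloneqq \mathcal{C}_L$. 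The cover is $\mathcal{K} \coloneqq \bigcup_{j=1}^L \mathcal{K}_j$; a key feature is that the thresholds $\tau_{L-1},\dotsc,\tau_1$ are applied in \emph{reverse} order at successively deeper levels.

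To bound the $s$-cost, count each $|\mathcal{K}_j|$. For $j=1$, the free column block of length $\gamma(K)-K$ gives $|\mathcal{K}_1| \asymp N^K \cdot e^{(\gamma-1)K(\log M - I(\tau_{L-1}))}$ by Lemma~\ref{lem:33-first} combined with Step~1 of Proposition~\ref{prop:1}. For $2 \leq j \leq L-1$, an element of $\mathcal{K}_j$ is specified by a word $(i_1,\dotsc,i_{\gamma^{j-1}(K)},\ih_{\gamma^{j-1}(K)+1},\dotsc,\ih_{\gamma^j(K)})$ subject to the following independent constraints: for each $k=1,\dotsc,j-1$, the block $(i_{\gamma^{k-1}(K)+1},\dotsc,i_{\gamma^k(K)}) \in [N]^{\gamma^k(K)-\gamma^{k-1}(K)}$ projects to column symbols of average strictly below $\tau_{L-k}$ (a regime to which Lemma~\ref{lem:simpleindcomb} applies since $\tau_{L-k} \in (\underline{t},\overline{t})$), and the free column extension of length $\gamma^j(K)-\gamma^{j-1}(K)$ has average at least $\tau_{L-j}$ (handled by Lemma~\ref{lem:33-first}). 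Multiplying these counts together yields
\begin{equation*}
|\mathcal{K}_j| \asymp N^K \cdot \prod_{k=1}^{j-1} e^{(\gamma^k-\gamma^{k-1})K(\tau_{L-k}+\log M - I(\tau_{L-k}))} \cdot e^{(\gamma^j-\gamma^{j-1})K(\log M - I(\tau_{L-j}))},
\end{equation*}
while for $\mathcal{K}_L$ the final factor is replaced by the unconstrained $M^{(\gamma^L-\gamma^{L-1})K}$.

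Multiplying $|\mathcal{K}_j|$ by the $s$-cost $\asymp n^{-s\gamma^{j-1}(K)}$ of a single covering set, taking logarithms and dividing by $K\log n$, the resulting expression is exactly $G_{L-j+1}^{\btau}(s)$ after the substitutions $\ell = L-j+1$ and $i = k-1$, for each $j=1,\dotsc,L$. Summing over the $L$ parts and absorbing the sub-exponential corrections (from $\#\mathcal{T}_J \lesssim J^M$ in~\eqref{eq:205}, from the $\asymp$ symbol, and from $\gamma^{j-1}(K) = \gamma^{j-1}K + O(1)$) gives $\log S^{s}_{\delta,\gamma^{-(L-1)}}(\Lambda) \leq K\log n \cdot \max_{1\leq\ell\leq L} G_\ell^{\btau}(s) + o(K)$, and passing to $\limsup$ as $\delta \to 0$ yields the claim. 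The main obstacle is bookkeeping: carefully matching the reverse indexing of the thresholds $\tau_{L-1},\dotsc,\tau_1$ to the depth of subdivision, and verifying that every covering set has diameter in $[\delta^{1/\theta},\delta]$ modulo the constant $c$.
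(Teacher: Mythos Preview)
Your proposal is correct and follows essentially the same approach as the paper. Your recursive partition $\mathcal{K}_1,\dotsc,\mathcal{K}_L$ is precisely the paper's decomposition $\mathcal{U}_L^{\btau},\dotsc,\mathcal{U}_1^{\btau}$ under the reindexing $\ell = L-j+1$, and you invoke the same counting lemmas (Lemma~\ref{lem:33-first} with Step~1 of Proposition~\ref{prop:1} for the ``keep'' block, Lemma~\ref{lem:simpleindcomb} for the ``subdivide'' blocks) to arrive at the same exponents $G_\ell^{\btau}(s)$.
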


\begin{proof}
We construct a cover of $\Lambda$ as follows. For $\ell \in \mathbb{N}$, let $J_{\ell}\coloneqq \gamma^{\ell}(K)-\gamma^{\ell-1}(K)$. %
Define 
\begin{equation*}
\mathcal{U}_{1}^{\btau} \coloneqq \left\{ B_{\gamma^{L-1}(K)}(\mathbf{i}) \in \mathcal{B}_{\gamma^{L-1}(K)} : \tau(\iih,\gamma^k(K),\gamma^{k+1}(K)) \leq \tau_{L-1-k} \;\text{ for } k=0,\dotsc,L-2 \right\}.
\end{equation*}

For $\ell \in \{2,3,\dotsc,L-1 \}$, define $\mathcal{U}_{\ell}^{\btau}$ to be the set of level-$\gamma^{L-\ell}(K)$ approximate squares $B_{\gamma^{L-\ell}(K)}(\mathbf{i})$ for which
\begin{align*}%
&\tau(\iih,\gamma^k(K),\gamma^{k+1}(K)) \leq \tau_{L-k-1} \;\text{ for } k=0,\dotsc,L-\ell-1, \\*
\text{ and } &\tau(\iih,\gamma^{L-\ell}(K),\gamma^{L-\ell+1}(K)) > \tau_{\ell - 1}. 
\end{align*}
Define $\mathcal{U}_{L}^{\btau} \coloneqq \big\{ \,  B_{K}(\ii)\in \mathcal{B}_{K} : \tau(\iih,K,\gamma(K)) > \tau_{L-1} \, \big\}$. 
By construction this is a cover: 
\[ \Lambda \subseteq \bigcup_{\ell=1}^{L}\mathcal{U}_{\ell}^{\btau}. \]
Observe that for all $0\leq k<\ell\leq L-1$, if $B\in\mathcal{U}_k$ and $B'\in\mathcal{U}_{\ell}$ then they are either disjoint or intersect on their boundary, but it can never happen that $B\cap B'=B'$. 

For $\ell \in \{2,3,\dotsc, L\}$, the symbolic representation of a level-$\gamma^{L-\ell}(K)$ approximate square $B_{\gamma^{L-\ell}(K)}(\mathbf{i}) \in \mathcal{U}_{\ell}^{\btau}$ is 
\[ (\underbrace{ i_1,\dotsc, i_{K} }_{\in  [N] \mbox{ freely}} , \underbrace{ i_{K+1},\dotsc, i_{\gamma(K)} }_{\tau(\iih,K,\gamma(K)) \leq \tau_{L-1}} , \dotsb , \underbrace{ i_{\gamma^{L-\ell-1}(K)+1},\dotsc,i_{\gamma^{L-\ell}(K)} }_{\tau(\iih,\gamma^{L-\ell-1}(K),\gamma^{L-\ell}(K)) \leq \tau_{\ell}}, \underbrace{ \ih_{\gamma^{L-\ell}(K) + 1},\dotsc,\ih_{\gamma^{L-l+1}(K)} }_{\tau(\iih,\gamma^{L-\ell}(K),\gamma^{L-l+1}(K)) > \tau_{\ell - 1}}). \] 
Therefore the $s$-cost of $\mathcal{U}_{\ell}^{\btau}$ is 
\begin{align*}
\sum_{U \in \mathcal{U}_{\ell}^{\btau}} |U|^s &\asymp \# \mathcal{U}_{\ell}^{\btau} \cdot n^{-\gamma^{L-\ell}Ks} \\
&\asymp N^K \prod_{k=0}^{L-\ell-1} \# \{ \, \mathbf{i} \in [N]^{J_{k+1}} : \tau(\iih,0,J_{k+1}) \leq \tau_{L-k-1} \, \} \\*
&\phantom{\leq}\times \# \{ \,  \jjh \in [M]^{J_{L-\ell+1}} : \tau(\jjh,0,J_{L-\ell+1}) > \tau_{\ell - 1} \,\} \cdot n^{-\gamma^{L-\ell}Ks} \\
&\asymp N^K \prod_{k=0}^{L-\ell-1} e^{(\tau_{L-k-1} + \log M - I(\tau_{L-k-1}))J_{k+1}} \cdot e^{(\log M - I(\tau_{\ell - 1}))J_{L-l+1}} n^{-\gamma^{L-\ell}Ks} \\
&\asymp n^{K\cdot G_{\ell}^{\btau}(s)},\stepcounter{equation}\tag{\theequation}\label{eq:intcost1}
\end{align*}
by Lemmas~\ref{lem:simpleindcomb} and~\ref{lem:33-first} and algebraic manipulations. In the case $l=L$ we used the convention that the empty product equals 1. 

The symbolic representation of $B_{\gamma^{L-1}(K)}(\mathbf{i}) \in \mathcal{U}_1^{\btau}$ is 
\[ (\underbrace{ i_1,\dotsc, i_{K} }_{\in  [N] \mbox{ freely}} , \underbrace{ i_{K+1},\dotsc, i_{\gamma(K)} }_{\tau(\iih,K,\gamma(K)) \leq \tau_{L-1}} , \dotsb , \underbrace{ i_{\gamma^{L-2}(K)+1},\dotsc,i_{\gamma^{L-1}(K)} }_{\tau(\iih,\gamma^{L-2}(K),\gamma^{L-1}(K)) \leq \tau_1} , \underbrace{ \ih_{\gamma^{L-1}(K) + 1},\dotsc,\ih_{\gamma^{L}(K)} }_{\in [M] \mbox{ freely}} ). \]
Therefore, as in~\eqref{eq:intcost1}, 
\begin{equation}\label{eq:intcost2}
\sum_{U \in \mathcal{U}_1^{\btau}} |U|^s \asymp N^K \prod_{k=0}^{L-2} e^{(\tau_{L-k-1} + \log M - I(\tau_{L-k-1}))J_{k+1}} \cdot M^{\gamma^L K - \gamma^{L-1}K} n^{-\gamma^{L-1}Ks} \asymp n^{K \cdot G_{1}^{\btau}(s)}. 
\end{equation}
We have bounded the $s$-cost of each part of the cover, so the proof is complete. 
\end{proof}
 In~\eqref{eq:intcost1} and~\eqref{eq:intcost2} it was crucial that each $\tau_i \in (\underline{t},\overline{t})$ when using Lemma~\ref{lem:simpleindcomb}. 
Lemma~\ref{lem:50} tells us the exponential growth rate of the cover for all tuples $\btau$, but motivated by~\eqref{eq:45}, of particular interest is the case when 
\[  G_{1}^{\btau}(s) = \dotsb = G_{L}^{\btau}(s) = 0.\]
A tuple $\btau = (\tau_1,\dotsc,\tau_{L-1}) \in (\underline{t},\overline{t})^{L-1}$ satisfies $G_{1}^{\btau}(s) = \dotsb = G_{L}^{\btau}(s)$ if and only if $\tau_1 = t_1(s)$ (from $G_{1}^{\btau}(s) = G_{2}^{\btau}(s)$) and $\tau_{k} = T_s(\tau_{k-1})$ for $k = 2,3,\dotsc , L$ (from $G_{k}^{\btau}(s) = G_{k+1}^{\btau}(s)$). Equivalently, $\tau_k = t_k(s)$ for $k = 1,2,\dotsc,L-1$. 
The next lemma ensures that each of $t_1(s), \dotsc,t_L(s)$ lies in the correct range $(\underline{t},\overline{t})$.  In particular, writing $\mathbf{t}\coloneqq (t_1(s), \dotsc,t_{L-1}(s))$, we can then apply Lemma~\ref{lem:50} to obtain the upper bound 
\[ \limsup_{\delta\searrow 0} \frac{\log S_{\delta, \gamma^{-(L-1)}}^{s}(\Lambda)}{-\log \delta} \leq G_{L}^{\mathbf{t}}(s) = \dim_{\mathrm B} \Lambda - (1-\gamma^{-1})\frac{I(t_{L-1}(s))}{\log m} - s. \] 

\begin{lemma}\label{lem:51}
Let $\theta\in(0,1)$, $L = L(\theta) \coloneqq 1 + \lfloor \frac{-\log \theta}{\log \gamma} \rfloor$ and $s\in(\dim_{\mathrm H}\Lambda,\overline{\dim}_{\gamma^{-(L-1)}}\,\Lambda]$. Then, using notation from~\eqref{eq:definetsequence} and~\eqref{eq:44},  
\[ \underline{t} < t_1(\dim_{\mathrm H} \Lambda) < t_1(s) < t_2(s) < \dotsb < t_L(s) \leq t_L(\overline{\dim}_{\gamma^{-(L-1)}} \Lambda) < T_{\dim_{\mathrm H} \Lambda}(t^*) < \overline{t}. \]%
\end{lemma}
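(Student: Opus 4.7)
The plan is to walk along the chain of inequalities from left to right, with five of the six bounds being routine consequences of Lemmas~\ref{lem:41} and~\ref{lem:tprimelesststar} and the definitions, while the middle bound $t_L(\overline{\dim}_{\gamma^{-(L-1)}}\Lambda) < T_{\dim_H\Lambda}(t^*)$ is the main technical challenge.

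First I would dispose of the easy bounds. The leftmost inequality $\underline{t} < t_1(\dim_H\Lambda)$ follows from Lemma~\ref{lem:41} (which gives $T_{\dim_H\Lambda}(\underline{t}) \geq \underline{t}$) together with the fact noted in the proof of Lemma~\ref{lem:tprimelesststar} that $\underline{t} < t'$ (so the equality case of Lemma~\ref{lem:41} is excluded). The inequality $t_1(\dim_H\Lambda) < t_1(s)$ is clear because $t_1(s) = (s - \log M/\log m)\log n$ is strictly increasing in $s$. The strict monotonicity $t_k(s) < t_{k+1}(s)$ is an immediate application of Lemma~\ref{lem:41}'s strict inequality $T_s(t) > t$, valid for every $t \in \mathbb{R}$ whenever $s > \dim_H\Lambda$, to $t = t_k(s)$. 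The inequality $t_L(s) \leq t_L(\overline{\dim}_{\gamma^{-(L-1)}}\Lambda)$ follows from $s \leq \overline{\dim}_{\gamma^{-(L-1)}}\Lambda$ and the fact that $s \mapsto t_k(s)$ is non-decreasing on $[\dim_H\Lambda, \infty)$; this is proved by induction on $k$, using that $T_s(t)$ is strictly increasing in $s$ (obvious) and non-decreasing in $t$ on $[\underline{t}, \infty)$ (since $T_s'(t) = \gamma I'(t) \geq 0$ by strict convexity of $I$ and $I'(\underline{t}) = 0$). Finally, $T_{\dim_H\Lambda}(t^*) < \overline{t}$ reduces, after substituting the Bedford--McMullen formulae~\eqref{eq:102} and~\eqref{eq:103}, the defining relation~\eqref{eq:44} for $t^*$, the identity $\overline{t} = \log N - H(\mathbf{P})$, and the definition of $T_{\dim_H\Lambda}$, to a direct arithmetic computation using that $\dim_H\Lambda < \dim_B\Lambda$ strictly (from non-uniform fibres).

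The hard part is the bound $t_L(\overline{\dim}_{\gamma^{-(L-1)}}\Lambda) < T_{\dim_H\Lambda}(t^*)$, because $\overline{\dim}_{\gamma^{-(L-1)}}\Lambda$ is defined implicitly. I would proceed by induction on $L$. The base case $L = 1$ is a direct computation: $t_1(\overline{\dim}_B\Lambda) = \log(N/M)$ using~\eqref{eq:103}, and one checks that
\[
T_{\dim_H\Lambda}(t^*) - t_1(\overline{\dim}_B\Lambda) = (\dim_B\Lambda - \dim_H\Lambda)\log n \cdot \frac{\gamma^{-1}}{1-\gamma^{-1}} > 0.
\]
For the inductive step, the previously established cases of the lemma legitimise applying Lemma~\ref{lem:50} with the optimal tuple $\btau = (t_1(s), \dotsc, t_{L-1}(s))$, which yields an a priori upper bound $\overline{\dim}_{\gamma^{-(L-1)}}\Lambda \leq s^{\sharp}$, where $s^{\sharp}$ is the unique solution to $G_L^{\mathbf{t}(s)}(s) = 0$, equivalently $I(t_{L-1}(s^{\sharp})) = (\dim_B\Lambda - s^{\sharp})\log m/(1-\gamma^{-1})$. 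Comparing with the defining equation~\eqref{eq:44} of $t^*$ gives $I(t_{L-1}(s^{\sharp})) = I(t^*) - (s^{\sharp} - \dim_H\Lambda)\log m/(1-\gamma^{-1}) < I(t^*)$, so $t_{L-1}(s^{\sharp}) < t^*$ by strict monotonicity of $I$. A direct computation using $T_s(t) = (s - \log M/\log m)\log n + \gamma I(t)$ then yields
\[
t_L(s^{\sharp}) - T_{\dim_H\Lambda}(t^*) = -(s^{\sharp} - \dim_H\Lambda)\log n \cdot \frac{\gamma^{-1}}{1-\gamma^{-1}} < 0,
\]
which combined with monotonicity of $t_L$ in $s$ gives the desired bound.

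I expect the main obstacle to be making the inductive step above rigorous: the application of Lemma~\ref{lem:50} needs the entries of $\btau$ to lie in $(\underline{t}, \overline{t})$ at $s = s^{\sharp}$, but the statement of Lemma~\ref{lem:51} being proved is precisely what guarantees this. Disentangling this apparent circularity requires a careful bootstrap. One uses that all iterates $t_k(\dim_H\Lambda)$ stay strictly below the fixed point $t'$ of $T_{\dim_H\Lambda}$ (since they are produced by iterating $T_{\dim_H\Lambda}$ starting from $\underline{t} < t'$ and $T_{\dim_H\Lambda}$ is strictly convex with $T_{\dim_H\Lambda}(t') = t'$), and then extends this control by continuity and monotonicity in $s$, coupled with the inductive hypothesis, to show that the ranges are preserved on the closed interval $[\dim_H\Lambda, s^{\sharp}]$, closing the induction.
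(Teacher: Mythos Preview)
Your inductive strategy for the hard bound $t_L(\overline{\dim}_{\gamma^{-(L-1)}}\Lambda) < T_{\dim_{\mathrm H}\Lambda}(t^*)$ is genuinely different from the paper's and, once cleaned up, does work. The paper does not induct on $L$: instead it argues by contradiction, assuming $t_{L-1}(\overline{\dim}_{\gamma^{-(L-1)}}\Lambda) \geq t^*$ and then constructing an ad hoc tuple $\btau$ (either constant $t^*$, or $t_\ell(s)$ up to the last index where this stays below $t^*$ followed by linear interpolation to $t^*$) which lies in $(\underline{t},\overline{t})^{L-1}$ \emph{by construction}, bypassing any circularity. Applying Lemma~\ref{lem:50} with this tuple forces $G_L^{\btau}(\overline{\dim}_{\gamma^{-(L-1)}}\Lambda) < 0$, a contradiction. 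Only then, with $t_{L-1}(\overline{\dim}_{\gamma^{-(L-1)}}\Lambda) < t^* < \overline{t}$ secured, does the paper apply Lemma~\ref{lem:50} with the optimal tuple and derive the bound on $t_L$ via the same algebraic identity you found. Your approach trades this tuple construction for an induction, which is arguably cleaner.

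However, your worry about circularity is self-inflicted: there is no need to introduce $s^{\sharp}$. Apply Lemma~\ref{lem:50} directly at $s_0 = \overline{\dim}_{\gamma^{-(L-1)}}\Lambda$. Since intermediate dimensions are non-decreasing in $\theta$, $s_0 \leq \overline{\dim}_{\gamma^{-(L-2)}}\Lambda$, so the inductive hypothesis for level $L-1$ gives $t_{L-1}(s_0) \leq t_{L-1}(\overline{\dim}_{\gamma^{-(L-2)}}\Lambda) < T_{\dim_{\mathrm H}\Lambda}(t^*) < \overline{t}$, and the optimal tuple is validated. Lemma~\ref{lem:50} then yields $0 \leq G_L^{\mathbf{t}(s_0)}(s_0)$, which rearranges to $I(t_{L-1}(s_0)) < I(t^*)$, and the rest of your computation goes through verbatim. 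The bootstrap in your final paragraph is unnecessary.

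Separately, your treatment of $T_{\dim_{\mathrm H}\Lambda}(t^*) < \overline{t}$ as ``direct arithmetic using $\dim_{\mathrm H}\Lambda < \dim_{\mathrm B}\Lambda$'' is too optimistic. After substitution this inequality does not collapse to the mere fact that Hausdorff and box dimensions differ; the paper proves it by showing that the auxiliary function $f_{\mathbf P}(z) = \log n \cdot \log\sum_{\ih} P_{\ih}^{z/\log n} - (\log n - z)H(\mathbf P)$ is strictly convex (Jensen, using non-uniform fibres) with a global minimum $f_{\mathbf P}(\log n) = 0$, so $f_{\mathbf P}(\log m) > 0$, which is algebraically equivalent to the desired inequality.
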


\begin{proof}
 Recall from~\eqref{e:sunderlinedef} and~\eqref{e:soverlinedef} that $\underline{s}<\dim_{\mathrm H}\Lambda<s$. It is therefore immediate that
\begin{equation*}
\underline{t} = t_1(\underline{s}) < t_1(\dim_{\mathrm H} \Lambda) < t_1(s). 
\end{equation*}
It follows from Lemma~\ref{lem:41} that $t_1(s) < t_2(s) < \dotsb < t_L(s)$ for all $s>\dim_{\mathrm H}\Lambda$. 
Since $s \leq \overline{\dim}_{\gamma^{-(L-1)}} \Lambda$ we have $t_L(s) \leq t_L(\overline{\dim}_{\gamma^{-(L-1)}} \Lambda)$. 

We now prove $T_{\dim_{\mathrm H} \Lambda}(t^*) < \overline{t}$. To do so, we define, for every fixed $\mathbf{p} \in \mathcal{P}_M$, the function $f_{\mathbf{p}} \colon (0,\infty) \to \mathbb{R}$ by 
\[ f_{\mathbf{p}}(z) \coloneqq \log n \cdot \log \sum_{\ih=1}^M p_{\ih}^{z/\log n}   -   (\log n -z)H(\mathbf{p}) . \]
Recall that $\mathbf{Q} = (1/M,\dotsc,1/M)$ and $\mathbf{P} = (N_1/N,\dotsc,N_M/N)$. 
Clearly, $f_{\mathbf{p}}(\log n) = 0$ for all $\mathbf{p}$ and $f_{\mathbf{Q}}(z) = 0$ for every $z$. The derivative of $f_{\mathbf{p}}(z)$ with respect to $z$ is
\begin{equation*}
f'_{\mathbf{p}}(z) = H(\mathbf{p}) + \sum_{\ih=1}^M \frac{p_{\ih}^{z/\log n}}{\sum_{\jh=1}^M p_{\jh}^{z/\log n}} \log p_{\ih}, 
\end{equation*}
while after some algebraic manipulations, we obtain that
\begin{equation*}
f''_{\mathbf{p}}(z) =\frac{1}{\log n} \left(
\sum_{\ih=1}^M \frac{p_{\ih}^{z/\log n}}{\sum_{\jh=1}^M p_{\jh}^{z/\log n}} (\log p_{\ih})^2 -
\Bigg( \sum_{\ih=1}^M \frac{p_{\ih}^{z/\log n}}{\sum_{\jh=1}^M p_{\jh}^{z/\log n}} \log p_{\ih} \Bigg)^2 
\right) \geq 0
\end{equation*}
by Jensen's inequality with equality if and only if $\mathbf{p}=\mathbf{Q}$ (here we use that $\mathbf{p}$ has strictly positive entries). Hence, for $\mathbf{p}\neq\mathbf{Q}$, $f_{\mathbf{p}}(z)$ is a strictly convex function for all $z>0$, and also $f'_{\mathbf{p}}(\log n)=0$, so $f_{\mathbf{p}}(z)$ has a global minimum at $z=\log n$. In particular, since $m \neq n$ and $\mathbf{P} \neq \mathbf{Q}$ as we assume the carpet has non-uniform fibres, $f_{\mathbf{P}}(\log m) > 0$. 
Using formulae~\eqref{eq:102} and $\eqref{eq:103}$ for $\dim_{\mathrm H}\Lambda$ and $\dim_{\mathrm B}\Lambda$, algebraic manipulations show that $f_{\mathbf{P}}(\log m) > 0$ is equivalent to 
\[ \left( \dim_{\mathrm H} \Lambda - \frac{\log M}{\log m} \right) \log n  +  \gamma \frac{\log m}{1-\gamma^{-1}} (\dim_{\mathrm B} \Lambda - \dim_{\mathrm H} \Lambda)   <   \log N  - H(\mathbf{P}). \]
But we can express $I(t^*)$ from~\eqref{eq:44} and use the definition $\overline{t} \coloneqq \log N  - H(\mathbf{P})$ to show that this is equivalent to the assertion $T_{\dim_{\mathrm H} \Lambda}(t^*) < \overline{t}$, as required. 

It remains to prove $t_L(\overline{\dim}_{\gamma^{-(L-1)}}\Lambda) < T_{\dim_{\mathrm H} \Lambda}(t^*)$. To do so, we first prove the weaker claim $t_{L-1}(\overline{\dim}_{\gamma^{-(L-1)}} \Lambda)  <  t^*$ using the fact that Lemma~\ref{lem:50} holds for an arbitrary tuple $\btau$. Assume for a contradiction that $t_{L-1}(\overline{\dim}_{\gamma^{-(L-1)}} \Lambda)  \geq t^*$. We define a tuple $\btau = (\tau_1,\dotsc,\tau_{L-1}) \in (\underline{t},\overline{t})^{L-1}$ as follows. %
If $t_1(\overline{\dim}_{\gamma^{-(L-1)}} \Lambda) \geq t^*$, then define $\tau_l \coloneqq t^*$ for all $l \in \{ 1,2,\dotsc,L-1\}$, noting that $\underline{t} < t^* \leq T_{\dim_{\mathrm H} \Lambda}(t^*) < \overline{t}$. If, on the other hand, $t_1(\overline{\dim}_{\gamma^{-(L-1)}} \Lambda) < t^*$, then define 
\[ l_* \coloneqq \max \{ \, l \in \{ 1,2,\dotsc,L-2 \} : t_{l}(\overline{\dim}_{\gamma^{-(L-1)}} \Lambda) < t^* \, \} .\] %
For $l \in \{1,2,\dotsc,l_*\}$ let $\tau_l \coloneqq t_l(\overline{\dim}_{\gamma^{-(L-1)}} \Lambda)$. For $l \in \{l_*+1,l_* + 2,\dotsc,L-1\}$ let $\tau_l \coloneqq t_{l_*} + \frac{l - l_*}{L-1-l_*}(t^* - t_{l_*}(s))$, %
so 
\[ \underline{t} < t_1(\dim_{\mathrm H} \Lambda) < t_1(\overline{\dim}_{\gamma^{-(L-1)}} \Lambda) = \tau_1 < \tau_2 < \dotsb < \tau_{L-2} < \tau_{L-1} = t^* \leq T_{\dim_{\mathrm H} \Lambda}(t^*) < \overline{t}.\] 
In either case, $\tau_1 \leq t_1(\overline{\dim}_{\gamma^{-(L-1)}} \Lambda)$ and $\tau_{l+1} \leq T_{\overline{\dim}_{\gamma^{-(L-1)}} \Lambda}(\tau_{l})$ for all $l \in \{1,2,\dotsc,L-2\}$, so 
\[ G_{1}^{\btau}(\overline{\dim}_{\gamma^{-(L-1)}} \Lambda) \leq G_{2}^{\btau}(\overline{\dim}_{\gamma^{-(L-1)}} \Lambda) \leq \dotsb \leq G_{L}^{\btau}(\overline{\dim}_{\gamma^{-(L-1)}} \Lambda).\]
 Therefore by Lemma~\ref{lem:50}, 
\begin{align*}
0 = \limsup_{\delta\searrow 0} \frac{\log S_{\delta, \gamma^{-(L-1)}}^{\overline{\dim}_{\gamma^{-(L-1)}} \Lambda}(\Lambda)}{-\log \delta} &\leq \max_{1 \leq \ell \leq L} G_{\ell}^{\btau}(\overline{\dim}_{\gamma^{-(L-1)}} \Lambda) 
 = G_{L}^{\btau}(\overline{\dim}_{\gamma^{-(L-1)}} \Lambda) < 0 
\end{align*} 
(the last inequality holds since $\tau_{L-1} = t^*$ and $\overline{\dim}_{\gamma^{-(L-1)}} \Lambda > \dim_{\mathrm H} \Lambda$, see \cite[Section~4]{Falconer2020firstintermediate}), a contradiction. Thus $t_{L-1}(\overline{\dim}_{\gamma^{-(L-1)}} \Lambda)  <  t^* \leq T_{\dim_{\mathrm H} \Lambda}(t^*) < \overline{t}$ (using Lemma~\ref{lem:41}). 

To complete the proof that $t_L(\overline{\dim}_{\gamma^{-(L-1)}} \Lambda) < T_{\dim_{\mathrm H} \Lambda}(t^*)$, we apply Lemma~\ref{lem:50} again but this time with the optimal tuple $\mathbf{t} \coloneqq (t_1(\overline{\dim}_{\gamma^{-(L-1)}} \Lambda),\dotsc,t_{L-1}(\overline{\dim}_{\gamma^{-(L-1)}} \Lambda))$ which we now know lies in the correct range: 
\begin{align*}
 0 &= \limsup_{\delta\searrow 0} \frac{\log S_{\delta, \gamma^{-(L-1)}}^{\overline{\dim}_{\gamma^{-(L-1)}} \Lambda}(\Lambda)}{-\log \delta} \\
 &\leq \max_{1 \leq \ell \leq L} G_{\ell}^{\mathbf{t}}(\overline{\dim}_{\gamma^{-(L-1)}} \Lambda) \\
 &= G_{L}^{\mathbf{t}}(\overline{\dim}_{\gamma^{-(L-1)}} \Lambda) \\
 &= \dim_{\mathrm B}\Lambda - (1-\gamma^{-1})\frac{I\big( t_{L-1}(\overline{\dim}_{\gamma^{-(L-1)}} \Lambda)\big)}{\log m} - \overline{\dim}_{\gamma^{-(L-1)}} \Lambda,\stepcounter{equation}\tag{\theequation}\label{eq:combrightrangeupperbound}
 \end{align*}%
noting that all terms in the maximum are in fact equal by the definition of $\mathbf{t}$. Therefore 
\begin{align}
t_L(\overline{\dim}_{\gamma^{-(L-1)}} \Lambda) 
&=  T_{\overline{\dim}_{\gamma^{-(L-1)}} \Lambda}(t_{L-1}(\overline{\dim}_{\gamma^{-(L-1)}} \Lambda)) \label{eq:rightrangelast1}\\
&\leq \left(\overline{\dim}_{\gamma^{-(L-1)}} \Lambda  -  \frac{\log M}{\log m}\right) \log n  +   \gamma\frac{\log m}{1-\gamma^{-1}}  (  \dim_{\mathrm B} \Lambda - \overline{\dim}_{\gamma^{-(L-1)}} \Lambda )  \label{eq:rightrangelast2} \\
&<   \left( \dim_{\mathrm H} \Lambda -  \frac{\log M}{\log m}\right) \log n  +   \gamma\frac{\log m}{1-\gamma^{-1}}  (  \dim_{\mathrm B} \Lambda -  \dim_{\mathrm H} \Lambda)  \label{eq:rightrangelast3}\\
&= T_{\dim_{\mathrm H} \Lambda}(t^*), \label{eq:rightrangelast4}
\end{align}
where~\eqref{eq:rightrangelast1} is by~\eqref{eq:definetsequence};~\eqref{eq:rightrangelast2} is by~\eqref{eq:defineiteratingfunction} and~\eqref{eq:combrightrangeupperbound};~\eqref{eq:rightrangelast3} is since $\overline{\dim}_{\gamma^{-(L-1)}} \Lambda > \dim_{\mathrm H} \Lambda$; and~\eqref{eq:rightrangelast4} is by~\eqref{eq:defineiteratingfunction} and~\eqref{eq:44}. 
This completes the proof. 
\end{proof}

\subsection{Lower bound}\label{sec:prooflower}%
For $\theta \in (0,1)$, $s \in (\dim_{\mathrm{H}} \Lambda,\overline{\dim}_{\gamma^{-(L-1)}} \Lambda]$, sufficiently small $\delta$, and $R \in \mathbb{N}$, we define a measure $\mu = \mu_{\delta,s,\theta,R}$ which we will use to apply the mass distribution principle. Recall that $K = K(\delta) = \lfloor\frac{-\log \delta}{\log n}\rfloor$. The measure will be defined by putting point masses on some carefully chosen level-$\lfloor K/\theta \rfloor$ approximate squares (corresponding to the very smallest scale $\delta^{1/\theta}$).

If $k \in \mathbb{N}$ and $B_k$ is a level-$k$ approximate square in $\mathcal{B}_k$, we can choose a point $\Lambda_{B_k} \in \Lambda$ in the interior of $B_k$. We can make this choice explicitly by choosing the image of a distinguished point in $\Lambda$ inside the top-most (in the plane) level-$\gamma(k)$ cylinder within $B_k$. 
Let $\delta_{\Lambda_{B_k}}$ denote a unit point mass at $\Lambda_{B_k}$. 
For $l \in \mathbb{N}$ define 
\begin{equation}\label{eq:definejnumbers}
J_l \coloneqq \gamma^l(K)  -   \lfloor K/(\gamma^{L-l}\theta) \rfloor;  \qquad J_l' \coloneqq \lfloor K/(\gamma^{L-l}\theta) \rfloor - \gamma^{l-1}(K). 
\end{equation} %
Using notation from~\eqref{eq:definestjr}, define $C_{K,s,\theta,R}$ to be the set of level-$\lfloor K/\theta\rfloor$ approximate squares $(i_1,\dotsc,i_{\lfloor K/\theta\rfloor}, \ih_{\lfloor K/\theta\rfloor+1}, \dotsc, \ih_{\gamma(\lfloor K/\theta\rfloor)}) \in \mathcal{B}_{\lfloor K/\theta\rfloor}$ for which 
\begin{equation}\label{eq:definemass1}
(\ih_{\lfloor K/(\gamma^{L-l}\theta) \rfloor + 1},\dotsc,\ih_{\gamma^l(K)}) \in S_{t_{L-l}(s),J_l,R}  \quad \mbox{for} \quad l \in \{1,2,\dotsc,L-1\}.
\end{equation}
and
\begin{equation}\label{eq:definemass2}
(\ih_{\gamma^{l-1}(K) + 1},\dotsc,\ih_{\lfloor K/(\gamma^{L-l}\theta) \rfloor}) \in        S_{t_{L-l+1}(s),J_l',R} \quad \mbox{for} \quad l \in \{1,2,\dotsc,L\}.
\end{equation}
Note that when $\theta = \gamma^{-(L-1)}$ we do not impose the condition~\eqref{eq:definemass2}. 
Now we define the measure 
\begin{equation}\label{eq:definemu}
\mu = \mu_{\delta,s,\theta,R} \coloneqq \sum_{B_{\lfloor K/\theta \rfloor} \in C_{K,s,\theta,R}} n^{-Ks/\theta} \delta_{\Lambda_{B_{\lfloor K/\theta \rfloor}}}.
\end{equation}
This is clearly supported on $\Lambda$. 

For the remainder of this section, for 
\[ k \in \{ K, {K+1},\dotsc,{\lfloor K/\theta \rfloor} \}, \]
$B_k$ will denote an arbitrary level-$k$ approximate square in $\mathcal{B}_k \cap \mathrm{supp}(\mu)$. By the definition of the $S_{t,J,R}$ sets, $\mu(B_k)$ depends on $k,\delta,s,\theta,R$ and the carpet, but if $k = \gamma^l(K)$ or $k = \lfloor K/\gamma^j \theta \rfloor$ for some $j \in \{0,\dotsc,L-1\}$, then $\mu(B_k)=\mu(B'_k)$ for all $B_k,B'_k\in\mathcal{B}(k) \cap \mathrm{supp}(\mu)$. 
\begin{lemma}\label{lem:lowernicescales}
Fix $\theta \in (0,1)$, $s \in (\dim_{\mathrm{H}} \Lambda,\overline{\dim}_{\theta} \Lambda]$, and $R \in \mathbb{N}$ as above. %
For all $l = 0,1,\dotsc,L-1$, as $K \to \infty$, the following two asymptotic equalities hold: 
\begin{equation}\label{eq:nicescalesfirst} \mu_{\delta,s,\theta,R}(B_{\gamma^{l}(K)}) \asymp n^{-\gamma^l K s},
\end{equation}
\begin{equation}\label{eq:nicescaleslast}
\mu_{\delta,s,\theta,R}\left(B_{\left\lfloor \frac{K}{\theta \cdot \gamma^{L-l-1}} \right\rfloor} \right) \asymp n^{- \frac{Ks}{\theta \cdot \gamma^{L-l-1}}}.
\end{equation}
\end{lemma}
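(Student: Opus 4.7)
The measure $\mu_{\delta,s,\theta,R}$ assigns the common mass $n^{-Ks/\theta}$ to each level-$\lfloor K/\theta\rfloor$ approximate square in $C_{K,s,\theta,R}$, so for any level-$k$ approximate square $B_k$ we have $\mu(B_k) = n^{-Ks/\theta} \cdot \#\{ \, B' \in C_{K,s,\theta,R} : B' \subset B_k \, \}$. The guiding principle for the plan is that the two families of scales in the lemma, $\{\gamma^l(K)\}_{l=0}^{L-1}$ and $\{\lfloor K/(\theta\gamma^{L-l-1})\rfloor\}_{l=0}^{L-1}$, are precisely the $2L$ interleaved endpoints of the constraint intervals appearing in \eqref{eq:definemass1} and \eqref{eq:definemass2}. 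It is this alignment that makes the count factor cleanly at the special scales.

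Fix such a scale $k$ and $B_k \in \supp(\mu)$. A level-$\lfloor K/\theta\rfloor$ approximate square $B'$ lies in $B_k$ if and only if the first $k$ full-index symbols of $B'$ and its column indices on positions $[k+1,\gamma(k)]$ agree with those of $B_k$. My plan is to split the choices for $B' \in C_{K,s,\theta,R}$ with $B' \subset B_k$ into three groups: (a) the $[N]$-lifts $i_j$ for $j \in [k+1, \min\{\gamma(k),\lfloor K/\theta\rfloor\}]$, whose columns are fixed by $B_k$; (b) admissible column sequences (together with their $[N]$-lifts) in positions $[\gamma(k)+1,\lfloor K/\theta\rfloor]$, subject to exactly those constraint intervals of \eqref{eq:definemass1}--\eqref{eq:definemass2} that lie strictly to the right of $\gamma(k)$; and (c) free column choices in $[\max\{\gamma(k),\lfloor K/\theta\rfloor\}+1, \gamma(\lfloor K/\theta\rfloor)]$, which contribute a factor of $M$ raised to the corresponding length. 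The alignment at special scales ensures that every constraint sub-interval falls entirely into exactly one of these groups (or lies to the left of $k$, where it is already encoded in the choice of $B_k$), so no sub-interval is split. For any column sequence in $S_{t,J,R}$, the block structure in \eqref{eq:definestjr} forces its type to converge to $\mathbf{Q}^*_t$, so the corresponding $[N]$-lift count satisfies $\prod_{j} N_{\ih_j} \asymp e^{Jt}$ by Lemma~\ref{lem:32}. Combined with \eqref{eq:lowerboundcard}, each sub-interval in group (b) contributes $\asymp \#S_{t,J,R} \cdot e^{Jt} \asymp e^{J(\log M - I(t) + t)}$.

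Assembling these factors for each special scale $k$ and multiplying by $n^{-Ks/\theta}$, I will then verify that the resulting exponent equals $-ks\log n$ up to sub-exponential error. The key algebraic identity is that the recursion $t_{\ell+1}(s) = T_s(t_\ell(s))$ from \eqref{eq:definetsequence}--\eqref{eq:defineiteratingfunction} rewrites, using $\log n/\log m = \gamma$, as
\[ \gamma(\log M - I(t_\ell(s))) = s\log n - t_{\ell+1}(s), \]
which converts each contribution $e^{J_\ell(\log M - I(t_{L-\ell}(s)) + t_{L-\ell}(s))}$ and $e^{J_\ell'(\log M - I(t_{L-\ell+1}(s)) + t_{L-\ell+1}(s))}$ into a telescoping expression in the $t_\ell(s)$. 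Combined with the arithmetic identities $J_\ell + J_\ell' = \gamma^\ell(K) - \gamma^{\ell-1}(K) \approx (\gamma-1)\gamma^{\ell-1}K$ relating the interval lengths to the scales, the exponent will collapse to the claimed value for both families of scales. The main obstacle is the bookkeeping required to carry out this telescoping correctly; it is particularly delicate at the boundary case $l = L-1$ of the first family, since when $\theta < \gamma^{-(L-1)}$ one has $\gamma^L(K) > \lfloor K/\theta\rfloor$, so the constraint sub-interval $[\gamma^{L-1}(K)+1,\lfloor K/\theta\rfloor]$ falls partly into group (a) while the range $[\lfloor K/\theta\rfloor+1, \gamma^L(K)]$ contributes to group (c). The parameter $R$ is not exploited essentially in the argument; it serves only to absorb the sub-exponential errors coming from the fact that the types in $S_{t,J,R}$ approach $\mathbf{Q}^*_t$ blockwise rather than globally.
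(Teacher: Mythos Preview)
Your plan is correct and is essentially the paper's argument unrolled. The paper organises the count as an induction, starting from the smallest scale $\lfloor K/\theta\rfloor$ (where \eqref{eq:nicescaleslast} for $l=L-1$ holds by definition of $\mu$) and moving up through the adjacent special scales one at a time; each inductive step then involves exactly one constraint sub-interval and one use of the identity $\gamma(\log M - I(t_\ell(s))) = s\log n - t_{\ell+1}(s)$. Your direct count at each scale, followed by a telescoping over all sub-intervals to the right, is the same computation with the induction unwound. The inductive organisation is slightly cleaner because it avoids tracking all $2(L-l)$ factors simultaneously and sidesteps the bookkeeping of the floor discrepancies between $\gamma(\gamma^l(K))$ and $\gamma^{l+1}(K)$, but your approach is valid.

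One small inaccuracy in your boundary-case description: for $k=\gamma^{L-1}(K)$ the positions $[\lfloor K/\theta\rfloor+1,\gamma^L(K)]$ are column indices $\jh_p$ of $B'$ that are \emph{forced} to equal the $\ih_p$ of $B_k$, so they contribute nothing to the count; they are not free as in group (c). Your own definition of group (c) as starting at $\max\{\gamma(k),\lfloor K/\theta\rfloor\}+1 = \gamma^L(K)+1$ is correct, so if you follow that definition rather than the prose you will get the right factor $M^{\gamma(\lfloor K/\theta\rfloor)-\gamma^L(K)}$, matching the paper.
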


\begin{proof}%
The proof is an induction argument, starting with the smaller scales. Note that~\eqref{eq:nicescaleslast} holds for $l=L-1$ by the definition of $\mu$. We first use this to prove~\eqref{eq:nicescalesfirst} for $l=L-1$. 
Indeed, consider an approximate square $B_{\gamma^{L-1}(K)}(\mathbf{i}) \in \mathcal{B}_{\gamma^{L-1}(K)}$ and assume it intersects $\mathrm{supp}(\mu)$. 
Because of the way $\mu$ is defined, the mass of all such squares will be the same. 
To calculate this mass, we need to count up the number of level-$\lfloor K/\theta \rfloor$ approximate squares $B_{\lfloor K/\theta \rfloor}(\mathbf{j})$ which lie inside the level-$\gamma^{L-1}(K)$ square (so $B_{\lfloor K/\theta \rfloor}(\mathbf{j}) \subset B_{\gamma^{L-1}(K)}(\mathbf{i})$), and which also carry mass. 

To count the number of such smaller squares, it is helpful to compare the symbolic representation of any such square $B_{\lfloor K/\theta \rfloor}(\mathbf{j})$ with that of $B_{\gamma^{L-1}(K)}(\mathbf{i})$: 
{\small
\begin{align*}
B_{\gamma^{L-1}(K)}(\mathbf{i}):  ( i_1,\dotsc,i_{\gamma^{L-1}(K)}, \overbrace{  \ih_{\gamma^{L-1}(K) + 1},\dotsc,\ih_{\lfloor K/\theta \rfloor}  }^{ \in S_{t_1(s),J_L',R} }     &,\ih_{\lfloor K/\theta \rfloor + 1},\dotsc, \ih_{\gamma^L (K)}  ) \\*
B_{\lfloor K/\theta \rfloor}(\mathbf{j}): ( \underbrace{ j_1, \dotsc, j_{\gamma^{L-1}(K)} }_{\mbox{equal}}, \underbrace{  j_{\gamma^{L-1}(K) + 1},\dotsc,j_{\lfloor K/\theta \rfloor}}_{\mbox{same column}} &, \underbrace{ \jh_{\lfloor K/\theta \rfloor + 1},\dotsc, \jh_{\gamma^L(K)} }_{\mbox{equal}},\underbrace{  \jh_{\gamma^L(K) + 1},\dotsc, \jh_{\gamma(\lfloor K/\theta \rfloor)} }_{\in [M] \mbox{ freely}}) 
\end{align*}
}
For $t \in (\underline{t},\overline{t})$, $J \in \mathbb{N}$, $R \in \mathbb{N}$, let $\mathbf{p}(t,J,R) = (p_1(t,J,R),\dotsc,p_M(t,J,R))$ be the type class of every element of $S_{t,J,R}$. 
 Then $\mathbf{p}(t,J,R) \xrightarrow[J \to \infty]{} \mathbf{Q}^*_{t} \in \mathcal{E}_{t}$ by Lemma~\ref{lem:32}. Therefore 
 \begin{align*}
 \mu(B_{\gamma^{L-1}(K)}(\mathbf{i})) &\asymp  \prod_{\ih=1}^M N_{\ih}^{p_{\ih}(t_1(s),J_L',R) \cdot J_L'} M^{\gamma K/\theta - \gamma^L K} n^{-Ks/\theta} \\
 &= e^{t_1(s) J_L'  + K((\gamma/\theta - \gamma^L)\log M - (s\log n)/\theta)} \\
 &\asymp n^{-\gamma^{L-1} K s},
\end{align*}
using~\eqref{eq:definejnumbers} and the definition of $t_1(s)$ in~\eqref{eq:definetsequence}. Therefore~\eqref{eq:nicescalesfirst} holds for $l=L-1$. 

We now use this to prove~\eqref{eq:nicescaleslast} for $l=L-2$. If %
$B_{\gamma^{L-1}(K)}(\mathbf{j}) \subset B_{\lfloor K/(\gamma \theta)\rfloor}(\mathbf{i})$, %
then 
\begin{align*}
&(  i_1,\dotsc,i_{\lfloor K/(\gamma \theta)\rfloor}  , \overbrace{ \ih_{\lfloor K/(\gamma \theta)\rfloor + 1}, \dotsc, \ih_{\gamma^{L-1}(K)}  }^{\in S_{t_1(s),J_{L-1},R} } , \ih_{\gamma^{L-1}(K) + 1},\dotsc, \ih_{\lfloor K/\theta \rfloor}  )  \\*
&(  \underbrace{ j_1,\dotsc,j_{\lfloor K/(\gamma \theta)\rfloor} }_{\mbox{equal}}, \underbrace{ j_{\lfloor K/(\gamma \theta)\rfloor +1} ,\dotsc,  j_{\gamma^{L-1}(K)}   }_{\mbox{same column}}  , \underbrace{ \jh_{\gamma^{L-1}(K) + 1}, \dotsc, \jh_{\lfloor K/\theta \rfloor} }_{\mbox{equal}}, \underbrace{ \jh_{\lfloor K/\theta \rfloor + 1},\dotsc, \jh_{\gamma^L (K)} }_{\in [M] \mbox{ freely}} ).
\end{align*}
Therefore by Lemma~\ref{lem:32}, case $l=L-1$ of~\eqref{eq:nicescalesfirst},~\eqref{eq:definejnumbers}, and~\eqref{eq:definetsequence}, 
\begin{equation*}
\mu(B_{\lfloor K/(\gamma \theta)\rfloor}(\mathbf{i}))  \asymp e^{t_1(s)J_{L-1}} M^{\gamma^L K - K/\theta} n^{-\gamma^{L-1}Ks}  \asymp n^{-Ks/(\gamma \theta)}, 
\end{equation*}
so~\eqref{eq:nicescaleslast} holds for $l=L-2$. 

Now fix any $l \in \{0,1,\dotsc, L-2\}$ and assume that~\eqref{eq:nicescaleslast} holds for $l$. We show that this implies that~\eqref{eq:nicescalesfirst} holds for $l$. Indeed, if %
$B_{\lfloor \frac{K}{\theta \cdot \gamma^{L-l-1}} \rfloor}(\mathbf{j}) \subset B_{\gamma^l (K)}(\mathbf{i})$, %
then 
{\small
\begin{align*}
&(  i_1, \dotsc, i_{\gamma^l (K)} , \overbrace{  \ih_{\gamma^l (K) + 1}, \dotsc,  \ih_{\left\lfloor \frac{K}{\gamma^{L-l-1} \cdot \theta} \right\rfloor}  }^{ \in S_{t_{L-l}(s),J_{l+1}',R}} ,  \ih_{\left\lfloor \frac{K}{\gamma^{L-l-1}\cdot \theta} \right\rfloor + 1}, \dotsc, \ih_{\gamma^{l+1} (K)}  )  \\*
&( \underbrace{ j_1, \dotsc, j_{\gamma^l (K)} }_{\mbox{equal}}  ,  \underbrace{ j_{\gamma^l(K) + 1} , \dotsc, j_{\left\lfloor \frac{K}{\gamma^{L-l-1}\cdot \theta} \right\rfloor} }_{\mbox{same column}}  ,  \underbrace{ \jh_{\left\lfloor \frac{K}{\gamma^{L-l-1}\cdot \theta} \right\rfloor + 1} , \dotsc, \jh_{\gamma^{l+1}(K)} }_{\mbox{equal}}  , \underbrace{  \jh_{\gamma^{l+1}(K) + 1} , \dotsc,      \jh_{\left\lfloor \frac{K}{\gamma^{L-l-2} \cdot \theta} \right\rfloor}  }_{ \in S_{t_{L-l-1}(s),J_{l+2}',R}} ). 
\end{align*}
}
Therefore %
\begin{align*}
\mu(B_{\gamma^l(K)}(\mathbf{i})) &\asymp e^{t_{L-l}(s) J_{l+1}'} \cdot \#  S_{t_{L-l-1}(s),J_{l+2}',R} \cdot n^{-K s \gamma^{l+1-L}\theta^{-1}} &\substack{\text{by Lemma~\ref{lem:32}}\\
\text{and case } l \text{ of~\eqref{eq:nicescaleslast}}} \\
&\asymp e^{t_{L-l}(s) J_{l+1}'} \cdot e^{(\log M - I(t_{L-l-1}(s))) J_{l+2}'} n^{-K s \gamma^{l+1-L}\theta^{-1}}  &\text{by~\eqref{eq:lowerboundcard}}  \\*
&\asymp  n^{-\gamma^l K s} &\text{by~\eqref{eq:definejnumbers} and~\eqref{eq:definetsequence}}, 
\end{align*}
so indeed~\eqref{eq:nicescalesfirst} holds for $l$. 

Finally, fix any $l \in \{0,1,\dotsc, L-3\}$ %
and assume~\eqref{eq:nicescalesfirst} holds for $l+1$. We now show that this implies that~\eqref{eq:nicescaleslast} holds for $l$. Indeed, if %
$B_{\gamma^{l+1}(K)}(\mathbf{j}) \subset B_{\left\lfloor \frac{K}{\gamma^{L-l-1} \theta} \right\rfloor}(\mathbf{i})$, %
then 
{\footnotesize
\begin{align*}
&(  i_1,\dotsc, i_{\left\lfloor \frac{K}{\gamma^{L-l-1} \theta} \right\rfloor} , \overbrace{ \ih_{\left\lfloor \frac{K}{\gamma^{L-l-1} \theta} \right\rfloor + 1} , \dotsc, \ih_{\gamma^{l+1}(K)} }^{\in S_{t_{L-l-1}(s),J_{l+1},R}} , \ih_{\gamma^{l+1}(K) + 1} , \dotsc, \ih_{\left\lfloor \frac{K}{\gamma^{L-l-2} \theta} \right\rfloor }  )  \\*
&( \underbrace{ j_1,\dotsc, j_{\left\lfloor \frac{K}{\gamma^{L-l-1} \theta} \right\rfloor} }_{\mbox{equal}} , \underbrace{ j_{\left\lfloor \frac{K}{\gamma^{L-l-1} \theta} \right\rfloor + 1} , \dotsc, j_{\gamma^{l+1}(K)} }_{\mbox{same column}} , \underbrace{ \jh_{\gamma^{l+1}(K) + 1} , \dotsc, \jh_{\left\lfloor \frac{K}{\gamma^{L-l-2} \theta} \right\rfloor } }_{\mbox{equal}} ,\underbrace{ \jh_{\left\lfloor \frac{K}{\gamma^{L-l-2} \theta} \right\rfloor  + 1},\dotsc,  \jh_{\gamma^{l+2}(K) } }_{\in S_{t_{L-l-2}(s),J_{l+2},R}}  ). 
\end{align*}
}
As above, 
\begin{equation*}
\mu\left(B_{\left\lfloor \frac{K}{\gamma^{L-l-1} \theta} \right\rfloor}(\mathbf{i})\right) \asymp e^{t_{L-l-1}(s) J_{l+1}} \cdot e^{(\log M - I(t_{L-l-2}(s))) J_{l+2}} n^{-\gamma^{l+1} K s} \asymp n^{- \frac{Ks}{\gamma^{L-l-1} \theta}}, 
\end{equation*}
so indeed~\eqref{eq:nicescaleslast} holds for $l$. 
The proof is complete by induction. 
\end{proof}

In Lemma~\ref{lem:lowerallscales} we prove that if we make $R$ large enough then the mass is sufficiently evenly distributed for us to apply the mass distribution principle Proposition~\ref{prop:mdp} from page~\pageref{prop:mdp} in Section~\ref{sec:proofconclusion}. 
\begin{lemma}\label{lem:lowerallscales}
Let $\theta \in (0,1)$ and $s \in (\dim_{\mathrm{H}} \Lambda,\overline{\dim}_{\theta} \Lambda]$. For all $s'<s$ there exists $\delta_0 \in (0,1)$ and $R \in \mathbb{N}$ depending on $s,s',\theta$ and the carpet such that for all $\delta \in (0,\delta_0)$ and $k \in \{K,K+1,\dotsc,\lfloor K/\theta \rfloor \}$, if $b_k$ is any level-$k$ approximate square then $\mu_{\delta,s,\theta,R}(b_k) < n^{-ks'}$. 
\end{lemma}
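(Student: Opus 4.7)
\textbf{Proof plan for Lemma~\ref{lem:lowerallscales}.} The strategy is to interpolate Lemma~\ref{lem:lowernicescales} across the ``non-nice'' intermediate scales by exploiting the block-uniformity of the sets $S_{t,J,R}$: within each block of length $J/R$ appearing in the definition~\eqref{eq:definestjr}, the empirical type is fixed to be (approximately) $\mathbf{Q}^*_t$. Fix an intermediate level $k$, and let $k_1 < k_2$ be the two consecutive ``nice'' scales (elements of $\{\gamma^l(K)\} \cup \{\lfloor K/(\gamma^{L-l-1}\theta)\rfloor\}$) with $k_1 < k \le k_2$. A level-$k$ approximate square $b_k$ lies inside a unique level-$k_1$ approximate square $b_{k_1}$, and its mass can be expressed as the number of level-$\lfloor K/\theta\rfloor$ squares carrying point masses that lie inside $b_k$, times the common mass $n^{-Ks/\theta}$. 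Since $\mu$ is constant on $\mathcal{B}_{k_2} \cap \supp(\mu)$, this count factorises as (number of level-$k_2$ mass-carrying squares inside $b_k$) $\times \mu(b_{k_2})/n^{-Ks/\theta}$.

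The key step is to count the number of mass-carrying level-$k_2$ squares inside $b_k$. Comparing symbolic representations of $b_{k_1}$, $b_k$ and level-$k_2$ squares extending them (exactly as in the proof of Lemma~\ref{lem:lowernicescales}), the symbols in positions $k_1+1,\dotsc,k$ that must be specified to pass from $b_{k_1}$ to $b_k$ fall into one of two regimes: either they lie in a ``free-in-$[M]$'' segment constrained by some $S_{t_j(s), J, R}$, or they lie in a ``same column'' segment whose column choice was constrained by an $S_{t_j(s), J, R}$ at an earlier stage. In either case, after prescribing these $k-k_1$ symbols the remaining completions inside $b_k$ correspond to prescribing the tail of a word in some $S_{t_j(s), J, R}$ whose prefix has already been fixed. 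Writing $J'$ for the length of the prefix within the current $S$-block, the number of admissible completions is the product over the affected blocks of cardinalities of type classes $T_{J_{l,R}}(\mathbf{Q}^*_{t_j(s), J_{l,R}})$, with at most two ``boundary'' blocks of length at most $J/R$ treated trivially by bounding by $M^{J/R}$.

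Applying~\eqref{eq:206} and continuity of $H$ at $\mathbf{Q}^*_{t_j(s)}$, each full block of length $J_{l,R}$ contributes $e^{J_{l,R} H(\mathbf{Q}^*_{t_j(s)}) + o(J_{l,R})}$, and using Step~1 of Proposition~\ref{prop:1} this gives, after rearrangement and comparison with the mass formulas for nice scales in Lemma~\ref{lem:lowernicescales}, an estimate of the form
\begin{equation*}
\mu(b_k) \;\le\; C \cdot n^{-ks} \cdot M^{C'J/R}
\end{equation*}
for constants $C, C'$ depending only on the carpet (not on $k$, $\delta$, or $R$). Since $J \le K/\theta \le k/\theta$, we have $M^{C'J/R} \le n^{C'' k/(\theta R)}$ for a constant $C''$ depending only on the carpet. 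Given $s' < s$, choose $R$ large enough that $C''/(\theta R) < s - s'$; then $\mu(b_k) \le C \cdot n^{-ks'} \cdot n^{-k(s-s' - C''/(\theta R))}$, and taking $\delta_0$ small (so that $k \ge K$ is large enough to absorb the constant $C$) we obtain $\mu(b_k) < n^{-ks'}$.

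\textbf{Main obstacle.} The technical core is the uniform-in-$k$ combinatorial counting, which must handle every intermediate scale $k$ between every pair of consecutive nice scales of two distinct structural types (``free'' $S_{t,J,R}$ segments versus ``same column'' segments that inherit constraints from a coarser $S$-block). The bookkeeping is intricate because the symbols specified in passing from $b_{k_1}$ to $b_k$ may straddle a boundary block of length $\le J/R$ in the middle of an $S_{t,J,R}$ sub-word, and this boundary contribution is where the multiplicative error $M^{C'J/R}$ is lost; absorbing this error into $n^{-k(s-s')}$ is only possible because $J \lesssim k$ uniformly, and verifying this uniform linear comparison in each of the cases is the delicate point.
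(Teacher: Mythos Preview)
Your proposal is correct and follows essentially the same approach as the paper: interpolate between the ``nice'' scales of Lemma~\ref{lem:lowernicescales} by exploiting the block structure of $S_{t,J,R}$, bound the boundary contributions by terms of order $e^{cJ/R}$, and then absorb this error into $n^{-k(s-s')}$ via $J \le k/\theta$ and a sufficiently large choice of $R$. The paper carries out precisely this plan via an explicit four-case analysis (according to which pair of consecutive nice scales brackets $k$), using the partial-product lower bound~\eqref{eq:lowerboundcombinatorialfudge} from Step~3 of Proposition~\ref{prop:1} to control the ``same column'' tails and a direct type-class count to control the number of admissible completions; both of these produce exactly the $e^{cJ/R}$ error terms you anticipated.
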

\begin{proof}
Fix $\theta \in (0,1)$, $s \in (\dim_{\mathrm{H}} \Lambda,\overline{\dim}_{\theta} \Lambda]$ and $R \in \mathbb{N}$. 
The idea is that for each scale $k$, we will choose from the finitely many scales considered in Lemma~\ref{lem:lowernicescales} the one which corresponds to the largest size that is smaller than $n^{-k}$. We will then bound the number of approximate squares of this level which are contained in each level-$k$ approximate square which carries mass, and use Lemma~\ref{lem:lowernicescales} to bound the mass of the level-$k$ approximate square. 

Let $J' \in \mathbb{N}$ be large enough that for each $t \in \{t_1(s),\dotsc,t_{L-1}(s)\}$ and $(s_t,t)$ related by~\eqref{eq:23},~\eqref{eq:lowerboundcombinatorialfudge} holds for all $J \geq J'$ and $k' \in \{1,\dotsc,J\}$. 
By Lemma~\ref{lem:32}, we may increase $J'$ to assume further that if $\iiv \in S_{t,J,R}$ then $\prod_{j=1}^J N_{\ih_j} \leq e^{(t+1/R)J}$. Then 
\begin{align}\label{eq:lastpartproduct}
\begin{split}
\prod_{j=k'}^{J} N_{\ih_j} = \frac{\prod_{j=1}^{J} N_{\ih_j}}{\prod_{j'=1}^{k'-1} N_{\ih_{j'}}} \leq \frac{e^{(t+1/R)J}}{\psi_{(\ih_1,\dotsc,\ih_{k'}) | k'}(s_t) n^{k's_t}M^{-k'\gamma}} &\leq e^{(1+3\overline{t})J/R} e^{tJ}n^{-k's_t} M^{\gamma k'} \\*
&= e^{(1+3\overline{t})J/R  +    (J-k')t}. 
\end{split}
\end{align}
We may increase $J'$ to ensure that for all $J \geq J'$ and $k' \in \{1,\dotsc,J\}$, letting $l' \in \{0,1,\dotsc,R-1\}$ be such that $\lfloor l'J/R \rfloor < k' \leq \lfloor (l'+1)J/R \rfloor $, if $(\jh_1,\dotsc,\jh_{k'}) \in [M]^{k'}$ then 
\begin{align}\label{eq:boundrightpart}
\begin{split}
 \# \{ \, (\ih_1,\dotsc,\ih_J) \in S_{t,J,R} : \ih_p = \jh_p \mbox{ for } p \in \{1,\dotsc,k'\} \, \} &\leq e^{(J-\lfloor l'J/R \rfloor) (H(\mathbf{Q}^*_t) + 1/R)} \\*
&\leq e^{(J-k') (\log M - I(t)) + 3J (1+H(\mathbf{Q}^*_t))/R}.
\end{split}
\end{align}
Let $\delta_0>0$ be small enough that for all $\delta \in (0,\delta_0)$, $J' < J_1 \leq J_2 \leq J_3 \leq \dotsb$ and, if $\theta \neq \gamma^{-(L-1)}$, $J' < J_1' \leq J_2' \leq J_3' \leq \dotsb$. By decreasing $\delta_0$ further we may assume by Lemma~\ref{lem:lowernicescales} that for all $l \in \{0,1,\dotsc,L-1\}$, 
\begin{equation}\label{eq:massonesquare}
\mu_{\delta,s,\theta,R}(B_{\gamma^{l}(K)}) \leq n^{\gamma^l K (-s+1/R)} \qquad \mbox{and} \qquad \mu_{\delta,s,\theta,R}\left(B_{\left\lfloor \frac{K}{\gamma^{L-l-1} \theta} \right\rfloor} \right) \leq n^{ \frac{K}{\gamma^{L-l-1} \theta}(-s+1/R)}.
\end{equation}

We now consider symbolic representations of approximate squares in a similar way to Lemma~\ref{lem:lowernicescales}. 

\emph{Case 1:} 
Suppose $l \in \{0,1,\dotsc,L-2\}$ and 
\[ k \in \{{\gamma^l(K)+1},{\gamma^{l}(K)+2},\dotsc, {\lfloor K \gamma^{l+1-L}\theta^{-1} \rfloor - 1}\}.\] 
The symbolic representations of approximate squares $B_{\lfloor K \gamma^{l+1-L}\theta^{-1} \rfloor} (\mathbf{j}) \subset B_{k}(\mathbf{i})$ are as follows (broken onto two lines because they do not fit onto one line): 
\begin{align*}
&(i_1,\dotsc, i_{\gamma^l(K)}, \overbrace{ i_{\gamma^l(K)+1},\dotsc,i_{k}  , \ih_{k+1},\dotsc,\ih_{\left\lfloor \frac{K}{\gamma^{L-l-1}\theta} \right\rfloor } }^{\in S_{t_{L-l}(s),J_{l+1}',R}} , \\*
&(\underbrace{ j_1,\dotsc,j_{\gamma^l(K)},j_{\gamma^l(K)+1},\dotsc,j_k}_{\mbox{equal}}, \underbrace{ j_{k+1},\dotsc,j_{\left\lfloor \frac{K}{\gamma^{L-l-1}\theta} \right\rfloor}}_{\mbox{same column}} ,
\end{align*}
and continuing
\begin{align*}
&\ih_{\left\lfloor \frac{K }{\gamma^{L-l-1}\theta} \right\rfloor + 1},\dotsc,\ih_{\gamma^{l+1}(K)},\ih_{\gamma^{l+1}(K)+1},\dotsc, \ih_{\gamma(k)}  ) \\*
& \rlap{\ensuremath{\underbrace{ \phantom{\jh_{\left\lfloor \frac{K }{\gamma^{L-l-1}\theta} \right\rfloor + 1} ,\dotsc, \jh_{\gamma^{l+1}(K)},  \jh_{\gamma^{l+1}(K)+1},\dotsc, \jh_{\gamma(k)} }}_{\mbox{equal}}}}  \jh_{\left\lfloor \frac{K }{\gamma^{L-l-1}\theta} \right\rfloor + 1} ,\dotsc,  \jh_{\gamma^{l+1}(K)}, \underbrace{ \vphantom{\frac{a}{\frac{a}{\frac{b}{c}}}}  \jh_{\gamma^{l+1}(K)+1},\dotsc, \jh_{\gamma(k)} , \jh_{\gamma(k)+1} ,\dotsc, \jh_{\left\lfloor \frac{K}{\gamma^{L-l-2}\theta} \right\rfloor}}_{ \in S_{t_{L-l-1}(s),J_{l+2}',R}} )  .
\end{align*}
Therefore we can bound the mass 
\begin{align}
\mu(B_k(\mathbf{i})) &\leq C \prod_{y=k+1}^{\left\lfloor \frac{K}{\gamma^{L-l-1}\theta} \right\rfloor} N_{\ih_y}  e^{ \left(\frac{K}{\gamma^{L-l-2}\theta} - \gamma k \right)(\log M - I(t_{L-l-1}(s)))  +  3J_{l+2}'(1+H(\mathbf{Q}^*_{t_{L-l-1}(s)}))/R} \nonumber \\*
&\phantom{\leq}\times n^{\frac{K}{\gamma^{L-l-1}\theta}(-s+1/R)} \label{eq:allscales1} \\
&\leq C e^{\left(\frac{K}{\gamma^{L-l-1}\theta} - k\right)t_{L-l}(s) + (1+3\overline{t})J_{l+1}'/R} e^{ \left(\frac{K}{\gamma^{L-l-2}\theta} - \gamma k \right)(\log M - I(t_{L-l-1}(s)))} \nonumber \\*
 &\phantom{\leq}\times e^{3J_{l+2}'(1+H(\mathbf{Q}^*_{t_{L-l-1}(s)}))/R}  n^{\frac{K}{\gamma^{L-l-1}\theta}(-s+1/R)} \label{eq:allscales2} \\
&\leq C n^{-ks + \big((1+3\overline{t})J_{l+1}'/(\log n) +3J_{l+2}'(1+H(\mathbf{Q}^*_{t_{L-l-1}(s)}))/(\log n) + \frac{K}{\gamma^{L-l-1}\theta}  \big)/R } \label{eq:allscales3} \\
&\leq C n^{\Big(-s + \frac{(1+3\overline{t}) + 3(1+H(\mathbf{Q}^*_{t_{L-l-1}(s)})) + \log n}{R\theta \log n}     \Big)k}, \label{eq:allscales4}
\end{align}
where $C$ is a constant depending only on the carpet,  %
~\eqref{eq:allscales1} is by~\eqref{eq:boundrightpart} and~\eqref{eq:massonesquare}; \eqref{eq:allscales2} is by~\eqref{eq:lastpartproduct}; \eqref{eq:allscales3} is by~\eqref{eq:definetsequence} and algebraic manipulations; and~\eqref{eq:allscales4} is since $\max\left\{ J_{l+1}' ,J_{l+2}',\frac{K}{\gamma^{L-l-1}\theta} \right\} \leq K/\theta < k/\theta$. 

\emph{Case 2:} Suppose $l \in \{0,1,\dotsc,L-3\}$ and $k \in \{\lfloor K \gamma^{l+1-L}\theta^{-1} \rfloor + 1,\dotsc,\gamma^{l+1}(K) - 1\}$. If $B_{\gamma^{l+1}(K)}(\mathbf{j}) \subset B_k(\mathbf{i})$, then 
\begin{align*}
&(i_1,\dotsc,i_{\left\lfloor \frac{K }{\gamma^{L-l-1}\theta} \right\rfloor }, \overbrace{ i_{\left\lfloor \frac{K }{\gamma^{L-l-1}\theta} \right\rfloor + 1},\dotsc, i_{k},\ih_{k+1},\dotsc,\ih_{\gamma^{l+1}(K)} }^{\in S_{t_{L-l-1}(s),J_{l+1},R}} , \\*
&(\underbrace{ j_1,\dotsc,j_{\left\lfloor \frac{K }{\gamma^{L-l-1}\theta} \right\rfloor },j_{\left\lfloor \frac{K }{\gamma^{L-l-1}\theta} \right\rfloor + 1},\dotsc, j_{k} }_{\mbox{equal}},\underbrace{ j_{k+1},\dotsc,j_{\gamma^{l+1}(K)} }_{\mbox{same column}} , 
\end{align*}
continuing 
\begin{align*}
&\ih_{\gamma^{l+1}(K) + 1},\dotsc,\ih_{\left\lfloor \frac{K }{\gamma^{L-l-2}\theta} \right\rfloor },\ih_{\left\lfloor \frac{K }{\gamma^{L-l-2}\theta} \right\rfloor  + 1},\dotsc,\ih_{\gamma(k)} ) \\*
& \rlap{\ensuremath{\underbrace{ \phantom{ \jh_{\gamma^{l+1}(K) + 1},\dotsc,\jh_{\left\lfloor \frac{K }{\gamma^{L-l-2}\theta} \right\rfloor },\jh_{\left\lfloor \frac{K }{\gamma^{L-l-2}\theta} \right\rfloor  + 1},\dotsc,\jh_{\gamma(k)} }}_{\mbox{equal}}}} \jh_{\gamma^{l+1}(K) + 1},\dotsc,\jh_{\left\lfloor \frac{K }{\gamma^{L-l-2}\theta} \right\rfloor },\underbrace{\vphantom{\frac{a}{\frac{a}{\frac{b}{c}}}}  \jh_{\left\lfloor \frac{K }{\gamma^{L-l-2}\theta} \right\rfloor  + 1},\dotsc,\jh_{\gamma(k)},\jh_{\gamma(k) + 1},\dotsc,\jh_{\gamma^{l+2}(K)} }_{\in S_{t_{L-l-2}(s),J_{l+2},R}}). 
\end{align*}
Therefore there is a constant $C>0$ such that 
\begin{align*}
\mu(B_k(\mathbf{i})) &\leq C e^{(\gamma^{l+1}K - k) t_{L-l-1}(s) + (1+3\overline{t})J_{l+1}/R }  e^{(\gamma^{l+2}K - \gamma k)(\log M - I(t_{L-l-2}(s)))} \\*
&\phantom{\leq}\times e^{3 J_{l+2} (1 + H(\mathbf{Q}^*_{t_{L-l-2}(s)}))/R } n^{\gamma^{l+1}K(-s+1/R)} \\
&\leq C n^{\Big(-s + \frac{(1+3\overline{t}) + 3(1+H(\mathbf{Q}^*_{t_{L-l-2}(s)})) + \log n}{R\theta \log n}     \Big)k}.  
\end{align*}

\emph{Case 3:} If $k \in \{ \lfloor K/(\gamma\theta) \rfloor + 1,\dotsc, \gamma^{L-1}(K) - 1\}$ and $B_{\gamma^{L-1}(K)}(\mathbf{j}) \subset B_{k}(\mathbf{i})$, then 
\begin{align*}
&( i_1,\dotsc,i_{\left\lfloor \frac{K}{\gamma\theta}\right\rfloor}, \overbrace{ i_{\left\lfloor \frac{K}{\gamma\theta}\right\rfloor + 1} , \dotsc, i_k, \ih_{k+1},\dotsc,\ih_{\gamma^{L-1}(K)} }^{\in S_{t_1(s),J_{L-1},R}}, \\*
&( \underbrace{ j_1,\dotsc,j_{\left\lfloor \frac{K}{\gamma\theta}\right\rfloor},j_{\left\lfloor \frac{K}{\gamma\theta}\right\rfloor + 1} , \dotsc, j_k }_{\mbox{equal}},\underbrace{ j_{k+1},\dotsc,j_{\gamma^{L-1}(K)} }_{\mbox{same column}},
\end{align*}
continuing
\begin{align*}
&\ih_{\gamma^{L-1}(K)+1},\dotsc,\ih_{\lfloor K/\theta\rfloor},\ih_{\lfloor K/\theta\rfloor + 1},\dotsc,\ih_{\gamma(k)} ) \\*
&\underbrace{ \jh_{\gamma^{L-1}(K)+1},\dotsc,\jh_{\lfloor K/\theta\rfloor},\jh_{\lfloor K/\theta\rfloor + 1},\dotsc,\jh_{\gamma(k)}}_{\mbox{equal}},\underbrace{ \jh_{\gamma(k)+1},\dotsc,\jh_{\gamma^L(K)} }_{\in [M] \mbox{ freely}} ). 
\end{align*}
Therefore by the definition of $t_1(s)$ in~\eqref{eq:definetsequence} there is a constant $C>0$ such that 
\begin{equation*}
\mu(B_k(\mathbf{i})) \leq C e^{(\gamma^{L-1}K-k)t_1(s) + (1+3\overline{t})J_{L-1}/R} M^{\gamma^L K - \gamma k} n^{\gamma^{L-1} K(-s+1/R)} \leq C n^{\left( -s + \frac{1+3\overline{t} + \log n}{R\theta \log n} \right) k}. 
\end{equation*}

\emph{Case 4:} Finally, if $k \in \gamma^{L-1}(K) + 1,\dotsc , \lfloor K/\theta \rfloor - 1$ and $B_{\lfloor K/\theta \rfloor}(\mathbf{j}) \subset B_k(\mathbf{i})$ then 
\begin{align*}
&(i_1,\dotsc,i_{\gamma^{L-1}(K)}, \overbrace{ i_{\gamma^{L-1}(K)+1},\dotsc,i_k,\ih_{k+1},\dotsc,\ih_{\lfloor K/\theta \rfloor} }^{\in S_{t_1(s),J_L',R}} \\*
&(\underbrace{ j_1,\dotsc,j_{\gamma^{L-1}(K)},j_{\gamma^{L-1}(K)+1},\dotsc,j_k }_{\mbox{equal}} ,\underbrace{ j_{k+1} ,\dotsc , j_{\lfloor K/\theta \rfloor} }_{\mbox{same column}},
\end{align*}
continuing 
\begin{align*}
&\ih_{\lfloor K/\theta \rfloor + 1},\dotsc,\ih_{\gamma^L(K)},\ih_{\gamma^L(K) + 1},\dotsc, \ih_{\gamma(k)} ) \\*
&\underbrace{ \jh_{\lfloor K/\theta \rfloor + 1},\dotsc,\jh_{\gamma^L(K)},\jh_{\gamma^L(K) + 1},\dotsc, \jh_{\gamma(k)} }_{\mbox{equal}},\underbrace{ \jh_{\gamma(k)+1},\dotsc,\jh_{\gamma(K/\theta)} }_{\in [M] \mbox{ freely}} ). 
\end{align*}
Therefore by the definition of $t_1(s)$ there exists a constant $C>0$ such that 
\begin{equation*}
\mu(B_k(\mathbf{i})) \leq C e^{(K/\theta - k)t_1(s) + (1+3\overline{t})J_L'/R} M^{\gamma K/\theta - \gamma k} n^{-Ks/\theta} \leq C n^{( -s+ (1+3\overline{t})/R ) k}.
\end{equation*}
Therefore the result follows (using Lemma~\ref{lem:lowernicescales} if $k \in \{K, \gamma(K),\dotsc,\gamma^{L-1}(K)\}$) if we take $R$ large enough depending on $s,s',\theta,C$ and the carpet. 
\end{proof} 

We write 
\begin{equation}\label{eq:definemainquantity} 
G(\theta,s) \coloneqq \gamma^L\theta \log N - (\gamma^L\theta - 1) t_L(s) + \gamma(1-\gamma^{L-1}\theta)(\log M - I(t_L(s))) - s\log n.
\end{equation}
\begin{lemma}\label{lem:lowertotalmass}
Fix $\theta \in (0,1)$, $s \in (\dim_{\mathrm{H}} \Lambda,\overline{\dim}_{\gamma^{-(L-1)}} \Lambda]$, and $R \in \mathbb{N}$. The total mass 
\[ \mu_{\delta,s,\theta,R}(\Lambda) \asymp e^{K \cdot G(\theta,s) / (\gamma^L \theta)} \mbox{ as } K \to \infty. \]
\end{lemma}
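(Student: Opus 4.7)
The plan is to compute $\mu_{\delta,s,\theta,R}(\Lambda)$ directly from the counting definition of $\mu$ and then verify that the resulting exponential rate of growth equals $G(\theta,s)/(\gamma^L\theta)$. By~\eqref{eq:definemu}, $\mu(\Lambda) = n^{-Ks/\theta} \cdot \#C_{K,s,\theta,R}$, so the task reduces to showing
\[ \tfrac{1}{K}\log\#C_{K,s,\theta,R} \to \tfrac{s\log n}{\theta} + \tfrac{G(\theta,s)}{\gamma^L\theta} \quad \text{as } K \to \infty. \]

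The first step is to decompose a level-$\lfloor K/\theta\rfloor$ approximate square into three parts. The indices $1,\dotsc,K$ carry no constraint from~\eqref{eq:definemass1} or~\eqref{eq:definemass2}, contributing $N^K$. The indices $(K,\lfloor K/\theta\rfloor]$ are partitioned by the breakpoints $\lfloor K/(\gamma^{L-1}\theta)\rfloor, \gamma(K), \lfloor K/(\gamma^{L-2}\theta)\rfloor, \dotsc, \gamma^{L-1}(K), \lfloor K/\theta\rfloor$ into $L$ segments from condition~\eqref{eq:definemass2} (lengths $J_1',\dotsc,J_L'$ with column constraints $S_{t_{L-l+1}(s), J_l', R}$) alternating with $L-1$ segments from condition~\eqref{eq:definemass1} (lengths $J_1,\dotsc,J_{L-1}$ with constraints $S_{t_{L-l}(s), J_l, R}$). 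Since $\theta \leq \gamma^{-(L-1)}$ ensures $\gamma^{L-1}(K) \leq \lfloor K/\theta\rfloor$, all these segments are genuinely in the $\mathbf{i}$ portion of the symbol, so for each valid column subsequence there are $\prod_j N_{\hat i_j}$ extensions to the full index. Finally, the indices $(\lfloor K/\theta\rfloor, \gamma(\lfloor K/\theta\rfloor)]$ are unconstrained columns in $[M]$, contributing $M^{\gamma(\lfloor K/\theta\rfloor)-\lfloor K/\theta\rfloor}$.

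For each constrained segment with parameters $(t,J,R)$, \eqref{eq:lowerboundcard} gives $\#S_{t,J,R} \asymp e^{J(\log M - I(t))}$, and since by Lemma~\ref{lem:32} every element of $S_{t,J,R}$ has type approaching $\mathbf{Q}^*_t \in \mathcal{E}_t$, we have $\prod_j N_{\hat i_j} \asymp e^{Jt}$ uniformly. Thus each segment contributes a factor $\asymp e^{J[\log M - I(t) + t]}$. Using $J_l/K \to \gamma^l - 1/(\gamma^{L-l}\theta)$, $J_l'/K \to 1/(\gamma^{L-l}\theta) - \gamma^{l-1}$, and grouping contributions by $t_k(s)$, the combination $J_{L-k}/K + J_{L-k+1}'/K$ telescopes to $(\gamma-1)/(\gamma^k\theta)$ for $k = 1,\dotsc,L-1$, while $t_L(s)$ appears with coefficient $J_1'/K \to (1 - \gamma^{L-1}\theta)/(\gamma^{L-1}\theta)$.

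The final and most delicate step is the algebraic identification with $G(\theta,s)/(\gamma^L\theta)$. The recursion $t_{k+1}(s) = T_s(t_k(s))$ from~\eqref{eq:defineiteratingfunction} gives $\log M - I(t_k(s)) = (s\log n - t_{k+1}(s))/\gamma$ for $k = 1,\dotsc,L-1$, while $t_1(s) = (s - \log M/\log m)\log n$. Substituting for $k < L$ (but keeping $\log M - I(t_L)$ intact, since it appears in $G$), the sums $\sum_{k=1}^{L-1}\frac{\gamma-1}{\gamma^k\theta} t_k$ and $-\sum_{k=1}^{L-1}\frac{\gamma-1}{\gamma^{k+1}\theta} t_{k+1}$ telescope, leaving only boundary terms in $t_1$ and $t_L$. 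The $t_1$ terms combine with the $(\gamma-1)/\theta \cdot \log M$ contribution from the final free segment and cancel exactly. The coefficient of $s\log n$ simplifies via $\sum_{k=1}^{L-1}(\gamma-1)\gamma^{-k}/\theta = (\gamma^{L-1}-1)/(\gamma^{L-1}\theta)$ to $(\gamma^L-1)/(\gamma^L\theta)$, which after subtracting $s\log n/\theta$ yields $-s\log n/(\gamma^L\theta)$. The surviving coefficients of $t_L$ and $\log M - I(t_L)$ are then $(1-\gamma^L\theta)/(\gamma^L\theta)$ and $(\gamma-\gamma^L\theta)/(\gamma^L\theta)$ respectively, matching $G(\theta,s)/(\gamma^L\theta)$ exactly. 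The main obstacle is this bookkeeping, which is straightforward but needs to be carried out carefully; a minor point is to verify that the case $L=1$ (where the product over $l \in \{1,\dotsc,L-1\}$ is empty and the breakpoint $\gamma^{L-1}(K) = K$) fits into the same framework, which it does since the unifying formula reduces to the computation sketched for $L=1$.
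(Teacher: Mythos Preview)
Your approach is correct but takes a different route from the paper's. You compute $\mu(\Lambda) = n^{-Ks/\theta}\cdot \#C_{K,s,\theta,R}$ by directly counting the level-$\lfloor K/\theta\rfloor$ approximate squares carrying mass, which forces you to handle all $2L-1$ constrained segments together with the free head and tail, and then to verify the long telescoping identity against $G(\theta,s)/(\gamma^L\theta)$. The paper instead leverages the case $l=0$ of Lemma~\ref{lem:lowernicescales}, namely $\mu(B_K)\asymp n^{-Ks}$, so that $\mu(\Lambda)\asymp \#(\mathcal{B}_K\cap\supp\mu)\cdot n^{-Ks}$. This reduces the count to the level-$K$ symbolic representation, which has only three pieces: the free head $(i_1,\dotsc,i_K)$ giving $N^K$, the segment of length $J_1'$ in $S_{t_L(s),J_1',R}$ giving $\asymp e^{(\log M-I(t_L(s)))J_1'}$, and the segment of length $J_1$ in $S_{t_{L-1}(s),J_1,R}$ giving $\asymp e^{(\log M-I(t_{L-1}(s)))J_1}$. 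The algebraic identification with $G(\theta,s)/(\gamma^L\theta)$ is then essentially immediate from~\eqref{eq:definejnumbers} and~\eqref{eq:definetsequence} (one uses $T_s(t_{L-1}(s))=t_L(s)$ to eliminate $I(t_{L-1}(s))$). So your approach trades a dependence on Lemma~\ref{lem:lowernicescales} for a much longer but self-contained computation; the paper's approach is shorter precisely because the inductive work has already been packaged into that earlier lemma.
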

\begin{proof}
The symbolic representation of a level-$K$ approximate square $B_K(\mathbf{i}) \in \mathcal{B}_{K} \cap \mathrm{supp}(\mu)$ is 
\begin{equation*}
( \underbrace{ i_1,\dotsc, i_K }_{\in [N] \mbox{ freely}}, \underbrace{ \ih_{K+1},\dotsc, \ih_{\lfloor K \gamma^{-(L-1)}\theta^{-1}\rfloor} }_{\in S_{t_L(s),J_1',R}} , \underbrace{ \ih_{\lfloor K \gamma^{-(L-1)}\theta^{-1}\rfloor + 1},\dotsc, \ih_{\gamma(K)} }_{\in S_{t_{L-1}(s),J_1,R}} ).
\end{equation*}
Therefore 
\begin{align*}
 \mu(\Lambda) &\asymp \# (\mathcal{B}_{K} \cap \mathrm{supp}(\mu)) \cdot n^{-Ks} &\text{by case } l=0 \text{ of~\eqref{eq:nicescalesfirst}} \\
 &\asymp N^K e^{(\log M - I(t_L(s)))J_1'} e^{(\log M - I(t_{L-1}(s)))J_1} n^{-Ks} &\text{by~\eqref{eq:lowerboundcard}} \\ 
 &\asymp e^{K \cdot G(\theta,s) / (\gamma^L \theta)} &\text{by~\eqref{eq:definejnumbers},\eqref{eq:definetsequence},\eqref{eq:definemainquantity}},
 \end{align*}
 completing the proof. 
\end{proof}

We have now proved enough to give Theorem~\ref{thm:main} in the case when~$\theta$ is a negative integer power of~$\gamma$, see Section~\ref{sec:proofconclusion}.

 \subsection{Upper bound for general $\theta$}\label{sec:mainupper}%
Suppose $L \in \mathbb{N}$, $\theta \in (\gamma^{-L},\gamma^{-(L-1)})$, $s \in (\dim_{\mathrm H} \Lambda,\overline{\dim}_{\gamma^{-(L-1)}} \Lambda]$ and $0 < \delta \ll 1$. We define a cover $\{V_j\}_j$ of $\Lambda$ (depending on $\theta$, $s$ and $\delta$) as follows. Every level-$K$ cylinder will be covered in the same way, and the cover will consist of approximate squares $(i_1,\dotsc,i_k,\ih_{k+1},\dotsc,\ih_{\gamma(k)})$ of different levels $k \in \{ K, K+1,\dotsc, \lfloor K/\theta \rfloor\}$. This means that the diameter of each element of the cover will, up to an irrelevant multiplicative constant depending only on the carpet, lie in the interval $[\delta^{1/\theta},\delta]$. In fact, we will use only the scales $\gamma^l(K)$ and $\lfloor K/(\gamma^l \theta)\rfloor$ for $l \in \{0,1,2,\dotsc,L-1\}$. 
Figure~\ref{fig:CoverHelp} provides a diagram which may help the reader follow the construction of the cover. 
\begin{figure}[th]
\center{\includegraphics[width=.99\textwidth]
        {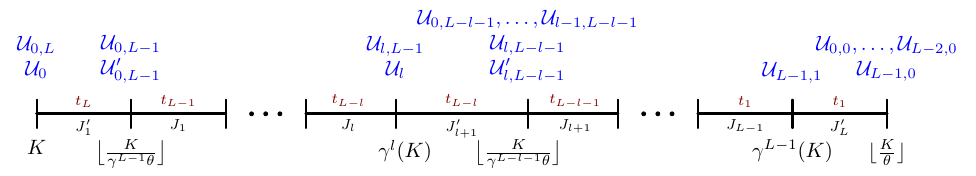}}
        \caption{Visualising the cover in~\eqref{eq:definerealcover} for $L \geq 3$. 
        Here, $l$ denotes an arbitrary number in $\{1,2,\dotsc,L-2\}$. 
The indices of the symbolic representation and the lengths of the different parts are in black. Above the scales explicitly written out are the sets (in \textcolor{blue}{blue}) which make up the part of the cover consisting of approximate squares of the corresponding level. The `critical' thresholds for the averages of the different parts of the symbolic representation are in red. Recall that the $t_i$ depend on $s$, and the sets that make up the cover depend on $s$ and $\theta$.}
\label{fig:CoverHelp}
\end{figure}

Recall that $L \coloneqq 1 + \lfloor \frac{-\log \theta}{\log \gamma} \rfloor$, and $\iih = (\ih_1,\dotsc,\ih_k,\ih_{k+1},\dotsc,\ih_{\gamma(k)})$, and we use the notation from~\eqref{eq:tauaverage}. 
We define $\mathcal{U}_{L-1,1}$ to be the set of level-$\gamma^{L-1}(K)$ approximate squares for which~\eqref{eq:secondthreshold} and~\eqref{eq:firstthreshold} below hold for all $j \in \{ 1,2,\dotsc,L-1\}$, %
and~\eqref{eq:lastthreshold} holds: 
\begin{align}
	\tau(\iih,\lfloor K/(\gamma^{L-j}\theta)\rfloor,\gamma^j(K))   &<    t_{L-j}(s); \label{eq:secondthreshold} \\
	\tau(\iih,\gamma^{j-1}(K),\lfloor K/(\gamma^{L-j}\theta)\rfloor )  &<   \frac{\gamma^j  -  (\gamma^{L-j}\theta)^{-1}}{(\gamma^{L-j}\theta)^{-1}  -  \gamma^{j-1}}  \big( t_{L-j}(s) - \tau(\iih, \lfloor K/(\gamma^{L-j}\theta)\rfloor , \gamma^j(K)) \big) \nonumber \\*
	&\phantom{-}+ t_{L-j+1}(s) ; \label{eq:firstthreshold} \\
	\tau(\iih,\gamma^{L-1}(K),\lfloor K/\theta \rfloor)    &\geq   t_1(s).  \label{eq:lastthreshold}
\end{align}
Note that when defining $\mathcal{U}_{L-1,1}$ we imposed no restriction on $\ih_{\lfloor K/\theta \rfloor + 1},\dotsc,\ih_{\gamma^L(K)}$, or on $i_1,\dotsc,i_{K-1}$. 
Define $\mathcal{U}_{L-1,0}$ to be the set of level-$\lfloor K/\theta \rfloor$ approximate squares for which~\eqref{eq:secondthreshold} and~\eqref{eq:firstthreshold} hold for all $j \in \{ 1,2,\dotsc,L-1 \}$, and~\eqref{eq:lastthreshold} does \emph{not} hold (no restriction on $\ih_{\lfloor K/\theta \rfloor + 1},\dotsc,\ih_{\gamma(\lfloor K/\theta \rfloor)}$). 
If $L=1$ then our cover is simply $\mathcal{U}_{0,0} \cup \mathcal{U}_{0,1}$, so for the rest of the construction of the cover we assume that $L > 1$.

For $l = 0,1,\dotsc,L-2$ we define $\mathcal{U}_l$ to be the set of level-$\gamma^l(K)$ approximate squares which satisfy condition~\eqref{eq:secondthreshold} for all $j \in \{1,2,\dotsc,l+1\}$, and which satisfy~\eqref{eq:firstthreshold} for all $j \in \{1,2,\dotsc, l\}$ %
 but do \emph{not} satisfy~\eqref{eq:firstthreshold} for $j=l+1$. 
For $l = 0,1,\dotsc,L-2$ we define $\mathcal{U}_{l,L-l}$ to be the set of level-$\gamma^l(K)$ approximate squares for which 
\eqref{eq:secondthreshold} holds for all $j \in \{ 1,2,\dotsc,l\}$ %
but not for $j=l+1$, and~\eqref{eq:firstthreshold} holds for all $j \in \{ 1,2,\dotsc,l\}$, %
and~\eqref{eq:tsaverage} holds: 
\begin{align}\label{eq:tsaverage}
\begin{split}
\tau(\iih,\gamma^l(K),&\lfloor K/(\gamma^{L-l-1}\theta) \rfloor) \geq \\*
&T_s \Big(  \max\Big\{ \underline{t},  \frac{1}{(\gamma^{L-l-2}\theta)^{-1}  -  \gamma^{l+1}} \Big( \Big(  \frac{1}{\gamma^{L-l-2}\theta}  - \frac{1}{\gamma^{L-l-1} \theta} \Big)  t_{L-l-1}(s)  \\*
&\phantom{T_s \;}-   \big(\gamma^{l+1}  -  (\gamma^{L-l-1}\theta)^{-1}  \big) \tau(\iih,\lfloor K/(\gamma^{L-l-1}\theta) \rfloor  , \gamma^{l+1}(K))   \Big)    \Big\} \Big).
\end{split}
\end{align}%

For $l = 0,1,\dotsc,L-2$ define $\mathcal{U}_{l,L-l-1}'$ to be the set of level-$\lfloor K/(\gamma^{L-l-1}\theta) \rfloor$ approximate squares for which~\eqref{eq:secondthreshold} and~\eqref{eq:firstthreshold} hold for all $j \in \{1,2,\dotsc,l\}$, %
 and~\eqref{eq:tsaverage} does not hold, and~\eqref{eq:secondpartverybig} holds: 
 \begin{align}\label{eq:secondpartverybig}
 \begin{split}
\tau(\iih,&\lfloor K/(\gamma^{L-l-1}\theta) \rfloor  , \gamma^{l+1}(K)) \geq \\* 
&\frac{1}{\gamma^{l+1}  -  (\gamma^{L-l-1}\theta)^{-1}  }   \Big(  \Big(  \frac{1}{\gamma^{L-l-2}\theta}  - \frac{1}{\gamma^{L-l-1} \theta} \Big)  t_{L-l-1}(s)  
-   ((\gamma^{L-l-2}\theta)^{-1}  -  \gamma^{l+1}) \underline{t}  \Big) .
\end{split}
 \end{align}
 Note that~\eqref{eq:secondpartverybig} means that~\eqref{eq:secondthreshold} does not hold for $j=l+1$, and that the maximum in~\eqref{eq:tsaverage} equals $\underline{t}$ (since $t_{L-l-1}(s) > \underline{t}$). Note also that we imposed no restriction on $(\ih_{\gamma^{l+1}(K) + 1},\dotsc,\ih_{\lfloor K/(\gamma^{L-l-2}\theta))\rfloor})$. %
For $l = 0,1,\dotsc,L-2$ define $\mathcal{U}_{l,L-l-1}$ to be the set of level-$\lfloor K/(\gamma^{L-l-1}\theta) \rfloor$ approximate squares for which~\eqref{eq:secondthreshold} holds for all $j \in \{1,2,\dotsc,l\}$ %
 but not for $j=l+1$, and~\eqref{eq:firstthreshold} holds for all $j \in \{1,2,\dotsc,l\}$, %
 and~\eqref{eq:tsaverage} does not hold, and~\eqref{eq:secondpartverybig} does not hold, and~\eqref{eq:thirdthreshold} holds: 
 \begin{align}\label{eq:thirdthreshold}
\begin{split} 
 \tau(\iih ,    \gamma^{l+1}(K) , \lfloor K/(\gamma^{L-l-2}\theta) \rfloor ) &\geq  \frac{1}{(\gamma^{L-l-2}\theta)^{-1}  -  \gamma^{l+1}} \Big( \Big(  \frac{1}{\gamma^{L-l-2}\theta}  - \frac{1}{\gamma^{L-l-1} \theta} \Big)  t_{L-l-1}(s)  \\*
&\phantom{\geq}-   (\gamma^{l+1}  -  (\gamma^{L-l-1}\theta)^{-1}  ) \tau(\iih,\lfloor K/(\gamma^{L-l-1}\theta) \rfloor  , \gamma^{l+1}(K))   \Big) .
 \end{split}
\end{align}

For $l = 0,1,\dotsc,L-2$ define $\mathcal{U}_{l,0}$ to be the set of level-$\lfloor K/\theta \rfloor$ approximate squares for which~\eqref{eq:secondthreshold} holds for all $j \in \{1,2,\dotsc,l\}$ %
but not for $j=l+1$, and~\eqref{eq:firstthreshold} holds for all $j \in \{1,2,\dotsc,l\}$, %
 and~\eqref{eq:tsaverage} does not hold, and~\eqref{eq:secondpartverybig} does not hold, and~\eqref{eq:thirdthreshold} does not hold, and~\eqref{eq:lowerthresholds} holds for all $j \in \{ 1,2,\dotsc,L-l-2\}$: 
\begin{equation}\label{eq:lowerthresholds}
 \tau(\iih,  \lfloor K/(\gamma^{j}\theta) \rfloor, \lfloor K/(\gamma^{j-1}\theta) \rfloor )   <    t_j(s).
\end{equation}
Note that we imposed no restriction on $\ih_{\lfloor K/\theta \rfloor + 1},\dotsc,\ih_{\gamma(\lfloor K/\theta \rfloor)}$, and in the case $l=L-2$ we did not require the extra condition~\eqref{eq:lowerthresholds}. %
If $L=2$ then we have constructed the cover 
\[ \Lambda \subseteq \mathcal{U}_0 \cup \mathcal{U}_{0,0} \cup \mathcal{U}_{0,1}\cup \mathcal{U}_{0,1}'  \cup  \mathcal{U}_{0,2}\cup \mathcal{U}_{1,0} \cup \mathcal{U}_{1,1}.\] 

If $L >2$, then for $l = 0,1,\dotsc,L-3$ and $k = 1,2,\dotsc,L-l-2$
define $\mathcal{U}_{l,k}$ to be the set of level-$\lfloor K/(\gamma^k \theta) \rfloor$ approximate squares for which~\eqref{eq:secondthreshold} holds for all $j \in \{1,2,\dotsc,l\}$ %
 but not for $j=l+1$, and~\eqref{eq:firstthreshold} holds for all $j \in \{1,2,\dotsc,l\}$, %
 and~\eqref{eq:tsaverage} does not hold, and~\eqref{eq:secondpartverybig} does not hold, and~\eqref{eq:thirdthreshold} does not hold, and~\eqref{eq:lowerthresholds} holds for all $j \in \{ k+1,k+2,\dotsc, L-l-2 \}$ %
 but not for $j=k$. 
We have finally constructed a cover of~$\Lambda$: 
\begin{equation}\label{eq:definerealcover}
\Lambda \subseteq \mathcal{U}_{L-1,0} \cup \mathcal{U}_{L-1,1}  \cup   \bigcup_{l = 0}^{L-2} \bigg(  \mathcal{U}_l \cup  \mathcal{U}_{l,L-l-1}' \cup \bigcup_{k=0}^{L-l} \mathcal{U}_{l,k}  \bigg). 
\end{equation}
 For simplicity, we denote the cover by $\{ V_j\}_j$. Observe that any two elements of this cover are either disjoint or intersect on their boundary; it can never happen that one is contained within the other. 
 Figure~\ref{fig:CoverHelp} depicts the different parts of the cover in the most complicated case, namely when $\gamma^{-L} < \theta < \gamma^{-(L-1)}$ for some natural number $L \geq 3$. 

We will bound the $s$-cost of this cover in Lemma~\ref{lem:mainuppercost}. For this, we need Lemmas~\ref{lem:simpleindcomb},~\ref{lem:inductioncombinatorics} and~\ref{lem:tripleindcomb}, which we prove using the method of types. 
The inequalities in Lemma~\ref{lem:inductioncombinatorics} mimic~\eqref{eq:secondthreshold} and~\eqref{eq:firstthreshold}. 

\begin{lemma}\label{lem:inductioncombinatorics}
Suppose $c \in (0,1)$, $\overline{t} > t_1 > t_2 > \underline{t}$ %
and $J \in \mathbb{N}$. Then as $J \to \infty$, 
\begin{enumerate}[label=(\roman*)]

\item\label{eq:inductioncombinatorics1}\mbox{}\vspace{-\baselineskip}%
\begin{align*}
\# &\{ \, \iih \in [M]^J : \tau(\iih,\lfloor cJ \rfloor ,J) \leq t_2, \, \tau(\iih,0,\lfloor cJ \rfloor) \geq t_1 + ((1-c)/c) ( t_2 - \tau(\iih,\lfloor cJ \rfloor,J) ) \, \} \\*
&\asymp e^{J(c(\log M - I(t_1)) + (1-c)(\log M - I(t_2)))};
\end{align*}

\item\label{eq:inductioncombinatorics2}\mbox{}\vspace{-\baselineskip}
\begin{align*}
\# &\{ \, \mathbf{i} \in [N]^{J} : \tau(\iih,\lfloor cJ \rfloor ,J) \leq t_2, \, \tau(\iih,0,\lfloor cJ \rfloor) \leq t_1 + ((1-c)/c) ( t_2 - \tau(\iih,\lfloor cJ \rfloor,J) ) \, \} \\*
&\asymp e^{J (c(t_1 + \log M - I(t_1)) + (1-c)(t_2 + \log M - I(t_2)))}. 
\end{align*}

\end{enumerate}
\end{lemma}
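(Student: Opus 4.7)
The plan is to prove both parts by the method of types, in the spirit of Lemmas~\ref{lem:33-first}, \ref{lem:33-second}, and \ref{lem:simpleindcomb}. First I would partition the sequences according to the pair of types $(\mathbf{p}_1,\mathbf{p}_2)\in\mathcal{T}_{\lfloor cJ\rfloor}\times \mathcal{T}_{J-\lfloor cJ\rfloor}$ of the first $\lfloor cJ\rfloor$ and last $J-\lfloor cJ\rfloor$ coordinates. Writing $\tau_r\coloneqq \sum_{\ih} p_{r,\ih}\log N_{\ih}$ for $r=1,2$, the two defining conditions translate (for $J$ large enough that $\tau_r\geq\underline{t}$, which holds for any type contributing at the optimal rate) into $\tau_2\leq t_2$ together with the sliding linear constraint $c\tau_1+(1-c)\tau_2\geq ct_1+(1-c)t_2$ in case~\ref{eq:inductioncombinatorics1}, and the reverse inequality $c\tau_1+(1-c)\tau_2\leq ct_1+(1-c)t_2$ in case~\ref{eq:inductioncombinatorics2}. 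Since $\#\mathcal{T}_J$ is only polynomial in $J$ by~\eqref{eq:205}, the sub-exponential prefactors are irrelevant, and by~\eqref{eq:206} the number of $\iih\in[M]^J$ with a given type pair is $\asymp e^{cJH(\mathbf{p}_1)+(1-c)JH(\mathbf{p}_2)}$, while the number of $\mathbf{i}\in[N]^J$ with a given type pair on its $\Phi$-projection is $\asymp e^{cJ(\tau_1+H(\mathbf{p}_1))+(1-c)J(\tau_2+H(\mathbf{p}_2))}$ (compare~\eqref{eq:308}).

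For part~\ref{eq:inductioncombinatorics1} I would then maximise $cH(\mathbf{p}_1)+(1-c)H(\mathbf{p}_2)$ over the feasible region. For each fixed $\tau$, the maximum of $H(\mathbf{p})$ over $\mathbf{p}$ with $\sum p_{\ih}\log N_{\ih}=\tau$ equals $H(\mathbf{Q}^*_\tau)=\log M-I(\tau)$ by Step~1 of Proposition~\ref{prop:1}, so the task reduces to \emph{minimising} $cI(\tau_1)+(1-c)I(\tau_2)$ subject to $\tau_2\leq t_2$ and $c\tau_1+(1-c)\tau_2\geq ct_1+(1-c)t_2$. Fixing $u=c\tau_1+(1-c)\tau_2$, the strict convexity of $I$ (established in Section~\ref{sec:CompResults}) implies that the unconstrained minimum on the line $c\tau_1+(1-c)\tau_2=u$ is at $\tau_1=\tau_2=u$; since $u\geq ct_1+(1-c)t_2>t_2$ because $t_1>t_2$, that diagonal point violates $\tau_2\leq t_2$, hence the constrained minimum lies on the boundary $\tau_2=t_2$. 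On this boundary the constraint becomes $\tau_1\geq t_1$, and since $I$ is strictly increasing on $[\underline{t},\overline{t}]$ the optimum is $(\tau_1,\tau_2)=(t_1,t_2)$, giving the advertised rate $c(\log M-I(t_1))+(1-c)(\log M-I(t_2))$.

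For part~\ref{eq:inductioncombinatorics2} I would instead maximise $c(\tau_1+H(\mathbf{p}_1))+(1-c)(\tau_2+H(\mathbf{p}_2))$, which after the same inner optimisation becomes $c(\tau_1+\log M-I(\tau_1))+(1-c)(\tau_2+\log M-I(\tau_2))$. The key observation is that $\tau\mapsto \tau+\log M-I(\tau)$ is strictly increasing on $[\underline{t},\overline{t}]$: its derivative is $1-I'(\tau)$, which is positive on $[\underline{t},\overline{t})$ since $I$ is convex with $I'(\overline{t})=1$. Consequently I want $\tau_1$ and $\tau_2$ both as large as possible subject to $\tau_2\leq t_2$ and $c\tau_1+(1-c)\tau_2\leq ct_1+(1-c)t_2$; setting $\tau_2=t_2$ forces $\tau_1\leq t_1$, and the maximum is again at $(t_1,t_2)$, yielding the claimed exponential rate.

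For the matching lower bounds in both parts, I would pick types $\mathbf{p}_1\in\mathcal{T}_{\lfloor cJ\rfloor}$ and $\mathbf{p}_2\in\mathcal{T}_{J-\lfloor cJ\rfloor}$ approximating $\mathbf{Q}^*_{t_1}$ and $\mathbf{Q}^*_{t_2}$ (from the side of the feasible region required by each part) and concatenate representatives; Lemma~\ref{lem:32} together with continuity of $H$ and $I$ gives the claimed rate. The step I expect to be the main obstacle is the constrained optimisation: the sliding linear constraint couples the two parts, so one must rule out that some non-corner point $(\tau_1,\tau_2)$ with $\tau_2<t_2$ does better by trading off entropy between the two sub-words. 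The convexity/monotonicity argument sketched above is exactly what closes this possibility.
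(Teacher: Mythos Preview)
Your proposal is correct and follows essentially the same route as the paper: partition by the pair of types of the two sub-words, replace $H(\mathbf{p}_r)$ by $\log M - I(\tau_r)$ via Step~1 of Proposition~\ref{prop:1}, and then reduce to a finite-dimensional optimisation settled by the convexity and monotonicity of $I$ (respectively of $\tau\mapsto \tau - I(\tau)$) on $[\underline{t},\overline{t}]$, with the lower bound coming from types approximating $(\mathbf{Q}^*_{t_1},\mathbf{Q}^*_{t_2})$. The only places where the paper is slightly more explicit than your sketch are the boundary cases: it carries the $\max\{t_{\mathbf q},\underline{t}\}$ through the computation (to handle types whose raw average falls below $\underline{t}$, since $\tau(\cdot)$ in the statement already includes that max), and in part~\ref{eq:inductioncombinatorics2} it truncates at $\overline{t}$ via $\min\{t_{\mathbf p},\overline{t}\}$ before invoking $I'<1$ on $(\underline{t},\overline{t})$ and $I'>1$ beyond $\overline{t}$; you should make these cutoffs explicit when writing it up, but they do not affect the argument.
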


\begin{proof}

The lower bounds for the asymptotic growth follow from considering those strings for which $\ih_{\lfloor cJ \rfloor + 1}, \dotsc, \ih_J$ and $\ih_{1},\dotsc,\ih_{\lfloor cJ \rfloor}$ are the best approximations to $\mathbf{Q}^*_{t}$ and $\mathbf{Q}^*_{T_s(t)}$ respectively in $\mathcal{T}_{J-\lfloor cJ \rfloor}$ and $\mathcal{T}_{\lfloor cJ \rfloor}$ for which the required inequalities hold. 
The strategy for the upper bounds is to fix arbitrary type classes for the different parts of the string which satisfy the desired inequalities and then use the fact that there are only polynomially many type classes. 

\ref{eq:inductioncombinatorics1}. Fix $\mathbf{p} \in \mathcal{T}_{\lfloor cJ \rfloor}$ and $\mathbf{q} \in \mathcal{T}_{J - \lfloor cJ \rfloor}$ such that $t_{\mathbf{q}} \leq t_2$ and $t_{\mathbf{p}} \geq t_1 + ((1-c)/c) ( t_2 - t_{\mathbf{q}} )$, recalling that $t_{\mathbf{p}} = \sum_{\ih} p_{\ih} \log N_{\ih}$. Then 
\begin{align}
\# T_{\lfloor cJ \rfloor}(\mathbf{p}) \cdot \# T_{J - \lfloor cJ \rfloor}(\mathbf{q}) &\leq e^{J( c(\log M - I(t_{\mathbf{p}}))    +   (1-c) (\log M - I(\max\{t_{\mathbf{q}},\underline{t}\}))      )} \label{eq:alignindcomb1} \\
&\leq e^{J( c(\log M - I(t_1 + ((1-c)/c) ( t_2 - \max\{t_{\mathbf{q}},\underline{t}\} )))    +   (1-c) (\log M - I(\max\{t_{\mathbf{q}},\underline{t}\}))      )} \label{eq:alignindcomb2} \\ 
&\leq e^{J(c(\log M - I(t_1)) + (1-c)(\log M - I(t_2)))}. \label{eq:alignindcomb3}
\end{align}
In~\eqref{eq:alignindcomb1} we used~\eqref{eq:206} and Step~1 of Proposition~\ref{prop:1}. 
In~\eqref{eq:alignindcomb2} we used that the rate function is increasing. 
In~\eqref{eq:alignindcomb3} we used the convexity of the rate function. 
Therefore using~\eqref{eq:205} we can bound the cardinality of the set in the statement of~\ref{eq:inductioncombinatorics1} from above by 
\[(\lfloor cJ \rfloor + 1 )^M (J-\lfloor cJ \rfloor +1)^M e^{J(c(\log M - I(t_1)) + (1-c)(\log M - I(t_2)))}. \]

\ref{eq:inductioncombinatorics2}. Similarly, fix $\mathbf{p} \in \mathcal{T}_{\lfloor cJ \rfloor}$ and $\mathbf{q} \in \mathcal{T}_{J - \lfloor cJ \rfloor}$ such that $t_{\mathbf{q}} \leq t_2$ and $t_{\mathbf{p}} \leq t_1 + {((1-c)/c) ( t_2 - t_{\mathbf{q}} )}$. 
Then 
\begin{align} 
&\# \{ \, \mathbf{i} \in [N]^{\lfloor cJ \rfloor} : \iih \in T_{\lfloor cJ \rfloor}(\mathbf{p}) \, \} \cdot \# \{ \,  \mathbf{j} \in [N]^{J- \lfloor cJ \rfloor} : \jjh \in T_{J- \lfloor cJ \rfloor}(\mathbf{q}) \, \} \nonumber \\
&\leq e^{J(c(\max\{t_{\mathbf{p}},\underline{t}\} + \log M - I(\max\{t_{\mathbf{p}},\underline{t}\})) + (1-c)(\max\{t_{\mathbf{q}},\underline{t}\} + \log M - I(\max\{t_{\mathbf{q}},\underline{t}\})))} \nonumber \\
&\leq e^{ J c(\min\{t_1 + \frac{1-c}{c} ( t_2 - \max\{t_{\mathbf{q}},\underline{t}\} ),\overline{t} \} + \log M -  I(\min\{t_1 + \frac{1-c}{c} ( t_2 - \max\{t_{\mathbf{q}},\underline{t}\} ),\overline{t} \}) )} \nonumber \\*
&\phantom{\leq}\times e^{J (1-c)( \max\{t_{\mathbf{q}},\underline{t}, t_2 - \frac{c}{1-c} (\overline{t} - t_1)\} + \log M - I( \max\{t_{\mathbf{q}},\underline{t}, t_2 - \frac{c}{1-c} (\overline{t} - t_1) \} )  )  } \label{eq:bothlower2} \\
&\leq e^{J (c(t_1 + \log M - I(t_1)) + (1-c)(t_2 + \log M - I(t_2)))}. \label{eq:bothlower3}
\end{align}
In~\eqref{eq:bothlower2} we used the fact that $I'(t) < 1$ if $t \in (\underline{t},\overline{t})$ and $I'(t) > 1$ if $t \in (\overline{t},\max_{1\leq i \leq M} \log N_i)$. In~\eqref{eq:bothlower3} we used the convexity of the rate function. 
In light of~\eqref{eq:205}, the result follows. 
\end{proof}

The inequalities in Lemma~\ref{lem:tripleindcomb} mimic~\eqref{eq:secondthreshold},~\eqref{eq:secondpartverybig},~\eqref{eq:tsaverage} and~\eqref{eq:thirdthreshold}. 

\begin{lemma}\label{lem:tripleindcomb}
Suppose $s \in (\dim_{\mathrm H} \Lambda,\dim_{\mathrm B} \Lambda)$, $c \in (0,1)$, $\underline{t} < t < T_s(t) < \overline{t}$ and $J \in \mathbb{N}$. Then as $J \to \infty$, 
\begin{enumerate}[label=(\roman*)]
\item\label{eq:tripleindcomb1}\mbox{}\vspace{-\baselineskip}
\begin{align*}
\# &\{ \, \iih \in [M]^J  :     \tau(\iih,\lfloor cJ \rfloor, J )  \geq  t \\
&\mbox{ and }  \tau(\iih,0,\lfloor cJ \rfloor)    \geq  T_s  (  \max\{ \underline{t},     ( 1 - c + \gamma c ) t / (\gamma c)    -    (1-c) \tau(\iih,\lfloor cJ \rfloor, J ) / (\gamma c)    \}         )  \, \}   \\
&\asymp   e^{ J( c( \log M - I(T_s(t)) )   +   (1-c) ( \log M - I(t) )     ) }    ;
\end{align*}

\item\label{eq:tripleindcomb2}\mbox{}\vspace{-\baselineskip}
\begin{align*}
\# \{ \, &(i_1,\dotsc,i_{\lfloor cJ \rfloor} , \ih_{\lfloor cJ \rfloor + 1},\dotsc, \ih_J, \ih_{J+1},\dotsc, \ih_{\lfloor (1+ \gamma c) J \rfloor}  \in [N]^{\lfloor cJ \rfloor}  \times [M]^{\lfloor (1 + \gamma c) J \rfloor - \lfloor cJ \rfloor} ) : \\
&t \leq \tau(\iih,\lfloor cJ \rfloor, J  ) \leq ( (1-c+\gamma c) t  -   \gamma c \underline{t} )/(1-c) \\
&\mbox{ and } \tau(\iih,0,\lfloor cJ \rfloor)  \leq  T_s( (  ( 1 - c + \gamma c ) t   -    (1-c) \tau(\iih,\lfloor cJ \rfloor, J )) / (\gamma c) ) \\
&\mbox{ and }   \tau(\iih,J, \lfloor (1 + \gamma c) J \rfloor  ) \geq  (( 1 - c + \gamma c ) t    -    (1-c) \tau(\iih,\lfloor cJ \rfloor, J ) )/ (\gamma c)      \, \} \\
&\asymp   e^{J(c(T_s(t) + \log M - I(T_s(t))) + (1-c + \gamma c) (\log M - I(t))   )};
 \end{align*}

\item\label{eq:tripleindcomb3}\mbox{}\vspace{-\baselineskip}
\begin{align*}
\# &\{ \, \mathbf{i}  \in [N]^{\lfloor (1 + \gamma c) J\rfloor }  : t \leq \tau(\iih,\lfloor cJ \rfloor, J  ) \leq ( (1-c+\gamma c) t  -   \gamma c \underline{t}  )/(1-c) \\
&\mbox{ and } \tau(\iih,0,\lfloor cJ \rfloor)  \leq  T_s( ( ( 1 - c + \gamma c ) t   -    (1-c) \tau(\iih,\lfloor cJ \rfloor, J )) / (\gamma c) ) \\
&\mbox{ and }   \tau(\iih,J, \lfloor (\gamma c + 1)J \rfloor  ) \leq  (( 1 - c + \gamma c ) t  -    (1-c) \tau(\iih,\lfloor cJ \rfloor, J )) / (\gamma c)      \, \} \\
&\asymp   e^{J(c(T_s(t) + \log M - I(T_s(t))) + (1-c + \gamma c) (t + \log M - I(t))   )}.
\end{align*}

\end{enumerate}
\end{lemma}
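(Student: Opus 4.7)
The proof will proceed by the method of types from large deviations theory, following the template established in the proofs of Lemmas~\ref{lem:33-first}, \ref{lem:33-second}, \ref{lem:simpleindcomb} and \ref{lem:inductioncombinatorics}. For the lower bound in each part, I will exhibit explicit words whose types on each segment best approximate the entropy-maximising vectors $\mathbf{Q}^*_{t}$ on the segment of length $J - \lfloor cJ\rfloor$ (or $\lfloor(1+\gamma c)J\rfloor - J$ on the third segment in (ii),(iii)) and $\mathbf{Q}^*_{T_s(t)}$ on the segment of length $\lfloor cJ\rfloor$. Since $\mathbf{Q}^*_{t}\in\mathcal{E}_t$ and $\mathbf{Q}^*_{T_s(t)}\in\mathcal{E}_{T_s(t)}$ by Lemma~\ref{lem:32}, all of the constraints~\eqref{eq:secondthreshold}--\eqref{eq:thirdthreshold} become asymptotically tight along such words, and the product of type-class sizes (estimated by~\eqref{eq:206}, combined with Step~1 of Proposition~\ref{prop:1} and, in (ii)/(iii), the factors $\prod_\ih N_\ih^{p_\ih}$ from segments drawn from $[N]$ as in~\eqref{eq:308}) immediately yields the claimed exponential rates.

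For the upper bounds, in each case I will fix arbitrary types $\mathbf{p}, \mathbf{q}$ (and, for (ii)/(iii), a third type $\mathbf{r}$) on the three segments subject to the prescribed constraints on their means $t_{\mathbf{p}}, t_{\mathbf{q}}, t_{\mathbf{r}}$; bound the corresponding type classes by $e^{J H(\cdot)}$; bound each $H(\mathbf{p}) \leq \log M - I(t_{\mathbf{p}})$ using Step~1 of Proposition~\ref{prop:1}; then use~\eqref{eq:205} to absorb the polynomial loss from summing over types. Since in parts (ii) and (iii) the first (and in (iii) also the last) segment is drawn from $[N]$, each such segment contributes the extra factor $e^{J(\text{segment length fraction})\cdot t_{\cdot}}$, exactly as in~\eqref{eq:308}.

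The main obstacle, common to all three parts, will be verifying the analytic optimisation: that the maximum of the relevant weighted entropy sum, over all $t_{\mathbf{p}}, t_{\mathbf{q}}, t_{\mathbf{r}}$ consistent with the constraints, equals what the lower bound achieves, namely $t_{\mathbf{q}} = t$, $t_{\mathbf{r}} = t$ (where applicable), and $t_{\mathbf{p}} = T_s(t)$. The constraint surfaces are affine in the means (parametrise by $u = t_{\mathbf{q}}$; then $t_{\mathbf{p}}$ is forced to satisfy a linear inequality involving $T_s$ of the linear combination $v = \max\{\underline{t},((1-c+\gamma c)t - (1-c)u)/(\gamma c)\}$, and in (ii)/(iii) $t_{\mathbf{r}}$ is likewise constrained). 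The monotonicity of $T_s$ for $v\geq\underline{t}$ (Lemma~\ref{lem:41}) means $t_{\mathbf{p}}\geq T_s(v) > \underline{t}$ since $s > \dim_{\mathrm H}\Lambda$ (using $t_1(s) > \underline{t}$ from Lemma~\ref{lem:tprimelesststar}), so the rate function bound $I(t_{\mathbf{p}}) \geq I(T_s(v))$ is available throughout the feasible region.

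The optimality of the extremal configuration then reduces to a single-variable calculus argument: differentiating the objective $c I(T_s(v(u))) + (1-c) I(u)$ in $u$ and using $T_s'(v) = \gamma I'(v)$, the critical-point condition $I'(T_s(v))\cdot I'(v) = I'(u)$ is satisfied at $u = t$ precisely when $I'(t)(1 - I'(T_s(t))) = 0$, which forces $T_s(t) = t'$; but away from this degenerate case the derivative at $u = t$ has the correct sign, and strict convexity of $I$ together with the convexity of $v \mapsto T_s(v)$ on $v \geq \underline{t}$ ensures that the unique extremum on the feasible region is the boundary point $u = t$ (the sub-case where $v = \underline{t}$, which corresponds to~\eqref{eq:secondpartverybig}/\eqref{eq:thirdthreshold} holding with slack, only decreases $H(\mathbf{p})$ further and hence strengthens the upper bound). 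Combining this optimisation with the polynomial bound $\#\mathcal{T}_J \leq (J+1)^M$ completes the proofs of all three parts.
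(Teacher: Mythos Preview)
Your proposal is correct and follows essentially the same method-of-types approach as the paper's proof: lower bounds via strings whose segment-types approximate $\mathbf{Q}^*_t$ and $\mathbf{Q}^*_{T_s(t)}$, upper bounds by fixing arbitrary types, first optimising over $t_{\mathbf{p}}$ (and $t_{\mathbf{r}}$ in (ii)/(iii)) via monotonicity, then reducing to a one-variable optimisation in $t_{\mathbf{q}}$ which the paper handles by showing the derivative of the exponent is negative throughout the feasible range (your convexity phrasing is equivalent). Two small corrections: the degenerate critical-point condition $I'(T_s(t))=1$ corresponds to $T_s(t)=\overline{t}$ (where $I'(\overline{t})=1$), not to $T_s(t)=t'$ (where $I'(t')=\gamma^{-1}$), and this case is excluded by the hypothesis $T_s(t)<\overline{t}$; also, in part~(iii) you will need the paper's device of replacing $t_{\mathbf{q}}$ by $\min\{t_{\mathbf{q}},\overline{t}\}$ in the $[N]$-segment bound, since $\tau\mapsto\tau+\log M-I(\tau)$ is increasing only on $[\underline{t},\overline{t}]$.
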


\begin{proof}
The proof strategy is rather similar to that of Lemma~\ref{lem:inductioncombinatorics}. The lower bounds follow from considering those strings for which $\ih_{\lfloor cJ \rfloor + 1}, \dotsc, \ih_J$ and $\ih_{1},\dotsc,\ih_{\lfloor cJ \rfloor}$ are the best approximations to $\mathbf{Q}^*_{t}$ and $\mathbf{Q}^*_{T_s(t)}$ respectively in $\mathcal{T}_{J-\lfloor cJ \rfloor}$ and $\mathcal{T}_{\lfloor cJ \rfloor}$, and (for~\ref{eq:tripleindcomb2} and~\ref{eq:tripleindcomb3}) $\ih_{J+1},\dotsc,\ih_{\lfloor \gamma c J \rfloor}$ is the best approximation to $\mathbf{Q}^*_{t}$ in $\mathcal{T}_{\lfloor (1 + \gamma c) J \rfloor - J}$, for which the required inequalities hold. 
The upper bounds follow from the following estimates and~\eqref{eq:205}. 

 \ref{eq:tripleindcomb1}. Fix $\mathbf{p} \in \mathcal{T}_{\lfloor cJ \rfloor}$ and $\mathbf{q} \in \mathcal{T}_{J - \lfloor cJ \rfloor}$ such that $t_{\mathbf{q}} \geq t$ and 
 \[ t_{\mathbf{p}} \geq T_s  (  \max\{ \underline{t},     ( 1 - c + \gamma c ) t / (\gamma c)    -    (1-c) t_{\mathbf{q}} / (\gamma c).\]
  Then 
 \begin{align*} 
\# T_{\lfloor cJ \rfloor}(\mathbf{p}) \cdot \# T_{J- \lfloor cJ \rfloor}(\mathbf{q}) &\leq e^{Jc(\log M -  I( T_s  (  \max\{ \underline{t},     ( 1 - c + \gamma c ) t / (\gamma c)  -  (1-c) t_{\mathbf{q}} / (\gamma c) \}  )) )}  \\*
&\phantom{\leq}\times e^{J(1-c) (\log M -  I(\min\{t_{\mathbf{q}},( (1-c+\gamma c) t  -   \gamma c \underline{t} )/(1-c)\})    )  )} \\
&\leq e^{ J( c( \log M - I(T_s(t)) )   +   (1-c) ( \log M - I(t) )     ) }.  
\end{align*}
The final step holds since 
\begin{align*}
 ( (1-c+\gamma c) t  -   \gamma c \underline{t} )/(1-c)  &\leq t \leq t_{\mathbf{q}}; \\*
 T_s\big(( (1-c+\gamma c) t  -   \gamma c \underline{t} )/(1-c)\big)  &\leq  T_s(t) < \overline{t}, 
\end{align*}
so using standard properties of the rate function, the derivative of the exponent with respect to $t_{\mathbf{q}}$ is negative.

\ref{eq:tripleindcomb2}. Now fix $\mathbf{p} \in \mathcal{T}_{\lfloor cJ \rfloor}$, $\mathbf{q} \in \mathcal{T}_{J - \lfloor cJ \rfloor}$ and $\mathbf{r} \in \mathcal{T}_{\lfloor (1 + \gamma c) J \rfloor - J}$ such that $t \leq t_{\mathbf{q}} \leq ( (1-c+\gamma c) t  -  \gamma c \underline{t} )/(1-c)$, $t_{\mathbf{p}} \leq  T_s( (  ( 1 - c + \gamma c ) t   -    (1-c) t_{\mathbf{q}}) / (\gamma c) )$ and $t_{\mathbf{r}} \geq (( 1 - c + \gamma c ) t    -    (1-c) t_{\mathbf{q}} )/ (\gamma c)$. Then 
\begin{align*}
&\# \{ \, \mathbf{i} \in [N]^{\lfloor cJ \rfloor} : \iih \in T_{\lfloor cJ \rfloor}(\mathbf{p}) \, \} \cdot \# T_{J- \lfloor cJ \rfloor}(\mathbf{q}) \cdot \# T_{\lfloor (1 + \gamma c) J \rfloor - J}(\mathbf{r}) \\
&\leq  e^{J( c(T_s( (  ( 1 - c + \gamma c ) t   -    (1-c) t_{\mathbf{q}}) / (\gamma c) ) + \log M - I(T_s( (  ( 1 - c + \gamma c ) t   -    (1-c) t_{\mathbf{q}}) / (\gamma c) )))                    +   (1-c) (\log M - I(t_{\mathbf{q}})) )} \\*
&\phantom{\leq}\times e^{J(  \gamma c (\log M  -  I((( 1 - c + \gamma c ) t    -    (1-c) t_{\mathbf{q}} )/ (\gamma c)))        )}   \\
&\leq e^{J(c(T_s(t) + \log M - I(T_s(t))) + (1-c + \gamma c) (\log M - I(t))   )}
\end{align*}
since the derivative of the exponent with respect to $t_{\mathbf{q}}$ is negative. %

\ref{eq:tripleindcomb3}. Now fix $\mathbf{p} \in \mathcal{T}_{\lfloor cJ \rfloor}$, $\mathbf{q} \in \mathcal{T}_{J - \lfloor cJ \rfloor}$ and $\mathbf{r} \in \mathcal{T}_{\lfloor (1 + \gamma c) J \rfloor - J}$ such that $t \leq t_{\mathbf{q}} \leq ( (1-c+\gamma c) t  -  \gamma c \underline{t} )/(1-c)$, $t_{\mathbf{p}} \leq  T_s( (  ( 1 - c + \gamma c ) t   -    (1-c) t_{\mathbf{q}}) / (\gamma c) )$ and $t_{\mathbf{r}} \leq (( 1 - c + \gamma c ) t    -    (1-c) t_{\mathbf{q}} )/ (\gamma c)$. Then 
\begin{align}
 &\# \{ \, \mathbf{i} \in [N]^{\lfloor cJ \rfloor} : \iih \in T_{\lfloor cJ \rfloor}(\mathbf{p}) \, \} \cdot \# \{ \, \mathbf{j} \in [N]^{J - \lfloor cJ \rfloor} : \jjh \in T_{J - \lfloor cJ \rfloor}(\mathbf{q}) \, \} \nonumber \\*
 &\phantom{\leq}\times \# \{ \, \mathbf{k} \in [N]^{\lfloor (1 + \gamma c) J \rfloor - J} : \kkh \in T_{\lfloor (1 + \gamma c) J \rfloor - J}(\mathbf{r}) \, \} \nonumber \\
 &\leq e^{J c(T_s( (  ( 1 - c + \gamma c ) t   -    (1-c) t_{\mathbf{q}}) / (\gamma c) ) + \log M - I(T_s( (  ( 1 - c + \gamma c ) t   -    (1-c) t_{\mathbf{q}}) / (\gamma c) ))) } \\*
 &\phantom{\leq}\times e^{J(1-c) (\min\{t_{\mathbf{q}},\overline{t}\} + \log M - I(\min\{t_{\mathbf{q}},\overline{t}\})) } \nonumber \\*
 &\phantom{\leq}\times e^{J  \gamma c  ((( 1 - c + \gamma c ) t    -    (1-c) t_{\mathbf{q}} )/ (\gamma c) + \log M  -  I((( 1 - c + \gamma c ) t    -    (1-c) t_{\mathbf{q}} )/ (\gamma c))        )}  \nonumber \\
 &\leq e^{J   c(T_s( (  ( 1 - c + \gamma c ) t   -    (1-c) \min\{t_{\mathbf{q}},\overline{t}\}) / (\gamma c) ) + \log M - I(T_s( (  ( 1 - c + \gamma c ) t   -    (1-c) \min\{t_{\mathbf{q}},\overline{t}\}) / (\gamma c) )))                    }\nonumber \\*
 &\phantom{\leq}\times e^{J   (1-c) (\min\{t_{\mathbf{q}},\overline{t}\} + \log M - I(\min\{t_{\mathbf{q}},\overline{t}\})) } \nonumber \\*
 &\phantom{\leq}\times e^{J  \gamma c  ((( 1 - c + \gamma c ) t    -    (1-c) \min\{t_{\mathbf{q}},\overline{t}\} )/ (\gamma c) + \log M  -  I((( 1 - c + \gamma c ) t    -    (1-c) \min\{t_{\mathbf{q}},\overline{t}\} )/ (\gamma c))        )} \label{eq:aligntripleindcomb1} \\
 &\leq e^{J(c(T_s(t) + \log M - I(T_s(t))) + (1-c + \gamma c) (t + \log M - I(t))   )}. \label{eq:aligntripleindcomb2}
\end{align}
We have~\eqref{eq:aligntripleindcomb1} because $t \leq t_{\mathbf{q}} \leq ( (1-c+\gamma c) t  -  \gamma c \underline{t} )/(1-c)$, so 
\[ \underline{t} \leq (( 1 - c + \gamma c ) t  -  (1-c) t_{\mathbf{q}} )/ (\gamma c) \leq T_s\big((( 1 - c + \gamma c ) t    -    (1-c) t_{\mathbf{q}} )/ (\gamma c)\big)  \leq T_s(t) < \overline{t},\] 
so the derivative of the rate function here is between 0 and~1. 
We have~\eqref{eq:aligntripleindcomb2} since the derivative of the exponent in~\eqref{eq:aligntripleindcomb1} with respect to $t_{\mathbf{q}}$ is negative. %
In light of~\eqref{eq:205}, this completes the proof. 
\end{proof}

We are now ready to prove an upper bound for the $s$-cost of the cover that we have constructed. Recall that $G(\theta,s)$ is defined in~\eqref{eq:definemainquantity}. 
\begin{lemma}\label{lem:mainuppercost}
For $L \in \mathbb{N}$, $\theta \in (\gamma^{-L},\gamma^{-(L-1)})$, $s \in (\dim_{\mathrm H} \Lambda,\dim_{\gamma^{-(L-1)}} \Lambda]$ and $0 < \delta \ll 1$, let $\{V_j\}_j$ be the cover of $\Lambda$ defined in~\eqref{eq:definerealcover}. Then as $K = K(\delta) \to \infty$, 
\[ \sum_j |V_j|^s \asymp e^{K \cdot G(\theta,s)/(\gamma^L \theta)}. \]
\end{lemma}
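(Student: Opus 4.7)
The plan is to bound the $s$-cost of each of the pieces $\mathcal{U}_{L-1,1}$, $\mathcal{U}_{L-1,0}$, $\mathcal{U}_l$, $\mathcal{U}_{l,L-l-1}'$ and $\mathcal{U}_{l,k}$ appearing in~\eqref{eq:definerealcover} separately and show that each piece grows asymptotically like $e^{K \cdot G(\theta,s)/(\gamma^L \theta)}$. Since the number of pieces is bounded in terms of $L$ alone (and $L$ is a constant once $\theta$ is fixed), summing these bounds introduces only a sub-exponential factor and does not affect the exponential growth rate; the matching lower bound $\sum_j |V_j|^s \gtrsim e^{K \cdot G(\theta,s)/(\gamma^L \theta)}$ follows immediately by considering any single non-empty piece.

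For each piece I would follow the template used in Lemma~\ref{lem:50}: a level-$k$ approximate square has diameter $\asymp n^{-k}$ and therefore contributes $n^{-ks}$. The symbolic representation of each square in the piece is partitioned, by the scales $K, \lfloor K/(\gamma^{L-1}\theta)\rfloor, \gamma(K), \lfloor K/(\gamma^{L-2}\theta)\rfloor,\dotsc$, into segments that are either free in $[N]$ or $[M]$, or are constrained through one of the inequalities~\eqref{eq:secondthreshold}--\eqref{eq:lowerthresholds}. Free $[N]$- and $[M]$-segments of length $\ell$ contribute $N^\ell$ and $M^\ell$ respectively, while each constrained segment whose average is pinned near $t$ contributes an exponential factor of the form $e^{\ell(t+\log M - I(t))}$ or $e^{\ell(\log M - I(t))}$, counted via Lemma~\ref{lem:simpleindcomb}. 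For pieces such as $\mathcal{U}_{L-1,1}$, where two adjacent segments have coupled linear constraints of the type~\eqref{eq:secondthreshold}--\eqref{eq:firstthreshold}, Lemma~\ref{lem:inductioncombinatorics} gives the correct count; for $\mathcal{U}_{l,L-l}$, $\mathcal{U}_{l,L-l-1}'$ and $\mathcal{U}_{l,L-l-1}$, where three adjacent segments are linked through~\eqref{eq:secondpartverybig}, \eqref{eq:tsaverage} and~\eqref{eq:thirdthreshold}, Lemma~\ref{lem:tripleindcomb} supplies the count.

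The algebraic reason that all pieces produce the same exponent is the recursion $t_{l+1}(s) = T_s(t_l(s))$: moving a "break" scale from $\gamma^l(K)$ up to $\lfloor K/(\gamma^{L-l-1}\theta)\rfloor$ and then to $\gamma^{l+1}(K)$ shifts the combinatorial and rate-function contributions by equal and opposite amounts. Substituting $\tau_l = t_l(s)$ in Lemma~\ref{lem:50} already furnishes the identity $G_1^{\mathbf{t}}(s) = \dotsb = G_L^{\mathbf{t}}(s) = \dim_\mathrm{B}\Lambda - (1-\gamma^{-1})I(t_{L-1}(s))/\log m - s$, and a direct algebraic calculation shows that this common value equals $G(\gamma^{-(L-1)},s)/(\gamma \log n)$. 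For general $\theta \in (\gamma^{-L},\gamma^{-(L-1)})$ the same scheme produces the claimed exponent after accounting for the fact that $\gamma^L\theta \in (1,\gamma)$ and that the extra finest-scale segment of length $J_L'$ defined in~\eqref{eq:definejnumbers} contributes an additional $(\log M - I(t_1(s)))J_L'$, as already seen in the proof of Lemma~\ref{lem:lowertotalmass}.

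The main obstacle will be the careful bookkeeping for the intermediate pieces $\mathcal{U}_{l,k}$ with $1 \leq k \leq L-l-2$ and for $\mathcal{U}_{l,L-l-1}'$, whose symbolic representations involve several linked segments with constraints that are not simply of the form $\leq t_j(s)$. One must verify in each case that, once the combinatorial factors are multiplied by $n^{-ks}$, the dependence on the intermediate $\tau$-values appearing in~\eqref{eq:secondpartverybig}, \eqref{eq:tsaverage} and~\eqref{eq:thirdthreshold} cancels via the identity $t_{l+1}(s) - T_s(t_l(s)) = 0$, leaving the single exponent $G(\theta,s)/(\gamma^L\theta) \cdot K$. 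Lemmas~\ref{lem:inductioncombinatorics} and~\ref{lem:tripleindcomb} are designed so that this cancellation occurs cleanly, and the thresholds in~\eqref{eq:firstthreshold},~\eqref{eq:tsaverage} and~\eqref{eq:thirdthreshold} are precisely the affine-linear break points at which the counting bound of the corresponding lemma is sharp.
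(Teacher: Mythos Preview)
Your proposal is correct and follows the same overall approach as the paper: bound the $s$-cost of each piece of the cover separately using Lemmas~\ref{lem:simpleindcomb}, \ref{lem:inductioncombinatorics} and \ref{lem:tripleindcomb}, and verify that every piece has exponential growth rate $G(\theta,s)/(\gamma^L\theta)$.

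There is, however, one genuine simplification in the paper's argument that you do not exploit. Rather than carrying out the ``careful bookkeeping'' you describe and verifying directly for each piece that the various $t$- and $I$-terms collapse to $G(\theta,s)/(\gamma^L\theta)$, the paper observes that the combinatorial count produced by Lemmas~\ref{lem:inductioncombinatorics} and \ref{lem:tripleindcomb} for a piece at level $k$ is (up to sub-exponential factors) exactly $\#(\mathcal{B}_k \cap \mathrm{supp}(\mu_{\delta,s,\theta,R}))$, the number of level-$k$ approximate squares in the support of the measure from Section~\ref{sec:prooflower}. Since Lemma~\ref{lem:lowernicescales} gives $\mu(B_k) \asymp n^{-ks}$ for each of the relevant scales $k$, the cost of that piece is $\#(\mathcal{B}_k \cap \mathrm{supp}(\mu)) \cdot n^{-ks} \asymp \mu(\Lambda)$, which by Lemma~\ref{lem:lowertotalmass} equals $e^{K \cdot G(\theta,s)/(\gamma^L\theta)}$. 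This routes all of the algebra through calculations already done for the lower bound and avoids the repeated direct verification you anticipate. Your route would work but is more laborious; the paper's shortcut explains why the upper and lower bounds match so cleanly.
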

\begin{proof}
The strategy is to bound the $s$-costs of the different parts of the cover separately using Lemmas~\ref{lem:simpleindcomb},~\ref{lem:inductioncombinatorics} and~\ref{lem:tripleindcomb}, which can be applied since the $t_i(s)$ lie in the right range by Lemma~\ref{lem:51}. 
We use the convention that an empty product equals 1. 
In the following, as above, $R \in \mathbb{N}$ will be fixed and arbitrary. 
We first consider the $s$-cost of $\mathcal{U}_{l}$ for $l \in \{0,1,\dotsc,L-2\}$: %
\begin{align*}
\sum_{U \in \mathcal{U}_{l}} |U|^s &\asymp \# \mathcal{U}_{l} \cdot  n^{-\gamma^{l}K s} \\
  &\asymp N^K \prod_{j = 1}^{l} e^{K(t_{L-j}(s) + \log M - I(t_{L-j}(s)))(\gamma^j - \gamma^{-(L-j)}\theta^{-1})} \\*
  &\phantom{\leq}\times e^{K(t_{L-j+1}(s) + \log M - I(t_{L-j+1}(s)) ) (\gamma^{-(L-j)}\theta^{-1} - \gamma^{j-1}) ) }  \\*
&\phantom{\leq}\times e^{K((\log M - I(t_{L-l-1}(s)))(\gamma^{l+1} - \gamma^{-(L-l-1)}\theta^{-1})  +  (\log M - I(t_{L-l}))  (\gamma^{-(L-l-1)}\theta^{-1} - \gamma^{l})   )}  \\*
&\phantom{\leq}\times n^{-\gamma^{l}K s}  \qquad \qquad \text{by Lemma~\ref{lem:inductioncombinatorics}~\ref{eq:inductioncombinatorics1} and~\ref{eq:inductioncombinatorics2}} \\
&\asymp \# (\mathcal{B}_{\gamma^{l}(K)} \cap \mathrm{supp}(\mu_{\delta,s,\theta,R})) \cdot n^{-\gamma^{l}K s} \qquad \substack{\text{ by~\eqref{eq:lowerboundcard}} \\ \text{and Proposition~\ref{prop:1}, Step~1}} \\ 
&\asymp \mu_{\delta,s,\theta,R} (\Lambda) \qquad  \qquad \text{by~\eqref{eq:definemu} and Lemma~\ref{lem:lowernicescales}}  \\
&\asymp e^{K \cdot G(\theta,s)/(\gamma^L \theta)} \qquad \text{by Lemma~\ref{lem:lowertotalmass}}. \stepcounter{equation}\tag{\theequation}\label{eq:mainupperfirstcost}
\end{align*}

The $s$-costs of $\mathcal{U}_{l,0}$ are equal for all $l \in \{0,1,\dotsc,L-1\}$: 
\begin{align*}
\sum_{U \in \mathcal{U}_{l,0}} |U|^s &\asymp \# \mathcal{U}_{L-1,0} \cdot n^{-K s/\theta}  \\
&\asymp  \prod_{j = 1}^{L-1} e^{K(t_{L-j}(s) + \log M - I(t_{L-j}(s)))(\gamma^j - \gamma^{-(L-j)}\theta^{-1})} \\*
&\phantom{\leq}\times e^{K(t_{L-j+1}(s) + \log M - I(t_{L-j+1}(s)) ) (\gamma^{-(L-j)}\theta^{-1} - \gamma^{j-1})  )   }  \\*
&\phantom{\leq}\times N^K e^{(t_1(s) + \log M - I(t_1(s)) )(1/\theta - \gamma^{L-1}) K }  M^{(\gamma - 1)K/\theta}   \\*
&\phantom{\leq}\times n^{-K s/\theta} \qquad \substack{\text{by Lemmas~\ref{lem:simpleindcomb},~\ref{lem:inductioncombinatorics}~\ref{eq:inductioncombinatorics2}} \\ \text{and (when }l<L-1)\text{ Lemma~\ref{lem:tripleindcomb}~\ref{eq:tripleindcomb3}} } \\
&\asymp \# (\mathcal{B}_{\lfloor K/\theta\rfloor} \cap \mathrm{supp}(\mu_{\delta,s,\theta,R})) \cdot n^{-Ks/\theta} \asymp \mu_{\delta,s,\theta,R} (\Lambda) \asymp e^{K \cdot G(\theta,s)/(\gamma^L \theta)}. 
\end{align*}

To bound the $s$-cost of $\mathcal{U}_{l,L-l}$ when $l \in \{0,1,\dotsc,L-2\}$, note that by Lemma~\ref{lem:33-first} and Step~1 of Proposition~\ref{prop:1}, 
\begin{align*}
\# &\Big\{ \, \iih \in [M]^{\gamma^{l+1}(K) - \gamma^l(K)}  :  \tau(\iih,\lfloor K/(\gamma^{L-l-1}\theta) \rfloor - \gamma^l(K),\gamma^{l+1}(K) -   \gamma^l(K) ) \\
&\phantom{--}\geq \frac{1}{(\gamma^{l+1}  -  (\gamma^{L-l-1}\theta)^{-1}  )}   \Big(  \Big(  \frac{1}{\gamma^{L-l-2}\theta}  - \frac{1}{\gamma^{L-l-1} \theta} \Big)  t_{L-l-1}(s)  -   ((\gamma^{L-l-2}\theta)^{-1}  -  \gamma^{l+1}) \underline{t}  \Big)  \\
&\phantom{--}\mbox{ and } \tau( \iih, 0,   \lfloor K/(\gamma^{L-l-1}\theta) \rfloor - \gamma^l(K)  )  \geq  T_s(\underline{t}) \, \Big\} \\
&\asymp  e^{-K (\gamma^{l+1}  -  \gamma^{l+1-L}\theta ) I\left(\frac{1}{(\gamma^{l+1}  -  (\gamma^{L-l-1}\theta)^{-1}    )}   \big(  \big(  \frac{1}{\gamma^{L-l-2}\theta}  - \frac{1}{\gamma^{L-l-1} \theta} \big)  t_{L-l-1}(s) - ((\gamma^{L-l-2}\theta)^{-1}  -  \gamma^{l+1}) \underline{t}  \big)\right)  }\\*
&\phantom{\leq}\times e^{K \gamma^{l+1}  -  \gamma^{l+1-L}\theta ) \log M }\cdot e^{K \left(\gamma^{l+1-L}\theta   -  \gamma^l)  (\log M - I(T_s(\underline{t})))   \right)} \\
&\leq  e^{K\left(   (\gamma^{l+1}  -  \gamma^{l+1-L}\theta^{-1}  )\left( \log M - I(t_{L-l-1}(s))    \right)   +  (\gamma^{l+1-L}\theta^{-1}  -  \gamma^l)  (\log M - I(t_{L-l}(s)))   \right)},\stepcounter{equation}\tag{\theequation}\label{eq:combwithincostproof}
\end{align*}
where the last step follows from the case $t_{\mathbf{q}} = ( (1-c+\gamma c) t  -   \gamma c \underline{t} )/(1-c)$ of Lemma~\ref{lem:tripleindcomb}~\ref{eq:tripleindcomb1}. 

Now, for $l \in \{0,1,\dotsc,L-1\}$, %
\begin{align*}
\sum_{U \in \mathcal{U}_{l,L-l}} |U|^s &\asymp  N^K \prod_{j = 1}^{l} e^{K(t_{L-j}(s) + \log M - I(t_{L-j}(s)))(\gamma^j - \gamma^{-(L-j)}\theta^{-1})} \\*
&\phantom{\leq}\times e^{K(t_{L-j+1}(s) + \log M - I(t_{L-j+1}(s)) ) (\gamma^{-(L-j)}\theta^{-1} - \gamma^{j-1})  )   }  \\*
&\phantom{\leq}\times e^{K((\log M - I(t_{L-l-1}(s)))(\gamma^{l+1} - \gamma^{-(L-l-1)}\theta^{-1})  +  (\log M - I(t_{L-l}))  (\gamma^{-(L-l-1)}\theta^{-1} - \gamma^{l})   )}  \\*
&\phantom{\leq}\times n^{-\gamma^{l}K s}  \\
&\asymp \# (\mathcal{B}_{\gamma^{l}(K)} \cap \mathrm{supp}(\mu_{\delta,s,\theta,R})) \cdot n^{-\gamma^{l}K s} \asymp \mu_{\delta,s,\theta,R} (\Lambda) \asymp e^{K \cdot G(\theta,s)/(\gamma^L \theta)}. 
\end{align*}
In the case $l=L-1$ we used Lemma~\ref{lem:inductioncombinatorics}~\ref{eq:inductioncombinatorics2} and Lemma~\ref{lem:33-first} and Step~1 of Proposition~\ref{prop:1}. 
In the case $l<L-1$ we used Lemma~\ref{lem:inductioncombinatorics}~\ref{eq:inductioncombinatorics2} and (in the case when the maximum in~\eqref{eq:tsaverage} does not take the value $\underline{t}$, or equivalently when~\eqref{eq:secondpartverybig} does not hold) Lemma~\ref{lem:tripleindcomb}~\ref{eq:tripleindcomb1}, and (in the case when the maximum does take the value $\underline{t}$) we used~\eqref{eq:combwithincostproof}. 

Now, for $l \in \{0,1,\dotsc,L-2\}$, 
 \begin{align*}
\sum_{U \in \mathcal{U}_{l,L-l-1}'} |U|^s &\asymp N^K \prod_{j = 1}^{l} e^{K(t_{L-j}(s) + \log M - I(t_{L-j}(s)))(\gamma^j - \gamma^{-(L-j)}\theta^{-1})} \\*
&\phantom{\leq}\times e^{K(t_{L-j+1}(s) + \log M - I(t_{L-j+1}(s)) ) (\gamma^{-(L-j)}\theta^{-1} - \gamma^{j-1})   } \cdot e^{K(\gamma^{l+1} - \gamma^{l+1-L}\theta^{-1} )}  \\*
&\hspace{-2.5cm}\phantom{\leq}\times e^{-K (\gamma^{l+1} - \gamma^{l+1-L}\theta^{-1} ) I\left(  \frac{1}{\gamma^{l+1}  -  (\gamma^{L-l-1}\theta)^{-1}  }   \big(  \big(  \frac{1}{\gamma^{L-l-2}\theta}  - \frac{1}{\gamma^{L-l-1} \theta} \big)  t_{L-l-1}(s)    -   ((\gamma^{L-l-2}\theta)^{-1}  -  \gamma^{l+1}) \underline{t}  \big)  \right)   } \\*
&\phantom{\leq}\times e^{K(\gamma^{l+1-L}\theta^{-1}  -  \gamma^l) ( T_s(\underline{t})  +  \log M - I(T_s(\underline{t})) ) } \cdot  M^{K(\gamma^{l+2-L}\theta^{-1} - \gamma^{l+1})}   \cdot n^{K/(\gamma^{L-l-1}\theta)} \\
&\leq N^K \prod_{j = 1}^{l} e^{K(t_{L-j}(s) + \log M - I(t_{L-j}(s)))(\gamma^j - \gamma^{-(L-j)}\theta^{-1})} \\*
&\phantom{\leq}\times e^{K(t_{L-j+1}(s) + \log M - I(t_{L-j+1}(s)) ) (\gamma^{-(L-j)}\theta^{-1} - \gamma^{j-1})     }   \\*
&\phantom{\leq}\times e^{ K  (\gamma^{l+2-L}\theta^{-1} - \gamma^{l+1-L}\theta^{-1} ) (\log M - I(t_{L-l-1}(s))} \\*
&\phantom{\leq}\times e^{K(\gamma^{l+1-L}\theta^{-1}  -  \gamma^l) ( t_{L-l}(s)  +  \log M - I(t_{L-l}(s))) )  } \cdot  n^{K/(\gamma^{L-l-1}\theta)} \\
&\asymp  \# (\mathcal{B}_{\lfloor K/(\gamma^{L-l-1}\theta) \rfloor}  \cap \mathrm{supp}(\mu_{\delta,s,\theta,R})) \cdot n^{K/(\gamma^{L-l-1}\theta)} \asymp \mu_{\delta,s,\theta,R} (\Lambda) \\
&\asymp e^{K \cdot G(\theta,s)/(\gamma^L \theta)}, 
\end{align*}
where the inequality follows from the case $t_{\mathbf{q}} = ( (1-c+\gamma c) t  -   \gamma c \underline{t} )/(1-c)$ of the proof of Lemma~\ref{lem:tripleindcomb}~\ref{eq:tripleindcomb2}. 

For $l \in \{0,1,\dotsc,L-2\}$, using Lemma~\ref{lem:tripleindcomb}~\ref{eq:tripleindcomb2} (and Lemma~\ref{lem:inductioncombinatorics}~\ref{eq:inductioncombinatorics2}), 
\begin{align*}
\sum_{U \in \mathcal{U}_{l,L-l-1}} |U|^s 
&\asymp N^K \prod_{j = 1}^{l} e^{K(t_{L-j}(s) + \log M - I(t_{L-j}(s)))(\gamma^j - \gamma^{-(L-j)}\theta^{-1})} \\*
&\phantom{\leq}\times e^{K(t_{L-j+1}(s) + \log M - I(t_{L-j+1}(s)) ) (\gamma^{-(L-j)}\theta^{-1} - \gamma^{j-1})    }  \\*
&\phantom{\leq}\times e^{ K (\gamma^{l+2-L}\theta^{-1} - \gamma^{l+1-L}\theta^{-1} ) (\log M - I(t_{L-l-1}(s))} \\*
&\phantom{\leq}\times e^{K(\gamma^{l+1-L}\theta^{-1}  -  \gamma^l) ( t_{L-l}(s)  +  \log M - I(t_{L-l}(s))) )  } n^{K/(\gamma^{L-l-1}\theta)} \\
&\asymp  \# (\mathcal{B}_{\lfloor K/(\gamma^{L-l-1}\theta) \rfloor}  \cap \mathrm{supp}(\mu_{\delta,s,\theta,R})) \cdot n^{K/(\gamma^{L-l-1}\theta)} \\
&\asymp \mu_{\delta,s,\theta,R} (\Lambda) \asymp e^{K \cdot G(\theta,s)/(\gamma^L \theta)}.
\end{align*}

Finally, if $L \geq 3$, $l \in \{0,1,\dotsc,L-3\}$ and $k \in \{1,2,\dotsc, L-l-2\}$, 
\begin{align*}
\sum_{U \in \mathcal{U}_{l,k}} |U|^s 
&\asymp N^K \prod_{j = 1}^{L-k-1} e^{K(t_{L-j}(s) + \log M - I(t_{L-j}(s)))(\gamma^j - \gamma^{-(L-j)}\theta^{-1})}\\*
&\phantom{\leq}\times e^{K(t_{L-j+1}(s) + \log M - I(t_{L-j+1}(s)) ) (\gamma^{-(L-j)}\theta^{-1} - \gamma^{j-1})  }   \\*
&\phantom{\leq}\times e^{K(\gamma^{-k}\theta^{-1}  -  \gamma^{L-k-1})(t_{L-k-1}(s)  +  \log M - I(t_{L-k-1})) } \\*
&\phantom{\leq}\times e^{K(\gamma^{-(k+1)}\theta^{-1} - \gamma^{-k}\theta^{-1})(\log M - I(t_{L-k-2}(s))) } \cdot n^{-Ks/(\gamma^k \theta)} \\
&\asymp \# \mathcal{B}_{\lfloor K/(\gamma^k \theta)\rfloor} \cap \mathrm{supp}(\mu_{\delta,s,\theta,R}) \cdot n^{-Ks/(\gamma^k \theta)} \asymp \mu_{\delta,s,\theta,R} (\Lambda) \asymp e^{K \cdot G(\theta,s)/(\gamma^L \theta)},
\end{align*}
using Lemma~\ref{lem:tripleindcomb}~\ref{eq:tripleindcomb3} (and Lemma~\ref{lem:inductioncombinatorics}~\ref{eq:inductioncombinatorics2} and Lemmas~\ref{lem:simpleindcomb},~\ref{lem:33-first} and~\ref{lem:lowertotalmass}). 
We have now bounded the $s$-cost of each part of the cover, so the proof is complete.  
\end{proof}

Note that when we applied Lemma~\ref{lem:inductioncombinatorics}, in the proof of Lemma~\ref{lem:mainuppercost} (for example in~\eqref{eq:mainupperfirstcost}), we used that $t_L(s) < \overline{t}$ for all $s \in [\dim_{\gamma^{-L}} \Lambda,\dim_{\gamma^{-(L-1)}} \Lambda]$, see Lemma~\ref{lem:51}. 
We needed Section~\ref{sec:upperinteger} to establish the inequality $t_L(s) < \overline{t}$ before Section~\ref{sec:mainupper}.

\subsection{Conclusion and discussion of the proof}\label{sec:proofconclusion} 

We now conclude the proof of Theorem~\ref{thm:main} by combining the upper bounds in Sections~\ref{sec:upperinteger} and~\ref{sec:mainupper} and a lower bound from the mass distribution principle Proposition~\ref{prop:mdp} on page~\pageref{prop:mdp}, which can be applied by virtue of the results in Section~\ref{sec:prooflower}.

\begin{proof}[Proof of Theorem~\ref{thm:main}]%
Fix $\theta \in (0,1)$ and $s \in (\dim_{\mathrm{H}} \Lambda,\overline{\dim}_{\gamma^{-(L-1)}} \Lambda]$. Then 
\[ \limsup_{\delta\searrow 0} \frac{\log S_{\delta, \theta}^{s}(\Lambda)}{-\log \delta} \leq \frac{G(\theta,s)}{\gamma^L \theta \log n}.\]
This follows from Lemma~\ref{lem:51} and Lemma~\ref{lem:50} with $\btau = (t_1(s),\dotsc, t_{L-1}(s))$ in the case $\theta = \gamma^{-(L-1)}$, and from Lemma~\ref{lem:mainuppercost} in the case $\gamma^{-L} < \theta < \gamma^{-(L-1)}$. 
If $U \subset \mathbb{R}^2$ is Borel with $|U| <1$ then the number of approximate squares of level $\lceil \frac{-\log |U|}{\log n} \rceil$ which $U$ intersects is at most an absolute constant depending only on the carpet. Therefore by Lemma~\ref{lem:lowerallscales}, for all $s' < s$ there exists $\delta_0 > 0$ and $R \in \mathbb{N}$ such that for all $\delta \in (0,\delta_0)$, if $\delta^{1/\theta} \leq |U| \leq \delta$ then $\mu_{\delta,s,\theta,R}(U) < |U|^{s'}$.  This means that we can use Lemma~\ref{lem:lowertotalmass} and apply the mass distribution principle Proposition~\ref{prop:mdp} and deduce that 
\[ \liminf_{\delta\searrow 0} \frac{\log S_{\delta, \theta}^{s'}(\Lambda)}{-\log \delta} \geq \frac{G(\theta,s)}{\gamma^L \theta \log n}.\] %
Since $s' < s$ was arbitrary and $\liminf_{\delta\searrow 0} \frac{\log S_{\delta, \theta}^{s'}(\Lambda)}{-\log \delta}$ is continuous in $s'$ by \cite[Lemma~2.1]{Burrell2021projections}, $\liminf_{\delta\searrow 0} \frac{\log S_{\delta, \theta}^{s}(\Lambda)}{-\log \delta} \geq G(\theta,s)/(\gamma^L \theta \log n)$. Thus 
\[ \frac{\log S_{\delta, \theta}^{s}(\Lambda)}{-\log \delta} \to \frac{G(\theta,s)}{\gamma^L \theta \log n} \mbox{ as } \delta \searrow 0. \]
Therefore by~\eqref{eq:45}, $\dim_{\theta} \Lambda$ exists and $G(\theta,\dim_{\theta} \Lambda) = 0$. 
By induction $t_1(s),t_2(s),\dotsc,t_L(s)$ are strictly increasing functions of $s$. Thus for fixed $\theta$, $G(\theta,s)$ is strictly decreasing in $s$, so $s=\dim_{\theta} \Lambda$ is the only solution $s \in [\dim_{\mathrm{H}} \Lambda, \dim_{\mathrm{B}} \Lambda]$ to the equation $G(\theta,s) = 0$. 
\end{proof}

\begin{rem}\label{rem:pressureexp}
The significance of the pressure function can be illustrated by the simple case $\theta \geq \gamma^{-1}$. Indeed, in this case the optimal cover which gives the smallest possible $s$-cost (up to absolute multiplicative constants depending only on the carpet) involves subdividing a level-$K$ approximate square to a level $k \in \{K,K+1,\dotsc,\lfloor K/\theta \rfloor \}$ which minimises $\psi_{(\ih_{K+1},\dotsc,\ih_{\lfloor K/\theta \rfloor}) | k}(s)$. By considering the symbolic representation of the approximate squares in such a cover, the $s$-cost is, up to multiplicative constants, 
\begin{align*}
 N^K \Psi_{\lfloor K/\theta \rfloor - K}(s) M^{\gamma (K) -\lfloor K/\theta \rfloor} n^{-Ks} &\asymp e^{K( \log N + (\theta^{-1} - 1)P(s) - \theta^{-1} \log M - s\log n)} \\*
 &\asymp e^{K( (\dim_{\mathrm B} \Lambda) \log n - (\theta^{-1} - 1)I(t) - s\log n )}
 \end{align*}
by Proposition~\ref{prop:1}, where $(s,t)$ are related by~\eqref{eq:23}. 
Therefore the exponential growth rate of this $s$-cost is in fact the same as that of the cover constructed in Section~\ref{sec:mainupper} using just the two extreme scales $K$ and $\lfloor K/\theta \rfloor$. 
\end{rem}

\section{Proof of corollaries and applications}\label{sec:proofcorollaries}

In this section we prove the corollaries and consequences of Theorem~\ref{thm:main}. 

\subsection{Proof of Corollary \ref{cor:allprop}}\label{subsec:proofform}

\begin{proof}[Proof of part~\ref{itemi}]
For 
\[ (\theta, s) \in (\gamma^{-L},\gamma^{-(L-1)}) \times (\dim_{\mathrm H} \Lambda, \dim_{\mathrm B} \Lambda) \coloneqq D \subset \mathbb{R}^2,\]
define $G(\theta,s)$ by~\eqref{eq:definemainquantity}. Then $G(\theta,s)$ has continuous partial derivatives of all orders, so is $C^\infty$, on $D$. Moreover, the rate function $I$ is analytic (as the Legendre transform of an analytic function) and compositions of analytic functions are analytic. It follows that for all $(\theta,s) \in D$ and $(\theta_1,s_1) \in \mathbb{R}^2$ there exists $r = r(\theta,s,\theta_1,s_1) > 0$ such that the function $\lambda \mapsto G(\theta+\lambda \theta_1,s+\lambda s_1)$ is real analytic for $\lambda \in (-r,r)$. Therefore by a result of Siciak \cite[Theorem~1]{Siciak1970analytic}, $G(\theta,s)$ is jointly analytic in $(\theta,s) \in D$. Thus by the analytic implicit function theorem, the function $\theta \mapsto \dim_{\theta} \Lambda$ (describing the zero set of $G(\theta,s)$) is analytic for $\theta \in (\gamma^{-L},\gamma^{-(L-1)})$. 
\end{proof}

The next lemma gives a formula for the derivative. Recall, if $\theta\in(\gamma^{-L},\gamma^{-(L-1)}]$ then the formula for the intermediate dimension $s(\theta) = \dim_{\theta} \Lambda$ is
\begin{equation}\label{eq:61}
\gamma^L\theta \log N - (\gamma^L\theta - 1) t_L(s(\theta)) + \gamma(1-\gamma^{L-1}\theta)\big(\log M - I(t_L(s(\theta)))\big) - s(\theta)\log n = 0.
\end{equation}

\begin{lemma}\label{lem:60}
For all $L\in\mathbb{N}$ and $\theta\in(\gamma^{-L},\gamma^{-(L-1)})$, we have $\partial_- \dim_{\theta} \Lambda  =  \partial_+ \dim_{\theta} \Lambda = s'(\theta)$, where
\begin{equation*}
s'(\theta) = \frac{\gamma^L}{\log n} \cdot \frac{\log N - t_L(s(\theta))-\log M + I\big(t_L(s(\theta))\big) }{ 1+\big(\gamma^L \theta -1 + \gamma(1-\gamma^{L-1}\theta)I'\big(t_L(s(\theta))\big)\big) \cdot A_L(\theta) },
\end{equation*}
where $I'$ denotes the derivative of the rate function $I$, and 
\begin{equation*}
A_L(\theta)\coloneqq \sum_{\ell=0}^{L-1}\gamma^{\ell}\prod_{m=1}^{\ell}I'\big(t_{L-m}(s(\theta))\big), 
\end{equation*}
with the empty product defined to be 1. It follows that for $\theta=\gamma^{-L}$
\begin{align}
\partial_- \dim_{\gamma^{-L}} \Lambda &= \frac{\gamma^{L+1}}{\log n} \cdot \frac{\log N - t_{L+1}(s(\gamma^{-L}))-\log M + I\big(t_{L+1}(s(\gamma^{-L}))\big) }{ 1+(\gamma-1) \cdot A_{L+1}(\gamma^{-L}) }, \label{eq:62} \\*
\partial_+ \dim_{\gamma^{-L}} \Lambda &= \frac{\gamma^L}{\log n} \cdot \frac{\log N - t_L(s(\gamma^{-L}))-\log M + I\big(t_L(s(\gamma^{-L}))\big) }{ 1+(\gamma-1)I'\big(t_L(s(\gamma^{-L}))\big) \cdot A_L(\gamma^{-L}) }. \label{eq:63}
\end{align}
\end{lemma}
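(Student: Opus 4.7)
The plan is to apply implicit differentiation to the defining equation~\eqref{eq:61}. Since part~\ref{itemi} of Corollary~\ref{cor:allprop} has already established that $s(\theta)$ is real analytic on $(\gamma^{-L},\gamma^{-(L-1)})$, the two-sided derivative exists there and equals $s'(\theta) = -(\partial G/\partial \theta)/(\partial G/\partial s)$ evaluated at $(\theta,s(\theta))$, where $G$ is the expression in~\eqref{eq:definemainquantity}. The partial derivative with respect to $\theta$ is immediate, since the $\theta$-dependence is explicit and polynomial: a direct computation yields
\[
\frac{\partial G}{\partial \theta} = \gamma^{L}\bigl(\log N - t_L(s) - \log M + I(t_L(s))\bigr).
\]

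The partial derivative with respect to $s$ requires one small calculation, namely $t_L'(s)$, since $G$ depends on $s$ both explicitly through the $-s\log n$ term and implicitly through $t_L(s)$ and $I(t_L(s))$. From the definitions $t_1(s) = (s - \log M/\log m)\log n$ and $t_k(s) = T_s(t_{k-1}(s)) = (s - \log M/\log m)\log n + \gamma I(t_{k-1}(s))$, a one-line induction gives the recursion $t_k'(s) = \log n + \gamma I'(t_{k-1}(s))\, t_{k-1}'(s)$, and unwinding yields exactly
\[
t_L'(s) = (\log n)\sum_{\ell=0}^{L-1}\gamma^{\ell}\prod_{m=1}^{\ell}I'\bigl(t_{L-m}(s)\bigr) = (\log n)\,A_L(\theta),
\]
which is the source of the $A_L(\theta)$ appearing in the lemma. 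Substituting this into the chain-rule expansion of $\partial G/\partial s$ and simplifying produces the stated formula for $s'(\theta)$.

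For the one-sided derivatives at the phase transition $\theta = \gamma^{-L}$, the plan is simply to apply the formula on each side with the correct value of the index. For the right derivative, the appropriate interval is $(\gamma^{-L},\gamma^{-(L-1)}]$, so one uses the formula with index $L$; setting $\theta=\gamma^{-L}$ gives $\gamma^L\theta - 1 = 0$ and $\gamma(1 - \gamma^{L-1}\theta) = \gamma - 1$, which collapses the denominator to $1 + (\gamma-1)I'(t_L(s))A_L(\gamma^{-L})$ and yields~\eqref{eq:63}. For the left derivative, the appropriate interval is $(\gamma^{-(L+1)},\gamma^{-L}]$, so one uses the formula with index $L+1$; now $\gamma^{L+1}\theta - 1 = \gamma - 1$ and $\gamma(1-\gamma^{L}\theta) = 0$, collapsing the denominator to $1 + (\gamma-1)A_{L+1}(\gamma^{-L})$ and giving~\eqref{eq:62}.

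There is no real obstacle: the only subtlety is that one should verify that $\partial G/\partial s \neq 0$ at $(\theta,s(\theta))$ so that the implicit function theorem actually applies, but this is clear because $A_L(\theta) > 0$ and the bracket multiplying it is non-negative (the first term $\gamma^L\theta - 1 \geq 0$ for $\theta \geq \gamma^{-L}$, and the second term is non-negative since $I' \geq 0$ and $\gamma^{L-1}\theta \leq 1$), so the denominator is at least $\log n > 0$. The only bookkeeping care needed is to keep straight which branch of $T_s^{\ell}$ corresponds to which interval when evaluating the one-sided limits at $\theta = \gamma^{-L}$.
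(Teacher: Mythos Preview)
Your proposal is correct and takes essentially the same approach as the paper: both use implicit differentiation of the defining equation~\eqref{eq:61}, and the core computation is the same induction showing $t_L'(s) = (\log n)\,A_L(\theta)$ (the paper phrases this as $\frac{\mathrm d}{\mathrm d\theta}t_L(s(\theta)) = s'(\theta)\log n\cdot A_L(\theta)$, which is the same statement after the chain rule). The paper dispatches the one-sided limits at $\theta=\gamma^{-L}$ with the single phrase ``by analyticity,'' whereas you spell out the substitutions explicitly and add the check that $\partial G/\partial s\neq 0$; these are minor presentational differences only.
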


\begin{proof}
We first show by induction on $L$ that
\begin{equation}\label{eq:60}
\frac{\mathrm{d}}{\mathrm{d}\theta}t_L(s(\theta)) = s'(\theta)\cdot \log n \cdot A_L(\theta).
\end{equation}
For $L=1$, we have $t_1(s(\theta))=\big(s(\theta)-\frac{\log M}{\log m}\big)\log n$ and $A_1(\theta)=1$, so the claim holds. Assuming~\eqref{eq:60} for $L-1$, we now prove for $L$:
\begin{align*}
\frac{\mathrm{d}}{\mathrm{d}\theta}t_L(s(\theta)) &= \frac{\mathrm{d}}{\mathrm{d}\theta}T_{s(\theta)}\big(t_{L-1}(s(\theta))\big) = s'(\theta)\cdot \log n + \gamma I'\big( t_{L-1}(s(\theta)) \big)\cdot \frac{\mathrm{d}}{\mathrm{d}\theta}t_{L-1}(s(\theta)) \\
&= s'(\theta)\cdot \log n + \gamma I'\big( t_{L-1}(s(\theta)) \big)\cdot s'(\theta)\cdot \log n \cdot A_{L-1}(\theta) \\
&= s'(\theta)\cdot \log n \cdot \bigg( 1+ \sum_{\ell=0}^{L-2}\gamma^{\ell+1}\prod_{m=0}^{\ell}I'\big(t_{L-1-m}(s(\theta))\big) \bigg) \\
&= s'(\theta)\cdot \log n \cdot A_L(\theta),
\end{align*}
completing the proof of~\eqref{eq:60}.

Differentiating~\eqref{eq:61} with respect to $\theta$,
\begin{multline*}
s'(\theta)\cdot \log n = \gamma^L \log N - \gamma^L t_L(s(\theta)) -(\gamma^L \theta-1)\frac{\mathrm{d}}{\mathrm{d}\theta}t_L(s(\theta)) \\*
-\gamma^L\big( \log M - I\big(t_L(s(\theta)) \big) \big)  -\gamma(1-\gamma^{L-1}\theta) \frac{\mathrm{d}}{\mathrm{d}\theta}I\big(t_L(s(\theta))\big).
\end{multline*}
Using~\eqref{eq:60}, after rearranging we obtain the formula for $s'(\theta)$. The claims for $\theta=\gamma^{-L}$ follow by analyticity.
\end{proof}

\begin{proof}[Proof of part~\ref{itemii}]
Follows directly from Lemma~\ref{lem:60}.
\end{proof}

The following lemma describes the behaviour of $t_{L(\theta)} (\dim_{\theta} \Lambda)$ for small $\theta$. 
\begin{lemma}\label{lem:tlconvergence}
We have \begin{itemize} \item $\lim_{L \to \infty} t_{L-1}(\dim_{\gamma^{-(L-1)}} \Lambda) = \liminf_{\theta \to 0^+} t_{L(\theta)} (\dim_{\theta} \Lambda) = t^*$
\item $\lim_{L \to \infty} t_L(\dim_{\gamma^{-(L-1)}} \Lambda) = \limsup_{\theta \to 0^+} t_{L(\theta)} (\dim_{\theta} \Lambda) = T_{\dim_{\mathrm H} \Lambda}(t^*)$
\end{itemize}
\end{lemma}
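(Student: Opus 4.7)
The plan is to anchor the argument at the phase transitions $\theta = \gamma^{-(L-1)}$ where formula~\eqref{eq:mainformula} simplifies, then extend to arbitrary $\theta$ by continuity and monotonicity on each interval $(\gamma^{-L}, \gamma^{-(L-1)}]$. Write $s_L \coloneqq \dim_{\gamma^{-(L-1)}}\Lambda$. The key external input is the continuity of $\theta \mapsto \dim_\theta \Lambda$ at $\theta = 0$ for Bedford--McMullen carpets, established in \cite{Falconer2020firstintermediate}, which gives $s_L \to \dim_{\mathrm H}\Lambda$ as $L \to \infty$.

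First I would evaluate~\eqref{eq:mainformula} at $\theta = \gamma^{-(L-1)}$, where the $(1-\gamma^{L-1}\theta)$ factor vanishes, yielding the simplification noted immediately after Theorem~\ref{thm:main}:
\[ s_L \;=\; \dim_{\mathrm B}\Lambda - (1-\gamma^{-1})\,\frac{I(t_{L-1}(s_L))}{\log m}. \]
Comparing with~\eqref{eq:44} and using $s_L \to \dim_{\mathrm H}\Lambda$ gives $I(t_{L-1}(s_L)) \to I(t^*)$. By Lemma~\ref{lem:51}, each $t_{L-1}(s_L)$ lies in the interval $(\underline{t}, \overline{t})$ on which $I$ is continuous and strictly increasing, so $t_{L-1}(s_L) \to t^*$. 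Since $T_s(t) = (s-\log M/\log m)\log n + \gamma I(t)$ is jointly continuous in $(s,t)$, applying $T_{s_L}$ gives $t_L(s_L) = T_{s_L}(t_{L-1}(s_L)) \to T_{\dim_{\mathrm H}\Lambda}(t^*)$, settling the half of each claimed equality that concerns the phase transitions.

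To pass from $\theta = \gamma^{-(L-1)}$ to arbitrary $\theta$, I would exploit the fact that on $(\gamma^{-L}, \gamma^{-(L-1)}]$ the index $L(\theta) = L$ is constant and $\dim_\theta \Lambda$ is continuous and non-decreasing (from Theorem~\ref{intermediatects}). By continuity at the endpoints, $\dim_\theta \Lambda$ takes values in $[s_{L+1}, s_L]$; strict monotonicity and continuity of $s \mapsto t_L(s)$ (established in the proof of Theorem~\ref{thm:main}) then show that $t_L(\dim_\theta \Lambda)$ ranges continuously over $[t_L(s_{L+1}), t_L(s_L)]$. Therefore
\[ \liminf_{\theta \to 0^+} t_{L(\theta)}(\dim_\theta \Lambda) = \lim_{L \to \infty} t_L(s_{L+1}), \qquad \limsup_{\theta \to 0^+} t_{L(\theta)}(\dim_\theta \Lambda) = \lim_{L \to \infty} t_L(s_L), \]
once each one-sided limit is shown to exist. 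The latter is $T_{\dim_{\mathrm H}\Lambda}(t^*)$ by the previous paragraph.

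For $\lim_L t_L(s_{L+1})$ I would re-apply the argument of the second paragraph with the index shifted by one: $t_{L+1}(s_{L+1}) \to T_{\dim_{\mathrm H}\Lambda}(t^*)$. Since $t_{L+1}(s_{L+1}) = T_{s_{L+1}}(t_L(s_{L+1}))$ and $T_{s_{L+1}}(t) - \bigl(s_{L+1} - \tfrac{\log M}{\log m}\bigr)\log n = \gamma I(t)$, it follows that $\gamma I(t_L(s_{L+1})) \to T_{\dim_{\mathrm H}\Lambda}(t^*) - (\dim_{\mathrm H}\Lambda - \log M/\log m)\log n = \gamma I(t^*)$. Strict monotonicity of $I$, combined with the containment $t_L(s_{L+1}) \in (\underline{t}, \overline{t})$ supplied by Lemma~\ref{lem:51}, forces $t_L(s_{L+1}) \to t^*$, completing the proof. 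The main obstacle is bookkeeping across the two indices $L-1$ and $L$ at each phase transition and verifying that all iterates remain in the open interval $(\underline{t}, \overline{t})$ where $I$ is a homeomorphism onto its image; both issues are handled cleanly by Lemma~\ref{lem:51}.
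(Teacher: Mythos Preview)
Your proposal is correct and follows essentially the same route as the paper: simplify \eqref{eq:mainformula} at the phase transitions to get $t_{L-1}(s_L)\to t^*$ via \eqref{eq:44} and continuity at $\theta=0$, push through $T_{s_L}$ to obtain $t_L(s_L)\to T_{\dim_{\mathrm H}\Lambda}(t^*)$, and then sandwich $t_{L(\theta)}(\dim_\theta\Lambda)$ between the endpoint values $t_L(s_{L+1})$ and $t_L(s_L)$ on each interval using monotonicity. The only cosmetic difference is your treatment of $t_L(s_{L+1})\to t^*$: you invert through $T_{s_{L+1}}$, whereas the paper simply observes that $t_L(s_{L+1})=t_{(L+1)-1}(\dim_{\gamma^{-((L+1)-1)}}\Lambda)$ is the first limit with the index shifted by one, which is slightly more direct.
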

\begin{proof}
By~\eqref{eq:44}, the equation that $\dim_{\gamma^{-(L-1)}} \Lambda$ satisfies from Theorem~\ref{thm:main}, and the fact that the intermediate dimensions and rate function are continuous, we have $t_{L-1}(\dim_{\gamma^{-(L-1)}} \Lambda) \to t^*$ as $L \to \infty$. It follows that $t_L(\dim_{\gamma^{-(L-1)}} \Lambda) \to T_{\dim_{\mathrm H} \Lambda}(t^*)$, and that $\limsup_{\theta \to 0^+} t_{L(\theta)} (\dim_{\theta} \Lambda) \geq T_{\dim_{\mathrm H} \Lambda}(t^*)$. By considering $\theta > \gamma^{-L}$ very close to $\gamma^{-L}$, we see that $\liminf_{\theta \to 0^+} t_{L(\theta)} \dim_{\theta} \Lambda \leq t^*$. If $\gamma^{-L} < \theta \leq \gamma^{-(L-1)}$ then $t_L(\dim_{\gamma^{-L}} \Lambda) < t_L(\dim_{\theta} \Lambda) \leq t_L(\dim_{\gamma^{-(L-1)}} \Lambda)$. Therefore 
\begin{align*}
 t^* = \lim_{L \to \infty} t_L(\dim_{\theta} \Lambda) \leq \liminf_{\theta \to 0^+} t_{L(\theta)} (\dim_{\theta} \Lambda) &\leq \limsup_{\theta \to 0^+} t_{L(\theta)} (\dim_{\theta} \Lambda) \\
 &\leq \lim_{L \to \infty} t_L(\dim_{\gamma^{-(L-1)}} \Lambda) \\*
 &= T_{\dim_{\mathrm H} \Lambda}(t^*),
 \end{align*}
completing the proof. 
\end{proof}

\begin{proof}[Proof of part~\ref{itemiii}]
For brevity, let us write 
\begin{equation}\label{eq:64}
s'(\theta) = \frac{\gamma^L}{\log n}\cdot \frac{f_L(\theta)}{1+g_L(\theta)A_L(\theta)}.
\end{equation}
Lemma~\ref{lem:51} ensures that $\underline{t}<t_1(\dim_{\mathrm{H}}\Lambda)<t_{\ell}(s(\theta))<T_{\dim_{\mathrm{H}}\Lambda}(t^{\ast})<\overline{t}$ for all $1\leq \ell\leq L$. Using that $I$ is strictly increasing and convex, there exist constants $c_1,c_1',c_2,c_2'>0$ independent of $\theta$ such that for all $t_1(\dim_{\mathrm{H}}\Lambda)\leq t\leq T_{\dim_{\mathrm{H}}\Lambda}(t^{\ast})$,
\begin{equation*}
0<c_1< I(t) <c_1'<\log M - H(\mathbf{P}) \;\text{ and }\; 0<c_2< I'(t) <c_2'<1.
\end{equation*}
Hence, recalling that $\overline{t} \coloneqq \log N - H(\mathbf{P})$ and $I(\overline{t}) = \log M - H(\mathbf{P})$, there exists $c_3>0$ such that the numerator
\begin{equation*}
f_L(\theta)= \overline{t} - t_L(s(\theta)) - \big( I(\overline{t}) - I\big(t_L(s(\theta))\big) \big) \geq c_3 >0.
\end{equation*}
Furthermore, there also exists $c_4>0$ such that $0<c_4\leq g_L(\theta)\leq c_4^{-1}<\infty$. Therefore,
\begin{equation*}
s'(\theta)\geq \frac{c_3}{\log n} \cdot \frac{\gamma^L}{1+c_4^{-1}A_L(\theta)} \geq \frac{c_3}{\log n} \cdot \frac{\gamma^L}{1+c_4^{-1}\cdot \frac{\gamma^L-1}{\gamma-1}} \geq \frac{c_3}{\log n \big(\gamma^{-1}+\frac{c_4^{-1}}{\gamma-1}\big)} =:C_0 > 0. 
\end{equation*}%

Next we show that $\partial_- \dim_{\gamma^{-L}} \Lambda < \partial_+ \dim_{\gamma^{-L}} \Lambda$ for all $L\in\mathbb{N}$ from~\eqref{eq:62} and~\eqref{eq:63}. We can divide both~\eqref{eq:62} and~\eqref{eq:63} by $\gamma^L/\log n$. We claim that
\begin{equation*}
\gamma^{-1}\cdot  A_{L+1}(\gamma^{-L}) - I'\big(t_L(s(\gamma^{-L}))\big) \cdot A_L(\gamma^{-L}) = \gamma^{-1},
\end{equation*} 
implying $\gamma^{-1}\cdot (1+g_{L+1}(\gamma^{-L})\cdot A_{L+1}(\gamma^{-L})) = 1+g_{L}(\gamma^{-L})\cdot A_{L}(\gamma^{-L})$. Indeed, applying the definition of $A_{L+1}(\gamma^{-L})$ and $A_{L}(\gamma^{-L})$, observe that it is a telescopic sum with only $\gamma^{-1}$ not cancelling out. Since we also have 
\begin{equation*}
t_{L+1}(s(\gamma^{-L})) - t_{L}(s(\gamma^{-L})) -\big( I\big(t_{L+1}(s(\gamma^{-L}))\big) - I\big(t_{L}(s(\gamma^{-L}))\big) \big)>0 
\end{equation*}
for the same reason that $f_L(\theta)>0$, it follows that $\partial_- \dim_{\gamma^{-L}} \Lambda < \partial_+ \dim_{\gamma^{-L}} \Lambda$. Finally, by Lemma~\ref{lem:tlconvergence}, 
\[ \frac{\partial_+ \dim_{\gamma^{-L}} \Lambda}{\partial_- \dim_{\gamma^{-L}} \Lambda} \xrightarrow[(L \to \infty)]{} \frac{\log N - t^* - \log M + I(t^*)}{\log N - T_{\dim_{\mathrm H} \Lambda}(t^*) - \log M + I(T_{\dim_{\mathrm H} \Lambda}(t^*))} \in (1,\infty). \qedhere \]%
\end{proof}

\begin{proof}[Proof of part~\ref{itemiv}]
The idea is to estimate $L(\theta)$ (which is the number of the $t_i(s(\theta))$) in terms of $s(\theta) \coloneqq \dim_{\theta} \Lambda$. We do this by using the fact that $t'$ is a neutral fixed point of the function $T_{\dim_{\mathrm H} \Lambda}$, and 
\[ t_1(s(\theta)) \xrightarrow[\theta \to 0^+]{} \left(\dim_{\mathrm H} \Lambda - \frac{\log M}{\log m}\right) \log n < t' \] 
and $\liminf_{\theta \to 0^+} t_{L(\theta)}(s(\theta)) > t'$ by Lemmas~\ref{lem:tlconvergence} and~\ref{lem:tprimelesststar}, so most of the $t_i(s(\theta))$ lie close to $t'$. 

By Lemma~\ref{lem:41} and Taylor's theorem, since $T_s'(t') = 1$ and $T_s''(t') > 0$, there exists $c \geq 1$ such that for all $t \in (\underline{t},\overline{t})$, 
\begin{align}\label{eq:taylor}
\begin{split}
 T_s(t') + T_s'(t')(t-t') + c^{-1}(t-t')^2 &\leq T_s(t) \leq T_s(t') + T_s'(t')(t-t') + c(t-t')^2; \\*
t +  c^{-1}(t-t')^2 &\leq T_s(t) \leq t + c(t-t')^2.
\end{split}
\end{align}
If $L$ is large enough and $k_L \coloneqq \big\lfloor \max\left\{ L/10 , \frac{2}{\log 2} \log \left( \frac{\overline{t} - \underline{t}}{L(s(\theta)-\dim_{\mathrm H}\Lambda)\log n}\right) \right\} \big\rfloor$ then 
\begin{equation*}
t'-2^{k_L}\frac{L(s(\theta)-\dim_{\mathrm H}\Lambda)\log n}{4} < \underline{t} \;\text{ and }\; t' + 2^{k_L}\frac{L(s(\theta)-\dim_{\mathrm H}\Lambda)\log n}{4} > \overline{t}. 
\end{equation*}

Suppose $k \in \{1,2,\dotsc,k_L\}$. Then by~\eqref{eq:taylor}, 
\begin{align*}
 \# \Big\{ i \in \{1,2,\dotsc,L(\theta) \} : t' - 2^k\frac{L(s(\theta)-\dim_{\mathrm H}\Lambda)\log n}{4} &< t_i(s(\theta)) \\*
 &\phantom{\leq}\leq 2^{k-1} \frac{L(s(\theta)-\dim_{\mathrm H}\Lambda)\log n}{4} \Big\} \\
 &\leq 1 + \frac{16c}{2^k L(s(\theta)-\dim_{\mathrm H}\Lambda)\log n}. 
 \end{align*}
 Summing up, it follows that 
 \[ \# \left\{  i : t_i(s(\theta)) \leq t' - \frac{L(s(\theta)-\dim_{\mathrm H}\Lambda)\log n}{4} \right\} \leq k_L +  \frac{16c}{L(s(\theta)-\dim_{\mathrm H}\Lambda)\log n}, \]
 and we similarly obtain the same bound for the number of $t_i(s)$ which are greater than $t' + \frac{L(s(\theta) + \dim_{\mathrm H}\Lambda)\log n}{4}$. 
 But 
 \[ \# \left\{ i : t' - \frac{L(s(\theta)-\dim_{\mathrm H}\Lambda)\log n}{4} < t_i(s(\theta)) \leq t' + \frac{L(s(\theta)-\dim_{\mathrm H}\Lambda)\log n}{4} \right\} \leq 1 + L/2. \]
 Therefore for $L$ sufficiently large, 
 \[ L \leq  2 k_L +  \frac{32c}{L(s(\theta)-\dim_{\mathrm H}\Lambda)\log n} + 1 + \frac{L}{2} \leq 0.9 L + \frac{40c}{ L(s(\theta)-\dim_{\mathrm H}\Lambda)\log n}. \]
 Decreasing $\theta_0$ further if required, this tells us that for all $\theta < \theta_0$, 
 \[ s(\theta) \leq  \dim_{\mathrm H} \Lambda +  \frac{400c}{L^2 \log n}  \leq  \dim_{\mathrm H} \Lambda + \frac{500c\cdot (\log \gamma)^2}{(\log \theta)^2 \log n} , \]
 proving the upper bound. This shows in particular that $L(s(\theta)-\dim_{\mathrm H}\Lambda)\log n \to 0$ as $\theta \to 0^+$. 
 
 For the lower bound, we may decrease $\theta_0$ further to assume that for all $\theta<\theta_0$, $t_1(\dim_{\mathrm H} \Lambda) < t_1(s(\theta)) < t' - 3L(s(\theta)-\dim_{\mathrm H}\Lambda)\log n$ and $t_L(s(\theta)) > t'$ by Lemmas~\ref{lem:tlconvergence} and~\ref{lem:tprimelesststar}. Then by~\eqref{eq:taylor}, for large enough $L$, 
 \begin{align*}
  L &\geq \# \left\{ i : t' - 3L(s(\theta)-\dim_{\mathrm H}\Lambda)\log n \leq t_i(s(\theta)) \leq t' - L(s(\theta)-\dim_{\mathrm H}\Lambda)\log n \right\} \\
  &\geq \frac{2L(s(\theta)-\dim_{\mathrm H}\Lambda)\log n}{1.1((s(\theta)-\dim_{\mathrm H}\Lambda)\log n + c( 3L(s(\theta)-\dim_{\mathrm H}\Lambda)\log n)^2)}. 
  \end{align*}%
 Rearranging, for large enough $L$,  
 \[ s(\theta) \geq \dim_{\mathrm H}\Lambda + \frac{0.9}{L^2 \cdot 9.9 c \log n} \geq  \dim_{\mathrm H}\Lambda + \frac{(\log \gamma)^2}{20c \log n (\log \theta)^2 }. \qedhere \]
\end{proof} 

\begin{proof}[Proof of part~\ref{itemv}]
One can differentiate~\eqref{eq:64} to obtain
\begin{equation*}
s''(\theta) = \frac{\gamma^L\cdot \big( f'_L(\theta)(1+g_L(\theta)A_L(\theta)) - f_L(\theta)(g'_L(\theta)A_L(\theta)+g_L(\theta)A'_L(\theta)) \big)}{(1+g_L(\theta)A_L(\theta))^2\log n}.
\end{equation*}
The sign of $s''(\theta)$ is determined by the sign of the term in parentheses in the numerator. We know that there exists $c>0$ such that $f_L(\theta), g_L(\theta), A_L(\theta)\geq c$. There also exists $c'>0$ such that
\begin{equation*}
f'_L(\theta) = \underbrace{\big( I'\big(t_L(s(\theta))\big)-1 \big)}_{\leq -c'<0}\cdot \underbrace{\frac{\mathrm{d}}{\mathrm{d}\theta}t_L(s(\theta))}_{\geq c'>0 \text{ by~\eqref{eq:60}}} \leq -(c')^2<0
\end{equation*}
and $A'_L(\theta)>c'$ because all $I'\big(t_{\ell}(s(\theta))\big)$ and $I''\big(t_{\ell}(s(\theta))\big)$ are uniformly positive. Finally,
\begin{equation*}
g'_L(\theta) = \gamma^L \underbrace{\big( 1-I'\big(t_L(s(\theta))\big) \big)}_{\geq c'>0} + \gamma \underbrace{( 1-\gamma^{L-1}\theta)}_{\geq 0} \underbrace{I''\big( t_L(s(\theta)) \big)}_{>0}\cdot \underbrace{\frac{\mathrm{d}}{\mathrm{d}\theta}t_L(s(\theta))}_{\geq c'>0} \geq \gamma^L c'>0.
\end{equation*}
Hence, $s''(\theta)\leq C<0$ for some uniform constant $C>0$, implying that $s(\theta)$ is strictly concave on every interval $[\gamma^{-L},\gamma^{-(L-1)}]$.
\end{proof}
Note that $s''(\theta)\leq C<0$ holds for a constant $C$ that is independent of both $L$ and $\theta$.

\subsection{Proof of Corollary \ref{cor:twoscales}}
We now use the fact that $\dim_{\theta} \Lambda$ is strictly increasing in $\theta$ to prove Corollary~\ref{cor:twoscales}. 

\begin{proof}[Proof of Corollary~\ref{cor:twoscales}]
Fix $\eta>0$ small enough that $\dim_{\mathrm H} \Lambda + \eta < \dim_{\mathrm B} \Lambda$. Since $\dim_{\theta} \Lambda$ exists and is continuous in $\theta$ including at $\theta=0$ (see Theorem~\ref{thm:main} and \cite[Proposition~4.1]{Falconer2020firstintermediate}), we can fix $\theta_1 < 1/\gamma$ small enough that $\dim_{\theta_1} \Lambda + \eta < \dim_{\mathrm B} \Lambda$. 
Let $s \in (\dim_{\mathrm H} \Lambda, \dim_{\theta_1} \Lambda + \eta]$, and for each $\delta \in (0,1)$, let $K = K(\delta) \in \mathbb{N}$ be such that $n^{-K} \leq \delta < n^{-(K-1)}$. 
For some string $\ii$, let $\mathcal{B}_{\lfloor K/\theta\rfloor}^{K,\ii}$ denote the set of level-$\lfloor K/\theta\rfloor$ approximate squares within the level-$K$ approximate square $B_{K}(\ii)$. For each $B_{K}(\ii)$ it is more cost efficient (in terms of $s$-cost, up to irrelevant multiplicative constants depending only on $\Lambda$) to subdivide it into level-$\lfloor K/\theta\rfloor$ approximate squares if and only if
\begin{equation}\label{eq:50}
n^{-Ks} \geq \#\mathcal{B}_{\lfloor K/\theta\rfloor}^{K,\ii} \cdot n^{-sK/\theta}.
\end{equation}

To determine $\#\mathcal{B}_{\lfloor K/\theta\rfloor}^{K,\ii}$ for some $\theta < 1/\gamma$, we compare the sequences that define $B_{K}(\ii)$ and a level-$\lfloor K/\theta\rfloor$ approximate square within it:
\[
\begin{array}{c|c|c|ccc}
	i_1  \;\dotsb\;  i_{K} & \ih_{K+1}  \;\dotsb\;  \ih_{\gamma(K)} &   &&& \\
	\underbrace{j_1  \;\dotsb\;  j_{K}}_{\text{equal}} & \underbrace{j_{K+1} \;\dotsb\; j_{\gamma(K)}}_{\text{same column}} & \underbrace{j_{\gamma(K)+1} \;\dotsb\; j_{\lfloor K/\theta \rfloor }}_{\in[N] \text{ freely}} & \underbrace{\jh_{\lfloor K/\theta \rfloor+1} \;\dotsb\; \jh_{\gamma(K/\theta)}}_{\in[M] \text{ freely}}.
\end{array}
\]
Thus, $\#\mathcal{B}_{\lfloor K/\theta\rfloor}^{K,\ii} = N^{\lfloor K/\theta \rfloor - \gamma(K)}\cdot M^{\gamma(K/\theta)- \lfloor K/\theta \rfloor }\cdot \prod_{\ell=K+1}^{\gamma(K)}N_{\ih_{\ell}}$. Substituting this back into~\eqref{eq:50}, we get after algebraic manipulations that it is more cost-efficient to subdivide if and only if 
\begin{equation*}
s\geq \frac{\theta}{(1-\theta)}\frac{\gamma-1}{\log n} \Bigg( \frac{1}{\gamma(K)-K} \sum_{\ell=K+1}^{\gamma(K)} \log N_{\ih_{\ell}} \Bigg)  
+ \frac{\theta}{1-\theta}( 1/\theta-\gamma ) \frac{\log N}{\log n} + \frac{\gamma-1}{1-\theta}\frac{\log M}{\log n}. 
\end{equation*}
But since $s \leq \dim_{\theta_1} \Lambda + \eta$, if is more cost-efficient to subdivide then the following condition for the average must hold: 
\begin{equation*}
	\frac{1}{\gamma(K)-K} \sum_{\ell=K+1}^{\gamma(K)} \log N_{\ih_{\ell}} \leq \log (N/M) - \Big(\frac{1}{\theta}-1\Big) \frac{\log n}{\gamma-1} ( \dim_{\mathrm B} \Lambda - \dim_{\theta_1} \Lambda - \eta).
\end{equation*}

As $\theta\to 0$, the right-hand side tends to $-\infty$, so there exists $\theta_0 < \theta_1 /2$ small enough that for all $\theta \leq 2\theta_0$ it is more cost efficient not to subdivide any of the level-$K$ approximate squares, if using only scales $\delta$ and $\delta^{1/\theta}$. 
Now, since $\dim_{\theta} \Lambda$ is strictly increasing in $\theta$ by Corollary~\ref{cor:allprop}, there exists $\epsilon < \eta$ small enough that $\dim_{\theta_0} \Lambda + \epsilon < \dim_{2\theta_0} \Lambda$. Then by the definition of $\theta_0$ and since $\dim_{\theta_1} \Lambda + \eta < \dim_{\mathrm B} \Lambda$, there exists $\delta_1 < 1$ such that if $\{U_i\}$ is a cover of $\Lambda$ using just two scales $\delta$ and $\delta'$ with $\delta' \leq \delta^{1/(2\theta_0)} < \delta \leq \delta_1$, then for all $\theta \leq 2\theta_0$, 
\[ \sum_i |U_i|^{\dim_{\theta} \Lambda + \epsilon} \geq \sum_i |U_i|^{\dim_{\theta_1} \Lambda + \eta} \geq 1.\]
Since  $\dim_{\theta_0} \Lambda + \epsilon < \dim_{2\theta_0} \Lambda$, there exists $\delta_0 < \delta_1$ such that if $\{U_i\}$ is a cover using just two scales $\delta$, $\delta'$ with $\delta^{1/(2\theta_0)} < \delta' \leq \delta \leq \delta_0$, then for all $\theta \leq \theta_0$, $\sum_i |U_i|^{\dim_{\theta} \Lambda + \epsilon} \geq \sum_i |U_i|^{\dim_{\theta_0} \Lambda + \epsilon} \geq 1$, completing the proof. 
\end{proof}

\subsection{Proof of Theorems \ref{t:grid} and \ref{thm:multifractal}}\label{s:multifractalproof}

We use primes to denote the parameters of $\Lambda'$, and use notation from Section~\ref{subsec:multifractal}. 
In particular, in this section only, $I'$ will denote the rate function associated with $\Lambda'$, and not the derivative of $I$. 

\begin{lemma}\label{lem:multifractallemma}
Let $\Lambda$ and $\Lambda'$ be two Bedford--McMullen carpets with non-uniform fibres defined on grids of size $m \times n$ and $m' \times n'$ respectively. 
If either of~\ref{item1} or~\ref{itemnow2} from Theorem~\ref{thm:multifractal} hold, then $\frac{\log n}{\log n'} = \frac{\log m}{\log m'} \in \mathbb{Q}$. 
\end{lemma}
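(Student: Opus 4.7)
The plan is to reduce both hypotheses to the single analytic identity $\beta_\nu \equiv \beta_{\nu'}$ on $\R$ together with $\gamma = \gamma'$, then recast this identity as a statement about the Dirichlet polynomial $F(\lambda) \coloneqq \sum_{\ih=1}^{M_0} R_{\ih} N_{\ih}^\lambda$ (and its primed analogue), and finally compare asymptotic expansions as $\lambda \to +\infty$ to force $\log m / \log m'$ to be a ratio of positive integers. The equality $\log n / \log n' = \log m / \log m'$ follows from $\gamma = \gamma'$, and it is the rationality of this ratio that constitutes the content of the lemma.

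\textbf{Step 1 (reduction to $\beta_\nu \equiv \beta_{\nu'}$).} Under (i), Corollary~\ref{cor:allprop}\ref{itemiii} locates the phase transitions of $\theta \mapsto \dim_\theta \Lambda$ precisely at $\{\gamma^{-L}\}_{L\in\N}$, so coincidence of intermediate dimensions forces $\gamma = \gamma'$; inverting the formula of Theorem~\ref{thm:main} on $\theta \in (\gamma^{-1},1]$ expresses $I(t)$ analytically in terms of $\dim_\theta \Lambda$, so equality of intermediate dimensions gives $I(t) \equiv I'(t - \gamma\log(M'/M))$, which by Proposition~\ref{prop:1}~Step~4 is equivalent to $\beta_\nu \equiv \beta_{\nu'}$. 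Under (ii), equality of $f_\nu$ and $f_{\nu'}$ on $(\alpha_{\mathrm{min}}, \alpha_{\mathrm{max}})$ upgrades by Legendre inversion and the analyticity implicit in~\eqref{eq:uniformmultifractal} to $\beta_\nu \equiv \beta_{\nu'}$ on all of $\R$; the asymptotic slopes $\beta_\nu(\xi)/\xi \to -\alpha_{\mathrm{min}}$ as $\xi \to +\infty$ and $\to -\alpha_{\mathrm{max}}$ as $\xi \to -\infty$ yield $\alpha_{\mathrm{min}} = \alpha'_{\mathrm{min}}$ and $\alpha_{\mathrm{max}} = \alpha'_{\mathrm{max}}$, and then matching the leading subexponential correction (the factor $e^{-b\rho_2 \xi}$ with $b = 1-\gamma^{-1}$ and $\rho_2 = \log(N_1/N_2) > 0$, whose existence is guaranteed by the non-uniform fibre assumption $M_0, M'_0 \geq 2$) forces $\gamma = \gamma'$.

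\textbf{Step 2 (the affine identity and its extraction).} With $\gamma = \gamma'$ in hand, the defining relation~\eqref{eq:definebeta} rewrites as $\log F(\lambda) = \beta_\nu(\xi)\log m + \xi \log N$, where $\lambda = \gamma^{-1} + (1-\gamma^{-1})\xi$, and identically with primes. Eliminating $\beta_\nu = \beta_{\nu'}$ between the two equations produces, after cross-multiplication, the affine identity
\begin{equation*}
    \log m' \cdot \log F(\lambda) \; - \; \log m \cdot \log F'(\lambda) \; = \; A\lambda + B \qquad (\lambda \in \R)
\end{equation*}
for explicit constants $A, B \in \R$. Factoring out the dominant exponentials via $G(\lambda) \coloneqq F(\lambda)/(R_1 N_1^\lambda) = 1 + \sum_{k \geq 2}(R_k/R_1)e^{-\lambda \rho_k}$ and similarly $G'$, the leading asymptotic behaviour as $\lambda \to +\infty$ (where $\log G, \log G' \to 0$) absorbs the entire affine part, leaving the cleaner identity
\begin{equation*}
    \log m' \cdot \log G(\lambda) \; = \; \log m \cdot \log G'(\lambda) \qquad (\lambda \in \R).
\end{equation*}
Expanding $\log G(\lambda) = (R_2/R_1)e^{-\lambda\rho_2} + O(e^{-\lambda \min(\rho_3, 2\rho_2)})$ and analogously for $G'$, matching the leading decay term gives $\rho_2 = \rho'_2$ (hence $N_1/N_2 = N'_1/N'_2$) together with
\begin{equation*}
    \frac{R_2}{R_1}\log m' \; = \; \frac{R'_2}{R'_1}\log m,
\end{equation*}
whence $\log m / \log m' = R_2 R'_1 /(R_1 R'_2) \in \Q$, which combined with $\gamma = \gamma'$ is the assertion of the lemma.

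\textbf{Main obstacle.} The most delicate part is Step~1 under hypothesis~(i): converting equality of intermediate dimensions into equality of $\beta_\nu$ and $\beta_{\nu'}$ requires inverting the transcendental formula of Theorem~\ref{thm:main} and carefully tracking the chain of substitutions $\theta \leadsto s \leadsto t \leadsto \lambda$, ensuring in particular that as $\theta$ ranges over an open subinterval of $(\gamma^{-1},1]$ the corresponding $\lambda$'s sweep out an open set, at which point analyticity promotes the identity to all of $\R$. The subsequent asymptotic matching in Step~2 is comparatively routine.
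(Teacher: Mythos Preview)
Your Step~2 is elegant and, once $\gamma=\gamma'$ and $\beta_\nu\equiv\beta_{\nu'}$ are both in hand, entirely correct: the identity $\log m'\log G(\lambda)=\log m\log G'(\lambda)$ does follow, and matching leading exponentials gives $\rho_2=\rho'_2$ and $\log m/\log m'=R_2R'_1/(R_1R'_2)\in\Q$ directly, bypassing the paper's appeal to the Gelfond--Schneider theorem. Under hypothesis~(i) this goes through: $\gamma=\gamma'$ comes from the phase transitions, and a short calculation (using $\dim_{\mathrm B}\Lambda=\dim_{\mathrm B}\Lambda'$) confirms $\beta_\nu\equiv\beta_{\nu'}$. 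Your intermediate formula $I(t)\equiv I'(t-\gamma\log(M'/M))$ is however false without the same-grid hypothesis --- what actually follows from Theorem~\ref{thm:main} is $\tfrac{1}{\log n}I(t_1(s))=\tfrac{1}{\log n'}I'(t_1'(s))$ --- but the conclusion $\beta_\nu\equiv\beta_{\nu'}$ survives this correction.

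The genuine gap is under hypothesis~(ii). Matching the leading subexponential correction to $\beta_\nu(\xi)$ at $\xi\to+\infty$ yields only the single exponent constraint $(1-\gamma^{-1})\log(N_1/N_2)=(1-(\gamma')^{-1})\log(N'_1/N'_2)$ together with one coefficient relation; these do not force $\gamma=\gamma'$. Without $\gamma=\gamma'$, the variables $\lambda,\lambda'$ in~\eqref{eq:definebeta} are distinct affine functions of $\xi$, and redoing Step~2 in the variable $\xi$ the leading-coefficient match becomes
\[
\log m'\cdot\frac{R_2}{R_1}\Big(\frac{N_2}{N_1}\Big)^{\gamma^{-1}}=\log m\cdot\frac{R'_2}{R'_1}\Big(\frac{N'_2}{N'_1}\Big)^{(\gamma')^{-1}},
\]
so the resulting expression for $\log m/\log m'$ carries the typically irrational factor $(N_2/N_1)^{\gamma^{-1}}/(N'_2/N'_1)^{(\gamma')^{-1}}$. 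The paper takes a different route under~(ii): it first uses the generalised binomial expansion together with Gelfond--Schneider to establish $\log m'/\log m\in\Q$, reduces to the case $m=m'$, and only then matches Dirichlet-polynomial bases (exploiting integrality of the $N_i$ and the non-uniform-fibres hypothesis) to deduce $n=n'$ and hence $\gamma=\gamma'$. So your identified ``main obstacle'' is misplaced: the inversion under~(i) is routine, whereas establishing $\gamma=\gamma'$ under~(ii) is the real difficulty, and your asymptotic argument as stated does not resolve it.
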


\begin{proof}
First assume that~\ref{item1} holds. 
Since the intermediate dimensions of $\Lambda$ and $\Lambda'$ have phase transitions at $\gamma^{-1}$ and $(\gamma')^{-1}$ respectively (see Corollary~\ref{cor:allprop}), we must have $\gamma=\gamma'$. To show that $\frac{\log n'}{\log n} \in \mathbb{Q}$, note that Theorem~\ref{thm:main} and the equality of the intermediate dimensions for $\theta \in (\gamma^{-1},1)$ tells us that for an open interval of $s$, 
\[ \frac{1}{\log n} I\left(\left(s-\frac{\log M}{\log m}\right)\log n\right) = \frac{1}{\log n'} I'\left(\left(s-\frac{\log M'}{\log m'}\right)\log n'\right). \]
Taking Legendre transforms of both sides and using scaling properties of Legendre transforms, using~\eqref{eq:22}, for all $\lambda$,  
\begin{equation}\label{eq:rational1}
 \frac{1}{\log n}\log \left( \frac{1}{M}\sum_{\ih=1}^{M_0}
 R_{\ih} N_{\ih}^{\lambda} \right) + \lambda \frac{\log M}{\log m}  =
  \frac{1}{\log n'}\log \left( \frac{1}{M'}\sum_{\jh=1}^{M_0'} R'_{\jh}  (N'_{\jh})^{\lambda} \right) + \lambda \frac{\log M'}{\log m'}.  
  \end{equation}
  Fix $K \in \mathbb{N}$ large enough that \[ N_1^{\frac{\log n'}{\log n}} (N_2/N_1)^K  <   N'_{M_0'} \cdot (n')^{\left( \frac{\log M'}{\log m'} - \frac{\log M}{\log m}\right)}.\] 
  Then exponentiating~\eqref{eq:rational1}, 
\begin{align*}
  &\frac{M^{\frac{\log n'}{\log n}}}{M'} \sum_{\jh=1}^{M_0'} R'_{\jh} \left( N'_{\jh} \cdot (n')^{\left( \frac{\log M'}{\log m'} - \frac{\log M}{\log m}\right)}\right)^{\lambda} =  \left( \sum_{\ih=1}^{M_0} R_{\ih} N_{\ih}^{\lambda} \right)^{\frac{\log n'}{\log n}}  \\
  &\phantom{--}= (R_1 N_1^\lambda)^{\frac{\log n'}{\log n}} \left( \sum_{k=0}^{K}  \frac{\frac{\log n'}{\log n} \left(\frac{\log n'}{\log n} - 1\right) \dotsb \left(\frac{\log n'}{\log n} - k + 1 \right) }{k!} \left( \sum_{\ih=2}^{M_0}  \frac{R_{\ih}}{R_1}\left(\frac{N_{\ih}}{N_1}\right)^{\lambda} \right)^{k}  \right)  \\*
  &\phantom{----}+  O\left(\left( N_1^{\frac{\log n'}{\log n}} \left(\frac{N_2}{N_1}\right)^{K+1}\right)^{\lambda}  \right)  
  \end{align*}
as $\lambda \to +\infty$, by the generalised binomial theorem. This means that the coefficient of $N_1^{\frac{\log n'}{\log n}} (N_2/N_1)^{K \lambda}$, which is a polynomial equation in $\frac{\log n'}{\log n}$ with rational coefficients, must be 0. So $\frac{\log n'}{\log n}$ is algebraic. But $n^{\frac{\log n'}{\log n}} = n'$, so by the Gelfond--Schneider theorem, $\frac{\log n'}{\log n} \in \mathbb{Q}$. 

Now we assume only~\ref{itemnow2}. We first show that $\frac{\log m'}{\log m} \in \mathbb{Q}$. Since the functions $\beta_{\nu}$ and $\beta_{\nu'}$ are equal, using~\eqref{eq:definebeta} with the change of variable $\lambda \coloneqq \gamma^{-1} + (1-\gamma^{-1}) \xi$, 
\[ \frac{ \frac{\lambda - \gamma^{-1}}{1-\gamma^{-1}} \log N - \log \sum_{\ih=1}^{M_0} R_{\ih} N_{\ih}^{\lambda}}{\log m} 
= \frac{ \frac{\lambda - \gamma^{-1}}{1-\gamma^{-1}} \log N' - \log \sum_{\jh=1}^{M'_0} 
{R_{\jh}N_{\jh}'}^{(\gamma')^{-1} + 
(1-(\gamma'))^{-1}
\frac{\lambda - \gamma^{-1}}{1-\gamma^{-1}} } }{\log m'}\] 
for all $\lambda$. A similar argument to the above using the generalised binomial theorem and Gelfond--Schneider theorem now gives that $\frac{\log m'}{\log m} \in \mathbb{Q}$. 
To show moreover that $\gamma = \gamma'$, note that these quantities are bi-Lipschitz invariants which depend only on the respective carpets, not the choice of iterated function system (see \cite[Theorem~3.3]{Fraser2018secondassouad}). So since $m$ and $m'$ are multiplicatively dependent, we can iterate the IFS to assume without loss of generality that $\Lambda$ is defined on an $m \times n$ grid, and $\Lambda'$ is defined on an $m \times n'$ grid, for the same $m$. Then by~\eqref{eq:definebeta}, 
\begin{equation}\label{eq:rational2} N^{-\xi} \sum_{\ih=1}^{M_0} R_{\ih} N_{\ih}^{\gamma^{-1} + (1-\gamma^{-1})\xi}  =  {N'}^{-\xi} \sum_{\jh=1}^{M'_0} R_{\jh}{N_{\jh}'}^{(\gamma')^{-1} + (1-(\gamma')^{-1})\xi} 
\end{equation}
for all $\xi$. So using a similar argument to the proof of \cite[Theorem~1.2]{RaoPreprintlipschitz}, equating exponential terms and coefficients gives $M_0=M'_0$. Also, $N_{\ih}^{\frac{\log m}{\log n}} = (N'_{\ih})^{\frac{\log m}{\log n'}}$, so 
\begin{equation}\label{eq:rational3} 
N_{\ih}' =  N_{\ih}^{\frac{\log n'}{\log n}} \qquad \mbox{for all} \quad \ih \in \{1,\dotsc,M_0\}. 
\end{equation}
Equating corresponding exponential bases in~\eqref{eq:rational2}, applying~\eqref{eq:rational3} and using that the carpet has non-uniform fibres (so not all $N_{\ih}$ are equal) shows that $n=n'$. %
In particular, $\frac{\log n'}{\log n} \in \mathbb{Q}$, as required. 
\end{proof}

\begin{proof}[Proof of Theorem~\ref{t:grid}]
\ref{i:gridonecarpet}
The equality $\log n / \log m = \log n' / \log m'$ follows from observing the phase transitions of the Assouad spectrum (see~\cite[Theorem~3.3]{Fraser2018secondassouad}) or intermediate dimensions. The fact that $\log n / \log n' \in \Q$ follows from Lemma~\ref{lem:multifractallemma}. 

\ref{i:gridtwocarpets}
The forward implication follows from~\ref{i:gridonecarpet}, and the backward implication holds by iterating the IFSs (recalling the discussion after the statement of Theorem~\ref{t:grid}). 
\end{proof}

We now prove Theorem~\ref{thm:multifractal}. 
Since the intermediate dimension and multifractal spectra obviously do not depend on the grid on which the carpet is defined, in light of Lemma~\ref{lem:multifractallemma} and Theorem~\ref{t:grid} it suffices to assume henceforth that both carpets are defined on the same $m \times n$ grid to begin with. 
We already mentioned that the equivalence of~\ref{itemnow2} and~\ref{itemnow5} was proved in~\cite[Theorem~1.2]{RaoPreprintlipschitz}. To complete the proof, we show that $\ref{item1}\Rightarrow\ref{itemnow3}\Rightarrow\ref{item4}\Rightarrow\ref{itemnow5}\Rightarrow\ref{itemlq}\Rightarrow\ref{itemnow5}\Rightarrow\ref{item4}\Rightarrow\ref{item1}$. Of these, the implication $\ref{item1}\Rightarrow\ref{itemnow3}$ is obvious. 

\begin{proof}[Proof of $\ref{itemnow3}\Rightarrow\ref{item4}$]
Assume $\dim_{\theta}\Lambda = \dim_{\theta}\Lambda'$ on the open interval $(a,b)\subset [\gamma^{-1},1]$. After rearranging the formula in Theorem~\ref{thm:main} for $\dim_{\theta}\Lambda$ in the case $L=1$, we obtain that
\begin{equation*}
\dim_{\theta}\Lambda=\dim_{\mathrm B}\Lambda - \Big( \frac{1}{\theta}-1 \Big) \frac{I\big(t_1(\dim_{\theta}\Lambda)\big)}{\log n}.
\end{equation*}
By Corollary~\ref{cor:allprop} part~\ref{itemi}, $\dim_{\theta}\Lambda$ and $\dim_{\theta}\Lambda'$ are real analytic on $(\gamma^{-1},1)$, hence $\dim_{\theta}\Lambda = \dim_{\theta}\Lambda'$ on the whole interval $[\gamma^{-1},1]$. In particular, $\dim_{\mathrm B}\Lambda=\dim_{\mathrm B}\Lambda'$, so 
\begin{align*}
I\big(t_1(\dim_{\theta}\Lambda)\big) &= I'\big(t'_1(\dim_{\theta}\Lambda')\big) \\
&= I'\Big(\Big(\dim_{\theta}\Lambda'-\frac{\log (M' M/M)}{\log m'}\Big)\log n'\Big) \\
&= I'\Big( \underbrace{\Big(\dim_{\theta}\Lambda-\frac{\log M}{\log m}\Big)\log n}_{=t_1(\dim_{\theta}\Lambda)} -\gamma \log(M'/M) \Big).
\end{align*}
Setting $t=t_1(\dim_{\theta}\Lambda)$, we see that $I(t)=I'(t-\gamma\log(M'/M))$ on the open interval $(t_1(\dim_{a}\Lambda),t_1(\dim_{b}\Lambda))$. Since the rate function is analytic,~\ref{item4} follows. 
\end{proof}

\begin{proof}[Proof of $\ref{item4}\Rightarrow\ref{itemnow5}$]
Assume $I(t) = I'(t-\gamma \log(M'/M))$ on an open interval of $t$. Without loss of generality we assume that $M'\geq M$. Using definition~\eqref{eq:22} of the rate function,
\begin{equation*}
I'(t-\gamma \log(M'/M))=\sup_{\lambda\in \R} \bigg\{\lambda t - \log\bigg( \frac{(M'/M)^{\gamma \lambda}}{M'} \sum_{\ih=1}^{M'_0} R'_{\ih} (N'_{\ih})^{\lambda}\bigg)\bigg\}.
\end{equation*} 
Since $I$ and $I'$ are convex functions, their Legendre transforms must agree on an open interval, implying that
\begin{equation}\label{eq:65}
\frac{1}{M} \sum_{\ih=1}^{M_0} R_{\ih} N_{\ih}^{\lambda} = \frac{1}{M'} \sum_{\ih=1}^{M'_0} R'_{\ih} \big((M'/M)^{\gamma }N'_{\ih}\big)^{\lambda}
\end{equation}
on an open interval of $\lambda$. 

From here the proof follows the idea of the proof of \cite[Theorem~1.2]{RaoPreprintlipschitz}. Taking the $k$-th derivative of both sides of~\eqref{eq:65} with respect to $\lambda$ gives
\begin{equation}\label{eq:66}
\frac{1}{M} \sum_{\ih=1}^{M_0} R_{\ih} N_{\ih}^{\lambda}\cdot (\log N_{\ih})^k = \frac{1}{M'} \sum_{\ih=1}^{M'_0} R'_{\ih} \big((M'/M)^{\gamma }N'_{\ih}\big)^{\lambda} \cdot \big(\log( (M'/M)^{\gamma }N'_{\ih})\big)^k.
\end{equation}
Recall that the $N_{\ih}$ and $N'_{\ih}$ are ordered in decreasing order. Since~\eqref{eq:66} holds for all $k$, the largest term on either side must be equal, so $N_{1}/N'_{1} = (M'/M)^{\gamma}$, and also its coefficient
\begin{equation*}
\frac{R_{1} N_{1}^{\lambda}}{M} = \frac{R'_{1} \big((M'/M)^{\gamma }N'_{1}\big)^{\lambda} }{M'} \;\Longleftrightarrow\; \frac{R'_{1}}{R_{1}} = \frac{M'}{M}\cdot \left(\frac{M}{M'}\right)^{\gamma\lambda} \cdot \left(\frac{N_1}{N'_1}\right)^{\lambda} =  \frac{M'}{M}.
\end{equation*}
After subtracting these terms from both sides of~\eqref{eq:66}, we repeat the argument for the next largest term and so on. If $M_0\neq M'_0$ then after $\min\{M_0,M'_0\}$ steps one side would be 0 and the other non-zero, a contradiction. Hence, we conclude that $M_0= M'_0$ and $N_{\ih}/N'_{\ih}=(R'_{\ih}/R_{\ih})^{\gamma}= (M'/M)^{\gamma}$ for all $\ih=1,\dotsc,M_0$.
\end{proof}

\begin{proof}[Proof of $\ref{itemnow5}\Rightarrow\ref{itemlq}$]
If~\ref{itemnow5} holds, then substituting $R_{\ih}' = R_{\ih}M'/M$ and $N_{\ih}' = N_{\ih}(M'/M)^{-\gamma}$ gives $N' = N(M'/M)^{1-\gamma}$. Substituting into~\eqref{e:lqformula} gives $T_{\nu}(q) = T_{\nu'}(q)$. 
\end{proof}

\begin{proof}[Proof of $\ref{itemlq}\Rightarrow\ref{itemnow5}$]
Equating the constant term and coefficient of $q$ from~\eqref{e:lqformula} gives that 
\[ \sum_{\ih=1}^{M_0'} R_{\ih}' (N_{\ih}')^q = \left(\frac{N'}{N}\right)^{\frac{\log n - q\log n}{\log(n/m)}} \sum_{\ih=1}^{M_0} R_{\ih} (N_{\ih})^q. \]
By the same differentiation argument from~\cite{RaoPreprintlipschitz} that was used in the proof of~$\ref{item4}\Rightarrow\ref{itemnow5}$, $M_0 = M_0'$ and $N_{\ih}/N'_{\ih}=(R'_{\ih}/R_{\ih})^{\gamma}= (N'/N)^{\frac{\log n}{\log (n/m)}}$ for all $\ih$. 
Now observing that 
\[ \frac{M'}{M} = \frac{\sum_{\ih=1}^{M_0} R_{\ih}'}{\sum_{\jh=1}^{M_0} R_{\jh}} = \left(\frac{N'}{N}\right)^{\frac{\log n}{\log (n/m)}}\]
 shows that~\ref{itemnow5} holds. 
\end{proof}

\begin{proof}[Proof of $\ref{itemnow5}\Rightarrow\ref{item4}$]
Assume that $M_0= M'_0$ and $N_{\ih}/N'_{\ih}=(R'_{\ih}/R_{\ih})^{\gamma}= (M'/M)^{\gamma}$ for all $\ih=1,\dotsc,M_0$. Then~\eqref{eq:65} holds for every $\lambda\in\mathbb{R}$. Since both sides of~\eqref{eq:66} are strictly positive for $k=2$, both sides of~\eqref{eq:65} are convex functions of $\lambda$. Hence, their Legendre transforms are also equal: 
\begin{equation*}
\sup_{\lambda\in \R} \bigg\{\lambda t - \log\bigg( \frac{1}{M} \sum_{\ih=1}^{M_0} R_{\ih} N_{\ih}^{\lambda}\bigg)\bigg\} = \sup_{\lambda\in \R} \bigg\{\lambda t - \log\bigg( \frac{(M'/M)^{\gamma \lambda}}{M'} \sum_{\ih=1}^{M'_0} R'_{\ih} (N'_{\ih})^{\lambda}\bigg)\bigg\}
\end{equation*}
for all $t$, which is precisely $I(t) = I'(t-\gamma \log(M'/M))$.
\end{proof}

\begin{proof}[Proof of $\ref{item4}\Rightarrow\ref{item1}$]
Assume $I(t) = I'(t-\gamma \log(M'/M))$ for every $t\in\mathbb{R}$. We claim that for every $s\in(\dim_{\mathrm H}\Lambda,\dim_{\mathrm B}\Lambda)$,
\begin{equation}\label{eq:67}
t'_{\ell}(s) = t_{\ell}(s) - \gamma \log\left(\frac{M'}{M}\right) \quad\text{ for every } \ell\in\N.
\end{equation}
The proof goes by induction on $\ell$. For $\ell=1$, $t_1(s)= \big( s-\log(M'M/M)\log m \big)\log n = t_1(s)-\gamma\log(M'/M)$. Assuming~\eqref{eq:67} for $\ell-1$, we prove for $\ell$:
\begin{align*}
t'_{\ell}(s) &= T'_s(t'_{\ell-1}(s)) \\*
&= \left( s-\frac{\log M'}{\log m} \right)\log n +\gamma I'\left(t_{\ell-1}(s) - \gamma \log\left(\frac{M'}{M}\right)\right) \\
&\stackrel{\ref{item4}}{=} \left( s-\frac{\log(M'M/M)}{\log m} \right)\log n +\gamma I\big(t_{\ell-1}(s)\big) \\
&= T_s(t_{\ell-1}(s)) - \gamma\log\left(\frac{M'}{M}\right), 
\end{align*} 
which completes the proof of~\eqref{eq:67} since $T_s(t_{\ell-1}(s))=t_{\ell}(s)$. 

From~\eqref{eq:67} and assumption~\ref{item4} it immediately follows that 
\begin{equation}\label{eq:68}
I(t_{\ell}(s))=I'(t'_{\ell}(s)). 
\end{equation}
Assumption~\ref{item4} also implies~\ref{itemnow5}, thus we know that
\begin{equation}\label{eq:69}
N'=\sum_{\ih=1}^{M'_0} R'_{\ih} N'_{\ih} \stackrel{\ref{itemnow5}}{=} \sum_{\ih=1}^{M_0} \frac{M'R_{\ih}}{M} \cdot N_{\ih} \left( \frac{M}{M'}\right)^{\gamma} = N\left( \frac{M}{M'}\right)^{\gamma-1}. 
\end{equation}
Writing $s'_{\theta}=\dim_{\theta}\Lambda'$, using~\eqref{eq:67},~\eqref{eq:68},~\eqref{eq:69} and algebraic manipulations, we obtain
\begin{align*}
0&= -s'_{\theta} \log n + \gamma^L\theta \log N' - (\gamma^L\theta - 1) t'_L(s'_{\theta}) + \gamma(1-\gamma^{L-1}\theta)(\log M' - I'(t'_L(s'_{\theta}))) \\
&= -s'_{\theta} \log n + \gamma^L\theta \log N - (\gamma^L\theta - 1) t_L(s'_{\theta}) + \gamma(1-\gamma^{L-1}\theta)(\log M - I(t_L(s'_{\theta}))).
\end{align*} 
By Theorem~\ref{thm:main}, $\dim_{\theta}\Lambda$ is the unique solution to this equation, so $\dim_{\theta}\Lambda=\dim_{\theta}\Lambda'$. This completes the proof of Theorem~\ref{thm:multifractal}. 
\end{proof}

\newgeometry{top=30mm,bottom=30mm,left=29mm,right=29mm}

\chapter*{References}
\addcontentsline{toc}{chapter}{References}

\printbibliography[heading=none]

\restoregeometry

\end{document}